\renewcommand{\tocsection}[3]{%
  \indentlabel{\@ifnotempty{#2}{\bfseries\ignorespaces#1 #2\quad}}\bfseries#3}
\renewcommand{\tocsubsection}[3]{%
  \indentlabel{\@ifnotempty{#2}{\ignorespaces#1 #2\quad}}#3}
\newcommand\@dotsep{4.5}
\def\@tocline#1#2#3#4#5#6#7{\relax
  \ifnum #1>\c@tocdepth 
  \else
    \par \addpenalty\@secpenalty\addvspace{#2}%
    \begingroup \hyphenpenalty\@M
    \@ifempty{#4}{%
      \@tempdima\csname r@tocindent\number#1\endcsname\relax
    }{%
      \@tempdima#4\relax
    }%
    \parindent\z@ \leftskip#3\relax \advance\leftskip\@tempdima\relax
    \rightskip\@pnumwidth plus1em \parfillskip-\@pnumwidth
    #5\leavevmode\hskip-\@tempdima{#6}\nobreak
    \leaders\hbox{$\m@th\mkern \@dotsep mu\hbox{.}\mkern \@dotsep mu$}\hfill
    \nobreak
    \hbox to\@pnumwidth{\@tocpagenum{\ifnum#1=1\bfseries\fi#7}}\par
    \nobreak
    \endgroup
  \fi}
\renewcommand\csname r@tocindent0\endcsname{0pt}
\def\l@subsection{\@tocline{2}{0pt}{2.5pc}{5pc}{}}
\newcommand{\R}{\mathbb{R}}
\newcommand{\Z}{\mathbb{Z}}
\newcommand{\N}{\mathbb{N}}
\renewcommand{\P}{\mathbb{P}}
\newcommand{\T}{\mathbb{T}}
\newcommand{\A}{\mathcal{A}}
\newcommand{\E}{\mathcal{E}}
\newcommand{\bR}{\bm{R}}
\newcommand{\barR}{\overline{\bm{R}}}
\newcommand{\bartheta}{\wh{\theta}}
\newcommand{\bars}{\wh s}
\newcommand{\bu}{\bm{u}}
\newcommand{\bw}{\bm{w}}
\newcommand{\bx}{\bm{x}}
\newcommand{\by}{\bm{y}}
\newcommand{\X}{\bm{X}}
\newcommand{\Y}{\bm{Y}}
\newcommand{\be}{\bm{e}}
\newcommand{\bv}{\bm{v}}
\newcommand{\p}{\partial}
\renewcommand{\div}{{\rm{div}\,}}
\newcommand{\abs}[1]{\left\lvert #1 \right\rvert}
\newcommand{\norm}[1]{\left\lVert #1 \right\rVert}
\newcommand{\wh}[1]{\widehat{#1}}
\newcommand{\wt}[1]{\widetilde{#1}}
\newcommand{\mc}[1]{\mathcal{#1}}
\DeclareMathOperator*{\esssup}{ess\,sup}
\newtheorem{theorem}{Theorem}[section]
\newtheorem{lemma}[theorem]{Lemma}
\newtheorem{proposition}[theorem]{Proposition}
\newtheorem{corollary}[theorem]{Corollary}
\theoremstyle{definition}
\begin{document}
\title{A free boundary problem for an immersed filament in 3D Stokes flow}

\author{Laurel Ohm}
\address{Department of Mathematics, University of Wisconsin - Madison, Madison, WI 53706}
\email{lohm2@wisc.edu}

\begin{abstract} 
We consider a simplified extensible version of a dynamic free boundary problem for a thin filament with radius $\epsilon>0$ immersed in 3D Stokes flow. The 3D fluid is coupled to the quasi-1D filament dynamics via a novel type of angle-averaged Neumann-to-Dirichlet operator for the Stokes equations, and much of the difficulty in the analysis lies in understanding this operator. Here we show that the main part of this angle-averaged NtD map about a closed, curved filament is the corresponding operator about a straight filament, for which we can derive an explicit symbol. Remainder terms due to curvature are lower order with respect to regularity or size in $\epsilon$. Using this operator decomposition, it is then possible to show that the simplified free boundary evolution is a third-order parabolic equation and is locally well posed. This establishes a more complete mathematical foundation for the myriad computational results based on slender body approximations for thin immersed elastic structures.
\end{abstract}

\maketitle

\tableofcontents

\setlength{\parskip}{8pt}
\section{Introduction}\label{sec:intro}
Thin elastic filaments immersed in very viscous fluids play a central role in many biological processes, including mitotic spindle positioning during cell division \cite{nazockdast2017cytoplasmic}, cytoskeletal mechanics \cite{maxian2021simulations}, sperm motility \cite{hines1978bend, camalet1999self}, and ciliary transport \cite{chakrabarti2022multiscale}. 
To capture the effective interaction between a thin semiflexible filament and the surrounding viscous fluid, it is natural to seek a lower dimensional representation of immersed filament dynamics. This is the basic tenet of slender body theories, which play an important role in many computational models.  

Given an immersed filament with finite radius $\epsilon>0$ and centerline position $\X(s,t):\R/\Z\times[0,T]\to \R^3$, the fundamental aim is to capture the filament evolution using an equation of the form 
\begin{equation}\label{eq:full_evolution}
\frac{\p\X}{\p t} = -\mc{L}_\epsilon(\X)\big[\p_s^4\X-\p_s(\tau\p_s\X)\big] \,, \quad \abs{\p_s\X}=1\,.
\end{equation}
Here $\mc{L}_\epsilon(\X)$ is an operator mapping force data defined along a 1D curve to a 3D velocity field governing the position of the filament centerline. This operator must also incorporate the effects of the surrounding 3D fluid on the filament evolution. The forcing terms $-\p_s^4\X+\p_s(\tau\p_s\X)$ come from Euler-Bernoulli beam theory and model a simple elastic response along the filament, with $\tau(s,t)$, the filament tension, serving as a Lagrange multiplier to enforce the inextensibility constraint $\abs{\p_s\X} = 1$. The key feature of this type of formulation \eqref{eq:full_evolution} is that the filament dynamics are 1D. A major difficulty is coupling these 1D dynamics and the surrounding 3D fluid via the operator $\mc{L}_\epsilon$ in both a physically and a mathematically reasonable way.

A simple and mathematically reasonable choice for this coupling operator is \emph{local slender body theory}, $\mc{L}_{\epsilon,{\rm loc}}^{\rm SB}(\X)=c\abs{\log\epsilon}({\bf I}+\p_s\X\otimes\p_s\X)$ \cite{gray1955propulsion, cox1970motion, batchelor1970slender, hines1978bend, wiggins1998flexive, camalet2000generic, camalet1999self}. For this very simple choice of coupling, the analogue of \eqref{eq:full_evolution} is a fourth order parabolic equation for the filament position coupled with a 1D elliptic equation for the filament tension $\tau$. In addition to establishing well-posedness in this setting, the author and Mori \cite{mori2023well,ohm2024well} prove that the filament can generate net forward motion (i.e. swim) with a suitable choice of time-periodic forcing. However, $\mc{L}_{\epsilon,{\rm loc}}^{\rm SB}$ incorporates only a very simplified description of the surrounding fluid physics on the filament evolution.

 To better capture the physics of the fluid-structure interaction, a natural attempt would be to use \emph{nonlocal} slender body theory $\mc{L}^{\rm SB}_{\epsilon,{\rm nloc}}(\X)$ \cite{keller1976slender,johnson1980improved, tornberg2004simulating}, which, in addition to the terms from $\mc{L}_{\epsilon,{\rm loc}}^{\rm SB}$, includes a 1D integral operator incorporating nonlocal interactions along the filament. This is a popular choice for computational models \cite{li2013sedimentation,spagnolie2011comparative,tornberg2006numerical,lauga2009hydrodynamics,cortez2012slender,NorwayPoF,norway_SBT,maxian2021integral,maxian2022hydrodynamics} because it is more physically realistic than $\mc{L}^{\rm SB}_{\epsilon,{\rm loc}}$ and the evolution \eqref{eq:full_evolution} is still relatively straightforward to compute numerically. However, the classical $\mc{L}^{\rm SB}_{\epsilon,{\rm nloc}}$ operator is known to have issues at high wavenumbers $\sim 1/\epsilon$ leading to nonsensical mapping properties \cite{gotz2000interactions, inverse, tornberg2004simulating, shelley2000stokesian}, even in analytic regularity. This issue does not typically affect numerics, as these high frequencies are truncated by discretization. Even so, from a PDE perspective, $\mc{L}^{\rm SB}_{\epsilon,{\rm nloc}}$ cannot give rise to a well-posed evolution equation \eqref{eq:full_evolution}.

Thus, a central question is how to incorporate more of the fluid physics into the operator $\mc{L}_\epsilon$ while still yielding a well-posed PDE evolution \eqref{eq:full_evolution}. In \cite{closed_loop,free_ends}, the author along with Mori and Spirn propose a novel candidate for the coupling operator $\mc{L}_\epsilon$ in the form of an angle-averaged Neumann-to-Dirichlet operator for the 3D Stokes equations. This notion of \emph{slender body Neumann-to-Dirichlet (NtD) map} is defined as the solution to a natural type of boundary value problem for the Stokes equations in the exterior of the thin filament (see equation \eqref{eq:SB_PDE}). It therefore incorporates much more of the fluid physics than local slender body theory but also has more reasonable mapping properties than nonlocal slender body theory. In \cite{closed_loop,free_ends}, we establish well-posedness for the static slender body boundary value problem in a natural energy space and moreover show that the solution is close in a suitable sense to nonlocal slender body theory. We have since extended this notion of slender body NtD map in various directions \cite{rigid,regularized,inverse,mitchell2022single,laplace}, but all analysis has thus far been in the static setting.

Here we give a more complete answer to the central question above by establishing local-in-time well-posedness for a simplified version of \eqref{eq:full_evolution} (see equation \eqref{eq:curve_evolution}) using the slender body NtD map $\mc{L}_\epsilon$. In our simplified dynamics, the filament is allowed to change length over time, although not necessarily in a physically realistic way. This allows us to develop the theory for the main part of the evolution \eqref{eq:full_evolution} without also needing to solve for the filament tension $\tau$. Developing this well-posedness theory requires a detailed understanding of the mapping properties of the map $\mc{L}_\epsilon$. The bulk of this paper is therefore devoted to proving Theorem \ref{thm:decomp}, a technically demanding decomposition of $\mc{L}_\epsilon$ into a main part plus lower order remainders which are smoother or small with respect to $\epsilon$. 

Using a layer potential argument, we show that the principal behavior of $\mc{L}_\epsilon$ is that of the corresponding operator about a straight cylinder with periodic boundary conditions, for which we can derive an explicit symbol. Using this explicit symbol, we show that the toy extensible version of \eqref{eq:full_evolution} is third-order parabolic and locally well-posed. This provides a proof-of-concept for using $\mc{L}_\epsilon$ to solve a dynamic free boundary problem, and represents an important step toward establishing well-posedness for the full evolution \eqref{eq:full_evolution} and a more complete mathematical justification of slender body theories of filament dynamics in Stokes flow.

Our extraction of the straight operator as the leading order behavior of $\mc{L}_\epsilon$ is a type of small-scale decomposition analogous to similar Dirichlet-to-Neumann operator decompositions used in other interfacial fluids problems. These include Hele-Shaw and water waves problems \cite{alazard2014cauchy, alazard2009paralinearization, beale1993growth,hou1994removing,lannes2005well}, the Muskat problem \cite{alazard2020paralinearization, flynn2021vanishing, nguyen2020paradifferential}, and the Peskin problem for the evolution of a 1D filament in 2D Stokes flow \cite{cameron2021critical, chen2021peskin, gancedo2020global, garcia2020peskin, lin2019solvability, kuo2023tension, mori2019well, tong2021regularized, tong2023geometric, garcia2023critical}. In many of these other interfacial fluids problems, however, the principal part of the operator is simpler to state and analyze. In contrast, the slender body NtD operator $\mc{L}_\epsilon$ requires a cylindrical geometry to be well-defined, and even for the straight cylinder, the symbol, while explicit, is not a simple expression. 
Considering a curved, closed filament as a perturbation of the straight filament and showing that the local effects due to curvature are lower order has also been used as a strategy to study vortex filament solutions of the Ginzburg-Landau, Navier-Stokes, and Euler equations \cite{bedrossian2018vortex, davila2022interacting, davila2022travelling}. We will also frequently refer to the author's earlier work \cite{laplace} deriving an analogous operator decomposition in the much simpler setting of the Laplace equation.

\subsection{Geometry}\label{subsec:geometry}
We begin by defining the filament geometry at a fixed instant in time (for example, the initial configuration of the fiber). The geometric quantities described here will play a major role in the eventual extraction of the principal behavior for the slender body NtD map (Theorem \ref{thm:decomp}). At this fixed time, we let $\X: \,\T:=\R/\Z \to \R^3$ denote the arclength parameterization of a closed, unit-length curve $\Gamma_0$ in $\R^3$ (see figure \ref{fig:filament}). We require that the curve belongs to $C^{2,\beta}(\T)$, $0<\beta<1$, and satisfies a non-self-intersection condition 
\begin{equation}\label{eq:cGamma}
 \inf_{s\neq s'}\frac{\abs{\X(s)-\X(s')}}{\abs{s-s'}} \ge c_\Gamma 
\end{equation}
for some constant $c_\Gamma>0$. Taking $\be_{\rm t}(s)=\frac{d\X}{ds}(s)$ to be the unit tangent vector to the curve at $s$, points sufficiently close to $\Gamma_0$ may be described by a $C^{1,\beta}$ orthonormal frame $(\be_{\rm t}(s),\be_{\rm n_1}(s),\be_{\rm n_2}(s))$ satisfying the ODEs
\begin{equation}\label{eq:frame}
\frac{d}{ds}
\begin{pmatrix}
\be_{\rm t}(s)\\
\be_{\rm n_1}(s)\\
\be_{\rm n_2}(s)
\end{pmatrix}
 = \begin{pmatrix}
 0 & \kappa_1(s) & \kappa_2(s) \\
 -\kappa_1(s) & 0 & \kappa_3 \\
-\kappa_2(s) & -\kappa_3& 0
 \end{pmatrix}\begin{pmatrix}
\be_{\rm t}(s)\\
\be_{\rm n_1}(s)\\
\be_{\rm n_2}(s)
\end{pmatrix}\,.
\end{equation}
The coefficients $\kappa_1$ and $\kappa_2$ are related to the shape of the curve via
\begin{align*}
\kappa_1^2(s) +\kappa_2^2(s) = \kappa^2(s)\,,
\end{align*}
where $\kappa^2(s)=\abs{\X_{ss}}^2$ is the squared curvature of $\X(s)$. The coefficient $\kappa_3$ may be taken to be a constant satisfying $\abs{\kappa_3}\le \pi$ (see \cite[Lemma 1.1]{closed_loop}). Throughout, we will use the notation
 \begin{equation}\label{eq:kappastar}
 \kappa_* := \sup_{s\in\T} \abs{\kappa(s)}, \quad \kappa_{*,\beta} := \sup_{s\neq s'\in\T} \frac{\abs{\X_{ss}(s)-\X_{ss}(s')}}{\abs{s-s'}^\beta} + \kappa_* \,.
 \end{equation}
 Note that, using \eqref{eq:frame}, the coefficients $\kappa_\ell$, $\ell=1,2$, satisfy
 \begin{align*}
   \norm{\kappa_\ell}_{C^{0,\beta}(\T)} = \norm{\X_{ss}\cdot\be_{{\rm n}_\ell}}_{C^{0,\beta}} \le \kappa_{*,\beta}\,.
 \end{align*}
 Given \eqref{eq:frame}, it will also be convenient to define the following cylindrical basis vectors: 
\begin{equation}\label{eq:er_etheta}
\be_r(s,\theta) = \cos\theta\be_{\rm n_1}(s) + \sin\theta\be_{\rm n_2}(s)\,,
\quad \be_\theta(s,\theta) = -\sin\theta\be_{\rm n_1}(s) + \cos\theta\be_{\rm n_2}(s)\,.
\end{equation}
Now, for $\Gamma_0$ as above, there exists a maximal radius
\begin{equation}\label{eq:rstar}
  0<r_* = r_*(c_\Gamma,\kappa_*) \le \min\bigg\{\frac{1}{2\kappa_*},\frac{c_\Gamma}{2}\bigg\}
\end{equation}
such that we may uniquely parameterize points $\bx$ satisfying ${\rm dist}(\bx,\Gamma_0)< r_*$ as
\begin{align*} 
\bx = \X(s) + r\be_r(s,\theta), \quad 0\le r <r_* \,.
\end{align*}
For any $0<\epsilon<\frac{r_*}{4}$, we may define a slender filament with uniform radius $\epsilon$ as
\begin{equation}\label{eq:SB_def} 
\Sigma_\epsilon:= \big\{\bx\in \R^3 \; : \; \bx = \X(s) + r\be_r(s,\theta)\,,  \; s\in\T\,, \; r < \epsilon\,, \; \theta\in2\pi\T \big\}\,.
\end{equation}
The filament surface $\Gamma_\epsilon = \p \Sigma_\epsilon$ is given by
\begin{align*}
 \Gamma_\epsilon :=  \big\{\bx\in \R^3 \; : \; \bx = \X(s) + \epsilon\be_r(s,\theta)\,,  \; s\in\T\,, \; \theta\in2\pi\T \big\}\,,
 \end{align*}
 with surface element $dS_\epsilon=\mc{J}_\epsilon(s,\theta)\,d\theta ds$ where $\mc{J}_\epsilon(s,\theta)$ is given by 
\begin{equation}\label{eq:jacfac}
\mc{J}_\epsilon(s,\theta) = \epsilon(1-\epsilon\wh\kappa(s,\theta))\,, \qquad
\wh\kappa(s,\theta):= \kappa_1(s)\cos\theta+\kappa_2(s)\sin\theta\,. 
\end{equation}

\begin{figure}[!ht]
\centering
\includegraphics[scale=0.3]{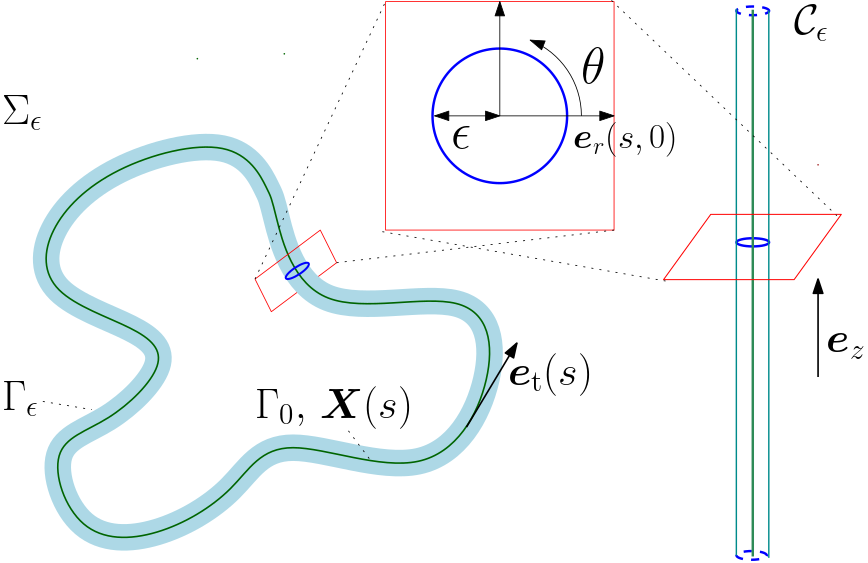}
\caption{A typical curved filament $\Sigma_\epsilon$ as in \eqref{eq:SB_def} in comparison with the straight filament $\mc{C}_\epsilon$ given by \eqref{eq:Cepsilon}.}
\label{fig:filament}
\end{figure}

Let $\Sigma_\epsilon^{(a)}$ and $\Sigma_\epsilon^{(b)}$ be two closed filaments as in \eqref{eq:SB_def} with centerlines parameterized by $\X^{(a)}(s)$ and $\X^{(b)}(s)$, respectively. We will be concerned with pairs of curves that are ``close" in the following sense. Choose the basepoints $\X^{(a)}(0)$, $\X^{(b)}(0)$ to be the nearest points on the two curves, i.e.
\begin{align*}
\min_{s_1,s_2\in \T} \abs{\X^{(a)}(s_1)-\X^{(b)}(s_2)}\,.
\end{align*}
Note that there may be multiple pairs of nearest points, in which case we may then select one such pair arbitrarily. Given this choice of basepoint, we consider pairs of curves satisfying  
\begin{equation}\label{eq:XaXb_close}
\norm{\X^{(a)}-\X^{(b)}}_{C^{2,\beta}(\T)} =\delta
\end{equation}
for $\delta\ge 0$ sufficiently small. 
For two such nearby curves, the corresponding orthonormal frames may also be chosen to be close. In particular, from \eqref{eq:XaXb_close} we have that $\be_{\rm t}^{(a)}=\frac{d\X^{(a)}}{ds}$ and $\be_{\rm t}^{(b)}=\frac{d\X^{(b)}}{ds}$ satisfy $\norm{\be_{\rm t}^{(a)}-\be_{\rm t}^{(b)}}_{C^{1,\beta}(\T)}\le \delta$, and there exists a systematic choice of normal vectors along each curve such that the frames $(\be_{\rm t}^{(a)},\be_{\rm n_1}^{(a)},\be_{\rm n_2}^{(a)})$, $(\be_{\rm t}^{(b)},\be_{\rm n_1}^{(b)},\be_{\rm n_2}^{(b)})$ both satisfy \eqref{eq:frame} and the following lemma holds. 
\begin{lemma}[Nearby parameterizations for nearby curves]\label{lem:XaXb_C2beta}
Given two closed filaments with centerlines $\X^{(a)}(s)$, $\X^{(b)}(s)$ satisfying \eqref{eq:XaXb_close}, there exists a choice of orthonormal frames $(\be_{\rm t}^{(a)},\be_{\rm n_1}^{(a)},\be_{\rm n_2}^{(a)})$ and $(\be_{\rm t}^{(b)},\be_{\rm n_1}^{(b)},\be_{\rm n_2}^{(b)})$, respectively, satisfying \eqref{eq:frame} such that 
\begin{equation}\label{eq:XaXb_C2beta}
\sum_{\ell=1}^3\norm{\kappa_\ell^{(a)}-\kappa_\ell^{(b)}}_{C^{0,\beta}(\T)} \le c(\kappa_{*,\beta}^{(a)},\kappa_{*,\beta}^{(b)})\,\norm{\X^{(a)}-\X^{(b)}}_{C^{2,\beta}(\T)}\,.
\end{equation}
\end{lemma}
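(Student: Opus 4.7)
The plan is to construct both frames simultaneously from parallel-transport (Bishop-type) frames along each curve, then correct by a uniform rotation to enforce periodicity, and show that each step introduces only $O(\delta)$ errors in $C^{0,\beta}$.

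First I would fix the basepoints $\X^{(a)}(0), \X^{(b)}(0)$ of the two curves and choose orthonormal pairs $\wt{\be}_{\rm n_1}^{(a)}(0), \wt{\be}_{\rm n_2}^{(a)}(0)$ and $\wt{\be}_{\rm n_1}^{(b)}(0), \wt{\be}_{\rm n_2}^{(b)}(0)$ in the respective normal planes with
\[
|\wt{\be}_{\rm n_j}^{(a)}(0)-\wt{\be}_{\rm n_j}^{(b)}(0)|\le C\delta\,,\qquad j=1,2\,,
\]
which is possible because $|\be_{\rm t}^{(a)}(0)-\be_{\rm t}^{(b)}(0)|\le \delta$ and one may project and re-normalize to match. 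I would then propagate along each curve by the parallel-transport ODE, obtained from \eqref{eq:frame} with $\kappa_3$ replaced by $0$, i.e.
\[
\frac{d}{ds}\wt{\be}_{\rm n_j} \;=\; -\bigl(\dot{\be}_{\rm t}\cdot \wt{\be}_{\rm n_j}\bigr)\be_{\rm t} \;=\; -\bigl(\be_{\rm t}\, \dot{\be}_{\rm t}^{\,T}\bigr)\wt{\be}_{\rm n_j}\,.
\]
The coefficient matrix $M:=-\be_{\rm t}\dot{\be}_{\rm t}^{\,T}$ lies in $C^{0,\beta}(\T_0)$ (viewing $s\in[0,1]$ before identification) with norm controlled by $\|\X\|_{C^{2,\beta}}$, and the hypothesis \eqref{eq:XaXb_close} gives $\|M^{(a)}-M^{(b)}\|_{C^{0,\beta}}\le C\delta$. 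A standard linear-ODE comparison—Gr\"onwall for the $C^0$ estimate followed by bootstrapping $\dot{\wt{\be}}^{(a)}_{\rm n_j}-\dot{\wt{\be}}^{(b)}_{\rm n_j} = (M^{(a)}-M^{(b)})\wt{\be}^{(a)}_{\rm n_j}+M^{(b)}(\wt{\be}^{(a)}_{\rm n_j}-\wt{\be}^{(b)}_{\rm n_j})$ in $C^{0,\beta}$—then yields
\[
\sum_{j=1,2}\|\wt{\be}^{(a)}_{\rm n_j}-\wt{\be}^{(b)}_{\rm n_j}\|_{C^{1,\beta}(\T_0)} \le C\bigl(\kappa_{*,\beta}^{(a)},\kappa_{*,\beta}^{(b)}\bigr)\,\delta\,.
\]

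Next I would account for the holonomy. Because $\kappa_3$ was set to $0$, the parallel-transport frame generally does not close: there exist angles $\alpha^{(a)},\alpha^{(b)}\in(-\pi,\pi]$ with $\wt{\be}_{\rm n_1}(1)=\cos\alpha\,\wt{\be}_{\rm n_1}(0)+\sin\alpha\,\wt{\be}_{\rm n_2}(0)$ (and analogously for $\wt{\be}_{\rm n_2}$). The $C^{1,\beta}$ estimate above gives $|\wt{\be}^{(a)}_{\rm n_j}(1)-\wt{\be}^{(b)}_{\rm n_j}(1)|\le C\delta$, and a short trigonometric computation (modulo a harmless choice of representative in the rare case $\alpha^{(a)}$ is within $O(\delta)$ of $\pm\pi$, where we redefine one representative by $2\pi$ to keep both in the same branch) yields $|\alpha^{(a)}-\alpha^{(b)}|\le C\delta$. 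Setting $\kappa_3^{(a)}:=-\alpha^{(a)}$ and $\kappa_3^{(b)}:=-\alpha^{(b)}$ (constants in $[-\pi,\pi]$) ensures that the rotated frames
\[
\be_{\rm n_1}(s) = \cos(\kappa_3 s)\,\wt{\be}_{\rm n_1}(s)+\sin(\kappa_3 s)\,\wt{\be}_{\rm n_2}(s)\,, \qquad \be_{\rm n_2}(s)=-\sin(\kappa_3 s)\,\wt{\be}_{\rm n_1}(s)+\cos(\kappa_3 s)\,\wt{\be}_{\rm n_2}(s)
\]
satisfy \eqref{eq:frame} with the chosen constant $\kappa_3$ and are genuinely $1$-periodic, hence descend to $\T$.

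Finally, a direct calculation (as in the derivation above) yields
\[
\kappa_1 = \cos(\kappa_3 s)\wt\kappa_1+\sin(\kappa_3 s)\wt\kappa_2\,,\qquad \kappa_2 = -\sin(\kappa_3 s)\wt\kappa_1+\cos(\kappa_3 s)\wt\kappa_2\,,
\]
where $\wt\kappa_j=\dot{\be}_{\rm t}\cdot \wt{\be}_{\rm n_j}$. Since $\dot{\be}_{\rm t}^{(a)}-\dot{\be}_{\rm t}^{(b)}$ is $O(\delta)$ in $C^{0,\beta}$ and $\wt{\be}_{\rm n_j}^{(a)}-\wt{\be}_{\rm n_j}^{(b)}$ is $O(\delta)$ in $C^{1,\beta}$, the pointwise products $\wt\kappa_j^{(a)}-\wt\kappa_j^{(b)}$ are $O(\delta)$ in $C^{0,\beta}$ by the Banach-algebra property of $C^{0,\beta}(\T)$. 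Combined with $|\kappa_3^{(a)}-\kappa_3^{(b)}|\le C\delta$ (which gives an $O(\delta)$ $C^{0,\beta}$ bound on the trigonometric factors), this yields $\|\kappa_\ell^{(a)}-\kappa_\ell^{(b)}\|_{C^{0,\beta}}\le C\delta$ for $\ell=1,2$, and the $\ell=3$ bound is immediate since $\kappa_3^{(a)},\kappa_3^{(b)}$ are constants differing by $O(\delta)$. The main technical obstacle I anticipate is the bookkeeping around the choice of holonomy branch (to guarantee $|\kappa_3|\le \pi$ for both curves simultaneously while keeping the difference small); everything else reduces to Gr\"onwall-type estimates and the product structure of $C^{0,\beta}$.
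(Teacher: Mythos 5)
Your proposal is correct and follows essentially the same route as the paper's Appendix A proof: Bishop (parallel transport) frames with well-matched initial normal vectors, Gr\"onwall/bootstrap comparison of the two frame ODEs, identification of $\kappa_3$ with the negative of the holonomy angle, rotation by $s\kappa_3$ to restore periodicity, and then the $C^{0,\beta}$ estimate on $\wt\kappa_j$ via the product structure. The one thing worth noting is that the branch-choice subtlety you flag (keeping $|\kappa_3^{(a)}-\kappa_3^{(b)}|$ small near the $\pm\pi$ cut) is precisely what the paper handles implicitly by choosing the initial normal $\wt\be_1^{(b)}(0)$ to minimize its distance to $\wt\be_1^{(a)}(0)$; with that choice and the frame comparison at $s=1$, both holonomy angles land in the same branch automatically, so no ad hoc $2\pi$ shift is needed.
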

The proof of Lemma \ref{lem:XaXb_C2beta} appears in Appendix \ref{app:XaXb}. Given two sufficiently nearby filaments, we will always choose a parameterization satisfying the estimate \eqref{eq:XaXb_C2beta}.

\subsection{Slender body Neumann-to-Dirichlet map}\label{subsec:SB_NtD}
To study an evolution equation of the form \eqref{eq:full_evolution}, we must first define the map $\mc{L}_\epsilon$ being used to map force data defined along a 1D curve to a 3D velocity field governing the motion of the filament. Here we define $\mc{L}_\epsilon$ as the solution to the following \emph{slender body boundary value problem}, first introduced by the author along with Mori and Spirn in \cite{closed_loop}. Given force data $\bm{f}(s)$, $s\in\T$, we consider $(\bu,p)$ satisfying
\begin{equation}\label{eq:SB_PDE}
\begin{aligned}
-\Delta \bu +\nabla p &= 0\,, \quad \div\bu=0 \qquad \text{in }\Omega_\epsilon = \R^3\backslash \overline{\Sigma_\epsilon} \\
\int_0^{2\pi}(\bm{\sigma}\bm{n}) \, \mc{J}_\epsilon(s,\theta)\,d\theta &= \bm{f}(s) \qquad\qquad\qquad\; \text{on }\Gamma_\epsilon \\
\bu\big|_{\Gamma_\epsilon} &= \bv(s)\,, \qquad\qquad\quad\;\;\, \text{unknown but independent of }\theta\,,   
\end{aligned}
\end{equation}
and $\abs{\bu}\to 0$ as $\abs{\bx}\to\infty$. Here $\bm{\sigma}=\bm{\sigma}[\bu]= \frac{1}{2}(\nabla\bu+\nabla\bu^{\rm T})-p{\bf I}$ is the viscous stress tensor and $\bm{n}(\bx)$ is the unit normal vector at $\bx\in\Gamma_\epsilon$. The force data $\bm{f}(s)$ is understood as the total surface stress $\bm{\sigma}[\bu]\bm{n}$ over each cross section of the filament, weighted by the fiber surface area via the Jacobian factor \eqref{eq:jacfac}. The corresponding Dirichlet boundary value $\bv(s)$ along the filament surface is \emph{a priori} unknown but constrained to be constant over each cross section of the fiber.

In \cite{closed_loop,free_ends,rigid}, the slender body BVP is shown to be well posed in a natural energy space (see section \ref{sec:SBNtD_holder} for further details). Moreover, given $\bm{f}\in C^1(\T)$, the solution $\bu(\bx)$ to \eqref{eq:SB_PDE} and its Dirichlet boundary value $\bv(s)$ are shown to be close in a quantitative sense to the corresponding nonlocal slender body theory expressions. In particular,
\begin{align*}
\norm{\bv -\mc{L}_{\epsilon,\rm nloc}^{\rm SB}[\bm{f}]}_{L^2(\T)}\le c(\kappa_*,c_\Gamma)\,\epsilon\abs{\log\epsilon}^{3/2}\norm{\bm{f}}_{C^1(\T)}\,.   
\end{align*}   
We define the Stokes \emph{slender body Neumann-to-Dirichlet (NtD) map} $\mc{L}_\epsilon$ as the operator
\begin{equation}\label{eq:SB_NtD}
\mc{L}_\epsilon : \bm{f}(s) \mapsto \bm{v}(s)
\end{equation}
mapping the total force per cross section $\bm{f}(s)$ to the $\theta$-independent velocity $\bv(s)$ along $\Gamma_\epsilon$ by solving the boundary value problem \eqref{eq:SB_PDE}. Our analysis hinges on a detailed understanding of this map.
Given the nonlocal nature of the boundary conditions in \eqref{eq:SB_PDE}, our analysis of $\mc{L}_\epsilon$ will rely heavily on the fact that the inverse map, the \emph{slender body Dirichlet-to-Neumann (DtN) map},
\begin{equation}\label{eq:SB_DtN}
\mc{L}_\epsilon^{-1} : \bm{v}(s) \mapsto \bm{f}(s)\,,
\end{equation}
is much simpler to study. In particular, $\mc{L}_\epsilon^{-1}$ is just the usual Stokes Dirichlet-to-Neumann map along a surface using $\theta$-independent Dirichlet data, with an additional step of angle-averaging the resulting Neumann data.

To study the maps $\mc{L}_\epsilon$ and $\mc{L}_\epsilon^{-1}$ in detail, we will work in the setting of H\"older spaces along the filament surface $\Gamma_\epsilon$, as this will be a natural setting for analyzing the boundary integral formulation of \eqref{eq:SB_PDE} considered in section \ref{subsubsec:SB_DtN_boundaryintegral}. For a vector-valued function $\bm{h}$ defined along $\Gamma_\epsilon$, we denote 
\begin{equation}\label{eq:norms}
\norm{\bm{h}}_{L^\infty(\Gamma_\epsilon)} = \esssup_{\bx\in\Gamma_\epsilon}\abs{\bm{h}(\bx)}\,, \qquad
\abs{\bm{h}}_{\dot C^{0,\alpha}(\Gamma_\epsilon)} = \sup_{\bx\neq\bx'\in\Gamma_\epsilon}\frac{\abs{ \bm{h}(\bx)-\bm{h}(\bx')}}{\abs{\bx-\bx'}^\alpha}\,, \quad 0<\alpha<1\,, 
\end{equation}
and define the H\"older spaces $C^{k,\alpha}(\Gamma_\epsilon)$ as
\begin{equation}\label{eq:Ckalpha}
C^{k,\alpha}(\Gamma_\epsilon) = \{ \bm{h} \; : \; \norm{\bm{h}}_{C^{k,\alpha}(\Gamma_\epsilon)} <\infty\}\,, \qquad 
\norm{\bm{h}}_{C^{k,\alpha}(\Gamma_\epsilon)} = \sum_{\ell=0}^k\norm{\nabla^\ell \bm{h}}_{L^\infty(\Gamma_\epsilon)}+\abs{\nabla^k \bm{h}}_{\dot C^{0,\alpha}(\Gamma_\epsilon)}\,.
\end{equation}

\subsubsection{Evolution problem and theorem statement}\label{subsubsec:thm_statement}
Given the map $\mc{L}_\epsilon$ defined by solving the slender body boundary value problem \eqref{eq:SB_PDE}, we consider a simplified version of \eqref{eq:full_evolution} without the inextensibility constraint. This simplifies many aspects of the evolution, as we avoid the need to simultaneously solve a \emph{tension determination problem} for $\tau(s,t)$ along with the evolution equation for $\X$ \cite{kuo2023tension,mori2023well,ohm2024well}. This allows us to focus on the behavior of $\mc{L}_\epsilon$ as the main difficulty in this paper, and develop the theory for the main part of the evolution \eqref{eq:full_evolution}, namely, $\mc{L}_\epsilon$ applied to a fourth derivative of the filament centerline. The tension determination problem with $\mc{L}_\epsilon$ will be considered in future work.

However, we now have to consider a filament that is possibly growing or shrinking in length over time in a non-uniform way. To treat this possible filament length change in the simplest way, we choose to artificially introduce some rescalings such that the main operator always acts on a unit length curve. We emphasize that this choice of dynamics has the same main term as \eqref{eq:full_evolution}.


For an extensible filament, we will consider the evolution of $\X^\sigma(\sigma,t)$ where $0\le\sigma\le 1$ is now a material coordinate rather than an arclength coordinate. Let
\begin{equation}\label{eq:lambda_def}
\lambda(t) = \int_0^1\abs{\p_\sigma\X^\sigma(\sigma,t)}\,d\sigma
\end{equation}
denote the length of the filament at time $t$. We will require the filament to have length 1 at the initial time, with $\abs{\p_\sigma(\X^\sigma)^{\rm in}}=1$ at each $\sigma$. 

The NtD operator $\mc{L}_\epsilon$ depends on two arguments: a curve in arclength parameterization and a function of the arclength parameter. In \eqref{eq:SB_PDE}, the problem is introduced for a unit length curve, but may be readily extended to more general curves. By making the change of variables $\wt{\bx}=\lambda\bx$ in \eqref{eq:SB_PDE}, we can see how $\mc{L}_\epsilon$ changes under uniform rescaling. For now, we suppress the time dependence in our notation. Let $\wt s=\lambda s$ for $0\le s\le 1$ and denote $\wt{\bv}(\wt s)=\bv(s)$, $\wt{\bm{f}}(\wt s)=\bm{f}(s)$, and $\wt\X(\wt s)=\X(s)$. We then have
\begin{equation}\label{eq:rescaling}
\mc{L}_{\lambda\epsilon}(\wt\X)[\wt{\bm{f}}](\wt s)=\lambda^{-1}\wt\bv(\wt s) = \lambda^{-1}\bv(s) = \lambda^{-1}\mc{L}_\epsilon(\X)[\bm{f}](s)\,.
\end{equation}
We will consider the evolution
\begin{equation}\label{eq:pre_evo}
\frac{\p\X^\sigma}{\p t}(\sigma,t) = \mc{L}_{\lambda\epsilon}(\wt\X)[\wt{\bm{f}}(\wt s,t)](\lambda\sigma) \,,
\end{equation}
where $\X^\sigma$ parameterizes the curve of interest and $\wt{\X}$ is the same curve parameterized by arclength $\wt s$. Here we will use the forcing term
\begin{equation}\label{eq:wtf_def}
\wt{\bm{f}}(\wt s,t) = -(\p_\sigma^4\X^\sigma)\big|_{\sigma=\wt s/\lambda}\,,
\end{equation}
i.e., we take the fourth derivative with respect to the material parameter and evaluate at the rescaled arclength parameter $0\le \wt s/\lambda(t)\le 1$, where the rescaling occurs around a fixed basepoint $\X(0,t)$. Note that the material points and rescaled arclength parameter do not necessarily match up precisely after the initial time. An additional nonphysical feature is that for convenience we will take a fixed \emph{ratio} $\epsilon$ between the filament radius and length, rather than having a fixed filament radius.

Putting \eqref{eq:rescaling}, \eqref{eq:pre_evo}, and \eqref{eq:wtf_def} together, we obtain the curve evolution
\begin{equation}\label{eq:curve_evolution}
\begin{aligned}
\frac{\p\X^\sigma}{\p t}(\sigma,t) &= -\lambda(t)^{-1}\mc{L}_\epsilon(\wt R_1[\X^\sigma])\big[\p_\sigma^4\X^\sigma\big]\,, \\
\X^\sigma(\sigma,0)&= (\X^\sigma)^{\rm in}(\sigma)\,, \qquad \abs{\p_\sigma(\X^\sigma)^{\rm in}}=1\,,\\
\lambda(t)&= \int_0^1\abs{\p_\sigma\X^\sigma}\,d\sigma\,.
\end{aligned}
\end{equation}
Here we use $\wt R_1[\X^\sigma](s)$ to denote the arclength parameterization of the uniform rescaling of the curve $\X^\sigma$ to length 1. 
We show the following. 
\begin{theorem}[Local well-posedness for filament evolution]\label{thm:main}
Given $(\X^\sigma)^{\rm in}\in C^{4,\alpha}(\T)$ satisfying \eqref{eq:cGamma}, there exists $\epsilon_0>0$ depending only on $\norm{(\X^\sigma)^{\rm in}}_{C^{4,\alpha}(\T)}$ and $c_\Gamma$ such that for $0<\epsilon<\epsilon_0$, there exists a time $T>0$ depending on $\epsilon$, $\norm{(\X^\sigma)^{\rm in}}_{C^{4,\alpha}(\T)}$, and $c_\Gamma$ such that the evolution \eqref{eq:curve_evolution} admits a unique solution $\X^\sigma(\sigma,t)\in L^\infty(0,T;C^{4,\alpha}(\T))$.
\end{theorem}
The proof of Theorem \ref{thm:main} in section \ref{subsec:pf_dynamics} is relatively straightforward by design. It serves mainly as a proof-of-concept for the use of the slender body NtD operator $\mc{L}_\epsilon$ for a dynamic curve evolution problem. The main content of this paper is a detailed decomposition of $\mc{L}_\epsilon$ that we will introduce in the next sections and summarize in Theorem \ref{thm:decomp}. The decomposition of $\mc{L}_\epsilon=\mc{L}_\epsilon(\X)$ will be presented for a unit length centerline curve $\X(s)$ in arclength parameterization. We will typically suppress the curve dependence of the operator $\mc{L}_\epsilon$ in our notation. The material parameter $\sigma$ and curve $\X^\sigma$ will only be used to define the evolution \eqref{eq:curve_evolution} and prove Theorem \ref{thm:main}; everything else will be in terms of $\X(s)$.

\subsubsection{Slender body NtD map for the straight cylinder}\label{subsubsec:SB_NtD_straight}
The proof of Theorem \ref{thm:main} relies heavily on the fact that we can derive very detailed information about the slender body NtD and its inverse, the slender body DtN, for the simple geometry of a straight cylinder with periodic boundary conditions. We define 
\begin{equation}\label{eq:Cepsilon} 
\mc{C}_\epsilon :=  \big\{\bx\in \R^2\times\T \; : \; \bx = s\be_z + \epsilon\be_r(\theta)\,,  \; s\in\T\,, \; \theta\in2\pi\T \big\}\,.
\end{equation}

Let $\overline{\mc{L}}_\epsilon$ and $\overline{\mc{L}}_\epsilon^{-1}$ denote the slender body NtD and DtN maps along $\mc{C}_\epsilon$. Due to symmetry, the behavior of $\overline{\mc{L}}_\epsilon$ and $\overline{\mc{L}}_\epsilon^{-1}$ in this simple geometry decouples into directions tangential ($\be_z$) and normal ($\be_x,\be_y$) to the filament centerline. The eigenfunctions of $\overline{\mc{L}}_\epsilon$ and $\overline{\mc{L}}_\epsilon^{-1}$ in each direction are then just $e^{2\pi iks}\be_z$, $e^{2\pi iks}\be_x$, $e^{2\pi iks}\be_y$, and we may consider the eigenvalues $m_{\epsilon,{\rm t}}^{-1}(k)$ and $m_{\epsilon,{\rm n}}^{-1}(k)$ of the slender body DtN $\overline{\mc{L}}_\epsilon^{-1}$ satisfying  
\begin{equation}\label{eq:eval_prob}
\overline{\mc{L}}_\epsilon^{-1}[e^{2\pi iks}\be_z] = m_{\epsilon,{\rm t}}^{-1}(k)e^{2\pi iks}\be_z\,, \qquad 
\overline{\mc{L}}_\epsilon^{-1}[e^{2\pi iks}\be_x] = m_{\epsilon,{\rm n}}^{-1}(k)e^{2\pi iks}\be_x\,.
\end{equation} 
Note that by symmetry the $\be_y$ direction is identical to the $\be_x$ direction. 
In \cite{inverse}, the following explicit expressions are calculated for $m_{\epsilon,{\rm t}}^{-1}(k)$ and $m_{\epsilon,{\rm n}}^{-1}(k)$.
\begin{proposition}[Slender body DtN spectrum]\label{prop:Leps_spectrum}
The eigenvalues $m_{\epsilon,{\rm t}}^{-1}(k)$, $m_{\epsilon,{\rm n}}^{-1}(k)$ of the straight slender body DtN map $\overline{\mc{L}}_\epsilon^{-1}$ satisfying \eqref{eq:eval_prob} are given by
\begin{align}
\label{eq:eigsT}
m_{\epsilon,{\rm t}}^{-1}(k) &= \frac{ 8\pi^2\epsilon \abs{k} K_1^2}{2K_0K_1 + 2\pi\epsilon \abs{k} \big( K_0^2 - K_1^2 \big) } \\
\label{eq:eigsN}
m_{\epsilon,{\rm n}}^{-1}(k) &= 
4\pi^2\epsilon \abs{k}\frac{4K_1^2K_2+2\pi\epsilon \abs{k} K_1(K_1^2-K_0K_2)}{2K_0K_1K_2 + 2\pi\epsilon \abs{k} \big(K_1^2(K_0+K_2)-2K_0^2K_2 \big)}
\end{align}
where each $K_j=K_j(2\pi\epsilon \abs{k})$, $j=0,1,2$, is a $j^{\text th}$ order modified Bessel function of the second kind. 
\end{proposition}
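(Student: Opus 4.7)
The plan is to solve the exterior Stokes boundary value problem on $\R^2\times\T\setminus\overline{\mc{C}_\epsilon}$ explicitly for each $s$-Fourier mode of $\theta$-independent Dirichlet data on $\mc{C}_\epsilon$, then read off the symbol of $\overline{\mc{L}}_\epsilon^{-1}$ from the cross-section-averaged traction. The key structural simplification is the translational ($s$) and rotational ($\theta$) symmetry of the straight cylinder, which decouples the problem both in the $s$-frequency $k$ and in the azimuthal mode, reducing Stokes to a family of modified-Bessel ODEs in the radial variable $r$. Decay as $r\to\infty$ selects the $K_j$ family, and the Jacobian in \eqref{eq:jacfac} collapses to $\mc{J}_\epsilon=\epsilon$.

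\textbf{Tangential case.} For Dirichlet data $\bv=e^{2\pi iks}\be_z$, I would use an axisymmetric ansatz $u_z=\phi(r)e^{2\pi iks}$, $u_r=\psi(r)e^{2\pi iks}$, $u_\theta=0$, $p=q(r)e^{2\pi iks}$. The divergence-free condition ties $\psi$ to $\phi$, and the Stokes system collapses to a coupled ODE in $(\phi,q)$ whose two decaying solutions are generated by $K_0(2\pi|k|r)$ and $K_1(2\pi|k|r)$ together with a Stokeslet-type pressure contribution. The two free constants are fixed by $\phi(\epsilon)=1$, $\psi(\epsilon)=0$. I would then compute $\bm{\sigma}[\bu]\bm{n}$ at $r=\epsilon$ with $\bm{n}=-\be_r$, integrate against $\epsilon\,d\theta$ (trivial by axisymmetry), and extract the $\be_z$ component. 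Using $K_0'=-K_1$ to clean up $r$-derivatives, the resulting coefficient in front of $e^{2\pi iks}\be_z$ is exactly $m_{\epsilon,{\rm t}}^{-1}(k)$ as in \eqref{eq:eigsT}.

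\textbf{Normal case.} For Dirichlet data $\bv=e^{2\pi iks}\be_x=e^{2\pi iks}(\cos\theta\,\be_r-\sin\theta\,\be_\theta)$, I would use the matching ansatz
\begin{equation*}
u_r=\phi_r(r)\cos\theta\,e^{2\pi iks},\quad u_\theta=\phi_\theta(r)\sin\theta\,e^{2\pi iks},\quad u_z=\phi_z(r)\cos\theta\,e^{2\pi iks},\quad p=q(r)\cos\theta\,e^{2\pi iks}.
\end{equation*}
The incompressibility constraint together with the three Stokes equations yields a closed coupled ODE system for $(\phi_r,\phi_\theta,\phi_z,q)$, whose space of decaying solutions is spanned by modes built from $K_0,K_1,K_2$ evaluated at $2\pi|k|r$, after elimination of $r$-derivatives via the Bessel recurrences $K_0'=-K_1$, $K_1'=-\tfrac12(K_0+K_2)$, and $K_2'=-K_1-\tfrac{2}{2\pi|k|r}K_2$. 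Three constants are fixed by $\phi_r(\epsilon)=1$, $\phi_\theta(\epsilon)=-1$, $\phi_z(\epsilon)=0$. Computing $\bm{\sigma}\bm{n}$ at $r=\epsilon$, projecting onto $\be_x=\cos\theta\,\be_r-\sin\theta\,\be_\theta$, and integrating $\epsilon\,d\theta$ against $\cos^2\theta$ and $\sin^2\theta$ (each of which produces a factor of $\pi\epsilon$), one arrives after simplification at the ratio \eqref{eq:eigsN} for $m_{\epsilon,{\rm n}}^{-1}(k)$.

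\textbf{Expected main obstacle.} The tangential calculation is essentially a two-Bessel computation; the real work is in the normal case, where the coupled ODE system carries three nontrivial components and the $3\times 3$ linear system for the decaying-mode coefficients produces a combination of $K_0,K_1,K_2$ multiplied by polynomials in $2\pi\epsilon|k|$. Bookkeeping the Bessel recurrences carefully enough to reorganize numerator and denominator into the clean quotient form of \eqref{eq:eigsN}, and not into an equivalent but unrecognizable expression, is the principal technical annoyance. As indicated in the excerpt, this computation is carried out in \cite{inverse}, and the statement can be verified by substituting the standard identities into both the putative symbol and the directly computed traction and checking they agree as rational functions of $K_0,K_1,K_2$ and $2\pi\epsilon|k|$.
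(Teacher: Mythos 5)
Your plan is sound, and it is a genuinely different route from the one taken in this paper. The paper does not prove Proposition~\ref{prop:Leps_spectrum}; it cites the expressions directly from \cite{inverse}, where they were presumably obtained by exactly the method you outline (separation of variables in $(s,\theta,r)$, exponential decay selecting the $K_j$ branch, traction readoff at $r=\epsilon$). What the paper actually develops, in Lemma~\ref{lem:straight_SD} and Appendix~\ref{app:symbol_calc}, is an independent representation of the same multipliers through the single- and double-layer symbols on $\mc{C}_\epsilon$ --- the $Q_{\rm t*}$ and $Q_{\rm n*}$ quantities --- which it then combines via \eqref{eq:recover_mt}--\eqref{eq:recover_mn} and compares with \eqref{eq:eigsT}--\eqref{eq:eigsN} only \emph{numerically} (Figure~\ref{fig:multipliers}), explicitly noting that the algebraic identity of the two expressions is not obvious. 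Your direct solve thus buys a self-contained elementary proof of the proposition itself, at the cost of a coupled $3\times3$ radial ODE system in the normal case; the paper's boundary-integral route buys no proof of the proposition, but yields exactly the $\overline{\mc{S}}$ and $\overline{\mc{D}}$ symbols needed for the decomposition of $\mc{L}_\epsilon^{-1}$ about a curved filament, which is what the rest of the paper is built on. Two small points to watch in your write-up: first, the paper's convention is that $\bm{n}$ points \emph{out of} $\Sigma_\epsilon$ (into the fluid), i.e.\ $\bm{n}=+\be_r$ at $r=\epsilon$, whereas you wrote $\bm{n}=-\be_r$; the paper's own sign bookkeeping between \eqref{eq:SB_PDE} and \eqref{eq:SB_DtN_layer}/\eqref{eq:BI_rep2} is loose (these differ by a sign through the definition $\bw=-\bm{\sigma}\bm{n}$), so you should pin the sign down by checking against the positivity of \eqref{eq:eigsT}. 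Second, the Jacobian collapse $\mc{J}_\epsilon\to\epsilon$ on $\mc{C}_\epsilon$ that you use should be flagged explicitly, since $\wh\kappa\equiv 0$ there and this is where the $\pi\epsilon$ factors come from in the normal-direction averages.
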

In \cite{inverse}, both $m_{\epsilon,{\rm t}}^{-1}(k)$ and $m_{\epsilon,{\rm n}}^{-1}(k)$ are shown to grow like $\epsilon\abs{k}$ as $\abs{k}\to\infty$; in particular, $\overline{\mc{L}}_\epsilon^{-1}(\T)$ maps the Sobolev space $H^m$ to $H^{m-1}(\T)$. In addition, at low wavenumbers $\abs{k}\lesssim\frac{1}{\epsilon}$, both $m_{\epsilon,{\rm t}}^{-1}(k)$ and $m_{\epsilon,{\rm n}}^{-1}(k)$ scale as $\abs{\log(\epsilon |k|)}^{-1}$ (see figure \ref{fig:multipliers}). Here we will be working in H\"older spaces and thus will require a more detailed characterization of the behavior of $m_{\epsilon,{\rm t}}^{-1}(k)$ and $m_{\epsilon,{\rm n}}^{-1}(k)$. We use the explicit expressions \eqref{eq:eigsT} and \eqref{eq:eigsN} to show the following.
\begin{lemma}[Mapping properties of SB NtD and DtN along $\mc{C}_\epsilon$]\label{lem:straight_Leps_Holder}
Let $\overline{\mc{L}}_\epsilon$ denote the slender body NtD map \eqref{eq:SB_NtD} along the straight filament $\mc{C}_\epsilon$. Given total force data $\bm{f}(s)\in C^{0,\alpha}(\T)$ along $\mc{C}_\epsilon$ with $\int_\T\bm{f}(s)\,ds=0$, we have 
\begin{equation}\label{eq:holder_NtD}
\norm{\overline{\mc{L}}_\epsilon[\bm{f}]}_{C^{1,\alpha}(\T)} \le c\abs{\log\epsilon}\big(\norm{\bm{f}}_{L^\infty(\T)}+\epsilon^{-1}\abs{\bm{f}}_{\dot C^{0,\alpha}(\T)} \big)\,.
\end{equation}
Letting $\overline{\mc{L}}_\epsilon^{-1}$ denote the SB DtN map \eqref{eq:SB_DtN} along $\mc{C}_\epsilon$, given a $\theta$-independent velocity field $\bv(s)\in C^{1,\alpha}(\T)$ along $\mc{C}_\epsilon$, we have 
\begin{equation}\label{eq:holder_DtN}
\norm{\overline{\mc{L}}_\epsilon^{-1}[\bv]}_{C^{0,\alpha}(\T)} \le c\abs{\log\epsilon}^{-1}\norm{\bv}_{C^{1,\alpha}(\T)}\,.
\end{equation}
\end{lemma}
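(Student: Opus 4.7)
The plan is to exploit the explicit Fourier symbols from Proposition \ref{prop:Leps_spectrum}. Since both $\overline{\mc{L}}_\epsilon$ and $\overline{\mc{L}}_\epsilon^{-1}$ decouple into scalar multipliers in the basis $(\be_z,\be_x,\be_y)$ along $\mc{C}_\epsilon$, the lemma reduces to sharp one-dimensional Fourier multiplier estimates on H\"older spaces over $\T$, which I obtain from standard asymptotics of the modified Bessel functions $K_0, K_1, K_2$ combined with a low/high frequency split at the scale $|k|\sim 1/\epsilon$.

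\emph{Step 1 (symbol asymptotics).} Using the small-argument expansions $K_0(x) = -\log(x/2) - \gamma + O(x^2\log x)$, $K_1(x) = 1/x + O(x\log x)$, $K_2(x) = 2/x^2 + O(1)$ as $x\to 0^+$, and $K_j(x) = \sqrt{\pi/(2x)}\,e^{-x}(1+O(1/x))$ as $x\to\infty$, I substitute into \eqref{eq:eigsT}--\eqref{eq:eigsN} and obtain, for $k\in\Z\setminus\{0\}$ and $*\in\{{\rm t},{\rm n}\}$, the two-regime behavior $m_{\epsilon,*}^{-1}(k)\sim c_*\,|\log(\epsilon|k|)|^{-1}$ for $1\le|k|\le(2\epsilon)^{-1}$ and $m_{\epsilon,*}^{-1}(k)\sim c_*\,\epsilon|k|$ for $|k|\ge(2\epsilon)^{-1}$, together with matching bounds on $\p_k^\ell m_{\epsilon,*}^{\pm 1}(k)$ that gain one power of $|k|^{-1}$ per derivative (treating $k$ as a real variable via the smoothness of $K_j$). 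These feed into a Mihlin-type multiplier theorem on $C^{0,\alpha}(\T)$.

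\emph{Step 2 (DtN estimate \eqref{eq:holder_DtN}).} Factor $\overline{\mc{L}}_\epsilon^{-1} = M_\epsilon\circ\p_s$, where $M_\epsilon$ is the Fourier multiplier with symbol $m_{\epsilon,*}^{-1}(k)/(2\pi ik)$ (set to $0$ at $k=0$, compatible with $\int_\T\p_s\bv=0$). Step 1 shows this symbol is bounded uniformly by $c|\log\epsilon|^{-1}$ in $k\ne 0$, with the maximum attained at the lowest nonzero frequency and with matching derivative decay. A Mihlin-type multiplier theorem (or direct estimation of the periodic convolution kernel as a rescaled logarithmic Riesz-type kernel) then gives $M_\epsilon$ bounded on $C^{0,\alpha}(\T)$ with operator norm $\lesssim|\log\epsilon|^{-1}$, and composing with $\p_s:C^{1,\alpha}\to C^{0,\alpha}$ produces \eqref{eq:holder_DtN}.

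\emph{Step 3 (NtD estimate \eqref{eq:holder_NtD}).} Using the mean-zero hypothesis on $\bm{f}$, decompose $\bm{f}=\sum_{j\ge 0}P_j\bm{f}$ into Littlewood--Paley blocks and split at $2^{j_0}\sim 1/\epsilon$. In the low block $2^j\le 2^{j_0}$, the multiplier bound $|m_{\epsilon,*}(k)|\lesssim|\log\epsilon|$ combined with the smooth structure of the symbol (so that the kernel is in $L^1$ uniformly up to $|\log\epsilon|$) yields the $|\log\epsilon|\|\bm{f}\|_{L^\infty}$ contribution to $\|\overline{\mc{L}}_\epsilon[\bm{f}]\|_{C^{1,\alpha}}$. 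In the high block $2^j\ge 2^{j_0}$, the bound $|m_{\epsilon,*}(k)|\lesssim(\epsilon|k|)^{-1}$ gives a gain of one derivative at the cost of $\epsilon^{-1}$; pairing with the Besov characterization $\|P_j\bm{f}\|_{L^\infty}\lesssim 2^{-j\alpha}|\bm{f}|_{\dot C^{0,\alpha}}$ and summing the geometric series produces the $\epsilon^{-1}|\bm{f}|_{\dot C^{0,\alpha}}$ term, with an additional $|\log\epsilon|$ picked up from the transition region where the two asymptotic forms of $m_{\epsilon,*}$ are glued.

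\emph{Main obstacle.} The most delicate step is producing uniform pointwise and derivative bounds on $m_{\epsilon,*}^{\pm 1}(k)$ across the transition regime $|k|\sim 1/\epsilon$, where the small- and large-argument Bessel asymptotics must be matched smoothly. The logarithmic singularity of $K_0(x)$ as $x\to 0^+$ is the source of every factor of $|\log\epsilon|$ in the statement, and tracking that exactly one such factor (rather than powers) appears in each part of \eqref{eq:holder_NtD} and \eqref{eq:holder_DtN} requires a careful dyadic accounting near the transition.
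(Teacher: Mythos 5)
Your overall strategy matches the paper's: reduce to scalar Fourier multiplier estimates in the decoupled $(\be_z,\be_x,\be_y)$ basis, establish pointwise and second-derivative bounds on $m_{\epsilon,*}^{\pm 1}(\xi)$ in the two regimes $|\xi|\lessgtr(2\pi\epsilon)^{-1}$, then run a Littlewood--Paley argument with the Besov characterization of $C^{k,\alpha}(\T)$ (the paper's Lemma \ref{lem:besov} plays the role of your Mihlin-type kernel estimate, and your factorization of $\overline{\mc{L}}_\epsilon^{-1}=M_\epsilon\circ\p_s$ in Step 2 is a harmless variant). So this is essentially the same route.

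However, there is a genuine gap at the point you yourself flag. Substituting the small- and large-argument Bessel asymptotics into \eqref{eq:eigsT}--\eqref{eq:eigsN} gives the correct leading behavior in each regime, but it does \emph{not} produce the uniform pointwise and first/second-derivative bounds that the dyadic argument requires through the transition $z=2\pi\epsilon|k|\sim 1$. The expressions for $m_{\epsilon,{\rm n}}^{-1}$ (and its reciprocal) involve differences like $K_1^2-K_0K_2$ and, after invoking the symbols from Lemma \ref{lem:straight_SD}, combinations such as $\tfrac{2}{z}+\bigl(1-z\tfrac{I_0}{I_1}\bigr)\bigl(\tfrac{I_0}{I_1}-\tfrac{I_1}{I_0}\bigr)$ in the denominator; whether these stay bounded away from zero for $z\sim 1$ is not a consequence of the asymptotics at either end, and one must rule out a zero. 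The paper settles this with explicit quantitative two-sided bounds on $K_1/K_0$, $K_0/K_1$, $I_1/I_0$, $I_0/I_1$ valid for \emph{all} $z\ge 1$ (Propositions \ref{prop:besselK_bounds}, \ref{prop:besselI_bounds}, \ref{prop:besselI_2} and Corollaries \ref{cor:Kderivs}, \ref{cor:Iderivs}), proved by ODE comparison (the auxiliary functions $B_K$, $B$, $H$ and sub/supersolution arguments) rather than by asymptotic matching, and then assembles these into the clean two-regime derivative bounds of Lemmas \ref{lem:mt_bounds} and \ref{lem:mn_bounds}. Without this ingredient your Step 1 is not a proof.

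A smaller issue in Step 3: the bookkeeping of which Littlewood--Paley block produces which term is off. The $|\log\epsilon|\,\|\bm{f}\|_{L^\infty}$ contribution comes only from the zeroth block $P_{\le 0}$ (no $2^{j(1+\alpha)}$ weight). The blocks $1\le 2^j\lesssim 1/\epsilon$ produce $\epsilon^{-1}|\log\epsilon|\,|\bm{f}|_{\dot C^{0,\alpha}}$ --- the $|\log\epsilon|$ being the size of the multiplier throughout that range (not a gluing artifact), and the $\epsilon^{-1}=2^{j_\epsilon}$ being the price of the mismatch between the $C^{1,\alpha}$ target weight $2^{j(1+\alpha)}$ and the $C^{0,\alpha}$ input decay $2^{-j\alpha}$ with no derivative gain from the multiplier. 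The high blocks $2^j\gtrsim 1/\epsilon$ give $\epsilon^{-1}|\bm{f}|_{\dot C^{0,\alpha}}$ with no logarithm, since there the multiplier decays like $(\epsilon|k|)^{-1}$ and gains exactly one derivative. There is no ``additional $|\log\epsilon|$ picked up from the transition region.''
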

The proof of Lemma \ref{lem:straight_Leps_Holder} appears in section \ref{subsec:straight_mapping}.

\subsubsection{Decomposition of the slender body DtN and NtD maps}\label{subsubsec:thm_decomp}
The proof of Theorem \ref{thm:main} relies on a detailed decomposition of the slender body DtN and NtD maps for arbitrarily curved filaments. In particular, given a curved filament, we extract the straight operators $\overline{\mc{L}}_\epsilon^{-1}$, $\overline{\mc{L}}_\epsilon$ as the leading order behavior and show that the effects of curvature are lower order: either small in $\epsilon$ or more regular. Given the explicit expressions for the behavior of $\overline{\mc{L}}_\epsilon^{-1}$ and $\overline{\mc{L}}_\epsilon$, this decomposition will allow us to go on to prove local well-posedness for the evolution \eqref{eq:curve_evolution}. 

In order to accomplish this, we will need to define a way of identifying vector valued functions defined on curvilinear basis vectors $(\be_{\rm t}(s),\be_{\rm n_1}(s),\be_{\rm n_2}(s))$ along $\Gamma_\epsilon$ with vector valued functions defined on $(\be_z,\be_x,\be_y)$ along $\mc{C}_\epsilon$. We define a map $\Phi$ identifying tangential and normal components of vectors along $\mc{C}_\epsilon$ with tangential and normal components of vectors along $\Gamma_\epsilon$. In particular, given a vector-valued function $\bm{g}$ along the straight filament $\mc{C}_\epsilon$ and vector-valued $\bm{h}$ about the curved filament $\Gamma_\epsilon$, we identify the tangential and normal components in the straight versus curved setting via 
\begin{equation}\label{eq:mapPhi_def}
\begin{aligned}
\Phi\bm{g} &= (\bm{g}\cdot\be_z)\be_{\rm t} + (\bm{g}\cdot\be_x)\be_{\rm n_1} + (\bm{g}\cdot\be_y)\be_{\rm n_2} \\
\Phi^{-1}\bm{h} &= (\bm{h}\cdot\be_{\rm t})\be_z + (\bm{h}\cdot\be_{\rm n_1})\be_x + (\bm{h}\cdot\be_{\rm n_2})\be_y\,.
\end{aligned}
\end{equation}
For any $\bm{h}$ defined along $\Gamma_\epsilon$, we define
\begin{equation}\label{eq:subtract_mean}
\bm{h}_0^\Phi := \bm{h}-\Phi\int_\T(\Phi^{-1}\bm{h})\,ds\,;
\end{equation}
in particular, $\Phi^{-1}\bm{h}_0^\Phi$ is mean-zero in $s$ along the straight filament $\mc{C}_\epsilon$.

Given the definition \eqref{eq:mapPhi_def} of $\Phi$, we show the following. 
\begin{theorem}[Decomposition of SB DtN and NtD]\label{thm:decomp}
Let $0<\alpha<\gamma<\beta<1$ and consider a closed filament $\Sigma_\epsilon$ with centerline $\X(s)\in C^{2,\beta}(\T)$. Let $\bv(s)\in C^{1,\alpha}(\T)$ be $\theta$-independent Dirichlet data along $\Gamma_\epsilon$. There exists $\epsilon_{\rm d}=\epsilon_{\rm d}(c_\Gamma,\kappa_*)>0$ such that for $0<\epsilon\le\epsilon_{\rm d}$, the slender body Dirichlet-to-Neumann operator $\mc{L}_\epsilon^{-1}$ as defined in \eqref{eq:SB_DtN} may be decomposed as 
\begin{equation}\label{eq:thm_DtN_decomp}
\mc{L}_\epsilon^{-1}[\bv](s) = \Phi\overline{\mc{L}_\epsilon}^{-1}[\Phi^{-1}\bv(s)] + \mc{R}_{\rm d,\epsilon}[\bv(s)]+\mc{R}_{\rm d,+}[\bv(s)]
\end{equation}
where the remainder terms satisfy 
\begin{equation}\label{eq:thm_DtN_ests}
\begin{aligned}
\norm{\mc{R}_{\rm d,\epsilon}[\bv]}_{C^{0,\alpha}(\T)} &\le c(\kappa_{*,\alpha^+},c_\Gamma)\, \epsilon^{2-\alpha^+}\norm{\bv}_{C^{1,\alpha}(\T)} \\
\norm{\mc{R}_{\rm d,+}[\bv]}_{C^{0,\gamma}(\T)} &\le c(\epsilon,\kappa_{*,\gamma^+},c_\Gamma)\norm{\bv}_{C^{1,\alpha}(\T)}
\end{aligned}
\end{equation}
for any $\alpha^+\in(\alpha,\beta]$ and $\gamma^+\in(\gamma,\beta]$. 
Furthermore, given two nearby filaments with centerlines $\X^{(a)}(s)$, $\X^{(b)}(s)$ satisfying Lemma \ref{lem:XaXb_C2beta}, the differences between their corresponding remainder terms $\mc{R}_{\rm d,\epsilon}^{(a)}-\mc{R}_{\rm d,\epsilon}^{(b)}$ and $\mc{R}_{\rm d,+}^{(a)}-\mc{R}_{\rm d,+}^{(b)}$ satisfy 
\begin{equation}\label{eq:thm_DtN_ests_lip}
\begin{aligned}
\norm{(\mc{R}_{\rm d,\epsilon}^{(a)}-\mc{R}_{\rm d,\epsilon}^{(b)})[\bv]}_{C^{0,\alpha}(\T)} &\le c(\kappa_{*,\alpha^+}^{(a)},\kappa_{*,\alpha^+}^{(b)},c_\Gamma)\, \epsilon^{2-\alpha^+}\norm{\X^{(a)}-\X^{(b)}}_{C^{2,\alpha^+}(\T)}\norm{\bv}_{C^{1,\alpha}(\T)} \\
\norm{(\mc{R}_{\rm d,+}^{(a)}-\mc{R}_{\rm d,+}^{(b)})[\bv]}_{C^{0,\gamma}(\T)} &\le c(\epsilon,\kappa_{*,\gamma^+}^{(a)},\kappa_{*,\gamma^+}^{(b)},c_\Gamma)\norm{\X^{(a)}-\X^{(b)}}_{C^{2,\gamma^+}(\T)}\norm{\bv}_{C^{1,\alpha}(\T)}\,.
\end{aligned}
\end{equation}

For a filament $\Sigma_\epsilon$ with centerline $\X(s)\in C^{3,\alpha}(\T)$, let $\bm{f}(s)\in C^{0,\alpha}(\T)$ be slender body force data. There exists $\epsilon_{\rm n}=\epsilon_{\rm n}(c_\Gamma,\kappa_{*,\gamma})>0$ such that for $0<\epsilon\le\epsilon_{\rm n}$, the slender body Neumann-to-Dirichlet operator $\mc{L}_\epsilon$ given by \eqref{eq:SB_NtD} may be similarly decomposed as 
\begin{equation}\label{eq:thm_NtD_decomp}
\mc{L}_\epsilon[\bm{f}](s) = \big({\bf I} + \mc{R}_{\rm n,\epsilon}\big)\big[\Phi\overline{\mc{L}}_\epsilon[\Phi^{-1}\bm{f}_0^\Phi(s)]\big] +\mc{R}_{\rm n,+}[\bm{f}(s)]
\end{equation}
where the remainder terms satisfy
\begin{equation}\label{eq:thm_NtD_ests}
\begin{aligned}
\norm{\mc{R}_{\rm n,\epsilon}[\bm{g}]}_{C^{1,\alpha}(\T)}&\le c(\kappa_{*,\alpha^+},c_\Gamma)\,\epsilon^{1-\alpha^+}\abs{\log\epsilon}\norm{\bm{g}}_{C^{1,\alpha}(\T)}\\
\norm{\mc{R}_{\rm n,+}[\bm{f}]}_{C^{1,\gamma}(\T)} &\le c(\epsilon,\|\X_{ss}\|_{C^{1,\alpha}},c_\Gamma)\norm{\bm{f}}_{C^{0,\alpha}(\T)}
\end{aligned}
\end{equation}
for any $\alpha^+\in(\alpha,\beta]$.
Given two nearby filaments with centerlines $\X^{(a)}(s)$, $\X^{(b)}(s)$ in $C^{3,\alpha}$ and satisfying Lemma \ref{lem:XaXb_C2beta}, we also have
\begin{equation}\label{eq:thm_NtD_ests_lip}
\begin{aligned}
\norm{(\mc{R}_{\rm n,\epsilon}^{(a)}-\mc{R}_{\rm n,\epsilon}^{(b)})[\bm{g}]}_{C^{1,\alpha}(\T)} &\le c(\kappa_{*,\alpha^+}^{(a)},\kappa_{*,\alpha^+}^{(b)},c_\Gamma)\, \epsilon^{1-\alpha^+}\abs{\log\epsilon}\norm{\X^{(a)}-\X^{(b)}}_{C^{2,\alpha^+}}\norm{\bm{g}}_{C^{1,\alpha}(\T)} \\
\norm{(\mc{R}_{\rm n,+}^{(a)}-\mc{R}_{\rm n,+}^{(b)})[\bm{f}]}_{C^{1,\gamma}(\T)}
&\le c(\epsilon,\|\X_{ss}^{(a)}\|_{C^{1,\alpha}},\|\X_{ss}^{(b)}\|_{C^{1,\alpha}},c_\Gamma)\norm{\X^{(a)}-\X^{(b)}}_{C^{2,\alpha^+}}\norm{\bm{f}}_{C^{0,\alpha}(\T)}\,.
\end{aligned}
\end{equation}
\end{theorem}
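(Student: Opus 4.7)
The plan is to prove the DtN decomposition \eqref{eq:thm_DtN_decomp} first by a direct boundary-integral expansion, and then deduce the NtD decomposition \eqref{eq:thm_NtD_decomp} by operator inversion, exploiting the explicit spectrum from Proposition \ref{prop:Leps_spectrum} and the mapping properties from Lemma \ref{lem:straight_Leps_Holder}.

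\textbf{Step 1: DtN via kernel expansion.} I would represent the solution $\bu$ of the Stokes problem with $\theta$-independent Dirichlet data $\bv(s)$ on $\Gamma_\epsilon$ by a Stokes layer potential on $\Gamma_\epsilon$, compute $\bm{\sigma}\bm{n}$ via the standard jump relations, and angle-average with Jacobian weight $\mc{J}_\epsilon$ from \eqref{eq:jacfac}. Parameterizing $\Gamma_\epsilon$ in the tubular frame $(\be_{\rm t},\be_{\rm n_1},\be_{\rm n_2})$ and converting components via $\Phi$ to the straight basis $(\be_z,\be_x,\be_y)$ on $\mc{C}_\epsilon$, I would Taylor-expand the Stokeslet evaluated at $\X(s)+\epsilon\be_r(s,\theta)-\X(s')-\epsilon\be_r(s',\theta')$ in powers of $s-s'$, using the frame ODEs \eqref{eq:frame} to control how the basis rotates. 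The zeroth-order term reproduces exactly the Stokeslet about a straight cylinder of radius $\epsilon$; after $\theta$-averaging with $\mc{J}_\epsilon$ this yields $\Phi\overline{\mc{L}}_\epsilon^{-1}\Phi^{-1}$. The residual kernel splits into (i) a near-diagonal piece carrying an extra power of $\epsilon$ from $\wh\kappa$ in $\mc{J}_\epsilon$ and from second-order Taylor corrections to $\X$ and $\be_r$, which is absorbed into $\mc{R}_{\rm d,\epsilon}$, and (ii) a far-field piece whose kernel is bounded and smooth in terms of $\kappa_*$ and $c_\Gamma$, which is absorbed into $\mc{R}_{\rm d,+}$.

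\textbf{Step 2: Hölder and Lipschitz-in-curve estimates.} The bounds \eqref{eq:thm_DtN_ests} follow from standard singular-integral estimates on each piece. The $\epsilon$-small near-diagonal kernel is only borderline integrable, so trading an $L^\infty$ bound for a $C^{0,\alpha}$ bound costs an $\alpha^+$ power of $\epsilon$, producing the exponent $\epsilon^{2-\alpha^+}$; the far-field piece gains a derivative, giving the $C^{0,\gamma}$ estimate on $\mc{R}_{\rm d,+}$ for any $\gamma<\beta$. The Lipschitz-in-curve estimates \eqref{eq:thm_DtN_ests_lip} come from differentiating the same Taylor expansion in the centerline parameters and invoking Lemma \ref{lem:XaXb_C2beta} to control the differences of the frame and of the $\kappa_\ell$. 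The hardest technical point throughout is to push the curvature-induced kernel perturbations from $L^\infty$ into Hölder norms without losing more than the promised $\epsilon^{2-\alpha^+}$; this is where the Stokeslet's matrix structure and pressure coupling make the bookkeeping more delicate than in the scalar Laplace case, and I would follow and extend the author's earlier framework \cite{laplace}.

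\textbf{Step 3: NtD by inversion.} For the NtD decomposition, I would invert \eqref{eq:thm_DtN_decomp} on the $\Phi$-mean-zero subspace. The $\Phi$-mean projection $\bm{f}_0^\Phi$ is forced on us because $\overline{\mc{L}}_\epsilon$ is undefined on constants: the straight eigenvalues \eqref{eq:eigsT}--\eqref{eq:eigsN} vanish at $k=0$ (a Stokes-paradox effect on the periodic cylinder), so the constant mode of $\bm{f}$ must be analyzed separately and folded into $\mc{R}_{\rm n,+}$. Writing
\begin{equation*}
\mc{L}_\epsilon^{-1} = \Phi\overline{\mc{L}}_\epsilon^{-1}\Phi^{-1}\bigl({\bf I} + \mc{S}\bigr), \qquad \mc{S} := \Phi\overline{\mc{L}}_\epsilon\Phi^{-1}\bigl(\mc{R}_{\rm d,\epsilon}+\mc{R}_{\rm d,+}\bigr),
\end{equation*}
Lemma \ref{lem:straight_Leps_Holder} together with \eqref{eq:thm_DtN_ests} shows that $\mc{S}$ is a small (plus smoother) operator on $C^{1,\alpha}(\T)$ for $\epsilon$ small, so a Neumann series inverts ${\bf I}+\mc{S}$ and produces \eqref{eq:thm_NtD_decomp} with the estimates \eqref{eq:thm_NtD_ests}; the $\epsilon$-small part of $({\bf I}+\mc{S})^{-1}-{\bf I}$ becomes $\mc{R}_{\rm n,\epsilon}$, the smoother part is collected into $\mc{R}_{\rm n,+}$. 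The Lipschitz bounds \eqref{eq:thm_NtD_ests_lip} then follow from \eqref{eq:thm_DtN_ests_lip} via the resolvent identity
\begin{equation*}
({\bf I}+\mc{S}^{(a)})^{-1} - ({\bf I}+\mc{S}^{(b)})^{-1} = ({\bf I}+\mc{S}^{(a)})^{-1}\bigl(\mc{S}^{(b)}-\mc{S}^{(a)}\bigr)({\bf I}+\mc{S}^{(b)})^{-1}\,.
\end{equation*}
Thus once Step 1 is in place, Steps 2 and 3 are driven by the explicit straight-cylinder symbols and standard operator-theoretic manipulations; the main obstacle is the precise kernel accounting in Step 1.
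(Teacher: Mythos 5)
Your Steps 1--2 track the paper's general strategy (boundary integral representation, split into straight part plus near-diagonal $\epsilon$-small and far-field smooth remainders, then bound each in H\"older), although the paper's actual implementation is more structured: it writes the DtN boundary relation $\frac12\bv-\mc{D}[\bv]=\mc{S}[\bm w]$, introduces the projection $\P_{01}$ onto zero- and one-modes in $\theta$ so the inverse straight single layer $\overline{\mc{S}}^{-1}$ can be applied, and then angle-averages. The reason for this extra machinery is that the unknown Neumann density $\bm w(s,\theta)$ is not $\theta$-independent, and controlling it requires a separate completed-double-layer argument (Lemma \ref{lem:full_neumann}); your Taylor-expansion picture of ``the zeroth-order term reproduces the straight Stokeslet'' skips this and does not by itself produce a bound on the $\theta$-dependent part of $\bm w$.

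The genuine gap is in Step 3. You propose to invert $({\bf I}+\mc{S})$ with $\mc{S}=\Phi\overline{\mc{L}}_\epsilon\Phi^{-1}(\mc{R}_{\rm d,\epsilon}+\mc{R}_{\rm d,+})$ by a Neumann series, but the Neumann series is only justified for the piece coming from $\mc{R}_{\rm d,\epsilon}$: by \eqref{eq:thm_DtN_ests} and Lemma \ref{lem:straight_Leps_Holder}, $\overline{\mc{L}}_\epsilon\mc{R}_{\rm d,\epsilon}$ has small operator norm on $C^{1,\alpha}$ (it is $O(\epsilon^{1-\alpha^+}\abs{\log\epsilon})$). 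By contrast, $\mc{R}_{\rm d,+}$ is \emph{not} small in $\epsilon$ --- it is merely smoother (it maps $C^{1,\alpha}\to C^{0,\gamma}$), and a smoothing-but-not-small perturbation cannot be absorbed into a Neumann series; inverting ${\bf I}+(\text{compact})$ is a Fredholm problem, not a small perturbation problem, and requires an injectivity input your proposal does not supply. The paper sidesteps this: it only Neumann-inverts ${\bf I}+\overline{\mc{L}}_\epsilon(\mc{R}_{\rm d,\epsilon}\Phi)_0^\Phi$ (producing $\mc{R}_{\rm n,\epsilon}$), then substitutes $\bv=\mc{L}_\epsilon[\bm{f}]$ into the $\mc{R}_{\rm d,+}$ term and controls the result via a \emph{separate, independently proved} H\"older estimate $\norm{\mc{L}_\epsilon[\bm f]}_{C^{1,\alpha}}\le c(\epsilon,\cdots)\norm{\bm f}_{C^{0,\alpha}}$ (Lemma \ref{lem:holder_NtD}, established by a Campanato-type argument in a tubular neighborhood). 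That estimate is the real mechanism behind $\mc{R}_{\rm n,+}$, and it is also the reason the NtD half of the theorem needs the extra $\X\in C^{3,\alpha}$ regularity, which is absent from your inversion argument. As written, your Step 3 does not yield \eqref{eq:thm_NtD_decomp}--\eqref{eq:thm_NtD_ests} without this additional ingredient.
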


For both $\mc{L}_\epsilon^{-1}$ and $\mc{L}_\epsilon$, the $\epsilon$-dependence is explicit in the bound for the remainder terms $\mc{R}_{j,\epsilon}$ but not for the smoother remainders $\mc{R}_{j,+}$.
Although only the decomposition \eqref{eq:thm_NtD_decomp} of $\mc{L}_\epsilon$ is required to show Theorem \ref{thm:main}, the proof of this representation is dependent on the decomposition \eqref{eq:thm_DtN_decomp} of $\mc{L}_\epsilon^{-1}$, which is simpler to study and forms the bulk of this paper. We thus state both results together. 

Given force data $\bm{f}(s)$ involving multiple derivatives in $s$, as in \eqref{eq:full_evolution}, we may refine the decomposition \eqref{eq:thm_NtD_decomp} of $\mc{L}_\epsilon$ as follows. 
\begin{corollary}[SB NtD decomposition involving derivatives]\label{cor:four_derivs}
For $0<\alpha<\gamma<1$, given $\X(s)\in C^{3,\gamma}(\T)$ and slender body force data of the form $\bm{f}(s)=\p_s^4\bm{F}$ for some $\bm{F}\in C^{4,\alpha}(\T)$, the decomposition \eqref{eq:thm_NtD_decomp} of $\mc{L}_\epsilon$ simplifies to 
\begin{equation}\label{eq:new_NtD_decomp}
\begin{aligned}
\mc{L}_\epsilon[\p_s^4\bm{F}]&= ({\bf I}+\mc{R}_{\rm n,\epsilon})\big[\overline{\mc{L}}_\epsilon[\p_s^4\bm{F}]\big] +\wt{\mc{R}}_{\rm n,+}[\p_s^4\bm{F}]\,,\\
\norm{\mc{R}_{\rm n,\epsilon}\big[\overline{\mc{L}}_\epsilon[\p_s^4\bm{F}]\big]}_{C^{1,\alpha}(\T)} &\le c(\|\X\|_{C^{2,\alpha^+}},c_\Gamma)\, \epsilon^{1-\alpha^+}\abs{\log\epsilon}\norm{\overline{\mc{L}}_\epsilon[\p_s^4\bm{F}]}_{C^{1,\alpha}(\T)}\\
\norm{\wt{\mc{R}}_{\rm n,+}[\p_s^4\bm{F}]}_{C^{1,\gamma}(\T)} &\le c(\epsilon,\|\X\|_{C^{3,\gamma}},c_\Gamma)\norm{\bm{F}}_{C^{4,\alpha}(\T)}\,,
\end{aligned}
\end{equation}
where $\alpha<\alpha^+<1$.
Furthermore, given two nearby filaments with centerlines $\X^{(a)}(s)$, $\X^{(b)}(s)$ in $C^{3,\gamma}$ satisfying Lemma \ref{lem:XaXb_C2beta}, we have
\begin{equation}\label{eq:new_NtD_ests_lip}
\begin{aligned}
&\norm{(\mc{R}_{\rm n,\epsilon}^{(a)}-\mc{R}_{\rm n,\epsilon}^{(b)})[\overline{\mc{L}}_\epsilon[\p_s^4\bm{F}]]}_{C^{1,\alpha}(\T)} \\
&\quad\le c(\|\X^{(a)}\|_{C^{2,\alpha^+}},\|\X^{(b)}\|_{C^{2,\alpha^+}},c_\Gamma)\, \epsilon^{1-\alpha^+}\abs{\log\epsilon}\norm{\X^{(a)}-\X^{(b)}}_{C^{2,\alpha^+}(\T)}\norm{\overline{\mc{L}}_\epsilon[\p_s^4\bm{F}]}_{C^{1,\alpha}(\T)} \\
&\norm{(\wt{\mc{R}}_{\rm n,+}^{(a)}-\wt{\mc{R}}_{\rm n,+}^{(b)})[\p_s^4\bm{F}]}_{C^{1,\gamma}(\T)}\\
&\quad\le c(\epsilon,\|\X^{(a)}\|_{C^{3,\gamma}},\|\X^{(b)}\|_{C^{3,\gamma}},c_\Gamma)\norm{\X^{(a)}-\X^{(b)}}_{C^{3,\gamma}(\T)}\norm{\bm{F}}_{C^{4,\alpha}(\T)}\,.
\end{aligned}
\end{equation}
\end{corollary}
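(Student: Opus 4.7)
The plan is to apply Theorem \ref{thm:decomp} with $\bm{f}=\p_s^4\bm{F}$ and show that, given the additional regularity $\X\in C^{3,\gamma}$ and the fact that the force is a fourth derivative, the principal term $\Phi\overline{\mc{L}}_\epsilon[\Phi^{-1}(\p_s^4\bm{F})_0^\Phi]$ may be replaced by the simpler $\overline{\mc{L}}_\epsilon[\p_s^4\bm{F}]$ up to a smoother $C^{1,\gamma}$ error that is then absorbed into $\wt{\mc{R}}_{\rm n,+}$. Concretely, I would define
\begin{equation*}
\wt{\mc{R}}_{\rm n,+}[\p_s^4\bm{F}] := \mc{R}_{\rm n,+}[\p_s^4\bm{F}] + ({\bf I}+\mc{R}_{\rm n,\epsilon})\,\mc{D}[\bm{F}], \quad
\mc{D}[\bm{F}] := \Phi\overline{\mc{L}}_\epsilon[\Phi^{-1}(\p_s^4\bm{F})_0^\Phi] - \overline{\mc{L}}_\epsilon[\p_s^4\bm{F}].
\end{equation*}
The bound on $\mc{R}_{\rm n,+}[\p_s^4\bm{F}]$ is immediate from \eqref{eq:thm_NtD_ests} together with $\|\p_s^4\bm{F}\|_{C^{0,\alpha}}\le\|\bm{F}\|_{C^{4,\alpha}}$, and the bound on $\mc{R}_{\rm n,\epsilon}[\overline{\mc{L}}_\epsilon[\p_s^4\bm{F}]]$ follows directly from the $\mc{R}_{\rm n,\epsilon}$ estimate in \eqref{eq:thm_NtD_ests}. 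The heart of the proof is therefore the estimate
\begin{equation*}
\|\mc{D}[\bm{F}]\|_{C^{1,\gamma}(\T)} \le c(\epsilon,\|\X\|_{C^{3,\gamma}},c_\Gamma)\,\|\bm{F}\|_{C^{4,\alpha}(\T)},
\end{equation*}
which, combined with the mapping properties of $\mc{R}_{\rm n,\epsilon}$ used at Hölder exponent $\gamma$ (as the proof of Theorem \ref{thm:decomp} admits), controls the second term in $\wt{\mc{R}}_{\rm n,+}[\p_s^4\bm{F}]$.

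Since $\overline{\mc{L}}_\epsilon$ is naturally defined on mean-zero data -- with the $k=0$ mode in its kernel, reflecting the vanishing of the DtN symbols $m_\epsilon^{-1}(k)$ in Proposition \ref{prop:Leps_spectrum} at $k=0$ -- the mean subtraction $(\p_s^4\bm{F})_0^\Phi$ is inert under $\overline{\mc{L}}_\epsilon$, and I can rewrite $\mc{D}[\bm{F}] = \Phi\,[\overline{\mc{L}}_\epsilon,\Phi^{-1}]\,\p_s^4\bm{F}$. I would then invoke a commutator estimate: $\overline{\mc{L}}_\epsilon$ is a matrix Fourier multiplier of order $-1$ (with $|\log\epsilon|$ prefactor per Lemma \ref{lem:straight_Leps_Holder}), and the frame matrix $\Phi\in C^{2,\gamma}(\T)$ since $\X\in C^{3,\gamma}$. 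Commutators of order-$(-1)$ Fourier multipliers with $C^{2,\gamma}$ matrix multiplications are expected to gain an additional order of regularity, so that $[\overline{\mc{L}}_\epsilon,\Phi^{-1}]\colon C^{0,\alpha}(\T)\to C^{2,\alpha}(\T)\hookrightarrow C^{1,\gamma}(\T)$ with constants controlled by $\epsilon$, $\|\X\|_{C^{3,\gamma}}$, and $c_\Gamma$. Multiplication by the $C^{2,\gamma}$ matrix $\Phi$ preserves $C^{1,\gamma}$ regularity, yielding the required bound on $\mc{D}[\bm{F}]$.

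The principal obstacle is making this commutator estimate quantitative in Hölder norms given the intricate Bessel-function symbol of $\overline{\mc{L}}_\epsilon$ from Proposition \ref{prop:Leps_spectrum} and its non-uniform behavior across the transition scale $|k|\sim\epsilon^{-1}$. My approach would be to split into low-frequency ($|k|\lesssim\epsilon^{-1}$, where $\overline{\mc{L}}_\epsilon$ scales like $|\log(\epsilon k)|$) and high-frequency ($|k|\gtrsim\epsilon^{-1}$, where it is genuinely order $-1$) regimes, treating each using the explicit kernel representation that underlies Lemma \ref{lem:straight_Leps_Holder} together with Calderón-type commutator bounds in the high-frequency regime. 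Finally, the Lipschitz estimates \eqref{eq:new_NtD_ests_lip} are obtained by propagating curve dependence through each step: differences $\Phi^{(a)}-\Phi^{(b)}$ and $(\Phi^{(a)})^{-1}-(\Phi^{(b)})^{-1}$ are controlled via Lemma \ref{lem:XaXb_C2beta}, giving a Lipschitz analogue of the commutator estimate with the same $\epsilon$-dependence, while the curve-difference contributions of $\mc{R}_{\rm n,\epsilon}^{(a)}-\mc{R}_{\rm n,\epsilon}^{(b)}$ and $\mc{R}_{\rm n,+}^{(a)}-\mc{R}_{\rm n,+}^{(b)}$ come directly from \eqref{eq:thm_NtD_ests_lip}.
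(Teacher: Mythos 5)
Your formal decomposition is correct, and the key quantity you isolate, $\mc{D}[\bm{F}]=\Phi[\overline{\mc{L}}_\epsilon,\Phi^{-1}]\p_s^4\bm{F}$, is algebraically identical to the paper's extra error term $\Phi\overline{\mc{L}}_\epsilon(\mc{R}_{\X,1})_0^\Phi+\Phi\mc{R}_{\X,2}$. The gap is in how you propose to estimate it. You want a direct commutator bound $[\overline{\mc{L}}_\epsilon,\Phi^{-1}]\colon C^{0,\alpha}\to C^{1,\gamma}$ for the order $-1$ multiplier $\overline{\mc{L}}_\epsilon$ with coefficient matrix $\Phi^{-1}\in C^{2,\gamma}$, but this is asserted only heuristically (``expected to gain an additional order''). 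The commutator machinery actually developed in the paper — Lemma \ref{lem:symbol_comm} and its specialization Proposition \ref{prop:Phi_comm} — is built specifically for \emph{order one} symbols (hypotheses $|\phi_j\wh{\bm M}|\lesssim b_0 2^j$, $|\p_\xi^2(\phi_j\wh{\bm M})|\lesssim b_1 2^{-j}$), and it yields a gain of only one derivative, not the two you need when starting from the order $-1$ operator. To make your route rigorous you would have to formulate and prove a separate Littlewood--Paley commutator lemma for order $-1$ multipliers with $C^{2,\gamma}$ coefficients; nothing in the paper supplies this, and you don't either — the step you flag as the ``principal obstacle'' is exactly the one that remains open.

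The paper sidesteps this by a reorganization that fits the tool it already has: first commute $\Phi^{-1}$ through $\p_s^2$ (a pure product-rule computation, controlled by the elementary bounds \eqref{eq:Phi_commutator_est}), producing $\Phi^{-1}\p_s^4\bm{F}=\p_s^2(\Phi^{-1}\p_s^2\bm{F})+\mc{R}_{\X,1}$ with $\mc{R}_{\X,1}\in C^{0,\gamma}$; then apply Proposition \ref{prop:Phi_comm} to the \emph{order one} operator $\overline{\mc{L}}_\epsilon\p_s^2$, giving $[\overline{\mc{L}}_\epsilon,\Phi^{-1}]\p_s^4\bm{F}=[\overline{\mc{L}}_\epsilon\p_s^2,\Phi^{-1}]\p_s^2\bm{F}+\overline{\mc{L}}_\epsilon\mc{R}_{\X,1}$, with both pieces bounded in $C^{1,\gamma}$ by tools already proved (Proposition \ref{prop:Phi_comm} and Lemma \ref{lem:straight_Leps_Holder}). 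A secondary note: your intermediate claim $[\overline{\mc{L}}_\epsilon,\Phi^{-1}]\colon C^{0,\alpha}\to C^{2,\alpha}$ is slightly too strong — the commutator's kernel carries a factor of $\nabla\Phi^{-1}\in C^{1,\gamma}$, so the output cannot be smoother than $C^{1,\gamma}$; this doesn't affect your final $C^{1,\gamma}$ conclusion (since $C^{2,\alpha}\subset C^{1,\gamma}$ anyway) but the intermediate target should be $C^{1,\gamma}$ rather than $C^{2,\alpha}$.
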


The proof of Theorem \ref{thm:decomp} relies on a series of key lemmas, which we will state first before proving Theorem \ref{thm:decomp} in section \ref{subsec:pf_decomp}. The proof of Corollary \ref{cor:four_derivs} will appear immediately after the proof of Theorem \ref{thm:decomp}. The proofs of each of these key lemmas then form the remainder of the paper.

\subsubsection{Boundary integral formulation of slender body DtN map}\label{subsubsec:SB_DtN_boundaryintegral}
Motivated by our analysis in the Laplace setting \cite{laplace}, the strategy for understanding the slender body NtD map will rely on an explicit boundary integral representation for the inverse map, the slender body DtN map, for which we can more easily obtain estimates. 

We use $\mc{G}(\bx,\bx')$ to denote the Stokeslet, the free-space Green's function for the Stokes equations, given by the matrix 
\begin{equation}\label{eq:stokeslet}
\mc{G}(\bx,\bx') = \frac{1}{8\pi}\bigg(\frac{{\bf I}}{\abs{\bx-\bx'}}+\frac{(\bx-\bx')\otimes(\bx-\bx')}{\abs{\bx-\bx'}^3}\bigg).
\end{equation}
For $\bx\in\Gamma_\epsilon$, let $\bm{n}(\bx)$ denote the unit normal vector to $\Gamma_\epsilon$ pointing \emph{out from} the slender body $\Sigma_\epsilon$, \emph{into} the fluid domain $\Omega_\epsilon$. We additionally define the stresslet kernel\footnote{Some sources \cite{pozrikidis1992boundary} instead use the convention that the double layer kernel is given by $-\bm{K}_{\mc D}(\bx',\bx;\bm{n}(\bx'))$.}
\begin{equation}\label{eq:stresslet}
\bm{K}_{\mc D}(\bx,\bx';\bm{n}(\bx')) = \frac{3}{4\pi}\frac{(\bx-\bx')\otimes(\bx-\bx')}{\abs{\bx-\bx'}^5}(\bx-\bx')\cdot\bm{n}(\bx')\,.
\end{equation}
Given a continuous $\bm{\varphi}:\Gamma_\epsilon \to \R^3$, for $\bx\in\Omega_\epsilon$, we define the single layer operator $\mc{S}$ and double layer operator $\mc{D}$ as 
\begin{equation}\label{eq:S_and_D}
\mc{S}[\bm{\varphi}](\bx) = \int_{\Gamma_\epsilon}\mc{G}(\bx,\bx')\bm{\varphi}(\bx')\,dS_{x'}\,, \qquad 
\mc{D}[\bm{\varphi}](\bx) = \int_{\Gamma_\epsilon}\bm{K}_{\mc D}(\bx,\bx';\bm{n}(\bx'))\bm{\varphi}(\bx')\,dS_{x'}\,.
\end{equation}
As in the Laplace setting \cite{kress1989linear,steinbach2007numerical,folland1995introduction}, the single layer operator is continuous up to $\Gamma_\epsilon$, while the double layer operator satisfies the exterior jump relation (see \cite[eq. (2.3.12)]{pozrikidis1992boundary}):
\begin{equation}\label{eq:ext_jump}
\lim_{h\to 0} \mc{D}[\bm{\varphi}](\bx+h\bm{n}(\bx)) =\mc{D}[\bm{\varphi}](\bx)+ \frac{1}{2}\bm{\varphi}(\bx)\,.
\end{equation}
Following \cite[eq. (2.3.10)]{pozrikidis1992boundary}, the Stokes flow $\bu(\bx)$ at a point $\bx\in\Omega_\epsilon$ may be represented by the boundary integral expression 
\begin{equation}\label{eq:BI_rep}
\bu(\bx) = \mc{S}[\bm{w}](\bx) + \mc{D}[\bv](\bx) 
\end{equation}
where, for $\bx\in\Gamma_\epsilon$, 
\begin{equation}\label{eq:BI_rep2}
\begin{aligned}
\bm{w}(\bx) &= -\bm{\sigma}[\bu]\bm{n}(\bx)\big|_{\Gamma_\epsilon}\,,  \\ 
\bv(\bx) &= \bu\big|_{\Gamma_\epsilon}  \,.
\end{aligned}
\end{equation}
The negative sign in $\bm{w}$ arises due to the convention that the normal vector $\bm{n}(\bx)$ points into the fluid domain $\Omega_\epsilon$, making $-\bm{\sigma}[\bu]\bm{n}(\bx)$ the Neumann boundary value for $\bu$. Using \eqref{eq:ext_jump} in \eqref{eq:BI_rep}, on the filament surface $\Gamma_\epsilon$, we obtain a relation between the Dirichlet ($\bv$) and Neumann ($\bm{w}$) boundary values for $\bu$:
\begin{equation}\label{eq:full_DtN}
\textstyle \frac{1}{2}\bv(\bx) - \mc{D}[\bv](\bx) = \mc{S}[\bm{w}](\bx)\,, \qquad \bx\in \Gamma_\epsilon\,.
\end{equation}
Using \eqref{eq:full_DtN}, we may thus relate the slender body Dirichlet data $\bv(s)$ and angle-averaged Neumann data $\bm{f}(s)$ from \eqref{eq:SB_PDE} as
\begin{equation}\label{eq:SB_DtN_layer}
\begin{aligned}
\textstyle \frac{1}{2}\bv(s) - \mc{D}[\bv(s)] &= \mc{S}[\bm{w}(s,\theta)]\\
\int_0^{2\pi}\bm{w}(s,\theta) \,\mc{J}_\epsilon(s,\theta)\,d\theta &= \bm{f}(s)
\end{aligned}
\end{equation}
As in the Laplace setting \cite{laplace}, we consider $\bm{w}(s,\theta)=\bm{w}(\bx(s,\theta))$ as a function of the parameters $s$ and $\theta$ along the surface $\Gamma_\epsilon$.

Accordingly, to better compare across different filament centerline geometries, we use the following characterization of the $\abs{\cdot}_{\dot C^{0,\alpha}}$ seminorm along $\Gamma_\epsilon$:  
\begin{equation}\label{eq:dot_Calpha_eps}
\abs{\bm{h}}_{\dot C^{0,\alpha}(\Gamma_\epsilon)} = \sup_{(s-s')^2+(\theta-\theta')^2\neq 0}\frac{\abs{\bm{h}(s,\theta)-\bm{h}(s',\theta')}}{\sqrt{(s-s')^2+\epsilon^2(\theta-\theta')^2}^{\,\alpha}}\,.
\end{equation}
The seminorm \eqref{eq:dot_Calpha_eps} may be seen to be an equivalent characterization of $\dot C^{0,\alpha}(\Gamma_\epsilon)$ to \eqref{eq:Ckalpha}
(see \cite[Lemma 3.1]{laplace}). In addition, it will be useful to define the following spaces measuring additional H\"older regularity in $s$ only: 
\begin{equation}\label{eq:CalphaS_norm}
\begin{aligned}
\norm{\bm{h}}_{C^{0,\alpha}_{\rm s}(\Gamma_\epsilon)} &:= \norm{\bm{h}}_{L^\infty(\Gamma_\epsilon)}+ \sup_{\theta\in [0,2\pi)}\bigg(\sup_{s\neq s'} \frac{\abs{\bm{h}(s,\theta)-\bm{h}(s',\theta)}}{\abs{s-s'}^\alpha}\bigg) \\
\norm{\bm{h}}_{C^{m,\alpha}_{\rm s}(\Gamma_\epsilon)} &:= \sum_{\ell=0}^m\norm{\p_s^\ell\bm{h}}_{L^\infty(\Gamma_\epsilon)} + \sup_{\theta\in [0,2\pi)}\bigg(\sup_{s\neq s'} \frac{\abs{\p_s^m\bm{h}(s,\theta)-\p_s^m\bm{h}(s',\theta)}}{\abs{s-s'}^\alpha}\bigg)\,.
\end{aligned}
\end{equation}
Note that if $\bm{h}\in C^{m,\alpha}(\Gamma_\epsilon)$ is a function of $s$ only, then both \eqref{eq:dot_Calpha_eps} and \eqref{eq:CalphaS_norm} reduce to 
\begin{align*}
\abs{\bm{h}(s)}_{\dot C^{0,\alpha}(\Gamma_\epsilon)} 
= \abs{\bm{h}(s)}_{\dot C^{0,\alpha}_s(\Gamma_\epsilon)} 
= \abs{\bm{h}(s)}_{\dot C^{0,\alpha}(\T)} 
= \sup_{s\neq s'}\frac{\abs{\bm{h}(s)-\bm{h}(s')}}{\abs{s-s'}^{\,\alpha}}\,.
\end{align*}

\subsubsection{Single and double layer operators about $\mc{C}_\epsilon$}\label{subsubsec:straight_SLDL}
In the Laplace setting \cite{laplace}, we were able to calculate explicit Fourier multiplier expressions for the corresponding single and double layer potentials about the straight cylinder $\mc{C}_\epsilon$. These expressions were then applied as maps between $\theta$-independent scalar-valued functions on $\Gamma_\epsilon$.

In the Stokes setting, we again may calculate explicit Fourier multiplier expressions for the single and double layer operators about the straight filament $\mc{C}_\epsilon$, which we will denote by $\overline{\mc{S}}$ and $\overline{\mc{D}}$, respectively. However, the situation is far more complicated. In particular, the multipliers are now matrix-valued expressions that must be applied between vector-valued functions living on cylindrical basis vectors $\be_r$, $\be_\theta$, $\be_z$. 

Furthermore, the $\theta$-independent functions on $\mc{C}_\epsilon$ are represented not just by zero Fourier modes in $\theta$, but also by one modes: given a $\theta$-independent vector-valued function $\bm{f}(s)$ along the straight cylinder $\mc{C}_\epsilon$, for $s\in \T$ we may write 
\begin{align*}
\bm{f}(s) &= f^z(s)\,\be_z + f^x(s)\,\be_x + f^y(s)\,\be_y \\
&= f^z(s)\,\be_z + f^x(s)(\cos\theta\be_r-\sin\theta\be_\theta) + f^y(s)(\sin\theta\be_r+\cos\theta\be_\theta)\,.
\end{align*}
Along $\mc{C}_\epsilon$, we may treat the tangential ($\be_z$) and normal ($\be_x$ and $\be_y$) directions along the filament separately. Expanding each of $f^z(s)$, $f^x(s)$, and $f^y(s)$ as a Fourier series in $s$, we show that the double and single layer operators $\overline{\mc{D}}$ and $\overline{\mc{S}}$ act in the following way.

\begin{lemma}[Single and double layer multipliers about $\mc{C}_\epsilon$]\label{lem:straight_SD}
When applied to $\theta$-independent data in the tangential direction along the straight cylinder $\mc{C}_\epsilon$, the double layer operator $\overline{\mc{D}}$ is given by 
\begin{equation}\label{eq:DL_tangential}
\begin{aligned}
\overline{\mc{D}}[e^{2\pi iks}\be_z] 
&= Q_{\rm tE}(2\pi\epsilon\abs{k})\, e^{2\pi iks}\be_z + i\,Q_{\rm tF}(2\pi\epsilon\abs{k})\,e^{2\pi iks}\be_r\,, \\
Q_{\rm tE}(z) &= \textstyle z^2 \big( I_0K_0-I_1K_1\big) -\frac{z}{2}\big(I_0K_1-I_1K_0\big)\,,\\
Q_{\rm tF}(z) &= -z^2\big(I_1K_0-I_0K_1\big)- z I_1K_1 \,.
\end{aligned}
\end{equation}
Here and throughout, each $I_j=I_j(z)$ and $K_j=K_j(z)$ are modified Bessel functions of the first and second kind, respectively.
Given the form \eqref{eq:DL_tangential} of the double layer operator, let
\begin{equation}\label{eq:Fz}
\bm{F}_z = \big(f_z^z\be_z+f_r^z\be_r\big) e^{2\pi i k s}\,.
\end{equation}
When applied to data of the form \eqref{eq:Fz}, the single layer operator $\overline{\mc{S}}$ is invertible and satisfies
\begin{equation}\label{eq:Sinv_tangential1}
\begin{aligned}
\overline{\mc{S}}^{-1}[\bm{F}_z]&= \epsilon^{-1}\bm{M}_{\rm S,t}^{-1}(k)\bm{F}_z\,, \\
\bm{M}_{\rm S,t}^{-1}(k)&=
m_{\epsilon,{\rm tA}}(k)\,\be_z\otimes\be_z  +i\,m_{\epsilon,{\rm tB}}(k)(\be_r\otimes\be_z-\be_z\otimes\be_r) + m_{\epsilon,{\rm tC}}(k)\,\be_r\otimes\be_r\,. 
\end{aligned}
\end{equation}
The components of the matrix-valued multiplier $\bm{M}_{\rm S,t}^{-1}(k)$ are given by 
\begin{equation}\label{eq:Sinv_tangential2}
\begin{aligned}
 m_{\epsilon,{\rm t}j}(k) &= \frac{Q_{{\rm t}j}(2\pi\epsilon\abs{k})}{Q_{\rm tD}(2\pi\epsilon\abs{k})}\,, \quad j={\rm A,B,C}  \\
Q_{\rm tA}(z) &= \textstyle \frac{1}{I_0K_0}\left(1+ \frac{z}{2}\left(\frac{K_0}{K_1}-\frac{I_0}{I_1} \right)\right) \,, \quad
Q_{\rm tC}(z) = \frac{1}{I_1K_1}\left(1+ \frac{z}{2}\left(\frac{I_1}{I_0}-\frac{K_1}{K_0} \right)\right) \,, \\
Q_{\rm tB}(z) &= \textstyle \frac{z}{2}\left(\frac{1}{I_1K_1}-\frac{1}{I_0K_0}\right)\,, \quad
Q_{\rm tD}(z) = \left(1+ \frac{z}{2}\left(\frac{I_1}{I_0}-\frac{I_0}{I_1} \right)\right)\left(1+ \frac{z}{2}\left(\frac{K_0}{K_1}-\frac{K_1}{K_0} \right)\right)\,. 
\end{aligned}
\end{equation}

When applied to $\theta$-independent data in the normal direction to the straight cylinder $\mc{C}_\epsilon$, the double layer operator $\overline{\mc{D}}$ is given by 
\begin{equation}\label{eq:DL_normal}
\begin{aligned}
\overline{\mc{D}}[e^{2\pi iks}\be_x] &= \overline{\mc{D}}[e^{2\pi iks}\big(\cos\theta\be_r - \sin\theta\be_\theta\big)] \\
&= \left( Q_{\rm nN}(2\pi\epsilon\abs{k})\cos\theta\be_r - Q_{\rm nO}(2\pi\epsilon\abs{k})\sin\theta\be_\theta + iQ_{\rm nP}(2\pi\epsilon\abs{k})\cos\theta\be_z\right)e^{2\pi iks}\,, \\
\overline{\mc{D}}[e^{2\pi iks}\be_y] &= \overline{\mc{D}}[e^{2\pi iks}\big(\sin\theta\be_r + \cos\theta\be_\theta\big)] \\
&= \left( Q_{\rm nN}(2\pi\epsilon\abs{k})\sin\theta\be_r + Q_{\rm nO}(2\pi\epsilon\abs{k})\cos\theta\be_\theta + iQ_{\rm nP}(2\pi\epsilon\abs{k})\sin\theta\be_z\right)e^{2\pi iks}\,; \\
Q_{\rm nN}(z) &= z^2(I_1K_1-I_0K_0)+\textstyle \frac{3}{2}z(I_1K_0-I_0K_1) + 2I_1K_1 \,, \\
Q_{\rm nO}(z) &= \textstyle -\frac{z}{2}(I_1K_0-I_0K_1)-2I_1K_1\,,\qquad 
Q_{\rm nP}(z) = z^2(I_0K_1-I_1K_0) - zI_1K_1 \,.
\end{aligned}
\end{equation}
Given the form of the expression \eqref{eq:DL_normal}, let
\begin{equation}\label{eq:FxFy}
\begin{aligned}
\bm{F}_x(s,\theta) &= \big(f_r^x\cos\theta \be_r + f_\theta^x\sin\theta \be_\theta + f_z^x\cos\theta\be_z\big) e^{2\pi iks} \\
\bm{F}_y(s,\theta) &= \big(f_r^y\sin\theta \be_r - f_\theta^y\cos\theta \be_\theta + f_z^y\sin\theta\be_z \big) e^{2\pi iks}\,.
\end{aligned}
\end{equation}
When applied to data of the form \eqref{eq:FxFy}, the single layer operator $\overline{\mc{S}}$ is again invertible and satisfies 
\begin{equation}\label{eq:Sinv_normal1}
\begin{aligned}
\overline{\mc{S}}^{-1}[\bm{F}_x] &= \epsilon^{-1}{\bm M}_{\rm S,n}^{-1}(k)\bm{F}_x\,, \qquad  
\overline{\mc{S}}^{-1}[\bm{F}_y] = \epsilon^{-1}{\bm M}_{\rm S,n}^{-1}(k)\bm{F}_y\,, 
\end{aligned}
\end{equation}
where ${\bm M}_{\rm S,n}^{-1}$ is a matrix-valued multiplier of the form
\begin{equation}\label{eq:Sinv_normal2}
\begin{aligned}
&{\bm M}_{\rm S,n}^{-1}(k) = 
m_{\epsilon,{\rm nA}}(k)\,\be_r\otimes\be_r 
+ m_{\epsilon,{\rm nB}}(k)\,(\be_r\otimes\be_\theta + \be_\theta\otimes\be_r) 
+ m_{\epsilon,{\rm nD}}(k)\,\be_\theta\otimes\be_\theta \\
&\quad + i\,m_{\epsilon,{\rm nC}}(k)\,(\be_r\otimes\be_z-\be_z\otimes\be_r ) 
+i\,m_{\epsilon,{\rm nE}}(k)\,(\be_z\otimes\be_\theta -\be_\theta\otimes\be_z) +m_{\epsilon,{\rm nF}}(k)\,\be_z\otimes\be_z\,.
\end{aligned}
\end{equation}
The components of ${\bm M}_{\rm S,n}^{-1}(k)$ are given explicitly by   
\begin{equation}\label{eq:Sinv_components}
\begin{aligned}
m_{\epsilon,{\rm n}j}(k) &= \frac{Q_{{\rm n}j}(2\pi\epsilon\abs{k})}{Q_{\rm nG}(2\pi\epsilon\abs{k})}\,, \quad j={\rm A,B,C,D,E,F}\,,\\
Q_{\rm nA}(z) &= \textstyle \frac{1}{I_1K_1} \left( 4+\frac{4}{z^2}\frac{I_1}{I_0}\frac{K_1}{K_0} + \frac{2}{z}\left(\frac{K_1}{K_0} -\frac{I_1}{I_0}\right) -2z\left(\frac{I_0}{I_1} -\frac{K_0}{K_1}\right) -2\left(\frac{I_0}{I_1}\frac{K_1}{K_0} +\frac{I_1}{I_0}\frac{K_0}{K_1}\right) \right)\\
Q_{\rm nB}(z) &= \textstyle \frac{1}{I_1K_1} \left( 2\frac{K_1}{K_0}\left(\frac{I_0}{I_1}-\frac{I_1}{I_0}\right)+ 2\left(\frac{I_1}{I_0}\frac{K_0}{K_1}-1\right) +\frac{2}{z}\left(\frac{I_1}{I_0}-\frac{K_1}{K_0}\right) -\frac{4}{z^2}\frac{I_1}{I_0}\frac{K_1}{K_0} \right) \\
Q_{\rm nC}(z) &=\textstyle \frac{2}{I_1K_1} \left(\frac{2}{z}\frac{I_1}{I_0}\frac{K_1}{K_0} + 2\left(\frac{I_0}{I_1}- \frac{K_0}{K_1}\right) +\left(\frac{I_1}{I_0}-\frac{K_1}{K_0}\right)+z\frac{I_0}{I_1}\frac{K_0}{K_1} -z-\frac{4}{z} \right) \\
Q_{\rm nD}(z) &= \textstyle \frac{1}{I_1K_1} \bigg( \frac{4}{z^2}\frac{I_1}{I_0}\frac{K_1}{K_0}+ \frac{2}{z}\left(\frac{K_1}{K_0}-\frac{I_1}{I_0}\right)+2\left(2\frac{I_1}{I_0}\frac{K_1}{K_0}- \frac{I_1}{I_0}\frac{K_0}{K_1}- \frac{I_0}{I_1}\frac{K_1}{K_0}\right) \\
&\qquad \textstyle +z^2\left(\frac{I_0}{I_1}-\frac{I_1}{I_0}\right)\left(\frac{K_0}{K_1}-\frac{K_1}{K_0}\right) \bigg)\\
Q_{\rm nE}(z) &= \textstyle \frac{1}{I_1K_1} \left( \frac{4}{z}\frac{I_1}{I_0}\frac{K_1}{K_0}- \frac{4}{z} +2\left(\frac{I_0}{I_1}-\frac{K_0}{K_1}\right) +z\left(\frac{I_0}{I_1}-\frac{I_1}{I_0}\right)\left(\frac{K_0}{K_1}-\frac{K_1}{K_0}\right) \right)\\
Q_{\rm nF}(z) &= \textstyle \frac{1}{I_1K_1} \bigg( \frac{12}{z^2} + \frac{6}{z}\left(\frac{K_0}{K_1}-\frac{I_0}{I_1}+\frac{K_1}{K_0}-\frac{I_1}{I_0}\right) -3\left(\frac{I_0}{I_1}\left(\frac{K_0}{K_1}+\frac{K_1}{K_0}\right)+\frac{I_1}{I_0}\frac{K_0}{K_1}\right) \\
&\qquad \textstyle + \left(\frac{I_1}{I_0}\frac{K_1}{K_0} +8\right) + 2z\left(\frac{K_0}{K_1}-\frac{I_0}{I_1}\right) \bigg)  \\
Q_{\rm nG}(z) &= \textstyle \left(\frac{2}{z} + \left(1- z\frac{I_0}{I_1}\right)\left(\frac{I_0}{I_1}-\frac{I_1}{I_0}\right) \right) \left(\frac{2}{z}- \left(1+ z\frac{K_0}{K_1}\right)\left(\frac{K_0}{K_1}-\frac{K_1}{K_0}\right) \right) \,.
\end{aligned}
\end{equation}
\end{lemma}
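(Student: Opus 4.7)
The plan is to exploit the $s$-translation and $\theta$-rotation symmetries of $\mc{C}_\epsilon$: both $\overline{\mc{D}}$ and $\overline{\mc{S}}$ commute with these symmetries, so they act diagonally on the joint Fourier basis $e^{2\pi iks}e^{in\theta}$, with a finite matrix-valued action on the three cylindrical components $(\be_r, \be_\theta, \be_z)$ for each fixed $(k,n)$. The tangential test function $e^{2\pi iks}\be_z$ is a pure $n=0$ mode whose image is forced into the $\be_r$--$\be_z$ subspace by the reflection symmetry $\theta\mapsto -\theta$. The normal test functions decompose as pure $n=\pm 1$ modes after expanding $\be_x = \cos\theta\be_r - \sin\theta\be_\theta$ and $\be_y = \sin\theta\be_r + \cos\theta\be_\theta$, and their images live in the full $(\be_r, \be_\theta, \be_z)$ span with a $3\times 3$ multiplier. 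The $i\,{\rm sgn}(k)$ factor that naturally accompanies odd-in-$(s-s')$ components of the Stokeslet explains the imaginary prefactors on $Q_{\rm tF}, Q_{\rm nP}, m_{\epsilon,{\rm tB}}, m_{\epsilon,{\rm nC}}, m_{\epsilon,{\rm nE}}$.

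The core computational tool is the Fourier--Bessel expansion of the Stokeslet about $\mc{C}_\epsilon$. Taking the Fourier transform in $s$, each wavenumber slice of $\mc{G}(\bx, \bx')$ reduces to a Green's tensor for a 2D modified-Helmholtz-type Stokes problem in $(r, \theta)$, whose exterior/interior fundamental solutions are built from $K_n(2\pi|k|r)/I_n(2\pi|k|r)$. Concretely, I would start from the identity
\begin{equation*}
\frac{1}{|\bx-\bx'|} = \frac{1}{\pi}\int_0^\infty \cos(k(s-s'))\Big[I_0(kr_<)K_0(kr_>) + 2\sum_{n\ge 1}\cos(n(\theta-\theta'))I_n(kr_<)K_n(kr_>)\Big]\,dk
\end{equation*}
and obtain the expansion of $(\bx-\bx')\otimes(\bx-\bx')/|\bx-\bx'|^3$ by symmetric differentiation. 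Substituting into $\overline{\mc{S}}[\bm{\varphi}]$ and $\overline{\mc{D}}[\bm{\varphi}]$ (with the latter obtained from $\overline{\mc{S}}$ by differentiating along $\bm n' = -\be_r(\theta')$ and using the stresslet formula \eqref{eq:stresslet}) and setting $r = r' = \epsilon$, the $s'$ integral collapses the spectral variable against $2\pi k$ by Fourier orthogonality and the $\theta'$ integral selects the admissible angular mode. What remains are closed-form expressions in $I_0, I_1, K_0, K_1$ evaluated at $z=2\pi\epsilon|k|$ (any intermediate $I_2, K_2$ get eliminated by the recurrences $K_2 = K_0 + (2/z)K_1$ and $I_2 = I_0 - (2/z)I_1$). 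This directly yields the formulas \eqref{eq:DL_tangential} and \eqref{eq:DL_normal} for $\overline{\mc{D}}$, and also produces the forward multiplier matrices for $\overline{\mc{S}}$.

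For $\overline{\mc{S}}^{-1}$, invertibility of the forward multiplier matrix on each admissible $(k,n)$ mode follows from unique solvability of the exterior Stokes Dirichlet problem on $\mc{C}_\epsilon$ with $\theta$-independent data decaying at infinity, since the single layer potential parameterizes all such solutions. Direct algebraic inversion of the $2\times 2$ tangential matrix acting on $(\be_z, \be_r)$ produces $\bm{M}_{\rm S,t}^{-1}$ with entries \eqref{eq:Sinv_tangential2}, and the common denominator $Q_{\rm tD}$ is simply the determinant of the uninverted matrix; analogously the $3\times 3$ normal inversion gives $\bm{M}_{\rm S,n}^{-1}$ and the denominator $Q_{\rm nG}$ in \eqref{eq:Sinv_components}. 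The main obstacle is algebraic bookkeeping rather than any conceptual difficulty, especially for the $3\times 3$ normal-direction inversion: cleaning the inverse into the form \eqref{eq:Sinv_components} requires systematic use of the Bessel recurrences, the derivative identities $K_0'=-K_1$ and $I_0' = I_1$, and the Wronskian $I_n K_n' - I_n' K_n = -1/z$. As a sanity check, angle-averaging $\tfrac{1}{2}\bv - \overline{\mc{D}}[\bv] = \overline{\mc{S}}[\bm w]$ against $\epsilon\,d\theta$ for $\theta$-independent $\bv$ must reproduce the DtN eigenvalues \eqref{eq:eigsT}--\eqref{eq:eigsN} of Proposition \ref{prop:Leps_spectrum}, which provides a clean consistency check on all of the $Q$ expressions.
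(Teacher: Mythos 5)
Your plan is correct and would reach the same formulas, but it takes a genuinely different calculational route from the paper. The paper works entirely at the level of the kernels restricted to $\Gamma_\epsilon$: it writes out the Stokeslet and stresslet components explicitly in $(s-s',\theta-\theta',\theta+\theta')$, Fourier transforms in $s'$ only (turning powers of $((s-s')^2+4\epsilon^2\sin^2\tfrac{\theta-\theta'}{2})^{-1/2}$ into $K_0,K_1$ of argument $4\pi\epsilon|k|\sin\tfrac{\theta'}{2}$), and then closes the $\theta'$ integrals by a bank of six specialized identities $\int_0^{2\pi}\sin^m(\tfrac{\theta}{2})K_j(z\sin\tfrac{\theta}{2})\,d\theta$ expressed in $I_jK_j$ products. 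You propose instead to start from the full cylindrical-harmonic (addition-theorem) expansion of $1/|\bx-\bx'|$, differentiate it to build the Stokeslet and stresslet, and then collapse both the $s'$ and $\theta'$ integrals by orthogonality. The two strategies are equivalent (the paper's $\sin^m K_j$ identities are themselves consequences of the addition theorem), but the tradeoff is real: the paper's route is more elementary at the front (only one Fourier transform, then scalar integral identities) at the cost of computing several explicit tensor components; yours front-loads the machinery into the harmonic expansion and then derivatives of it, which is cleaner in $(k,n)$ bookkeeping but heavier in the tensorial differentiation of the expansion. Either works, and your consistency check against Proposition \ref{prop:Leps_spectrum} is exactly the one the paper uses (via the plot in Figure \ref{fig:multipliers}).

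One caveat: your stated reason for invertibility of the forward multiplier matrices — that the exterior Dirichlet problem is uniquely solvable and the single layer potential parameterizes all decaying solutions — is heuristic and not something the paper relies on. Surjectivity and injectivity of the single layer at each mode is not automatic; the paper simply computes the $2\times 2$ and $3\times 3$ forward matrices explicitly (\eqref{eq:SL_tangential} and \eqref{eq:straight_SL_n} in the appendix) and inverts them, with nonvanishing of the determinants $Q_{\rm tD}$, $Q_{\rm nG}$ established separately via the Bessel-ratio bounds of Propositions \ref{prop:besselK_bounds}--\ref{prop:besselI_2}. Since you also propose direct algebraic inversion, this does not break your argument, but the abstract PDE justification should be replaced by the explicit determinant computation.
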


The calculation of the explicit symbols in Lemma \ref{lem:straight_SD} appears in Appendix \ref{app:symbol_calc}.
For data of the form $\bm{F}_z$ \eqref{eq:Fz} or $\bm{F}_x,\bm{F}_y$ \eqref{eq:FxFy}, instead of working with the full inverse single layer operator $\overline{\mc{S}}^{-1}$, it will be convenient to define the following angle-averaged inverse single layer along $\mc{C}_\epsilon$: 
\begin{equation}\label{eq:Aeps}
\overline{\mc{A}}_\epsilon[\bm{F}_j] := \int_0^{2\pi}\overline{\mc{S}}^{-1}[\bm{F}_j]\,\epsilon \,d\theta\,.
\end{equation}
Using that 
\begin{align*}
\int_0^{2\pi} \be_z \,d\theta &= 2\pi\be_z\,, \quad \int_0^{2\pi}\be_r\,d\theta=\int_0^{2\pi}\be_\theta\,d\theta=\int_0^{2\pi}\cos\theta\be_z\,d\theta=0\,, \\
\int_0^{2\pi}\cos\theta\be_r\,d\theta&=-\int_0^{2\pi}\sin\theta\be_\theta\,d\theta=\pi \be_x \,, \qquad 
\int_0^{2\pi}\sin\theta\be_r\,d\theta=\int_0^{2\pi}\cos\theta\be_\theta\,d\theta=\pi \be_y\,,
\end{align*}
we immediately obtain the following corollary to Lemma \ref{lem:straight_SD}. 
\begin{corollary}[Angle-averaged single layer]\label{cor:Aeps}
When applied to functions $\bm{F}_\mu$ of the form \eqref{eq:Fz} or \eqref{eq:FxFy}, the angle-averaged inverse single layer operator $\overline{\mc{A}}_\epsilon$ given by \eqref{eq:Aeps} satisfies
\begin{equation}
\overline{\mc{A}}_\epsilon[\bm{F}_\mu] = \pi \big( m_{\epsilon,r}^\mu(k)f_r^\mu + m_{\epsilon,\theta}^\mu(k)f_\theta^\mu + m_{\epsilon,z}^\mu(k)f_z^\mu \big)\, e^{2\pi iks}\be_\mu\,, \qquad \mu=x,y,z\,,
\end{equation}
where the tangential multipliers $m_{\epsilon,\ell}^z$ are given by
\begin{equation}\label{eq:Aeps_mt}
m_{\epsilon,z}^z(k) = 2\,m_{\epsilon,{\rm tA}}(k)\,, \qquad 
m_{\epsilon,\theta}^z(k) = 0\,, \qquad
m_{\epsilon,r}^z(k) = -2i\,m_{\epsilon,{\rm tB}}(k)\,;
\end{equation}
and the normal direction multipliers $m_{\epsilon,\ell}^x=m_{\epsilon,\ell}^y$ are given by 
\begin{equation}\label{eq:Aeps_mn}
\begin{aligned}
m_{\epsilon,r}^x(k)&=m_{\epsilon,r}^y(k) = m_{\epsilon,{\rm nA}}(k)-m_{\epsilon,{\rm nB}}(k)\,, \\
m_{\epsilon,\theta}^x(k)&=m_{\epsilon,\theta}^y(k) = m_{\epsilon,{\rm nB}}(k)-m_{\epsilon,{\rm nD}}(k)\,, \\
m_{\epsilon,z}^x(k)&=m_{\epsilon,z}^y(k) = i\,(m_{\epsilon,{\rm nC}}(k)+m_{\epsilon,{\rm nE}}(k)) \,.
\end{aligned}
\end{equation}
\end{corollary}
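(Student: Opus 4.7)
The statement is essentially a computation: once the inverse single layer $\overline{\mc{S}}^{-1}$ is known explicitly from Lemma \ref{lem:straight_SD}, all that remains is to apply the operator to the prescribed data, multiply by $\epsilon$, and perform the $\theta$-integration term by term.

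The plan is to treat the tangential and normal cases separately. For the tangential case, I would apply the matrix multiplier $\bm{M}_{\rm S,t}^{-1}(k)$ from \eqref{eq:Sinv_tangential1} to $\bm{F}_z=(f_z^z\be_z+f_r^z\be_r)e^{2\pi iks}$, using the rule $(\bm{a}\otimes\bm{b})\bm{c} = (\bm{b}\cdot\bm{c})\bm{a}$ on the basis vectors $\be_z$ and $\be_r$. This yields
\begin{equation*}
\overline{\mc{S}}^{-1}[\bm{F}_z] \;=\; \epsilon^{-1}\bigl((m_{\epsilon,{\rm tA}}f_z^z-i\,m_{\epsilon,{\rm tB}}f_r^z)\be_z \;+\;(i\,m_{\epsilon,{\rm tB}}f_z^z+m_{\epsilon,{\rm tC}}f_r^z)\be_r\bigr)e^{2\pi iks}.
\end{equation*}
Multiplying by $\epsilon$ and integrating in $\theta$ then uses only the two identities $\int_0^{2\pi}\be_z\,d\theta=2\pi\be_z$ and $\int_0^{2\pi}\be_r\,d\theta=0$. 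The $\be_r$ piece drops out, and the $\be_z$ piece gives $2\pi(m_{\epsilon,{\rm tA}}f_z^z-i\,m_{\epsilon,{\rm tB}}f_r^z)\be_z e^{2\pi iks}$, which after factoring out $\pi$ matches \eqref{eq:Aeps_mt}.

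For the normal case, I would proceed identically but with more bookkeeping, since the data $\bm{F}_x,\bm{F}_y$ in \eqref{eq:FxFy} carries explicit $\cos\theta$, $\sin\theta$ factors and the multiplier $\bm{M}_{\rm S,n}^{-1}(k)$ in \eqref{eq:Sinv_normal2} involves six tensor products among $\be_r,\be_\theta,\be_z$. Expanding $\bm{M}_{\rm S,n}^{-1}(k)\bm{F}_x$ using $(\bm{a}\otimes\bm{b})\bm{c}=(\bm{b}\cdot\bm{c})\bm{a}$ and noting that $\be_r,\be_\theta$ are mutually orthogonal and each orthogonal to $\be_z$, the result is a sum of terms each carrying a single factor $\cos\theta$ or $\sin\theta$ and a basis vector $\be_r,\be_\theta$, or $\be_z$. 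The angle integration is then carried out using the listed identities, namely $\int\cos\theta\,\be_r\,d\theta=-\int\sin\theta\,\be_\theta\,d\theta=\pi\be_x$, $\int\sin\theta\,\be_r\,d\theta=\int\cos\theta\,\be_\theta\,d\theta=\pi\be_y$, and $\int\cos\theta\be_z\,d\theta=0$ (and the analogous pairings for $\bm{F}_y$). Collecting coefficients of $\be_x$ (or $\be_y$) yields precisely the combinations in \eqref{eq:Aeps_mn}; for example, the $\be_r$ contribution pairs $m_{\epsilon,{\rm nA}}f_r^x\cos\theta\be_r$ with the $\be_\theta$ contribution $m_{\epsilon,{\rm nB}}f_\theta^x\sin\theta\be_r$ coming from $\be_r\otimes\be_\theta$, and integration produces $\pi(m_{\epsilon,{\rm nA}}-m_{\epsilon,{\rm nB}})f_r^x\be_x$ after using $\int\sin\theta\be_r\,d\theta=\pi\be_y$ vs.\ $\int\cos\theta\be_r\,d\theta=\pi\be_x$ with the opposing sign conventions in \eqref{eq:FxFy}.

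There is no genuine obstacle here; the only care required is in tracking the signs arising from the antisymmetric combinations $\be_r\otimes\be_z-\be_z\otimes\be_r$ and $\be_z\otimes\be_\theta-\be_\theta\otimes\be_z$ in \eqref{eq:Sinv_normal2}, together with the sign built into the definition $\bm{F}_y=(f_r^y\sin\theta\be_r-f_\theta^y\cos\theta\be_\theta+f_z^y\sin\theta\be_z)e^{2\pi iks}$. Once these sign conventions are handled consistently, the $\bm{F}_x$ and $\bm{F}_y$ cases give the same multiplier entries $m_{\epsilon,\ell}^x=m_{\epsilon,\ell}^y$, as asserted. The corollary then follows by reading off the coefficients directly.
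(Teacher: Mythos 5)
Your tangential case is correct: $\bm{F}_z$ carries no $\cos\theta$ or $\sin\theta$ factors, so the raw rank-one rule $(\bm{a}\otimes\bm{b})\bm{c}=(\bm{b}\cdot\bm{c})\bm{a}$ applied to $\bm{F}_z$ coincides with the intended action, and your computation reproduces \eqref{eq:Aeps_mt} exactly.

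The normal case, however, contains a genuine gap. The paper's $\bm{M}_{\rm S,n}^{-1}(k)$ is \emph{not} meant to act on $\bm{F}_x$ by the raw tensor product on $\R^3$. Appendix~B (see \eqref{eq:straight_SL_n}) shows that $\overline{\mc{S}}$ is a $3\times3$ matrix acting on the coefficient triple in the ordered basis $(\cos\theta\be_r,\ \sin\theta\be_\theta,\ \cos\theta\be_z)$; the symbols $\be_r\otimes\be_\theta$, etc., in \eqref{eq:Sinv_normal2} are index labels for entries of the \emph{inverse} of that $3\times3$ matrix. So $m_{\epsilon,{\rm nB}}\,\be_r\otimes\be_\theta$ sends $f_\theta^x$ into the $r$-slot, producing the term $m_{\epsilon,{\rm nB}}f_\theta^x\cos\theta\,\be_r$; under your raw rule it would instead produce $m_{\epsilon,{\rm nB}}f_\theta^x\sin\theta\,\be_r$, which is the wrong trigonometric factor. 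With the raw rule, $\overline{\mc{S}}^{-1}[\bm{F}_x]$ is not of $\bm{F}_x$ form, and after integrating in $\theta$ one finds a nonzero $\be_y$-component (e.g.\ $\pi m_{\epsilon,{\rm nB}}f_\theta^x\be_y$ from $\int\sin\theta\,\be_r\,d\theta$) and a $\be_x$-coefficient $\pi\big(m_{\epsilon,{\rm nA}}f_r^x-m_{\epsilon,{\rm nD}}f_\theta^x+i\,m_{\epsilon,{\rm nC}}f_z^x\big)$ missing the $-m_{\epsilon,{\rm nB}}f_r^x$, $+m_{\epsilon,{\rm nB}}f_\theta^x$ and $+i\,m_{\epsilon,{\rm nE}}f_z^x$ cross terms. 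Indeed your stated example is already inconsistent: integrating $m_{\epsilon,{\rm nA}}f_r^x\cos\theta\,\be_r+m_{\epsilon,{\rm nB}}f_\theta^x\sin\theta\,\be_r$ gives $\pi m_{\epsilon,{\rm nA}}f_r^x\,\be_x+\pi m_{\epsilon,{\rm nB}}f_\theta^x\,\be_y$, not $\pi(m_{\epsilon,{\rm nA}}-m_{\epsilon,{\rm nB}})f_r^x\,\be_x$. To fix the proof, apply $\bm{M}_{\rm S,n}^{-1}(k)$ to the coefficient triple to obtain $\tilde f_r^x=m_{\epsilon,{\rm nA}}f_r^x+m_{\epsilon,{\rm nB}}f_\theta^x+i\,m_{\epsilon,{\rm nC}}f_z^x$ and $\tilde f_\theta^x=m_{\epsilon,{\rm nB}}f_r^x+m_{\epsilon,{\rm nD}}f_\theta^x-i\,m_{\epsilon,{\rm nE}}f_z^x$; then $\int\big(\tilde f_r^x\cos\theta\,\be_r+\tilde f_\theta^x\sin\theta\,\be_\theta+\tilde f_z^x\cos\theta\,\be_z\big)\,d\theta=\pi(\tilde f_r^x-\tilde f_\theta^x)\be_x$, which is precisely \eqref{eq:Aeps_mn}.
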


It is ultimately the angle-averaged operator $\overline{\mc{A}}_\epsilon$ for which we will need detailed information. Before we turn to the mapping properties of $\overline{\mc{A}}_\epsilon$, we may perform a sanity check to demonstrate that we indeed have the correct expressions for the single and double layer multipliers on $\mc{C}_\epsilon$. 
In particular, we can verify that the double and single layer expressions of Lemma \ref{lem:straight_SD} combine to yield the multipliers $m_{\epsilon,{\rm t}}^{-1}$ and $m_{\epsilon,{\rm n}}^{-1}$ given by \eqref{eq:eigsT} and \eqref{eq:eigsN}. 
For the tangential direction, we may combine \eqref{eq:DL_tangential} and \eqref{eq:Aeps_mt} to obtain 
\begin{equation}\label{eq:recover_mt}
\begin{aligned}
\overline{\mc{L}}_\epsilon^{-1}[e^{2\pi iks}\be_z] &= \overline{\mc{A}}_\epsilon\big[\textstyle(\frac{1}{2}{\bf I}-\overline{\mc{D}})[e^{2\pi iks}\be_z]\big]\\
&= 2\pi\big(m_{\epsilon,{\rm tA}}(k)Q_{\rm tE}(2\pi\epsilon\abs{k})+ m_{\epsilon,{\rm tB}}(k)Q_{\rm tF}(2\pi\epsilon\abs{k})\big)\, e^{2\pi iks}\be_z\,.
\end{aligned}
\end{equation}
For the normal direction, we may combine \eqref{eq:DL_normal} and \eqref{eq:Aeps_mn} to obtain
\begin{equation}\label{eq:recover_mn}
\begin{aligned}
\overline{\mc{L}}_\epsilon^{-1}[e^{2\pi iks}\be_x] &= 
\overline{\mc{A}}_\epsilon\big[\textstyle(\frac{1}{2}{\bf I}-\overline{\mc{D}})[e^{2\pi iks}\be_x]\big] \\
&= \pi \bigg( \big(m_{\epsilon,{\rm nA}}(k)-m_{\epsilon,{\rm nB}}(k) \big)Q_{\rm nN}(2\pi\epsilon\abs{k}) - \big(m_{\epsilon,{\rm nB}}(k)-m_{\epsilon,{\rm nD}}(k) \big)Q_{\rm nO}(2\pi\epsilon\abs{k}) \\
&\qquad - \big(m_{\epsilon,{\rm nC}}(k)+m_{\epsilon,{\rm nE}}(k)\big)Q_{\rm nP}(2\pi\epsilon\abs{k}) \bigg) \, e^{2\pi iks}\be_x\,.
\end{aligned}
\end{equation}
The $\be_y$ direction is identical. Here each of the $m_{\epsilon,j}$ and $Q_j$ are as in Lemma \ref{lem:straight_SD}. Just by looking at the components of the expressions \eqref{eq:recover_mt} and \eqref{eq:recover_mn}, it is not clear that these are the same as the multipliers $m_{\epsilon,{\rm t}}^{-1}(k)$ and $m_{\epsilon,{\rm n}}^{-1}(k)$ of Proposition \ref{prop:Leps_spectrum}. However, by plotting (see figure \ref{fig:multipliers}), we can verify that \eqref{eq:recover_mt} and \eqref{eq:recover_mn} are in fact the same as \eqref{eq:eigsT} and \eqref{eq:eigsN} from Proposition \ref{prop:Leps_spectrum}. 
\begin{figure}[!ht]
\centering
\includegraphics[scale=0.6]{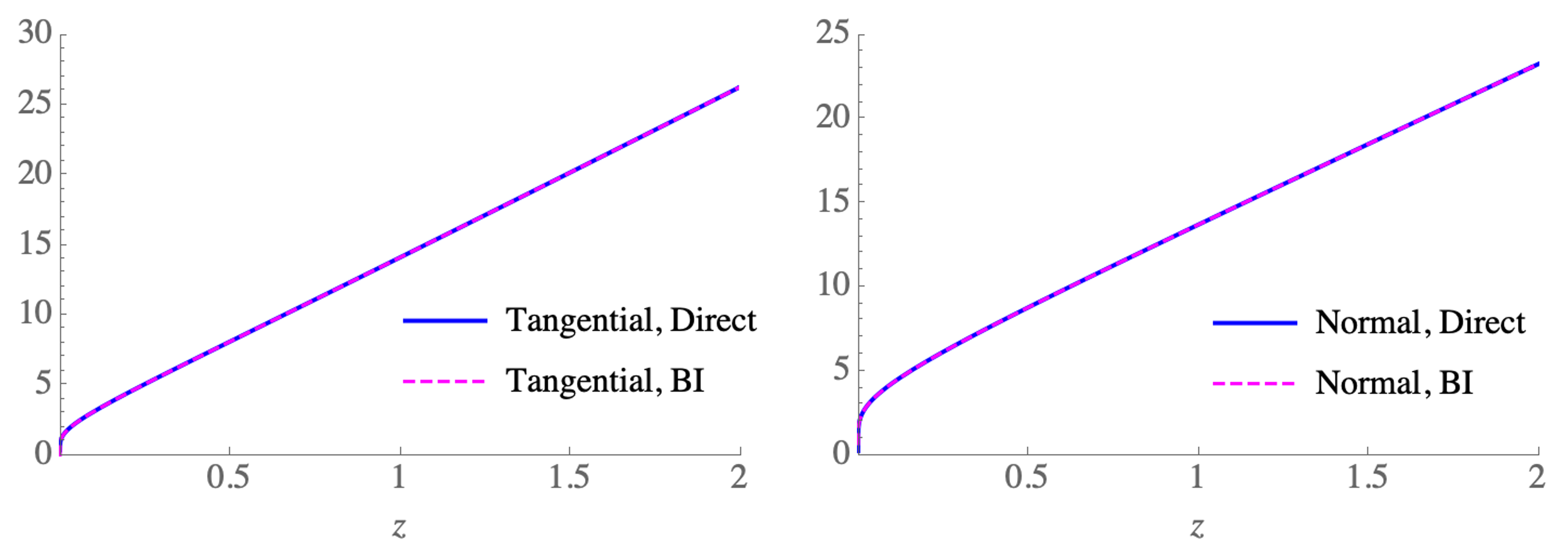}
\caption{Plot of the eigenvalue expressions in directions tangential (left) and normal (right) to the straight filament $\mc{C}_\epsilon$. Here the horizontal axis is $z=2\pi\epsilon \abs{k}$. In both plots, the solid blue curves (``Direct'') are given by the expressions in Proposition \ref{prop:Leps_spectrum} while the dotted curves (``BI'') are the boundary integral expressions \eqref{eq:recover_mt} and \eqref{eq:recover_mn}, respectively.}
\label{fig:multipliers}
\end{figure}

The important takeaway from the explicit expressions in Lemma \ref{lem:straight_SD} is that we can then directly calculate the mapping properties of the angle-averaged operator $\overline{\mc{A}}_\epsilon$ when applied to functions consisting of zero- and one-modes in $\theta$.
This is in direct analogy to the Laplace setting \cite{laplace}, but the calculation here is much more involved.

For a general vector-valued function $\bm{g}(s,\theta)$ defined along the straight cylinder $\mc{C}_\epsilon$, we write  
\begin{align*}
\bm{g}(s,\theta) = \sum_{j\in\{r,\theta,z\}}\bigg(g_0^j(s)+\sum_{\ell=0}^\infty g^j_{{\rm c},\ell}(s) \cos(\ell\theta) +g^j_{\rm{s},\ell}(s) \sin(\ell\theta) \bigg)\be_j\,.
\end{align*}
We define the projection $\P_{01}$ onto zero- and one-modes in $\theta$ by
\begin{equation}\label{eq:P01_def}
\begin{aligned}
\P_{01}\bm{g} &= g_0^z\,\be_z + g_0^r\,\be_r + g_{{\rm c},1}^r\cos\theta\be_r + g_{{\rm s},1}^\theta\sin\theta\be_\theta + g_{{\rm c},1}^z\cos\theta\be_z \\
&\qquad + g_{{\rm s},1}^r\sin\theta\be_r + g_{{\rm c},1}^\theta\cos\theta\be_\theta  + g_{{\rm s},1}^z\sin\theta\be_z\,.
\end{aligned}
\end{equation}
\begin{lemma}\label{lem:Sinv_mapping}
Let $\overline{\mc{A}}_\epsilon$ be the angle-averaged inverse single layer operator about $\mc{C}_\epsilon$, given by \eqref{eq:Aeps}. Given $\bm{g}\in C^{1,\alpha}_s(\Gamma_\epsilon)$ and the projection operator \eqref{eq:P01_def}, we may decompose $\overline{\mc{A}}_\epsilon\P_{01}$ as
\begin{align*}
\overline{\mc{A}}_\epsilon[\P_{01}\bm{g}] = \overline{\mc{A}}_\epsilon^\epsilon[\bm{g}] + \overline{\mc{A}}_\epsilon^+[\bm{g}]
\end{align*}
where
\begin{equation}
\begin{aligned}
\norm{\overline{\mc{A}}_\epsilon^\epsilon[\bm{g}]}_{C^{0,\alpha}(\T)}
&\le c\norm{\bm{g}}_{C_s^{0,\alpha}(\Gamma_\epsilon)}+c\,\epsilon\abs{\bm{g}}_{\dot C_s^{1,\alpha}(\Gamma_\epsilon)}\,, \\
\norm{\overline{\mc{A}}_\epsilon^+[\bm{g}]}_{C^{1,\alpha}(\T)} &\le  
c\,\epsilon^{-1}\norm{\bm{g}}_{C_s^{1,\alpha}(\Gamma_\epsilon)}\,.
\end{aligned}
\end{equation}
\end{lemma}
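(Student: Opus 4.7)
The plan is to use the explicit multiplier representation from Corollary \ref{cor:Aeps} to reduce the claim to a statement about scalar Fourier multipliers on the torus. Since $\P_{01}\bm{g}$ retains only the zero and one modes of $\bm{g}$ in $\theta$, the operator $\overline{\mc{A}}_\epsilon[\P_{01}\bm{g}]$ acts as a direct sum of 1D Fourier multiplier operators in the $s$-variable on the Fourier coefficients of the components $g_0^z$, $g_0^r$, $g^r_{{\rm c},1}$, $g^\theta_{{\rm s},1}$, $g^z_{{\rm c},1}$, and their $\sin$-analogues. Each such multiplier $m_{\epsilon,\ell}^\mu(k)$ has the form $m(2\pi\epsilon|k|)$ for an explicit rational combination of modified Bessel functions $I_j, K_j$ spelled out in Lemma \ref{lem:straight_SD} and assembled in \eqref{eq:Aeps_mt}--\eqref{eq:Aeps_mn}.

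The first main step is an asymptotic analysis of these multipliers. Using the small-argument expansions $K_0(z) \sim -\log z$, $K_1(z) \sim 1/z$, $I_0(z) \sim 1$, $I_1(z) \sim z/2$ together with the large-argument expansion $I_\nu(z), K_\nu(z) \sim e^{\pm z}\sqrt{\pi/(2z)}\,(1 + O(1/z))$, my aim is to show that each $m(z)$ admits a splitting $m(z) = m_\sharp(z) + m_\flat(z)$, where $m_\sharp(z)$ is an explicit function that remains bounded as $z \to 0$ and grows at most linearly ($\sim c z$) as $z \to \infty$ with controlled derivatives $|m_\sharp^{(j)}(z)| \lesssim (1+z)^{1-j}$, while $m_\flat$ is smooth and decays faster than any polynomial as $z\to\infty$.

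The second step is to pass from symbol estimates to operator estimates on H\"older spaces on $\T$. I would define $\overline{\mc{A}}_\epsilon^\epsilon$ and $\overline{\mc{A}}_\epsilon^+$ as the Fourier multiplier operators associated with $m_\sharp(2\pi\epsilon|\cdot|)$ and $m_\flat(2\pi\epsilon|\cdot|)$, respectively, applied componentwise to the relevant modes of $\bm{g}$. For $\overline{\mc{A}}_\epsilon^\epsilon$, Mikhlin-type bounds for the bounded part of $m_\sharp$ yield a $C^{0,\alpha}(\T)\to C^{0,\alpha}(\T)$ estimate uniform in $\epsilon$, while the linearly growing piece $\sim \epsilon|k|$ corresponds to an order-one singular integral with $\epsilon$ prefactor, contributing the $c\,\epsilon\,|\bm{g}|_{\dot C^{1,\alpha}_s}$ term. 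For $\overline{\mc{A}}_\epsilon^+$, the rapid decay of $m_\flat$ implies that the associated convolution kernel on $\T$ is essentially a Schwartz function concentrated at scale $\epsilon$; its $C^{1,\alpha}(\T)\to C^{1,\alpha}(\T)$ operator norm then scales like $\epsilon^{-1}$, giving the second bound.

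The main obstacle is the first step: extracting the splitting $m = m_\sharp + m_\flat$ from the specific Bessel combinations $Q_{\rm tj}/Q_{\rm tD}$ and $Q_{\rm nj}/Q_{\rm nG}$ of Lemma \ref{lem:straight_SD}. The denominators involve delicate cancellations such as $1 + (z/2)(K_0/K_1 - K_1/K_0)$, whose leading behavior vanishes to several orders at small $z$, and at large $z$ requires keeping multiple terms of the asymptotic expansion of $I_\nu/I_{\nu\pm1}$ and $K_\nu/K_{\nu\pm 1}$. For the normal direction one must further exploit the specific sums $m_{\epsilon,\rm nA} - m_{\epsilon,\rm nB}$, $m_{\epsilon,\rm nB} - m_{\epsilon,\rm nD}$, and $m_{\epsilon,\rm nC} + m_{\epsilon,\rm nE}$ from \eqref{eq:Aeps_mn}, whose individual summands blow up or vanish more severely than the combinations themselves. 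Once these cancellations are identified and the resulting asymptotics are in hand, the translation into H\"older operator estimates is a fairly standard combination of Bernstein-type bounds and kernel size estimates on the torus.
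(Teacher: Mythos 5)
Your proposal has a genuine structural gap. You propose to split \emph{each} multiplier $m(z)$ into a ``main'' part $m_\sharp$ (bounded at $z=0$, $O(z)$ at infinity) plus a ``correction'' part $m_\flat$ that decays faster than any polynomial as $z\to\infty$, and then to assign $\overline{\mc{A}}_\epsilon^\epsilon$ the $m_\sharp$-parts and $\overline{\mc{A}}_\epsilon^+$ the $m_\flat$-parts. But the multipliers appearing in $\overline{\mc{A}}_\epsilon\P_{01}$ do not have this structure. There are five distinct multipliers, $m_{\epsilon,z}^z, m_{\epsilon,r}^x, m_{\epsilon,\theta}^x$ (attached to the projections $\P_0^z,\P_1^r,\P_1^\theta$) and $m_{\epsilon,r}^z, m_{\epsilon,z}^x$ (attached to $\P_0^r,\P_1^z$). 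The first three all behave like $\epsilon|\xi|$ at high frequency and $O(1)$ at low frequency. The latter two are genuinely different: $m_{\epsilon,r}^z(\xi)$ is $O(1)$ at high frequency with derivative $O(|\xi|^{-1})$, and at low frequency it blows up like $\epsilon^{-1}|\xi|^{-1}$ (because $m_{\rm tB}(z)\sim z^{-1}$ near $z=0$); $m_{\epsilon,z}^x$ is $O(1)$ at high frequency and $O(\epsilon|\log\epsilon|\,|\xi|)$ at low frequency. Neither of these admits a superpolynomially-decaying split-off piece; their behavior at large $\xi$ is merely one power of $\xi$ better than the other three.

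Consequently your description of $\overline{\mc{A}}_\epsilon^+$ as ``essentially a Schwartz function concentrated at scale $\epsilon$'' is incorrect; in particular the $\epsilon^{-1}$ in the $C^{1,\alpha}\to C^{1,\alpha}$ bound for $\overline{\mc{A}}_\epsilon^+$ does not come from kernel concentration but from the $\epsilon^{-1}$ amplitude at low frequency of $m_{\epsilon,r}^z$. The correct decomposition is \emph{between multipliers rather than within each multiplier}: $\overline{\mc{A}}_\epsilon^\epsilon = \overline{\mc{A}}_\epsilon(\P_0^z+\P_1^r+\P_1^\theta)$ collects the three multipliers with uniformly good $\epsilon$-dependence (giving the $c\|\bm{g}\|_{C^{0,\alpha}_s}+c\epsilon|\bm{g}|_{\dot C^{1,\alpha}_s}$ estimate), while $\overline{\mc{A}}_\epsilon^+ = \overline{\mc{A}}_\epsilon(\P_0^r+\P_1^z)$ collects the two multipliers that have worse $\epsilon$-dependence but one extra order of decay in $|\xi|$, sufficient to map $C^{1,\alpha}_s$ to $C^{1,\alpha}$ with an $\epsilon^{-1}$ constant. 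Your asymptotic analysis of the Bessel expressions is the right starting point (and is essentially carried out via Lemmas \ref{lem:SLbounds_tang} and \ref{lem:SLbounds_norm}), and a Besov/Littlewood--Paley dyadic argument in the spirit of your second step does complete the proof; but without the insight that the relevant split is by component/projection rather than a single symbol decomposition, your claimed estimates would not go through.
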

The proof of Lemma \ref{lem:Sinv_mapping} appears in section \ref{subsec:straight_mapping}.

\subsubsection{Decomposition of SB DtN}\label{subsubsec:SB_DtN_decomp}
Given a curved filament $\Sigma_\epsilon$ with centerline $\X(s)\in C^{2,\beta}(\T)$, we consider the slender body Dirichlet-to-Neumann map $\mc{L}_\epsilon^{-1}$ along $\Gamma_\epsilon$ and aim to extract the corresponding map $\overline{\mc{L}}_\epsilon^{-1}$ about the straight filament $\mc{C}_\epsilon$ as the leading order behavior. 

Recalling the definition \eqref{eq:mapPhi_def} of the operator $\Phi$ identifying the normal/tangential directions along the curved and straight filaments, we begin by decomposing the boundary integral representation \eqref{eq:SB_DtN_layer} as 
\begin{equation}\label{eq:decomp_step1}
\textstyle (\frac{1}{2}{\bf I} - \overline{\mc{D}})[\Phi^{-1}\bv(s)]-\mc{R}_{\mc{D}}[\bv(s)] = \overline{\mc{S}}[\Phi^{-1}\bw_0^\Phi]+\mc{R}_{\mc{S}}[\bw_0^\Phi] + \Phi^{-1}\mc{S}\displaystyle\bigg[\Phi\int_\T\Phi^{-1}\bw(s,\theta)\,ds \bigg]\,,
\end{equation}
where $\bw_0^\Phi(s,\theta)$ is as in \eqref{eq:subtract_mean} and we define
\begin{equation}\label{eq:RS_RD_def}
\begin{aligned}
\mc{R}_{\mc{D}}[\bv(s)] &:= \Phi^{-1}\mc{D}[\bv(s)]-\overline{\mc{D}}[\Phi^{-1}\bv(s)]\,, \\
\mc{R}_{\mc{S}}[\bw_0^\Phi(s,\theta)] &:= \Phi^{-1}\mc{S}[\bw_0^\Phi(s,\theta)]-\overline{\mc{S}}[\Phi^{-1}\bw_0^\Phi(s,\theta)]\,.
\end{aligned}
\end{equation}
We next would like to use Lemma \ref{lem:straight_SD} to apply $\overline{\mc{S}}^{-1}$ to \eqref{eq:decomp_step1}, but in order to do so we must first apply the projection operator $\P_{01}$ \eqref{eq:P01_def} onto zero- and one-modes in $\theta$. 
Note that $\P_{01}$ commutes with the straight single and double layer operators $\overline{\mc{S}}$ and $\overline{\mc{D}}$ about $\mc{C}_\epsilon$. In particular, applying $\P_{01}$ to \eqref{eq:decomp_step1}, we have 
\begin{align*}
\textstyle (\frac{1}{2}{\bf I} - \overline{\mc{D}})[\Phi^{-1}\bv(s)]-\P_{01}\mc{R}_{\mc{D}}[\bv(s)] &= \overline{\mc{S}}[\P_{01}\Phi^{-1}\bw_0^\Phi]+ \P_{01}\mc{R}_{\mc{S}}[\bw_0^\Phi] + \P_{01}\Phi^{-1}\mc{S}\displaystyle\bigg[\Phi\int_\T\Phi^{-1}\bw(s,\theta)\,ds \bigg]\,.
\end{align*}
We may now apply $\overline{\mc{S}}^{-1}$ to the above equation, multiply by $\epsilon$, and integrate in $\theta$ to obtain 
\begin{align*}
\textstyle \overline{\mc{A}}_\epsilon\big[(\frac{1}{2}{\bf I}-\overline{\mc{D}})[\Phi^{-1}\bv(s)] \big] &= \displaystyle\int_0^{2\pi}\P_{01}(\Phi^{-1}\bw_0^\Phi)\,\epsilon\,d\theta
+ \overline{\A}_\epsilon\big[\P_{01}\mc{R}_{\mc{D}}[\bv]\big]
+ \overline{\A}_\epsilon\big[\P_{01}\mc{R}_{\mc{S}}[\bm{w}_0^\Phi]\big]  \\
&\qquad + \mc{A}_\epsilon\bigg[\P_{01}\Phi^{-1}\mc{S}\displaystyle\bigg[\Phi\int_\T\Phi^{-1}\bw(s,\theta)\,ds \bigg]\bigg]\\
&= \Phi^{-1}\int_0^{2\pi}\bm{w}(s,\theta)\,\epsilon\,d\theta
-\int_\T \Phi^{-1}\int_0^{2\pi}\bm{w}(s,\theta)\,\epsilon\, d\theta ds
+ \overline{\A}_\epsilon\big[\P_{01}\mc{R}_{\mc{D}}[\bv]\big]
 \\
&\quad + \overline{\A}_\epsilon\big[\P_{01}\mc{R}_{\mc{S}}[\bm{w}_0^\Phi]\big]+ \overline{\A}_\epsilon\bigg[\P_{01}\Phi^{-1}\mc{S}\displaystyle\bigg[\Phi\int_\T\Phi^{-1}\bw(s,\theta)\,ds \bigg]\bigg]\\
&= (\Phi^{-1}\bm{f})(s)+\Phi^{-1}\int_0^{2\pi}\bm{w}(s,\theta)\,\epsilon^2\wh\kappa\,d\theta -\int_\T \Phi^{-1}\int_0^{2\pi}\bm{w}(s,\theta)\,\epsilon\, d\theta ds\\
&\quad
+ \overline{\A}_\epsilon\big[\P_{01}\mc{R}_{\mc{D}}[\bv]\big]
+ \overline{\A}_\epsilon\big[\P_{01}\mc{R}_{\mc{S}}[\bm{w}_0^\Phi]\big] 
+ \overline{\A}_\epsilon\bigg[\P_{01}\Phi^{-1}\mc{S}\displaystyle\bigg[\Phi\int_\T\Phi^{-1}\bw(s,\theta)\,ds \bigg]\bigg]
\,.
\end{align*}
Here we have used the expression for $\bm{f}(s)$ from \eqref{eq:SB_DtN_layer}.
The left hand side is exactly $\overline{\mc{L}}_\epsilon^{-1}[\Phi^{-1}\bv(s)]$, and using that $\bm{f}(s) = \mc{L}_\epsilon^{-1}[\bv(s)]$, we thus obtain the following decomposition of the slender body DtN map: 
\begin{equation}\label{eq:SB_DtN_decomp}
\begin{aligned}
\mc{L}_\epsilon^{-1}[\bv(s)] &= \Phi\overline{\mc{L}}_\epsilon^{-1}[\Phi^{-1}\bv(s)]
- \int_0^{2\pi}\bm{w}(s,\theta)\,\epsilon^2\wh\kappa\,d\theta + \Phi\int_\T \Phi^{-1}\int_0^{2\pi}\bm{w}(s,\theta)\,\epsilon\, d\theta ds \\
&\quad 
- \Phi\overline{\A}_\epsilon\big[\P_{01}\mc{R}_{\mc{D}}[\bv]\big]
- \Phi\overline{\A}_\epsilon\big[\P_{01}\mc{R}_{\mc{S}}[\bm{w}_0^\Phi]\big] 
- \Phi\overline{\A}_\epsilon\bigg[\P_{01}\Phi^{-1}\mc{S}\displaystyle\bigg[\Phi\int_\T\Phi^{-1}\bw(s,\theta)\,ds \bigg]\bigg]\,.
\end{aligned}
\end{equation}

We will rely on this representation of $\mc{L}_\epsilon^{-1}$ to prove Theorem \ref{thm:decomp}. In particular, we show that the five remainder terms appearing in \eqref{eq:SB_DtN_decomp} are lower order with respect to regularity or size in $\epsilon$. These terms may each be bounded using a combination of Lemma \ref{lem:Sinv_mapping} and the following series of key lemmas.

The first two lemmas give the mapping properties of the single and double layer remainders $\mc{R}_\mc{S}$ and $\mc{R}_\mc{D}$, respectively.
\begin{lemma}[Single layer remainder]\label{lem:single_layer}
Let $0<\alpha<\beta<1$ and consider a filament $\Sigma_\epsilon$ with centerline $\X(s)\in C^{2,\beta}(\T)$. Let the single layer remainder $\mc{R}_{\mc{S}}$ be as defined in \eqref{eq:RS_RD_def}, and let the map $\Phi$ be as in \eqref{eq:mapPhi_def}. Given $\bm{\varphi}\in C^{0,\alpha}(\Gamma_\epsilon)$ with $\int_\T \Phi^{-1}\bm{\varphi}(s,\theta)\,ds=0$, $\mc{R}_{\mc{S}}$ may be decomposed as 
\begin{align*}
\mc{R}_{\mc{S}}[\bm{\varphi}] = \mc{R}_{\mc{S},\epsilon}[\bm{\varphi}] + \mc{R}_{\mc{S},+}[\bm{\varphi}]
\end{align*}
where
\begin{equation}\label{eq:RS_ests1}
\begin{aligned}
\norm{\mc{R}_{\mc{S},\epsilon}[\bm{\varphi}]}_{C^{0,\alpha}} &\le c(\kappa_{*,\alpha},c_\Gamma)\,\epsilon^{2-\alpha}\norm{\bm{\varphi}}_{L^\infty}\,, \quad
\abs{\mc{R}_{\mc{S},\epsilon}[\bm{\varphi}]}_{\dot C_s^{1,\alpha}} \le c(\kappa_{*,\alpha^+},c_\Gamma)\,\epsilon^{1-\alpha^+}\norm{\bm{\varphi}}_{C^{0,\alpha}}\\
\norm{\mc{R}_{\mc{S},+}[\bm{\varphi}]}_{C^{0,\alpha}} &\le c(\kappa_{*,\alpha},c_\Gamma)\,\epsilon^{1-\alpha}\norm{\bm{\varphi}}_{L^\infty}\,, \quad
\abs{\mc{R}_{\mc{S},+}[\bm{\varphi}]}_{\dot C_s^{1,\beta}} \le c(\kappa_{*,\beta},c_\Gamma)\,\epsilon^{1-2\beta}\norm{\bm{\varphi}}_{C^{0,\alpha}}
\end{aligned}
\end{equation}
for any $\alpha^+\in(\alpha,\beta]$.
Furthermore, given two nearby filaments with centerlines $\X^{(a)}(s)$, $\X^{(b)}(s)$ satisfying Lemma \ref{lem:XaXb_C2beta}, let $\bm{\varphi}_0^{\Phi^{(a)}}=\bm{\varphi}-\int_\T(\Phi^{(a)})^{-1}\bm{\varphi}(s,\theta)\,ds$ and $\bm{\varphi}_0^{\Phi^{(b)}}=\bm{\varphi}-\int_\T(\Phi^{(b)})^{-1}\bm{\varphi}(s,\theta)\,ds$. The difference between the corresponding single layer remainders $\mc{R}_{\mc{S}}^{(a)}$ and $\mc{R}_{\mc{S}}^{(b)}$ may be decomposed as above, and the components satisfy
\begin{equation}\label{eq:RS_ests2}
\begin{aligned}
\norm{\mc{R}_{\mc{S},\epsilon}^{(a)}[\bm{\varphi}_0^{\Phi^{(a)}}]-\mc{R}_{\mc{S},\epsilon}^{(b)}[\bm{\varphi}_0^{\Phi^{(b)}}]}_{C^{0,\alpha}} &\le c(\kappa_{*,\alpha}^{(a)},\kappa_{*,\alpha}^{(b)},c_\Gamma)\,\epsilon^{2-\alpha}\norm{\X^{(a)}-\X^{(b)}}_{C^{2,\alpha}}\norm{\bm{\varphi}}_{L^\infty} \\
\abs{\mc{R}_{\mc{S},\epsilon}^{(a)}[\bm{\varphi}_0^{\Phi^{(a)}}]-\mc{R}_{\mc{S},\epsilon}^{(b)}[\bm{\varphi}_0^{\Phi^{(b)}}]}_{\dot C^{1,\alpha}} &\le c(\kappa_{*,\alpha^+}^{(a)},\kappa_{*,\alpha^+}^{(b)},c_\Gamma)\,\epsilon^{1-\alpha^+}\norm{\X^{(a)}-\X^{(b)}}_{C^{2,\alpha^+}}\norm{\bm{\varphi}}_{C^{0,\alpha}}\\
\norm{\mc{R}_{\mc{S},+}^{(a)}[\bm{\varphi}_0^{\Phi^{(a)}}]-\mc{R}_{\mc{S},+}^{(b)}[\bm{\varphi}_0^{\Phi^{(b)}}]}_{C^{0,\alpha}} &\le c(\kappa_{*,\alpha}^{(a)},\kappa_{*,\alpha}^{(b)},c_\Gamma)\,\epsilon^{1-\alpha}\norm{\X^{(a)}-\X^{(b)}}_{C^{2,\alpha}}\norm{\bm{\varphi}}_{L^\infty} \\
\abs{\mc{R}_{\mc{S},+}^{(a)}[\bm{\varphi}_0^{\Phi^{(a)}}]-\mc{R}_{\mc{S},+}^{(b)}[\bm{\varphi}_0^{\Phi^{(b)}}]}_{\dot C^{1,\beta}} &\le c(\kappa_{*,\beta}^{(a)},\kappa_{*,\beta}^{(b)},c_\Gamma)\,\epsilon^{1-2\beta}\norm{\X^{(a)}-\X^{(b)}}_{C^{2,\beta}}\norm{\bm{\varphi}}_{C^{0,\alpha}}\,.
\end{aligned}
\end{equation}
\end{lemma}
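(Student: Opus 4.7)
The plan is to derive Lemma~\ref{lem:single_layer} by an explicit expansion of the Stokeslet kernel about the straight-cylinder geometry, in direct analogy with the Laplace-setting argument of \cite{laplace} but accounting for the tensorial structure of $\mc{G}$. Parameterize $\Gamma_\epsilon$ by $\bx(s,\theta) = \X(s) + \epsilon\be_r(s,\theta)$ and $\mc{C}_\epsilon$ by $\barx(s,\theta) = s\be_z + \epsilon\be_r(\theta)$, so that in matching coordinates $(s,\theta)\in\T\times 2\pi\T$ the two surfaces differ by transformations whose Jacobians and kernels can be Taylor expanded in $\epsilon\wh\kappa$ and in the Frenet-type derivatives of the centerline.

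First I would write
\begin{equation*}
\mc{R}_{\mc{S}}[\bm{\varphi}](s,\theta) = \int_\T\int_0^{2\pi}\mc{K}_{\rm diff}(s,\theta;s',\theta')\,\Phi^{-1}\bm{\varphi}(s',\theta')\,d\theta'\,ds',
\end{equation*}
where $\mc{K}_{\rm diff}(s,\theta;s',\theta')= \Phi^{-1}(s)\mc{G}(\bx,\bx')\Phi(s')\mc{J}_\epsilon(s',\theta') - \overline{\mc{G}}(s-s';\theta,\theta')\,\epsilon$, with $\overline{\mc{G}}$ the straight-cylinder Stokeslet. Using $\mc{J}_\epsilon = \epsilon - \epsilon^2\wh\kappa$, basis-rotation identities of the form $\Phi^{-1}(s)\Phi(s') = {\bf I} + O(|s-s'|)$ controlled in $C^{0,\beta}$ by $\kappa_{*,\beta}$, and the expansion $|\bx(s,\theta)-\bx(s',\theta')|^2 = (s-s')^2 + 2\epsilon^2(1-\cos(\theta-\theta')) + \mc{E}(s,s';\theta,\theta')$ with $|\mc{E}| \lesssim \kappa_*^2|s-s'|^4 + \epsilon|\wh\kappa|\,|s-s'|^2$, the kernel difference $\mc{K}_{\rm diff}$ splits naturally into (i) terms carrying an explicit prefactor of $\epsilon\wh\kappa$ or $\epsilon^2$, which I call $\mc{K}_{\rm diff}^\epsilon$, and (ii) a remainder $\mc{K}_{\rm diff}^+$ whose singularity at $s=s'$ is strictly milder than that of $\overline{\mc{G}}$ by one power of $R = \sqrt{(s-s')^2+\epsilon^2(\theta-\theta')^2}$. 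Define $\mc{R}_{\mc{S},\epsilon}$ and $\mc{R}_{\mc{S},+}$ as the integrals of these two pieces against $\Phi^{-1}\bm{\varphi}$, respectively.

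The H\"older estimates \eqref{eq:RS_ests1} then reduce to standard kernel-difference bounds of the type developed in \cite{laplace}. For $\mc{R}_{\mc{S},\epsilon}$, an $\epsilon^2$ or $\epsilon|s-s'|^2$ gain in the kernel, integrated against a base singularity of the form $R^{-1}$ or $R^{-3}$, yields an $\epsilon^{2-\alpha}$ or $\epsilon^{1-\alpha^+}$ factor after testing against a $C^{0,\alpha}$ density and splitting the domain of integration at $|s-s'| \sim \epsilon$; the dichotomy between $\|\bm{\varphi}\|_{L^\infty}$ and $\|\bm{\varphi}\|_{C^{0,\alpha}}$ in \eqref{eq:RS_ests1} reflects whether we subtract a density value to close an otherwise logarithmic integral. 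The mean-zero hypothesis $\int_\T\Phi^{-1}\bm{\varphi}\,ds=0$ is used precisely to absorb the $s$-constant far-field mismatch between the curved and straight kernels. For $\mc{R}_{\mc{S},+}$, the improved kernel regularity permits one extra $s$-derivative, giving the $\dot C_s^{1,\beta}$ bound, with the $\epsilon^{1-2\beta}$ factor arising from balancing the derivative against the surface scale $\epsilon$ in \eqref{eq:dot_Calpha_eps}.

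For the Lipschitz-in-curve estimates \eqref{eq:RS_ests2} I would invoke Lemma~\ref{lem:XaXb_C2beta} to align the two orthonormal frames and differentiate the entire expansion above term by term with respect to the centerline. Each appearance of $\kappa_\ell(s)$, $\be_{\rm t}(s)$, or $\X(s)-\X(s')$ in the kernel becomes, in the difference, a factor of $\|\X^{(a)}-\X^{(b)}\|_{C^{2,\alpha^+}}$ controlled by \eqref{eq:XaXb_C2beta}, while the difference of means $\int_\T(\Phi^{(a)})^{-1}\bm{\varphi}-\int_\T(\Phi^{(b)})^{-1}\bm{\varphi}$ contributes an additional smooth term of the required size. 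The main obstacle is the bookkeeping in $\mc{K}_{\rm diff}^\epsilon$: to obtain the full $\epsilon^2$ gain rather than merely $\epsilon$, one must identify and exploit the cancellation between the leading curvature contribution from $\mc{J}_\epsilon-\epsilon = -\epsilon^2\wh\kappa$ and that from the $O(\epsilon|\wh\kappa||s-s'|^2)$ correction in the expansion of $|\bx-\bx'|^2$, while simultaneously controlling the tensorial cross-terms in $\mc{G}$ that have no analog in the scalar Newtonian kernel of \cite{laplace}.
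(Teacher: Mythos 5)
Your overall strategy — Taylor-expand the kernel difference and sort contributions by whether they carry an explicit factor of $\epsilon$ (giving $\mc{R}_{\mc{S},\epsilon}$) or merely a milder singularity (giving $\mc{R}_{\mc{S},+}$) — is the right one and is essentially what the paper does. The paper organizes the bookkeeping by introducing an intermediate hybrid displacement $\bR_{\rm t}(s,\bars,\theta,\bartheta)=\bars\be_{\rm t}(s)+\epsilon\big(\be_r(s,\theta)-\be_r(s-\bars,\theta-\bartheta)\big)$ that interpolates between $\bR$ and $\barR$: the difference $\frac{1}{|\bR|^k}-\frac{1}{|\bR_{\rm t}|^k}$ is higher order in $\bars$ (hence smoother, contributing to $\mc{R}_{\mc{S},+}$), while $\frac{1}{|\bR_{\rm t}|^k}-\frac{1}{|\barR|^k}$ carries an explicit $\epsilon$ factor (contributing to $\mc{R}_{\mc{S},\epsilon}$). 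Adopting something like this makes the sorting of terms in your $\mc{K}_{\rm diff}$ systematic rather than ad hoc.

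However, the ``main obstacle'' you flag — a cancellation between $\mc{J}_\epsilon-\epsilon=-\epsilon^2\wh\kappa$ and the $O(\epsilon\wh\kappa\bars^2)$ correction in $|\bR|^2$ — does not exist and is not needed. These two contributions sit in different remainder pieces (the paper's $\mc{R}_{\mc{S},3}$ and $\mc{R}_{\mc{S},1,2}/\mc{R}_{\mc{S},2,2}$ respectively) and each is bounded by $\epsilon^{2-\alpha}\|\bm\varphi\|_{L^\infty}$ on its own. Your concern that the $O(\epsilon\wh\kappa\bars^2)$ kernel correction yields only an $\epsilon$ gain overlooks the fact that the surface measure $\mc{J}_\epsilon\,d\theta'\,ds'\approx\epsilon\,d\bartheta\,d\bars$ already supplies one more power of $\epsilon$: a kernel gain of size $\epsilon/|\barR|$, integrated via Lemma~\ref{lem:basic_est}, produces $\epsilon^2$ directly. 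Hunting for a nonexistent cancellation will stall the argument. Two further imprecisions: (i) the $\epsilon^{1-2\beta}$ exponent in the $\dot C_s^{1,\beta}$ bound of $\mc{R}_{\mc{S},+}$ is not a ``balance against the surface scale''; it is $\epsilon^{1-\beta}$ from Lemma~\ref{lem:alpha_est} case (1) compounded with an additional $\epsilon^{-\beta}$ loss because the coefficient remainders $Q_i$ in the differentiated kernel depend on $\theta$, and their $C^{0,\beta}_1$ norms scale like $\epsilon^{-\beta}$ under the anisotropic characterization \eqref{eq:dot_Calpha_eps}; (ii) the mean-zero hypothesis is used specifically for the tail piece $\mc{R}_{\mc{S},0}$ coming from the non-compact $\bars$-domain in the straight operator — one writes $\Phi^{-1}\bm{\varphi}=\p_s\overline{\bm\varphi}$ and integrates by parts there — and separately, the dichotomy between $\|\bm\varphi\|_{L^\infty}$ and $\|\bm\varphi\|_{C^{0,\alpha}}$ in the $\dot C_s^{1,\alpha}$ bound of $\mc{R}_{\mc{S},\epsilon}$ relies on the odd-kernel cancellation of Lemma~\ref{lem:odd_nm}, which you mention only obliquely but must invoke explicitly.
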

The proof of Lemma \ref{lem:single_layer} appears in section \ref{subsec:single_layer}. 
%
%
\begin{lemma}[Double layer remainder]\label{lem:double_layer}
Let $0<\gamma<\beta<1$ and consider a filament $\Sigma_\epsilon$ with centerline $\X(s)\in C^{2,\beta}(\T)$. Let the double layer remainder $\mc{R}_{\mc{D}}$ be as defined in \eqref{eq:RS_RD_def}. Given $\bm{\psi}\in C^{0,\gamma}(\Gamma_\epsilon)$, we have that $\mc{R}_{\mc{D}}$ satisfies
\begin{equation}\label{eq:RD_est}
\norm{\mc{R}_{\mc{D}}[\bm{\psi}]}_{C^{1,\gamma}}\le c(\kappa_{*,\gamma^+},c_\Gamma)\,\epsilon^{-\gamma^+}\norm{\bm{\psi}}_{C^{0,\gamma}} 
\end{equation}
for any $\gamma^+\in(\gamma,\beta]$.
Furthermore, given two nearby filaments with centerlines $\X^{(a)}(s)$, $\X^{(b)}(s)$ satisfying Lemma \ref{lem:XaXb_C2beta}, the difference between their corresponding double layer remainders $\mc{R}_{\mc{D}}^{(a)}$ and $\mc{R}_{\mc{D}}^{(b)}$ satisfies 
\begin{equation}\label{eq:RD_est_lip}
\norm{\mc{R}_{\mc{D}}^{(a)}[\bm{\psi}]-\mc{R}_{\mc{D}}^{(b)}[\bm{\psi}]}_{C^{1,\gamma}}\le c(\kappa_{*,\gamma^+}^{(a)},\kappa_{*,\gamma^+}^{(b)},c_\Gamma)\,\epsilon^{-\gamma^+}\norm{\X^{(a)}-\X^{(b)}}_{C^{2,\gamma^+}}\norm{\bm{\psi}}_{C^{0,\gamma}}\,. 
\end{equation}
\end{lemma}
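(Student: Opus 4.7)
My plan is to write both $\mc{D}$ and $\overline{\mc{D}}$ as integral operators over $(s',\theta')$ using the stresslet kernel \eqref{eq:stresslet} together with the $(s,\theta)$-parameterizations of $\Gamma_\epsilon$ and $\mc{C}_\epsilon$, and then show that the kernel of their difference enjoys a gain in regularity and smallness coming from curvature-induced cancellation of the most singular terms. With $\bx_c(s,\theta)=\X(s)+\epsilon\be_r(s,\theta)$ the curved parameterization (outward normal $\bm{n}_c$) and $\bx_s(s,\theta)=s\be_z+\epsilon\be_r(\theta)$ the straight one (normal $\be_r(\theta)$), the remainder $\mc{R}_{\mc{D}}$ becomes a double integral whose kernel $K_{\rm diff}$ is the difference of the two stresslet expressions, including the Jacobian $\mc{J}_\epsilon$ from \eqref{eq:jacfac}. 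The key structural observation is that the factor $(\bx-\bx')\cdot\bm{n}(\bx')$ vanishes to higher order than any other factor along either surface: in the straight case it equals exactly $\epsilon(\cos(\theta-\theta')-1)$, while on $\Gamma_\epsilon$, Taylor expansion via \eqref{eq:frame} and \eqref{eq:jacfac} produces the same principal term plus curvature corrections proportional to $\epsilon^2\wh\kappa$, $|s-s'|^2\kappa$, and higher-order terms involving $\kappa_{*,\gamma^+}$.

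Performing analogous Taylor expansions of the numerator $(\bx-\bx')\otimes(\bx-\bx')\otimes(\bx-\bx')$, the denominator $|\bx-\bx'|^{-5}$, and $\bm{n}(\bx')$ about the straight geometry, the leading-order terms in the two stresslets match and $K_{\rm diff}$ decomposes into a finite sum of pieces of the form $\kappa^\#(s,s')\,k(s-s',\theta,\theta')$, where each $\kappa^\#$ is a smooth combination of $\kappa_1,\kappa_2,\kappa_3$ in $C^{0,\beta}(\T)$ and each $k$ is weakly singular, carrying one additional power of $|s-s'|$ or $\epsilon$ in the numerator compared with the original stresslet. This extra power is precisely what converts the usual $C^{0,\gamma}\to C^{0,\gamma}$ mapping property of a classical stresslet into the $C^{0,\gamma}\to C^{1,\gamma}$ mapping claimed for $\mc{R}_{\mc{D}}$.

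To turn this structural gain into the bound \eqref{eq:RD_est}, I would adapt the thin-tube boundary-integral technique developed in \cite{laplace}: for each piece, split the $s'$ integration into a near-field $\{|s-s'|\lesssim\epsilon\}$, handled by rescaling to the variable $(s-s')/\epsilon$ and exploiting the boundedness of $k$ on this scale, and a far-field, handled by the extra factor of $|s-s'|$ and the $C^{0,\beta}$ regularity of $\kappa^\#$. For the $\dot C^{1,\gamma}$ seminorm, an $s$-derivative on $K_{\rm diff}$ either lands on $\kappa^\#$ (which is only in $C^{0,\beta}$, forcing $\gamma^+<\beta$) or produces an additional $|s-s'|^{-1}$, and balancing these contributions against the near-field scale $|s-s'|\sim\epsilon$ yields the $\epsilon^{-\gamma^+}$ factor in \eqref{eq:RD_est}. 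The Lipschitz bound \eqref{eq:RD_est_lip} follows by executing the same expansion simultaneously for $\X^{(a)}$ and $\X^{(b)}$ and writing $K_{\rm diff}^{(a)}-K_{\rm diff}^{(b)}$ as one further difference; since every piece depends on $\X$ only through position, tangent, normal and the $\kappa_j$, Lemma \ref{lem:XaXb_C2beta} immediately transfers $\norm{\X^{(a)}-\X^{(b)}}_{C^{2,\gamma^+}}$ control onto the remainder and the same near/far-field scheme applies.

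The main obstacle will be the bookkeeping in the second paragraph: one must systematically identify \emph{all} the cancellations so that each curvature factor appears explicitly in the numerator of $K_{\rm diff}$, since any unexpanded residual of the principal stresslet would destroy both the regularity gain and the $\epsilon^{-\gamma^+}$ rate. In particular, the expansion of the curved outward normal $\bm{n}_c(\bx_c')$ in terms of the straight normal $\be_r(\theta')$ and the frame derivatives is the most error-prone step, as it must be carried out to high enough order that the residual has the expected $C^{0,\beta}$ regularity in $s'$ and comes with the expected explicit curvature prefactor; the analogous expansion of the Jacobian $\mc{J}_\epsilon$ and of the denominator $|\bx_c-\bx_c'|^{-5}$ must be kept to compatible order so that the decomposition closes.
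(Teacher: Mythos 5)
Your structural plan — Taylor-expand the curved stresslet and outward normal about the straight geometry, observe that $(\bx-\bx')\cdot\bm{n}(\bx')$ already vanishes to higher order (giving the kernel only a weak singularity), and conclude that the difference kernel carries an extra power of $|s-s'|$ or $\epsilon$ in each numerator — is the same mechanism the paper uses, and the identification of the leading term of $\bR\cdot\bm{n}'$ as $\epsilon(\cos\bartheta-1)$ plus curvature corrections of size $\bars^2\kappa$ is exactly \eqref{eq:RdotN}. The paper packages the near/far-field split you propose into the ready-made Lemma \ref{lem:alpha_est}, but that is a presentational rather than conceptual difference. However, there are three concrete gaps that would need to be filled before your outline becomes a proof.

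First, the target is the full $C^{1,\gamma}(\Gamma_\epsilon)$ norm, which on a thin tube means controlling both $\p_s$ \emph{and} $\tfrac{1}{\epsilon}\p_\theta$ of the remainder (cf.~the surface H\"older seminorm \eqref{eq:dot_Calpha_eps}). Your plan only addresses the $s$-derivative. The $\tfrac{1}{\epsilon}\p_\theta$ derivative is genuinely harder here because the $\epsilon^{-1}$ weight cannot be absorbed by an extra power of $|s-s'|$ and requires a separate expansion of $\tfrac{1}{\epsilon}\p_\theta\bR$ (i.e., \eqref{eq:pthetaR}); it is not an afterthought. Second, the mechanism producing the $\epsilon^{-\gamma^+}$ factor is different from the near/far balance you describe: it comes from the $\theta$-dependence of the remaining coefficient functions $\be_r(s,\theta)$, $\be_\theta(s,\theta)$, $\wh\kappa(s,\theta)$ and the $Q$-remainders, whose H\"older seminorms with respect to the $\epsilon$-scaled surface metric are of size $\epsilon^{-\gamma^+}$ (using $\gamma^+>\gamma$ to interpolate between $L^\infty$ and $C^{0,\beta}$ regularity in $\theta$). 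If one tries to derive $\epsilon^{-\gamma^+}$ from $|s-s'|\sim\epsilon$ balancing alone, the exponent comes out wrong; the coefficient regularity is where $\gamma^+$ actually enters. Third, your difference $K_{\rm diff}$ implicitly compares two integrals with different domains: $\mc{D}$ is over $\Gamma_\epsilon$ (periodic $s'\in\T$) while $\overline{\mc{D}}$ is over $\mc{C}_\epsilon$ parameterized by $s'\in\R$. This produces an additional far-field remainder term (the paper's $\mc{R}_{\mc{D},0}$, the $|s'|>1/2$ tails) which is smooth and easy but must appear somewhere in the decomposition; as written your $K_{\rm diff}$ omits it. The Jacobian correction $\mc{R}_{\mc{D},2}$ is also a separate piece that cannot be folded into the kernel difference without losing the cancellation structure, so the decomposition should be into at least three pieces rather than one combined $K_{\rm diff}$.
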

The proof of Lemma \ref{lem:double_layer} appears in section \ref{subsec:double_layer}. In addition to Lemmas \ref{lem:single_layer} and \ref{lem:double_layer}, we will require a bound for the single layer operator applied to constant-in-$s$ functions along $\Gamma_\epsilon$. We show the following.

\begin{lemma}[Single layer applied to constant-in-$s$]\label{lem:single_const_in_s}
Let $0<\alpha<\beta<1$ and consider a filament $\Sigma_\epsilon$ with centerline $\X(s)\in C^{2,\beta}(\T)$. Given $\overline{\bm{h}}(\theta)\in C^{0,\alpha}(2\pi\T)$ of the form $\overline{\bm{h}}=\overline h_1(\theta)\be_z+\overline h_2(\theta)\be_x+\overline h_3(\theta)\be_y$, and recalling the map $\Phi$ \eqref{eq:mapPhi_def}, we may decompose
\begin{align*}
\mc{S}[\Phi\overline{\bm{h}}(\theta)] = \mc{H}_\epsilon[\Phi\overline{\bm{h}}(\theta)] + \mc{H}_+[\Phi\overline{\bm{h}}(\theta)]
\end{align*}
where
\begin{equation}
\begin{aligned}
\norm{\mc{H}_\epsilon[\Phi\overline{\bm{h}}]}_{C^{0,\alpha}} &\le c(\kappa_{*,\alpha},c_\Gamma)\,\epsilon^{2-\alpha}\norm{\overline{\bm{h}}}_{L^\infty}\,, \quad
\abs{\mc{H}_\epsilon[\Phi\overline{\bm{h}}]}_{\dot C_s^{1,\alpha}} \le c(\kappa_{*,\alpha^+},c_\Gamma)\,\epsilon^{1-\alpha^+}\norm{\overline{\bm{h}}}_{C^{0,\alpha}} \\
\norm{\mc{H}_+[\Phi\overline{\bm{h}}]}_{C^{0,\alpha}} &\le c(\kappa_{*,\alpha},c_\Gamma)\,\epsilon^{1-\alpha}\norm{\overline{\bm{h}}}_{L^\infty}\,, \quad
\abs{\mc{H}_+[\Phi\overline{\bm{h}}]}_{\dot C_s^{1,\beta}} \le c(\kappa_{*,\beta},c_\Gamma)\,\epsilon^{1-2\beta}\norm{\overline{\bm{h}}}_{C^{0,\alpha}}
\end{aligned}
\end{equation}
for any $\alpha^+\in(\alpha,\beta]$.
In addition, given two nearby curves $\X^{(a)}$, $\X^{(b)}$ satisfying Lemma \ref{lem:XaXb_C2beta}, the corresponding differences $\mc{H}_\epsilon^{(a)}-\mc{H}_\epsilon^{(b)}$ and $\mc{H}_+^{(a)}-\mc{H}_+^{(b)}$ satisfy
\begin{equation}
\begin{aligned}
\norm{\mc{H}_\epsilon^{(a)}[\Phi^{(a)}\overline{\bm{h}}]-\mc{H}_\epsilon^{(b)}[\Phi^{(b)}\overline{\bm{h}}]}_{C^{0,\alpha}} &\le c(\kappa_{*,\alpha}^{(a)},\kappa_{*,\alpha}^{(b)},c_\Gamma)\,\epsilon^{2-\alpha}\norm{\X^{(a)}-\X^{(b)}}_{C^{2,\alpha}}\norm{\overline{\bm{h}}}_{L^\infty} \\
\abs{\mc{H}_\epsilon^{(a)}[\Phi^{(a)}\overline{\bm{h}}]-\mc{H}_\epsilon^{(b)}[\Phi^{(b)}\overline{\bm{h}}]}_{\dot C_s^{1,\alpha}} &\le c(\kappa_{*,\alpha^+}^{(a)},\kappa_{*,\alpha^+}^{(b)},c_\Gamma)\,\epsilon^{1-\alpha^+}\norm{\X^{(a)}-\X^{(b)}}_{C^{2,\alpha^+}}\norm{\overline{\bm{h}}}_{C^{0,\alpha}} \\
\norm{\mc{H}_+^{(a)}[\Phi^{(a)}\overline{\bm{h}}]-\mc{H}_+^{(b)}[\Phi^{(b)}\overline{\bm{h}}]}_{C^{0,\alpha}} &\le c(\kappa_{*,\alpha}^{(a)},\kappa_{*,\alpha}^{(b)},c_\Gamma)\,\epsilon^{1-\alpha}\norm{\X^{(a)}-\X^{(b)}}_{C^{2,\alpha}}\norm{\overline{\bm{h}}}_{L^\infty}\\
\abs{\mc{H}_+^{(a)}[\Phi^{(a)}\overline{\bm{h}}]-\mc{H}_+^{(b)}[\Phi^{(b)}\overline{\bm{h}}]}_{\dot C_s^{1,\beta}} &\le c(\kappa_{*,\beta}^{(a)},\kappa_{*,\beta}^{(b)},c_\Gamma)\,\epsilon^{1-2\beta}\norm{\X^{(a)}-\X^{(b)}}_{C^{2,\beta}}\norm{\overline{\bm{h}}}_{C^{0,\alpha}}\,.
\end{aligned}
\end{equation}
\end{lemma}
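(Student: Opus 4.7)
The plan is to follow the same strategy used for Lemma \ref{lem:single_layer}, adapted to the special feature here that the density $\Phi\overline{\bm{h}}(\theta')$ depends on $s'$ only through the implicit rotation of the frame $(\be_{\rm t}(s'), \be_{\rm n_1}(s'), \be_{\rm n_2}(s'))$, not through any genuine $s'$-oscillation. I would write the single layer integral explicitly using the surface parameterization $\bx(s',\theta') = \X(s') + \epsilon\be_r(s',\theta')$ with area element $\mc{J}_\epsilon(s',\theta')\,d\theta'\,ds'$, pick a smooth cutoff $\chi_\delta(s-s')$ supported in $|s-s'|<\delta$ for some $\delta = \delta(c_\Gamma, \kappa_*)$ small enough that the curve is well-approximated there by its tangent line at $s$, and split $\mc{S}[\Phi\overline{\bm{h}}] = \mc{S}_{\rm near} + \mc{S}_{\rm far}$ accordingly. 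In $\mc{S}_{\rm near}$ I would add and subtract $\Phi(s)\overline{\bm{h}}(\theta')$ to isolate the piece with frozen frame, for which the problem reduces essentially to a straight-cylinder single layer acting on the $k=0$ Fourier mode in $s$, from a remainder carrying a frame difference controlled by $\kappa_*$ via \eqref{eq:frame}.

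For the frozen-frame near-field piece, the leading Taylor expansion of $\bx-\bx'$ in the directions $\be_{\rm t}(s)$ and $\be_r(s,\cdot)$ yields an expression analogous to $\overline{\mc{S}}[\overline{\bm{h}}(\theta')]$ applied to the zero mode in $s$. Because the relevant surface has area of order $\epsilon$, and because (an angle-averaged form of) Lemma \ref{lem:straight_SD} gives bounded behavior of the straight single layer on $\theta$-only data at $k=0$, this piece contributes the advertised $\epsilon^{2-\alpha}$ bound in $C^{0,\alpha}$. The curvature corrections of size $O(\epsilon)$ and $O(|s-s'|)$ produced by the expansion of $\bx-\bx'$ and of $\mc{J}_\epsilon$ via $\wh\kappa$, together with the frozen-frame remainder, either combine into $\mc{H}_\epsilon$ (yielding the $\dot C_s^{1,\alpha}$ bound of size $\epsilon^{1-\alpha^+}$ once one $s$-derivative is absorbed by the frame or by the kernel) or fall into $\mc{H}_+$. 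The far-field integral, where the kernel is smooth uniformly in $\epsilon$, also contributes to $\mc{H}_+$, supplying the $\epsilon^{1-\alpha}$ and $\epsilon^{1-2\beta}$ bounds through the same scale balance used in Lemma \ref{lem:single_layer}.

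For the Lipschitz-in-curve estimates, I would form the term-by-term difference between the two-curve versions, using Lemma \ref{lem:XaXb_C2beta} to control $\kappa_\ell^{(a)} - \kappa_\ell^{(b)}$ (hence frame differences) in the appropriate Hölder norm, and using $\norm{\X^{(a)} - \X^{(b)}}_{C^{2,\alpha^+}}$ to control the differences entering the Stokeslet kernel through $\bx^{(a)}(s',\theta') - \bx^{(b)}(s',\theta')$. The main obstacle, as in Lemma \ref{lem:single_layer}, is the $\dot C_s^{1,\beta}$ estimate for $\mc{H}_+$: one must balance the extra $\beta$-Hölder regularity of the centerline against the near-diagonal singular behavior of the $s$-derivative of the Stokeslet, which forces careful tuning of $\delta$ and meticulous bookkeeping for which error terms land in $\mc{H}_\epsilon$ versus $\mc{H}_+$. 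Once these technical hurdles are resolved for a single curve, the two-curve Lipschitz estimates should follow by repeating the same decomposition for the difference and invoking Lemma \ref{lem:XaXb_C2beta} systematically at each occurrence of the frame or centerline.
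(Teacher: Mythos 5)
Your instinct to exploit that $\Phi\overline{\bm{h}}$ depends on $s'$ only through the $C^1$ frame rotation is correct, but the proposal has a scaling error and misses the structural device that makes the hard estimate close. The frozen-frame near-field piece does not scale like $\epsilon^{2-\alpha}$: freezing the frame at $\Phi(s)$ gives a kernel still $\sim 1/\abs{\bR}$; integrated against $\mc{J}_\epsilon\,d\theta'\,ds'\sim\epsilon\,d\theta'\,ds'$ this is $O(\epsilon)$ in $L^\infty$ (Lemma \ref{lem:basic_est}) and $O(\epsilon^{1-\alpha})$ in $C^{0,\alpha}$ (Lemma \ref{lem:alpha_est}), which is the $\mc{H}_+$ scaling, not the $\mc{H}_\epsilon$ scaling. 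It also does not ``reduce to'' $\overline{\mc{S}}$ on the zero $s$-mode --- freezing the frame leaves the curved Stokeslet kernel intact, so Lemma \ref{lem:straight_SD} does not apply. The genuine $\epsilon^{2-\alpha}$ pieces must come from terms whose \emph{kernel} carries an extra $\epsilon$. The paper produces them via the auxiliary vector $\bR_{\rm r} = \X(s)-\X(s') + \epsilon(\be_r(s,\theta)-\be_r(s,\theta'))$ (the second $\be_r$ is at $s$, not $s'$), for which $\bR - \bR_{\rm r} = \epsilon(s-s')\bm{Q}_r$: the kernel difference $\bm{H}_2$ and the Jacobian correction $\bm{H}_3 \propto \epsilon\wh\kappa$ then carry an honest extra $\epsilon$ and make up $\mc{H}_\epsilon$, while the $\bR_{\rm r}$ kernel $\bm{H}_1$ makes up $\mc{H}_+$.

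The deeper gap is the $\dot C_s^{1,\beta}$ bound on $\mc{H}_+$, which you flag as ``the main obstacle'' but do not resolve: no choice of cutoff $\delta$ converts a $1/\abs{\bR}^2$-type kernel paired with an $\alpha$-H\"older-in-$\theta'$ density into a $\beta$-H\"older-in-$s$ output. The mechanism is built into $\bR_{\rm r}$: by construction $\p_s\bR_{\rm r} + \p_{s'}\bR_{\rm r} = (s-s')\bm{Q}_{\rm t} + \epsilon\sin(\frac{\theta-\theta'}{2})\bm{Q}_{X1}$ (equation \eqref{eq:ps_bRr}), so the leading part of $\p_s\bm{H}_1$ is $-\p_{s'}\bm{H}_1$, and integrating by parts in $s'$ transfers the derivative onto $\Phi\overline{\bm{h}}(s',\theta')$, whose $s'$-derivative is bounded by $c(\kappa_*)\norm{\overline{\bm{h}}}_{L^\infty}$ via the frame ODE, with no H\"older-in-$\theta'$ penalty. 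The residual commutator kernel gains $\abs{s-s'}$ or $\epsilon\sin(\frac{\theta-\theta'}{2})$, and Lemma \ref{lem:alpha_est}, case (1), then yields $\epsilon^{1-2\beta}$. Absent this cancellation (or an equivalent one), the $\dot C_s^{1,\beta}$ estimate will not close.
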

The proof of Lemma \ref{lem:single_const_in_s} appears in section \ref{subsec:single_layer_mean}.


The next lemma concerns bounds for the full Neumann data $\bw(s,\theta)$ given by \eqref{eq:BI_rep2} and appearing in the decomposition \eqref{eq:SB_DtN_decomp}.
\begin{lemma}[Bounds for $\bw(s,\theta)$]\label{lem:full_neumann}
Let $0<\alpha<\gamma<\beta<1$ and consider a slender body $\Sigma_\epsilon$ with centerline $\X(s)\in C^{2,\beta}(\T)$.
Given $\theta$-independent Dirichlet data $\bv(s)\in C^{1,\alpha}(\T)$ along the filament surface $\Gamma_\epsilon$, let $\bu(\bx)$, $\bx\in\Omega_\epsilon$, denote the corresponding solution to the Stokes equations in $\Omega_\epsilon$, and let $\bw(s,\theta)$ denote the corresponding Neumann boundary value $\bw=-\bm{\sigma}[\bu]\bm{n}\big|_{\Gamma_\epsilon}$. Then $\bw$ may be decomposed as 
\begin{align*}
\bw(s,\theta) = \bw_\epsilon(s,\theta) + \bw_+(s,\theta)
\end{align*}
where
\begin{equation}\label{eq:fullN}
\begin{aligned}
\norm{\bw_\epsilon}_{C^{0,\alpha}(\Gamma_\epsilon)} &\le c(\kappa_*,c_\Gamma)\norm{\bv}_{C^{1,\alpha}(\T)}\\
\norm{\bw_+}_{C^{0,\gamma}(\Gamma_\epsilon)}&\le c(\epsilon,\kappa_{*,\gamma^+},c_\Gamma)\,\norm{\bv}_{C^{0,\gamma}(\T)}
\end{aligned}
\end{equation}
for any $\gamma^+\in(\gamma,\beta]$.
In addition, given two nearby filaments $\Sigma_\epsilon^{(a)},\Sigma_\epsilon^{(b)}$ with centerlines $\X^{(a)}$ and $\X^{(b)}$ satisfying Lemma \ref{lem:XaXb_C2beta}, we have
\begin{equation}\label{eq:fullN_lip}
\begin{aligned}
\norm{\bw_\epsilon^{(a)}-\bw_\epsilon^{(b)}}_{C^{0,\alpha}(\Gamma_\epsilon)}&\le c(\kappa_*^{(a)},\kappa_*^{(b)},c_\Gamma)\norm{\X^{(a)}-\X^{(b)}}_{C^2(\T)}\norm{\bv}_{C^{1,\alpha}(\T)}\\
\norm{\bw_+^{(a)}-\bw_+^{(b)}}_{C^{0,\gamma}(\Gamma_\epsilon)}&\le c(\epsilon,\kappa_{*,\gamma^+}^{(a)},\kappa_{*,\gamma^+}^{(b)},c_\Gamma)\,\norm{\X^{(a)}-\X^{(b)}}_{C^{2,\gamma^+}(\T)}\norm{\bv}_{C^{0,\gamma}(\T)}\,,
\end{aligned}
\end{equation}
where $\bw^{(a)},\bw^{(b)}$ denote the Neumann data along filaments $\Sigma_\epsilon^{(a)}$ and $\Sigma_\epsilon^{(b)}$, respectively.
\end{lemma}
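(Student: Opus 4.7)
The plan is to exploit the boundary integral identity \eqref{eq:full_DtN} relating the Neumann data $\bw$ to the Dirichlet data $\bv$, and to extract the principal behavior from the straight-cylinder single layer inverse of Lemma \ref{lem:straight_SD}. Starting from $\mc{S}[\bw] = \tfrac{1}{2}\bv - \mc{D}[\bv]$ and substituting $\mc{S} = \Phi\overline{\mc{S}}\Phi^{-1} + \mc{R}_{\mc{S}}$ and $\mc{D} = \Phi\overline{\mc{D}}\Phi^{-1} + \mc{R}_{\mc{D}}$ from \eqref{eq:RS_RD_def}, the natural candidate for the principal part is
\begin{equation*}
\bw_\epsilon := \Phi\,\overline{\mc{S}}^{-1}\!\left[(\tfrac{1}{2}\mathbf{I} - \overline{\mc{D}})\Phi^{-1}\bv\right].
\end{equation*}

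First I would justify this definition: since $\bv(s)$ is $\theta$-independent, $\Phi^{-1}\bv$ consists purely of a zero-mode along $\be_z$ and one-modes along $\be_r$, $\be_\theta$ (via $\be_x=\cos\theta\,\be_r-\sin\theta\,\be_\theta$ and the analogous decomposition for $\be_y$), so the explicit matrix multipliers of Lemma \ref{lem:straight_SD} apply directly through $\bm{M}_{\mathrm{S,t}}^{-1}(k)$ and $\bm{M}_{\mathrm{S,n}}^{-1}(k)$. A direct inspection of the formulas \eqref{eq:Sinv_tangential2} and \eqref{eq:Sinv_components}, combined with the standard Bessel asymptotics, shows that these multiplier entries grow like $|k|$ at high wavenumber, so $\overline{\mc{S}}^{-1}$ is of order one in $s$. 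Classical H\"older estimates for Fourier multipliers then yield $\norm{\bw_\epsilon}_{C^{0,\alpha}(\Gamma_\epsilon)}\le c\norm{(\tfrac{1}{2}\mathbf{I}-\overline{\mc{D}})\Phi^{-1}\bv}_{C^{1,\alpha}}\le c(\kappa_*,c_\Gamma)\norm{\bv}_{C^{1,\alpha}(\T)}$, which is the claimed bound on $\bw_\epsilon$.

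Next I would derive a closed equation for the residual $\bw_+:=\bw-\bw_\epsilon$. Using $\Phi\overline{\mc{S}}\Phi^{-1}[\bw_\epsilon] = \tfrac{1}{2}\bv - \Phi\overline{\mc{D}}\Phi^{-1}[\bv]$, subtracting in \eqref{eq:full_DtN} produces
\begin{equation*}
\mc{S}[\bw_+] \;=\; -\mc{R}_{\mc{D}}[\bv] - \mc{R}_{\mc{S}}[\bw_\epsilon] - \mc{R}_{\mc{S}}[\bw_+],
\end{equation*}
which I would rewrite as a fixed-point equation by applying $\Phi\overline{\mc{S}}^{-1}\Phi^{-1}$. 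Lemma \ref{lem:double_layer} controls $\mc{R}_{\mc{D}}[\bv]$ in $C^{1,\gamma}$ with an $\epsilon$-dependent prefactor, and Lemma \ref{lem:single_layer} decomposes $\mc{R}_{\mc{S}}[\bw_\epsilon] = \mc{R}_{\mc{S},\epsilon}[\bw_\epsilon] + \mc{R}_{\mc{S},+}[\bw_\epsilon]$, with the ``$+$'' part having bounded $\dot C^{1,\beta}_s$ seminorm and the rough part $O(\epsilon^{2-\alpha})$ in $C^{0,\alpha}$. Inverting the first-order operator $\overline{\mc{S}}$ against these pieces yields a $C^{0,\gamma}$ bound on $\bw_+$ with an $\epsilon$-dependent constant absorbing the negative powers of $\epsilon$.

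Finally, the Lipschitz estimates \eqref{eq:fullN_lip} would follow by writing the same equations for two nearby configurations $\X^{(a)}$, $\X^{(b)}$, subtracting, and invoking the Lipschitz halves of Lemmas \ref{lem:single_layer} and \ref{lem:double_layer}, together with the $C^{2,\beta}$ closeness of the adapted frames supplied by Lemma \ref{lem:XaXb_C2beta}; the principal-part difference $\bw_\epsilon^{(a)} - \bw_\epsilon^{(b)}$ is handled through the Lipschitz dependence of $\Phi$ on the frame. The main obstacle is the circular appearance of $\bw_+$ on the right-hand side through the term $\mc{R}_{\mc{S}}[\bw_+]$: since $\overline{\mc{S}}^{-1}$ is first-order while $\mc{R}_{\mc{S},\epsilon}$ only produces $C^{0,\alpha}$ output, closing the fixed point requires absorbing $\mc{R}_{\mc{S},\epsilon}$ via its $\epsilon^{2-\alpha}$ smallness, which forces the contraction hypothesis for $\epsilon$ sufficiently small.
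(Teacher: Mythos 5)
Your proposed principal part
\begin{equation*}
\bw_\epsilon := \Phi\,\overline{\mc{S}}^{-1}\!\left[(\tfrac{1}{2}\mathbf{I} - \overline{\mc{D}})\Phi^{-1}\bv\right]
\end{equation*}
does not satisfy the claimed $\epsilon$-uniform bound, and this breaks the entire argument at the first step. The explicit formulas \eqref{eq:Sinv_tangential1} and \eqref{eq:Sinv_normal1} show that $\overline{\mc{S}}^{-1}$ carries an overall $\epsilon^{-1}$ prefactor, while the $\bm{M}^{-1}$ multipliers and the symbol of $\tfrac{1}{2}\mathbf{I}-\overline{\mc{D}}$ are both $O(1)$ at low wavenumbers. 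Concretely, the $\theta$-average of $\overline{\mc{S}}^{-1}$ is $(2\pi\epsilon)^{-1}\overline{\mc{A}}_\epsilon$, and $\overline{\mc{A}}_\epsilon\circ(\tfrac{1}{2}\mathbf{I}-\overline{\mc{D}})=\overline{\mc{L}}_\epsilon^{-1}$ has multiplier $\sim|\log\epsilon|^{-1}$ at low $k$, so the $\theta$-average of your $\bw_\epsilon$ is already of size $(\epsilon|\log\epsilon|)^{-1}$ for a unit-size $\bv$. This is physically unavoidable: it is the surface traction on a translating slender cylinder, which is drag per unit length over circumference. There is no cancellation hidden in the Bessel ratios that removes the $\epsilon^{-1}$, and your claim that ``Classical H\"older estimates \ldots\ yield $\norm{\bw_\epsilon}_{C^{0,\alpha}}\le c(\kappa_*,c_\Gamma)\norm{\bv}_{C^{1,\alpha}}$'' silently drops that prefactor. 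The lemma, however, states a genuinely $\epsilon$-independent constant for $\bw_\epsilon$, while all $\epsilon$-growth must be swept into $\bw_+$.

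The paper avoids this by not touching the first-kind equation $\mc{S}[\bw]=\tfrac{1}{2}\bv-\mc{D}[\bv]$ at all. Instead it represents the flow via the \emph{completed double layer} $\bu=\mc{D}[\bm{\varphi}]+\bm{V}[\bm{\varphi}]$ (second-kind, so $(\tfrac{1}{2}\mathbf{I}+\mc{D}+\bm{V})^{-1}$ is bounded; Lemma \ref{lem:inv_doub_layer}) and computes the Neumann trace through the hypersingular operator $\mc{T}$ of \eqref{eq:T_def}. The crucial source of the $\epsilon$-uniform estimate is the difference structure $\bm{\varphi}(\bx')-\bm{\varphi}(\bx)$ in $\mc{T}$: the principal kernel $\bm{K}_{\mc T,0}$ scales like $|\bR|^{-3}$, but against a $C^1$ density the difference supplies a factor $\bars\p_s\bm{\varphi}+\epsilon\bartheta\,\tfrac{1}{\epsilon}\p_\theta\bm{\varphi}$, and since $\bv$ is $\theta$-independent the $\tfrac{1}{\epsilon}\p_\theta$ term vanishes. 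This is exactly why Lemma \ref{lem:hyper} gives $\norm{\mc{T}_\epsilon[\bv]}_{C^{0,\alpha}}\le c(\kappa_*,c_\Gamma)\norm{\p_s\bv}_{C^{0,\alpha}}$ with no $\epsilon$. Your plan has no analogue of this cancellation.

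There are further structural problems even if the $\epsilon$-counting were fixed. Lemma \ref{lem:straight_SD} provides $\overline{\mc{S}}^{-1}$ only on the zero- and one-modes in $\theta$; the remainders $\mc{R}_{\mc{D}}[\bv]$, $\mc{R}_{\mc{S}}[\bw_\epsilon]$, $\mc{R}_{\mc{S}}[\bw_+]$ generically carry higher $\theta$-modes, and there is no bound in the paper (nor a routine one) for $\overline{\mc{S}}^{-1}$ on those modes in H\"older spaces, so the fixed-point map for $\bw_+$ is not well defined with the available toolkit. Moreover, $\mc{R}_{\mc{S},+}$ loses $\epsilon^{1-2\beta}$ in $\dot C^{1,\beta}_s$, which is unbounded for $\beta>1/2$, and composing with the $\epsilon^{-1}$-sized $\overline{\mc{S}}^{-1}$ makes the putative contraction even worse; the $\epsilon^{2-\alpha}$ smallness of $\mc{R}_{\mc{S},\epsilon}$ is not enough to absorb it. The underlying difficulty is that you are trying to invert a first-kind integral operator perturbatively, whereas the problem really calls for the second-kind formulation the paper adopts.
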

In both \eqref{eq:fullN} and \eqref{eq:fullN_lip}, the $\epsilon$-dependence is explicit in the bounds for $\bw_\epsilon$ but not for $\bw_+$.
The proof of Lemma \ref{lem:full_neumann} is given in section \ref{sec:full_neumann}.

The final key lemma concerns H\"older space estimates for the slender body NtD map, which will be used in inverting the representation \eqref{eq:SB_DtN_decomp} of $\mc{L}_\epsilon^{-1}$ to obtain a decomposition of $\mc{L}_\epsilon$ instead.
\begin{lemma}[Slender body NtD in H\"older spaces]\label{lem:holder_NtD}
For $0<\alpha<\alpha^+<1$, given a filament $\Sigma_\epsilon$ with centerline $\X(s)\in C^{3,\alpha}(\T)$ and slender body force data $\bm{f}(s)\in C^{0,\alpha}(\T)$, the slender body NtD map $\mc{L}_\epsilon[\bm{f}]$ \eqref{eq:SB_NtD} satisfies
\begin{equation}\label{eq:holderNTD1}
\norm{\mc{L}_\epsilon[\bm{f}]}_{C^{1,\alpha}(\T)} \le c(\epsilon,\norm{\X_{ss}}_{C^{1,\alpha}} ,c_\Gamma)\norm{\bm{f}}_{C^{0,\alpha}(\T)}\,. 
\end{equation}
In addition, given two nearby filaments with centerlines $\X^{(a)}(s)$, $\X^{(b)}(s)$ satisfying Lemma \ref{lem:XaXb_C2beta}, the difference between the corresponding NtD maps $(\mc{L}_\epsilon^{(a)}-\mc{L}_\epsilon^{(b)})[\bm{f}]$ satisfies
\begin{equation}\label{eq:holderNTD2}
\norm{(\mc{L}_\epsilon^{(a)}-\mc{L}_\epsilon^{(b)})[\bm{f}]}_{C^{1,\alpha}(\T)} \le c(\epsilon,\|\X_{ss}^{(a)}\|_{C^{1,\alpha}},\|\X_{ss}^{(b)}\|_{C^{1,\alpha}},c_\Gamma)\norm{\X^{(a)}-\X^{(b)}}_{C^{2,\alpha^+}(\T)}\norm{\bm{f}}_{C^{0,\alpha}(\T)}\,.
\end{equation}
\end{lemma}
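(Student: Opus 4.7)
The plan is to invert the boundary integral decomposition \eqref{eq:SB_DtN_decomp} of the slender body DtN map, treating its remainder as a compact perturbation of the principal part $\Phi\overline{\mc{L}}_\epsilon^{-1}\Phi^{-1}$. Existence of the solution $\bv = \mc{L}_\epsilon[\bm{f}]$ in the natural energy space is already provided by \cite{closed_loop}; the content of Lemma \ref{lem:holder_NtD} is the quantitative $C^{1,\alpha}$ bound together with its dependence on the centerline.

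\textbf{Setup.} Given $\bm{f}\in C^{0,\alpha}(\T)$, set $\bv = \mc{L}_\epsilon[\bm{f}]$, so \eqref{eq:SB_DtN_decomp} reads $\bm{f} = \Phi\overline{\mc{L}}_\epsilon^{-1}[\Phi^{-1}\bv] + \mc{R}[\bv]$, where $\mc{R}[\bv]$ collects the five explicit remainder terms from \eqref{eq:SB_DtN_decomp}, each depending linearly on $\bv$ and on the Neumann trace $\bw = -\bm{\sigma}[\bu]\bm{n}$. Applying $\Phi\overline{\mc{L}}_\epsilon\Phi^{-1}$ to the $s$-mean-zero part (the constant-in-$s$ component is read off directly from the $\epsilon^2\wh\kappa$ and angle-integrated $\bw$ terms) realizes $\bv$ as a fixed point of an affine map of the form $\bv\mapsto T_\epsilon \bv + F_\epsilon[\bm{f}]$ with $T_\epsilon := -\Phi\overline{\mc{L}}_\epsilon\Phi^{-1}\mc{R}$.

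\textbf{Estimating $T_\epsilon$.} I would chain the key lemmas as follows. Lemma \ref{lem:full_neumann} decomposes $\bw = \bw_\epsilon+\bw_+$ with $\|\bw_+\|_{C^{0,\gamma}(\Gamma_\epsilon)}$ controlled by $\|\bv\|_{C^{1,\alpha}}$ for some $\gamma\in(\alpha,1)$. Feeding this into Lemmas \ref{lem:single_layer} and \ref{lem:single_const_in_s} for the $\mc{R}_\mc{S}$ piece and the constant-in-$s$ single layer piece, and into Lemma \ref{lem:double_layer} for the $\mc{R}_\mc{D}$ piece, and then applying Lemma \ref{lem:Sinv_mapping} to the projection $\P_{01}$ onto zero and one modes in $\theta$, yields $\|\mc{R}[\bv]\|_{C^{0,\gamma}(\T)} \le c(\epsilon,\|\kappa\|_{C^{1,\alpha}},c_\Gamma)\|\bv\|_{C^{1,\alpha}(\T)}$. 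Composing with $\Phi\overline{\mc{L}}_\epsilon\Phi^{-1}$ via Lemma \ref{lem:straight_Leps_Holder} gives $\|T_\epsilon\bv\|_{C^{1,\gamma}(\T)} \le c(\epsilon,\|\kappa\|_{C^{1,\alpha}},c_\Gamma)\|\bv\|_{C^{1,\alpha}(\T)}$. Since the embedding $C^{1,\gamma}(\T)\hookrightarrow C^{1,\alpha}(\T)$ is compact, the operator $I-T_\epsilon$ is Fredholm of index zero on $C^{1,\alpha}(\T)$, and the uniqueness for \eqref{eq:SB_PDE} furnished by the energy theory forces trivial kernel. Inverting then gives \eqref{eq:holderNTD1}.

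\textbf{Lipschitz estimate and main obstacle.} For \eqref{eq:holderNTD2}, I would write \eqref{eq:SB_DtN_decomp} for $\bv^{(a)}=\mc{L}_\epsilon^{(a)}[\bm{f}]$ and $\bv^{(b)}=\mc{L}_\epsilon^{(b)}[\bm{f}]$ and subtract. The main-term difference splits into (i) $\Phi^{(a)}\overline{\mc{L}}_\epsilon^{-1}[(\Phi^{(a)})^{-1}(\bv^{(a)}-\bv^{(b)})]$, which is inverted as in the previous paragraph, and (ii) terms proportional to $\Phi^{(a)}-\Phi^{(b)}$ or $(\Phi^{(a)})^{-1}-(\Phi^{(b)})^{-1}$, each bounded by $\|\X^{(a)}-\X^{(b)}\|_{C^{2,\alpha^+}}\|\bv^{(b)}\|_{C^{1,\alpha}}$ using Lemma \ref{lem:XaXb_C2beta} together with Lemma \ref{lem:straight_Leps_Holder}, the $\bv^{(b)}$ factor being already controlled by \eqref{eq:holderNTD1}. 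The remainder differences are dominated by the Lipschitz-in-$\X$ halves of Lemmas \ref{lem:single_layer}--\ref{lem:full_neumann}, and applying the inverse of $I-T_\epsilon^{(a)}$ yields \eqref{eq:holderNTD2}. The main obstacle is bookkeeping: Lemma \ref{lem:straight_Leps_Holder} costs $\epsilon^{-1}|\log\epsilon|$ on the Hölder seminorm, while the sharpest remainder gains are only $\epsilon^{1-\alpha^+}|\log\epsilon|$ or $\epsilon^{2-\alpha^+}$. Since the target estimate only demands constants depending on $\epsilon$, what actually must be secured is not smallness but a strict regularity gain (Hölder exponent $\gamma>\alpha$ after composition) in each of the five remainder contributions, so that the Fredholm/compactness argument can close; verifying this gain, and in particular handling the constant-in-$s$ single layer contribution at the higher $C^{3,\alpha}$ centerline regularity that the statement assumes, is the technically delicate step.
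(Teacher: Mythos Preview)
Your approach differs substantially from the paper's. For \eqref{eq:holderNTD1} the paper does not invert the boundary integral decomposition \eqref{eq:SB_DtN_decomp}; it runs a direct Campanato-type elliptic regularity argument on the slender body BVP itself. The curved annulus $\mc{O}_{r_*}$ is straightened by a diffeomorphism $\Psi$, yielding a variable-coefficient Stokes system whose coefficients $\bm{A},\bm{B}$ are controlled in $C^{1,\alpha}$ by $\|\kappa\|_{C^{1,\alpha}}$ (this is where the $C^{3,\alpha}$ centerline assumption actually enters). On each annulus $A_R(s_0)$ the coefficients are frozen at $(s_0,\epsilon,\theta)$, the solution is decomposed as $\overline{\bu}=\bm{h}+\bm{b}$, and higher regularity for the frozen-coefficient part (Proposition \ref{prop:h_highreg}), a Caccioppoli inequality (Corollary \ref{cor:caccio}), and an oscillation decay estimate (Lemma \ref{lem:osc_h}) feed into a standard iteration (Proposition \ref{prop:giaq}) placing $\p_s\overline{\bu}$ in the Campanato-type space $\mc{A}^{2,\alpha}$ of \eqref{eq:campanato}, hence $\bv\in C^{1,\alpha}(\T)$ by Lemma \ref{lem:camp}. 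For \eqref{eq:holderNTD2} the paper writes $\bv^{(a)}-\bv^{(b)} = \mc{L}_\epsilon^{(a)}\big[\big((\mc{L}_\epsilon^{(b)})^{-1}-(\mc{L}_\epsilon^{(a)})^{-1}\big)[\bv^{(b)}]\big]$ and combines \eqref{eq:holderNTD1} with the DtN Lipschitz bound \eqref{eq:thm_DtN_ests_lip}; this is close in spirit to your subtraction scheme.

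Your Fredholm route has a genuine gap at precisely the step you flag as delicate. The assertion that all five remainders in \eqref{eq:SB_DtN_decomp} strictly gain H\"older regularity, mapping $\bv\in C^{1,\alpha}$ to $C^{0,\gamma}$ with $\gamma>\alpha$, is not supported by the available lemmas: the component $\bw_\epsilon$ of Lemma \ref{lem:full_neumann} sits only in $C^{0,\alpha}$, since $\mc{T}_\epsilon$ in Lemma \ref{lem:hyper} needs $\|\p_s\bv\|_{C^{0,\gamma}}$ to output $C^{0,\gamma}$, and $\bv\in C^{1,\alpha}$ gives only $\p_s\bv\in C^{0,\alpha}$. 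Consequently the contributions $\int_0^{2\pi}\bw_\epsilon\,\epsilon^2\wh\kappa\,d\theta$ and $\Phi\overline{\mc{A}}_\epsilon^\epsilon[\P_{01}\mc{R}_{\mc{S},\epsilon}[(\bw_\epsilon)_0^\Phi]]$ land only in $C^{0,\alpha}$, and the corresponding part of $T_\epsilon$ is not compact on $C^{1,\alpha}$. These are exactly the terms the paper groups into $\mc{R}_{{\rm d},\epsilon}$ in \eqref{eq:thm_DtN_decomp}, whose merit is smallness in $\epsilon$, not smoothing; your instinct to trade smallness for regularity gain is the wrong direction here. The scheme can be repaired by splitting $T_\epsilon$ into a contraction piece (via the $\epsilon^{1-\alpha^+}|\log\epsilon|$ bound, cf.\ \eqref{eq:Rn_epsilon}--\eqref{eq:Rn0_epsilon_est}) plus a compact piece (from $\mc{R}_{{\rm d},+}$), so that $I-T_\epsilon$ is Fredholm of index zero for $\epsilon$ small; but you then also owe a uniformity-in-centerline argument (as in the proof of Lemma \ref{lem:inv_doub_layer}) to make the resulting constant depend only on $(\epsilon,\|\kappa\|_{C^{1,\alpha}},c_\Gamma)$ rather than on the specific curve.
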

Note that additional $C^{3,\alpha}$ regularity is needed on the filament centerline to show Lemma \ref{lem:holder_NtD}. Furthermore, the $\epsilon$-dependence in the bounds \eqref{eq:holderNTD1} and \eqref{eq:holderNTD2} is not explicit. The proof of Lemma \ref{lem:holder_NtD} appears in section \ref{sec:SBNtD_holder}.


\section{Proofs of Theorems \ref{thm:decomp} and \ref{thm:main}}\label{sec:pf_maintheorems}
Given the decomposition \eqref{eq:SB_DtN_decomp} of $\mc{L}_\epsilon^{-1}$ along with Lemmas \ref{lem:Sinv_mapping}, \ref{lem:single_layer}, \ref{lem:double_layer}, \ref{lem:single_const_in_s}, \ref{lem:full_neumann}, and \ref{lem:holder_NtD}, we may proceed immediately to the proof of Theorem \ref{thm:decomp} in section \ref{subsec:pf_decomp}. Given Theorem \ref{thm:decomp} and Lemma \ref{lem:straight_Leps_Holder}, we may then prove Theorem \ref{thm:main} for the simple evolution \eqref{eq:curve_evolution} in section \ref{subsec:pf_dynamics}. The remainder of this paper is devoted to the proofs of each of the above key lemmas.

\subsection{Proof of Theorem \ref{thm:decomp}: operator decomposition}\label{subsec:pf_decomp}
We begin by stating some properties of the map $\Phi$ \eqref{eq:mapPhi_def} used to identify the tangential and normal directions along curved $\Gamma_\epsilon$ with the tangential and normal directions along the straight filament $\mc{C}_\epsilon$. 
Note that the map $\Phi$ commutes with derivatives in $\theta$ along $\Gamma_\epsilon$ but does not commute with derivatives in $s$. In particular, we have
\begin{equation}\label{eq:Phi_commutator0}
\begin{aligned}
\p_s(\Phi\bm{g}) &= \Phi(\p_s\bm{g}) - \bm{g}\cdot(\kappa_1\be_x+\kappa_2\be_y)\be_{\rm t} 
+ \bm{g}\cdot(\kappa_1\be_z-\kappa_3\be_y)\be_{\rm n_1} \\
&\qquad + \bm{g}\cdot(\kappa_2\be_z+\kappa_3\be_x)\be_{\rm n_2} \\
\p_s(\Phi^{-1}\bm{h}) &= \Phi^{-1}(\p_s\bm{h}) + \bm{h}\cdot(\kappa_1\be_{\rm n_1}+\kappa_2\be_{\rm n_2})\be_z + \bm{h}\cdot(\kappa_3\be_{\rm n_2}-\kappa_1\be_{\rm t})\be_x \\
&\qquad  -\bm{h}\cdot(\kappa_3\be_{\rm n_1}+\kappa_2\be_{\rm t})\be_y\,.
\end{aligned}
\end{equation}
From \eqref{eq:Phi_commutator0}, we can see that
\begin{equation}\label{eq:Phi_commutator_est}
\begin{aligned}
\norm{[\p_s,\Phi]\,\bm{g}}_{L^\infty} &\le c(\kappa_*)\norm{\bm{g}}_{L^\infty}\,, \qquad 
\norm{[\p_s,\Phi]\,\bm{g}}_{C^{0,\alpha}} \le c(\kappa_{*,\alpha})\norm{\bm{g}}_{C^{0,\alpha}} \\
\norm{[\p_s,\Phi^{-1}]\,\bm{h}}_{L^\infty}&\le c(\kappa_*)\norm{\bm{h}}_{L^\infty}\,, \qquad 
\norm{[\p_s,\Phi^{-1}]\,\bm{h}}_{C^{0,\alpha}} \le c(\kappa_{*,\alpha})\norm{\bm{h}}_{C^{0,\alpha}}\,.
\end{aligned}
\end{equation}
Furthermore, given two nearby filaments with centerlines $\X^{(a)}$ and $\X^{(b)}$ satisfying Lemma \ref{lem:XaXb_C2beta}, we note the following form of the difference between the corresponding maps $\Phi^{(a)}$ and $\Phi^{(b)}$:
\begin{equation}\label{eq:Phi_lip_1}
\begin{aligned}
\Phi^{(a)}\bm{g}-\Phi^{(b)}\bm{g} &= (\bm{g}\cdot\be_z)(\be_{\rm t}^{(a)}-\be_{\rm t}^{(b)}) + (\bm{g}\cdot\be_x)(\be_{\rm n_1}^{(a)}-\be_{\rm n_1}^{(b)}) + (\bm{g}\cdot\be_y)(\be_{\rm n_2}^{(a)}-\be_{\rm n_2}^{(b)})\\
(\Phi^{(a)})^{-1}\bm{h}-(\Phi^{(b)})^{-1}\bm{h} &= \bm{h}\cdot(\be_{\rm t}^{(a)}-\be_{\rm t}^{(b)})\be_z + \bm{h}\cdot(\be_{\rm n_1}^{(a)}-\be_{\rm n_1}^{(b)})\be_x + \bm{h}\cdot(\be_{\rm n_2}^{(a)}-\be_{\rm n_2}^{(b)})\be_y\,.
\end{aligned}
\end{equation}
Recalling the definition \eqref{eq:subtract_mean} of $\bm{h}_0^\Phi$, we note that the subtracted mean differs slightly for different curves. However, using the definition \eqref{eq:mapPhi_def} of $\Phi$, we may write
\begin{equation}\label{eq:Phi_lip_2}
\begin{aligned}
(\Phi^{(a)})^{-1}\bm{h}_0^{\Phi^{(a)}}-(\Phi^{(b)})^{-1}\bm{h}_0^{\Phi^{(b)}} &= (\Phi^{(a)})^{-1}\bm{h}-(\Phi^{(b)})^{-1}\bm{h}
- \be_z\int_\T\bm{h}\cdot\big(\be_{\rm t}^{(a)}-\be_{\rm t}^{(b)}\big)\,ds \\
&\quad -\be_x\int_\T\bm{h}\cdot\big(\be_{\rm n_1}^{(a)}-\be_{\rm n_1}^{(b)}\big)\,ds  -\be_y\int_\T\bm{h}\cdot\big(\be_{\rm n_2}^{(a)}-\be_{\rm n_2}^{(b)}\big)\,ds\,.
\end{aligned}
\end{equation}
Combining \eqref{eq:Phi_lip_1} and \eqref{eq:Phi_lip_2}, we thus have
\begin{equation}\label{eq:Phi_lip_est}
\begin{aligned}
\norm{(\Phi^{(a)})^{-1}\bm{h}_0^{\Phi^{(a)}}-(\Phi^{(b)})^{-1}\bm{h}_0^{\Phi^{(b)}}}_{L^\infty} &\le c(\kappa_*^{(a)},\kappa_*^{(b)})\norm{\X^{(a)}-\X^{(b)}}_{C^2}\norm{\bm{h}}_{L^\infty} \\
\norm{(\Phi^{(a)})^{-1}\bm{h}_0^{\Phi^{(a)}}-(\Phi^{(b)})^{-1}\bm{h}_0^{\Phi^{(b)}}}_{C^{0,\alpha}} &\le c(\kappa_{*,\alpha}^{(a)},\kappa_{*,\alpha}^{(b)})\norm{\X^{(a)}-\X^{(b)}}_{C^{2,\alpha}}\norm{\bm{h}}_{C^{0,\alpha}} \,.
\end{aligned}
\end{equation}
In addition, from \eqref{eq:Phi_commutator0}, we obtain
\begin{equation}\label{eq:Phi_comm_lip_est}
\begin{aligned}
\norm{\big[\p_s,(\Phi^{(a)})^{-1}\big]\bm{h}_0^{\Phi^{(a)}}-\big[\p_s,(\Phi^{(b)})^{-1}\big]\bm{h}_0^{\Phi^{(b)}}}_{L^\infty} &\le c(\kappa_*^{(a)},\kappa_*^{(b)})\norm{\X^{(a)}-\X^{(b)}}_{C^2}\norm{\bm{h}}_{L^\infty} \\
\norm{\big[\p_s,(\Phi^{(a)})^{-1}\big]\bm{h}_0^{\Phi^{(a)}}-\big[\p_s,(\Phi^{(b)})^{-1}\big]\bm{h}_0^{\Phi^{(b)}}}_{C^{0,\alpha}} &\le c(\kappa_{*,\alpha}^{(a)},\kappa_{*,\alpha}^{(b)})\norm{\X^{(a)}-\X^{(b)}}_{C^{2,\alpha}}\norm{\bm{h}}_{C^{0,\alpha}} \,.
\end{aligned}
\end{equation}

We may now proceed with the decomposition \eqref{eq:thm_DtN_decomp} of $\mc{L}_\epsilon^{-1}$. Recalling the representation \eqref{eq:SB_DtN_decomp} of $\mc{L}_\epsilon^{-1}$, we first separate the remainder terms into terms which are small in $\epsilon$ and terms which are smoother. Using the decompositions $\overline{\mc{A}}_\epsilon^{\epsilon/+}$, $\mc{R}_{\mc{S},\epsilon/+}$, $\mc{H}_{\epsilon/+}$ and $\bm{w}_{\epsilon/+}$ from Lemmas \ref{lem:Sinv_mapping}, \ref{lem:single_layer}, \ref{lem:single_const_in_s}, and \ref{lem:full_neumann}, respectively, we define the terms $\mc{R}_{\rm d,\epsilon}$ and $\mc{R}_{\rm d,+}$ in \eqref{eq:thm_DtN_decomp} as 
\begin{align*}
\mc{R}_{\rm d,\epsilon}[\bv(s)] &= - \int_0^{2\pi}\bm{w}_\epsilon(s,\theta)\,\epsilon^2\wh\kappa\,d\theta
- \Phi\overline{\A}_\epsilon^\epsilon\big[\P_{01}\mc{R}_{\mc{S},\epsilon}[(\bm{w}_\epsilon)_0^\Phi]\big]- \Phi\overline{\A}_\epsilon^\epsilon\bigg[\P_{01}\Phi^{-1}\mc{H}_\epsilon\bigg[\Phi\int_\T\Phi^{-1}\bw_\epsilon\,ds \bigg]\bigg] \\
\mc{R}_{\rm d,+}[\bv(s)] &= - \int_0^{2\pi}\bm{w}_+(s,\theta)\,\epsilon^2\wh\kappa\,d\theta + \Phi\int_\T \Phi^{-1}\int_0^{2\pi}\bm{w}(s,\theta)\,\epsilon\, d\theta ds
- \Phi\overline{\A}_\epsilon\big[\P_{01}\mc{R}_{\mc{D}}[\bv]\big]\\
&\qquad
- \Phi\overline{\A}_\epsilon^+\big[\P_{01}\mc{R}_{\mc{S}}[\bm{w}_0^\Phi]\big] 
- \Phi\overline{\A}_\epsilon^\epsilon\big[\P_{01}\mc{R}_{\mc{S},+}[\bm{w}_0^\Phi]\big] 
- \Phi\overline{\A}_\epsilon^\epsilon\big[\P_{01}\mc{R}_{\mc{S},\epsilon}[(\bm{w}_+)_0^\Phi]\big]  \\
&\qquad
- \Phi\overline{\A}_\epsilon^+\bigg[\P_{01}\Phi^{-1}\mc{S}\displaystyle\bigg[\Phi\int_\T\Phi^{-1}\bw\,ds \bigg]\bigg]
-\Phi\overline{\A}_\epsilon^\epsilon\bigg[\P_{01}\Phi^{-1}\mc{H}_+\displaystyle\bigg[\Phi\int_\T\Phi^{-1}\bw\,ds \bigg]\bigg]\\
&\qquad -\Phi\overline{\A}_\epsilon^\epsilon\bigg[\P_{01}\Phi^{-1}\mc{H}_\epsilon\displaystyle\bigg[\Phi\int_\T\Phi^{-1}\bw_+\,ds \bigg]\bigg]
\,.
\end{align*}

We first bound $\mc{R}_{\rm d,\epsilon}$.
Using Lemmas \ref{lem:Sinv_mapping}, \ref{lem:single_layer}, and \ref{lem:full_neumann}, we make note of the following estimates for the second term:
\begin{align*}
&\norm{\Phi\overline{\A}_\epsilon^\epsilon\big[\P_{01}\mc{R}_{\mc{S},\epsilon}[(\bm{w}_\epsilon)_0^\Phi]\big]}_{C^{0,\alpha}} 
\le 
c(\kappa_*)\big(\norm{\mc{R}_{\mc{S},\epsilon}[(\bm{w}_\epsilon)_0^\Phi]}_{C_s^{0,\alpha}} + \epsilon\abs{\mc{R}_{\mc{S},\epsilon}[(\bm{w}_\epsilon)_0^\Phi]}_{\dot C_s^{1,\alpha}} \big) \\
&\qquad\le 
 c(\kappa_{*,\alpha^+},c_\Gamma)\,\epsilon^{2-\alpha^+}\norm{\bm{w}_\epsilon}_{C^{0,\alpha}} 
 \le c(\kappa_{*,\alpha^+},c_\Gamma)\,\epsilon^{2-\alpha^+}\norm{\bv}_{C^{1,\alpha}(\T)} \,, \\
&\norm{\Phi^{(a)}\overline{\A}_\epsilon^\epsilon\big[\P_{01}\mc{R}_{\mc{S},\epsilon}^{(a)}[(\bm{w}_\epsilon^{(a)})_0^{\Phi^{(a)}}]\big]-\Phi^{(b)}\overline{\A}_\epsilon^\epsilon\big[\P_{01}\mc{R}_{\mc{S},\epsilon}^{(b)}[(\bm{w}_\epsilon^{(b)})_0^{\Phi^{(b)}}]\big]}_{C^{0,\alpha}}\\
&\qquad \le c(\kappa_*^{(a)},\kappa_*^{(b)}) \bigg(\norm{\X^{(a)}-\X^{(b)}}_{C^2}\big(\norm{\mc{R}_{\mc{S},\epsilon}^{(a)}[(\bm{w}_\epsilon^{(a)})_0^{\Phi^{(a)}}]\big]}_{C_s^{0,\alpha}}+ \epsilon\abs{\mc{R}_{\mc{S},\epsilon}^{(a)}[(\bm{w}_\epsilon^{(a)})_0^{\Phi^{(a)}}]\big]}_{\dot C_s^{1,\alpha}} \big)\\
&\qquad \quad 
+\norm{\mc{R}_{\mc{S},\epsilon}^{(a)}[(\bm{w}_\epsilon^{(a)})_0^{\Phi^{(a)}}]-\mc{R}_{\mc{S},\epsilon}^{(b)}[(\bm{w}_\epsilon^{(b)})_0^{\Phi^{(b)}}]}_{C_s^{0,\alpha}}\
+\epsilon\abs{\mc{R}_{\mc{S},\epsilon}^{(a)}[(\bm{w}_\epsilon^{(a)})_0^{\Phi^{(a)}}]-\mc{R}_{\mc{S},\epsilon}^{(b)}[(\bm{w}_\epsilon^{(b)})_0^{\Phi^{(b)}}]}_{C_s^{1,\alpha}} \bigg) \\
&\qquad\le c(\kappa_{*,\alpha^+}^{(a)},\kappa_{*,\alpha^+}^{(b)},c_\Gamma)\,\epsilon^{2-\alpha^+}\big(\norm{\X^{(a)}-\X^{(b)}}_{C^{2,\alpha^+}}\norm{\bw_\epsilon^{(a)}}_{C^{0,\alpha}} + \norm{\bw_\epsilon^{(a)}-\bw_\epsilon^{(b)}}_{C^{0,\alpha}}\big)\\
&\qquad\le c(\kappa_{*,\alpha^+}^{(a)},\kappa_{*,\alpha^+}^{(b)},c_\Gamma)\,\epsilon^{2-\alpha^+}\norm{\X^{(a)}-\X^{(b)}}_{C^{2,\alpha^+}}\norm{\bv}_{C^{1,\alpha}(\T)}\,.
\end{align*}
For the third term of $\mc{R}_{\rm d,\epsilon}$, we may use Lemmas \ref{lem:Sinv_mapping}, \ref{lem:single_const_in_s}, and \ref{lem:full_neumann} to estimate
\begin{align*}
&\norm{\Phi\overline{\A}_\epsilon^\epsilon\bigg[\P_{01}\Phi^{-1}\mc{H}_\epsilon\bigg[\Phi\int_\T\Phi^{-1}\bw_\epsilon\,ds \bigg]\bigg]}_{C^{0,\alpha}}\\
&\quad\le c(\kappa_*)\bigg(\norm{\mc{H}_\epsilon\bigg[\Phi\int_\T\Phi^{-1}\bw_\epsilon\,ds \bigg]}_{C_s^{0,\alpha}} +\epsilon\abs{\mc{H}_\epsilon\bigg[\Phi\int_\T\Phi^{-1}\bw_\epsilon\,ds \bigg]}_{\dot C_s^{1,\alpha}} \bigg)\\
&\quad \le c(\kappa_{*,\alpha^+},c_\Gamma)\,\epsilon^{2-\alpha^+}\norm{\bw_\epsilon}_{C^{0,\alpha}}
\le c(\kappa_{*,\alpha^+},c_\Gamma)\,\epsilon^{2-\alpha^+}\norm{\bv}_{C^{1,\alpha}(\T)}\,,\\
&\norm{(\Phi^{(a)}-\Phi^{(b)})\overline{\A}_\epsilon^\epsilon\bigg[\P_{01}(\Phi^{(a)})^{-1}\mc{H}_\epsilon^{(a)}\bigg[\Phi^{(a)}\int_\T(\Phi^{(a)})^{-1}\bw_\epsilon^{(a)}\,ds \bigg]\bigg]}_{C^{0,\alpha}}\\
&+ \norm{\Phi^{(b)}\overline{\A}_\epsilon^\epsilon\bigg[\P_{01}\bigg((\Phi^{(a)})^{-1}\mc{H}_\epsilon^{(a)}\bigg[\Phi^{(a)}\int_\T(\Phi^{(a)})^{-1}\bw_\epsilon^{(a)}\,ds \bigg]-(\Phi^{(b)})^{-1}\mc{H}_\epsilon^{(b)}\bigg[\Phi^{(b)}\int_\T(\Phi^{(b)})^{-1}\bw_\epsilon^{(b)}\,ds \bigg]\bigg)\bigg]}_{C^{0,\alpha}}\\
&\quad\le c(\kappa_*^{(a)},\kappa_*^{(b)})\bigg(\norm{\X^{(a)}-\X^{(b)}}_{C^{2,\alpha}}\norm{\mc{H}_\epsilon^{(a)}\bigg[\Phi^{(a)}\int_\T(\Phi^{(a)})^{-1}\bw_\epsilon^{(a)}\,ds \bigg]}_{C_s^{0,\alpha}}\\
&\qquad 
+ \norm{\X^{(a)}-\X^{(b)}}_{C^{2,\alpha}}\,\epsilon\abs{\mc{H}_\epsilon^{(a)}\bigg[\Phi^{(a)}\int_\T(\Phi^{(a)})^{-1}\bw_\epsilon^{(a)}\,ds \bigg]}_{\dot C_s^{1,\alpha}}\\
&\qquad
 + \norm{(\Phi^{(a)})^{-1}\mc{H}_\epsilon^{(a)}\bigg[\Phi^{(a)}\int_\T(\Phi^{(a)})^{-1}\bw_\epsilon^{(a)}\,ds \bigg]-(\Phi^{(b)})^{-1}\mc{H}_\epsilon^{(b)}\bigg[\Phi^{(b)}\int_\T(\Phi^{(b)})^{-1}\bw_\epsilon^{(b)}\,ds \bigg]}_{C_s^{0,\alpha}}\\
 &\qquad
 + \epsilon\abs{(\Phi^{(a)})^{-1}\mc{H}_\epsilon^{(a)}\bigg[\Phi^{(a)}\int_\T(\Phi^{(a)})^{-1}\bw_\epsilon^{(a)}\,ds \bigg]-(\Phi^{(b)})^{-1}\mc{H}_\epsilon^{(b)}\bigg[\Phi^{(b)}\int_\T(\Phi^{(b)})^{-1}\bw_\epsilon^{(b)}\,ds \bigg]}_{\dot C_s^{1,\alpha}}\bigg) \\
 &\quad \le c(\kappa_{*,\alpha^+}^{(a)},\kappa_{*,\alpha^+}^{(b)},c_\Gamma)\,\epsilon^{2-\alpha^+}\big(\norm{\X^{(a)}-\X^{(b)}}_{C^{2,\alpha^+}}\norm{\bw_\epsilon^{(a)}}_{C^{0,\alpha}}+\norm{\bw_\epsilon^{(a)}-\bw_\epsilon^{(b)}}_{C^{0,\alpha}}\big)\\
 &\quad \le c(\kappa_{*,\alpha^+}^{(a)},\kappa_{*,\alpha^+}^{(b)},c_\Gamma)\,\epsilon^{2-\alpha^+}\norm{\X^{(a)}-\X^{(b)}}_{C^{2,\alpha^+}}\norm{\bv}_{C^{1,\alpha}(\T)}\,.
\end{align*}
Altogether, we may bound $\mc{R}_{\rm d,\epsilon}$ as
\begin{align*}
\norm{\mc{R}_{\rm d,\epsilon}[\bv]}_{C^{0,\alpha}(\T)} 
&\le c(\kappa_{*,\alpha}) \epsilon^{2-\alpha}\norm{\bm{w}_\epsilon}_{C^{0,\alpha}} + c(\kappa_{*,\alpha^+},c_\Gamma)\,\epsilon^{2-\alpha^+}\norm{\bv}_{C^{1,\alpha}(\T)}\\
&\le c(\kappa_{*,\alpha^+},c_\Gamma)\,\epsilon^{2-\alpha^+}\norm{\bv}_{C^{1,\alpha}(\T)} \,,\\
\norm{(\mc{R}_{\rm d,\epsilon}^{(a)}-\mc{R}_{\rm d,\epsilon}^{(b)})[\bv]}_{C^{0,\alpha}(\T)} &\le \epsilon^2\norm{\int_0^{2\pi}\big(\bm{w}_\epsilon^{(a)}\,\wh\kappa^{(a)}-\bm{w}_\epsilon^{(b)}\,\wh\kappa^{(b)}\big)\,d\theta}_{C^{0,\alpha}}\\
&\quad + c(\kappa_{*,\alpha^+}^{(a)},\kappa_{*,\alpha^+}^{(b)},c_\Gamma)\,\epsilon^{2-\alpha^+}\norm{\X^{(a)}-\X^{(b)}}_{C^{2,\alpha^+}}\norm{\bv}_{C^{1,\alpha}(\T)} \\
&\le c(\kappa_{*,\alpha^+}^{(a)},\kappa_{*,\alpha^+}^{(b)},c_\Gamma)\,\epsilon^{2-\alpha^+}\norm{\X^{(a)}-\X^{(b)}}_{C^{2,\alpha^+}}\norm{\bv}_{C^{1,\alpha}(\T)}\,.
\end{align*}

We next consider $\mc{R}_{\rm d,+}$. For $0<\alpha<\gamma<\beta<1$, we may use Lemmas \ref{lem:Sinv_mapping}, \ref{lem:double_layer}, and \ref{lem:full_neumann} to estimate the third term involving the double layer remainder as 
\begin{align*}
&\norm{\Phi\overline{\A}_\epsilon\big[\P_{01}\mc{R}_{\mc{D}}[\bv]\big]}_{C^{0,\gamma}} \le c(\kappa_*)\,\epsilon^{-1}\norm{\mc{R}_{\mc{D}}[\bv]}_{C_s^{1,\gamma}} \le c(\kappa_{*,\gamma^+},c_\Gamma)\,\epsilon^{-1-\gamma^+}\norm{\bv}_{C^{0,\gamma}(\T)}\,,\\
&\norm{\Phi^{(a)}\overline{\A}_\epsilon\big[\P_{01}\mc{R}_{\mc{D}}^{(a)}[\bv]\big]-\Phi^{(b)}\overline{\A}_\epsilon\big[\P_{01}\mc{R}_{\mc{D}}^{(b)}[\bv]\big]}_{C^{0,\gamma}} \\
&\qquad\qquad\le c(\kappa_*^{(a)},\kappa_*^{(b)})\,\epsilon^{-1}\big(\norm{\X^{(a)}-\X^{(b)}}_{C^2}\norm{\mc{R}_{\mc{D}}^{(a)}[\bv]}_{C_s^{1,\gamma}} 
+ \norm{(\mc{R}_{\mc{D}}^{(a)}-\mc{R}_{\mc{D}}^{(b)})[\bv]}_{C_s^{1,\gamma}} \big)\\
&\qquad\qquad \le c(\kappa_{*,\gamma^+}^{(a)},\kappa_{*,\gamma^+}^{(b)},c_\Gamma)\,\epsilon^{-1-\gamma^+}\norm{\X^{(a)}-\X^{(b)}}_{C^{2,\gamma^+}}\norm{\bv}_{C^{0,\gamma}(\T)}\,.
\end{align*}
Letting $\Phi\mc{M}_1=\Phi(\overline{\A}_\epsilon^+\big[\P_{01}\mc{R}_{\mc{S}}[\bm{w}_0^\Phi]\big] 
+\overline{\A}_\epsilon^\epsilon\big[\P_{01}\mc{R}_{\mc{S},+}[\bm{w}_0^\Phi]\big] 
+\overline{\A}_\epsilon^\epsilon\big[\P_{01}\mc{R}_{\mc{S},\epsilon}[(\bm{w}_+)_0^\Phi]\big])$ denote the middle three terms of $\mc{R}_{\rm d,+}$, we may use Lemmas \ref{lem:Sinv_mapping}, \ref{lem:single_layer}, and \ref{lem:full_neumann} to estimate
\begin{align*}
&\norm{\Phi\mc{M}_1}_{C^{0,\gamma}} \le c(\kappa_*)\big(\epsilon^{-1}\norm{\mc{R}_{\mc{S}}[\bm{w}_0^\Phi]}_{C_s^{1,\alpha}} 
+\norm{\mc{R}_{\mc{S},+}[\bm{w}_0^\Phi]}_{C_s^{1,\gamma}} 
+\norm{\mc{R}_{\mc{S},\epsilon}[(\bm{w}_+)_0^\Phi]}_{C_s^{1,\gamma}}\big)\\
&\qquad\qquad\quad\, \le c(\epsilon,\kappa_{*,\gamma^+},c_\Gamma)\big(\norm{\bm{w}}_{C^{0,\alpha}} 
+\norm{\bm{w}_+}_{C^{0,\gamma}}\big)
\le c(\epsilon,\kappa_{*,\gamma^+},c_\Gamma)\norm{\bv}_{C^{1,\alpha}(\T)}\,, \\
&\norm{\Phi^{(a)}\mc{M}_1^{(a)}-\Phi^{(b)}\mc{M}_1^{(b)}}_{C^{0,\gamma}}\\ 
&\quad \le 
c(\epsilon,\kappa_*^{(a)},\kappa_*^{(b)})\bigg(\norm{\X^{(a)}-\X^{(b)}}_{C^2}\bigg(\norm{\mc{R}_{\mc{S}}^{(a)}[(\bm{w}_0^{(a)})^{\Phi^{(a)}}]}_{C_s^{1,\alpha}} 
+\norm{\mc{R}_{\mc{S},+}^{(a)}[(\bm{w}_0^{(a)})^{\Phi^{(a)}}]}_{C_s^{1,\gamma}}\\ 
&\qquad +\norm{\mc{R}_{\mc{S},\epsilon}^{(a)}[(\bm{w}_+^{(a)})_0^{\Phi^{(a)}}]}_{C_s^{1,\gamma}} \bigg)
 + \norm{\mc{R}_{\mc{S}}^{(a)}[(\bm{w}_0^{(a)})^{\Phi^{(a)}}]-\mc{R}_{\mc{S}}^{(b)}[(\bm{w}_0^{(b)})^{\Phi^{(b)}}]}_{C_s^{1,\alpha}} \\
&\qquad +\norm{\mc{R}_{\mc{S},+}^{(a)}[(\bm{w}_0^{(a)})^{\Phi^{(a)}}]-\mc{R}_{\mc{S},+}^{(b)}[(\bm{w}_0^{(b)})^{\Phi^{(b)}}]}_{C_s^{1,\gamma}}
 +\norm{\mc{R}_{\mc{S},\epsilon}^{(a)}[(\bm{w}_+^{(a)})_0^{\Phi^{(a)}}]-\mc{R}_{\mc{S},\epsilon}^{(b)}[(\bm{w}_+^{(b)})_0^{\Phi^{(b)}}]}_{C_s^{1,\gamma}}\bigg)\\
&\quad \le c(\epsilon,\kappa_{*,\gamma^+}^{(a)},\kappa_{*,\gamma^+}^{(b)},c_\Gamma)\bigg(\norm{\X^{(a)}-\X^{(b)}}_{C^{2,\gamma^+}}\big(\norm{\bm{w}_0^{(a)}}_{C^{0,\alpha}} +\norm{\bm{w}_+^{(a)}}_{C^{0,\gamma}} \big) \\
 &\qquad + \norm{\bm{w}_0^{(a)}-\bm{w}_0^{(b)}}_{C^{0,\alpha}} +\norm{\bm{w}_+^{(a)}-\bm{w}_+^{(b)}}_{C^{0,\gamma}}\bigg)\\
&\quad \le c(\epsilon,\kappa_{*,\gamma^+}^{(a)},\kappa_{*,\gamma^+}^{(b)},c_\Gamma)\norm{\X^{(a)}-\X^{(b)}}_{C^{2,\gamma^+}}\norm{\bv}_{C^{1,\alpha}(\T)}\,. 
\end{align*}
Finally, letting $\Phi\mc{M}_2= \Phi\big(\overline{\A}_\epsilon^+\big[\P_{01}\Phi^{-1}\mc{S}\big[\Phi\int_\T\Phi^{-1}\bw\,ds \big]\big]$
$+\overline{\A}_\epsilon^\epsilon\big[\P_{01}\Phi^{-1}\mc{H}_+\big[\Phi\int_\T\Phi^{-1}\bw\,ds \big]\big]$ \\
$+\overline{\A}_\epsilon^\epsilon\big[\P_{01}\Phi^{-1}\mc{H}_\epsilon\big[\Phi\int_\T\Phi^{-1}\bw_+\,ds \big]\big]\big)$ denote the final three terms of $\mc{R}_{\rm d,+}$, we may use Lemmas \ref{lem:Sinv_mapping}, \ref{lem:single_const_in_s}, and \ref{lem:full_neumann} to obtain 
\begin{align*}
&\norm{\Phi\mc{M}_2}_{C^{0,\gamma}} \le c(\kappa_*)\bigg(\norm{\mc{S}\big[\Phi\int_\T\Phi^{-1}\bw\,ds \big]\big]}_{C_s^{1,\alpha}}
+\norm{\mc{H}_+\big[\Phi\int_\T\Phi^{-1}\bw\,ds \big]}_{C_s^{1,\gamma}}\\
&\hspace{4cm}+\norm{\mc{H}_\epsilon\big[\Phi\int_\T\Phi^{-1}\bw_+\,ds \big]}_{C_s^{1,\gamma}}\bigg)\\
&\hspace{2cm} \le c(\epsilon,\kappa_{*,\gamma^+},c_\Gamma)\big(\norm{\bw}_{C^{0,\alpha}}+\norm{\bw_+}_{C^{0,\gamma}}\big) 
\le c(\epsilon,\kappa_{*,\gamma^+},c_\Gamma)\norm{\bv}_{C^{1,\alpha}(\T)}\,, \\
&\norm{\Phi^{(a)}\mc{M}_2^{(a)}-\Phi^{(b)}\mc{M}_2^{(b)}}_{C^{0,\gamma}} \\
&\quad \le c(\kappa_*^{(a)},\kappa_*^{(b)})\bigg(\norm{\X^{(a)}-\X^{(b)}}_{C^2}\bigg(\norm{\mc{S}^{(a)}\big[\Phi^{(a)}\int_\T(\Phi^{(a)})^{-1}\bw^{(a)}\,ds \big]\big]}_{C_s^{1,\alpha}}\\
&\qquad +\norm{\mc{H}_+^{(a)}\big[\Phi^{(a)}\int_\T(\Phi^{(a)})^{-1}\bw^{(a)}\,ds \big]}_{C_s^{1,\gamma}}
+\norm{\mc{H}_\epsilon^{(a)}\big[\Phi^{(a)}\int_\T(\Phi^{(a)})^{-1}\bw_+^{(a)}\,ds \big]}_{C_s^{1,\gamma}}\bigg) \\
&\qquad +\norm{\mc{S}^{(a)}\big[\Phi^{(a)}\int_\T(\Phi^{(a)})^{-1}\bw^{(a)}\,ds \big]\big]-\mc{S}^{(b)}\big[\Phi^{(b)}\int_\T(\Phi^{(b)})^{-1}\bw^{(b)}\,ds \big]\big]}_{C_s^{1,\alpha}}\\
&\qquad +\norm{\mc{H}_+^{(a)}\big[\Phi^{(a)}\int_\T(\Phi^{(a)})^{-1}\bw^{(a)}\,ds \big]-\mc{H}_+^{(b)}\big[\Phi^{(b)}\int_\T(\Phi^{(b)})^{-1}\bw^{(b)}\,ds \big]}_{C_s^{1,\gamma}}\\
&\qquad+\norm{\mc{H}_\epsilon^{(a)}\big[\Phi^{(a)}\int_\T(\Phi^{(a)})^{-1}\bw_+^{(a)}\,ds \big]-\mc{H}_\epsilon^{(b)}\big[\Phi^{(b)}\int_\T(\Phi^{(b)})^{-1}\bw_+^{(b)}\,ds \big]}_{C_s^{1,\gamma}}\bigg) \\
&\quad \le c(\epsilon,\kappa_{*,\gamma^+}^{(a)},\kappa_{*,\gamma^+}^{(b)},c_\Gamma)\bigg(\norm{\X^{(a)}-\X^{(b)}}_{C^{2,\gamma^+}}\big(\norm{\bw^{(a)}}_{C^{0,\alpha}}
+\norm{\bw_+^{(a)}}_{C^{0,\gamma}}\big) \\
&\qquad +\norm{\bw^{(a)}-\bw^{(b)}}_{C^{0,\alpha}}
+\norm{\bw_+^{(a)}-\bw_+^{(b)}}_{C^{0,\gamma}}\bigg) \\
&\quad \le c(\epsilon,\kappa_{*,\gamma^+}^{(a)},\kappa_{*,\gamma^+}^{(b)},c_\Gamma)\norm{\X^{(a)}-\X^{(b)}}_{C^{2,\gamma^+}}\norm{\bv}_{C^{1,\alpha}(\T)}\,.
\end{align*}
Altogether, we may estimate $\mc{R}_{\rm d,+}$ as
\begin{equation}\label{eq:rd_plus}
\begin{aligned}
\norm{\mc{R}_{\rm d,+}[\bv]}_{C^{0,\gamma}(\T)}
&\le c(\kappa_{*,\gamma})\,\epsilon^{2-\gamma} \norm{\bm{w}_+}_{C^{0,\gamma}}+\epsilon\norm{\bw}_{L^\infty} 
+ c(\epsilon,\kappa_{*,\gamma^+},c_\Gamma)\norm{\bv}_{C^{1,\alpha}(\T)} \\
&\le c(\epsilon,\kappa_{*,\gamma^+},c_\Gamma)\norm{\bv}_{C^{1,\alpha}(\T)}\,, \\
\norm{(\mc{R}_{\rm d,+}^{(a)}-\mc{R}_{\rm d,+}^{(b)})[\bv]}_{C^{0,\gamma}(\T)}&\le  \epsilon^2\norm{\bm{w}_+^{(a)}(\wh\kappa^{(a)}-\wh\kappa^{(b)})}_{C^{0,\gamma}} + \epsilon^2\norm{\wh\kappa^{(b)}(\bm{w}_+^{(a)}-\bm{w}_+^{(b)})}_{C^{0,\gamma}}  \\
&\quad + \norm{\bigg(\Phi^{(a)}\int_\T(\Phi^{(a)})^{-1}-\Phi^{(b)}\int_\T(\Phi^{(b)})^{-1}\bigg)\int_0^{2\pi}\bw^{(a)}\,\epsilon \,d\theta ds}_{C^{0,\gamma}}\\
&\quad+ c(\kappa_{*,\gamma}^{(b)}) \,\epsilon \norm{\bw^{(a)}-\bw^{(b)}}_{L^\infty}\\ 
&\quad 
+ c(\epsilon,\kappa_{*,\gamma^+}^{(a)},\kappa_{*,\gamma^+}^{(b)},c_\Gamma)\norm{\X^{(a)}-\X^{(b)}}_{C^{2,\gamma^+}}\norm{\bv}_{C^{1,\alpha}(\T)}\\
&\le c(\epsilon,\kappa_{*,\gamma^+}^{(a)},\kappa_{*,\gamma^+}^{(b)},c_\Gamma)\norm{\X^{(a)}-\X^{(b)}}_{C^{2,\gamma^+}}\norm{\bv}_{C^{1,\alpha}(\T)}\,.
\end{aligned}
\end{equation}
In total we obtain the estimates \eqref{eq:thm_DtN_ests} and \eqref{eq:thm_DtN_ests_lip} for the slender body DtN map decomposition \eqref{eq:thm_DtN_decomp}. \\

We next turn to the decomposition \eqref{eq:thm_NtD_decomp} of the slender body NtD operator $\mc{L}_\epsilon$. We will rely on the DtN decomposition \eqref{eq:thm_DtN_decomp} to prove the subsequent bounds \eqref{eq:thm_NtD_ests} and \eqref{eq:thm_NtD_ests_lip}. In particular, given $\X(s)$ and corresponding map $\Phi$, for $\bm{f}(s)\in C^{0,\alpha}(\T)$ and $\bm{f}_0^\Phi(s)=\bm{f}(s)-\Phi\int_\T \Phi^{-1}\bm{f}(s)\,ds$, we may use \eqref{eq:thm_DtN_decomp} to write 
\begin{align*}
\Phi^{-1}\bm{v}(s) &= ({\bf I}+\overline{\mc{L}}_\epsilon(\mc{R}_{\rm d,\epsilon}\Phi)_0^\Phi)^{-1}\overline{\mc{L}}_\epsilon[\Phi^{-1}\bm{f}_0^\Phi(s)] + ({\bf I}+\overline{\mc{L}}_\epsilon(\mc{R}_{\rm d,\epsilon}\Phi)_0^\Phi)^{-1}\overline{\mc{L}}_\epsilon\big[ (\mc{R}_{\rm d,+}[\bv(s)])_0^\Phi\big]\\
&= ({\bf I}+\overline{\mc{L}}_\epsilon(\mc{R}_{\rm d,\epsilon}\Phi)_0^\Phi)^{-1}\overline{\mc{L}}_\epsilon[\Phi^{-1}\bm{f}_0^\Phi(s)] + ({\bf I}+\overline{\mc{L}}_\epsilon(\mc{R}_{\rm d,\epsilon}\Phi)_0^\Phi)^{-1}\overline{\mc{L}}_\epsilon\big[ (\mc{R}_{\rm d,+}[\mc{L}_\epsilon[\bm{f}(s)]])_0^\Phi\big]\,.
\end{align*}
Now, using Lemma \ref{lem:straight_Leps_Holder} and the estimate \eqref{eq:thm_DtN_ests} for $\mc{R}_{\rm d,\epsilon}$, for any $\overline{\bm{g}}\in C^{1,\alpha}(\T)$ we have
\begin{align*}
\norm{\overline{\mc{L}}_\epsilon(\mc{R}_{\rm d,\epsilon}[\Phi \overline{\bm{g}}])_0^\Phi}_{C^{1,\alpha}(\T)} 
&\le c\abs{\log\epsilon}\big(\norm{\mc{R}_{\rm d,\epsilon}[\Phi \overline{\bm{g}}]}_{L^\infty(\T)} + \epsilon^{-1}\big|\mc{R}_{\rm d,\epsilon}[\Phi \overline{\bm{g}}]\big|_{\dot C^{0,\alpha}(\T)} \big)\\
&\le c(\kappa_{*,\alpha^+},c_\Gamma)\,\epsilon^{1-\alpha^+}\abs{\log\epsilon}\norm{\Phi \overline{\bm{g}}}_{C^{1,\alpha}(\T)}\,.
\end{align*}
Thus for $\epsilon$ sufficiently small, we may expand $({\bf I}+\overline{\mc{L}}_\epsilon(\mc{R}_{\rm d,\epsilon}\Phi)_0^\Phi)^{-1}$ as a Neumann series:
\begin{equation}\label{eq:Rn_epsilon}
({\bf I}+\overline{\mc{L}}_\epsilon(\mc{R}_{\rm d,\epsilon}\Phi)_0^\Phi)^{-1}
= {\bf I} + \sum_{j=1}^\infty\big(\overline{\mc{L}}_\epsilon(\mc{R}_{\rm d,\epsilon}\Phi)_0^\Phi \big)^j 
=: {\bf I} + \mc{R}_{\rm n,\epsilon}\,,
\end{equation}
with
\begin{equation}\label{eq:Rn0_epsilon_est}
\norm{\mc{R}_{\rm n,\epsilon}[\bm{g}]}_{C^{1,\alpha}(\T)}
\le c(\kappa_{*,\alpha^+},c_\Gamma)\,\epsilon^{1-\alpha^+}\abs{\log\epsilon}\norm{\bm{g}}_{C^{1,\alpha}(\T)}\,.
\end{equation}
We thus obtain the first part of the decomposition \eqref{eq:thm_NtD_decomp}. 
Furthermore, using the Lipschitz estimate \eqref{eq:thm_DtN_ests_lip} for $\mc{R}_{\rm d,\epsilon}$, we also have 
\begin{equation}\label{eq:Rn0_eps_lip}
\norm{(\mc{R}_{\rm n,\epsilon}^{(a)}-\mc{R}_{\rm n,\epsilon}^{(b)})[\bm{g}]}_{C^{1,\alpha}(\T)} \le c(\kappa_{*,\alpha^+}^{(a)},\kappa_{*,\alpha^+}^{(b)},c_\Gamma)\, \epsilon^{1-\alpha^+}\abs{\log\epsilon}\norm{\X^{(a)}-\X^{(b)}}_{C^{2,\alpha^+}(\T)}\norm{\bm{g}}_{C^{1,\alpha}(\T)}\,.
\end{equation}

In addition, defining
\begin{align*}
\mc{R}_{\rm n,+} &:= ({\bf I}+\overline{\mc{L}}_\epsilon(\mc{R}_{\rm d,\epsilon}\Phi)_0^\Phi)^{-1}\overline{\mc{L}}_\epsilon\big[ (\mc{R}_{\rm d,+}[\mc{L}_\epsilon[\cdot]])_0^\Phi\big]\\
&= ({\bf I}+\mc{R}_{\rm n0,\epsilon})\overline{\mc{L}}_\epsilon\big[ (\mc{R}_{\rm d,+}[\mc{L}_\epsilon[\cdot]])_0^\Phi\big]\,,
\end{align*}
we may use \eqref{eq:Rn0_epsilon_est}, Lemma \ref{lem:straight_Leps_Holder}, \eqref{eq:rd_plus}, and Lemma \ref{lem:holder_NtD} to estimate
\begin{align*}
\norm{\mc{R}_{\rm n,+}[\bm{f}]}_{C^{1,\gamma}(\T)} &\le c(\epsilon,\kappa_{*,\gamma^+},c_\Gamma)\norm{\overline{\mc{L}}_\epsilon\big[ (\mc{R}_{\rm d,+}[\mc{L}_\epsilon[\bm{f}]])_0^\Phi\big]}_{C^{1,\gamma}} 
\le c(\epsilon,\kappa_{*,\gamma^+},c_\Gamma)\norm{\mc{R}_{\rm d,+}[\mc{L}_\epsilon[\bm{f}]]}_{C^{0,\gamma}} \\
&\le c(\epsilon,\kappa_{*,\gamma^+},c_\Gamma)\norm{\mc{L}_\epsilon[\bm{f}]}_{C^{1,\alpha}}
\le c(\epsilon,\|\X_{ss}\|_{C^{1,\alpha}},c_\Gamma)\norm{\bm{f}}_{C^{0,\alpha}(\T)}\,.
\end{align*}
Using the Lipschitz estimates \eqref{eq:Rn0_eps_lip}, \eqref{eq:thm_DtN_ests_lip}, and \eqref{eq:holderNTD2} for $\mc{R}_{\rm n,\epsilon}$, $\mc{R}_{\rm d,+}$, and $\mc{L}_\epsilon$, we obtain 
\begin{align*}
\norm{(\mc{R}_{\rm n,+}^{(a)}-\mc{R}_{\rm n,+}^{(b)})[\bm{f}]}_{C^{1,\gamma}(\T)}
&\le c(\epsilon,\|\X_{ss}^{(a)}\|_{C^{1,\alpha}},\|\X_{ss}^{(b)}\|_{C^{1,\alpha}},c_\Gamma)\norm{\X^{(a)}-\X^{(b)}}_{C^{2,\alpha^+}(\T)}\norm{\bm{f}}_{C^{0,\alpha}(\T)}\,.
\end{align*}
In total, we obtain the decomposition \eqref{eq:thm_NtD_decomp} and estimates \eqref{eq:thm_NtD_ests}, \eqref{eq:thm_NtD_ests_lip} for the slender body NtD map $\mc{L}_\epsilon$.
\hfill \qedsymbol

\begin{proof}[Proof of Corollary \ref{cor:four_derivs}]
Given slender body force data of the form $\bm{f}(s)=\p_s^4\bm{F}$, we may first use the commutator bounds \eqref{eq:Phi_commutator_est} for $\Phi$ to write
\begin{align*}
\Phi^{-1}\p_s^4\bm{F} &= \p_s^2\big(\Phi^{-1}\p_s^2\bm{F} \big)+\mc{R}_{\X,1}\,,\\
\norm{\mc{R}_{\X,1}}_{C^{0,\alpha}}&= \norm{\p_s\big([\p_s,\Phi^{-1}]\p_s^2\bm{F} \big)+[\p_s,\Phi^{-1}]\p_s^3\bm{F}}_{C^{0,\alpha}}\le c(\norm{\X_{ss}}_{C^{1,\alpha}})\norm{\bm{F}}_{C^{3,\alpha}}\,.
\end{align*}
For two nearby filaments with centerlines $\X^{(a)}$ and $\X^{(b)}$ satisfying Lemma \ref{lem:XaXb_C2beta}, using the commutator estimates \eqref{eq:Phi_comm_lip_est}, we additionally have 
\begin{align*}
\norm{\mc{R}_{\X,1}^{(a)}-\mc{R}_{\X,1}^{(b)}}_{C^{0,\alpha}}&\le 
c(\|\X^{(a)}\|_{C^{3,\alpha}},\|\X^{(b)}\|_{C^{3,\alpha}})\,\norm{\X^{(a)}-\X^{(b)}}_{C^{3,\alpha}}\norm{\bm{F}}_{C^{3,\alpha}}\,.
\end{align*}

We next note the following commutator estimate for $\overline{\mc{L}}_\epsilon\p_s^2$. 
\begin{proposition}[Commutator estimate for $\overline{\mc{L}}_\epsilon\p_s^2$]\label{prop:Phi_comm}
Given a matrix-valued function $\bm{A}\in C^{n+1,\alpha}(\T)$ and a vector-valued function $\bm{g}\in C^{n,\alpha}(\T)$ for $0\le n\in \Z$, the commutator $[\overline{\mc{L}}_\epsilon\p_s^2,\bm{A}]\bm{g}$ satisfies
\begin{equation}\label{eq:Phi_comm_est}
\norm{[\overline{\mc{L}}_\epsilon\p_s^2,\bm{A}]\bm{g}}_{C^{n,\alpha}(\T)} \le c\,\epsilon^{-1}\abs{\log\epsilon}\,\norm{\bm{A}}_{C^{n+1,\alpha}}\norm{\bm{g}}_{C^{n,\alpha}(\T)}\,.
\end{equation}
\end{proposition}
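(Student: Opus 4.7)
The plan is to exploit the translation invariance of $\overline{\mc{L}}_\epsilon$ along the straight cylinder $\mc{C}_\epsilon$, which makes it a Fourier multiplier in $s$ and hence commute with $\p_s$. Setting $\mc{T}:=\overline{\mc{L}}_\epsilon\p_s=\p_s\overline{\mc{L}}_\epsilon$, Lemma \ref{lem:straight_Leps_Holder} gives that $\mc{T}$ maps (mean-zero) $C^{n,\alpha}(\T)$ into itself with norm at most $c\epsilon^{-1}\abs{\log\epsilon}$, since $\norm{\p_s\overline{\mc{L}}_\epsilon\bm{h}}_{C^{0,\alpha}}\le \norm{\overline{\mc{L}}_\epsilon\bm{h}}_{C^{1,\alpha}}$ and the $\p_s^n$-derivative may be commuted through $\overline{\mc{L}}_\epsilon$ before applying Lemma \ref{lem:straight_Leps_Holder} at higher regularity. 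I would first treat $n=0$ and then indicate how the argument iterates.

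For $n=0$, I would Leibniz-expand $[\overline{\mc{L}}_\epsilon\p_s^2,\bm{A}]\bm{g}=[\p_s\mc{T},\bm{A}]\bm{g}=\p_s\bigl([\mc{T},\bm{A}]\bm{g}\bigr)+\p_s\bm{A}\cdot\mc{T}\bm{g}$. The second term is immediately bounded in $C^{0,\alpha}$ by $c\epsilon^{-1}\abs{\log\epsilon}\norm{\bm{A}}_{C^{1,\alpha}}\norm{\bm{g}}_{C^{0,\alpha}}$ from the mapping property of $\mc{T}$. For the first term, it suffices to bound $\norm{[\mc{T},\bm{A}]\bm{g}}_{C^{1,\alpha}}$ by the same quantity, i.e.\ to show the commutator gains one order of regularity. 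Splitting $[\mc{T},\bm{A}]\bm{g}=\overline{\mc{L}}_\epsilon(\p_s\bm{A}\cdot\bm{g})+[\overline{\mc{L}}_\epsilon,\bm{A}]\p_s\bm{g}$, the first piece is $C^{1,\alpha}$-bounded by direct application of Lemma \ref{lem:straight_Leps_Holder}; the second piece is the crux.

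Writing the commutator via the convolution kernel $\overline{\mc{L}}_\epsilon\bm{h}(s)=\int_\T K_\epsilon(s-s')\bm{h}(s')\,ds'$ gives
\begin{equation*}
[\overline{\mc{L}}_\epsilon,\bm{A}]\p_s\bm{g}(s)=\int_\T K_\epsilon(s-s')\bigl(\bm{A}(s')-\bm{A}(s)\bigr)\p_{s'}\bm{g}(s')\,ds'.
\end{equation*}
After integrating by parts in $s'$ and Taylor expanding $\bm{A}(s')-\bm{A}(s)=\p_s\bm{A}(s)(s'-s)+O(|s-s'|^{1+\alpha}\norm{\bm{A}}_{C^{1,\alpha}})$, the commutator splits into a principal part that acts as a Fourier multiplier with symbol $\p_k(km_\epsilon(k))$ applied to $\bm{g}$, together with a lower-order remainder whose kernel picks up the extra $|s-s'|^{1+\alpha}$ factor and is controllable by integrable-kernel estimates. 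The main obstacle is precisely this symbol-level gain: the Bessel-function expressions for $m_{\epsilon,{\rm t}}(k)$ and $m_{\epsilon,{\rm n}}(k)$ implicit in Proposition \ref{prop:Leps_spectrum} (or equivalently in Lemma \ref{lem:straight_SD}) are not simple, and verifying that $\p_k(km_\epsilon(k))$ decays one order faster than $km_\epsilon(k)$ uniformly in $\epsilon$ requires a careful analysis of the Bessel-function ratios in both the low-frequency regime $\epsilon|k|\lesssim 1$ and the high-frequency regime $\epsilon|k|\gtrsim 1$.

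For $n\ge 1$, I would apply Leibniz to $\p_s^n\bigl([\mc{T},\bm{A}]\bm{g}\bigr)+\p_s^n(\p_s\bm{A}\cdot\mc{T}\bm{g})$, redistributing up to $n+1$ derivatives of $\bm{A}$ and $n$ of $\bm{g}$. Each resulting term is either of the form $\p_s^j\bm{A}\cdot \mc{T}(\p_s^{n-j+1}\bm{g})$ with $j\le n+1$, bounded by Lemma \ref{lem:straight_Leps_Holder} applied to $\p_s^{n-j+1}\bm{g}$ after commuting $\p_s^{n-j+1}$ through $\overline{\mc{L}}_\epsilon$; or $\p_s^j\overline{\mc{L}}_\epsilon(\p_s^{k}\bm{A}\cdot\p_s^\ell\bm{g})$, handled again by Lemma \ref{lem:straight_Leps_Holder}; or a $\p_s^j$-derivative of the commutator $[\overline{\mc{L}}_\epsilon,\bm{A}]\p_s\bm{g}$, handled by the paragraph-3 symbol analysis (possibly iterated, picking up further gains from higher-order Taylor expansion of $\bm{A}$, which is permitted since $\bm{A}\in C^{n+1,\alpha}$).
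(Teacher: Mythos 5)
Your kernel/Taylor-expansion route is genuinely different from the paper's. The paper proves an abstract commutator estimate (Lemma~C.1, \emph{symbol\_comm}) via a Bony paraproduct decomposition of $\bm{A}\bm{g}$ into low-high, high-low, and high-high interactions; the only input on the multiplier is the pair of Littlewood--Paley block bounds $|\phi_j\wh{\bm{M}}|\le b_0 2^j$ and $|\p_\xi^2(\phi_j\wh{\bm{M}})|\le b_1 2^{-j}$, after which one reads off $(\sqrt{b_0b_1}+b_1)$ from the $L^1$ bound $\int|\bm{M}_k(s)||s|\,ds\lesssim\sqrt{b_0 b_1}+b_1$. The application to $\wh{\bm{M}}=\wh{\bm{M}}_\epsilon^{(1)}\xi^2$ is then immediate from Lemmas~3.4 and~3.5 with $b_0=b_1=c\,\epsilon^{-1}|\log\epsilon|$. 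The ``main obstacle'' you flag --- verifying the decay of $\p_\xi(\xi m_\epsilon)$ --- is actually already resolved by those same lemmas and is not where the difficulty lives.

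The real gap in your sketch is the intermediate reduction: ``it suffices to bound $\norm{[\mc{T},\bm{A}]\bm{g}}_{C^{1,\alpha}}$,'' followed by the Taylor split of $[\mc{T},\bm{A}]\bm{g}$ into a principal part $\p_s\bm{A}\cdot T_{\p_\xi(\xi m_\epsilon)}\bm{g}$ plus a remainder. Both factors in the principal part are only $C^{0,\alpha}$, so the principal term by itself is not in $C^{1,\alpha}$; attempting to estimate $\norm{[\mc{T},\bm{A}]\bm{g}}_{C^{1,\alpha}}$ term-by-term from this split fails because differentiating the principal term produces $\p_s^2\bm{A}\cdot T_{\p_\xi(\xi m_\epsilon)}\bm{g}$, which has no pointwise meaning when $\bm{A}\in C^{1,\alpha}$. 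That singular term is in fact cancelled by a compensating piece from $\p_s$ of the remainder (differentiating $\bm{A}(s')-\bm{A}(s)-\p_s\bm{A}(s)(s'-s)$ in $s$ gives $-\p_s^2\bm{A}(s)(s'-s)$, which exactly kills it), but your sketch does not track this cancellation, and without it the decomposition does not close. Beyond the cancellation, the remainder ``integrable-kernel estimates'' you invoke are not carried out; the $\dot C^{0,\alpha}$-seminorm estimate for $\int K_\epsilon'(s-s')O(|s-s'|^{1+\alpha})\bm{g}(s')\,ds'$ is genuinely delicate since $K_\epsilon'$ is oscillatory near scale $\epsilon$ and you must keep the prefactor $\epsilon^{-1}|\log\epsilon|$ sharp. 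These are precisely the issues the paraproduct decomposition in Lemma~C.1 is designed to avoid, since it never places two $s$-derivatives on $\bm{A}$ and uses only the $L^1$ weight estimate on the kernel. A smaller point: $\overline{\mc{L}}_\epsilon$ is only defined on mean-zero data (its Fourier multiplier vanishes at $k=0$), so the application of Lemma~1.3 to $\overline{\mc{L}}_\epsilon(\p_s\bm{A}\cdot\bm{g})$ requires subtracting means, which you do not address.
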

The proof of Proposition \ref{prop:Phi_comm} appears in Appendix \ref{app:Phi_comm}.
Applying Proposition \ref{prop:Phi_comm} in the case $\bm{A}=\Phi^{-1}=\be_z\otimes\be_{\rm t}+\be_x\otimes\be_{\rm n_1}+\be_y\otimes\be_{\rm n_2}$ and $n=1$, we have
\begin{align*}
(\overline{\mc{L}}_\epsilon\p_s^2)[\Phi^{-1}\p_s^2\bm{F}] &= \Phi^{-1}\overline{\mc{L}}_\epsilon\p_s^4\bm{F} + \mc{R}_{\X,2}\,,\\
\norm{\mc{R}_{\X,2}}_{C^{1,\gamma}}&= \norm{[\overline{\mc{L}}_\epsilon\p_s^2,\Phi^{-1}]\p_s^2\bm{F}}_{C^{1,\gamma}} \le c(\epsilon,\|\X\|_{C^{3,\gamma}})\norm{\bm{F}}_{C^{3,\gamma}}\,.
\end{align*}
Furthermore, for two nearby curves $\X^{(a)}$ and $\X^{(b)}$, we have
\begin{align*}
\norm{\mc{R}_{\X,2}^{(a)}-\mc{R}_{\X,2}^{(b)}}_{C^{1,\gamma}}& \le c(\epsilon,\|\X^{(a)}\|_{C^{3,\gamma}},\|\X^{(b)}\|_{C^{3,\gamma}})\,\norm{\X^{(a)}-\X^{(b)}}_{C^{3,\gamma}}\norm{\bm{F}}_{C^{3,\gamma}}\,.
\end{align*}

In total, we may rewrite 
\begin{equation}\label{eq:new_forcing}
\Phi\overline{\mc{L}}_\epsilon[\Phi^{-1}(\p_s^4\bm{F})_0^\Phi] = \overline{\mc{L}}_\epsilon[\p_s^4\bm{F}] + \Phi\overline{\mc{L}}_\epsilon(\mc{R}_{\X,1})_0^\Phi+\Phi\mc{R}_{\X,2}\,.
\end{equation}
Defining $\wt{\mc{R}}_{\rm n,+}=\mc{R}_{\rm n,+}+ ({\bf I}+\mc{R}_{\rm n,\epsilon})\Phi\overline{\mc{L}}_\epsilon(\mc{R}_{\X,1})_0^\Phi+ ({\bf I}+\mc{R}_{\rm n,\epsilon})\Phi\mc{R}_{\X,2}$, we obtain Corollary \ref{cor:four_derivs}.
\end{proof}

\subsection{Proof of Theorem \ref{thm:main}: local well-posedness}\label{subsec:pf_dynamics}
We return to the notation $\X^\sigma(\sigma,t)$ of section \ref{subsubsec:thm_statement} for an evolving filament that is possibly changing length over time. 
Using Corollary \ref{cor:four_derivs}, the evolution \eqref{eq:curve_evolution} may be written as 
\begin{align*}
\frac{\p\X^\sigma}{\p t}(\sigma,t) &= -\lambda^{-1}\big(({\bf I} + \mc{R}_{\rm n,\epsilon})\overline{\mc{L}}_\epsilon[\p_\sigma^4\X] +\wt{\mc{R}}_{\rm n,+}[\p_\sigma^4\X]\big)\,,\\
\X^\sigma(\sigma,0)&= (\X^\sigma)^{\rm in}(\sigma)\,, \qquad \abs{\p_\sigma(\X^\sigma)^{\rm in}}=1\,,\\
\lambda(t)&= \int_0^1\abs{\p_\sigma\X^\sigma}\,d\sigma\,.
\end{align*}
We will consider the evolution of the difference $\Y(\sigma,t) = \X^\sigma(\sigma,t)-(\X^\sigma)^{\rm in}(\sigma)$, which satisfies
\begin{equation}\label{eq:Y_pde}
\frac{\p\Y}{\p t}(\sigma,t) = -\lambda^{-1}\bigg(\big({\bf I}+\mc{R}_{\rm n,\epsilon}\big)\overline{\mc{L}}_\epsilon\big[\p_\sigma^4(\Y+(\X^\sigma)^{\rm in})\big] +\wt{\mc{R}}_{\rm n,+}\big[\p_\sigma^4(\Y+(\X^\sigma)^{\rm in})\big]\bigg)\,.
\end{equation}
Using Duhamel's formula, we may write $\Y(\sigma,t)$ as 
\begin{equation}\label{eq:Y_duhamel}
\begin{aligned}
\Y(\sigma,t) &= e^{-\int_{0}^t\lambda^{-1}(\tau)d\tau\;\overline{\mc{L}}_\epsilon\p_\sigma^4}(\X^\sigma)^{\rm in}-(\X^\sigma)^{\rm in}\\
&\quad-\int_0^t e^{-\int_{t'}^t\lambda^{-1}(\tau)d\tau\;\overline{\mc{L}}_\epsilon\p_\sigma^4}\,\lambda^{-1}\bigg(\mc{R}_\epsilon\big[\overline{\mc{L}}_\epsilon\p_\sigma^4(\Y+(\X^\sigma)^{\rm in})\big]+\wt{\mc{R}}_+\big[\p_\sigma^4(\Y+(\X^\sigma)^{\rm in})\big] \bigg)\, dt'\,.
\end{aligned}
\end{equation}
We will rely on the following two lemmas to estimate $\Y$ given by \eqref{eq:Y_duhamel}.
\begin{lemma}[Semigroup estimates]\label{lem:semigroup}
Let $n,m$ be nonnegative integers and $\alpha,\gamma\in(0,1)$ with $n+\alpha\ge m+\gamma$.
For $\bm{V}(s)\in C^{m,\gamma}(\T)$, we have 
\begin{equation}\label{eq:semigroup}
\norm{e^{-t\overline{\mc{L}}_\epsilon\p_s^4}\bm{V}}_{C^{n,\alpha}(\T)} 
\le c\, t^{-\frac{n+\alpha - (m +\gamma)}{3}} \norm{\bm{V}}_{C^{m,\gamma}(\T)}\,.
\end{equation}
In each case, $c$ is an absolute constant bounded independent of $\epsilon$ as $\epsilon\to 0$.
\end{lemma}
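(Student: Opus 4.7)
The plan is to realise $e^{-t\overline{\mc L}_\epsilon\p_s^4}$ as convolution on $\T$ with a third-order parabolic kernel and to derive the stated Hölder smoothing estimate from self-similar pointwise bounds on that kernel.

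By Proposition \ref{prop:Leps_spectrum}, $\overline{\mc L}_\epsilon$ diagonalises in the basis $(\be_z,\be_x,\be_y)$, so $\overline{\mc L}_\epsilon\p_s^4$ acts componentwise as a Fourier multiplier on $\T$ with scalar symbol $\sigma_\mu(k)=m_{\epsilon,\mu}(k)(2\pi k)^4$, where $m_{\epsilon,\mu}(k)=1/m_{\epsilon,\mu}^{-1}(k)$ for $\mu\in\{{\rm t},{\rm n}\}$. The asymptotics recalled after Proposition \ref{prop:Leps_spectrum}---$m_{\epsilon,\mu}^{-1}(k)\sim\epsilon|k|$ as $|k|\to\infty$ and $m_{\epsilon,\mu}^{-1}(k)\sim|\log(\epsilon|k|)|^{-1}$ for $1\lesssim|k|\lesssim 1/\epsilon$---give $\sigma_\mu(0)=0$, $\sigma_\mu(k)>0$ for $k\neq 0$, and a lower bound $\sigma_\mu(k)\ge c(\epsilon)|k|^3$ for $|k|$ large. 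Hence the semigroup acts as convolution with the real, mean-one kernel
\[
K_t(s)=\sum_{k\in\Z}e^{2\pi iks}\,e^{-t\sigma(k)},
\]
understood componentwise in the tangential/normal decomposition.

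First I would establish self-similar pointwise bounds
\[
|\p_s^j K_t(s)|\le C_j(\epsilon)\,t^{-(j+1)/3}\bigl(1+|s|/t^{1/3}\bigr)^{-N},\qquad j,\,N\ge 0,
\]
for $t$ in any fixed bounded interval. Poisson summation reduces the task to estimating the corresponding Fourier integral on $\R$; non-stationary phase / contour deformation, using that $\sigma_\mu$ is smooth on $\R\setminus\{0\}$, even, and grows at least cubically at infinity, then yields the bound. Integrating in $s$ gives $\|\p_s^j K_t\|_{L^1(\T)}\le C(\epsilon)\,t^{-j/3}$ for each $j\ge 0$. Second, I would convert these kernel bounds into the Hölder estimate: for integer orders, $\p_s^n(K_t\ast\bm V)=\p_s^{n-m}K_t\ast\p_s^m\bm V$ combined with Young's inequality yields $\|e^{-t\overline{\mc L}_\epsilon\p_s^4}\bm V\|_{C^n(\T)}\le C(\epsilon)\,t^{-(n-m)/3}\|\bm V\|_{C^m(\T)}$. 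The Hölder exponents $\alpha,\gamma$ are recovered either by applying the same argument to finite differences---of $K_t$ (producing the $\alpha$-output) and of $\p_s^m\bm V$ (absorbing the $\gamma$-input)---or equivalently by real interpolation via the Besov identification $C^{m,\gamma}(\T)=B^{m+\gamma}_{\infty,\infty}(\T)$ and the Littlewood--Paley bound $\|\Delta_j e^{-t\overline{\mc L}_\epsilon\p_s^4}\|_{L^\infty\to L^\infty}\le C e^{-c(\epsilon)t\,2^{3j}}$. Putting these together produces the full exponent $-\tfrac{(n+\alpha)-(m+\gamma)}{3}$.

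The main obstacle is the kernel estimate in the first step. Because $\sigma_\mu$ is only given implicitly through modified Bessel functions and carries a logarithmic correction in the transition region $|k|\sim 1/\epsilon$, the kernel $K_t$ is not exactly self-similar in $t^{1/3}$. One must argue that the contour / phase analysis is stable under this lower-order perturbation---for instance by splitting $\sigma_\mu(k)=c(\epsilon)|k|^3+r(k)$ with $r$ subordinate at infinity and treating the correction by contour shift---so that the clean $t^{1/3}$-scaling persists, with constants depending on $\epsilon$ but independent of $t$ in any bounded window. Once this is in place, the conversion to Hölder scales is standard.
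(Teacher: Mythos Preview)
Your proposal is correct in outline, but you have chosen the harder of your two branches. The paper does not attempt pointwise or $L^1$ bounds on the full heat kernel $K_t$; instead it goes directly through the Littlewood--Paley route you mention only as an afterthought. Concretely, the paper localises the semigroup multiplier to each dyadic shell, writes $\wh{\bm L}_{\epsilon,j}(\xi,t)=e^{-t\xi^4\wh{\bm M}_{\epsilon,j}(\xi)}$, and then uses the explicit $\xi$-derivative bounds on $m_{\epsilon,\rm t}$, $m_{\epsilon,\rm n}$ from Lemmas~\ref{lem:mt_bounds} and~\ref{lem:mn_bounds} together with Lemma~\ref{lem:besov} (the $\|\cdot\|_{L^1}\lesssim 2^j\sqrt{AB}$ estimate) to obtain $\|\bm L_{\epsilon,j}(\cdot,t)\|_{L^1}$ directly. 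This sidesteps the kernel analysis entirely and avoids the obstacle you flagged: no contour deformation, no Poisson summation, no need to control the symbol outside integer frequencies or worry about its regularity near $\xi=0$.

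One point your sketch glosses over: the symbol is not uniformly $\sim|k|^3$. The paper splits into two regimes, $j\ge j_\epsilon$ where $\sigma_\mu\sim\epsilon^{-1}2^{3j}$ (third-order) and $j<j_\epsilon$ where $\sigma_\mu\sim|\log\epsilon|\,2^{4j}$ (fourth-order). Both regimes contribute, and the fourth-order low-frequency piece actually gives the \emph{better} time exponent $t^{-(\omega-\nu)/4}$, so the final bound is governed by the high-frequency third-order piece. Your single exponential bound $e^{-c(\epsilon)t\,2^{3j}}$ is correct as a lower envelope but hides this structure; if you pursue the kernel route you will see the same splitting when you try to make the self-similarity precise, and it is exactly the source of the difficulty you anticipated.
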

For the next lemma, given $T>0$ and $0\le k\in \Z$, we define the function spaces 
\begin{equation}\label{eq:Ykspace_def}
\mc{Y}_k = \big\{ \bm{V}\in L^\infty\big(0,T;C^{k,\alpha}(\T)\big)\,:\,\norm{\bm{V}}_{\mc{Y}_k}<\infty \}\,, \qquad \norm{\cdot}_{\mc{Y}_k}=\esssup_{t\in [0,T]}\norm{\cdot}_{C^{k,\alpha}(\T)} \,.
\end{equation}
We have the following.
\begin{lemma}[Maximal regularity bound]\label{lem:max_reg}
Given $\bm{g}\in \mc{Y}_1$, let
\begin{align*}
\bm{h}(s,t)= \int_0^t e^{-(t-t')\overline{\mc{L}}_\epsilon\p_s^4}\;\bm{g}(s,t')\,dt'\,.
\end{align*}
Then, for some absolute constant $c$,
\begin{equation}\label{eq:max_semi}
\norm{\bm{h}}_{\mc{Y}_4} \le \big(\epsilon + T^{1/4}\abs{\log\epsilon}^{-3/4}+T \big)\,c\norm{\bm{g}}_{\mc{Y}_1} \,.
\end{equation}
\end{lemma}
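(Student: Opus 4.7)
\textbf{Proof plan for Lemma \ref{lem:max_reg}.} The strategy is to exploit the explicit Fourier multiplier structure of $L:=\overline{\mc{L}}_\epsilon\p_s^4$ on the straight cylinder rather than rely on the abstract semigroup smoothing of Lemma \ref{lem:semigroup}. By Proposition \ref{prop:Leps_spectrum}, $\overline{\mc{L}}_\epsilon$ diagonalizes on the Fourier basis $\{e^{2\pi iks}\be_\mu\}$, $\mu\in\{z,x,y\}$, so $L$ acts as a scalar Fourier multiplier with nonnegative symbol $\mu_\epsilon(k):=m_{\epsilon,*}(k)(2\pi k)^4$ in each direction. From the explicit expressions \eqref{eq:eigsT}--\eqref{eq:eigsN} and standard modified-Bessel asymptotics one reads off the two regimes
\begin{equation*}
\mu_\epsilon(k)\sim |k|^4\abs{\log(\epsilon|k|)}\ \text{for}\ 1\le |k|\lesssim \epsilon^{-1},\qquad \mu_\epsilon(k)\sim |k|^3/\epsilon\ \text{for}\ |k|\gtrsim \epsilon^{-1},
\end{equation*}
together with $\mu_\epsilon(0)=0$. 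On each Fourier mode, Duhamel yields
\begin{equation*}
|\widehat{\bm{h}}(k,t)|\le \frac{1-e^{-t\mu_\epsilon(k)}}{\mu_\epsilon(k)}\sup_{t'\in[0,T]}|\widehat{\bm{g}}(k,t')|,
\end{equation*}
with the convention that the prefactor is $t$ at $k=0$. The plan is to decompose $\bm{g},\bm{h}$ into Littlewood--Paley pieces $P_j$ and invoke the Besov--Zygmund characterization $\|f\|_{C^{4,\alpha}(\T)}\sim |\widehat f(0)|+\sup_{j\ge 0}2^{j(4+\alpha)}\|P_jf\|_{L^\infty}$, which is valid since $4+\alpha\notin\Z$.

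First I would apply Bernstein to pass the $C^{1,\alpha}$ control of $\bm{g}$ into $\|P_j\bm{g}(\cdot,t)\|_{L^\infty}\le c\,2^{-j(1+\alpha)}\|\bm{g}\|_{\mc{Y}_1}$. Since $P_j$ commutes with $e^{-tL}$, the frequency-localized Duhamel bound with $\lambda_j:=\mu_\epsilon(2^j)$ gives
\begin{equation*}
2^{j(4+\alpha)}\|P_j\bm{h}(\cdot,t)\|_{L^\infty}\le c\,2^{3j}\,\frac{1-e^{-t\lambda_j}}{\lambda_j}\,\|\bm{g}\|_{\mc{Y}_1}.
\end{equation*}
The prefactor $2^{3j}(1-e^{-t\lambda_j})/\lambda_j$ is then bounded regime by regime. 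In the high-frequency regime $2^j\gtrsim\epsilon^{-1}$ one uses $\lambda_j\gtrsim 2^{3j}/\epsilon$ and $1-e^{-t\lambda_j}\le 1$ to get $\lesssim\epsilon$. In the low-frequency regime $1\le 2^j\lesssim\epsilon^{-1}$, where $\lambda_j\gtrsim 2^{4j}\abs{\log\epsilon}$, I split on whether $t\lambda_j\ge 1$ or $t\lambda_j<1$: in the first case $(1-e^{-t\lambda_j})/\lambda_j\le\lambda_j^{-1}$ gives $2^{3j}/\lambda_j\lesssim 2^{-j}\abs{\log\epsilon}^{-1}\le t^{1/4}\abs{\log\epsilon}^{-3/4}$ using $2^j\ge (t\abs{\log\epsilon})^{-1/4}$; in the second case $(1-e^{-t\lambda_j})/\lambda_j\le t$ gives $t\cdot 2^{3j}\le t\cdot(t\abs{\log\epsilon})^{-3/4}=t^{1/4}\abs{\log\epsilon}^{-3/4}$. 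Finally, for the zero mode $\mu_\epsilon(0)=0$ yields $|\widehat{\bm{h}}(0,t)|\le T\|\bm{g}\|_{\mc{Y}_1}$. Summing these three contributions via the Besov--Zygmund equivalence and taking $\sup_{t\in[0,T]}$ with $t^{1/4}\le T^{1/4}$ delivers the claimed bound.

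The main obstacle is that the H\"older smoothing rate in Lemma \ref{lem:semigroup} is not usable directly: from $C^{1,\alpha}$ to $C^{4,\alpha}$ the exponent is $3/3=1$, giving a non-integrable $t^{-1}$ singularity in the naive Duhamel integral. One therefore cannot avoid the sharper Fourier-side input from \eqref{eq:eigsT}--\eqref{eq:eigsN}. The three terms in the final bound then correspond precisely to the three frequency regimes of $\mu_\epsilon$: the $1/\epsilon$ prefactor at high frequencies produces $\epsilon$; the biharmonic heat-type scaling $2^{4j}\abs{\log\epsilon}$ at moderate frequencies produces $T^{1/4}\abs{\log\epsilon}^{-3/4}$ via the Bernstein balance against $(1-e^{-t\lambda_j})/\lambda_j$; the vanishing of $\mu_\epsilon$ on the constant mode produces $T$. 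A minor technical point is that $\overline{\mc{L}}_\epsilon$ is matrix- rather than scalar-valued, but since it is diagonal in the $(\be_z,\be_x,\be_y)$ basis with tangential and normal multipliers $m_{\epsilon,{\rm t}}$, $m_{\epsilon,{\rm n}}$ sharing the same asymptotics in both regimes, the scalar argument applies componentwise.
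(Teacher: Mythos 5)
Your proposal is correct and follows essentially the same route as the paper: Littlewood--Paley decomposition, the frequency-localized Duhamel bound with the explicit multiplier asymptotics (high modes give $\epsilon$, intermediate modes give $T^{1/4}|\log\epsilon|^{-3/4}$, the zero mode gives $T$), and the Besov characterization of $C^{4,\alpha}$. The only cosmetic difference is in the intermediate regime, where you split on $t\lambda_j\gtrless 1$ while the paper uses the equivalent parabolic trick $2^{3j}e^{-\tau|\log\epsilon|2^{4j}}\le c\,|\log\epsilon|^{-3/4}\tau^{-3/4}$ and then integrates $\tau^{-3/4}$.
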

The proofs of Lemmas \ref{lem:semigroup} and \eqref{lem:max_reg} appear in section \ref{subsec:semigroup}.\\

Now, to track the non-self-intersection property \eqref{eq:cGamma}, we will define the quantity
\begin{align*}
  \abs{\X^\sigma(\cdot, t)}_\star:=\inf_{\sigma\neq\sigma'}\frac{\abs{\X^\sigma(\sigma,t)-\X^\sigma(\sigma',t)}}{\abs{\sigma-\sigma'}}\, .
\end{align*}
Using the definition \eqref{eq:Ykspace_def} of the spaces $\mc{Y}_k$, we consider the closed ball 
\begin{align*}
\mc{B}_{M,\frac{1}{2}}(T)= \big\{ \Y\in \mc{Y}_4\,:\,\norm{\Y}_{\mc{Y}_4}\le M\,, \; \norm{\Y}_{\mc{Y}_1}\le \textstyle \min\{\frac{1}{2}\abs{(\X^\sigma)^{\rm in}}_\star,\frac{1}{2}\} \big\}\,.
\end{align*}
The $\mc{Y}_1$ bound will be used to propagate a non-self-intersection condition and to consider only situations where the filament length remains close to its initial unit length. Note that this bound ensures that
\begin{align*}
   \esssup_{t\in[0,T]}\abs{\X^\sigma(\cdot,t)}_\star \ge \abs{(\X^\sigma)^{\rm in}}_\star - \norm{\Y}_{\mc{Y}_1}
   \ge \frac{1}{2}\abs{(\X^\sigma)^{\rm in}}_\star\,.
\end{align*} 
Furthermore, if $\Y$ satisfies $\norm{\Y}_{\mc{Y}_1}\le \frac{1}{2}$, then, using that $\abs{\p_\sigma(\X^\sigma)^{\rm in}}=1$ and 
\begin{equation}\label{eq:lambda_Y}
\lambda = \int_0^1\abs{\p_\sigma\Y+\p_\sigma(\X^\sigma)^{\rm in}}\,d\sigma\,,
\end{equation}
we have $\frac{1}{2}\le \lambda \le \frac{3}{2}$ and, in particular,
\begin{equation}\label{eq:lambda_inv_bds}
\frac{2}{3}(t-t')\le \int_{t'}^t\lambda^{-1}(\tau)\,d\tau \le 2(t-t')\,.
\end{equation}

Now, for $\Y\in \mc{B}_{M,\frac{1}{2}}(T)$, let $\Lambda[\Y]$ denote the map
\begin{equation}\label{eq:duhamel_map}
\begin{aligned}
\Lambda[\Y] &= e^{-\int_{0}^t\lambda^{-1}(\tau)d\tau\;\overline{\mc{L}}_\epsilon\p_\sigma^4}(\X^\sigma)^{\rm in}-(\X^\sigma)^{\rm in}\\
&\quad -\int_0^t e^{-\int_{t'}^t\lambda^{-1}(\tau)d\tau\;\overline{\mc{L}}_\epsilon\p_\sigma^4}\,\lambda^{-1}\bigg(\mc{R}_\epsilon\big[\overline{\mc{L}}_\epsilon\p_\sigma^4(\Y+(\X^\sigma)^{\rm in})\big]+\wt{\mc{R}}_+\big[\p_\sigma^4(\Y+(\X^\sigma)^{\rm in})\big] \bigg)\,dt' \,.
\end{aligned}
\end{equation}
We show that the map $\Lambda$ takes $\mc{B}_{M,\frac{1}{2}}(T)$ to itself for some choice of $\epsilon,T>0$.

We first estimate the $\mc{Y}_4$ contribution from the initial condition. Note that by \eqref{eq:lambda_inv_bds} and Lemma \ref{lem:semigroup}, we have
\begin{equation}\label{eq:Mchoice}
  \norm{e^{-\int_{0}^t\lambda^{-1}(\tau)d\tau\;\overline{\mc{L}}_\epsilon\p_\sigma^4}(\X^\sigma)^{\rm in}-(\X^\sigma)^{\rm in}}_{C^{4,\alpha}(\T)}\le
  c\norm{(\X^\sigma)^{\rm in}}_{C^{4,\alpha}(\T)}
\end{equation}
for some absolute constant $c$. We will take $M=3c\norm{(\X^\sigma)^{\rm in}}_{C^{4,\alpha}(\T)}$.

Next, using \eqref{eq:lambda_inv_bds}, Lemma \ref{lem:semigroup} and Corollary \ref{cor:four_derivs}, we may estimate the smooth remainder term involving $\wt{\mc{R}}_+$ as 
\begin{align*}
&\norm{\int_0^t e^{-\int_{t'}^t\lambda^{-1}(\tau)d\tau\;\overline{\mc{L}}_\epsilon\p_\sigma^4}\,\lambda^{-1}\wt{\mc{R}}_+[\p_\sigma^4(\Y+(\X^\sigma)^{\rm in})]\, dt'}_{C^{4,\alpha}(\T)} \\
&\qquad\le \int_0^t c\,(t-t')^{-(1-\frac{\gamma-\alpha}{3})}\norm{\wt{\mc{R}}_+[\p_\sigma^4(\Y+(\X^\sigma)^{\rm in})]}_{C^{1,\gamma}(\T)} \,dt'\\
&\qquad \le \int_0^t (t-t')^{-(1-\frac{\gamma-\alpha}{3})} c(\epsilon,\norm{\Y+(\X^\sigma)^{\rm in}}_{C^{3,\gamma}},\abs{(\X^\sigma)^{\rm in}}_\star)\big(\norm{\Y}_{C^{4,\alpha}(\T)} +\norm{(\X^\sigma)^{\rm in}}_{C^{4,\alpha}(\T)}\big)\,dt' \\
&\qquad \le t^{\frac{\gamma-\alpha}{3}}\, c(\epsilon,\norm{\Y}_{\mc{Y}_4},\norm{(\X^\sigma)^{\rm in}}_{\mc{Y}_4},\abs{(\X^\sigma)^{\rm in}}_\star)\big(\norm{\Y}_{\mc{Y}_4} +\norm{(\X^\sigma)^{\rm in}}_{\mc{Y}_4}\big)\\ 
&\qquad\le t^{\frac{\gamma-\alpha}{3}}\, c(\epsilon,M,\abs{(\X^\sigma)^{\rm in}}_\star)\,M\,.
\end{align*}
In addition, we may estimate the smooth remainder term in $\mc{Y}_1$ using
\begin{align*}
&\norm{\int_0^t e^{-\int_{t'}^t\lambda^{-1}(\tau)d\tau\;\overline{\mc{L}}_\epsilon\p_\sigma^4}\,\lambda^{-1}\wt{\mc{R}}_+[\p_\sigma^4(\Y+(\X^\sigma)^{\rm in})]\, dt'}_{C^{1,\alpha}(\T)} 
\le c\int_0^t \norm{\wt{\mc{R}}_+[\p_\sigma^4(\Y+(\X^\sigma)^{\rm in})]}_{C^{1,\alpha}(\T)} \,dt'\\
&\qquad \le t\, c(\epsilon,\norm{\Y}_{\mc{Y}_4},\norm{(\X^\sigma)^{\rm in}}_{\mc{Y}_4},\abs{(\X^\sigma)^{\rm in}}_\star)\big(\norm{\Y}_{\mc{Y}_4} +\norm{(\X^\sigma)^{\rm in}}_{\mc{Y}_4}\big) 
\le t\, c(\epsilon,M,\abs{(\X^\sigma)^{\rm in}}_\star)\,M\,.
\end{align*}

Furthermore, for the small-in-$\epsilon$ remainder term, we may use \eqref{eq:lambda_inv_bds} and Lemma \ref{lem:max_reg} along with Corollary \ref{cor:four_derivs} and Lemma \ref{lem:straight_Leps_Holder} to estimate 
\begin{align*}
&\norm{\int_0^t e^{-\int_{t'}^t\lambda^{-1}(\tau)d\tau\;\overline{\mc{L}}_\epsilon\p_\sigma^4}\,\lambda^{-1}\mc{R}_\epsilon[\overline{\mc{L}}_\epsilon\p_\sigma^4(\Y+(\X^\sigma)^{\rm in})]\, dt'}_{\mc{Y}_4} \\
&\qquad\le \big(\epsilon+T^{1/4}\abs{\log\epsilon}^{-3/4}+T \big)\,c\norm{\mc{R}_\epsilon[\overline{\mc{L}}_\epsilon\p_\sigma^4(\Y+(\X^\sigma)^{\rm in})]}_{\mc{Y}_1}\\
&\qquad\le \epsilon^{1-\alpha^+}\abs{\log\epsilon}\,\big(\epsilon+T^{1/4}\abs{\log\epsilon}^{-3/4}+T \big)\,c(\norm{\Y+(\X^\sigma)^{\rm in}}_{\mc{Y}_4},\abs{(\X^\sigma)^{\rm in}}_\star)\norm{\overline{\mc{L}}_\epsilon\p_\sigma^4(\Y+(\X^\sigma)^{\rm in})}_{\mc{Y}_1}\\
&\qquad\le \epsilon^{1-\alpha^+}\abs{\log\epsilon}\,\big(\epsilon+T^{1/4}\abs{\log\epsilon}^{-3/4}+T \big)\,c(M,\abs{(\X^\sigma)^{\rm in}}_\star)\; \big(\epsilon^{-1}\abs{\log\epsilon}\,M \big)\\
&\qquad \le \epsilon^{1-\alpha^+}\abs{\log\epsilon}^2\,c(M,\abs{(\X^\sigma)^{\rm in}}_\star)\,M
+\big(T^{1/4}+T \big)\,c(\epsilon,M,\abs{(\X^\sigma)^{\rm in}}_\star)\,M \,.
\end{align*}
In addition, by \eqref{eq:lambda_inv_bds} and Lemma \ref{lem:semigroup}, we have the $C^{1,\alpha}(\T)$ bound
\begin{align*}
&\norm{\int_0^t e^{-\int_{t'}^t\lambda^{-1}(\tau)d\tau\;\overline{\mc{L}}_\epsilon\p_\sigma^4}\,\lambda^{-1}\mc{R}_\epsilon[\overline{\mc{L}}_\epsilon\p_\sigma^4(\Y+(\X^\sigma)^{\rm in})]\, dt'}_{C^{1,\alpha}(\T)} \\
&\qquad\le c\int_0^t \norm{\mc{R}_\epsilon[\overline{\mc{L}}_\epsilon\p_\sigma^4(\Y+(\X^\sigma)^{\rm in})]}_{C^{1,\alpha}(\T)} \,dt'\\
&\qquad \le  t\, c(\epsilon,\norm{\Y}_{\mc{Y}_4},\norm{(\X^\sigma)^{\rm in}}_{\mc{Y}_4},\abs{(\X^\sigma)^{\rm in}}_\star)\,\norm{\overline{\mc{L}}_\epsilon\p_\sigma^4(\Y+(\X^\sigma)^{\rm in})}_{\mc{Y}_1}
\le t\, c(\epsilon,M,\abs{(\X^\sigma)^{\rm in}}_\star)\,M\,.
\end{align*}

Finally, to estimate the $\mc{Y}_1$ contribution from the initial condition, we consider the equation 
\begin{align*}
  \frac{\p\bm{Z}}{\p t}(\sigma, t) = -\lambda(t)^{-1}\overline{\mc{L}}_\epsilon\p_\sigma^4\bm{Z} \,, \qquad 
  \bm{Z}^{\rm in}=(\X^\sigma)^{\rm in}(\sigma)\,.
\end{align*}
Using the mapping properties of $\overline{\mc{L}}_\epsilon$ (Lemma \ref{lem:straight_Leps_Holder}), at each $t$ we have
\begin{align*}
  \norm{\p_t\bm{Z}}_{C^{1,\alpha}(\T)} \le c(\epsilon)\norm{\bm{Z}}_{C^{4,\alpha}(\T)} 
  \le c(\epsilon)\norm{(\X^\sigma)^{\rm in}}_{C^{4,\alpha}(\T)}\,.
\end{align*}
Thus 
\begin{align*}
  \norm{\bm{Z}-(\X^\sigma)^{\rm in}}_{C^{1,\alpha}(\T)} = 
  \norm{\int_0^t\p_t\bm{Z}\,dt'}_{C^{1,\alpha}(\T)} 
  \le c(\epsilon)\,t\,\norm{(\X^\sigma)^{\rm in}}_{C^{4,\alpha}(\T)}\,;
\end{align*}
in particular, we have
\begin{equation}
  \norm{e^{-\int_{0}^t\lambda^{-1}(\tau)d\tau\;\overline{\mc{L}}_\epsilon\p_\sigma^4}(\X^\sigma)^{\rm in}-(\X^\sigma)^{\rm in}}_{\mc{Y}_1} 
  \le c(\epsilon)\,T\norm{(\X^\sigma)^{\rm in}}_{C^{4,\alpha}(\T)} 
  \le c(\epsilon)\,T\,M\,.
\end{equation}

Altogether, we have
\begin{align*}
\norm{\Lambda[\Y]}_{\mc{Y}_4}&\le \bigg(\frac{1}{3}+ \big(T^{\frac{\gamma-\alpha}{3}}+T^{1/4}+T\big)\, c(\epsilon,M,\abs{(\X^\sigma)^{\rm in}}_\star) +  \epsilon^{1-\alpha^+}\abs{\log\epsilon}^2\,c(M,\abs{(\X^\sigma)^{\rm in}}_\star)\bigg) M \,,\\
\norm{\Lambda[\Y]}_{\mc{Y}_1}&\le T\, c(\epsilon,M,\abs{(\X^\sigma)^{\rm in}}_\star) \, M\,.
\end{align*}
We first take $\epsilon$ small enough that 
\begin{equation}\label{eq:eps_small1}
\epsilon^{1-\alpha^+}\abs{\log\epsilon}^2\,c(M,\abs{(\X^\sigma)^{\rm in}}_\star)\le \frac{1}{3}
\quad \text{and} \quad
\epsilon\le \min\bigg\{\frac{1}{2}\textstyle r_*(\frac{|\X^{\rm in}|_\star}{2},M)\,,\epsilon_{\rm n}(\frac{|\X^{\rm in}|_\star}{2},M)\bigg\}\,,
\end{equation}
where $r_*$ is as in \eqref{eq:rstar} and $\epsilon_{\rm n}$ is as in Theorem \ref{thm:decomp}.
Then, taking $T$ small enough that 
\begin{equation}\label{eq:Tsmall1}
\begin{aligned}
 &(T^{\frac{\gamma-\alpha}{3}}+T^{1/4}+T)\, c(\epsilon,M,\abs{(\X^\sigma)^{\rm in}}_\star)\le \frac{1}{3} 
 \quad \text{and} \\
 &T\, c(\epsilon,M,\abs{(\X^\sigma)^{\rm in}}_\star)\le \min\bigg\{\frac{1}{2}\abs{(\X^\sigma)^{\rm in}}_\star,\frac{1}{2}\bigg\}\,,
\end{aligned}
\end{equation}
we obtain 
\begin{align*}
\norm{\Lambda[\Y]}_{\mc{Y}_4}\le M\,, \qquad \norm{\Lambda[\Y]}_{\mc{Y}_1}\le\min\bigg\{\frac{1}{2}\abs{(\X^\sigma)^{\rm in}}_\star,\frac{1}{2}\bigg\}\,.
\end{align*}

We next show that the map $\Lambda[\Y]$ admits a unique fixed point on $\mc{B}_{M,\frac{1}{2}}(T)$.
We now consider two nearby curves $\X^{(a)}$ and $\X^{(b)}$, and define $\Y^{(a)}=\X^{(a)}-(\X^\sigma)^{\rm in}$, $\Y^{(b)}=\X^{(b)}-(\X^\sigma)^{\rm in}$.
Note that the lengths $\lambda^{(a)}$ and $\lambda^{(b)}$ of curve (a) and (b), respectively, satisfy 
\begin{align*}
\lambda^{(a)}-\lambda^{(b)}&=\int_0^1\bigg(\abs{\p_\sigma\Y^{(a)}+\p_\sigma(\X^\sigma)^{\rm in}}-\abs{\p_\sigma\Y^{(b)}+\p_\sigma(\X^\sigma)^{\rm in}}\bigg)\,d\sigma \\
&= \int_0^1 \frac{\p_\sigma(\Y^{(a)}-\Y^{(b)})\cdot(\p_\sigma\Y^{(a)}+\p_\sigma\Y^{(b)}+2\p_\sigma(\X^\sigma)^{\rm in})}{|\p_\sigma\Y^{(a)}+\p_\sigma(\X^\sigma)^{\rm in}|+|\p_\sigma\Y^{(b)}+\p_\sigma(\X^\sigma)^{\rm in}|}\,d\sigma\,;
\end{align*}
in particular, in $\mc{B}_{M,\frac{1}{2}}(T)$, we have
\begin{equation}\label{eq:lambda_lip}
\begin{aligned}
\norm{\Y^{(a)}-\Y^{(b)}}_{C^{1,\alpha}(\T)} &\le\abs{\lambda^{(a)}-\lambda^{(b)}} \le 3\norm{\Y^{(a)}-\Y^{(b)}}_{C^{1,\alpha}(\T)}\,, \\
\frac{4}{9}\norm{\Y^{(a)}-\Y^{(b)}}_{C^{1,\alpha}(\T)} &\le \abs{\frac{1}{\lambda^{(a)}}-\frac{1}{\lambda^{(b)}}} \le 12 \norm{\Y^{(a)}-\Y^{(b)}}_{C^{1,\alpha}(\T)}\,.
\end{aligned}
\end{equation}
Let
\begin{align*}
\bm{W}_+^{(j)} &= (\lambda^{(j)})^{-1}\wt{\mc{R}}_+^{(j)}[\p_\sigma^4(\Y^{(j)}+(\X^\sigma)^{\rm in})] \\
\bm{W}_\epsilon^{(j)} &= (\lambda^{(j)})^{-1}\mc{R}_\epsilon^{(j)}[\overline{\mc{L}}_\epsilon\p_\sigma^4(\Y^{(j)}+(\X^\sigma)^{\rm in})]\,, \qquad j=a,b\,.
\end{align*}
Using \eqref{eq:lambda_lip} and \eqref{eq:new_NtD_ests_lip}, we may estimate 
\begin{equation}\label{eq:BW_plus_lip}
\begin{aligned}
\norm{\bm{W}_+^{(a)} -\bm{W}_+^{(b)} }_{C^{1,\gamma}}
&\le c(\epsilon,M,\abs{(\X^\sigma)^{\rm in}}_\star)\bigg(\norm{\Y^{(a)}-\Y^{(b)}}_{C^{1,\alpha}}
+ \norm{\Y^{(a)}-\Y^{(b)}}_{C^{2,\beta}}\norm{\Y^{(a)}+(\X^\sigma)^{\rm in}}_{C^{0,\alpha}}\\
&\qquad + \norm{\Y^{(a)}-\Y^{(b)}}_{C^{4,\alpha}}\bigg)
\le c(\epsilon,M,\abs{(\X^\sigma)^{\rm in}}_\star) \norm{\Y^{(a)}-\Y^{(b)}}_{\mc{Y}_4}\,.
\end{aligned}
\end{equation}
Furthermore, using \eqref{eq:lambda_lip}, \eqref{eq:new_NtD_ests_lip}, and Lemma \ref{lem:straight_Leps_Holder}, we have 
\begin{equation}\label{eq:BW_eps_lip}
\begin{aligned}
\norm{\bm{W}_\epsilon^{(a)} -\bm{W}_\epsilon^{(b)}}_{C^{1,\alpha}}
&\le \epsilon^{1-\alpha^+}\abs{\log\epsilon}\,c(M,\abs{(\X^\sigma)^{\rm in}}_\star)\bigg(
\norm{\Y^{(a)}-\Y^{(b)}}_{C^{2,\beta}}\norm{\overline{\mc{L}}_\epsilon\p_\sigma^4(\Y^{(a)}+(\X^\sigma)^{\rm in})}_{C^{1,\alpha}} \\
&\qquad + \norm{\overline{\mc{L}}_\epsilon\p_\sigma^4(\Y^{(a)}-\Y^{(b)})}_{C^{1,\alpha}}\bigg) \\
&\le \epsilon^{-\alpha^+}\abs{\log\epsilon}^2\,c(M,\abs{(\X^\sigma)^{\rm in}}_\star)\norm{\Y^{(a)}-\Y^{(b)}}_{\mc{Y}_4}\,.
\end{aligned}
\end{equation}

Now, integrating \eqref{eq:lambda_lip} in time, we have 
\begin{equation}\label{eq:lambda_time_lip}
\frac{4}{9}(t-t')\norm{\Y^{(a)}-\Y^{(b)}}_{\mc{Y}_1}
\le \int_{t'}^t\bigg(\frac{1}{\lambda^{(a)}(\tau)}-\frac{1}{\lambda^{(b)}(\tau)}\bigg)\,d\tau 
\le 12(t-t')\norm{\Y^{(a)}-\Y^{(b)}}_{\mc{Y}_1}\,.
\end{equation}
In particular, combining \eqref{eq:lambda_time_lip}, \eqref{eq:BW_plus_lip}, and Lemma \ref{lem:semigroup}, we may estimate
\begin{align*}
&\norm{\int_0^t \bigg(e^{-\int_{t'}^t(\lambda^{(a)})^{-1}(\tau)d\tau\;\overline{\mc{L}}_\epsilon\p_\sigma^4}\bm{W}_+^{(a)}- e^{-\int_{t'}^t(\lambda^{(b)})^{-1}(\tau)d\tau\;\overline{\mc{L}}_\epsilon\p_\sigma^4}\bm{W}_+^{(b)}\bigg)\, dt'}_{C^{4,\alpha}(\T)} \\
&\quad\le \int_0^t c\,(t-t')^{-(1-\frac{\gamma-\alpha}{3})}\bigg(\norm{\Y^{(a)}-\Y^{(b)}}_{\mc{Y}_1}\norm{\bm{W}_+^{(a)}}_{C^{1,\gamma}(\T)} + \norm{\bm{W}_+^{(a)}-\bm{W}_+^{(b)}}_{C^{1,\gamma}(\T)}\bigg) \,dt'\\
&\quad \le t^{\frac{\gamma-\alpha}{3}}\,c(\epsilon,M,\abs{(\X^\sigma)^{\rm in}}_\star)\,\norm{\Y^{(a)}-\Y^{(b)}}_{\mc{Y}_4}\,.
\end{align*}

Furthermore, combining \eqref{eq:lambda_time_lip}, \eqref{eq:BW_eps_lip}, and Lemma \ref{lem:max_reg}, we have
\begin{align*}
&\norm{\int_0^t \bigg(e^{-\int_{t'}^t(\lambda^{(a)})^{-1}(\tau)d\tau\;\overline{\mc{L}}_\epsilon\p_\sigma^4}\bm{W}_\epsilon^{(a)}- e^{-\int_{t'}^t(\lambda^{(b)})^{-1}(\tau)d\tau\;\overline{\mc{L}}_\epsilon\p_\sigma^4}\bm{W}_\epsilon^{(b)}\bigg)\, dt'}_{\mc{Y}_4} \\
&\quad\le \big(\epsilon + T^{1/4}\abs{\log\epsilon}^{-3/4}+T \big)\,c\bigg(\norm{\Y^{(a)}-\Y^{(b)}}_{\mc{Y}_1}\norm{\bm{W}_\epsilon^{(a)}}_{\mc{Y}_1} + \norm{\bm{W}_\epsilon^{(a)}-\bm{W}_\epsilon^{(b)}}_{\mc{Y}_1}\bigg)\\
&\quad \le \epsilon^{-\alpha^+}\abs{\log\epsilon}^2\big(\epsilon + T^{1/4}\abs{\log\epsilon}^{-3/4}+T \big)\,c(M,\abs{(\X^\sigma)^{\rm in}}_\star)\,\norm{\Y^{(a)}-\Y^{(b)}}_{\mc{Y}_4} \,.
\end{align*}
Altogether, we obtain 
\begin{align*}
\norm{\Lambda[\Y^{(a)}]-\Lambda[\Y^{(b)}]}_{\mc{Y}_4}
&\le \bigg(\big(T^{\frac{\gamma-\alpha}{3}}+T^{1/4}+T\big)\,c(\epsilon,M,\abs{(\X^\sigma)^{\rm in}}_\star) \\
&\qquad +\epsilon^{1-\alpha^+}\abs{\log\epsilon}^2\,c(M,\abs{(\X^\sigma)^{\rm in}}_\star) \bigg)\norm{\Y^{(a)}-\Y^{(b)}}_{\mc{Y}_4}\,.
\end{align*}
Taking $\epsilon$ small enough that $\epsilon^{1-\alpha^+}\abs{\log\epsilon}^2\,c(M,\abs{(\X^\sigma)^{\rm in}}_\star)\le \frac{1}{3}$ in addition to \eqref{eq:eps_small1}, and then taking $T$ small enough that $\big(T^{\frac{\gamma-\alpha}{3}}+T^{1/4}+T\big)\,c(\epsilon,M,\abs{(\X^\sigma)^{\rm in}}_\star)\le\frac{1}{3}$ in addition to \eqref{eq:Tsmall1}, we have that the map $\Lambda$ admits a unique fixed point in $\mc{B}_{M,\frac{1}{2}}(T)$. 
\hfill \qedsymbol


\section{Mapping properties about the straight cylinder}\label{sec:straight_periodic}
This section is devoted to the proofs of Lemmas \ref{lem:straight_Leps_Holder} and \ref{lem:Sinv_mapping} regarding the mapping properties of the slender body NtD and DtN operators $\overline{\mc{L}}_\epsilon$ and $\overline{\mc{L}}_\epsilon^{-1}$ and the angle-averaged single layer operator $\overline{\mc{A}}_\epsilon$ about the straight filament $\mc{C}_\epsilon$.

\subsection{Preliminaries}\label{subsec:besov}
As in the Laplace setting \cite[section 2]{laplace}, we will consider the H\"older spaces $C^{k,\alpha}(\T)$ and $C^{k,\alpha}_s(\Gamma_\epsilon)$ as subsets of $C^{k,\alpha}(\R)$ and $C^{k,\alpha}_s(\R\times 2\pi\T)$ and will rely on a Besov space characterization of $C^{k,\alpha}(\R)$ and $C^{k,\alpha}_s(\R\times 2\pi\T)$. 

Following \cite[Chapter 2.2]{bahouri2011fourier}, we consider a smooth cutoff function $\phi(\xi)$ supported on the annulus $\{\frac{3}{4}<\abs{\xi}<2\}$ and satisfying $\sum_{j\in\Z}\phi(2^{-j}\xi)=1$ for $\xi\neq 0$. We define
\begin{equation}\label{eq:phij_def}
\phi_j(\xi) = \phi(2^{-j}\xi) 
\end{equation} 
and note that 
\begin{align*}
{\rm supp}(\phi_j)\cap {\rm supp}(\phi_k) &= \emptyset\,, \qquad \abs{j-k}\ge 2\,.
\end{align*}
For a function $\bm{g}(s,\theta)$, we will use $\mc{F}[\bm{g}]$ to denote the Fourier transform in $s$ only: 
\begin{align*}
\mc{F}[\bm{g}](\xi,\theta) = \int_{-\infty}^\infty \bm{g}(s,\theta)\,e^{-2\pi i\xi s}\, ds \,.
\end{align*}
We define the Littlewood-Paley projection in $s$, $P_j\bm{g}$, by
\begin{equation}\label{eq:littlewoodp}
\mc{F}[P_j\bm{g}](\xi) = \phi_j(\xi)\mc{F}[\bm{g}](\xi,\theta)\,;
\end{equation}
i.e. the projection of $\bm{g}$ onto frequencies in the $s$-variable which are supported within annulus $j$. In addition, we will use the notation 
\begin{align*}
P_{\le j_*} := \sum_{j\le j_*}P_j\,.
\end{align*}

Given $\nu\in\R$, we define the seminorm 
\begin{align*}
\abs{\bm{g}}_{(\dot B^\nu_{\infty,\infty})_s} &= \sup_{j\in \Z}2^{j\nu}\norm{P_j\bm{g}}_{L^\infty} \,.
\end{align*}
Note that if $\bm{g}$ is a function of $s$ only, then this is just the Besov seminorm $\abs{\cdot}_{\dot B^\nu_{\infty,\infty}(\R)}$.
We define the full norm
\begin{equation}\label{eq:besov}
\norm{\bm{g}}_{(B^\nu_{\infty,\infty})_s} = \sup\bigg(\norm{P_{\le 0}\bm{g}}_{L^\infty}\,,\,\sup_{j>0}2^{j\nu}\norm{P_j\bm{g}}_{L^\infty}\bigg)\,.
\end{equation}
Letting $\lfloor\nu\rfloor$ denote the integer part of $\nu$, it may be shown that the $(B^\nu_{\infty,\infty})_s$ norm \eqref{eq:besov} is equivalent to the $C^{\lfloor\nu\rfloor,\nu-\lfloor\nu\rfloor}_s$ H\"older norm \eqref{eq:CalphaS_norm}. Furthermore, if $\bm{g}$ is a function of $s$ only, then $\norm{\bm{g}}_{(B^\nu_{\infty,\infty})_s}=\norm{\bm{g}}_{B^\nu_{\infty,\infty}(\R)}$ is equivalent to $\norm{\bm{g}}_{C^{\lfloor\nu\rfloor,\nu-\lfloor\nu\rfloor}(\R)}$.

Given a matrix-valued Fourier multiplier $\wh{\bm{M}}(\xi)$, we denote 
\begin{equation}\label{eq:T_m_def}
T_{\wh{\bm{M}}}\bm{g} = \mc{F}^{-1}[\wh{\bm{M}}\,\mc{F}[\bm{g}]]\,, \qquad P_jT_{\wh{\bm{M}}}\bm{g} = \mc{F}^{-1}[\phi_j \wh{\bm{M}}\,\mc{F}[\bm{g}]]\,.
\end{equation}
For $\phi_j$ as in \eqref{eq:phij_def}, we may define $\bm{M}_j=\mc{F}^{-1}[\phi_j\wh{\bm{M}}]$ so that we may write $P_jT_{\wh{\bm{M}}}\bm{g}=\bm{M}_j*\bm{g}$, where the convolution is with respect to $s$ only:
\begin{align*}
 \bm{M}_j*\bm{g} = \int_\R \bm{M}_j(s-s')\,\bm{g}(s',\theta)\,ds'\,.
\end{align*} 
By Young's inequality for convolutions, we may estimate 
\begin{align*}
\norm{P_jT_{\wh{\bm{M}}}\bm{g}}_{L^\infty} = \norm{\bm{M}_j*\bm{g}}_{L^\infty}
\le \norm{\bm{M}_j}_{L^1}\norm{\bm{g}}_{L^\infty}\,.
\end{align*}
Using the definition \eqref{eq:besov} of the $(B^\nu_{\infty,\infty})_s$ norm and its equivalence with $C^{\lfloor\nu\rfloor,\nu-\lfloor\nu\rfloor}_s$, we may thus obtain the mapping properties of an operator $T_{\wh{\bm{M}}}\bm{g}$ by obtaining $L^1$ bounds for the function $\bm{M}_j$ in physical space. One way to prove such bounds is via the following lemma, which is just the matrix-valued version of \cite[Lemma 2.1]{laplace}. 
\begin{lemma}[Physical space $L^1$ bounds for multipliers]\label{lem:besov}
For fixed $j\in \Z$, let $\wh{\bm{M}}_j(\xi)\in C^\infty_0(\R)$ be a matrix-valued Fourier multiplier supported in the annulus $2^{j-1}<\abs{\xi}<2^{j+1}$ and satisfying
\begin{equation}
  \abs{\wh{\bm{M}}_j} \le A\,, \qquad \abs{\p_\xi^2\wh{\bm{M}}_j} \le B
\end{equation} 
for some numbers $A$, $B$. Then $\bm{M}_j = \mc{F}^{-1}[\wh{\bm{M}}_j]$ satisfies the $L^1$ bound
\begin{equation}
\norm{\bm{M}_j}_{L^1(\R)} \lesssim 2^j \sqrt{AB}\,.
\end{equation}
\end{lemma}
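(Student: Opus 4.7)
The plan is to exploit the compact support of $\wh{\bm M}_j$ and derive two complementary pointwise bounds on $\bm M_j$ in physical space, then balance them by splitting the $L^1$ integral at an optimally chosen scale in $|s|$. This is just the matrix-valued analogue of the standard Bernstein/Littlewood-Paley argument in \cite[Lemma 2.1]{laplace}; all steps pass through component-wise once a fixed matrix norm is chosen for $|\cdot|$.

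First I would write Fourier inversion $\bm M_j(s)=\int_\R e^{2\pi i s\xi}\wh{\bm M}_j(\xi)\,d\xi$ and use that $\wh{\bm M}_j$ is supported on an annulus of Lebesgue measure $\le 3\cdot 2^j$. This immediately yields the trivial uniform bound
\[
|\bm M_j(s)| \;\le\; \|\wh{\bm M}_j\|_{L^1(\R)} \;\lesssim\; A\cdot 2^j,
\]
which is good on a small interval around $s=0$. For the complementary large-$|s|$ bound, I would use that $\wh{\bm M}_j\in C_0^\infty(\R)$ to integrate by parts twice in $\xi$ (no boundary terms), obtaining
\[
(2\pi i s)^2\,\bm M_j(s) \;=\; -\!\int_\R e^{2\pi i s\xi}\,\partial_\xi^2\wh{\bm M}_j(\xi)\,d\xi,
\]
so that
\[
|\bm M_j(s)| \;\lesssim\; \frac{1}{s^2}\,\|\partial_\xi^2\wh{\bm M}_j\|_{L^1(\R)} \;\lesssim\; \frac{B\cdot 2^j}{s^2}.
\]

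With these in hand I split $\|\bm M_j\|_{L^1(\R)} = \int_{|s|\le s_*}|\bm M_j|\,ds + \int_{|s|>s_*}|\bm M_j|\,ds$, applying the first bound on the inner region and the second on the outer. This gives
\[
\|\bm M_j\|_{L^1(\R)} \;\lesssim\; A\,2^j\,s_* \;+\; B\,2^j\,s_*^{-1}.
\]
Choosing $s_*=\sqrt{B/A}$ balances the two contributions and yields the claimed $\|\bm M_j\|_{L^1(\R)}\lesssim 2^j\sqrt{AB}$.

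There is no genuine obstacle in the argument; it is purely a balance between a trivial $L^\infty$ estimate and an integration-by-parts decay estimate. The only points requiring mild care are (i) verifying that the optimization in $s_*$ really produces the geometric mean $\sqrt{AB}$ rather than some $A^{2/3}B^{1/3}$-type combination—which hinges on using the crude $L^1\!\to\! L^\infty$ bound $A\cdot 2^j$ for small $|s|$ rather than a Plancherel-based $\|\bm M_j\|_{L^2}$ bound—and (ii) noting that the matrix-valued nature of $\wh{\bm M}_j$ is immaterial, since both $A$ and $B$ control a fixed matrix norm of $\wh{\bm M}_j$ and $\partial_\xi^2\wh{\bm M}_j$ respectively, and Fourier inversion commutes with the entries. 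Implicit constants depend only on the ambient matrix dimension.
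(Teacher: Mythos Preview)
Your proof is correct and is precisely the standard argument the paper has in mind; the paper does not give its own proof but simply cites this as the matrix-valued version of \cite[Lemma 2.1]{laplace}, which is exactly the splitting-and-balancing argument you wrote out.
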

The strategy for showing the mapping properties in Lemmas \ref{lem:straight_Leps_Holder} and \ref{lem:Sinv_mapping} will thus be to prove $L^\infty$ bounds for the zeroth and second derivatives of the components of the Fourier multipliers corresponding to $\overline{\mc{L}}_\epsilon$, $\overline{\mc{L}}_\epsilon^{-1}$, and $\overline{\mc{A}}_\epsilon$.

\subsection{Bounds for SB DtN and NtD multipliers}
We begin by considering the tangential and normal direction multipliers $m_{\epsilon,{\rm t}}^{-1}$ and $m_{\epsilon,{\rm n}}^{-1}$ corresponding to $\overline{\mc{L}}_\epsilon^{-1}$, given by Proposition \ref{prop:Leps_spectrum}. Given the form of these multipliers, it will be essential to have bounds for ratios of second-kind modified Bessel functions $K_0$ and $K_1$ as well as their derivatives. We will make use of the following proposition. 

\begin{proposition}\label{prop:besselK_bounds}
Let $K_0(z)$ and $K_1(z)$ denote zeroth and first order modified Bessel functions of the second kind. The following bounds hold: 
\begin{align}
0&<\frac{K_1(z)}{K_0(z)} - 1 - \frac{1}{2z}+\frac{1}{8z^2} \le \frac{1}{8z^3} \,, \qquad z\ge 1  \label{eq:K1_K0} \\
-\frac{4}{11z^3}&\le \frac{K_0(z)}{K_1(z)}- 1+\frac{1}{2z}-\frac{3}{8z^2}< 0\,, \hspace{1.25cm} z\ge 1  \label{eq:K0_K1} \\
\frac{c}{z\abs{\log(z/3)}}&<\frac{K_1(z)}{K_0(z)}<\frac{c}{z\abs{\log(z/2)}}\,, \hspace{2.25cm} 0<z<1  \label{eq:K1_K0_smallz} \\
0&\le \frac{K_1}{K_0}-\frac{K_0}{K_1} \le \frac{1}{z}\,, \hspace{3.3cm} \text{all }z>0\,.  \label{eq:K12_K02_est}
\end{align}
\end{proposition}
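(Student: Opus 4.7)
The plan is to establish the four inequalities in turn, using standard integral and series representations of $K_0$ and $K_1$ together with careful tracking of the sign of the relevant Taylor remainders.

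For the large-$z$ estimates \eqref{eq:K1_K0} and \eqref{eq:K0_K1}, I would start from the Poisson-type integral representation
\begin{equation*}
K_\nu(z) = \sqrt{\frac{\pi}{2z}}\,\frac{e^{-z}}{\Gamma(\nu+\tfrac12)}\int_0^\infty e^{-s}\,s^{\nu-1/2}\Bigl(1+\frac{s}{2z}\Bigr)^{\nu-1/2}\,ds\,,
\end{equation*}
valid for $\nu=0,1$ and $z>0$. Applying Taylor's theorem with integral remainder to $(1+s/(2z))^{\pm 1/2}$ (truncated at the appropriate order), one obtains
\begin{equation*}
K_\nu(z)=\sqrt{\frac{\pi}{2z}}\,e^{-z}\bigl(P_\nu(1/z) + R_\nu(z)\bigr)\,,\qquad \nu=0,1\,,
\end{equation*}
with explicit polynomials $P_0,P_1$ in $1/z$ and remainders $R_0,R_1$ of definite sign coming from the fact that successive derivatives of $(1+t)^{\pm 1/2}$ alternate in sign. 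Dividing the two expressions and re-expanding, a direct algebraic computation shows that $P_1/P_0$ matches the polynomial $1+\tfrac{1}{2z}-\tfrac{1}{8z^2}+\tfrac{1}{8z^3}$ up to a $z^{-4}$ correction whose sign can be read off from the expansion; the inequality \eqref{eq:K1_K0} then follows by combining this with the sign and size estimate of $R_1/P_0 - P_1 R_0/P_0^2$ for $z\ge 1$, the numerical constant on the upper side being exactly the coefficient of $1/z^3$ in the ratio. Estimate \eqref{eq:K0_K1} is proven the same way with the roles of numerator and denominator reversed, with the constant $4/(11z^3)$ arising from evaluating the controlling expression at the endpoint $z=1$ where the remainder is largest relative to $z^{-3}$.

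For the small-$z$ bound \eqref{eq:K1_K0_smallz}, I would use the classical series expansions
\begin{equation*}
K_0(z) = -\log(z/2) - \gamma + O\bigl(z^2|\log z|\bigr)\,,\qquad K_1(z) = \frac{1}{z} + O\bigl(z|\log z|\bigr)\qquad\text{as }z\to 0^+\,.
\end{equation*}
On a small interval $(0,z_0]$, these asymptotics yield two-sided bounds $c_1|\log(z/2)|\le K_0(z) \le c_2|\log(z/3)|$ and $c_3/z\le K_1(z)\le c_4/z$ (using the slack $|\log(z/3)|>|\log(z/2)|$ for $z\in(0,1)$), so dividing gives the claim on $(0,z_0]$. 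On the complementary interval $[z_0,1]$, both $K_0$ and $K_1$ are strictly positive continuous functions, so compactness yields the bound after adjusting the constant $c$.

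For \eqref{eq:K12_K02_est}, positivity of $K_1/K_0 - K_0/K_1$ is immediate from the integral representation $K_\nu(z)=\int_0^\infty e^{-z\cosh t}\cosh(\nu t)\,dt$, since $K_1-K_0=\int_0^\infty e^{-z\cosh t}(\cosh t-1)\,dt>0$ implies $K_1>K_0>0$. For the upper bound $1/z$, I would apply Macdonald's product formula
\begin{equation*}
K_\mu(z)K_\nu(z) = 2\int_0^\infty K_{\mu+\nu}(2z\cosh t)\cosh\bigl((\mu-\nu)t\bigr)\,dt
\end{equation*}
with $(\mu,\nu)=(1,1)$ and $(0,1)$, together with the recurrence $K_2-K_0=(2/z)K_1$, to obtain the representations
\begin{equation*}
K_1^2-K_0^2 = \frac{2}{z}\int_0^\infty \frac{K_1(2z\cosh t)}{\cosh t}\,dt\,,\qquad K_0K_1 = 2\int_0^\infty K_1(2z\cosh t)\,dt\,.
\end{equation*}
The pointwise bound $1/\cosh t\le 1$ then yields $(K_1^2-K_0^2)/(K_0K_1)\le 1/z$ immediately.

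The main obstacle is the sharp sign-and-constant tracking in the large-$z$ estimates: after dividing the two truncated asymptotic expansions of $K_1$ and $K_0$, closing the inequality with the specific numerical constants $1/8$ and $4/11$ requires checking that the remainder terms $R_0,R_1$ from the Taylor expansion of $(1+s/(2z))^{\pm 1/2}$ contribute with the correct sign and that their size at the endpoint $z=1$ respects the stated constants. This is a one-variable optimization that is straightforward in principle but tedious to carry out explicitly.
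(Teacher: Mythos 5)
Your route for the hard inequalities \eqref{eq:K1_K0} and \eqref{eq:K0_K1} is genuinely different from the paper's. The paper rewrites $B_K(z)=z\frac{K_1}{K_0}-z-\tfrac12+\tfrac{1}{8z}$, observes that $B_K$ solves a Riccati-type ODE $B_K'=F(z,B_K)$ driven by the classical logarithmic-derivative identity for $K_\nu$, and then produces explicit super/sub-solutions $b(z)=\tfrac{1}{8z^2}$ and $h(z)=0$ that trap $B_K$ on $[1,\infty)$ via the comparison principle, using only the known asymptotics at $z\to\infty$ to anchor the comparison. Your Poisson-integral-plus-Taylor-remainder plan is the standard asymptotic-expansion route: it is viable in principle (the Lagrange remainder of $(1+t)^{\pm 1/2}$ has definite sign and the weight $e^{-s}s^{\nu-1/2}$ preserves it, so one can sandwich $K_0$ and $K_1$ by signed polynomial expressions), but it trades one clean barrier argument for a messier ratio-of-polynomials problem.

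The place where your sketch has a genuine gap is the passage from the separate sandwich bounds on $K_0,K_1$ to the quoted uniform two-sided bound on $K_1/K_0$ with the sharp constants $\tfrac18$ and $\tfrac{4}{11}$. First, $P_1/P_0$ is a rational function, not a polynomial plus a signed $z^{-4}$ tail, so ``matching up to a $z^{-4}$ correction whose sign can be read off'' requires estimating $P_1(u)-P_0(u)\bigl(1+\tfrac{u}{2}-\tfrac{u^2}{8}+\tfrac{u^3}{8}\bigr)$ as a polynomial on the whole interval $u\in(0,1]$, not just at leading order. Second, the correction to the ratio is $\frac{P_0 R_1 - P_1 R_0}{P_0(P_0+R_0)}$, not $R_1/P_0 - P_1 R_0/P_0^2$; the missing factor $\bigl(1+R_0/P_0\bigr)^{-1}$ is close to $1$ but cannot be dropped when the target bound $\le 1/(8z^3)$ has limited slack at $z=1$ (numerically $K_1(1)/K_0(1)\approx 1.4296$, so the left side of \eqref{eq:K1_K0} is $\approx 0.0546$ against a ceiling of $0.125$). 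Third, separately upper-bounding $K_1$ and lower-bounding $K_0$ introduces slack in both, and one would need to verify that the strict positivity $>0$ in \eqref{eq:K1_K0} survives; nothing in the proposal rules out the combined slack crossing zero somewhere on $[1,\infty)$. You flag these as ``tedious but straightforward,'' but they are exactly the content of the proposition, and the ODE barrier argument in the paper sidesteps them cleanly. Note also that the paper derives \eqref{eq:K0_K1} from \eqref{eq:K1_K0} by a two-line rational inequality rather than rerunning the expansion.

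For \eqref{eq:K1_K0_smallz} and \eqref{eq:K12_K02_est} your treatment is fine and in fact more self-contained than the paper's, which simply cites the DLMF asymptotics and estimate (77) of \cite{inverse}. The Macdonald-product argument for \eqref{eq:K12_K02_est} is correct: from $K_\mu K_\nu = 2\int_0^\infty K_{\mu+\nu}(2z\cosh t)\cosh((\mu-\nu)t)\,dt$ with $(1,1),(0,0)$ for the numerator, $(0,1)$ for the denominator, and $K_2-K_0=\tfrac{2}{z}K_1$, one gets $\frac{K_1}{K_0}-\frac{K_0}{K_1}=\frac1z\cdot\frac{\int K_1(2z\cosh t)\,(\cosh t)^{-1}dt}{\int K_1(2z\cosh t)\cosh t\,dt}\le \frac1z$; positivity follows from $K_1>K_0>0$. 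This is a nice independent proof of the cited estimate.
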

A proof of the quantitative bounds \eqref{eq:K1_K0} and \eqref{eq:K0_K1} for $z\ge 1$ appears below. These quantitative bounds will be useful for bounding the complicated functions \eqref{eq:eigsT}, \eqref{eq:eigsN} of the ratios $\frac{K_1}{K_0}$ and $\frac{K_0}{K_1}$ at intermediate $z\sim 1$. For small $0<z<1$, such quantitative bounds are not necessary, and \eqref{eq:K1_K0_smallz} follows from the well-known small-$z$ asymptotics of $K_0(z)$ and $K_1(z)$ \cite[Chapter 10]{NIST:DLMF}. The general bound \eqref{eq:K12_K02_est} is due to estimate (77) in \cite{inverse}.

Noting that the first and second derivatives of the ratios in Proposition \ref{prop:besselK_bounds} are given by 
\begin{align*}
\bigg(\frac{K_1}{K_0}\bigg)' &= \frac{K_1^2}{K_0^2}-\frac{K_1}{zK_0}-1\,, \qquad
\bigg(\frac{K_1}{K_0}\bigg)'' =2\frac{K_1}{K_0}\bigg(\frac{K_1}{K_0}\bigg)'+\frac{1}{z^2}\frac{K_1}{K_0}-\frac{1}{z}\bigg(\frac{K_1}{K_0}\bigg)' \\
\bigg(\frac{K_0}{K_1}\bigg)' &= \frac{K_0^2}{K_1^2}+\frac{K_0}{zK_1}-1\,, \qquad
\bigg(\frac{K_0}{K_1}\bigg)'' =2\frac{K_0}{K_1}\bigg(\frac{K_0}{K_1}\bigg)'-\frac{1}{z^2}\frac{K_0}{K_1}+\frac{1}{z}\bigg(\frac{K_0}{K_1}\bigg)'\,,
\end{align*}
as an immediate consequence of Proposition \ref{prop:besselK_bounds}, we obtain the following corollary.
\begin{corollary}\label{cor:Kderivs}
For $z\ge 1$, the following bounds hold for derivatives of ratios of $K_1(z)$ and $K_0(z)$:
\begin{equation}\label{eq:Kderivs_bigZ}
\begin{aligned}
\abs{\bigg(\frac{K_1}{K_0}\bigg)'+\frac{1}{2z^2}} 
&\le \frac{3}{4z^3}\,, \qquad
\abs{\bigg(\frac{K_1}{K_0}\bigg)''} 
\le \frac{9}{2z^3}\,, \\
\abs{\bigg(\frac{K_0}{K_1}\bigg)'-\frac{1}{2z^2}} 
&\le \frac{2}{z^3}\,, \qquad
\abs{\bigg(\frac{K_0}{K_1}\bigg)''} 
\le \frac{25}{2z^3}\,.
\end{aligned}
\end{equation}

For $0<z<1$, we have the bounds
\begin{equation}\label{eq:Kderivs_smallZ}
\begin{aligned}
\abs{\bigg(\frac{K_1}{K_0}\bigg)'} &\le \frac{c}{z^2\abs{\log(z/2)}}\,, \qquad
\abs{\bigg(\frac{K_1}{K_0}\bigg)''} \le \frac{c}{z^3\abs{\log(z/2)}}\,, \\
\abs{\bigg(\frac{K_0}{K_1}\bigg)'} &\le c\abs{\log(z/3)}\,, \qquad
\abs{\bigg(\frac{K_0}{K_1}\bigg)''} \le \frac{c}{z}\abs{\log(z/3)}\,.
\end{aligned}
\end{equation}
\end{corollary}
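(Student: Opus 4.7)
The plan is to derive all four estimates directly from the algebraic identities for $(K_1/K_0)'$, $(K_1/K_0)''$, $(K_0/K_1)'$, and $(K_0/K_1)''$ displayed just above the statement of the corollary, combined with the asymptotic information supplied by Proposition \ref{prop:besselK_bounds}.

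For the regime $z \ge 1$, I would write $K_1/K_0 = 1 + 1/(2z) - 1/(8z^2) + r_1$ with $0 < r_1 \le 1/(8z^3)$ and $K_0/K_1 = 1 - 1/(2z) + 3/(8z^2) + r_0$ with $-4/(11z^3) \le r_0 < 0$, as provided by \eqref{eq:K1_K0} and \eqref{eq:K0_K1}. Substituting the first expansion into $(K_1/K_0)' = (K_1/K_0)^2 - (K_1/K_0)/z - 1$ and expanding, the constant, $1/z$, and $1/(2z^2)$ contributions cancel, leaving $2r_1 - r_1/z$ plus $O(1/z^4)$ tail terms from the square; this residual is controlled by $3/(4z^3)$. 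The analogous substitution with $r_0$ into $(K_0/K_1)' = (K_0/K_1)^2 + (K_0/K_1)/z - 1$ yields the $2/z^3$ bound. For the second derivatives, I would feed the just-proved first-derivative bounds together with the expansions of $K_1/K_0$ and $K_0/K_1$ into the identities for $(K_1/K_0)''$ and $(K_0/K_1)''$. In each case the $\pm 1/z^2$ contributions cancel exactly, leaving a short sum of terms that are manifestly of order $1/z^3$; elementary arithmetic then gives the constants $9/2$ and $25/2$.

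For the regime $0 < z < 1$, use \eqref{eq:K1_K0_smallz}, which says $K_1/K_0 \asymp 1/(z|\log z|)$, so by taking reciprocals $K_0/K_1 \asymp z|\log z|$. In the identity $(K_1/K_0)' = (K_1/K_0)^2 - (K_1/K_0)/z - 1$, the linear term $-(K_1/K_0)/z$ has size $c/(z^2|\log z|)$ and dominates both the quadratic $(K_1/K_0)^2 = O(1/(z^2 (\log z)^2))$ and the constant $1$; invoking the upper half of \eqref{eq:K1_K0_smallz} gives the $c/(z^2|\log(z/2)|)$ bound. Feeding this back into the identity for $(K_1/K_0)''$, each of the three terms is $O(1/(z^3|\log z|))$, yielding the second estimate. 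For $K_0/K_1$, substitute $K_0/K_1 \le c\,z|\log(z/3)|$ into $(K_0/K_1)' = (K_0/K_1)^2 + (K_0/K_1)/z - 1$: the $(K_0/K_1)/z$ term of size $c|\log(z/3)|$ dominates, and the second-derivative identity then yields $c|\log(z/3)|/z$.

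Nothing in this argument is truly hard; the only place requiring mild care is matching the explicit constants $3/4$, $9/2$, $2$, and $25/2$ in the $z \ge 1$ bounds. Since the residuals in Proposition \ref{prop:besselK_bounds} are already quantitative, this reduces to bounding a fixed number of algebraic expressions in $1/z$ on $[1, \infty)$, which is routine. The asymmetry between $|\log(z/2)|$ and $|\log(z/3)|$ in the small-$z$ bounds simply reflects which side of the two-sided inequality \eqref{eq:K1_K0_smallz} is being invoked in each line.
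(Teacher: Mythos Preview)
Your approach is correct and matches the paper's: the paper simply records the derivative identities for $(K_1/K_0)'$, $(K_1/K_0)''$, $(K_0/K_1)'$, $(K_0/K_1)''$ and states the corollary as an immediate consequence of Proposition~\ref{prop:besselK_bounds}, which is exactly the substitution-and-bound argument you spell out. You supply more detail on the explicit constants than the paper does, but the underlying method is identical.
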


\begin{proof}[Proof of Proposition \ref{prop:besselK_bounds}, estimates \eqref{eq:K1_K0} and \eqref{eq:K0_K1}]

We first outline a general strategy for obtaining bounds of this form.
Consider a function $B(z)$ satisfying the ODE
\begin{equation}
B'(z) = F(z,B(z))\,, \qquad B(z_0) = B_0
\end{equation}
for some $F$. To obtain an upper bound on $B$, we seek $b(z)$ satisfying 
\begin{equation}\label{eq:B_upper}
\begin{cases}
b'(z) < F(z,b(z)) \quad \text{ for } z\ge z_0\ge 0\\
\exists z_1>z_0 \text{ such that } b(z)>B(z) \text{ for } z\ge z_1\,.
\end{cases}
\end{equation}
Then $b(z)>B(z)$ for all $z>z_0$.
Similarly, to obtain a lower bound for $B$, we seek $h(z)$ satisfying
\begin{equation}\label{eq:B_lower}
\begin{cases}
h'(z) > F(z,h(z)) \quad \text{ for } z\ge z_0\ge 0\\
\exists z_1>z_0 \text{ such that } h(z)<B(z) \text{ for } z\ge z_1\,.
\end{cases}
\end{equation}
Then $h(z)<B(z)$ for all $z>z_0$. \\

Consider the function $B_K(z) = z\frac{K_1(z)}{K_0(z)}-z-\frac{1}{2}+\frac{1}{8z}$. It can be seen that $B_K$ satisfies the ODE
\begin{equation}\label{eq:BK_ODE}
B_K'(z) = \frac{1}{z}\bigg(B_K^2+B_K-\frac{1}{4z}B_K+\frac{1}{64z^2}-\frac{1}{4z} \bigg) + 2B_K\,, \qquad B_K(1) = \frac{K_1(1)}{K_0(1)}-\frac{11}{8}\approx 0.055\,.
\end{equation}
Let $b(z)=\frac{1}{8z^2}$. We have that 
\begin{align*}
b'(z) - \frac{1}{z}\bigg(b^2+b-\frac{1}{4z}b+\frac{1}{64z^2}-\frac{1}{4z} \bigg) - 2b 
= -\frac{1}{64z^5}+\frac{1}{32z^4}-\frac{25}{64z^3}  <0
\end{align*}
for $z\ge 1$. Using the large-$z$ asymptotics of $K_0(z)$ and $K_1(z)$ \cite{NIST:DLMF}, we have 
\begin{equation}\label{eq:largeZ}
z\frac{K_1(z)}{K_0(z)} = z + \frac{1}{2}-\frac{1}{8z}+\frac{1}{8z^2} -\frac{25}{128z^3} + O\bigg(\frac{1}{z^4}\bigg)\,, \quad z\to \infty\,.
\end{equation}
In particular, for sufficiently large $z$, we have $B_K(z)<b(z)$. Thus by \eqref{eq:B_upper}, $B_K(z)<b(z)$ for all $z\ge 1$.

Now let $h(z)=0$. We have
\begin{align*}
h'(z) - \frac{1}{z}\bigg(h^2+h-\frac{1}{4z}h+\frac{1}{64z^2}-\frac{1}{4z} \bigg) - 2h 
= \frac{1}{4z^2} -\frac{1}{64z^3} >0 
 \end{align*} 
for $z\ge 1$. Again by the large-$z$ asymptotics \eqref{eq:largeZ} for $B_K$, we have that for sufficiently large $z$, $B_K>0$. By \eqref{eq:B_lower}, we thus have $B_K>0$ for all $z\ge 1$.

Note that using \eqref{eq:K1_K0} we immediately obtain the bound \eqref{eq:K0_K1}. In particular, we have
\begin{align*}
1-\frac{1}{2z}+\frac{3}{8z^2}-\frac{4}{11z^3}\le \frac{1}{1+\frac{1}{2z}-\frac{1}{8z^2}+\frac{1}{8z^3}} < \frac{K_0}{K_1}\le \frac{1}{1+\frac{1}{2z}-\frac{1}{8z^2}} \le 1-\frac{1}{2z}+\frac{3}{8z^2}\,.
\end{align*}
\end{proof}


We may use the bounds of Proposition \ref{prop:besselK_bounds} and Corollary \ref{cor:Kderivs} to obtain the following estimates for the tangential direction multiplier $m_{\epsilon,{\rm t}}^{-1}$ given by Proposition \ref{prop:Leps_spectrum}.

\begin{lemma}[Bounds for $m_{\epsilon,{\rm t}}^{-1}$ and $m_{\epsilon,{\rm t}}$]\label{lem:mt_bounds}
Let $m_{\epsilon,{\rm t}}^{-1}(\xi)$ be as in Proposition \ref{prop:Leps_spectrum}. For $\ell=0,1,2$, the following bounds hold: 
\begin{equation}\label{eq:mtINV_bds}
\begin{aligned}
\abs{\p_{\xi}^\ell m_{\epsilon,{\rm t}}^{-1}(\xi)} &\le 
\begin{cases}
c\,\epsilon\abs{\xi}^{1-\ell}\, , & \abs{\xi}\ge\frac{1}{2\pi\epsilon} \,, \\
c\,\abs{\log\epsilon}^{-1}\abs{\xi}^{-\ell}\,, & \abs{\xi}<\frac{1}{2\pi\epsilon} \,.
\end{cases}
\end{aligned}
\end{equation}
Furthermore, we have that $m_{\epsilon,{\rm t}}(\xi)=\frac{1}{m_{\epsilon,{\rm t}}^{-1}(\xi)}$ satisfies 
\begin{equation}\label{eq:mt_bds}
\begin{aligned}
\abs{\p_{\xi}^\ell m_{\epsilon,{\rm t}}(\xi)} &\le 
\begin{cases}
c\,\epsilon^{-1} \abs{\xi}^{-\ell-1}\,, & \abs{\xi}\ge\frac{1}{2\pi\epsilon} \,, \\
c\,\abs{\log\epsilon}\abs{\xi}^{-\ell} \,, & \abs{\xi}<\frac{1}{2\pi\epsilon}\,.
\end{cases}
\end{aligned}
\end{equation}
\end{lemma}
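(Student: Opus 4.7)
\medskip

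The plan is to recast both multipliers as rational functions of the single dimensionless variable $z:=2\pi\epsilon\abs{\xi}$ and the Bessel ratio $R(z):=K_1(z)/K_0(z)$, after which Proposition \ref{prop:besselK_bounds} and Corollary \ref{cor:Kderivs} supply all of the derivative bounds. Dividing the numerator and denominator of \eqref{eq:eigsT} by $K_0K_1$ gives
\begin{equation*}
m_{\epsilon,{\rm t}}^{-1}(\xi) \;=\; \frac{4\pi\, z\, R(z)}{2 - z\bigl(R(z) - R(z)^{-1}\bigr)} \;=:\; F(z).
\end{equation*}
Since $K_1>K_0>0$ on $(0,\infty)$, we have $R>1$, and by \eqref{eq:K12_K02_est} the product $z(R-R^{-1})\in[0,1]$, so the denominator lies in $[1,2]$ and $F$ is smooth on $(0,\infty)$. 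Because $m_{\epsilon,{\rm t}}^{-1}$ depends on $\xi$ only through $z=2\pi\epsilon\abs{\xi}$, the chain rule gives $\abs{\p_\xi^\ell m_{\epsilon,{\rm t}}^{-1}} = (2\pi\epsilon)^\ell\abs{\p_z^\ell F(z)}$, so it suffices to bound $\p_z^\ell F$ in each regime.

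For the high-frequency regime $\abs{\xi}\ge 1/(2\pi\epsilon)$, i.e.\ $z\ge 1$, I would use \eqref{eq:K1_K0}--\eqref{eq:K0_K1} to write $R=1+\tfrac{1}{2z}+O(z^{-2})$ and $R^{-1}=1-\tfrac{1}{2z}+O(z^{-2})$, and use \eqref{eq:Kderivs_bigZ} for $R'$ and $R''$. Combining these inside the quotient formula for $F$, $F'$, $F''$ (via the standard quotient/product rules, keeping track of powers of $z$) yields
\begin{equation*}
\abs{\p_z^\ell F(z)} \le c\, z^{\,1-\ell}, \qquad z\ge 1,\ \ell=0,1,2,
\end{equation*}
with a matching lower bound $F(z)\ge c z$. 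Multiplying by $(2\pi\epsilon)^\ell$ and using $z = 2\pi\epsilon\abs{\xi}$ gives the first case of \eqref{eq:mtINV_bds}.

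For the low-frequency regime $z<1$, I would use \eqref{eq:K1_K0_smallz} to bound $zR(z)\sim 1/\abs{\log(z/2)}$ uniformly, and \eqref{eq:Kderivs_smallZ} to control the first two derivatives of $R$ (each derivative loses one power of $z$ but only costs a factor of $\abs{\log(z/2)}^{\pm 1}$). The denominator of $F$ remains in $[1,2]$ by the same argument as above, and its derivatives are of strictly lower order than the numerator's because they still involve $R-R^{-1}$ with the favorable cancellation from \eqref{eq:K12_K02_est}. Differentiating $F$ and bookkeeping yields $\abs{\p_z^\ell F(z)}\le c\, z^{-\ell}/\abs{\log(2\pi\epsilon\abs\xi)}$, and in the range $\abs\xi<1/(2\pi\epsilon)$ we bound $\abs{\log(2\pi\epsilon\abs\xi)}^{-1}\le c\abs{\log\epsilon}^{-1}$, giving the second case of \eqref{eq:mtINV_bds}.

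Finally, for the reciprocal \eqref{eq:mt_bds} I would differentiate $m_{\epsilon,{\rm t}}=1/m_{\epsilon,{\rm t}}^{-1}$ via
\begin{equation*}
\p_\xi\Bigl(\tfrac{1}{f}\Bigr) = -\frac{f'}{f^2}, \qquad
\p_\xi^2\Bigl(\tfrac{1}{f}\Bigr) = -\frac{f''}{f^2} + \frac{2(f')^2}{f^3},
\end{equation*}
and combine the upper bounds on $\p_\xi^\ell m_{\epsilon,{\rm t}}^{-1}$ derived above with the corresponding \emph{lower} bounds on $m_{\epsilon,{\rm t}}^{-1}$ (namely $m_{\epsilon,{\rm t}}^{-1}\ge c\epsilon\abs\xi$ for $z\ge 1$ and $m_{\epsilon,{\rm t}}^{-1}\ge c\abs{\log\epsilon}^{-1}$ for $z<1$), which come directly from the leading-order analysis above. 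The main technical obstacle is the careful bookkeeping of the Bessel-ratio quotient rule and handling the transitional regime $z\sim 1$, where neither asymptotic expansion is sharp; this is absorbed by observing that $R, R', R''$ and the denominator are smooth and uniformly bounded above and below on any compact subset of $(0,\infty)$, so the bounds hold with explicit constants on $z\in[1/2,2]$ independently of $\epsilon$ and can be glued to the two asymptotic regimes.
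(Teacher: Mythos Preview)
Your approach is the same as the paper's: it writes $m_{\rm t}^{-1}(z)=m_{\rm t1}/m_{\rm t2}$ with $m_{\rm t1}=zK_1/K_0$ and $m_{\rm t2}=1+\tfrac{z}{2}(K_0/K_1-K_1/K_0)$, which is exactly your $F/(2\pi)$; it then bounds each factor and its first two $z$-derivatives via Proposition~\ref{prop:besselK_bounds} and Corollary~\ref{cor:Kderivs} in the two regimes $z\ge 1$ and $0<z<1$, applies the quotient rule, and treats $m_{\rm t}=1/m_{\rm t}^{-1}$ by the reciprocal formulas using the matching lower bounds on $m_{\rm t1}$ and $m_{\rm t2}$.

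One slip to fix: in the low-frequency regime you must keep the \emph{shifted} logarithm $|\log(z/2)|$ from \eqref{eq:K1_K0_smallz}, not $|\log(2\pi\epsilon|\xi|)|=|\log z|$. The latter vanishes as $z\to 1^-$, so your stated intermediate bound $cz^{-\ell}/|\log z|$ and the subsequent inequality ``$|\log(2\pi\epsilon|\xi|)|^{-1}\le c|\log\epsilon|^{-1}$'' both fail near the transition. With $|\log(z/2)|$ the intermediate estimate is uniform on $(0,1)$ (since $|\log(z/2)|\ge\log 2$), matching the paper's \eqref{eq:mt_smallZ}. Be aware, though, that even after this fix $|\log(z/2)|^{-1}$ is only $O(1)$, not $O(|\log\epsilon|^{-1})$, for $z$ bounded away from $0$; the paper passes over this point silently when it ``combines'' \eqref{eq:mt_smallZ} with \eqref{eq:mt_bigZ}, and in the downstream application (Lemma~\ref{lem:straight_Leps_Holder}) the finitely many dyadic annuli in this transition band can instead be handled with the large-$z$ bound and the full $C^{1,\alpha}$ norm of the input.
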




\begin{proof}[Proof of Lemma \ref{lem:mt_bounds}]
Consider the function 
\begin{equation}\label{eq:mtINV_def}
\begin{aligned}
m_{\rm t}^{-1}(z) &= \frac{m_{\rm t1}(z)}{m_{\rm t2}(z)}\,; \\ 
m_{\rm t1}(z) := z\frac{K_1(z)}{K_0(z)}\,, \quad 
&m_{\rm t2}(z) := 1+\frac{z}{2}\bigg(\frac{K_0(z)}{K_1(z)}-\frac{K_1(z)}{K_0(z)}\bigg)
\end{aligned}
\end{equation}
and note that $m_{\epsilon,{\rm t}}^{-1}(\xi)=2\pi \, m_{\rm t}^{-1}(2\pi\epsilon\abs{\xi})$ for $m_{\epsilon,{\rm t}}^{-1}(\xi)$ as in Proposition \ref{prop:Leps_spectrum}.
We will additionally consider 
\begin{equation}\label{eq:mt_def}
m_{\rm t}(z) = \frac{1}{m_{\rm t}^{-1}(z)} = \frac{m_{\rm t2}(z)}{m_{\rm t1}(z)}\,.
\end{equation}
The first and second derivatives of $m_{\rm t}^{-1}(z)$ and $m_{\rm t}(z)$ satisfy
\begin{equation}\label{eq:mt_derivs}
\begin{aligned}
(m_{\rm t}^{-1})'(z) &= \frac{m_{\rm t1}'m_{\rm t2}-m_{\rm t1}m_{\rm t2}'}{m_{\rm t2}^2}\,, \qquad  
(m_{\rm t}^{-1})''(z) =\frac{m_{\rm t2}m_{\rm t1}''-m_{\rm t1}m_{\rm t2}''}{m_{\rm t2}^2} + 2\frac{m_{\rm t2}'}{m_{\rm t2}}(m_{\rm t}^{-1})'\\
m_{\rm t}'(z) &= \frac{m_{\rm t2}'m_{\rm t1}-m_{\rm t2}m_{\rm t1}'}{m_{\rm t1}^2}\,, \qquad \qquad  
m_{\rm t}''(z) =\frac{m_{\rm t1}m_{\rm t2}''-m_{\rm t2}m_{\rm t1}''}{m_{\rm t1}^2} + 2\frac{m_{\rm t1}'}{m_{\rm t1}}\,m_{\rm t}'\,.
\end{aligned}
\end{equation}
From Proposition \ref{prop:besselK_bounds}, for $z\ge1$ we have
\begin{equation}\label{eq:mt_bounds0}
\abs{m_{\rm t1}-z-\frac{1}{2}+\frac{1}{8z}} \le \frac{1}{8z^2}\,, \qquad
\abs{m_{\rm t2}-\frac{1}{2}-\frac{1}{8z}} \le \frac{5}{16z^2}\,.
\end{equation}
Furthermore, using Corollary \ref{cor:Kderivs}, for $z\ge 1$ the first and second derivatives of $m_{\rm tA}$ and $m_{\rm tB}$ satisfy 
\begin{equation}\label{eq:mt_bounds}
\begin{aligned}
\abs{m_{\rm t1}' - 1} &= \abs{\frac{K_1}{K_0}+z\bigg(\frac{K_1}{K_0}\bigg)'-1} \le \frac{1}{z^2} \\
\abs{m_{\rm t2}' } &= \abs{\frac{1}{2}\bigg(\frac{K_0}{K_1}-\frac{K_1}{K_0}\bigg)+\frac{z}{2}\bigg(\frac{K_0}{K_1}-\frac{K_1}{K_0}\bigg)' } \le \frac{7}{4z^2}\\
\abs{m_{\rm t1}''} &= \abs{2\bigg(\frac{K_1}{K_0}\bigg)'+z\bigg(\frac{K_1}{K_0}\bigg)''} \le \frac{7}{z^2} \\
\abs{m_{\rm t2}''} &= \abs{\bigg(\frac{K_0}{K_1}-\frac{K_1}{K_0}\bigg)'+\frac{z}{2}\bigg(\frac{K_0}{K_1}-\frac{K_1}{K_0}\bigg)'' } \le \frac{13}{z^2}\,.
\end{aligned}
\end{equation}
Using \eqref{eq:mt_bounds0} and \eqref{eq:mt_bounds} in the expressions \eqref{eq:mtINV_def}, \eqref{eq:mt_def}, and \eqref{eq:mt_derivs}, we obtain the following bounds for $m_{\rm t}^{-1}$ and $m_{\rm t}$ when $z\ge 1$:
\begin{equation}\label{eq:mt_bigZ}
\begin{aligned}
\abs{m_{\rm t}^{-1}(z)} &\le c\,z \,, \qquad \abs{(m_{\rm t}^{-1})'(z)} \le c \,, \qquad \abs{(m_{\rm t}^{-1})''(z)} \le \frac{c}{z}\,, \\
\abs{m_{\rm t}(z)} &\le \frac{c}{z} \,, \qquad \abs{m_{\rm t}'(z)} \le \frac{c}{z^2}\,, \qquad \abs{m_{\rm t}''(z)} \le \frac{c}{z^3}\,.
\end{aligned}
\end{equation}

For $0<z<1$, we may use the bounds \eqref{eq:K1_K0_smallz} and \eqref{eq:K12_K02_est} to obtain
\begin{equation}\label{eq:mt_bounds_smallZ}
\begin{aligned}
\frac{c}{\abs{\log(z/3)}}\le m_{\rm t1}(z) &\le \frac{c}{\abs{\log(z/2)}}\,, \qquad  
\frac{1}{2}\le m_{\rm t2}(z)  \le 1 \\
\abs{m_{\rm t1}'(z)} &\le \frac{c}{z\abs{\log(z/2)}}\,, \qquad  
\abs{m_{\rm t2}'(z)} \le \frac{c}{z} \\
\abs{m_{\rm t1}''(z)} &\le \frac{c}{z^2\abs{\log(z/2)}}\,, \qquad  
\abs{m_{\rm t2}''(z)} \le \frac{c}{z^2} \,.
\end{aligned}
\end{equation}
Using \eqref{eq:mt_bounds_smallZ} in the expressions \eqref{eq:mtINV_def}, \eqref{eq:mt_def}, and \eqref{eq:mt_derivs}, for $0<z<1$ we obtain the bounds
\begin{equation}\label{eq:mt_smallZ}
\begin{aligned}
\abs{m_{\rm t}^{-1}(z)} &\le \frac{c}{\abs{\log(z/2)}} \,, \qquad \abs{(m_{\rm t}^{-1})'(z)} \le \frac{c}{z\abs{\log(z/2)}} \,, \qquad \abs{(m_{\rm t}^{-1})''(z)} \le \frac{c}{z^2\abs{\log(z/2)}}\,, \\
\abs{m_{\rm t}(z)} &\le c\abs{\log(z/2)} \,, \qquad \abs{m_{\rm t}'(z)} \le \frac{c\abs{\log(z/2)}}{z}\,, \qquad \abs{m_{\rm t}''(z)} \le \frac{c\abs{\log(z/2)}}{z^2}\,.
\end{aligned}
\end{equation}

Combining the $z\ge1$ bounds \eqref{eq:mt_bigZ} with the $0<z<1$ bounds \eqref{eq:mt_smallZ} and using that $m_{\epsilon,{\rm t}}^{-1}(\xi)=2\pi \,m_{\rm t}^{-1}(2\pi\epsilon\,\abs{\xi})$, we obtain Lemma \ref{lem:mt_bounds}.
\end{proof}


We next consider the normal direction multiplier $m_{\epsilon,{\rm n}}^{-1}$ given by Proposition \ref{prop:Leps_spectrum} and use the bounds of Proposition \ref{prop:besselK_bounds} and Corollary \ref{cor:Kderivs} to obtain the following lemma.

\begin{lemma}[Bounds for $m_{\epsilon,{\rm n}}^{-1}$ and $m_{\epsilon,{\rm n}}$]\label{lem:mn_bounds}
Let $m_{\epsilon,{\rm n}}^{-1}(\xi)$ be as in Proposition \ref{prop:Leps_spectrum}. For $\ell=0,1,2$, the following bounds hold: 
\begin{equation}\label{eq:mnINV_bds}
\begin{aligned}
\abs{\p_{\xi}^\ell m_{\epsilon,{\rm n}}^{-1}(\xi)} &\le 
\begin{cases}
c\,\epsilon\abs{\xi}^{1-\ell}\, , & \abs{\xi}\ge\frac{1}{2\pi\epsilon} \,, \\
c\,\abs{\log\epsilon}^{-1}\abs{\xi}^{-\ell}\,, & \abs{\xi}<\frac{1}{2\pi\epsilon} \,.
\end{cases}
\end{aligned}
\end{equation}

Furthermore, we have that $m_{\epsilon,{\rm n}}(\xi)=\frac{1}{m_{\epsilon,{\rm n}}^{-1}(\xi)}$ satisfies
\begin{equation}\label{eq:mn_bds}
\begin{aligned}
\abs{\p_{\xi}^\ell m_{\epsilon,{\rm n}}(\xi)} &\le 
\begin{cases}
c\,\epsilon^{-1} \abs{\xi}^{-\ell-1}\,, & \abs{\xi}\ge\frac{1}{2\pi\epsilon} \,, \\
c\,\abs{\log\epsilon}\abs{\xi}^{-\ell} \,, & \abs{\xi}<\frac{1}{2\pi\epsilon}\,.
\end{cases}
\end{aligned}
\end{equation}
\end{lemma}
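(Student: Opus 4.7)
The proof plan is to mirror the strategy used for Lemma \ref{lem:mt_bounds}. First, I would use the standard recurrence $K_2(z) = K_0(z) + \tfrac{2}{z}K_1(z)$ to eliminate $K_2$ from \eqref{eq:eigsN} and express $m_{\epsilon,\mathrm{n}}^{-1}$ in terms of $K_0, K_1$ and the ratios $K_0/K_1$, $K_1/K_0$ only. Setting $z = 2\pi\epsilon\abs{\xi}$, this produces a scalar function $m_{\mathrm{n}}^{-1}(z) = m_{\mathrm{n1}}(z)/m_{\mathrm{n2}}(z)$ with $m_{\epsilon,\mathrm{n}}^{-1}(\xi) = c\,m_{\mathrm{n}}^{-1}(2\pi\epsilon\abs{\xi})$, where both $m_{\mathrm{n1}}$ and $m_{\mathrm{n2}}$ are explicit polynomials in $z$, $K_1/K_0$, and $K_0/K_1$.

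For $z \ge 1$, I would insert the quantitative asymptotic expansions \eqref{eq:K1_K0}, \eqref{eq:K0_K1}, and the derivative bounds \eqref{eq:Kderivs_bigZ} from Corollary \ref{cor:Kderivs} into $m_{\mathrm{n1}}$ and $m_{\mathrm{n2}}$. Tracking the leading-order cancellations, the numerator should behave like $O(z^2)$ and the denominator should admit a strictly positive lower bound of the appropriate order, giving $\abs{m_{\mathrm{n}}^{-1}(z)} \le cz$, with first and second derivatives losing successive factors of $z$ exactly as in \eqref{eq:mt_bigZ}. Then for $0 < z < 1$, I would invoke the small-$z$ asymptotics \eqref{eq:K1_K0_smallz} together with $K_1(z) \sim 1/z$ and $K_0(z) \sim \abs{\log(z/2)}$, along with the small-$z$ derivative bounds \eqref{eq:Kderivs_smallZ}; after factoring out the common powers of $z$ and tracking the logarithmic terms, one obtains the analog of \eqref{eq:mt_smallZ}, namely $\abs{m_{\mathrm{n}}^{-1}(z)} \le c/\abs{\log(z/2)}$, with each derivative losing one power of $z$ but preserving the same logarithmic factor.

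Once these bounds on $m_{\mathrm{n}}^{-1}(z)$ are in hand, passing to $m_{\epsilon,\mathrm{n}}^{-1}(\xi)$ is immediate: each $\p_\xi$ brings out a factor of $2\pi\epsilon$, and the regime $\abs{\xi} \gtrless \tfrac{1}{2\pi\epsilon}$ corresponds precisely to $z \gtrless 1$, which gives \eqref{eq:mnINV_bds}. The bounds \eqref{eq:mn_bds} for $m_{\epsilon,\mathrm{n}} = 1/m_{\epsilon,\mathrm{n}}^{-1}$ then follow by exactly the quotient/derivative identities used in \eqref{eq:mt_derivs}, after observing that $m_{\mathrm{n}}^{-1}(z)$ is strictly positive and bounded below (by $c/\abs{\log(z/2)}$ for small $z$ and by $cz$ for large $z$).

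The main obstacle will be the algebraic bookkeeping: the numerator and denominator of $m_{\mathrm{n}}^{-1}$ are considerably more intricate than in the tangential case due to the higher-degree polynomial in $z$ and the mixed products of $K_0/K_1$ and $K_1/K_0$ that survive after eliminating $K_2$. In particular, verifying the correct leading-order cancellations to obtain the sharp growth $\abs{m_{\mathrm{n}}^{-1}(z)} \le cz$ at large $z$, and ruling out degeneracies in the denominator uniformly in the transition region $z \sim 1$, will require careful, case-by-case expansion. The small-$z$ regime is less delicate analytically but still needs the logarithmic behavior of $K_0$ to be propagated cleanly through both the polynomial structure and the differentiation formulas.
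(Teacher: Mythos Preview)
Your proposal is correct and follows essentially the same approach as the paper. The paper defines $m_{\mathrm{n}}^{-1}(z)=m_{\mathrm{n1}}(z)/m_{\mathrm{n2}}(z)$ with the explicit forms $m_{\mathrm{n1}}(z)=2+\tfrac{8}{z}\tfrac{K_1}{K_0}+z(\tfrac{K_1}{K_0}-\tfrac{K_0}{K_1})$ and $m_{\mathrm{n2}}(z)=\tfrac{4}{z^2}+2(1-\tfrac{K_0^2}{K_1^2})+\tfrac{2}{z}(\tfrac{K_1}{K_0}-\tfrac{K_0}{K_1})$ (obtained, as you anticipate, via the $K_2$ recurrence), and then proceeds exactly along the lines you describe: quantitative expansions from Proposition~\ref{prop:besselK_bounds} and Corollary~\ref{cor:Kderivs} for $z\ge1$, small-$z$ asymptotics for $0<z<1$, and the quotient-rule identities for derivatives. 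One minor correction to your outline: in this factorization the large-$z$ behavior is $m_{\mathrm{n1}}\sim 3$ and $m_{\mathrm{n2}}\sim 2/z$ (rather than numerator $O(z^2)$), but the ratio still yields the correct $O(z)$ growth, and the lower bound on $m_{\mathrm{n2}}$ for $z\ge1$ follows directly from the explicit remainder estimates in Proposition~\ref{prop:besselK_bounds}.
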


\begin{proof}[Proof of Lemma \ref{lem:mn_bounds}]
We consider the function 
\begin{equation}\label{eq:mnINV_def}
\begin{aligned}
m_{\rm n}^{-1}(z) &=\frac{m_{\rm n1}(z)}{m_{\rm n2}(z)}\,, \\
m_{\rm n1}(z) &= 2+ \frac{8}{z}\frac{K_1(z)}{K_0(z)} +z\bigg(\frac{K_1(z)}{K_0(z)}- \frac{K_0(z)}{K_1(z)}\bigg)\,, \\
m_{\rm n2}(z) &= \frac{4}{z^2}+ 2\bigg(1- \frac{K_0^2(z)}{K_1^2(z)}\bigg) + \frac{2}{z}\bigg( \frac{K_1(z)}{K_0(z)}-\frac{K_0(z)}{K_1(z)}\bigg)\,.
\end{aligned}
\end{equation}
Using the Bessel function identity
\begin{equation}\label{eq:K2_ID}
K_2(z) = K_0(z) + \frac{2}{z}K_1(z)\,,
\end{equation}
it can be seen that in fact $m_{\epsilon,{\rm n}}^{-1}(\xi)=2\pi\, m_{\rm n}^{-1}(2\pi\epsilon\abs{\xi})$ where $m_{\epsilon,{\rm n}}^{-1}(\xi)$ is given by Proposition \ref{prop:Leps_spectrum}. In addition, we define 
\begin{equation}\label{eq:mn_def}
m_{\rm n}(z) = \frac{1}{m_{\rm n}^{-1}(z)} = \frac{m_{\rm n2}(z)}{m_{\rm n1}(z)}\,.
\end{equation}
The first and second derivatives of $m_{\rm n}^{-1}$ and $m_{\rm n}$ then satisfy
\begin{equation}\label{eq:mn_derivs}
\begin{aligned}
(m_{\rm n}^{-1})'(z) &= \frac{m_{\rm n1}'m_{\rm n2}-m_{\rm n1}m_{\rm n2}'}{m_{\rm n2}^2}\,, \qquad  
(m_{\rm n}^{-1})''(z) =\frac{m_{\rm n2}m_{\rm n1}''-m_{\rm n1}m_{\rm n2}''}{m_{\rm n2}^2} + 2\frac{m_{\rm n2}'}{m_{\rm n2}}(m_{\rm n}^{-1})'\\
m_{\rm n}'(z) &= \frac{m_{\rm n2}'m_{\rm n1}-m_{\rm n2}m_{\rm n1}'}{m_{\rm n1}^2}\,, \qquad \qquad  
m_{\rm n}''(z) =\frac{m_{\rm n1}m_{\rm n2}''-m_{\rm n2}m_{\rm n1}''}{m_{\rm n1}^2} + 2\frac{m_{\rm n1}'}{m_{\rm n1}}\,m_{\rm n}'\,.
\end{aligned}
\end{equation}
Using Proposition \ref{prop:besselK_bounds} and Corollary \ref{cor:Kderivs}, for $z\ge1$ we may obtain the following bounds for the components $m_{\rm nA}$ and $m_{\rm nB}$:
\begin{equation}\label{eq:mn_bounds}
\begin{aligned}
 \abs{m_{\rm n1}-3-\frac{15}{2z}} &\le  \textstyle\abs{\frac{8}{z}\left(1+\frac{1}{2z}-\frac{1}{8z^2}+\frac{1}{8z^3}\right)-\frac{8}{z}}+\abs{z\left(\frac{1}{z}-\frac{1}{2z^2}+\frac{5}{8z^3}\right) -1+\frac{1}{2z}} \le \displaystyle \frac{7}{z^2}\\
 \abs{m_{\rm n2} - \frac{2}{z}-\frac{4}{z^2}} &\le \textstyle \abs{\left(\frac{1}{z}-\frac{3}{4z^2}\right)\left(2-\frac{1}{2z}+\frac{3}{8z^2} \right)- \frac{2}{z}+\frac{2}{z^2} }+ \frac{1}{z^3}+ \frac{5}{4z^4} \le \displaystyle \frac{3}{z^3}\\
\abs{m_{\rm n1}'} &= \textstyle \abs{-\frac{8}{z^2}\frac{K_1}{K_0}+\frac{8}{z}\left(\frac{K_1}{K_0}\right)'+\left(\frac{K_1}{K_0}-\frac{K_0}{K_1}\right)+z\left(\frac{K_1}{K_0}-\frac{K_0}{K_1}\right)'} \le \displaystyle \frac{32}{z} \\
\abs{m_{\rm n2}'} &= \textstyle \abs{-\frac{8}{z^3}-4\frac{K_0}{K_1}\left(\frac{K_0}{K_1}\right)'-\frac{2}{z^2}\left(\frac{K_1}{K_0}-\frac{K_0}{K_1}\right)+\frac{2}{z}\left(\frac{K_1}{K_0}-\frac{K_0}{K_1}\right)'} \le \displaystyle \frac{47}{z^2} \\
\abs{m_{\rm n1}''} &=  \textstyle \bigg|\frac{16}{z^3}\frac{K_1}{K_0}-\frac{16}{z^2}\left(\frac{K_1}{K_0}\right)' +\frac{8}{z}\left(\frac{K_1}{K_0}\right)''
+ 2\left(\frac{K_1}{K_0}-\frac{K_0}{K_1}\right)'+z\left(\frac{K_1}{K_0}-\frac{K_0}{K_1}\right)''\bigg|
\le \displaystyle \frac{103}{z^2}\\
\abs{m_{\rm n2}''} &=  \textstyle \bigg|\frac{24}{z^4}-4\left(\frac{K_0}{K_1}\right)'^{\;2}-4\frac{K_0}{K_1}\left(\frac{K_0}{K_1}\right)''+\frac{4}{z^3}\left(\frac{K_1}{K_0}-\frac{K_0}{K_1}\right)\\
&\qquad\qquad\qquad \textstyle -\frac{4}{z^2}\left(\frac{K_1}{K_0}-\frac{K_0}{K_1}\right)'+\frac{2}{z}\left(\frac{K_1}{K_0}-\frac{K_0}{K_1}\right)''\bigg| 
\le \displaystyle \frac{222}{z^3}\,.
\end{aligned}
\end{equation}
In particular, using \eqref{eq:mn_bounds} in \eqref{eq:mn_derivs}, for $z\ge 1$ we have 
\begin{equation}\label{eq:mn_bigZ}
\begin{aligned}
\abs{m_{\rm n}^{-1}(z)} &\le c\,z \,, \qquad \abs{(m_{\rm n}^{-1})'(z)} \le c\,,\qquad \abs{(m_{\rm n}^{-1})''(z)} \le \frac{c}{z} \\
\abs{m_{\rm n}(z)} &\le \frac{c}{z}\,, \qquad \abs{m_{\rm n}'(z)} \le \frac{c}{z^2}\,,\qquad \abs{m_{\rm n}''(z)} \le \frac{c}{z^3}\,.
\end{aligned}
\end{equation} 

For $0<z<1$, using Proposition \ref{prop:besselK_bounds}, Corollary \ref{cor:Kderivs}, and the forms of the derivatives in \eqref{eq:mn_bounds}, we may calculate
\begin{equation}\label{eq:mn_bounds_smallZ}
\begin{aligned}
\frac{c}{z^2\abs{\log(z/3)}}\le\abs{m_{\rm n1}} &\le \frac{c}{z^2\abs{\log(z/2)}}\,, \qquad
\frac{4}{z^2} \le \abs{m_{\rm n2}} \le \frac{8}{z^2}\\
\abs{m_{\rm n1}'} 
&\le \frac{c}{z^3\abs{\log(z/2)}} \,, \qquad\qquad
\abs{m_{\rm n2}'} 
\le \frac{c}{z^3} \\
\abs{m_{\rm n1}''} 
&\le \frac{c}{z^4\abs{\log(z/2)}}\,, \qquad \qquad
\abs{m_{\rm n2}''} 
\le \frac{c}{z^4}\,.
\end{aligned}
\end{equation}
Using \eqref{eq:mn_bounds_smallZ} with the forms of $m_{\rm n}^{-1}(z)$, $m_{\rm n}(z)$, and their derivatives \eqref{eq:mn_derivs}, we obtain
\begin{equation}\label{eq:mn_smallZ}
\begin{aligned}
\abs{m_{\rm n}^{-1}(z)} &\le \frac{c}{\abs{\log(z/2)}}\,, \qquad \abs{(m_{\rm n}^{-1})'(z)} \le \frac{c}{z\abs{\log(z/2)}}\,,\qquad \abs{(m_{\rm n}^{-1})''(z)} \le \frac{c}{z^2\abs{\log(z/2)}} \\
\abs{m_{\rm n}(z)} &\le c\abs{\log(z/2)}\,, \qquad \abs{m_{\rm n}'(z)} \le \frac{c\abs{\log(z/2)}}{z}\,,\qquad \abs{m_{\rm n}''(z)} \le \frac{c\abs{\log(z/2)}}{z^2}\,.
\end{aligned}
\end{equation} 

Again combining the $z\ge1$ bounds \eqref{eq:mn_bigZ} with the $0<z<1$ bounds \eqref{eq:mn_smallZ} and using that $m_{\epsilon,{\rm n}}^{-1}(\xi)=2\pi \,m_{\rm n}^{-1}(2\pi\epsilon\,\abs{\xi})$, we obtain Lemma \ref{lem:mn_bounds}.
\end{proof}

\subsection{Bounds for single layer multipliers}
We now turn to the angle-averaged inverse single layer operator $\overline{\mc{A}}_\epsilon$, defined in \eqref{eq:Aeps}, and its corresponding multipliers, given by Corollary \ref{cor:Aeps}. To bound these multipliers, in addition to Proposition \ref{prop:besselK_bounds} and Corollary \ref{cor:Kderivs}, we will need bounds on ratios of first-kind modified Bessel functions $I_0$ and $I_1$.

We first make note of a useful identity that is a consequence of a more general property of the Wronskian between first and second kind modified Bessel functions \cite{NIST:DLMF}. In particular, first- and second-order modified Bessel functions of the first and second kind may be related via the following: 
\begin{equation}\label{eq:I_K_id}
K_1(z)I_0(z) + K_0(z)I_1(z) =\frac{1}{z} \,, \qquad z> 0\,. 
\end{equation}
In addition, we have the following proposition. 
\begin{proposition}\label{prop:besselI_bounds}
Let $I_0(z)$ and $I_1(z)$ denote zeroth and first order modified Bessel functions of the first kind. The following bounds each hold for some $c>0$:
\begin{align}
\abs{\frac{I_1}{I_0}-1+\frac{1}{2z}+\frac{1}{8z^2}}&\le \frac{c}{z^3} \,, \qquad z\ge 1  \label{eq:I1_I0}\\
\abs{\frac{I_0}{I_1}-1-\frac{1}{2z}-\frac{3}{8z^2}}&\le \frac{c}{z^3} \,, \qquad z\ge 1  \label{eq:I0_I1}\\
\abs{\frac{I_1}{I_0}-\frac{z}{2}}&\le c z^3 \,, \qquad 0<z<1  \label{eq:I1_I0smallZ} \\
\abs{\frac{I_0}{I_1}-\frac{2}{z}-\frac{z}{4}}&\le c z^3 \,, \qquad 0<z<1 \,.  \label{eq:I0_I1smallZ} 
\end{align}
\end{proposition}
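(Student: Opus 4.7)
The plan is to treat the small-$z$ and large-$z$ regimes by different methods, analogous to how Proposition \ref{prop:besselK_bounds} was handled in the excerpt. Note that the bounds sought here are qualitatively weaker: the constants $c$ are not specified numerically, which will give us more flexibility than in the $K$-Bessel case.

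For the small-$z$ bounds \eqref{eq:I1_I0smallZ} and \eqref{eq:I0_I1smallZ}, I would argue directly from the power series
\begin{equation*}
I_0(z) = \sum_{k=0}^\infty \frac{(z/2)^{2k}}{(k!)^2}\,, \qquad I_1(z) = \sum_{k=0}^\infty \frac{(z/2)^{2k+1}}{k!(k+1)!}\,.
\end{equation*}
Both series are analytic and on $0<z\le 1$ satisfy $I_0(z)=1+z^2/4+O(z^4)$ and $I_1(z)=z/2+z^3/16+O(z^5)$, with tails uniformly controlled by their first omitted term. Forming $I_1/I_0$ and $I_0/I_1$ and using $(1+u)^{-1}=1-u+O(u^2)$ for $|u|\le 1/4$ immediately yields $I_1/I_0 = z/2 - z^3/16+O(z^5)$ and $I_0/I_1 = 2/z + z/4+O(z^3)$ on $(0,1]$, which is \eqref{eq:I1_I0smallZ} and \eqref{eq:I0_I1smallZ}.

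For the large-$z$ bounds \eqref{eq:I1_I0} and \eqref{eq:I0_I1}, I would mirror the ODE comparison method used in the proof of \eqref{eq:K1_K0}. Let $r(z):=I_1(z)/I_0(z)$. Using the standard identities $I_0'=I_1$ and $I_1'=I_0 - I_1/z$, the ratio satisfies the Riccati-type equation
\begin{equation*}
r'(z) \;=\; 1 \;-\; \frac{r(z)}{z} \;-\; r(z)^2\,, \qquad r(1)=I_1(1)/I_0(1)\,.
\end{equation*}
Setting $B_I(z):=r(z)-1+\tfrac{1}{2z}+\tfrac{1}{8z^2}$ converts this into an ODE for $B_I$ in which the coefficients are manifestly of size $O(1/z^3)$ when $B_I$ is small. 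I would then choose explicit barrier functions $b(z)=C/z^3$ and $h(z)=-C/z^3$ for a sufficiently large constant $C$, verifying that $b$ is a supersolution and $h$ a subsolution of the $B_I$-equation on $[1,\infty)$ using the standard asymptotic expansion $r(z)=1-1/(2z)-1/(8z^2)+O(1/z^3)$ as $z\to\infty$ to check the ordering at large $z$. The comparison principle outlined in \eqref{eq:B_upper}--\eqref{eq:B_lower} then gives \eqref{eq:I1_I0}. The bound \eqref{eq:I0_I1} follows from \eqref{eq:I1_I0} by the algebraic identity $I_0/I_1 = 1/r$, expanding $1/(1+u)=1-u+u^2+O(u^3)$ for $u=-1/(2z)-1/(8z^2)+O(1/z^3)$.

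The main obstacle is the ODE comparison step at large $z$: choosing barrier functions that are simple enough to verify directly yet sharp enough to yield the $O(1/z^3)$ rate, and correctly matching with the known asymptotics at infinity so that the ordering hypothesis in \eqref{eq:B_upper}--\eqref{eq:B_lower} is satisfied. An alternative, purely analytic route would be to cite the standard uniform asymptotic expansion of $I_\nu(z)$ with an explicit remainder bound from \cite{NIST:DLMF} and then divide the expansions term-by-term; this avoids the Riccati analysis altogether but relies on an external quantitative asymptotic statement rather than a self-contained comparison argument.
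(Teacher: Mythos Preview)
Your proposal is correct, but for the large-$z$ bounds you are working harder than the paper does. The paper gives essentially no proof: it simply cites the standard large- and small-$z$ asymptotic expansions of $I_0$ and $I_1$ from \cite{NIST:DLMF} (the expansions you mention in your final paragraph as an ``alternative, purely analytic route'') and declares that the bounds follow immediately by dividing term-by-term. The paper explicitly remarks that, unlike in Proposition~\ref{prop:besselK_bounds}, the constants $c$ here are not specified numerically, and it is precisely this freedom that allows the asymptotic-expansion shortcut to suffice; the price paid is that a separate refinement (Proposition~\ref{prop:besselI_2}) is then needed downstream where quantitative control is required.

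Your small-$z$ argument via the power series matches the paper's. Your large-$z$ Riccati/ODE comparison argument is a genuine alternative: it is self-contained and does not rely on an external quantitative remainder bound, and it mirrors the $K$-Bessel proof nicely, but it is more labor than necessary given that only qualitative $O(z^{-3})$ control is required. Either route is fine; the paper opts for the one-line citation.
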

Each of the bounds in Proposition \ref{prop:besselI_bounds} may be seen immediately using the large and small-$z$ asymptotics of $I_0(z)$ and $I_1(z)$; in particular, from \cite[Chap. 10]{NIST:DLMF}, we have 
\begin{equation}\label{eq:Iasymptotics}
\begin{aligned}
I_0(z) &\sim \frac{e^z}{\sqrt{2\pi z}}\big(1+ \frac{1}{8z}+\frac{9}{128z^2}+\frac{75}{124z^3}+\cdots\big)\,,\\
I_1(z) &\sim \frac{e^z}{\sqrt{2\pi z}}\big(1- \frac{3}{8z}-\frac{15}{128z^2}-\frac{105}{1025z^3}-\cdots\big)\,, \quad z\to \infty\,; \\
I_0(z) &\sim 1+\frac{z^2}{4}+\frac{z^4}{64}+\cdots\,,\quad
I_1(z) \sim \frac{z}{2}+\frac{z^3}{16}+\cdots\,, \quad z\to 0\,.
\end{aligned}
\end{equation}
Note that the bounds \eqref{eq:I1_I0} and \eqref{eq:I0_I1} are stated with an unknown constant $c$, unlike in Proposition \ref{prop:besselK_bounds}. Because of this, we will need an additional refinement in order to prove all of the bounds we will eventually need (see Proposition \ref{prop:besselI_2}). Before stating and proving this refinement, however, we note an immediate corollary of Proposition \ref{prop:besselI_bounds}. 

Calculating the first and second derivatives of the quantities in Proposition \ref{prop:besselI_bounds} yields
\begin{align*}
\bigg(\frac{I_1}{I_0} \bigg)' &= -\frac{I_1^2}{I_0^2}-\frac{I_1}{zI_0}+1\,, \qquad
\bigg(\frac{I_1}{I_0} \bigg)'' = -2\frac{I_1}{I_0}\bigg(\frac{I_1}{I_0} \bigg)' +\frac{1}{z^2}\frac{I_1}{I_0}-\frac{1}{z}\bigg(\frac{I_1}{I_0}\bigg)'\\
\bigg(\frac{I_0}{I_1} \bigg)' &= -\frac{I_0^2}{I_1^2}+\frac{I_0}{zI_1}+1\,, \qquad
\bigg(\frac{I_0}{I_1} \bigg)'' = -2\frac{I_0}{I_1}\bigg(\frac{I_0}{I_1}\bigg)'-\frac{1}{z^2}\frac{I_0}{I_1}+\frac{1}{z}\bigg(\frac{I_0}{I_1}\bigg)'\,,
\end{align*}
from which we may again immediately obtain the following.

\begin{corollary}\label{cor:Iderivs}
For $z\ge 1$, the following bounds hold for derivatives of ratios of $I_1(z)$ and $I_0(z)$:
\begin{equation}
\begin{aligned}
\abs{\bigg(\frac{I_1}{I_0} \bigg)'-\frac{1}{2z^2}} &\le \frac{c}{z^3}\,, \qquad
\abs{\bigg(\frac{I_1}{I_0} \bigg)''} \le \frac{c}{z^3} \\
\abs{\bigg(\frac{I_0}{I_1} \bigg)'+\frac{1}{2z^2}} &\le \frac{c}{z^3}\,, \qquad
\abs{\bigg(\frac{I_0}{I_1} \bigg)''} \le \frac{c}{z^3}\,.
\end{aligned}
\end{equation}
For $0<z<1$, we have the bounds
\begin{equation}\label{eq:Iderivs_smallZ}
\begin{aligned}
\abs{\bigg(\frac{I_1}{I_0} \bigg)'-\frac{1}{2}} &\le c\,z^2\,, \qquad
\abs{\bigg(\frac{I_1}{I_0} \bigg)''} \le c\,z \\
\abs{\bigg(\frac{I_0}{I_1} \bigg)'+\frac{2}{z^2}-\frac{1}{4}} &\le c\,z^2 \,, \qquad
\abs{\bigg(\frac{I_0}{I_1} \bigg)''-\frac{4}{z^3}} \le c\,z\,.
\end{aligned}
\end{equation}
\end{corollary}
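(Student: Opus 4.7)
The plan is to read the derivatives of $I_1/I_0$ and $I_0/I_1$ directly from the identities stated immediately before the corollary, namely $(I_1/I_0)' = -I_1^2/I_0^2 - I_1/(zI_0) + 1$ and $(I_0/I_1)' = -I_0^2/I_1^2 + I_0/(zI_1) + 1$, together with the companion second-derivative formulas (which themselves follow from $I_0' = I_1$ and $I_1' = I_0 - I_1/z$). With these in hand, the corollary reduces to substituting the expansions from Proposition \ref{prop:besselI_bounds} and tracking cancellations.

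First I would treat $z \ge 1$. Writing $I_1/I_0 = 1 - 1/(2z) - 1/(8z^2) + O(z^{-3})$ via \eqref{eq:I1_I0}, squaring it, and subtracting $I_1/(zI_0) = 1/z - 1/(2z^2) + O(z^{-3})$, the leading $1$'s from the square and the additive $+1$ cancel, the $1/z$ contributions also cancel between the squared term and $I_1/(zI_0)$, and what survives is $1/(2z^2)$ modulo $O(z^{-3})$. The estimate for $(I_0/I_1)'$ is entirely analogous using \eqref{eq:I0_I1}, with the sign of the $1/(2z^2)$ term reversed. For the second derivatives, each of the expressions
\[
(I_1/I_0)'' = -2(I_1/I_0)(I_1/I_0)' + (1/z^2)(I_1/I_0) - (1/z)(I_1/I_0)'
\]
and the $I_0/I_1$ analogue is a sum in which the two $O(z^{-2})$ contributions coming from the leading part $I_1/I_0 \approx 1$ (or $I_0/I_1 \approx 1$) carry opposite signs and cancel, leaving the $O(z^{-3})$ tail claimed in the corollary.

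For $0 < z < 1$ I would instead substitute $I_1/I_0 = z/2 + O(z^3)$ and $I_0/I_1 = 2/z + z/4 + O(z^3)$ from \eqref{eq:I1_I0smallZ} and \eqref{eq:I0_I1smallZ}. The first identity then yields $(I_1/I_0)' = 1 - 1/2 + O(z^2) = 1/2 + O(z^2)$ directly, since the squared term is $O(z^2)$. For $(I_0/I_1)'$, the singular parts of $-I_0^2/I_1^2 = -4/z^2 - 1 + O(z^2)$ and $I_0/(zI_1) = 2/z^2 + 1/4 + O(z^2)$ combine with the $+1$ to produce the leading part $-2/z^2 + 1/4$ stated in \eqref{eq:Iderivs_smallZ}. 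The second derivatives are handled in the same spirit; the $4/z^3$ leading coefficient in $(I_0/I_1)''$ arises as $-2(2/z)(-2/z^2) - (1/z^2)(2/z) + (1/z)(-2/z^2) = 4/z^3$, and $(I_1/I_0)''$ is checked to be $O(z)$ by the same book-keeping.

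There is no real analytical obstacle here: once the derivative identities are in hand, the entire argument is expansion and cancellation at the level of formal power series in $1/z$ (respectively $z$), with the remainder terms controlled directly by Proposition \ref{prop:besselI_bounds}. The one point worth flagging is that the constants in that proposition are not explicit, so the constants in Corollary \ref{cor:Iderivs} must likewise be left unnamed, in contrast to the more quantitative bounds of Corollary \ref{cor:Kderivs}.
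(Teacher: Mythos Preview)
Your proposal is correct and is essentially the same argument the paper gives: the paper states the derivative identities for $(I_1/I_0)'$, $(I_1/I_0)''$, $(I_0/I_1)'$, $(I_0/I_1)''$ immediately before the corollary and then says the bounds follow ``immediately'' by substituting the expansions of Proposition~\ref{prop:besselI_bounds}, which is exactly the cancellation-tracking you describe. Your remark about the constants being non-explicit (in contrast to Corollary~\ref{cor:Kderivs}) is also in line with the paper's presentation.
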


Since we do not have precise values for the constants $c$ appearing in \eqref{eq:I1_I0} and \eqref{eq:I0_I1}, we will additionally need the following refinement.
Using the small-$z$ asymptotics \eqref{eq:I1_I0smallZ} and \eqref{eq:I0_I1smallZ}, note that we may choose $0<\delta< \frac{1}{4}$ small enough that
\begin{equation}\label{eq:delta_def}
\abs{\frac{I_1(z)}{I_0(z)}-\frac{z}{2}} \le \frac{z^2}{8} \quad \text{and} \quad \abs{\frac{I_0(z)}{I_1(z)}-\frac{2}{z}-\frac{z}{4}} \le \frac{z^2}{8}\,, \qquad 0<z\le\delta\,.
\end{equation}
With this choice of $\delta$, we may show the following proposition.

\begin{proposition}\label{prop:besselI_2}
Let $\delta$ be as in \eqref{eq:delta_def}. For $z\ge\delta$, the following bound holds between the difference of ratios of first kind modified Bessel functions:
\begin{equation}\label{eq:Idiff_bd}
\frac{1}{z}\le \frac{I_0(z)}{I_1(z)}-\frac{I_1(z)}{I_0(z)} \le \frac{c_\delta}{z}\,, \qquad c_\delta<2.
\end{equation}
In addition, for $z\ge \delta$, we may show that the following function is bounded away from zero: 
\begin{equation}\label{eq:Idenom_bd}
\frac{2}{z}+\bigg(1-z\frac{I_0(z)}{I_1(z)}\bigg)\bigg(\frac{I_0(z)}{I_1(z)}-\frac{I_1(z)}{I_0(z)}\bigg) < -c 
\end{equation}
for some $c>0$. 
\end{proposition}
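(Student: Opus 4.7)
The plan is to handle the two estimates separately, relying on the Bessel recurrence $I_{\nu-1}(z)-I_{\nu+1}(z)=\tfrac{2\nu}{z}I_\nu(z)$ (so that $I_2 = I_0-\tfrac{2}{z}I_1$) together with the classical Turán inequality $I_0(z)I_2(z)<I_1^2(z)$ for $z>0$.

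For \eqref{eq:Idiff_bd}, the upper bound follows immediately from Turán: substituting $I_2=I_0-\tfrac{2}{z}I_1$ into $I_0 I_2<I_1^2$ gives $I_0^2-I_1^2<\tfrac{2}{z}I_0 I_1$, i.e.\ $z\bigl(\tfrac{I_0}{I_1}-\tfrac{I_1}{I_0}\bigr)<2$ for every $z>0$. Combined with the large-$z$ asymptotic $z\bigl(\tfrac{I_0}{I_1}-\tfrac{I_1}{I_0}\bigr)\to 1$ (which follows from Proposition \ref{prop:besselI_bounds}) and continuity, the supremum over $[\delta,\infty)$ is a constant $c_\delta<2$. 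For the lower bound, I would introduce the auxiliary function $N(z):=z(I_0^2-I_1^2)-I_0 I_1$, then compute, using $I_0'=I_1$ and $I_1'=I_0-I_1/z$, that $N'(z)=I_0(z)I_1(z)/z>0$. Since $N(0^+)=0$, we get $N(z)>0$ for all $z>0$, which is precisely $\tfrac{I_0}{I_1}-\tfrac{I_1}{I_0}>\tfrac{1}{z}$ and therefore gives the stated lower bound on $[\delta,\infty)$ (in fact on all of $(0,\infty)$).

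For \eqref{eq:Idenom_bd}, let $u=I_0/I_1$, $v=1/u=I_1/I_0$, and $h(z):=\tfrac{2}{z}+(1-zu)(u-v)$. First I would record that the same recurrence $zI_0=zI_2+2I_1$ yields $zu-1=(zI_2+I_1)/I_1>0$, so $1-zu<0$ throughout $(0,\infty)$; hence $h$ is the sum of a positive term $2/z$ and a negative term $(1-zu)(u-v)$, and the task is to bound the negative part sufficiently. Introducing $w:=I_2/I_1=u-2/z$ and $y:=zw$, an algebraic reduction gives the clean identity
\begin{equation*}
u-v \;=\; \frac{(zw+2)^2-z^2}{z(zw+2)}, \qquad zu-1 \;=\; zw+1,
\end{equation*}
so the inequality $h(z)<-c$ becomes equivalent to
\begin{equation*}
\frac{y(y+2)(y+3)}{y+1}\;\ge\; z^2+c'
\end{equation*}
for some $c'>0$ depending on $\delta$. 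The strict inequality $y(y+2)(y+3)/(y+1)>z^2$ on $(0,\infty)$ is what must then be verified.

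To verify this strict inequality on $[\delta,\infty)$ I would proceed by a three-region argument. Near $z=\delta$, the small-$z$ expansions \eqref{eq:I1_I0smallZ}--\eqref{eq:I0_I1smallZ} (together with the choice of $\delta$ in \eqref{eq:delta_def}) give $h(z)=-z/4+O(z^3)$, bounded above by some $-c_1<0$ on an interval $[\delta,\delta_1]$. For $z\to\infty$, Proposition \ref{prop:besselI_bounds} yields $u\sim 1+\tfrac{1}{2z}+\tfrac{3}{8z^2}$ and $u-v\sim\tfrac{1}{z}+\tfrac{1}{2z^2}$, so $h(z)\to-1$; fix $R$ large with $h(z)\le -1/2$ for $z\ge R$. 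On the compact intermediate interval $[\delta_1,R]$, continuity together with the \emph{strict} pointwise inequality $y(y+2)(y+3)/(y+1)>z^2$ (shown by combining the lower bound $u-v>1/z$ already proved in \eqref{eq:Idiff_bd} with the refined identity $zu=2+zI_2/I_1$) gives a strict negative supremum. Taking $c$ to be the minimum of the three bounds yields \eqref{eq:Idenom_bd}. The main obstacle is this last step: verifying the strict polynomial inequality on the compact middle range, since no monotonicity of $h$ is available and the argument relies on combining the Turán-type estimate $u-v<2/z$ with the recurrence-based identity $zu-1=zI_2/I_1+1$ to rule out sign changes.
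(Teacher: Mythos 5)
Your argument for the first inequality \eqref{eq:Idiff_bd} is correct and genuinely cleaner than the paper's. The paper derives both sides by setting up an ODE for $B(z)=z\big(\tfrac{I_0}{I_1}-\tfrac{I_1}{I_0}\big)$ and running a sub/supersolution comparison. You instead get the upper bound $B(z)<2$ in one line from Tur\'an's inequality $I_0I_2<I_1^2$ together with $I_2=I_0-\tfrac{2}{z}I_1$, and the lower bound from showing the auxiliary function $N(z)=z(I_0^2-I_1^2)-I_0I_1$ has $N(0^+)=0$ and $N'(z)=I_0I_1/z>0$; both computations check out. (Tur\'an's inequality is not stated in the paper, so you would need to cite or prove it, but that is a minor point.)

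The second inequality \eqref{eq:Idenom_bd} has a genuine gap. Your algebraic reduction is right: with $u=I_0/I_1$, $y=zI_2/I_1=zu-2$, one has
\begin{equation*}
h(z)=\frac{2}{z}+(1-zu)(u-v)=\frac{-y(y+2)(y+3)+(y+1)z^2}{z(y+2)}\,,
\end{equation*}
so $h<0$ is equivalent to $y(y+2)(y+3)/(y+1)>z^2$. But the inputs you propose to use on the middle range do not imply this. The lower bound $u-v>1/z$ from \eqref{eq:Idiff_bd} rearranges, via $zu=y+2$ and $v=z/(y+2)$, to
\begin{equation*}
(y+1)(y+2)>z^2\,,
\end{equation*}
while Tur\'an ($u-v<2/z$) gives $y(y+2)<z^2$. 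To pass from $(y+1)(y+2)>z^2$ to the target $y(y+2)(y+3)/(y+1)>z^2$ you would need $y(y+2)(y+3)/(y+1)\ge(y+1)(y+2)$, which after simplification is $y(y+3)\ge(y+1)^2$, i.e.\ $y\ge 1$. But $y=zI_2/I_1\sim z^2/4$ for small $z$, and numerically $y<1$ up to $z\approx 2.3$ (since $y\ge1\iff I_0/I_1\ge 3/z$, which first occurs around there). So on the window between where the small-$z$ expansion $h=-z/4+O(z^3)$ is reliable (say $z\lesssim 1$) and where $y$ exceeds $1$, neither of your two arguments applies, and the reduction to a polynomial inequality remains unverified. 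The paper sidesteps this by keeping the ODE machinery: it writes $H(z)=zh(z)$, derives a differential inequality $H'\le(-H-c_B)s'(z)$ with $s'>0$, and integrates to push $H(\delta)<0$ forward to all $z\in[\delta,3]$, combining this with the elementary estimate $h\le 3/z-I_0/I_1<0$ for $z\ge3$. To repair your proof you would either need a sharper pointwise bound than $u-v>1/z$ that is strong enough when $y<1$ (the needed inequality $(y+1)z^2<y(y+2)(y+3)$ does hold numerically, but is not a consequence of the two Tur\'an-type estimates alone), or revert to the paper's monotone comparison for $H$.
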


\begin{proof}
We consider the function
\begin{equation}\label{eq:Bz}
B(z):= z\bigg( \frac{I_0(z)}{I_1(z)}-\frac{I_1(z)}{I_0(z)} \bigg)\,,
\end{equation}
which satisfies the ODE
\begin{align*}
B'(z) = -B(z)\bigg( \frac{I_0(z)}{I_1(z)}+\frac{I_1(z)}{I_0(z)} \bigg) + 2\frac{I_0(z)}{I_1(z)}
\end{align*}
with initial condition $B(\delta) = \delta\frac{I_0(\delta)}{I_1(\delta)}-\frac{I_1(\delta)}{I_0(\delta)}$. Using the bounds \eqref{eq:delta_def}, we have that
\begin{equation}\label{eq:Bdelta}
2-\frac{\delta^2}{4}(1+\delta)\le B(\delta)\le 2-\frac{\delta^2}{4}(1-\delta)\,.
\end{equation}
Noting that $I_0(z)>I_1(z)$ for all $z$ \cite{NIST:DLMF}, we have 
\begin{align*}
B'(z) &\le \big(-B(z) + 2\big)\frac{I_0(z)}{I_1(z)}\,,\\
B'(z) &\ge \big(-B(z) +1\big)\bigg( \frac{I_0(z)}{I_1(z)}+\frac{I_1(z)}{I_0(z)} \bigg) \,.
\end{align*}

Let $b(z)$ be the solution to the ODE 
\begin{align*}
b'(z) = \big(-b(z) + 2\big)\frac{I_0(z)}{I_1(z)}\,, \qquad b(\delta)= B(\delta) \,.
\end{align*}
Defining $s=\log(zI_1(z))$, we have
\begin{align*}
\frac{db}{ds} = -b(s) + 2\,, \qquad b(s(\delta)) = B(\delta)\,,
\end{align*}
which we can integrate directly to yield $b(s) = 2+ (B(\delta)-2) e^{-(s-s(\delta))}$, or
\begin{equation}\label{eq:b_s_z}
b(s(z)) = 2+ (B(\delta)-2)\frac{\delta I_1(\delta)}{zI_1(z)}\,.
\end{equation}
Using the large-$z$ asymptotics \eqref{eq:I1_I0} and \eqref{eq:I0_I1} of $\frac{I_1(z)}{I_0(z)}$ and $\frac{I_0(z)}{I_1(z)}$, we note that there exists $\eta>1$ such that 
\begin{equation}\label{eq:Bz_eta}
B(z)\le \frac{3}{2}\,, \qquad z\ge \eta\,.
\end{equation}
Meanwhile, $b(s(z))\to 2$ as $z\to\infty$, so for $z$ sufficiently large we have $b(s(z))>B(z)$. Thus by \eqref{eq:B_upper}, we have $B(z)\le b(s(z))$ for $z\ge \delta$. In particular, for $\delta\le z\le \eta$, we have 
\begin{equation}\label{eq:B_upperbd}
B(z)\le b(s(\eta)) \le 2 - \frac{\delta}{8}\frac{\delta I_1(\delta)}{\eta I_1(\eta)} <2\,. 
\end{equation}
Combining \eqref{eq:B_upperbd} and \eqref{eq:Bz_eta}, we obtain the upper bound of \eqref{eq:Idiff_bd}.

Similarly, let $h(z)$ be the solution to the ODE
\begin{align*}
 h'(z) =  \big(-h(z)+1\big)\bigg( \frac{I_0(z)}{I_1(z)}+\frac{I_1(z)}{I_0(z)} \bigg)\,, \qquad h(\delta) = B(\delta)\,.
 \end{align*} 
Defining $t=\log(zI_1(z)I_0(z))$, we have 
\begin{align*}
\frac{dh}{dt} = -h(t)+1\,, \qquad h(t(\delta)) = B(\delta)\,.
\end{align*}
This can again be directly integrated to yield $h(t) = 1 + (B(\delta)-1)e^{-(t-t(\delta))}$, or
\begin{equation}\label{eq:h_t_z}
h(t(z)) = 1 + (B(\delta)-1)\frac{\delta I_0(\delta)I_1(\delta)}{zI_0(z)I_1(z)}\,.
\end{equation}
Since both $I_1(z)$ and $I_0(z)$ grow exponentially in $z$ \cite{NIST:DLMF}, for sufficiently large $z$, we have $h(t(z))<B(z)$. Thus by \eqref{eq:B_lower}, we have
\begin{equation}\label{eq:B_lowerbd}
B(z)\ge h(t(z))\ge 1 \,, \qquad z\ge \delta\,. 
\end{equation}
This yields the lower bound of \eqref{eq:Idiff_bd}. \\

To show the second bound \eqref{eq:Idenom_bd}, we let 
\begin{align*}
G(z) = \frac{2}{z}+\bigg(1-z\frac{I_0(z)}{I_1(z)}\bigg)\bigg(\frac{I_0(z)}{I_1(z)}-\frac{I_1(z)}{I_0(z)}\bigg) \,.
\end{align*}
For $z\ge3$, since $\frac{I_0(z)}{I_1(z)}>1$ for all $z$, we may use \eqref{eq:Idiff_bd} to immediately obtain
\begin{equation}\label{eq:Gz_ge3}
G(z) \le \frac{2}{z}+\bigg(1-z\frac{I_0(z)}{I_1(z)}\bigg)\frac{1}{z}
= \frac{3}{z}-\frac{I_0(z)}{I_1(z)} \le -c <0\,.
\end{equation}
For $\delta\le z\le 3$, it suffices to show that $H(z) = zG(z)$ satisfies $H(z)\le -c$ for some $c>0$. Writing 
\begin{align*}
H(z) = 2+\bigg(1-z\frac{I_0(z)}{I_1(z)}\bigg)B(z)
\end{align*}
for $B(z)$ as in \eqref{eq:Bz}, we may calculate that $H(z)$ satisfies 
\begin{align*}
H'(z) &= \bigg(-H(z)+ \frac{B(z)(2-\frac{B(z)}{2})-2}{B(z)-1} \bigg)\frac{B(z)-1}{B(z)}\bigg(2\frac{I_0(z)}{I_1(z)}+\frac{I_1(z)}{I_0(z)} \bigg)\\
&\qquad -\frac{I_1(z)}{I_0(z)}\bigg(1-\frac{B(z)}{2}\bigg)-z(B(z)-1)\,.
\end{align*}
Noting that $B(z)\le c_\delta <2$ implies $B(z)(2-\frac{B(z)}{2})\le c<2$, we have $\frac{B(z)(2-\frac{B(z)}{2})-2}{B(z)-1} \le -c_B$ for some constant $c_B>0$. In particular, 
\begin{align*}
H'(z) &\le \big(-H(z)-c_B \big)\frac{B(z)-1}{B(z)}\bigg(2\frac{I_0(z)}{I_1(z)}+\frac{I_1(z)}{I_0(z)} \bigg)\,.
\end{align*}
Now, using the bounds \eqref{eq:delta_def}, for $0<z\le \delta$, we have
\begin{align*}
\abs{H(z) + \frac{z^2}{4}} \le \frac{3z^3}{4}\,. 
\end{align*}
In particular, $H(\delta)\le -\frac{\delta^2}{16}<0$, since $\delta\le \frac{1}{4}$. 
Letting $s(z)$ be such that $s'(z) = \frac{B(z)-1}{B(z)}\big(2\frac{I_0(z)}{I_1(z)}+\frac{I_1(z)}{I_0(z)} \big)>0$ for all $z$, we define $w$ to be the solution to the ODE
\begin{align*}
\frac{dw}{ds} = -w(s) - c_B\,, \qquad w(s(\delta))=H(\delta)\,.
\end{align*}
We may solve for $w$ explicitly as $w(s) = -c_B+ (c_B+H(\delta))e^{-s-s(\delta)}$. Using the large-$z$ asymptotics \eqref{eq:I1_I0} and \eqref{eq:I0_I1} of $\frac{I_1}{I_0}$ and $\frac{I_0}{I_1}$, along with \eqref{eq:B_upper}, we thus have $H(z)\le w(s(z))\le -c<0$ for all $z\ge \delta$; in particular, for $\delta\le z\le 3$. Combined with the bound \eqref{eq:Gz_ge3} for $z\ge 3$, we obtain the bound \eqref{eq:Idenom_bd}.
\end{proof}


Using Propositions \ref{prop:besselI_bounds}, \ref{prop:besselI_2}, and Corollary \ref{cor:Iderivs}, we may obtain bounds for the tangential multipliers $m_{\epsilon,z}^z$ and $m_{\epsilon,r}^z$ for the angle-averaged single layer operator $\overline{\mc{A}}_\epsilon$, defined in Corollary \ref{cor:Aeps}. 
\begin{lemma}[Bounds for $\overline{\mc{A}}_\epsilon$ multipliers, tangential direction]\label{lem:SLbounds_tang}
Consider the multipliers $m_{\epsilon,z}^z(\xi)$ and $m_{\epsilon,r}^z(\xi)$ given by \eqref{eq:Aeps_mt} in Corollary \ref{cor:Aeps}. 
For $\ell=0,1,2$, these multipliers satisfy: 
\begin{equation}
\begin{aligned}
\abs{\p_\xi^\ell m_{\epsilon,z}^z(\xi)} &\le 
\begin{cases}
c\,\epsilon\abs{\xi}^{1-\ell}\,, & \abs{\xi}\ge \frac{1}{2\pi\epsilon}\,,\\ 
c\,\abs{\xi}^{-\ell}\,, & \abs{\xi}< \frac{1}{2\pi\epsilon}\,;
\end{cases} \\
\abs{\p_\xi^\ell m_{\epsilon,r}^z(\xi)} &\le 
\begin{cases}
c\,, & \abs{\xi}\ge \frac{1}{2\pi\epsilon}\,, \; \ell=0\,,\\ 
c\,\abs{\xi}^{-1} \,, & \abs{\xi}\ge \frac{1}{2\pi\epsilon}\,, \; \ell=1,2\,,\\ 
c\,\epsilon^{-1}\abs{\xi}^{-1-\ell}\,, & \abs{\xi}< \frac{1}{2\pi\epsilon}\,.
\end{cases}
%
%
\end{aligned}
\end{equation}
\end{lemma}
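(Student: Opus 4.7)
By Corollary~\ref{cor:Aeps}, $m_{\epsilon,z}^z(\xi)=2\,m_{\epsilon,\rm tA}(\xi)$ and $m_{\epsilon,r}^z(\xi)=-2i\,m_{\epsilon,\rm tB}(\xi)$, and by~\eqref{eq:Sinv_tangential2} these are evaluations at $z := 2\pi\epsilon|\xi|$ of the scalar rational functions
\[
m_{\rm tA}(z) := \frac{Q_{\rm tA}(z)}{Q_{\rm tD}(z)}\,, \qquad m_{\rm tB}(z) := \frac{Q_{\rm tB}(z)}{Q_{\rm tD}(z)}\,.
\]
Thus it suffices to control $m_{\rm tA}, m_{\rm tB}$ and their first two $z$-derivatives on the two regimes $z\geq 1$ and $0<z<1$; the chain rule $\partial_\xi^\ell = (\pm 2\pi\epsilon)^\ell \partial_z^\ell$ (for $\xi\neq 0$) converts the $z$-estimates into the stated $\xi$-estimates, with the threshold $|\xi|=1/(2\pi\epsilon)$ matching $z=1$. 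The plan mirrors the strategy of Lemmas~\ref{lem:mt_bounds} and~\ref{lem:mn_bounds}: bound numerators, denominators, and their first two derivatives using Propositions~\ref{prop:besselK_bounds}, \ref{prop:besselI_bounds}, \ref{prop:besselI_2} and Corollaries~\ref{cor:Kderivs}, \ref{cor:Iderivs}, then apply the quotient rule.

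For $z\geq 1$, I would extract the leading behavior $Q_{\rm tA}(z)= z+O(1)$, $Q_{\rm tB}(z)=\tfrac{1}{8}+O(1/z^2)$, and $Q_{\rm tD}(z)=\tfrac{1}{4}+O(1/z)$ together with the corresponding derivative bounds. The critical input is that $Q_{\rm tD}$ is bounded away from zero: writing its first factor as $1-\tfrac{1}{2}B(z)$ with $B(z):=z(I_0/I_1-I_1/I_0)$, Proposition~\ref{prop:besselI_2} estimate~\eqref{eq:Idiff_bd} gives $1\leq B(z)\leq c_\delta<2$ so this factor lies in $[1-c_\delta/2,\,\tfrac{1}{2}]$, while the second factor lies in $[\tfrac{1}{2},1]$ by the unconditional bound~\eqref{eq:K12_K02_est} together with the sign of $K_0/K_1-K_1/K_0$. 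Dividing and differentiating yields $|m_{\rm tA}^{(\ell)}(z)|\leq c\,z^{1-\ell}$ and $|m_{\rm tB}^{(\ell)}(z)|\leq c\,z^{-1-\ell}$ for $\ell=0,1,2$, which after the $\xi$-chain rule is exactly the large-$\xi$ half of the claim. For $0<z<1$, the second factor of $Q_{\rm tD}$ still lies in $[\tfrac{1}{2},1]$ by~\eqref{eq:K12_K02_est}, whereas~\eqref{eq:I1_I0smallZ}--\eqref{eq:I0_I1smallZ} give $B(z)=2-z^2/4+O(z^4)$, so the first factor vanishes precisely like $z^2/8$ and $|Q_{\rm tD}(z)|\geq c\,z^2$. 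Using the small-$z$ bounds~\eqref{eq:K1_K0_smallz} together with the Wronskian identity~\eqref{eq:I_K_id} to rewrite $Q_{\rm tA}$ in the cancellation-free form $\tfrac{1}{I_0K_0}+\tfrac{1}{2I_0K_0I_1K_1}-\tfrac{z}{I_1K_0}$, one obtains $Q_{\rm tA}(z)=O(z^2)$ and $Q_{\rm tB}(z)\sim z$. Division then produces a bounded $m_{\rm tA}(z)$ and $m_{\rm tB}(z)\sim 1/z$, and the derivative bounds follow from Corollaries~\ref{cor:Kderivs}, \ref{cor:Iderivs} and the quotient rule; converting through $z=2\pi\epsilon|\xi|$ gives the low-frequency part of the lemma, including the $\epsilon^{-1}|\xi|^{-1}$ blow-up of $m_{\epsilon,r}^z$.

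The main obstacle is the small-$z$ analysis: both $Q_{\rm tA}$ and $Q_{\rm tD}$ degenerate as $z\to 0$, so bounding them in isolation would wrongly predict an unbounded $m_{\rm tA}$. The resolution is the rewriting of $Q_{\rm tA}$ via~\eqref{eq:I_K_id} above, which exposes the matching $z^2$ cancellation in the numerator; this computation, together with the careful quantitative lower bound $|Q_{\rm tD}(z)|\geq cz^2$ obtained from Proposition~\ref{prop:besselI_2}, is what drives the entire small-$z$ branch of the proof. Once these base-level $L^\infty$ and first-/second-derivative bounds on $Q_{\rm tA}, Q_{\rm tB}, Q_{\rm tD}$ are in place together with the non-vanishing of $Q_{\rm tD}$ in each regime, the remaining steps (quotient rule for $\partial_z^2 m_{\rm tA}, \partial_z^2 m_{\rm tB}$ and chain rule in $\xi$) are mechanical and reproduce the two cases of Lemma~\ref{lem:SLbounds_tang} exactly.
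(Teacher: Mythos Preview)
Your approach is essentially the paper's: reduce to $m_{\rm tA}(z)=Q_{\rm tA}/Q_{\rm tD}$ and $m_{\rm tB}(z)=Q_{\rm tB}/Q_{\rm tD}$ in $z=2\pi\epsilon|\xi|$, bound numerators, denominators, and their first two derivatives on $z\ge1$ and $0<z<1$ via the Bessel estimates, use Proposition~\ref{prop:besselI_2} and~\eqref{eq:K12_K02_est} for the lower bound on $|Q_{\rm tD}|$, and finish with the quotient and chain rules. There is, however, a slip in your large-$z$ conclusion for $m_{\rm tB}$: your own asymptotics $Q_{\rm tB}\to\text{const}\neq0$ and $Q_{\rm tD}\to\tfrac14$ force $m_{\rm tB}(z)$ to tend to a nonzero constant, so the claim $|m_{\rm tB}^{(\ell)}(z)|\le c\,z^{-1-\ell}$ is false (and the leading constant for $Q_{\rm tB}$ is $\tfrac12$, not $\tfrac18$). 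The paper obtains the correct weaker bounds $|m_{\rm tB}|\le c$, $|m_{\rm tB}'|\le c/z$, $|m_{\rm tB}''|\le c/z$ for $z\ge1$; under the chain rule these still yield the lemma's $\ell=0$ bound $c$ and the $\ell=1,2$ bound $c|\xi|^{-1}$ for $m_{\epsilon,r}^z$, so the error is recoverable but the stated intermediate step fails as written.

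A smaller point on the small-$z$ branch: the rewriting you give for $Q_{\rm tA}$ is not actually cancellation-free. Each of the three summands is $O(1/|\log z|)$ and the leading parts cancel, so extracting $Q_{\rm tA}=O(z^2)$ from that form still requires tracking subleading terms. The paper instead uses~\eqref{eq:rewrite_IDs} to write $Q_{\rm tA}=z(I_1/I_0+K_1/K_0)\big(1+\tfrac{z}{2}(K_0/K_1-I_0/I_1)\big)$, where the second factor is $O(z^2|\log z|)$ by the small-$z$ expansion~\eqref{eq:I0_I1smallZ} of $I_0/I_1$ and the first factor is $O(1/|\log z|)$ by~\eqref{eq:K1_K0_smallz}; the product is then visibly $O(z^2)$.
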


\begin{proof}
We begin by defining 
\begin{align*}
m_{{\rm tA}}(z) = \frac{Q_{\rm tA}(z)}{Q_{\rm tD}(z)}\,, \quad 
m_{{\rm tB}}(z) = \frac{Q_{\rm tB}(z)}{Q_{\rm tD}(z)}
\end{align*}
where $Q_{{\rm t}j}$ are as in \eqref{eq:Sinv_tangential2} of Lemma \ref{lem:straight_SD}. Note that $m_{{\rm tA}}(2\pi\epsilon\abs{\xi})=\frac{1}{2}m_{\epsilon,z}^z(\xi)$ and $m_{{\rm tB}}(2\pi\epsilon\abs{\xi})=\frac{i}{2}m_{\epsilon,r}(\xi)$.
For both $j={\rm A,B}$, we have 
\begin{equation}\label{eq:mtj_derivs}
m_{{\rm t}j}'(z) = \frac{Q_{{\rm t}j}'Q_{\rm tD}- Q_{{\rm t}j}Q_{\rm tD}'}{Q_{\rm tD}^2}\,, \qquad 
m_{{\rm t}j}''(z) =\frac{Q_{\rm tD}Q_{{\rm t}j}''-Q_{{\rm t}j}Q_{\rm tD}''}{Q_{\rm tD}^2} + 2\frac{Q_{\rm tD}'}{Q_{\rm tD}}\,m_{{\rm t}j}'\,.
\end{equation}
It thus remains to bound each of $Q_{{\rm tA}}$, $Q_{{\rm tB}}$, $Q_{{\rm tD}}$, and their derivatives.

We begin with $Q_{\rm tD}$. Using Proposition \ref{prop:besselI_2} and Proposition \ref{prop:besselK_bounds}, we may obtain a lower bound for $Q_{\rm tD}(z)$ when $z\ge1$: in particular, 
\begin{equation}\label{eq:QtD_bds0}
Q_{\rm tD}(z) = \left(1+ \frac{z}{2}\left(\frac{I_1}{I_0}-\frac{I_0}{I_1} \right)\right)\left(1+ \frac{z}{2}\left(\frac{K_0}{K_1}-\frac{K_1}{K_0} \right)\right) \ge c>0\,, \qquad z\ge 1\,.
\end{equation}
Furthermore, using Propositions \ref{prop:besselK_bounds} and \ref{prop:besselI_bounds}, for $z\ge 1$ we have 
\begin{equation}\label{eq:QtD_bds}
\begin{aligned}
\textstyle \abs{Q_{\rm tD}(z)-\frac{1}{4}} &\le \frac{c}{z^2} \\
\abs{Q_{\rm tD}'(z)} &= \textstyle \bigg|\left( \frac{1}{2}\left(\frac{I_1}{I_0}-\frac{I_0}{I_1} \right)+\frac{z}{2}\left(\frac{I_1}{I_0}-\frac{I_0}{I_1} \right)'\right)\left(1+ \frac{z}{2}\left(\frac{K_0}{K_1}-\frac{K_1}{K_0} \right)\right)\\
&\qquad \textstyle +\left(1+ \frac{z}{2}\left(\frac{I_1}{I_0}-\frac{I_0}{I_1} \right)\right)\left(\frac{1}{2}\left(\frac{K_0}{K_1}-\frac{K_1}{K_0} \right)+\frac{z}{2}\left(\frac{K_0}{K_1}-\frac{K_1}{K_0} \right)'\right)\bigg| 
\le \displaystyle \frac{c}{z^2}\\
\abs{Q_{\rm tD}''(z)} &= \textstyle \bigg|\left( \left(\frac{I_1}{I_0}-\frac{I_0}{I_1} \right)'+\frac{z}{2}\left(\frac{I_1}{I_0}-\frac{I_0}{I_1} \right)''\right)\left(1+ \frac{z}{2}\left(\frac{K_0}{K_1}-\frac{K_1}{K_0} \right)\right)\\
&\qquad \textstyle + \left( \frac{I_1}{I_0}-\frac{I_0}{I_1} +z\left(\frac{I_1}{I_0}-\frac{I_0}{I_1} \right)'\right)\left(\frac{1}{2}\left(\frac{K_0}{K_1}-\frac{K_1}{K_0} \right)+\frac{z}{2}\left(\frac{K_0}{K_1}-\frac{K_1}{K_0} \right)'\right)\\
&\qquad \textstyle +\left(1+ \frac{z}{2}\left(\frac{I_1}{I_0}-\frac{I_0}{I_1} \right)\right)\left(\left(\frac{K_0}{K_1}-\frac{K_1}{K_0} \right)'+\frac{z}{2}\left(\frac{K_0}{K_1}-\frac{K_1}{K_0} \right)''\right)\bigg| 
 \le \displaystyle \frac{c}{z^2}\,.
\end{aligned}
\end{equation}
In addition, for $0<z<1$, using cancellations in the asymptotics \eqref{eq:I0_I1smallZ}, \eqref{eq:Iderivs_smallZ} for $\frac{I_0}{I_1}$, we have 
\begin{equation}\label{eq:QtD_smallz}
\frac{z^2}{c}\le \abs{Q_{\rm tD}(z)} \le c\,z^2 \,, \qquad
\abs{Q_{\rm tD}'(z)} \le c\,z \,, \qquad
\abs{Q_{\rm tD}''(z)} \le c\,.
\end{equation}

For $Q_{\rm tA}(z)$ and $Q_{\rm tB}(z)$, we first note that, using the identity \eqref{eq:I_K_id}, we may write
\begin{equation}\label{eq:rewrite_IDs}
\frac{1}{I_0K_0} = z\bigg(\frac{I_1}{I_0}+\frac{K_1}{K_0}\bigg)\,, \qquad
\frac{1}{I_1K_1} = z\bigg(\frac{I_0}{I_1}+\frac{K_0}{K_1}\bigg)\,.
\end{equation}
For $z\ge 1$, we may then use Propositions \ref{prop:besselI_bounds} and \ref{prop:besselK_bounds} to obtain
\begin{equation}\label{eq:QtA_bds}
\begin{aligned}
&\abs{Q_{\rm tA}(z)-z} = \textstyle \abs{z\left(\frac{I_1}{I_0}+\frac{K_1}{K_0}\right)\left(1+ \frac{z}{2}\left(\frac{K_0}{K_1}-\frac{I_0}{I_1} \right)\right)-z} \le \displaystyle \frac{c}{z} \\
&\abs{Q_{\rm tA}'(z)-1} = \textstyle \bigg|\left(\frac{I_1}{I_0}+\frac{K_1}{K_0}\right)\left(1+ \frac{z}{2}\left(\frac{K_0}{K_1}-\frac{I_0}{I_1} \right)\right)
+z\left(\frac{I_1}{I_0}+\frac{K_1}{K_0}\right)'\left(1+ \frac{z}{2}\left(\frac{K_0}{K_1}-\frac{I_0}{I_1} \right)\right) \\
&\qquad\qquad\qquad \textstyle +z\left(\frac{I_1}{I_0}+\frac{K_1}{K_0}\right)\left(\frac{1}{2}\left(\frac{K_0}{K_1}-\frac{I_0}{I_1} \right)+ \frac{z}{2}\left(\frac{K_0}{K_1}-\frac{I_0}{I_1} \right)'\right)-1\bigg| \le \displaystyle \frac{c}{z}\\
&\abs{Q_{\rm tA}''(z)} = \textstyle \bigg|2\left(\frac{I_1}{I_0}+\frac{K_1}{K_0}\right)'\left(1+ \frac{z}{2}\left(\frac{K_0}{K_1}-\frac{I_0}{I_1} \right)\right)
+2\left(\frac{I_1}{I_0}+\frac{K_1}{K_0}\right)\left(\frac{1}{2}\left(\frac{K_0}{K_1}-\frac{I_0}{I_1} \right)+ \frac{z}{2}\left(\frac{K_0}{K_1}-\frac{I_0}{I_1} \right)'\right)\\
&\qquad\quad \textstyle +z\left(\frac{I_1}{I_0}+\frac{K_1}{K_0}\right)''\left(1+ \frac{z}{2}\left(\frac{K_0}{K_1}-\frac{I_0}{I_1} \right)\right) 
+ 2z\left(\frac{I_1}{I_0}+\frac{K_1}{K_0}\right)'\left(\frac{1}{2}\left(\frac{K_0}{K_1}-\frac{I_0}{I_1} \right)+ \frac{z}{2}\left(\frac{K_0}{K_1}-\frac{I_0}{I_1} \right)'\right) \\
&\qquad\quad \textstyle +z\left(\frac{I_1}{I_0}+\frac{K_1}{K_0}\right)\left(\left(\frac{K_0}{K_1}-\frac{I_0}{I_1} \right)'+ \frac{z}{2}\left(\frac{K_0}{K_1}-\frac{I_0}{I_1} \right)''\right)\bigg| \le \displaystyle \frac{c}{z}\,.
\end{aligned}
\end{equation}
Similarly, for $0<z<1$, using the small-$z$ asymptotics of Proposition \ref{prop:besselI_bounds} and the bounds of Proposition \ref{prop:besselK_bounds}, we have 
\begin{equation}\label{eq:QtA_smallz}
 \abs{Q_{\rm tA}(z)} \le c\,z^2 \,, \quad
\abs{Q_{\rm tA}'(z)} \le  c\,z\,, \quad
\abs{Q_{\rm tA}''(z)} \le c\,.
\end{equation}

For $Q_{\rm tB}$, again using \eqref{eq:rewrite_IDs} and Propositions \ref{prop:besselI_bounds} and \ref{prop:besselK_bounds}, for $z\ge 1$ we have
\begin{equation}\label{eq:QtB_bds}
\begin{aligned}
\textstyle\abs{Q_{\rm tB}(z)-\frac{1}{2}} &= \textstyle \abs{\frac{z^2}{2}\left(\frac{I_0}{I_1}- \frac{I_1}{I_0} +\frac{K_0}{K_1}-\frac{K_1}{K_0}\right)-\frac{1}{2}} \le \displaystyle \frac{c}{z} \\
\abs{Q_{\rm tB}'(z)} &= \textstyle \abs{z\left(\frac{I_0}{I_1}- \frac{I_1}{I_0} +\frac{K_0}{K_1}-\frac{K_1}{K_0}\right)+ \frac{z^2}{2}\left(\frac{I_0}{I_1}- \frac{I_1}{I_0} +\frac{K_0}{K_1}-\frac{K_1}{K_0}\right)'} \le \displaystyle \frac{c}{z} \\
\abs{Q_{\rm tB}''(z)} &= \textstyle \bigg|\left(\frac{I_0}{I_1}- \frac{I_1}{I_0} +\frac{K_0}{K_1}-\frac{K_1}{K_0}\right)+2z\left(\frac{I_0}{I_1}- \frac{I_1}{I_0} +\frac{K_0}{K_1}-\frac{K_1}{K_0}\right)'\\
&\qquad +\textstyle  \frac{z^2}{2}\left(\frac{I_0}{I_1}- \frac{I_1}{I_0} +\frac{K_0}{K_1}-\frac{K_1}{K_0}\right)''\bigg| \le \displaystyle \frac{c}{z} \,.
\end{aligned}
\end{equation}
For $0<z<1$, we have
\begin{equation}\label{eq:QtB_smallz}
\abs{Q_{\rm tB}(z)} \le c\,z \,, \qquad
\abs{Q_{\rm tB}'(z)} \le c \,, \qquad
\abs{Q_{\rm tB}''(z)} \le \frac{c}{z}\,.
\end{equation}

Using the lower and upper bounds \eqref{eq:QtD_bds0} and \eqref{eq:QtD_bds} for $Q_{\rm tD}$, along with the bounds \eqref{eq:QtA_bds} and \eqref{eq:QtB_bds} for $Q_{\rm tA}$ and $Q_{\rm tB}$, for $z\ge 1$ the formulas \eqref{eq:mtj_derivs} yield
\begin{align*}
\abs{m_{\rm tA}(z)} &\le c\,z\,, \qquad \abs{m_{\rm tA}'(z)} \le c\,, \qquad \abs{m_{\rm tA}''(z)} \le \frac{c}{z}\,; \\
\abs{m_{\rm tB}(z)} &\le c\,, \qquad \abs{m_{\rm tB}'(z)} \le \frac{c}{z}\,, \qquad \abs{m_{\rm tB}''(z)} \le \frac{c}{z}\,. 
%
\end{align*}
Similarly, using \eqref{eq:QtD_smallz} with each of \eqref{eq:QtA_smallz}, \eqref{eq:QtB_smallz} in the formulas \eqref{eq:mtj_derivs}, for $0<z<1$ we have
\begin{align*}
\abs{m_{\rm tA}(z)} &\le c\,, \qquad \abs{m_{\rm tA}'(z)} \le \frac{c}{z}\,, \qquad \abs{m_{\rm tA}''(z)} \le \frac{c}{z^2}\,; \\
\abs{m_{\rm tB}(z)} &\le \frac{c}{z}\,, \qquad \abs{m_{\rm tB}'(z)} \le \frac{c}{z^2}\,, \qquad \abs{m_{\rm tB}''(z)} \le \frac{c}{z^3}\,.
%
\end{align*}
Using that $m_{{\rm tA}}(2\pi\epsilon\abs{\xi})=\frac{1}{2}m_{\epsilon,z}^z(\xi)$ and $m_{{\rm tB}}(2\pi\epsilon\abs{\xi})=\frac{i}{2}m_{\epsilon,r}(\xi)$, we obtain Lemma \ref{lem:SLbounds_tang}.
\end{proof}


We next consider the normal direction multipliers $m_{\epsilon,r}^x$, $m_{\epsilon,\theta}^x$, $m_{\epsilon,z}^x$ for the angle-averaged single layer operator $\overline{\mc{A}}_\epsilon$, defined in Corollary \ref{cor:Aeps}. Using Propositions \ref{prop:besselK_bounds}, \ref{prop:besselI_bounds}, \ref{prop:besselI_2}, and Corollaries \ref{cor:Kderivs} and \ref{cor:Iderivs}, we may obtain the following bounds for the normal direction multipliers and their derivatives.

\begin{lemma}[Bounds for $\overline{\mc{A}}_\epsilon$ multipliers, normal direction]\label{lem:SLbounds_norm}
Consider the multipliers $m_{\epsilon,r}^x(\xi)$, $m_{\epsilon,\theta}^x(\xi)$, and $m_{\epsilon,z}^x(\xi)$ given by \eqref{eq:Aeps_mn} in Corollary \ref{cor:Aeps}. For $\ell=0,1,2$, these multipliers satisfy: 
\begin{equation}
\begin{aligned}
\abs{\p_\xi^\ell m_{\epsilon,r}^x(\xi)} &\le
\begin{cases}
c\,\epsilon\abs{\xi}^{1-\ell}\,, & \abs{\xi}\ge \frac{1}{2\pi\epsilon}\,,\\ 
c\,\abs{\xi}^{-\ell} \,, & \abs{\xi}< \frac{1}{2\pi\epsilon}\,;
\end{cases} \\
\abs{\p_\xi^\ell m_{\epsilon,\theta}^x(\xi)} &\le
\begin{cases}
c\,\epsilon\abs{\xi}^{1-\ell}\,, & \abs{\xi}\ge \frac{1}{2\pi\epsilon}\,,\\ 
c\,\abs{\xi}^{-\ell} \,, & \abs{\xi}< \frac{1}{2\pi\epsilon}\,;
\end{cases} \\
\abs{\p_\xi^\ell m_{\epsilon,z}^x(\xi)} &\le
\begin{cases}
c\,\abs{\xi}^{-\ell} \,, & \abs{\xi}\ge \frac{1}{2\pi\epsilon}\,,\\ 
c\,\epsilon\abs{\log\epsilon}\abs{\xi}^{1-\ell} \,, & \abs{\xi}< \frac{1}{2\pi\epsilon}\,.
\end{cases} 
\end{aligned}
\end{equation}
\end{lemma}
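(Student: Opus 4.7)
The plan is to follow the same overall strategy as in the proof of Lemma \ref{lem:SLbounds_tang}, but applied to the six normal-direction ratios. Using Corollary \ref{cor:Aeps}, the three target multipliers are combinations
$m_{\epsilon,r}^x = m_{\epsilon,\rm nA}-m_{\epsilon,\rm nB}$, $m_{\epsilon,\theta}^x = m_{\epsilon,\rm nB}-m_{\epsilon,\rm nD}$, $m_{\epsilon,z}^x = i(m_{\epsilon,\rm nC}+m_{\epsilon,\rm nE})$, where each $m_{\epsilon,{\rm n}j}(\xi) = Q_{{\rm n}j}(2\pi\epsilon|\xi|)/Q_{\rm nG}(2\pi\epsilon|\xi|)$. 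Differentiating via the quotient rule analogue of \eqref{eq:mtj_derivs}, it suffices to obtain two-regime pointwise bounds in $z$ (for $z\ge 1$ and for $0<z<1$) on each of the numerators $Q_{\rm nA},\dots,Q_{\rm nF}$, on the denominator $Q_{\rm nG}$, and on their first two derivatives, and then combine; converting $z=2\pi\epsilon|\xi|$ to $\xi$-bounds produces the stated powers of $\epsilon$ just as in the tangential case.

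The first key step is to show that $Q_{\rm nG}(z)$ stays uniformly bounded away from zero for all $z>0$. The first factor of $Q_{\rm nG}$ is exactly the quantity controlled by Proposition \ref{prop:besselI_2}, estimate \eqref{eq:Idenom_bd}, which is $<-c$ for $z\ge \delta$; a short small-$z$ expansion using \eqref{eq:I1_I0smallZ}, \eqref{eq:I0_I1smallZ} shows it behaves like $-z/4$ for $0<z<\delta$. The second factor, by the $K$-analogues of the same computations (Proposition \ref{prop:besselK_bounds} and Corollary \ref{cor:Kderivs}), is positive and bounded below by $c>0$ for $z\ge 1$ and asymptotic to $2/z$ for small $z$. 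Multiplying, $Q_{\rm nG}(z)$ is bounded away from zero uniformly, with $|Q_{\rm nG}'|,\,|Q_{\rm nG}''|$ estimated via the same propositions by routine differentiation.

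The second step is to bound each numerator $Q_{{\rm n}j}$. I would first use the Wronskian identity rewritten as \eqref{eq:rewrite_IDs} to eliminate the $1/(I_1K_1)$ prefactors, which converts the $Q_{{\rm n}j}$ into sums of products of Bessel ratios already estimated in Propositions \ref{prop:besselK_bounds}, \ref{prop:besselI_bounds} and Corollaries \ref{cor:Kderivs}, \ref{cor:Iderivs}. This gives, for $z\ge 1$, size bounds of order $z^2$ for the worst terms and $z$ or constant for the others, together with the corresponding derivative bounds; for $0<z<1$, small-$z$ expansions give the analogous $(1/z)$-growth or logarithmic rates. Plugged into the quotient rule along with the denominator bounds of Step 1, these give preliminary bounds on the individual $m_{\epsilon,{\rm n}j}$.

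The main obstacle will be exposing cancellation in the combinations $Q_{\rm nA}-Q_{\rm nB}$, $Q_{\rm nB}-Q_{\rm nD}$, and (crucially) $Q_{\rm nC}+Q_{\rm nE}$: individually bounding each $m_{\epsilon,{\rm n}j}$ is too lossy to reach the stated bounds, especially the $\epsilon|\log\epsilon||\xi|$ bound at low frequency and the mere boundedness at high frequency for $m_{\epsilon,z}^x$. Exhibiting these cancellations requires carrying the Bessel asymptotics one more order than in Step 2, i.e.\ using the full quantitative expansions of Propositions \ref{prop:besselK_bounds} and \ref{prop:besselI_bounds} together with Corollaries \ref{cor:Kderivs} and \ref{cor:Iderivs}, and then symbolically collecting terms in each difference/sum. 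The bookkeeping is lengthy but mechanical; once the leading-order contributions cancel, the remaining bounds on differences and their derivatives, combined with the denominator bounds via the quotient rule, yield the claimed estimates. Finally substituting $z=2\pi\epsilon|\xi|$ converts the $z$-bounds to the $\xi$-bounds stated in Lemma \ref{lem:SLbounds_norm}.
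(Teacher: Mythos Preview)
Your proposal is correct and follows essentially the same approach as the paper: form the ratios $m_r^x=(Q_{\rm nA}-Q_{\rm nB})/Q_{\rm nG}$, $m_\theta^x=(Q_{\rm nB}-Q_{\rm nD})/Q_{\rm nG}$, $m_z^x=(Q_{\rm nC}+Q_{\rm nE})/Q_{\rm nG}$, bound $Q_{\rm nG}$ above and below via Propositions \ref{prop:besselK_bounds} and \ref{prop:besselI_2}, bound the combined numerators and their derivatives after rewriting with \eqref{eq:rewrite_IDs}, and then convert $z=2\pi\epsilon|\xi|$. The paper skips your intermediate Step 2 (bounding the individual $Q_{{\rm n}j}$) and works directly with the three numerator combinations, which is exactly the cancellation you flag as the ``main obstacle''; since you already recognize that the individual bounds are too lossy and that one must pass to the differences/sum, your route converges to the paper's.
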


\begin{proof}
We begin by defining
\begin{equation}\label{eq:mx_defs}
m_r^x(z) = \frac{Q_{\rm nA}(z)-Q_{\rm nB}(z)}{Q_{\rm nG}(z)}\,, \quad
m_\theta^x(z) = \frac{Q_{\rm nB}(z)-Q_{\rm nD}(z)}{Q_{\rm nG}(z)}\,, \quad
m_z^x(z) = \frac{Q_{\rm nC}(z)+Q_{\rm nE}(z)}{Q_{\rm nG}(z)}\,, 
\end{equation}
where the functions $Q_{{\rm n}j}(z)$ are as in \eqref{eq:Sinv_components}. Note that $m_{\epsilon,r}^x(\xi)=m_r^x(2\pi\epsilon\abs{\xi})$, $m_{\epsilon,\theta}^x(\xi)=m_\theta^x(2\pi\epsilon\abs{\xi})$, and $m_{\epsilon,z}^x(\xi)=i\,m_z^x(2\pi\epsilon\abs{\xi})$ for the expressions \eqref{eq:Aeps_mn} in Corollary \ref{cor:Aeps}.

The first and second derivatives of $m_r^x$ are given by
\begin{equation}\label{eq:mx_derivs}
\begin{aligned}
(m_r^x)'(z) &= \frac{(Q_{\rm nA}'-Q_{\rm nB}')Q_{\rm nG}-(Q_{\rm nA}-Q_{\rm nB})Q_{\rm nG}'}{Q_{\rm nG}^2} \\
(m_r^x)''(z) &= \frac{(Q_{\rm nA}''-Q_{\rm nB}'')Q_{\rm nG}-(Q_{\rm nA}-Q_{\rm nB})Q_{\rm nG}''}{Q_{\rm nG}^2} + 2\frac{Q_{\rm nG}'}{Q_{\rm nG}}(Q_{\rm nA}'-Q_{\rm nB}') \,,
\end{aligned}
\end{equation}
and the first and second derivatives of $m_\theta^x$ and $m_z^x$ may be written analogously. It thus remains to obtain bounds for the numerators and denominator of each multiplier.

We begin with bounds for $Q_{\rm nG}$. Using Proposition \ref{prop:besselI_2} and Proposition \ref{prop:besselK_bounds}, we may obtain the following lower bound for $\abs{Q_{\rm nG}(z)}$ when $z\ge1$: 
\begin{equation}\label{eq:QnG_lower}
\abs{Q_{\rm nG}(z)} = \abs{\textstyle \left(\frac{2}{z} + \left(1- z\frac{I_0}{I_1}\right)\left(\frac{I_0}{I_1}-\frac{I_1}{I_0}\right) \right) \left(\frac{2}{z}- \left(1+ z\frac{K_0}{K_1}\right)\left(\frac{K_0}{K_1}-\frac{K_1}{K_0}\right) \right)} \ge c >0\,. 
\end{equation}
In addition, by Propositions \ref{prop:besselI_bounds} and \ref{prop:besselK_bounds}, for $z\ge1$ we have the upper bounds 
\begin{equation}\label{eq:QnG_upper}
\begin{aligned}
\abs{Q_{\rm nG}(z)} &\le c \\
\abs{Q_{\rm nG}'(z)} &= \bigg|\textstyle \left(\frac{2}{z^2} + \left(\frac{I_0}{I_1} + z\left(\frac{I_0}{I_1}\right)'\right)\left(\frac{I_0}{I_1}-\frac{I_1}{I_0}\right) - \left(1- z\frac{I_0}{I_1}\right)\left(\frac{I_0}{I_1}-\frac{I_1}{I_0}\right)' \right)\times \\
&\qquad \times \textstyle \left(\frac{2}{z}- \left(1+ z\frac{K_0}{K_1}\right)\left(\frac{K_0}{K_1}-\frac{K_1}{K_0}\right) \right)  +\textstyle \left(\frac{2}{z} + \left(1- z\frac{I_0}{I_1}\right)\left(\frac{I_0}{I_1}-\frac{I_1}{I_0}\right) \right) \times \\
&\qquad \times \textstyle \left(\frac{2}{z^2} + \left(\frac{K_0}{K_1}+z\left(\frac{K_0}{K_1}\right)'\right)\left(\frac{K_0}{K_1}-\frac{K_1}{K_0}\right) 
+ \left(1+ z\frac{K_0}{K_1}\right)\left(\frac{K_0}{K_1}-\frac{K_1}{K_0}\right)'\right) \bigg| 
\le \displaystyle \frac{c}{z} \\
\abs{Q_{\rm nG}''(z)} &= \bigg|\textstyle \bigg(-\frac{4}{z^3} + \left(2\left(\frac{I_0}{I_1}\right)'+\left(\frac{I_0}{I_1}\right)''\right)\left(\frac{I_0}{I_1}-\frac{I_1}{I_0}\right)
+ 2\left(\frac{I_0}{I_1} + z\left(\frac{I_0}{I_1}\right)'\right)\left(\frac{I_0}{I_1}-\frac{I_1}{I_0}\right)' \\
&\quad \textstyle -\left(1- z\frac{I_0}{I_1}\right)\left(\frac{I_0}{I_1}-\frac{I_1}{I_0}\right)'' \bigg) \textstyle \left(\frac{2}{z}- \left(1+ z\frac{K_0}{K_1}\right)\left(\frac{K_0}{K_1}-\frac{K_1}{K_0}\right) \right) \\
&\quad +\textstyle \left(\frac{2}{z} + \left(1- z\frac{I_0}{I_1}\right)\left(\frac{I_0}{I_1}-\frac{I_1}{I_0}\right) \right)  \textstyle \bigg(-\frac{4}{z^3} + \left(2\left(\frac{K_0}{K_1}\right)'+\left(\frac{K_0}{K_1}\right)''\right)\left(\frac{K_0}{K_1}-\frac{K_1}{K_0}\right) \\
&\quad
+ \textstyle 2\left(\frac{K_0}{K_1}+z\left(\frac{K_0}{K_1}\right)'\right)\left(\frac{K_0}{K_1}-\frac{K_1}{K_0}\right)' 
+ \left(1+ z\frac{K_0}{K_1}\right)\left(\frac{K_0}{K_1}-\frac{K_1}{K_0}\right)''\bigg) \\
&\quad + \textstyle 2\left(\frac{2}{z^2} + \left(\frac{I_0}{I_1} + z\left(\frac{I_0}{I_1}\right)'\right)\left(\frac{I_0}{I_1}-\frac{I_1}{I_0}\right) - \left(1- z\frac{I_0}{I_1}\right)\left(\frac{I_0}{I_1}-\frac{I_1}{I_0}\right)' \right)\times \\
&\qquad \times \textstyle \left(\frac{2}{z^2} + \left(\frac{K_0}{K_1}+z\left(\frac{K_0}{K_1}\right)'\right)\left(\frac{K_0}{K_1}-\frac{K_1}{K_0}\right) 
+ \left(1+ z\frac{K_0}{K_1}\right)\left(\frac{K_0}{K_1}-\frac{K_1}{K_0}\right)'\right)\bigg| 
\le \displaystyle \frac{c}{z^2}\,.
\end{aligned}
\end{equation}
For $0<z<1$, we may use cancellations in the small-$z$ asymptotics \eqref{eq:I0_I1smallZ} for $\frac{I_0}{I_1}$ along with the small-$z$ bounds of Propositions \ref{prop:besselK_bounds} and \ref{prop:besselI_bounds} to obtain
\begin{equation}\label{eq:QnG_smallZ}
\frac{1}{c}\le \abs{Q_{\rm nG}(z)} \le c\,, \qquad
\abs{Q_{\rm nG}'(z)} \le \frac{c}{z}\,, \qquad 
\abs{Q_{\rm nG}''(z)} \le \frac{c}{z^2}\,, \qquad  0<z<1\,.
\end{equation}

We next consider the numerator of $m_r^x(z)$ in \eqref{eq:mx_defs}. Using the form \eqref{eq:Sinv_components} of $Q_{\rm nA}$ and $Q_{\rm nB}$, and using \eqref{eq:rewrite_IDs} to rewrite $\frac{1}{I_1K_1}$, for $z\ge 1$ we may use Propositions \ref{prop:besselK_bounds}, \ref{prop:besselI_bounds} and Corollaries \ref{cor:Kderivs}, \ref{cor:Iderivs} to obtain:
\begin{equation}\label{eq:mrx_bigZ}
\begin{aligned}
\abs{Q_{\rm nA}-Q_{\rm nB}} &= \bigg|\textstyle z\left(\frac{I_0}{I_1}+\frac{K_0}{K_1}\right) \bigg( 6+\left(\frac{8}{z^2}+2\right)\frac{I_1}{I_0}\frac{K_1}{K_0} + \frac{4}{z}\left(\frac{K_1}{K_0} -\frac{I_1}{I_0}\right) \\
&\qquad \textstyle -2z\left(\frac{I_0}{I_1} -\frac{K_0}{K_1}\right) -4\left(\frac{I_0}{I_1}\frac{K_1}{K_0} +\frac{I_1}{I_0}\frac{K_0}{K_1}\right) \bigg) \bigg| 
\le c\,z\,, \\
\abs{Q_{\rm nA}'-Q_{\rm nB}'} &\le c\,, \qquad
\abs{Q_{\rm nA}''-Q_{\rm nB}''} \le \frac{c}{z}\,.
\end{aligned}
\end{equation}
We have suppressed the full expressions for the first and second derivatives above, but note that they can easily be computed from the first expression.  

For $0<z<1$, we may use cancellations in the small-$z$ asymptotics \eqref{eq:I0_I1smallZ}, \eqref{eq:I1_I0smallZ} of $\frac{I_0}{I_1}$ and $\frac{I_1}{I_0}$ along with Proposition \eqref{prop:besselK_bounds} to obtain
\begin{equation}\label{eq:mrx_smallZ}
\begin{aligned}
\abs{Q_{\rm nA}-Q_{\rm nB}} \le c\,, \qquad
\abs{Q_{\rm nA}'-Q_{\rm nB}'} \le \frac{c}{z}\,, \qquad 
\abs{Q_{\rm nA}''-Q_{\rm nB}''} \le \frac{c}{z^2}\,.
\end{aligned}
\end{equation}
Combining \eqref{eq:QnG_lower}, \eqref{eq:QnG_upper}, \eqref{eq:QnG_smallZ}, \eqref{eq:mrx_bigZ}, and \eqref{eq:mrx_smallZ} with the definition \eqref{eq:mx_defs} of $m_r^x(z)$ and the expression \eqref{eq:mx_derivs} of its derivatives, we obtain
\begin{equation}\label{eq:mrx_bds0}
\begin{aligned}
\abs{m_r^x(z)} &\le \begin{cases}
c\,z\,,  & z\ge 1 \\
c\,, & 0<z<1\,,
\end{cases} \qquad 
\abs{(m_r^x)'(z)} \le \begin{cases}
c\,,  & z\ge 1 \\
\frac{c}{z}\,, & 0<z<1\,,
\end{cases} \\
\abs{(m_r^x)''(z)} &\le \begin{cases}
\frac{c}{z}\,,  & z\ge 1 \\
\frac{c}{z^2}\,, & 0<z<1\,.
\end{cases}
\end{aligned}
\end{equation}

We now turn to the numerator of $m_\theta^x(z)$ from \eqref{eq:mx_defs}. We recall the form \eqref{eq:Sinv_components} of $Q_{\rm nB}$ and $Q_{\rm nD}$ and again use \eqref{eq:rewrite_IDs} to rewrite $\frac{1}{I_1K_1}$. By Propositions \ref{prop:besselK_bounds}, \ref{prop:besselI_bounds} and Corollaries \ref{cor:Kderivs}, \ref{cor:Iderivs}, for $z\ge 1$ we have
\begin{equation}\label{eq:mthetax_bigZ}
\begin{aligned}
\abs{Q_{\rm nB}-Q_{\rm nD}} &= \bigg|\textstyle z\left(\frac{I_0}{I_1}+\frac{K_0}{K_1}\right) \bigg( 4\frac{K_1}{K_0}\frac{I_0}{I_1}-6\frac{K_1}{K_0}\frac{I_1}{I_0} +4\frac{I_1}{I_0}\frac{K_0}{K_1}-2 +\frac{4}{z}\left(\frac{I_1}{I_0}-\frac{K_1}{K_0}\right)\\
&\qquad \textstyle -\frac{8}{z^2}\frac{I_1}{I_0}\frac{K_1}{K_0} -z^2\left(\frac{I_0}{I_1}-\frac{I_1}{I_0}\right)\left(\frac{K_0}{K_1}-\frac{K_1}{K_0}\right) \bigg) \bigg| 
\le c\,z\,, \\
\abs{Q_{\rm nB}'-Q_{\rm nD}'} &\le c\,, \qquad
\abs{Q_{\rm nB}''-Q_{\rm nD}''} \le \frac{c}{z}\,.
\end{aligned}
\end{equation}
Again we have suppressed the full expressions for the first and second derivatives, but again they may be easily computed from the first expression.
For $0<z<1$, again using Proposition \eqref{prop:besselK_bounds} and cancellations in the small-$z$ asymptotics \eqref{eq:I0_I1smallZ}, \eqref{eq:I1_I0smallZ} of $\frac{I_0}{I_1}$ and $\frac{I_1}{I_0}$, we have
\begin{equation}\label{eq:mthetax_smallZ}
\abs{Q_{\rm nB}-Q_{\rm nD}} \le c\,, \qquad
\abs{Q_{\rm nB}'-Q_{\rm nD}'} \le \frac{c}{z}\,, \qquad
\abs{Q_{\rm nB}''-Q_{\rm nD}''} \le \frac{c}{z^2}\,.
\end{equation}
Combining \eqref{eq:QnG_lower}, \eqref{eq:QnG_upper}, \eqref{eq:QnG_smallZ}, \eqref{eq:mthetax_bigZ}, and \eqref{eq:mthetax_smallZ} with the definition \eqref{eq:mx_defs} of $m_\theta^x(z)$ and the analogous expression \eqref{eq:mx_derivs} of its derivatives, we obtain
\begin{equation}\label{eq:mthetax_bds0}
\begin{aligned}
\abs{m_\theta^x(z)} &\le \begin{cases}
c\,z\,,  & z\ge 1 \\
c\,, & 0<z<1\,,
\end{cases} \qquad 
\abs{(m_\theta^x)'(z)} \le \begin{cases}
c\,,  & z\ge 1 \\
\frac{c}{z}\,, & 0<z<1\,,
\end{cases} \\
\abs{(m_\theta^x)''(z)} &\le \begin{cases}
\frac{c}{z}\,,  & z\ge 1 \\
\frac{c}{z^2}\,, & 0<z<1\,.
\end{cases}
\end{aligned}
\end{equation}

Finally, we consider the numerator of $m_z^x(z)$ from \eqref{eq:mx_defs}. Using the form \eqref{eq:Sinv_components} of $Q_{\rm nC}$ and $Q_{\rm nE}$ along with the identity \eqref{eq:rewrite_IDs}, for $z\ge 1$ we have
\begin{equation}\label{eq:mzx_bigZ}
\begin{aligned}
\abs{Q_{\rm nC}+Q_{\rm nE}} &= \bigg|\textstyle z\left(\frac{I_0}{I_1}+\frac{K_0}{K_1}\right)\bigg(\frac{8}{z}\frac{I_1}{I_0}\frac{K_1}{K_0} + 6\left(\frac{I_0}{I_1}- \frac{K_0}{K_1}\right) +2\left(\frac{I_1}{I_0}-\frac{K_1}{K_0}\right)+2z\left(\frac{I_0}{I_1}\frac{K_0}{K_1} -1\right) \\
&\qquad \textstyle -\frac{12}{z} +z\left(\frac{I_0}{I_1}-\frac{I_1}{I_0}\right)\left(\frac{K_0}{K_1}-\frac{K_1}{K_0}\right) \bigg) \bigg| 
\le c\,,\\
\abs{Q_{\rm nC}'+Q_{\rm nE}'} &\le \frac{c}{z}\,, \qquad
\abs{Q_{\rm nC}''+Q_{\rm nE}''} \le \frac{c}{z^2}\,.
\end{aligned}
\end{equation}
Again we suppress the full expressions for the first and second derivatives, as they are straightforward to compute. For $0<z<1$, we again use Proposition \ref{prop:besselK_bounds} along with cancellations in the small-$z$ asymptotics \eqref{eq:I0_I1smallZ}, \eqref{eq:I1_I0smallZ} of $\frac{I_0}{I_1}$ and $\frac{I_1}{I_0}$ to obtain
\begin{equation}\label{eq:mzx_smallZ}
\textstyle 
\abs{Q_{\rm nC}+Q_{\rm nE}} \le c\,z\abs{\log(\frac{z}{2})}\,, \quad
\abs{Q_{\rm nC}'+Q_{\rm nE}'} \le c\,\abs{\log(\frac{z}{2})}\,, \quad
\abs{Q_{\rm nC}''+Q_{\rm nE}''} \le \displaystyle c\,\frac{\abs{\log(\frac{z}{2})}}{z}\,.
\end{equation}
Combining \eqref{eq:QnG_lower}, \eqref{eq:QnG_upper}, \eqref{eq:QnG_smallZ}, \eqref{eq:mzx_bigZ}, and \eqref{eq:mzx_smallZ} with the definition \eqref{eq:mx_defs} of $m_z^x(z)$ and the analogous expression \eqref{eq:mx_derivs} of its derivatives, we obtain
\begin{equation}\label{eq:mzx_bds0}
\begin{aligned}
\abs{m_z^x(z)} &\le \begin{cases}
c\,,  & z\ge 1 \\
c\,z\abs{\log(\frac{z}{2})}\,, & 0<z<1\,,
\end{cases} \qquad 
\abs{(m_z^x)'(z)} \le \begin{cases}
\frac{c}{z}\,,  & z\ge 1 \\
c\,\abs{\log(\frac{z}{2})}\,, & 0<z<1\,,
\end{cases} \\
\abs{(m_z^x)''(z)} &\le \begin{cases}
\frac{c}{z^2}\,,  & z\ge 1 \\
c\,\frac{\abs{\log(z/2)}}{z}\,, & 0<z<1\,.
\end{cases}
\end{aligned}
\end{equation}

Using the bounds \eqref{eq:mrx_bds0}, \eqref{eq:mthetax_bds0}, and \eqref{eq:mzx_bds0} along with the fact that $m_{\epsilon,r}^x(\xi)=m_r^x(2\pi\epsilon\abs{\xi})$, $m_{\epsilon,\theta}^x(\xi)=m_\theta^x(2\pi\epsilon\abs{\xi})$, and $m_{\epsilon,z}^x(\xi)=i\,m_z^x(2\pi\epsilon\abs{\xi})$, we obtain Lemma \ref{lem:SLbounds_norm}. 
\end{proof}

\subsection{Proofs of Lemmas \ref{lem:straight_Leps_Holder} and \ref{lem:Sinv_mapping}}\label{subsec:straight_mapping}
Given the multiplier bounds in Lemmas \ref{lem:mt_bounds}, \ref{lem:mn_bounds}, \ref{lem:SLbounds_tang}, \ref{lem:SLbounds_norm}, we may now use Lemma \ref{lem:besov} to prove Lemmas \ref{lem:straight_Leps_Holder} and \ref{lem:Sinv_mapping}. We begin with Lemma \ref{lem:straight_Leps_Holder}.

\subsubsection{Proof of Lemma \ref{lem:straight_Leps_Holder}}\label{subsubsec:Meps}
Recalling the definitions \eqref{eq:eigsT} and \eqref{eq:eigsN} of $m_{\epsilon,{\rm t}}$ and $m_{\epsilon,{\rm n}}$, we define the matrix-valued Fourier multipliers 
\begin{equation}\label{eq:Meps12_defs}
\begin{aligned}
\wh{\bm{M}}_{\epsilon}^{(1)}(\xi) &=  m_{\epsilon,{\rm t}}(\xi)\, \be_z\otimes\be_z + m_{\epsilon,{\rm n}}(\xi)\, (\be_x\otimes\be_x +\be_y\otimes\be_y) \,, \qquad
\wh{\bm{M}}_{\epsilon,j}^{(1)}(\xi) = \phi_j(\xi)\wh{\bm{M}}_{\epsilon}^{(1)}(\xi) \\
\wh{\bm{M}}_{\epsilon}^{(2)}(\xi) &=  m_{\epsilon,{\rm t}}^{-1}(\xi)\, \be_z\otimes\be_z + m_{\epsilon,{\rm n}}^{-1}(\xi)\, (\be_x\otimes\be_x +\be_y\otimes\be_y)\,, \qquad
\wh{\bm{M}}_{\epsilon,j}^{(2)}(\xi) = \phi_j(\xi)\wh{\bm{M}}_{\epsilon}^{(2)}(\xi) \,.
\end{aligned}
\end{equation}
Here $\phi_j(\xi)$ is as in \eqref{eq:phij_def}. Letting $j_\epsilon=\frac{\abs{\log(2\pi\epsilon)}}{\log(2)}$, by Lemmas \ref{lem:mt_bounds} and \ref{lem:mn_bounds}, we have 
\begin{align*}
\norm{\p_\xi^\ell\wh{\bm{M}}_{\epsilon,j}^{(1)}}_{L^\infty} &\le \begin{cases}
c\,\epsilon^{-1}\,2^{-j(\ell+1)}  \,, & j\ge j_\epsilon \\
c\abs{\log\epsilon}\,2^{-j\ell}\,, &j<j_\epsilon \,,
\end{cases} \qquad \ell=0,1,2\,; \\
\norm{\p_\xi^\ell\wh{\bm{M}}_{\epsilon,j}^{(2)}}_{L^\infty} &\le \begin{cases}
c\,\epsilon \,2^{j(1-\ell)}\,, &j\ge j_\epsilon \\
c\abs{\log\epsilon}^{-1} 2^{-j\ell}\,, &j<j_\epsilon\,,
\end{cases} \qquad \ell=0,1,2\,.
\end{align*}
We may then use Lemma \ref{lem:besov} to obtain physical space estimates for $\bm{M}_{\epsilon,j}^{(1)}$ and $\bm{M}_{\epsilon,j}^{(2)}$: 
\begin{align*}
\norm{\bm{M}_{\epsilon,j}^{(1)}}_{L^1} &\le \begin{cases}
c\,\epsilon^{-1}\,2^{-j} \,, &j\ge j_\epsilon  \\
c\abs{\log\epsilon} \,, &j<j_\epsilon\,,
\end{cases}  \qquad
\norm{\bm{M}_{\epsilon,j}^{(2)}}_{L^1} \le \begin{cases}
c\,\epsilon\, 2^j\,, &j\ge j_\epsilon \\
c\abs{\log\epsilon}^{-1} \,, &j<j_\epsilon\,.
\end{cases}
\end{align*}
Given $\bm{h}\in C^{0,\alpha}(\T)$ with $\int_\T\bm{h}(s)\,ds = 0$ and $\bm{g}\in C^{1,\alpha}(\T)$, we may use the dyadic partition of unity \eqref{eq:phij_def} to write
\begin{align*}
\overline{\mc{L}}_\epsilon[\bm{h}] &= T_{\wh{\bm{M}}_\epsilon^{(1)}}\bm{h} = \sum_j P_jT_{\wh{\bm{M}}_\epsilon^{(1)}}\bm{h}\,, \qquad
\overline{\mc{L}}_\epsilon^{-1}[\bm{g}] = T_{\wh{\bm{M}}_\epsilon^{(2)}}\bm{g} = \sum_j P_jT_{\wh{\bm{M}}_\epsilon^{(2)}}\bm{g}\,.
\end{align*}

On $\T$, we have $P_{\le 0}T_{\wh{\bm{M}}_\epsilon^{(1)}}\bm{h} = P_0T_{\wh{\bm{M}}_\epsilon^{(1)}}\bm{h}$ and $P_{\le 0}T_{\wh{\bm{M}}_\epsilon^{(2)}}\bm{g} = P_0T_{\wh{\bm{M}}_\epsilon^{(2)}}\bm{g}$. We may then estimate
\begin{equation}\label{eq:Meps0mode}
\begin{aligned}
\norm{P_{\le 0}T_{\wh{\bm{M}}_\epsilon^{(1)}}\bm{h}}_{L^\infty} &= 
\norm{\bm{M}_{\epsilon,0}^{(1)}*\bm{h}}_{L^\infty} \le
\norm{\bm{M}_{\epsilon,0}^{(1)}}_{L^1}\norm{\bm{h}}_{L^\infty}
\le c\abs{\log\epsilon}\norm{\bm{h}}_{L^\infty} \\
\norm{P_{\le 0}T_{\wh{\bm{M}}_\epsilon^{(2)}}\bm{g}}_{L^\infty} &= 
\norm{\bm{M}_{\epsilon,0}^{(2)}*\bm{g}}_{L^\infty} \le
\norm{\bm{M}_{\epsilon,0}^{(2)}}_{L^1}\norm{\bm{g}}_{L^\infty}
\le c\abs{\log\epsilon}^{-1}\norm{\bm{g}}_{L^\infty}\, .
\end{aligned}
\end{equation}
For $j>0$ we introduce the notation $\wt P_j= P_{j-1}+P_j+P_{j+1}$ and note that $P_jT_{\wh{\bm{M}}_\epsilon^{(1)}}\bm{h}=P_jT_{\wh{\bm{M}}_\epsilon^{(1)}}\wt P_{j}\bm{h}$ and $P_jT_{\wh{\bm{M}}_\epsilon^{(2)}}\bm{g}=P_jT_{\wh{\bm{M}}_\epsilon^{(2)}}\wt P_{j}\bm{g}$. For low frequencies $0<j< j_\epsilon$, we estimate
\begin{equation}\label{eq:Meps_lowmode}
\begin{aligned}
\sup_{0<j< j_\epsilon} 2^{j(1+\alpha)}\norm{P_jT_{\wh{\bm{M}}_\epsilon^{(1)}}\bm{h}}_{L^\infty} &= \sup_{0<j< j_\epsilon} 2^{j(1+\alpha)}\norm{P_jT_{\wh{\bm{M}}_\epsilon^{(1)}}\wt P_j\bm{h}}_{L^\infty} \\
&= \sup_{0<j< j_\epsilon} 2^{j(1+\alpha)}\norm{\bm{M}_{\epsilon,j}^{(1)}*(\wt P_j\bm{h})}_{L^\infty} \\
&\le 2^{j_\epsilon}\sup_{0<j< j_\epsilon} \norm{\bm{M}_{\epsilon,j}^{(1)}}_{L^1}\,2^{j\alpha}\norm{\wt P_j\bm{h}}_{L^\infty} 
\le c\,\epsilon^{-1}\abs{\log\epsilon}\abs{\bm{h}}_{\dot B^\alpha_{\infty,\infty}}\\
\sup_{0<j< j_\epsilon} 2^{j\alpha}\norm{P_jT_{\wh{\bm{M}}_\epsilon^{(2)}}\bm{g}}_{L^\infty} &= \sup_{0<j< j_\epsilon} 2^{j\alpha}\norm{P_jT_{\wh{\bm{M}}_\epsilon^{(2)}}\wt P_j\bm{g}}_{L^\infty} \\
&= \sup_{0<j< j_\epsilon} 2^{j\alpha}\norm{\bm{M}_{\epsilon,j}^{(2)}*(\wt P_j\bm{g})}_{L^\infty} \\
&\le \sup_{0<j< j_\epsilon} \norm{\bm{M}_{\epsilon,j}^{(2)}}_{L^1}\,2^{j\alpha}\norm{\wt P_j\bm{g}}_{L^\infty} 
\le c \abs{\log\epsilon}^{-1} \abs{\bm{g}}_{\dot B^\alpha_{\infty,\infty}}\,.
\end{aligned}
\end{equation}
For high frequencies $j\ge j_\epsilon$, we have
\begin{equation}\label{eq:Meps_highmode}
\begin{aligned}
\sup_{j\ge j_\epsilon} 2^{j(1+\alpha)}\norm{P_jT_{\wh{\bm{M}}_\epsilon^{(1)}}\bm{h}}_{L^\infty} &\le 
\sup_{j\ge j_\epsilon} 2^{j(1+\alpha)}\norm{\bm{M}_{\epsilon,j}^{(1)}}_{L^1}\norm{\wt P_j\bm{h}}_{L^\infty}  \\
&\le c\,\epsilon^{-1}\sup_{j\ge j_\epsilon} 2^{j\alpha}\norm{\wt P_j\bm{h}}_{L^\infty} 
\le c\,\epsilon^{-1}\abs{\bm{h}}_{\dot B^\alpha_{\infty,\infty}}\\
\sup_{j\ge j_\epsilon} 2^{j\alpha}\norm{P_jT_{\wh{\bm{M}}_\epsilon^{(2)}}\bm{g}}_{L^\infty} &\le 
\sup_{j\ge j_\epsilon} 2^{j\alpha}\norm{\bm{M}_{\epsilon,j}^{(2)}}_{L^1}\norm{\wt P_j\bm{g}}_{L^\infty} \\
&\le c\,\epsilon\,\sup_{j\ge j_\epsilon} 2^{j(1+\alpha)}\norm{\wt P_j\bm{g}}_{L^\infty}
\le c\,\epsilon\abs{\bm{g}}_{\dot B^{1+\alpha}_{\infty,\infty}}\,.
\end{aligned}
\end{equation}
Combining the estimates \eqref{eq:Meps0mode}, \eqref{eq:Meps_lowmode}, and \eqref{eq:Meps_highmode} and using the norm definition \eqref{eq:besov}, we obtain Lemma \ref{lem:straight_Leps_Holder}.
\hfill\qedsymbol \\

\subsubsection{Proof of Lemma \ref{lem:Sinv_mapping} }
The mapping properties of the operator $\overline{\mc{A}}_\epsilon\P_{01}$ may be shown by similar means to the previous section. 
Recall the definition \eqref{eq:phij_def} of $\phi_j$ and the forms of the five distinct multipliers $m_{\epsilon,z}^z$, $m_{\epsilon,r}^z$, $m_{\epsilon,z}^x$, $m_{\epsilon,r}^x$, $m_{\epsilon,\theta}^x$ appearing in Corollary \ref{cor:Aeps} as the components of $\overline{\mc{A}}_\epsilon\P_{01}$. We define the scalar-valued Fourier multipliers 
\begin{align*}
\wh M_{\epsilon,z,j}^z(\xi) &= \phi_j(\xi) m_{\epsilon,z}^z(\xi)\,, \quad \wh M_{\epsilon,r,j}^z(\xi) = \phi_j(\xi) m_{\epsilon,r}^z(\xi)\,, \\
\wh M_{\epsilon,z,j}^x(\xi) &= \phi_j(\xi) m_{\epsilon,z}^x(\xi)\,, \quad \wh M_{\epsilon,r,j}^x(\xi) = \phi_j(\xi) m_{\epsilon,r}^x(\xi)\,, \quad \wh M_{\epsilon,\theta,j}^x(\xi) = \phi_j(\xi) m_{\epsilon,\theta}^x(\xi)\,.
\end{align*}
As in the proof of Lemma \ref{lem:straight_Leps_Holder}, we define $j_\epsilon = \frac{\abs{\log(2\pi\epsilon)}}{\log(2)}$. Using Lemmas \ref{lem:SLbounds_tang} and \ref{lem:SLbounds_norm}, we have that three of the multipliers $\wh M_{\epsilon,z,j}^z$, $\wh M_{\epsilon,r,j}^x$, and $\wh M_{\epsilon,\theta,j}^x$ each satisfy the same set of estimates, namely 
\begin{align*}
 \norm{\p_\xi^\ell\wh M_{\epsilon,z,j}^z}_{L^\infty} \,, 
 \norm{\p_\xi^\ell\wh M_{\epsilon,r,j}^x}_{L^\infty} \,,
\norm{\p_\xi^\ell\wh M_{\epsilon,\theta,j}^x}_{L^\infty} 
&\le \begin{cases}
c\,\epsilon \,2^{j(1-\ell)}\,, & j\ge j_\epsilon \\
c\, 2^{-j\ell}\,, & j<j_\epsilon\,,
\end{cases}
\quad
\ell=0,1,2\,.
 \end{align*}
These multipliers will comprise $\overline{\mc{A}}_\epsilon^\epsilon$, the leading order behavior (with respect to regularity) of the angle-averaged single layer operator $\overline{\mc{A}}_\epsilon\P_{01}$. The remaining multipliers  
$\wh M_{\epsilon,r,j}^z$ and $\wh M_{\epsilon,z,j}^x$ satisfy a different set of estimates:
 \begin{align*}
  \norm{\p_\xi^\ell\wh M_{\epsilon,r,j}^z}_{L^\infty}
  &\le 
\begin{cases}
c\,, & j\ge j_\epsilon\,, \;\ell=0 \\
c\,2^{-j}\,, & j\ge j_\epsilon\,, \;\ell=1,2 \\
c\, \epsilon^{-1}2^{-j(1+\ell)}\,, & j<j_\epsilon\,,\; \ell=0,1,2\,,
\end{cases}   \\
\norm{\p_\xi^\ell\wh M_{\epsilon,z,j}^x}_{L^\infty}
  &\le 
\begin{cases}
c\,2^{-j\ell}\,, & j\ge j_\epsilon \\
c\, \epsilon\abs{\log\epsilon}2^{j(1-\ell)}\,, & j<j_\epsilon\,.
\end{cases} 
 \end{align*}
These multipliers will make up the operator $\overline{\mc{A}}_\epsilon^+$, which has `bad' $\epsilon$-dependence but is smoother.

Using the scalar-valued version of Lemma \ref{lem:besov} (see \cite[Lemma 2.1]{laplace}), we may obtain the following physical space estimates for the inverse Fourier transforms of each $\wh M_{\epsilon,j}$: 
\begin{equation}\label{eq:M_L1ests}
\begin{aligned}
 \norm{M_{\epsilon,z,j}^z}_{L^1} \,, 
 \norm{M_{\epsilon,r,j}^x}_{L^1} \,,
\norm{M_{\epsilon,\theta,j}^x}_{L^1} 
&\le \begin{cases}
c\,\epsilon \,2^j\,, & j\ge j_\epsilon \\
c\,, & j<j_\epsilon\,,
\end{cases} \\
\norm{M_{\epsilon,r,j}^z}_{L^1}
  &\le 
\begin{cases}
c\,2^{-j/2}\,, & j\ge j_\epsilon \\
c\,\epsilon^{-1}2^{-j}\,, & j<j_\epsilon\,,
\end{cases}   \\
\norm{M_{\epsilon,z,j}^x}_{L^1}
  &\le 
\begin{cases}
c\,, & j\ge j_\epsilon \\
c\, \epsilon\abs{\log\epsilon}2^j\,, & j<j_\epsilon\,.
\end{cases} 
 \end{aligned}
 \end{equation}

It will now be useful to further decompose the projection operator $\P_{01}$ as follows. Given $\bm{g}(s,\theta)$ defined along $\mc{C}_\epsilon$, we may consider $\P_{01}\bm{g}$ as in \eqref{eq:P01_def} and we may further define the following projections:
\begin{align*}
\P_0^z \,\bm{g} &= g_0^z\be_z\,, \qquad \P_0^r \bm{g} = g_0^r\be_r\,, \\
\P_1^z\,\bm{g}&=g_{\rm c,1}^z\cos\theta\be_z + g_{\rm s,1}^z\sin\theta\be_z \,,\\
\P_1^r\,\bm{g} &= g_{\rm c,1}^r\cos\theta \be_r + g_{\rm s,1}^r\sin\theta \be_r\,, \\
\P_1^\theta\,\bm{g} &= g_{\rm s,1}^\theta\sin\theta \be_\theta + g_{\rm c,1}^\theta\cos\theta \be_\theta\,.
\end{align*}
We then decompose $\overline{\mc{A}}_\epsilon \P_{01}$ as $\overline{\mc{A}}_\epsilon \P_{01}=\overline{\mc{A}}_\epsilon^\epsilon+\overline{\mc{A}}_\epsilon^+$where
\begin{align*}
\overline{\mc{A}}_\epsilon^\epsilon &= \overline{\mc{A}}_\epsilon(\P_0^z+\P_1^r+\P_1^\theta)\,, 
\qquad \overline{\mc{A}}_\epsilon^+ = \overline{\mc{A}}_\epsilon(\P_0^r + \P_1^z)\,.
\end{align*}

For $\bm{g}\in C_s^{1,\alpha}(\Gamma_\epsilon)$, we use the dyadic partition of unity \eqref{eq:phij_def} to write 
\begin{equation}\label{eq:Aeps_components}
\begin{aligned}
\overline{\mc{A}}_\epsilon^\epsilon\bm{g} &=  T_{M_{\epsilon,z,j}^z}\P_0^z\bm{g}+ T_{M_{\epsilon,r,j}^x}\P_1^r\bm{g} 
+ T_{M_{\epsilon,\theta,j}^x}\P_1^\theta\bm{g} \\
&= \sum_j P_j\big(T_{M_{\epsilon,z,j}^z}\P_0^z\bm{g}+ T_{M_{\epsilon,r,j}^x}\P_1^r\bm{g} 
+ T_{M_{\epsilon,\theta,j}^x}\P_1^\theta\bm{g} \big)\,, \\
\overline{\mc{A}}_\epsilon^+\bm{g} &=  T_{M_{\epsilon,r,j}^z}\P_0^r\bm{g}+ T_{M_{\epsilon,z,j}^x}\P_1^z\bm{g}  \\
&= \sum_j P_j\big(T_{M_{\epsilon,r,j}^z}\P_0^r\bm{g}+ T_{M_{\epsilon,z,j}^x}\P_1^z\bm{g} \big)\,.
\end{aligned}
\end{equation}
We may now obtain bounds for $\overline{\mc{A}}_\epsilon^\epsilon\bm{g}$ and $\overline{\mc{A}}_\epsilon^+\bm{g}$. For each multiplier $M$, we have $P_{\le 0}T_M \bm{g}=P_0T_M\bm{g}$, so using the $L^1$ bounds \eqref{eq:M_L1ests} we may estimate each of the components of $\overline{\mc{A}}_\epsilon^\epsilon$ as 
\begin{align*}
\norm{P_{\le 0}T_{M_{\epsilon,z,j}^z}\P_0^z\bm{g}}_{L^\infty} 
&= \norm{M_{\epsilon,z,0}^z*(\P_0^z\bm{g})}_{L^\infty} 
\le c\norm{M_{\epsilon,z,0}^z}_{L^1}\norm{\bm{g}}_{L^\infty} 
\le c\norm{\bm{g}}_{L^\infty} \,,\\
\norm{P_{\le 0}T_{M_{\epsilon,r,j}^x}\P_1^r\bm{g}}_{L^\infty} &\le c\norm{\bm{g}}_{L^\infty} \,, \qquad 
\norm{P_{\le 0}T_{M_{\epsilon,\theta,j}^x}\P_1^\theta\bm{g}}_{L^\infty} \le c\norm{\bm{g}}_{L^\infty} \,.
\end{align*}
Here and throughout, the convolution $*$ is with respect to $s$ only. The components of $\overline{\mc{A}}_\epsilon^+$ may be similarly estimated as 
\begin{align*}
\norm{P_{\le 0}T_{M_{\epsilon,r,j}^z}\P_0^r\bm{g}}_{L^\infty} 
&= \norm{M_{\epsilon,r,0}^z*(\P_0^r\bm{g})}_{L^\infty} 
\le c\norm{M_{\epsilon,r,0}^z}_{L^1}\norm{\bm{g}}_{L^\infty} 
\le c\,\epsilon^{-1}\norm{\bm{g}}_{L^\infty} \,,\\
\norm{P_{\le 0}T_{M_{\epsilon,z,j}^x}\P_1^z\bm{g}}_{L^\infty} 
&= \norm{M_{\epsilon,z,0}^x*(\P_1^z\bm{g})}_{L^\infty} 
\le c\norm{M_{\epsilon,z,0}^x}_{L^1}\norm{\bm{g}}_{L^\infty} 
\le c\,\epsilon\abs{\log\epsilon}\norm{\bm{g}}_{L^\infty} \,.
\end{align*}

We next bound the low frequencies $0<j< j_\epsilon$. As in the proof of Lemma \ref{lem:straight_Leps_Holder}, we will use the notation $\wt P_j\bm{g}=(P_{j-1}+P_j+P_{j+1})\bm{g}$ and again note that $P_jT_m\bm{g}=P_jT_m\wt P_j\bm{g}$.

Using \eqref{eq:M_L1ests}, we may estimate the first component of $\overline{\mc{A}}_\epsilon^\epsilon$ as
\begin{align*}
\sup_{0<j< j_\epsilon} 2^{j\alpha}\norm{P_j T_{M_{\epsilon,z,j}^z}\P_0^z\bm{g}}_{L^\infty} 
&\le \sup_{0<j< j_\epsilon} 2^{j\alpha}\norm{P_j T_{M_{\epsilon,z,j}^z}\wt P_j(\P_0^z\bm{g})}_{L^\infty} \\
&= \sup_{0<j< j_\epsilon} 2^{j\alpha}\norm{M_{\epsilon,z,j}^z*(\wt P_j\P_0^z\bm{g})}_{L^\infty} \\
&\le \sup_{0<j< j_\epsilon} 2^{j\alpha}\norm{M_{\epsilon,z,j}^z}_{L^1}\norm{\wt P_j\P_0^z\bm{g}}_{L^\infty}\\
&\le c\sup_{0<j< j_\epsilon} 2^{j\alpha}\norm{\wt P_j\P_0^z\bm{g}}_{L^\infty}
\le c\,\abs{\bm{g}}_{(\dot B^\alpha_{\infty,\infty})_s} \,.
\end{align*}
Likewise, the remaining two terms of $\overline{\mc{A}}_\epsilon^\epsilon$ satisfy
\begin{align*}
\sup_{0<j< j_\epsilon} 2^{j\alpha}\norm{P_j T_{M_{\epsilon,r,j}^x}\P_1^r\bm{g}}_{L^\infty} 
\le c\,\abs{\bm{g}}_{(\dot B^\alpha_{\infty,\infty})_s}\,,
\qquad  
\sup_{0<j< j_\epsilon} 2^{j\alpha}\norm{P_j T_{M_{\epsilon,\theta,j}^x}\P_1^\theta\bm{g}}_{L^\infty} 
\le c\,\abs{\bm{g}}_{(\dot B^\alpha_{\infty,\infty})_s}\,.
\end{align*}
In addition, the two components of $\overline{\mc{A}}_\epsilon^+$ satisfy
\begin{align*}
\sup_{0<j< j_\epsilon} 2^{j(3/2+\alpha)}\norm{P_j T_{M_{\epsilon,r,j}^z}\P_0^r\bm{g}}_{L^\infty} 
&\le \sup_{0<j< j_\epsilon} 2^{j(3/2+\alpha)}\norm{M_{\epsilon,r,j}^z}_{L^1}\norm{\wt P_j\P_0^r\bm{g}}_{L^\infty}\\
&\le c\,\epsilon^{-1}\sup_{0<j< j_\epsilon} 2^{j(1/2+\alpha)}\norm{\wt P_j\P_0^r\bm{g}}_{L^\infty}
\le c\,\epsilon^{-1}\abs{\bm{g}}_{(\dot B^{1/2+\alpha}_{\infty,\infty})_s} \,,\\
\sup_{0<j< j_\epsilon} 2^{j(1+\alpha)}\norm{P_j T_{M_{\epsilon,z,j}^x}\P_1^z\bm{g}}_{L^\infty} 
&\le \sup_{0<j< j_\epsilon} 2^{j(1+\alpha)}\norm{M_{\epsilon,z,j}^x}_{L^1}\norm{\wt P_j\P_1^z\bm{g}}_{L^\infty}\\
&\le c\,\epsilon\abs{\log\epsilon}\sup_{0<j< j_\epsilon} 2^{j(2+\alpha)}\norm{\wt P_j\P_1^z\bm{g}}_{L^\infty}\\
&\le c\,\epsilon \,2^{j_\epsilon}\abs{\log\epsilon}\sup_{0<j< j_\epsilon} 2^{j(1+\alpha)}\norm{\wt P_j\P_1^z\bm{g}}_{L^\infty}
\le c\abs{\log\epsilon}\abs{\bm{g}}_{(\dot B^{1+\alpha}_{\infty,\infty})_s} \,.
\end{align*}
Here we have use that $2^{j_\epsilon}=\frac{1}{2\pi\epsilon}$.

Finally, for high frequencies $j\ge j_\epsilon$, we may use the $L^1$ estimates \eqref{eq:M_L1ests} to show that the components of $\overline{\mc{A}}_\epsilon^\epsilon$ each satisfy
\begin{align*}
\sup_{j\ge j_\epsilon} 2^{j\alpha}\norm{P_j T_{M_{\epsilon,z,j}^z}\P_0^z\bm{g}}_{L^\infty} 
&\le \sup_{j\ge j_\epsilon} 2^{j\alpha}\norm{P_j T_{M_{\epsilon,z,j}^z}\wt P_j(\P_0^z\bm{g})}_{L^\infty} \\
&= \sup_{j\ge j_\epsilon} 2^{j\alpha}\norm{M_{\epsilon,z,j}^z*(\wt P_j\P_0^z\bm{g})}_{L^\infty} \\
&\le \sup_{j\ge j_\epsilon} 2^{j\alpha}\norm{M_{\epsilon,z,j}^z}_{L^1}\norm{\wt P_j\P_0^z\bm{g}}_{L^\infty}\\
&\le c\,\epsilon\sup_{j\ge j_\epsilon} 2^{j(1+\alpha)}\norm{\wt P_j\P_0^z\bm{g}}_{L^\infty}
\le c\,\epsilon\abs{\bm{g}}_{(\dot B^{1+\alpha}_{\infty,\infty})_s} \,,\\
\sup_{j\ge j_\epsilon} 2^{j\alpha}\norm{P_j T_{M_{\epsilon,r,j}^x}\P_1^r\bm{g}}_{L^\infty} 
&\le c\,\epsilon\abs{\bm{g}}_{(\dot B^{1+\alpha}_{\infty,\infty})_s}\,,
\qquad  
\sup_{j\ge j_\epsilon} 2^{j\alpha}\norm{P_j T_{M_{\epsilon,\theta,j}^x}\P_1^\theta\bm{g}}_{L^\infty} 
\le c\,\epsilon\abs{\bm{g}}_{(\dot B^{1+\alpha}_{\infty,\infty})_s}\,.
\end{align*}
For $\overline{\mc{A}}_\epsilon^+$, we have
\begin{align*}
\sup_{j\ge j_\epsilon} 2^{j(3/2+\alpha)}\norm{P_j T_{M_{\epsilon,r,j}^z}\P_0^r\bm{g}}_{L^\infty} 
&\le \sup_{j\ge j_\epsilon} 2^{j(3/2+\alpha)}\norm{M_{\epsilon,r,j}^z}_{L^1}\norm{\wt P_j\P_0^r\bm{g}}_{L^\infty}\\
&\le c\sup_{j\ge j_\epsilon} 2^{j(1+\alpha)}\norm{\wt P_j\P_0^r\bm{g}}_{L^\infty}
\le c\,\abs{\bm{g}}_{(\dot B^{1+\alpha}_{\infty,\infty})_s} \,,\\
\sup_{j\ge j_\epsilon} 2^{j(1+\alpha)}\norm{P_j T_{M_{\epsilon,z,j}^x}\P_1^z\bm{g}}_{L^\infty} 
&\le \sup_{j\ge j_\epsilon} 2^{j(1+\alpha)}\norm{M_{\epsilon,z,j}^x}_{L^1}\norm{\wt P_j\P_1^z\bm{g}}_{L^\infty}\\
&\le c\sup_{j\ge j_\epsilon} 2^{j(1+\alpha)}\norm{\wt P_j\P_1^z\bm{g}}_{L^\infty}
\le c\,\abs{\bm{g}}_{(\dot B^{1+\alpha}_{\infty,\infty})_s} \,.
\end{align*}

Returning to the definitions \eqref{eq:Aeps_components} of $\overline{\mc{A}}_\epsilon^\epsilon$ and $\overline{\mc{A}}_\epsilon^+$, we may combine the above estimates and use the Besov characterization \eqref{eq:besov} of $C^{0,\alpha}_s$ to obtain 
\begin{align*}
\norm{\overline{\mc{A}}_\epsilon^\epsilon\bm{g}}_{C^{0,\alpha}(\T)} &\le \norm{T_{M_{\epsilon,z,j}^z}\P_0^z\bm{g}}_{C^{0,\alpha}(\T)}+ \norm{T_{M_{\epsilon,r,j}^x}\P_1^r\bm{g}}_{C^{0,\alpha}(\T)} 
+ \norm{T_{M_{\epsilon,\theta,j}^x}\P_1^\theta\bm{g}}_{C^{0,\alpha}(\T)} \\
&\le c\norm{\bm{g}}_{C^{0,\alpha}_s(\mc{C}_\epsilon)}+c\epsilon\norm{\bm{g}}_{C^{1,\alpha}_s(\mc{C}_\epsilon)}\,, \\
\norm{\overline{\mc{A}}_\epsilon^+\bm{g}}_{C^{1,\alpha}(\T)} &\le  \norm{T_{M_{\epsilon,r,j}^z}\P_0^r\bm{g}}_{C^{1,\alpha}(\T)}+ \norm{T_{M_{\epsilon,z,j}^x}\P_1^z\bm{g}}_{C^{1,\alpha}(\T)}  \le c\,\epsilon^{-1}\norm{\bm{g}}_{C^{1,\alpha}_s(\mc{C}_\epsilon)}\,.
\end{align*}
\hfill \qedsymbol

\subsection{Proofs of semigroup lemmas}\label{subsec:semigroup}
We may additionally use the Littlewood-Paley decomposition \eqref{eq:littlewoodp} to show the estimates \eqref{eq:semigroup} and \eqref{eq:max_semi} for the semigroup $e^{-t\overline{\mc{L}}_\epsilon\p_s^4}$ stated in Lemmas \ref{lem:semigroup} and \ref{lem:max_reg}. 

\begin{proof}[Proof of Lemma \ref{lem:semigroup}]
We will require bounds for the Littlewood--Paley projection $P_j e^{-t\overline{\mc{L}}_\epsilon\p_s^4}\bm{V}=e^{-t\overline{\mc{L}}_\epsilon\p_s^4}P_j\bm{V}$ for each $j$. Using the notation of section \ref{subsubsec:Meps}, we first recall the definition of the matrix-valued Fourier multiplier
\begin{align*}
\wh{\bm{M}}_\epsilon^{(1)}(k) = m_{\epsilon,{\rm t}}(k) \be_z\otimes\be_z + 
m_{\epsilon,{\rm n}}(k) (\be_x\otimes\be_x+\be_y\otimes\be_y)\,.
\end{align*}
At each time $t$, we may write the semigroup $e^{-t\overline{\mc{L}}_\epsilon\p_s^4}$ as a Fourier multiplier in $s$:
\begin{align*}
\mc{F}[e^{-t\overline{\mc{L}}_\epsilon\p_s^4}\bm{V}](k) &= e^{-t\,k^4\wh{\bm{M}}_\epsilon^{(1)}(k)}\mc{F}[\bm{V}](k)\,.
\end{align*}
Recalling the definition \eqref{eq:phij_def} of $\phi_j$, at each $t$ we consider \begin{align*}
\wh{\bm{L}}_{\epsilon,j}(\xi,t) = e^{-t\,\xi^4\wh{\bm{M}}_{\epsilon,j}(\xi)}\,, \qquad \wh{\bm{M}}_{\epsilon,j} = (\phi_{j-1}(\xi)+ \phi_j(\xi) +\phi_{j+1}(\xi)) \wh{\bm{M}}_{\epsilon}^{(1)}(\xi)\,.
\end{align*}
Using the growth bounds of Lemmas \ref{lem:mt_bounds} and \ref{lem:mn_bounds}, for $j_\epsilon=\frac{\abs{\log(2\pi\epsilon)}}{\log(2)}$, we have 
\begin{align*}
\abs{\wh{\bm{L}}_{\epsilon,j}} &\le \begin{cases}
 e^{-c\,t\epsilon^{-1}2^{3j}} \,, & j\ge j_\epsilon\\
 e^{-c\,t\abs{\log\epsilon}2^{4j}} \,, & j<j_\epsilon\,,
\end{cases} \\
\abs{\p_\xi^2\wh{\bm{L}}_{\epsilon,j}} &\le \begin{cases}
c \big(t\epsilon^{-1}2^{j}+(t\epsilon^{-1}2^{2j})^2 \big)e^{-c\,t\epsilon^{-1}2^{3j}} \,, & j\ge j_\epsilon\\
c \big(t\abs{\log\epsilon}2^{2j}+ (t\abs{\log\epsilon} 2^{3j})^2 \big) e^{-c\,t\abs{\log\epsilon}2^{4j}} \,, & j<j_\epsilon\,.
\end{cases}
\end{align*}
Using Lemma \ref{lem:besov}, we thus obtain 
\begin{align*}
\norm{\bm{L}_{\epsilon,j}(\cdot,t)}_{L^1} \le 
\begin{cases}
c\,\big(t \epsilon^{-1} 2^{3j}(1+t \epsilon^{-1} 2^{3j})\big)^{1/2}e^{-c\,t\epsilon^{-1}2^{3j}} \,, & j\ge j_\epsilon \\
c\,\big(t\abs{\log\epsilon} 2^{4j}(1+t\abs{\log\epsilon} 2^{4j})\big)^{1/2} e^{-c\,t\abs{\log\epsilon}2^{4j}} \,, & j<j_\epsilon\,.
\end{cases}
\end{align*}

For $\bm{V}\in C^{m,\gamma}(\T)$, we use the definitions \eqref{eq:T_m_def}, \eqref{eq:littlewoodp} of $T_{\wh{\bm{M}}}$ and $P_j$ to write 
\begin{align*}
e^{-t\overline{\mc{L}}_\epsilon\p_s^4}\bm{V} &= 
\sum_{j} T_{\wh{\bm{L}}_{\epsilon,j}}P_j\bm{V}
\,.
\end{align*}
We will let $\omega=n+\alpha$ and $\nu=m+\gamma$ and use the characterization \eqref{eq:besov} of $C^{n,\alpha}(\T)$ and $C^{m,\gamma}(\T)$. Noting that $P_{\le 0}=P_0$ on $\T$, we may first estimate the lowest part of the norm \eqref{eq:besov} as 
\begin{equation}\label{eq:Vest1}
\begin{aligned}
\norm{T_{\wh{\bm{L}}_{\epsilon,0}}P_{\le 0}\bm{V}}_{L^\infty} 
&= \norm{\bm{L}_{\epsilon,0}*(P_0\bm{V})}_{L^\infty}
\le \norm{\bm{L}_{\epsilon,0}}_{L^1}\norm{\bm{V}}_{L^\infty}\\
&\le c\,\big(t\abs{\log\epsilon}(1+t\abs{\log\epsilon})\big)^{1/2} e^{-c\,t\abs{\log\epsilon}} \norm{\bm{V}}_{L^\infty}
\le c\,\norm{\bm{V}}_{L^\infty}\,.
\end{aligned}
\end{equation}
For the low frequencies $0<j<j_\epsilon$, we have 
\begin{equation}\label{eq:Vest2}
\begin{aligned}
&\sup_{0<j<j_\epsilon} 2^{j\omega}\norm{T_{\wh{\bm{L}}_{\epsilon,j}}P_j\bm{V}}_{L^\infty} = \sup_{0<j<j_\epsilon} 2^{j\omega}\norm{\bm{L}_{\epsilon,j}*(P_j\bm{V})}_{L^\infty} \\
&\qquad\le \sup_{0<j<j_\epsilon} 2^{j(\omega-\nu)}\norm{\bm{L}_{\epsilon,j}}_{L^1}2^{j\nu}\norm{P_j\bm{V}}_{L^\infty} \\
&\qquad \le \sup_{0<j<j_\epsilon} 2^{j(\omega-\nu)}\bigg(\big(t\abs{\log\epsilon} 2^{4j}(1+t\abs{\log\epsilon} 2^{4j})\big)^{1/2}e^{-c\,t\abs{\log\epsilon}2^{4j}}\bigg)\,2^{j\nu}\norm{P_j\bm{V}}_{L^\infty} \\
&\qquad \le c\, t^{-\frac{(\omega-\nu)}{4}}\abs{\log\epsilon}^{-\frac{(\omega-\nu)}{4}}\abs{\bm{V}}_{\dot B^\nu_{\infty,\infty}}\,.
\end{aligned}
\end{equation}
For the high frequencies $j\ge j_\epsilon$, we may similarly estimate 
\begin{equation}\label{eq:Vest3}
\begin{aligned}
\sup_{j\ge j_\epsilon} 2^{j\omega}\norm{T_{\wh{\bm{L}}_{\epsilon,j}}P_j\bm{V}}_{L^\infty} 
&\le \sup_{j\ge j_\epsilon} 2^{j\omega}\norm{\bm{L}_{\epsilon,j}}_{L^1}\norm{P_j\bm{V}}_{L^\infty} \\
&\le \sup_{j\ge j_\epsilon} 2^{j(\omega-\nu)}
\bigg(\big(t \epsilon^{-1} 2^{3j}(1+t \epsilon^{-1} 2^{3j})\big)^{1/2}e^{-c\,t\epsilon^{-1}2^{3j}}\bigg)
 2^{j\nu}\norm{P_j\bm{V}}_{L^\infty} \\
 &\le c\,t^{-\frac{(\omega-\nu)}{3}}\epsilon^{\frac{\omega-\nu}{3}}
\abs{\bm{V}}_{\dot B^\nu_{\infty,\infty}} \,.
\end{aligned}
\end{equation}
Combining the estimates \eqref{eq:Vest1}, \eqref{eq:Vest2}, and \eqref{eq:Vest3} with the characterization \eqref{eq:besov} of $C^{n,\alpha}$ and $C^{m,\gamma}$, we obtain Lemma \ref{lem:semigroup}.
\end{proof}

We next show Lemma \ref{lem:max_reg} regarding a maximal regularity estimate for the semigroup $e^{-t\overline{\mc{L}}_\epsilon\p_s^4}$.
\begin{proof}[Proof of Lemma \ref{lem:max_reg}]
We first note that, using the characterization \eqref{eq:besov} of $C^{k,\alpha}(\T)$ and the definition \eqref{eq:Ykspace_def} of the spaces $\mc{Y}_k$, we may write $\norm{\bm{g}}_{\mc{Y}_1}$ as 
\begin{align*}
\norm{\bm{g}}_{\mc{Y}_1} &= \esssup_{t\in[0,T]}\;\sup\bigg(\norm{P_{\le 0}\bm{g}(\cdot,t)}_{L^\infty}\,,\,\sup_{j>0}\,2^{j(1+\alpha)}\norm{P_j\bm{g}(\cdot,t)}_{L^\infty}\bigg)\\
&= \sup \bigg(\esssup_{t\in[0,T]}\norm{P_{\le 0}\bm{g}(\cdot,t)}_{L^\infty}\,,\, \sup_{j>0}\,2^{j(1+\alpha)}\;\esssup_{t\in[0,T]}\norm{P_j\bm{g}(\cdot,t)}_{L^\infty}\bigg)\,.
\end{align*}
Again taking $j_\epsilon=\frac{\abs{\log(2\pi\epsilon)}}{\log(2)}$ and using Lemmas \ref{lem:mt_bounds} and \ref{lem:mn_bounds}, for $j\ge j_\epsilon$ we have 
\begin{align*}
\esssup_{t\in[0,T]}\;& 2^{j(4+\alpha)}\norm{P_j\bm{h}(\cdot,t)}_{L^\infty(\T)} \\
&= \esssup_{t\in[0,T]}\;2^{j(4+\alpha)}\norm{\int_0^t e^{-(t-t')\overline{\mc{L}}_\epsilon\p_s^4}P_j\bm{g}(\cdot,t')\,dt'}_{L^\infty(\T)} \\
& \le c\,\esssup_{t\in[0,T]}\;2^{j(4+\alpha)}\int_0^t e^{-c\,(t-t')\epsilon^{-1}2^{3j}}\norm{P_j\bm{g}(\cdot,t')}_{L^\infty(\T)}\,dt' \\
&\le c\,\bigg(\esssup_{t\in[0,T]}\;2^{3j}\int_0^t e^{-c\,(t-t')\epsilon^{-1}2^{3j}}\,dt'\bigg) \bigg(\esssup_{t\in[0,T]}2^{j(1+\alpha)}\norm{P_j\bm{g}(\cdot,t)}_{L^\infty(\T)} \bigg) \\
&\le c\,\epsilon\norm{\bm{g}}_{\mc{Y}_1}\,.
\end{align*}
In the first line we have used that $P_j$ commutes with the semigroup $e^{-t\overline{\mc{L}}_\epsilon\p_s^4}$.

For low modes with $0<j<j_\epsilon$, we have
\begin{align*}
\esssup_{t\in[0,T]}&\;2^{j(4+\alpha)}\norm{P_j\bm{h}(\cdot,t)}_{L^\infty(\T)} 
\le c\,\esssup_{t\in[0,T]}\;2^{j(4+\alpha)}\int_0^t e^{-c\,(t-t')\abs{\log\epsilon}\,2^{4j}}\norm{P_j\bm{g}(\cdot,t')}_{L^\infty(\T)}\,dt' \\
&\le c\,\bigg(\esssup_{t\in[0,T]}\;2^{3j}\int_0^t e^{-c\,(t-t')\abs{\log\epsilon}\,2^{4j}}\,dt'\bigg) \bigg(\esssup_{t\in[0,T]}2^{j(1+\alpha)}\norm{P_j\bm{g}(\cdot,t)}_{L^\infty(\T)} \bigg)\\
&\le c\,\abs{\log\epsilon}^{-3/4}\bigg(\esssup_{t\in[0,T]}\,\int_0^t (t-t')^{-3/4}\,dt'\bigg) \norm{\bm{g}}_{\mc{Y}_1}
\le c\, T^{1/4}\abs{\log\epsilon}^{-3/4}\norm{\bm{g}}_{\mc{Y}_1}\,.
\end{align*}
Finally, for the zero-mode projection $P_{\le 0}=P_0$, we have 
\begin{align*}
\esssup_{t\in[0,T]}\norm{P_0\bm{h}(\cdot,t)}_{L^\infty(\T)} 
&\le c\,\esssup_{t\in[0,T]}\int_0^t\norm{P_0\bm{g}(\cdot,t')}_{L^\infty(\T)}\,dt' %
\le c\,T\norm{\bm{g}}_{\mc{Y}_1}\,.
\end{align*}
In total, we obtain Lemma \ref{lem:max_reg}.
\end{proof}


\section{Bounds for single and double layer remainders}\label{sec:SandD_remainders}
\subsection{Setup and tools}\label{subsec:setup}
This section is devoted to the proofs of Lemmas \ref{lem:single_layer}, \ref{lem:double_layer}, and \ref{lem:single_const_in_s} regarding the mapping properties of the single and double layer remainder terms. We first require a more detailed understanding of the kernels \eqref{eq:stokeslet} and \eqref{eq:stresslet} along $\Gamma_\epsilon$ as functions of the surface parameters $s$ and $\theta$.

Throughout, we will be proving bounds for a single filament $\Sigma_\epsilon$ with centerline $\X(s)$, as well as corresponding Lipschitz estimates for two nearby filaments $\Sigma_\epsilon^{(a)}$ and $\Sigma_\epsilon^{(b)}$ with centerlines $\X^{(a)}(s)$ and $\X^{(b)}(s)$ satisfying Lemma \ref{lem:XaXb_C2beta}. When comparing two nearby filaments, we will use the superscripts $(a)$ and $(b)$ to denote quantities belonging to filament $\Sigma_\epsilon^{(a)}$ and $\Sigma_\epsilon^{(b)}$, respectively.

Given the single and double layer operators for an arbitrarily curved filament, we will be extracting the corresponding single and double layer operators about the straight filament $\mc{C}_\epsilon$. We will continue to denote quantities defined along $\mc{C}_\epsilon$ using overline notation. For $\overline{\bx},\overline{\bx}'\in \mc{C}_\epsilon$, we define 
\begin{equation}\label{eq:barR0}
\barR := \overline{\bx}-\overline{\bx}' = (s-s')\be_z+\epsilon(\be_r(\theta)-\be_r(\theta')) = \textstyle (s-s')\be_z+2\epsilon\sin(\frac{\theta-\theta'}{2})\be_\theta(\frac{\theta+\theta'}{2})\,.
\end{equation}
Defining $\bars:=s-s'$ and $\bartheta:=\theta-\theta'$, we may work in terms of $s,\bars,\theta,\bartheta$ rather than $s,s',\theta,\theta'$, and write
\begin{equation}\label{eq:barR}
\barR = \textstyle \bars\be_z+2\epsilon\sin(\frac{\bartheta}{2})\be_\theta(\theta-\frac{\bartheta}{2})\,, \qquad \bars=s-s'\,,\; \bartheta=\theta-\theta'\,.
\end{equation}
Noting that $\overline{\bm{n}}(\overline{\bx})=\be_r(\theta)$ and $\overline{\bm{n}}(\overline{\bx}')=\be_r(\theta-\bartheta)$, we may write
\begin{align*}
|\barR(\bars,\bartheta)|^2 &=\textstyle \bars^2+4\epsilon^2\sin^2(\frac{\bartheta}{2})\,,\\
\barR\cdot\overline{\bm{n}} &= \textstyle - \barR\cdot\overline{\bm{n}}' = 2\epsilon\sin^2(\frac{\bartheta}{2})\,.
\end{align*}

Along a curved filament $\Sigma_\epsilon$, we similarly consider
\begin{align*}
\bR:=\bx-\bx'
\end{align*}
as a function of $(s,\bars,\theta,\bartheta)$. Using the orthonormal frame ODEs \eqref{eq:frame}, we note the following expansions with respect to arclength $s$:
\begin{equation}\label{eq:s_expand}
\begin{aligned}
\X(s)-\X(s-\bars) &= \bars\be_{\rm t}(s) - \bars^2\bm{Q}_{\rm t}(s,\bars) \\
\be_{\rm t}(s) - \be_{\rm t}(s-\bars) &= \bars\bm{Q}_{\rm t}(s,\bars)\\
\be_{\rm n_1}(s)-\be_{\rm n_1}(s-\bars) &= \bars(-\kappa_1(s)\be_{\rm t}(s) + \kappa_3\be_{\rm n_2}(s)) + \bars^2\bm{Q}_{\rm n_1}(s,\bars) \\
\be_{\rm n_2}(s)-\be_{\rm n_2}(s-\bars) &= \bars(-\kappa_2(s)\be_{\rm t}(s) - \kappa_3\be_{\rm n_1}(s)) + \bars^2\bm{Q}_{\rm n_2}(s,\bars) \,.
\end{aligned}
\end{equation}
Given two filaments with centerlines $\X^{(a)}$, $\X^{(b)}$ in $C^{2,\beta}(\T)$, each of the remainder terms $\bm{Q}_i$ in \eqref{eq:s_expand} satisfies 
\begin{equation}\label{eq:Qdiff_bds}
\norm{\bm{Q}_i^{(a)}\cdot\be_j^{(a)}-\bm{Q}_i^{(b)}\cdot\be_j^{(b)}}_Y 
\le c\,\sum_{\ell=1}^3\norm{\kappa_\ell^{(a)}-\kappa_\ell^{(b)}}_Y \,, \qquad i,j\in \{{\rm t},{\rm n_1},{\rm n_2}\}\,,
\end{equation}
where the norms $\norm{\cdot}_Y$ are defined for functions of both $(s,\theta)$ and $(\bars,\bartheta)\in \T\times 2\pi\T$ by any of the following: 
\begin{equation}\label{eq:Y_norms}
\begin{aligned}
\norm{Q}_{L^\infty}&:=\esssup_{s,\theta,\bars,\bartheta}\abs{Q(s,\theta,\bars,\bartheta)}, \quad
\norm{Q}_{C^{0,\beta}_1}:=\esssup_{\bars,\bartheta}\norm{Q(\cdot,\cdot,\bars,\bartheta)}_{C^{0,\beta}}\,,\\
\norm{Q}_{C^{0,\beta}_2}&:=\esssup_{s,\theta}\norm{Q(s,\theta,\cdot,\cdot)}_{C^{0,\beta}}\,.
\end{aligned}
\end{equation}
Note that for $Q=Q(s,\theta)$ we have $\norm{Q}_{C^{0,\beta}_1}=\norm{Q}_{C^{0,\beta}}$, and for $Q=Q(\bars,\bartheta)$ we have $\norm{Q}_{C^{0,\beta}_2}=\norm{Q}_{C^{0,\beta}}$.
For some combinations of $\bm{Q}_i\cdot\be_j$, we note the following more refined expansions:
\begin{equation}\label{eq:refined_Qdiff}
\begin{aligned}
\bm{Q}_{\rm t}(s,\bars)\cdot\be_{\rm t}(s)&=\bars Q_{\rm t}(s,\bars)\,, \\ 
\bm{Q}_{{\rm n}_j}(s,\bars)\cdot\be_{\rm t}(s) + \bm{Q}_{\rm t}(s,\bars)\cdot\be_{{\rm n}_j}(s) &= \bars Q_{{\rm tn}_j}(s,\bars)\,, \quad j=1,2\,; \\
\norm{Q_i^{(a)}-Q_i^{(b)}}_Y &\le c\,\sum_{\ell=1}^3\norm{\kappa_\ell^{(a)}-\kappa_\ell^{(b)}}_Y \,, \quad i\in \{{\rm t},{\rm tn_1},{\rm tn_2}\}\,.
\end{aligned}
\end{equation}
Using \eqref{eq:s_expand} and the definition \eqref{eq:er_etheta} of $\be_r(s,\theta)$ and $\be_\theta(s,\theta)$, we may write 
\begin{equation}\label{eq:QrQtheta}
\be_r(s,\theta)-\be_r(s-\bars,\theta) = \bars\bm{Q}_r(s,\bars,\theta)\,, \qquad
\be_\theta(s,\theta)-\be_\theta(s-\bars,\theta) = \bars\bm{Q}_\theta(s,\bars,\theta) \,,
\end{equation}
where, using \eqref{eq:Qdiff_bds} and \eqref{eq:refined_Qdiff}, $\bm{Q}_r$ and $\bm{Q}_\theta$ satisfy
\begin{equation}\label{eq:Qexpand}
\begin{aligned}
\be_{\rm t}(s)\cdot\bm{Q}_r(s,\bars,\theta) &= -\wh\kappa(s,\theta) + \bars Q_{0,1}(s,\theta,\bars)  \\
\be_{\rm t}(s)\cdot\bm{Q}_\theta(s,\theta,\bars)+\bm{Q}_{\rm t}(s,\bars)\cdot\be_\theta(s,\theta) &= \bars Q_{0,2}(s,\theta,\bars)\\
\textstyle \be_\theta(s,\theta-\frac{\bartheta}{2})\cdot\bm{Q}_r(s,\bars,\theta-\bartheta)&= \textstyle \kappa_3\cos(\frac{\bartheta}{2}) + \bars Q_{0,3}(s,\theta,\bars,\bartheta) \\
\abs{\bm{Q}_r(s,\theta,\bars)}^2&= \wh\kappa(s,\theta)^2+\kappa_3^2 +\bars Q_{0,4}(s,\theta,\bars) \\
\be_r(s,\theta)\cdot\bm{Q}_r(s,\theta,\bars) &= \bars Q_{0,5}(s,\theta,\bars)\,.
\end{aligned}
\end{equation}
For each $Q_{0,j}$ above, using the norms $\norm{\cdot}_Y$ defined in \eqref{eq:Y_norms}, we have 
\begin{equation}\label{eq:Q_0j_norms}
\norm{Q_{0,j}^{(a)}-Q_{0,j}^{(b)}}_Y \le c\,\epsilon^{-\beta}\sum_{\ell=1}^3\norm{\kappa_\ell^{(a)}-\kappa_\ell^{(b)}}_Y \,.
\end{equation}
Here the $\epsilon^{-\beta}$ arises due to the $\theta$-dependence of these terms along $\Gamma_\epsilon$, and we have $\beta=0$ for the $Y=L^\infty$ estimate. 
In addition, we note that the following identities hold along $\Gamma_\epsilon$:
\begin{equation}\label{eq:er_etheta_IDs}
\begin{aligned}
\be_r(s,\theta)-\be_r(s,\theta-\bartheta) &= \textstyle 2\sin(\frac{\bartheta}{2})\be_\theta(s,\theta-\frac{\bartheta}{2}) \,, \qquad
\textstyle\be_\theta(s,\theta-\frac{\bartheta}{2})\cdot\be_r(s,\theta) = \textstyle\sin(\frac{\bartheta}{2}) \\
\bm{Q}_r(s,\theta,\bars)-\bm{Q}_r(s,\theta-\bartheta,\bars) 
&= \textstyle 2\sin(\frac{\bartheta}{2})\bm{Q}_\theta(s,\theta-\frac{\bartheta}{2},\bars)\,.
\end{aligned}
\end{equation}
Using the above expansions, we may write (see \cite[Section 3]{laplace}): 
\begin{equation}\label{eq:curvedR}
\begin{aligned}
\bR(s,\theta,\bars,\bartheta) &= \X(s) - \X(s-\bars) + \epsilon\big(\be_r(s,\theta)-\be_r(s-\bars,\theta-\bartheta) \big) \\
&= \textstyle \bars\be_{\rm t}(s) + 2\epsilon\sin(\frac{\bartheta}{2})\be_\theta(s,\theta-\frac{\bartheta}{2})-\bars^2\bm{Q}_{\rm t}(s,\bars)  + \epsilon\bars\bm{Q}_r(s,\bars,\theta-\bartheta)\,.
\end{aligned}
\end{equation}
Recalling the form \eqref{eq:barR} of $\barR$, we have 
\begin{equation}\label{eq:Rsq}
\begin{aligned}
\abs{\bm{R}}^2 
&=\textstyle
|\barR|^2 + \epsilon\bars^2Q_{R,0}(s,\theta) + 2\kappa_3\epsilon^2\bars\sin(\bartheta) +\bars^4Q_{R,1}(s,\bars) \\
&\hspace{2cm} \textstyle + \epsilon \bars^3Q_{R,2}(s,\theta,\bars,\bartheta)+ \epsilon^2\bars^2\sin(\frac{\bartheta}{2})Q_{R,3}(s,\theta,\bars,\bartheta) \,,
\end{aligned}
\end{equation}
where $Q_{R,j}$ are given by
\begin{equation}\label{eq:Q_Rj}
\begin{aligned}
Q_{R,0}(s,\theta) &= -2\wh\kappa(s,\theta)+\epsilon\wh\kappa(s,\theta)^2+\epsilon\kappa_3^2\\
Q_{R,1}(s,\bars) &= 3Q_{\rm t}(s,\bars) + \abs{\bm{Q}_{\rm t}(s,\bars)}^2 \\
Q_{R,2}(s,\theta,\bars,\bartheta) &= \textstyle 2Q_{0,1}(s,\bars,\theta) +4\sin(\frac{\bartheta}{2})Q_{0,2}(s,\bars,\theta-\frac{\bartheta}{2}) \\
&\hspace{2cm}+ 2\bm{Q}_{\rm t}(s,\bars)\cdot\bm{Q}_r(s,\bars,\theta-\bartheta)  +\epsilon Q_{0,4}(s,\bars,\theta) \\
Q_{R,3}(s,\theta,\bars,\bartheta) &= \textstyle 4Q_{0,3}(s,\theta,\bars,\bartheta)  -4\bm{Q}_\theta(s,\bars,\theta-\frac{\bartheta}{2})\cdot\bm{Q}_r(s,\bars,\theta) +4\sin(\frac{\bartheta}{2})|\bm{Q}_\theta(s,\bars,\theta-\frac{\bartheta}{2})|^2 \,.
\end{aligned}
\end{equation}
Here we emphasize that $Q_{R,0}(s,\theta)$ does not depend on $\bars$ and $\bartheta$, as this will be used in the proof of Lemma \ref{lem:new_alpha_est} later on. Using the definition \eqref{eq:Y_norms} of the norms $\norm{\cdot}_Y$ and the bounds \eqref{eq:Q_0j_norms}, the remaining $Q_{R,j}$ can be seen to satisfy 
\begin{align*}
\norm{Q_{R,1}^{(a)}-Q_{R,1}^{(b)}}_Y &\le c(\kappa_{*,\beta}^{(a)},\kappa_{*,\beta}^{(b)})\sum_{\ell=1}^3\norm{\kappa_\ell^{(a)}-\kappa_\ell^{(b)}}_Y\\
\norm{Q_{R,j}^{(a)}-Q_{R,j}^{(b)}}_Y &\le \epsilon^{-\beta} \, c(\kappa_{*,\beta}^{(a)},\kappa_{*,\beta}^{(b)})\sum_{\ell=1}^3\norm{\kappa_\ell^{(a)}-\kappa_\ell^{(b)}}_Y \,, \quad j=2,3\,.
\end{align*}

For two filaments with centerlines $\X^{(a)}$, $\X^{(b)}$ in $C^{2,\beta}(\T)$, we have  
\begin{equation}\label{eq:Ra_minus_Rb}
\begin{aligned}
\bR^{(a)}-\bR^{(b)} &= \textstyle \bars\big(\be_{\rm t}^{(a)}(s)-\be_{\rm t}^{(b)}(s)\big) + 2\epsilon\sin(\frac{\bartheta}{2})\big(\be_\theta^{(a)}(s,\theta-\frac{\bartheta}{2})-\be_\theta^{(b)}(s,\theta-\frac{\bartheta}{2})\big)\\
&\quad -\bars^2\big(\bm{Q}_{\rm t}^{(a)}(s,\bars)-\bm{Q}_{\rm t}^{(b)}(s,\bars)\big)  + \epsilon\bars\big(\bm{Q}_r^{(a)}(s,\bars,\theta-\bartheta)-\bm{Q}_r^{(b)}(s,\bars,\theta-\bartheta)\big)\,.
\end{aligned}
\end{equation}
Furthermore, we may write the differences
\begin{equation}\label{eq:RaRb_diff}
\begin{aligned}
\frac{1}{\abs{\bR}}-\frac{1}{|\barR|} &= \frac{\epsilon\bars^2Q_{R,0} +\epsilon^2\bars\sin(\bartheta)\kappa_3 +\bars^4Q_{R,1} +\epsilon\bars^3 Q_{R,2}+ \epsilon^2 \bars^2\sin(\frac{\bartheta}{2})Q_{R,3}}{\abs{\bR}|\barR|(\abs{\bR}+|\barR|)}\\
\frac{1}{\abs{\bm{R}^{(a)}}}-\frac{1}{\abs{\bm{R}^{(b)}}} &= \frac{\epsilon\bars^2(Q_{R,0}^{(a)}-Q_{R,0}^{(b)}) +\epsilon^2\bars\sin(\bartheta)(\kappa_3^{(a)}-\kappa_3^{(b)}) }{\abs{\bR^{(a)}}\abs{\bR^{(b)}}(\abs{\bR^{(a)}}+\abs{\bR^{(b)}})} \\
&\quad +\frac{\bars^4(Q_{R,1}^{(a)}-Q_{R,1}^{(b)}) +\epsilon\bars^3 (Q_{R,2}^{(a)}-Q_{R,2}^{(b)})+ \epsilon^2 \bars^2\sin(\frac{\bartheta}{2})(Q_{R,3}^{(a)}-Q_{R,3}^{(b)})}{\abs{\bR^{(a)}}\abs{\bR^{(b)}}(\abs{\bR^{(a)}}+\abs{\bR^{(b)}})}
\,.
\end{aligned}
\end{equation}
We may use \eqref{eq:RaRb_diff} in expanding differences of higher powers $k\in \N$ as
\begin{equation}\label{eq:Rdiffk}
\begin{aligned}
\frac{1}{\abs{\bR}^k}-\frac{1}{|\barR|^k} = \bigg(\frac{1}{\abs{\bR}}-\frac{1}{|\barR|}\bigg)\sum_{\ell=0}^{k-1}\frac{1}{\abs{\bR}^\ell|\barR|^{k-1-\ell}}\,,\\
\frac{1}{\abs{\bR^{(a)}}^k}-\frac{1}{\abs{\bR^{(b)}}^k} = \bigg(\frac{1}{\abs{\bm{R}^{(a)}}}-\frac{1}{\abs{\bm{R}^{(b)}}}\bigg)\sum_{\ell=0}^{k-1}\frac{1}{\abs{\bR^{(a)}}^\ell\abs{\bR^{(b)}}^{k-1-\ell}}\,.
\end{aligned}
\end{equation}
In addition, we note the following expansions for $\bR$ in the direction of the normal vectors $\bm{n}(\bx')$ and $\bm{n}(\bx)$. Noting that $\bm{n}':=\bm{n}(\bx')=\be_r(s-\bars,\theta-\bartheta)$ and $\bm{n}=\bm{n}(\bx)=\be_r(s,\theta)$, we have
\begin{equation}\label{eq:RdotN}
\begin{aligned}
\bR\cdot\bm{n}' &= -\textstyle 2\epsilon\sin^2(\frac{\bartheta}{2})+ \bars^2Q_{\rm Rn'}(s,\bars,\theta-\bartheta) 
= \barR\cdot\overline{\bm{n}}' + \bars^2Q_{\rm Rn'}\,,  \\
\bR\cdot\bm{n} &= \textstyle 2\epsilon\sin^2(\frac{\bartheta}{2})+ \bars^2Q_{\rm Rn}(s,\bars,\theta-\bartheta) 
= \barR\cdot\overline{\bm{n}} + \bars^2Q_{\rm Rn}\,.
\end{aligned}
\end{equation}
Furthermore, using \eqref{eq:Qexpand}, we have
\begin{equation}\label{eq:Rdots}
\begin{aligned}
\bR\cdot\be_{\rm t}(s) &= \bars + \epsilon\bars\wh\kappa(s,\theta) + \bars^2Q_{\rm Rt}(s,\bars,\theta,\bartheta)
= \barR\cdot\be_z + \epsilon\bars\wh\kappa(s,\theta) + \bars^2Q_{\rm Rt}\\
\bR\cdot\be_\theta(s,\theta) &= \epsilon\sin(\bartheta) + \epsilon\bars \kappa_3\cos(\theta-\bartheta) + \bars^2Q_{\rm R\theta}(s,\bars,\theta,\bartheta)\\
&= \barR\cdot\be_\theta(\theta) + \epsilon\bars \kappa_3\cos(\theta-\bartheta) + \bars^2Q_{\rm R\theta}\\
\bR\cdot\bm{\varphi} &= \textstyle \bars\be_{\rm t}(s)\cdot\bm{\varphi} + 2\epsilon\sin(\frac{\bartheta}{2})\be_\theta(s,\theta-\frac{\bartheta}{2})\cdot\bm{\varphi}-\bars^2\bm{Q}_{\rm t}\cdot\bm{\varphi}  + \epsilon\bars\bm{Q}_r\cdot\bm{\varphi}\\
&= \barR\cdot(\Phi^{-1}\bm{\varphi}) -\bars^2\bm{Q}_{\rm t}\cdot\bm{\varphi}  + \epsilon\bars\bm{Q}_r\cdot\bm{\varphi}\,.
\end{aligned}
\end{equation}
Here, given two curves $\X^{(a)}$ and $\X^{(b)}$ in $C^{2,\beta}$, each of the remainder terms appearing in \eqref{eq:RdotN} and \eqref{eq:Rdots} satisfies an estimate of the form
\begin{equation}\label{eq:Qj_Cbeta}
\norm{Q^{(a)}-Q^{(b)}}_Y
\le \epsilon^{-\beta}\, c(\kappa_{*,\beta}^{(a)},\kappa_{*,\beta}^{(b)})\sum_{\ell=1}^3\norm{\kappa_\ell^{(a)}-\kappa_\ell^{(b)}}_Y
\end{equation}
where the norms $\norm{\cdot}_Y$ are given by \eqref{eq:Y_norms} and $\beta=0$ for the $L^\infty$ bound.
Here and throughout, we note that if the two curves $\X^{(a)}$ and $\X^{(b)}$ are sufficiently close in the sense of Lemma \ref{lem:XaXb_C2beta}, then the bounds of the form \eqref{eq:Qj_Cbeta} imply that 
\begin{equation}\label{eq:Qj_Cbeta2}
\norm{Q^{(a)}-Q^{(b)}}_Y
\le \epsilon^{-\beta}\, c(\kappa_{*,\beta}^{(a)},\kappa_{*,\beta}^{(b)})\norm{\X^{(a)}-\X^{(b)}}_{C^{2,\beta}(\T)}\,.
\end{equation}

Given all of the above expansions, we now state a series of lemmas that will be used to estimate the actual boundary integral expressions appearing in the decomposition \eqref{eq:SB_DtN_decomp} and to obtain Lemmas \ref{lem:single_layer}, \ref{lem:double_layer}, and \ref{lem:single_const_in_s}. Each of the following (Lemmas \ref{lem:Rests} - \ref{lem:alpha_est}) comes directly from \cite[section 3]{laplace} and will thus be stated here without proof.

\begin{lemma}[Relating $\bR$ and $\barR$]\label{lem:Rests}
Given $\barR$, $\bR$ as in \eqref{eq:barR}, \eqref{eq:curvedR}, respectively, the following bounds hold for $\epsilon$ sufficiently small:
\begin{align}
\abs{\abs{\bR}-|\barR|} &\le \frac{\kappa_*}{2}\bars^2 + c(\kappa_*)\,\epsilon\abs{\bars} \label{eq:xest1}\\
\abs{\bR} &\ge c(\kappa_*,c_\Gamma)|\barR| \,. \label{eq:xest2}
\end{align}
\end{lemma}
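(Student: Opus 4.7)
The plan is to exploit the identification $\Phi$ from \eqref{eq:mapPhi_def} to bring $\barR$ into the curved frame, compute the difference $\bR - \Phi\barR$ directly, and split the lower bound into a near-diagonal regime and a far-diagonal regime controlled by the non-self-intersection hypothesis.

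For the first estimate, under $\Phi$ we have $\Phi\barR = \bars\be_{\rm t}(s) + 2\epsilon\sin(\bartheta/2)\be_\theta(s,\theta-\bartheta/2)$, and since $\Phi$ is an orthonormal change of basis, $|\Phi\barR| = |\barR|$. Subtracting from the expansion \eqref{eq:curvedR} yields
\begin{equation*}
\bR - \Phi\barR = -\bars^2\bm{Q}_{\rm t}(s,\bars) + \epsilon\bars\bm{Q}_r(s,\bars,\theta-\bartheta),
\end{equation*}
so by the reverse triangle inequality
\begin{equation*}
\bigl||\bR|-|\barR|\bigr| \le |\bR - \Phi\barR| \le \bars^2|\bm{Q}_{\rm t}| + \epsilon|\bars|\,|\bm{Q}_r|.
\end{equation*}
The sharp coefficient $\kappa_*/2$ in the claimed bound comes from writing $\bm{Q}_{\rm t}$ via the integral remainder formula $\bars^2\bm{Q}_{\rm t}(s,\bars) = \int_0^{\bars}(\bars-\tau)\X''(s-\tau)\,d\tau$, which gives the uniform bound $|\bm{Q}_{\rm t}(s,\bars)|\le \frac{\kappa_*}{2}$ (not merely an asymptotic one). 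The bound $|\bm{Q}_r|\le c(\kappa_*)$ follows similarly from integrating the frame ODE \eqref{eq:frame}, absorbing also the constant $\kappa_3$.

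For the second estimate, I would split at $|\bars| = r_*(\kappa_*) := 1/(2\kappa_*)$. When $|\bars|\le r_*$, the first estimate together with the elementary inequality $|\barR|\ge |\bars|$ shows that $\frac{\kappa_*}{2}\bars^2 \le \frac{1}{4}|\barR|$ by the choice of $r_*$, and $c(\kappa_*)\epsilon|\bars|\le \frac{1}{4}|\barR|$ for $\epsilon$ sufficiently small, so $|\bR|\ge \frac{1}{2}|\barR|$. When $|\bars|\ge r_*$, the non-self-intersection hypothesis \eqref{eq:cGamma} gives $|\X(s)-\X(s-\bars)|\ge c_\Gamma r_*$; combined with the trivial estimate $|\be_r(s,\theta)-\be_r(s-\bars,\theta-\bartheta)|\le 2$ this yields $|\bR|\ge c_\Gamma r_* - 2\epsilon \ge c(\kappa_*,c_\Gamma)>0$ for $\epsilon$ small. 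Since $|\barR|\le |\bars| + 2\epsilon$ is bounded above by a constant on $\T$, the ratio bound $|\bR|\ge c(\kappa_*,c_\Gamma)|\barR|$ follows.

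The only real obstacle is constant-chasing: the statement demands the exact coefficient $\kappa_*/2$, which forces the use of the exact integral Taylor remainder rather than a generic bound, and the crossover scale $r_* = (2\kappa_*)^{-1}$ must be chosen precisely so that the quadratic-in-$\bars$ error absorbs into $\tfrac14|\barR|$. Beyond this, no technical difficulty is expected.
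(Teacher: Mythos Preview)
Your argument is correct. The paper does not prove this lemma here; it states that Lemmas \ref{lem:Rests}--\ref{lem:alpha_est} ``come directly from \cite[section 3]{laplace} and will thus be stated here without proof.'' Your approach---identifying $\barR$ with $\Phi\barR$ in the curved frame, reading off the difference from \eqref{eq:curvedR}, extracting the sharp constant $\kappa_*/2$ via the integral Taylor remainder, and splitting the lower bound at the scale $1/(2\kappa_*)$ into a perturbative near-diagonal regime and a far regime controlled by \eqref{eq:cGamma}---is exactly the natural one and matches what is done in the cited reference. One minor notational caution: the paper reserves the symbol $r_*$ for a different (smaller) radius in \eqref{eq:O_region} and the discussion below \eqref{eq:er_etheta}, so in a write-up you may want to call your crossover scale something else.
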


In addition to Lemma \ref{lem:Rests}, we have the following series of integral estimates. 
\begin{lemma}[Integral bounds for $\bR$ and $\barR$]\label{lem:basic_est}
Given $\bR$, $\barR$ as in \eqref{eq:curvedR}, \eqref{eq:barR}, respectively, and given $0\le\alpha<1$, let $k$ be an integer satisfying $k-\alpha< 2$. We have 
\begin{equation}
\int_{-1/2}^{1/2}\int_{-\pi}^{\pi}\frac{1}{|\bR|^{k-\alpha}}\,\epsilon d\bartheta d\bars\, \le c(\kappa_*,c_\Gamma)\int_{-1/2}^{1/2}\int_{-\pi}^{\pi}\frac{1}{\abs{\overline{\bm{R}}}^{k-\alpha}}\,\epsilon d\bartheta d\bars \; \le \begin{cases}
c\,\epsilon^{2-k+\alpha}\,, & 1<k-\alpha<2\\
c\, \epsilon\,, & k-\alpha\le 1 \,.
\end{cases}
\end{equation}
\end{lemma}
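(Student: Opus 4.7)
The plan is to handle the two inequalities separately, with the first being an immediate pointwise consequence of the previous lemma and the second reducing to an explicit calculation on the straight cylinder.

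For the first inequality, the pointwise bound \eqref{eq:xest2} from Lemma \ref{lem:Rests}, namely $|\bR| \ge c(\kappa_*,c_\Gamma)\,|\barR|$, holds throughout the domain of integration, so $|\bR|^{-(k-\alpha)} \le c\,|\barR|^{-(k-\alpha)}$ and the inequality follows by integrating both sides against $\epsilon\,d\bartheta\,d\bars$.

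For the second inequality, I would work directly with the explicit formula $|\barR|^2 = \bars^2 + 4\epsilon^2\sin^2(\bartheta/2)$. Since $|\sin(\bartheta/2)| \ge c|\bartheta|$ on $|\bartheta|\le\pi$, we have the lower bound $|\barR|^2 \ge c(\bars^2+\epsilon^2\bartheta^2)$. The natural change of variables is $u = \epsilon\bartheta$: this absorbs the factor of $\epsilon$ in the measure and transforms the integrand into $(\bars^2+u^2)^{-(k-\alpha)/2}$ over the rectangle $(\bars,u) \in [-1/2,1/2]\times[-\pi\epsilon,\pi\epsilon]$. I would then split this rectangle into an inner disk-like region $\{\bars^2+u^2 \le C\epsilon^2\}$ and an outer sliver $\{|\bars|\ge \pi\epsilon,\ |u|\le \pi\epsilon\}$. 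On the inner region, polar coordinates in $(\bars,u)$ give $\int_0^{C\epsilon} r^{1-(k-\alpha)}\,dr \sim \epsilon^{2-(k-\alpha)}$, finite precisely because $k-\alpha<2$. On the outer region, $\bars^2+u^2\sim \bars^2$, so the $u$-integral contributes a factor of $\epsilon$ and the remaining $\bars$-integral reduces to $\int_{\pi\epsilon}^{1/2}\bars^{-(k-\alpha)}\,d\bars$, which is $\sim \epsilon^{1-(k-\alpha)}$ in the singular case $1<k-\alpha<2$ and $\sim 1$ in the regular case $k-\alpha<1$. Combining the two regions yields the dichotomy: $\epsilon^{2-k+\alpha}$ when $1<k-\alpha<2$ and $\epsilon$ when $k-\alpha<1$.

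The step I expect to be the main source of bookkeeping, rather than genuine difficulty, is ensuring that the inner/outer decomposition cleanly produces the matching powers of $\epsilon$ in both regimes (since the inner piece always gives $\epsilon^{2-(k-\alpha)}$, one must check this dominates $\epsilon$ when $k-\alpha<1$, while the outer piece is the dominant contribution when $k-\alpha>1$). A mild subtlety occurs at the borderline $k-\alpha=1$, which for integer $k$ and $\alpha\in[0,1)$ only happens at $(k,\alpha)=(1,0)$: the outer $\bars$-integral then produces a logarithm. This can either be handled by a direct log bookkeeping or absorbed into the stated $c\epsilon$ bound by noting that $\epsilon|\log\epsilon|$ is dominated by any $\epsilon^{1-\delta}$ for $\epsilon$ small; in applications of this lemma downstream, any such log factor is harmless.
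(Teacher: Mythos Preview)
The paper does not supply its own proof of this lemma; it cites \cite[section 3]{laplace} and states the result without argument. Your proposal is exactly the standard proof one would expect: the first inequality follows pointwise from Lemma~\ref{lem:Rests}, and the second reduces via the substitution $u=\epsilon\bartheta$ and a polar-coordinate decomposition to an elementary calculation. The approach is correct.

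Your flag on the borderline case $k-\alpha=1$ is well taken: the outer integral genuinely produces $\epsilon|\log\epsilon|$ rather than $c\epsilon$, so the stated bound as written is slightly imprecise at that endpoint. In the paper's applications, however, the lemma is invoked with specific exponents that avoid this borderline (e.g.\ $k-\alpha\in\{1-\alpha,\,2-\alpha\}$ with $\alpha>0$), so the discrepancy is harmless downstream. Your remark that $\epsilon|\log\epsilon|\le c_\delta\epsilon^{1-\delta}$ is the correct way to absorb it if one insists on covering the endpoint, though strictly speaking this does not recover the stated $c\epsilon$; it would be cleaner to either exclude $k-\alpha=1$ from the second case or replace $c\epsilon$ by $c\epsilon|\log\epsilon|$ there.
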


\begin{lemma}[Bound for integrands with odd powers]\label{lem:odd_nm}
Let $\barR$, $\bR$ be as in \eqref{eq:barR}, \eqref{eq:curvedR} and let $\ell,k,n,m$ be nonnegative integers with $\ell+k+2=n+m$ and $\ell+k$ odd. Given $g,\varphi\in C^{0,\alpha}(\Gamma_\epsilon)$, we have 
\begin{equation}\label{eq:oddlem}
\abs{{\rm p.v.}\int_{-1/2}^{1/2}\int_{-\pi}^{\pi}\frac{\bars^\ell(\epsilon\sin(\frac{\bartheta}{2}))^k g(\bars,\bartheta) }{|\barR|^m|\bR|^n}\varphi(s-\bars,\theta-\bartheta)\, \epsilon d\bartheta d\bars} \le 
c(\kappa_*,c_\Gamma)\,\epsilon^\alpha\norm{g}_{C^{0,\alpha}}\norm{\varphi}_{C^{0,\alpha}} \,.
\end{equation}
\end{lemma}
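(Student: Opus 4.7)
The hypothesis ``$\ell+k$ odd'' together with ``$\ell+k+2=n+m$'' is engineered to produce an odd-even symmetry: the numerator $\bars^\ell(\epsilon\sin(\bartheta/2))^k$ flips sign under the reflection $(\bars,\bartheta)\mapsto(-\bars,-\bartheta)$, while the straight-cylinder denominator $|\barR|^{n+m}$ (with $|\barR|^2=\bars^2+4\epsilon^2\sin^2(\bartheta/2)$) is invariant, as is the domain $[-1/2,1/2]\times[-\pi,\pi]$. The plan is to exploit a principal-value cancellation against the straight case and control the mismatch using the expansions of Section \ref{subsec:setup} together with Lemma \ref{lem:basic_est}.

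First, I would reduce to the purely straight denominator by writing
\begin{equation*}
\frac{1}{|\barR|^m|\bR|^n}=\frac{1}{|\barR|^{m+n}}+\frac{|\barR|^n-|\bR|^n}{|\barR|^{m+n}|\bR|^n}.
\end{equation*}
Using the telescoping identity analogous to \eqref{eq:Rdiffk} together with $||\bR|-|\barR||\le\tfrac{\kappa_*}{2}\bars^2+c(\kappa_*)\epsilon|\bars|$ and $|\bR|\gtrsim|\barR|$ from Lemma \ref{lem:Rests}, the remainder kernel gains at least one extra factor of $|\bars|$ in the numerator relative to the straight kernel. Bounding $|g||\varphi|\le\|g\|_{C^{0,\alpha}}\|\varphi\|_{C^{0,\alpha}}$ and applying Lemma \ref{lem:basic_est} with the improved integrability produces a contribution of size $c(\kappa_*,c_\Gamma)\,\epsilon\,\|g\|_{C^{0,\alpha}}\|\varphi\|_{C^{0,\alpha}}$, which is $\le c\epsilon^\alpha\|g\|_{C^{0,\alpha}}\|\varphi\|_{C^{0,\alpha}}$ for $\alpha<1$.

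Next, I would treat the principal value of the straight-kernel piece $\bars^\ell(\epsilon\sin(\bartheta/2))^k g(\bars,\bartheta)\varphi(s-\bars,\theta-\bartheta)/|\barR|^{m+n}$ by splitting
\begin{equation*}
g(\bars,\bartheta)\varphi(s-\bars,\theta-\bartheta)=g(0,0)\varphi(s,\theta)+G(s,\theta,\bars,\bartheta),
\end{equation*}
where the Hölder characterization \eqref{eq:dot_Calpha_eps} yields $|G|\le c\|g\|_{C^{0,\alpha}}\|\varphi\|_{C^{0,\alpha}}(\bars^2+\epsilon^2\sin^2(\bartheta/2))^{\alpha/2}$. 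The constant piece $g(0,0)\varphi(s,\theta)$ integrates against an odd-in-$(\bars,\bartheta)$ kernel over a symmetric domain, so by odd-even cancellation its principal value is exactly zero. For the $G$-piece, the pointwise inequalities $|\bars|\le|\barR|$ and $\epsilon|\sin(\bartheta/2)|\le\tfrac12|\barR|$ give $\bars^\ell(\epsilon\sin(\bartheta/2))^k\le c|\barR|^{\ell+k}=c|\barR|^{m+n-2}$, so the integrand is bounded by $c\|g\|_{C^{0,\alpha}}\|\varphi\|_{C^{0,\alpha}}\,|\barR|^{\alpha-2}$. A direct application of Lemma \ref{lem:basic_est} (with $k-\alpha=2-\alpha\in(1,2)$) then gives the desired $c\,\epsilon^\alpha\|g\|_{C^{0,\alpha}}\|\varphi\|_{C^{0,\alpha}}$ bound.

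The main obstacle is rigorously justifying the vanishing of the constant-valued principal-value integral. The symmetry is clear pointwise, but one must specify a symmetric truncation (for instance, excising a ball $\{\bars^2+\epsilon^2\bartheta^2<\delta^2\}$ around the origin in $(\bars,\bartheta)$-space) and verify that both the retained region and the excised region are invariant under $(\bars,\bartheta)\mapsto(-\bars,-\bartheta)$, so that the truncated integral vanishes identically and the $\delta\to 0$ limit is zero. This also pins down the precise meaning of the principal value referenced in the lemma statement, which must then be shown to agree with any alternative definition (e.g., excising a strip $|\bars|<\delta$) used elsewhere in the paper when the kernel happens to be integrable in one of the variables after cancellation.
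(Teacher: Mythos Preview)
The paper does not actually prove this lemma; it is quoted from the companion Laplace paper and explicitly ``stated here without proof.'' Your overall strategy---exploit oddness of $\bars^\ell(\epsilon\sin(\bartheta/2))^k$ against an even denominator to kill the constant piece, then use the H\"older gain $|\barR|^\alpha$ on the remainder---is the right idea and matches how the paper handles the closely related hypersingular estimate Lemma~\ref{lem:new_alpha_est}. However, your first step (reducing to the straight denominator $|\barR|^{m+n}$) has a gap.

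From \eqref{eq:xest1} the difference $|\bR|-|\barR|$ is only $O(\bars^2)+O(\epsilon|\bars|)$, so after telescoping and multiplying by $\bars^\ell(\epsilon\sin(\bartheta/2))^k$, your remainder kernel is bounded by $c(\bars^2+\epsilon|\bars|)/|\barR|^3$. The $\bars^2$ piece is fine (it gives $1/|\barR|$, and Lemma~\ref{lem:basic_est} applies), but the $\epsilon|\bars|$ piece reduces only to $\epsilon/|\barR|^2$, and $\int\!\!\int |\barR|^{-2}\,\epsilon\,d\bartheta\,d\bars$ diverges logarithmically near the origin---Lemma~\ref{lem:basic_est} requires exponent strictly below $2$. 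Bounding $|g\varphi|$ by the sup norm does not save this, and the correction kernel is not itself odd (since $|\bR|$ still appears in its denominator), so no further p.v.\ cancellation is available at this stage.

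The fix, which the paper implements explicitly in its proof of Lemma~\ref{lem:new_alpha_est} (Appendix~\ref{app:new_alphalem}), is to reduce instead to the denominator $|\barR|^m|\bR_{\rm even}|^n$, where $|\bR_{\rm even}|$ is defined in \eqref{eq:Reven} by $|\bR_{\rm even}|^2=|\barR|^2+\epsilon\bars^2 Q_{R,0}(s,\theta)+\kappa_3\epsilon^2\bars\sin\bartheta$. Since $Q_{R,0}$ is independent of $(\bars,\bartheta)$ and both $\bars^2$ and $\bars\sin\bartheta$ are even under $(\bars,\bartheta)\mapsto(-\bars,-\bartheta)$, this denominator is still even, so the kernel $\bars^\ell(\epsilon\sin(\bartheta/2))^k/(|\barR|^m|\bR_{\rm even}|^n)$ is still odd with vanishing principal value. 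Meanwhile by \eqref{eq:Rsq2} the remaining correction $|\bR|^2-|\bR_{\rm even}|^2$ consists only of the higher-order terms $\bars^4 Q_{R,1}+\epsilon\bars^3 Q_{R,2}+\epsilon^2\bars^2\sin(\bartheta/2)Q_{R,3}$, each of which gains enough to land in the regime where Lemma~\ref{lem:basic_est} applies and yields an $O(\epsilon)$ contribution.
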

Here ${\rm p.v.}$ is the Cauchy principal value, given along $\Gamma_\epsilon$ by 
\begin{align*}
{\rm p.v.}\int_{-1/2}^{1/2}\int_{-\pi}^{\pi} h(\bars,\bartheta)\,d\bartheta d\bars = \lim_{\delta\to 0^+}\bigg(\int_{-1/2}^{-\delta}+\int_{\delta}^{1/2} \bigg)\bigg(\int_{-\pi}^{-2\pi\delta}+\int_{2\pi\delta}^{\pi}\bigg)h(\bars,\bartheta)\,d\bartheta d\bars\,.
\end{align*}

Finally, we state the following bound for the $\dot C^{0,\alpha}$ seminorm of integral operators along $\Gamma_\epsilon$. 
\begin{lemma}[Estimating $\abs{\cdot}_{\dot C^{0,\alpha}}$ seminorms]\label{lem:alpha_est}
Let $\ell,k,m,n$ be nonnegative integers satisfying either
\begin{enumerate}
\item $\ell+k=m+n-1$ or
\item $\ell+k=m+n-2$ and $\ell+k$ is odd.
\end{enumerate} 
Given $g(s,\theta,s-\bars,\theta-\bartheta)\in C^{0,\alpha^+}(\Gamma_\epsilon\times\Gamma_\epsilon)$ for any $0<\alpha<\alpha^+<1$, for $\barR$ and $\bR$ as in \eqref{eq:barR}, \eqref{eq:curvedR}, let $K_{\ell kmn}(s,\theta,\bars,\bartheta)$ denote
\begin{equation}\label{eq:K_lkmn}
 K_{\ell kmn}(s,\theta,\bars,\bartheta):= \frac{\bars^\ell(\epsilon\sin(\frac{\bartheta}{2}))^k g(s,\theta,s-\bars,\theta-\bartheta)}{\abs{\barR(s,\theta,\bars,\bartheta)}^{m}\abs{\bR(s,\theta,\bars,\bartheta)}^n}\,.
\end{equation} 
Suppose $\varphi(s,\theta)\in C^{0,\alpha}(\Gamma_\epsilon)$. Then for any $(s_0,\theta_0)\in[-1/2,1/2]\times[-\pi,\pi]$ with $s_0^2+\theta_0^2\neq 0$ we have 
\begin{equation}\label{eq:alphalem}
\begin{aligned}
&\bigg|{\rm p.v.}\int_{-s_0-1/2}^{-s_0+1/2}\int_{-\theta_0-\pi}^{-\theta_0+\pi} K_{\ell kmn}(s_0+s,\theta_0+\theta,s_0+\bars,\theta_0+\bartheta)\varphi(s-\bars,\theta-\bartheta) \, \epsilon d\bartheta d\bars \\
&\qquad\quad -  {\rm p.v.}\int_{-1/2}^{1/2}\int_{-\pi}^{\pi}K_{\ell kmn}(s,\theta,\bars,\bartheta)\varphi(s-\bars,\theta-\bartheta) \, \epsilon d\bartheta d\bars\bigg| \\
&\hspace{2cm} \le \begin{cases}
c(\kappa_*,c_\Gamma)\,\epsilon^{1-\alpha}\norm{g}_{C^{0,\alpha}_1}\norm{\varphi}_{L^\infty}\sqrt{s_0^2+\epsilon^2\theta_0^2}^{\,\alpha} & \text{in case (1)} \\
c(\kappa_*,c_\Gamma)\big(\norm{g}_{C^{0,\alpha^+}_1}+\norm{g}_{C^{0,\alpha}_2}\big)\norm{\varphi}_{C^{0,\alpha}}\sqrt{s_0^2+\epsilon^2\theta_0^2}^{\,\alpha} & \text{in case (2)}\,, 
\end{cases}
\end{aligned}
\end{equation}
where $\norm{\cdot}_{C^{0,\alpha}_1}$ and $\norm{\cdot}_{C^{0,\alpha}_2}$ are as in \eqref{eq:Y_norms}.
\end{lemma}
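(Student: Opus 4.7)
The plan is to view the two integrals as the image of the singular integral operator $J[\varphi](s,\theta):=\mathrm{p.v.}\int K_{\ell kmn}(s,\theta,\bar s,\bar\theta)\varphi(s-\bar s,\theta-\bar\theta)\,\epsilon\,d\bar\theta\, d\bar s$ evaluated at the displaced points $(s_0,\theta_0)$ and $(0,0)$, after the change of variables that folds the outer translations on the left-hand side of \eqref{eq:alphalem} into a single shift of the argument. Writing $h:=\sqrt{s_0^2+\epsilon^2\theta_0^2}$, the goal reduces to the H\"older-seminorm bound $|J[\varphi](s_0,\theta_0)-J[\varphi](0,0)|\lesssim h^\alpha$. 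When $h$ is comparable to the diameter of the $(\bar s,\epsilon\bar\theta)$-domain, both integrals can be estimated separately by Lemmas \ref{lem:basic_est} or \ref{lem:odd_nm} and the result is trivial; hence the interesting regime is $h\ll 1$.

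Next, I would decompose the integration domain into a ``near'' region $N_h:=\{\sqrt{\bar s^2+\epsilon^2\bar\theta^2}\le Ch\}$ enclosing both singular points, and a ``far'' region $F_h$ where the kernel is smooth. On $N_h$ the two integrals are to be estimated individually, discarding any cancellation between them. In case (1), where $\ell+k=m+n-1$, the integrand has anisotropic homogeneity $-1$ in $(\bar s,\epsilon\bar\theta)$, and a localized version of Lemma \ref{lem:basic_est} on a disc of anisotropic radius $h$ produces a bound of order $h$; trading a factor $h^{1-\alpha}\le \epsilon^{1-\alpha}$ against the H\"older regularity of $g$ in its first slot should give the asserted $\epsilon^{1-\alpha}h^\alpha\|g\|_{C^{0,\alpha}_1}\|\varphi\|_{L^\infty}$ estimate. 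In case (2), where $\ell+k=m+n-2$ with $\ell+k$ odd, the integrand is at the log-divergent threshold, but the odd-mode principal-value cancellation exploited in Lemma \ref{lem:odd_nm} still applies on $N_h$ and delivers the analogous $h^\alpha$ bound without the extra $\epsilon^{1-\alpha}$ factor.

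On the far region $F_h$, the integrand has no singularity of size $\ll h$, so I would expand
\[
K_{\ell kmn}(s_0,\theta_0,\cdot)\varphi(s_0-\bar s,\theta_0-\bar\theta) - K_{\ell kmn}(0,0,\cdot)\varphi(-\bar s,-\bar\theta)
\]
by telescoping over the outer-variable dependence of $K_{\ell kmn}$, which enters only through $g$ and through the expansions of $|\bm R|$ and $|\overline{\bm R}|$ from Section \ref{subsec:setup} (see \eqref{eq:curvedR}--\eqref{eq:Rdiffk}), and over the argument shift in $\varphi$. Each telescoped difference picks up an $h^\alpha$ factor from the appropriate H\"older seminorm of $g$ or $\varphi$ multiplying a kernel of the \emph{same} anisotropic degree as the original; Lemma \ref{lem:basic_est} restricted to $F_h$ then produces a convergent contribution consistent with the asserted bound.

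The main obstacle I anticipate is the simultaneous management of the two shifted singular points. The cutoff $N_h$ must be centered so that both $(0,0)$ and the point translated by $(-s_0,-\theta_0)$ are buffered symmetrically; otherwise the odd-cancellation hypothesis of Lemma \ref{lem:odd_nm} breaks in case (2). Moreover, the telescoping on $F_h$ demands that the intermediate kernels retain the structure $(\ell,k,m,n)$, forcing a careful bookkeeping with the difference identities \eqref{eq:RaRb_diff}--\eqref{eq:Rdiffk} rather than raw Taylor bounds, since the latter would raise the effective homogeneity of the integrand by one and destroy integrability near the interface $\partial N_h$. Once these cutoff and telescoping choices are arranged, the near and far contributions combine to give the claimed estimate, with the prefactor $\epsilon^{1-\alpha}$ in case (1) arising solely from the $N_h$-contribution.
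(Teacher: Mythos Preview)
Your approach is correct and matches what the paper does. Note that the paper itself does not reprove this lemma, citing \cite[Section 3]{laplace} instead; however, the paper's Appendix~\ref{app:new_alphalem} proves the closely related hypersingular estimate (Lemma~\ref{lem:new_alpha_est}) by exactly the near/far decomposition you outline, confirming that this is the intended method.

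Two points deserve sharpening. First, the threshold separating the ``trivial'' regime from the interesting one is $h\sim\epsilon$, not the domain diameter (which is $\sim 1$ in the anisotropic metric). In the regime $h\ge\epsilon$, bounding each integral separately via Lemma~\ref{lem:basic_est} (case~(1)) or Lemma~\ref{lem:odd_nm} (case~(2)) yields $\epsilon$ and $\epsilon^\alpha$ respectively, and one then uses $\epsilon\le h^\alpha\epsilon^{1-\alpha}$ and $\epsilon^\alpha\le h^\alpha$; these inequalities fail if the threshold is taken larger than $\epsilon$. This is precisely the split the paper makes in Appendix~\ref{app:new_alphalem}. Second, in case~(2) you do not mention why the bound requires $\|g\|_{C^{0,\alpha^+}_1}$ with $\alpha^+>\alpha$ rather than $\|g\|_{C^{0,\alpha}_1}$. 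The extra H\"older exponent is needed on the far region $F_h$: differencing $g$ in its first slot gives a factor $h^{\alpha^+}$ multiplying a kernel of anisotropic degree $-2$, and integrating over $F_h$ produces a logarithm $|\log h|$; the surplus $h^{\alpha^+-\alpha}$ absorbs it. Without this, the far-region contribution in case~(2) would carry an unwanted $|\log h|$.

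Finally, your closing remark that the prefactor $\epsilon^{1-\alpha}$ in case~(1) ``arises solely from the $N_h$-contribution'' is slightly off: the far-region telescoping also produces $\epsilon$-dependent factors (through the $\theta$-dependence of the $Q_{R,j}$ coefficients in \eqref{eq:Rsq}), but these turn out to be no worse than $\epsilon^{1-\alpha}$ once the expansions \eqref{eq:RaRb_diff}--\eqref{eq:Rdiffk} are used, so the overall bound is unaffected.
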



\subsection{Single layer remainder terms}\label{subsec:single_layer}
Given the tools of section \ref{subsec:setup}, we may proceed to the proof of Lemma \ref{lem:single_layer} regarding the mapping properties of the single layer remainder $\mc{R}_\mc{S}$, defined in \eqref{eq:RS_RD_def}. We restate the lemma here for convenience.

\begin{lemma}[Single layer remainder]\label{lem:single_layer0}
Let $0<\alpha<\beta$ and consider a filament $\Sigma_\epsilon$ with centerline $\X(s)\in C^{2,\beta}(\T)$. Let the single layer remainder $\mc{R}_{\mc{S}}$ be as defined in \eqref{eq:RS_RD_def}, and let the map $\Phi$ be as in \eqref{eq:mapPhi_def}. Given $\bm{\varphi}\in C^{0,\alpha}(\Gamma_\epsilon)$ with $\int_\T \Phi^{-1}\bm{\varphi}(s,\theta)\,ds=0$, $\mc{R}_{\mc{S}}$ may be decomposed as 
\begin{align*}
\mc{R}_{\mc{S}}[\bm{\varphi}] = \mc{R}_{\mc{S},\epsilon}[\bm{\varphi}] + \mc{R}_{\mc{S},+}[\bm{\varphi}]
\end{align*}
where
\begin{equation}\label{eq:RS_ests1_0}
\begin{aligned}
\norm{\mc{R}_{\mc{S},\epsilon}[\bm{\varphi}]}_{C^{0,\alpha}} &\le c(\kappa_{*,\alpha},c_\Gamma)\,\epsilon^{2-\alpha}\norm{\bm{\varphi}}_{L^\infty}\,, \quad
\abs{\mc{R}_{\mc{S},\epsilon}[\bm{\varphi}]}_{\dot C_s^{1,\alpha}} \le c(\kappa_{*,\alpha^+},c_\Gamma)\,\epsilon^{1-\alpha^+}\norm{\bm{\varphi}}_{C^{0,\alpha}}\\
\norm{\mc{R}_{\mc{S},+}[\bm{\varphi}]}_{C^{0,\alpha}} &\le c(\kappa_{*,\alpha},c_\Gamma)\,\epsilon^{1-\alpha}\norm{\bm{\varphi}}_{L^\infty}\,, \quad
\abs{\mc{R}_{\mc{S},+}[\bm{\varphi}]}_{\dot C_s^{1,\beta}} \le c(\kappa_{*,\beta},c_\Gamma)\,\epsilon^{1-2\beta}\norm{\bm{\varphi}}_{C^{0,\alpha}}\,.
\end{aligned}
\end{equation}

Furthermore, given two nearby filaments with centerlines $\X^{(a)}(s)$, $\X^{(b)}(s)$ satisfying Lemma \ref{lem:XaXb_C2beta}, let $\bm{\varphi}_0^{\Phi^{(a)}}=\bm{\varphi}-\int_\T(\Phi^{(a)})^{-1}\bm{\varphi}(s,\theta)\,ds$ and $\bm{\varphi}_0^{\Phi^{(b)}}=\bm{\varphi}-\int_\T(\Phi^{(b)})^{-1}\bm{\varphi}(s,\theta)\,ds$. The difference between the corresponding single layer remainders $\mc{R}_{\mc{S}}^{(a)}$ and $\mc{R}_{\mc{S}}^{(b)}$ may be decomposed as above, and the components satisfy
\begin{equation}\label{eq:RS_ests2_0}
\begin{aligned}
\norm{\mc{R}_{\mc{S},\epsilon}^{(a)}[\bm{\varphi}_0^{\Phi^{(a)}}]-\mc{R}_{\mc{S},\epsilon}^{(b)}[\bm{\varphi}_0^{\Phi^{(b)}}]}_{C^{0,\alpha}} &\le c(\kappa_{*,\alpha}^{(a)},\kappa_{*,\alpha}^{(b)},c_\Gamma)\,\epsilon^{2-\alpha}\norm{\X^{(a)}-\X^{(b)}}_{C^{2,\alpha}}\norm{\bm{\varphi}}_{L^\infty} \\
\abs{\mc{R}_{\mc{S},\epsilon}^{(a)}[\bm{\varphi}_0^{\Phi^{(a)}}]-\mc{R}_{\mc{S},\epsilon}^{(b)}[\bm{\varphi}_0^{\Phi^{(b)}}]}_{\dot C^{1,\alpha}} &\le c(\kappa_{*,\alpha^+}^{(a)},\kappa_{*,\alpha^+}^{(b)},c_\Gamma)\,\epsilon^{1-\alpha^+}\norm{\X^{(a)}-\X^{(b)}}_{C^{2,\alpha^+}}\norm{\bm{\varphi}}_{C^{0,\alpha}}\\
\norm{\mc{R}_{\mc{S},+}^{(a)}[\bm{\varphi}_0^{\Phi^{(a)}}]-\mc{R}_{\mc{S},+}^{(b)}[\bm{\varphi}_0^{\Phi^{(b)}}]}_{C^{0,\alpha}} &\le c(\kappa_{*,\alpha}^{(a)},\kappa_{*,\alpha}^{(b)},c_\Gamma)\,\epsilon^{1-\alpha}\norm{\X^{(a)}-\X^{(b)}}_{C^{2,\alpha}}\norm{\bm{\varphi}}_{L^\infty} \\
\abs{\mc{R}_{\mc{S},+}^{(a)}[\bm{\varphi}_0^{\Phi^{(a)}}]-\mc{R}_{\mc{S},+}^{(b)}[\bm{\varphi}_0^{\Phi^{(b)}}]}_{\dot C^{1,\beta}} &\le c(\kappa_{*,\beta}^{(a)},\kappa_{*,\beta}^{(b)},c_\Gamma)\,\epsilon^{1-2\beta}\norm{\X^{(a)}-\X^{(b)}}_{C^{2,\beta}}\norm{\bm{\varphi}}_{C^{0,\alpha}}\,.\\
\end{aligned}
\end{equation}
\end{lemma}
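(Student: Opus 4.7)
The plan is to work in the coordinates $(s,\theta,\bars,\bartheta)$ established in section 4.1 and decompose the integrand of $\mc{R}_\mc{S}$ using the expansions \eqref{eq:Rsq}--\eqref{eq:Rdots}. Since the Stokeslet splits as $\mc{G}(\bR)=\frac{1}{8\pi}\big(\frac{{\bf I}}{|\bR|}+\frac{\bR\otimes\bR}{|\bR|^3}\big)$, and similarly for $\overline{\mc{G}}$, and the surface Jacobian is $\mc{J}_\epsilon=\epsilon(1-\epsilon\wh\kappa)$, I would write
\[
\mc{R}_\mc{S}[\bm{\varphi}] \;=\; \mc{R}_\mc{S}^{\rm jac}[\bm{\varphi}]\;+\;\mc{R}_\mc{S}^{\rm sc}[\bm{\varphi}]\;+\;\mc{R}_\mc{S}^{\rm ten}[\bm{\varphi}],
\]
where $\mc{R}_\mc{S}^{\rm jac}$ collects the $-\epsilon^2\wh\kappa$ piece from the Jacobian, $\mc{R}_\mc{S}^{\rm sc}$ comes from $\frac{1}{|\bR|}-\frac{1}{|\barR|}$ expanded via \eqref{eq:RaRb_diff}, and $\mc{R}_\mc{S}^{\rm ten}$ comes from the tensor piece after writing $\bR=\Phi\barR+\bm{E}$ with $\bm{E}=-\bars^2\bm{Q}_{\rm t}+\epsilon\bars\bm{Q}_r$ (cf.\ \eqref{eq:curvedR}) and using \eqref{eq:Rdiffk} for $k=3$. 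This reduces everything to a finite list of kernels of the form \eqref{eq:K_lkmn} multiplied by explicit curvature-type $Q$-factors.

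Next, I would enumerate the resulting kernels indexed by $(\ell,k,m,n)$ and their $Q$-factors, and apply one of Lemmas \ref{lem:basic_est}, \ref{lem:odd_nm}, \ref{lem:alpha_est} to each. The split $\mc{R}_{\mc{S},\epsilon}+\mc{R}_{\mc{S},+}$ is then dictated by the geometric content of the $Q$-factor. Class \emph{(i)}: kernels whose $Q$-factor is independent of $(\theta,\bartheta)$ at leading order (notably the $\wh\kappa$-Jacobian contribution, and terms built from $Q_{R,0}(s,\theta)$, $Q_{R,1}(s,\bars)$, and $Q_{\rm t}(s,\bars)$ in \eqref{eq:refined_Qdiff}), or which carry a numerator of $\bars^2$ or higher together with an odd power count making Lemma \ref{lem:odd_nm} applicable; these earn the extra $\epsilon^\alpha$ and form $\mc{R}_{\mc{S},\epsilon}$, yielding the $\epsilon^{2-\alpha}$ $C^{0,\alpha}$ bound in \eqref{eq:RS_ests1_0}. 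Class \emph{(ii)}: the remaining kernels (those carrying the genuinely $\theta,\bartheta$-dependent factors $Q_{R,2},Q_{R,3}$, $\bm{Q}_r$, or $\be_\theta$), which by \eqref{eq:Q_0j_norms} cost a factor $\epsilon^{-\beta}$ when measured in $C^{0,\beta}_2$; these form $\mc{R}_{\mc{S},+}$ and are handled by case (2) of Lemma \ref{lem:alpha_est}, producing the $\epsilon^{1-2\beta}$ $\dot C^{1,\beta}_s$ bound.

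For the $\dot C^{1,\alpha}_s$ and $\dot C^{1,\beta}_s$ seminorms I would apply Lemma \ref{lem:alpha_est} directly, after rewriting each kernel so that either the parity condition $\ell+k=m+n-1$ of case (1) or the odd-sum condition of case (2) holds; the mean-zero hypothesis $\int_\T\Phi^{-1}\bm{\varphi}\,ds=0$ is used here to absorb the constant-in-$s$ contributions that would otherwise fall outside both cases of Lemma \ref{lem:alpha_est} and are instead treated in Lemma \ref{lem:single_const_in_s}. The Lipschitz bounds \eqref{eq:RS_ests2_0} follow by running the identical decomposition for $\X^{(a)}$ and $\X^{(b)}$ and taking termwise differences: each $Q$-factor satisfies an estimate of the form \eqref{eq:Qj_Cbeta2} via \eqref{eq:Qdiff_bds} and Lemma \ref{lem:XaXb_C2beta}, and the differences of the $R$-denominators are expanded using \eqref{eq:Ra_minus_Rb}--\eqref{eq:Rdiffk}, so the same three lemmas apply with $\|\X^{(a)}-\X^{(b)}\|_{C^{2,\alpha^+}}$ (resp.\ $C^{2,\beta}$) picked up as an extra multiplicative factor; the mismatch of mean-subtraction between the two curves is reconciled by \eqref{eq:Phi_lip_2}.

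The main technical obstacle will be the bookkeeping for $\mc{R}_\mc{S}^{\rm ten}$: expanding $\frac{\bR\otimes\bR}{|\bR|^3}-\frac{(\Phi\barR)\otimes(\Phi\barR)}{|\barR|^3}$ produces a relatively long list of terms of schematic types $\frac{\barR\otimes\bm{E}}{|\bR|^3}$, $\frac{\bm{E}\otimes\bm{E}}{|\bR|^3}$, and $(\Phi\barR)\otimes(\Phi\barR)\big(|\bR|^{-3}-|\barR|^{-3}\big)$, and for each piece one must verify that either the combinatorics of $\ell,k,m,n$ permits Lemma \ref{lem:odd_nm} (assigning the term to $\mc{R}_{\mc{S},\epsilon}$) or that the $(\theta,\bartheta)$-dependence is localized inside a $C^{0,\beta}_2$ factor compatible with Lemma \ref{lem:alpha_est} case (2) (assigning it to $\mc{R}_{\mc{S},+}$). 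This is the Stokes analogue of the scalar bookkeeping already carried out for the Laplace problem in \cite{laplace}, with the added subtlety of propagating the $\Phi$-conjugation through the outer-product structure; no genuinely new estimates are needed beyond the combinatorial care.
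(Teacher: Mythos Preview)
Your overall architecture is right: split the Stokeslet into scalar and tensor parts, expand using the geometric identities of section~4.1, and reduce to kernels of the form $K_{\ell kmn}$ handled by Lemmas~\ref{lem:basic_est}--\ref{lem:alpha_est}. Two points, however, deserve correction.

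\textbf{The far-field piece and the role of mean zero.} You have not accounted for the fact that $\overline{\mc{S}}$ is an integral over $\bars\in\R$, not over $\T$. The difference $\mc{R}_\mc{S}$ therefore contains a tail contribution
\[
\mc{R}_{\mc{S},0}[\bm{\varphi}]=-\bigg(\int_{-\infty}^{-1/2}+\int_{1/2}^\infty\bigg)\int_{-\pi}^\pi \overline{\mc{G}}\,(\Phi^{-1}\bm{\varphi})\,\epsilon\,d\bartheta d\bars
\]
which is smooth but must still be estimated. This is precisely where the mean-zero hypothesis enters: writing $\Phi^{-1}\bm{\varphi}=\p_s\overline{\bm{\varphi}}$ and integrating by parts in $\bars$ yields the needed decay. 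Your description of the mean-zero hypothesis (absorbing constant-in-$s$ contributions via Lemma~\ref{lem:single_const_in_s}) is not how it is used here; that lemma handles a separate term in the master decomposition \eqref{eq:SB_DtN_decomp}, not a piece of $\mc{R}_\mc{S}$ itself.

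\textbf{The mechanism for the $\epsilon$/$+$ split.} Your criterion---$\theta$-independence of the $Q$-factor determines membership in $\mc{R}_{\mc{S},\epsilon}$---does not match what is actually needed and would misclassify terms. The paper instead introduces an intermediate quantity $\bR_{\rm t}=\bars\be_{\rm t}(s)+\epsilon(\be_r(s,\theta)-\be_r(s-\bars,\theta-\bartheta))$ satisfying $|\bR|^2-|\bR_{\rm t}|^2=O(\bars^3)$ while $|\bR_{\rm t}|^2-|\barR|^2=O(\epsilon\bars^2)$. The passage $\bR\to\bR_{\rm t}$ contributes the smoother remainder $\mc{R}_{\mc{S},+}$ (the extra $\bars$ permits case (1) of Lemma~\ref{lem:alpha_est} at the higher exponent $\beta$), while $\bR_{\rm t}\to\barR$ carries the extra factor of $\epsilon$ and gives $\mc{R}_{\mc{S},\epsilon}$. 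The distinguishing feature of $\mc{R}_{\mc{S},\epsilon}$ is this explicit extra $\epsilon$, not $\theta$-independence of coefficients; indeed, the Jacobian term $\mc{R}_{\mc{S},3}$ and the $\bR_{\rm t}\to\barR$ terms in $\mc{R}_{\mc{S},\epsilon}$ all carry $\theta$-dependent factors such as $\wh\kappa(s,\theta)$ and $\bm{Q}_r$. Without the $\bR_{\rm t}$ interpolant you will not cleanly obtain the $\epsilon^{2-\alpha}$ versus $\epsilon^{1-\alpha}$ dichotomy in the $C^{0,\alpha}$ bounds, and your bookkeeping for the tensor piece will not close as written.
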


\begin{proof}[Proof of Lemma \ref{lem:single_layer}]
Recall the definition of $\mc{R}_{\mc{S}}$ from \eqref{eq:RS_RD_def} and note that by assumption $\bm{\varphi}(s,\theta)=\bm{\varphi}_0^\Phi(s,\theta)$ since we consider $\int_\T \Phi^{-1}\bm{\varphi}(s,\theta)\,ds=0$. Rewriting each integrand in terms of the surface parameters $s,\bars,\theta,\bartheta$, we may decompose $\mc{R}_{\mc{S}}$ as follows:  
\begin{equation}\label{eq:RS_defs}
\begin{aligned}
\mc{R}_{\mc{S}}[\bm{\varphi}(s,\theta)] &= \Phi^{-1}\mc{S}[\bm{\varphi}(s,\theta)]-\overline{\mc{S}}[\Phi^{-1}\bm{\varphi}(s,\theta)] \\
&= \mc{R}_{\mc{S},0}[\bm{\varphi}] + \mc{R}_{\mc{S},1}[\bm{\varphi}] + \mc{R}_{\mc{S},2}[\bm{\varphi}]+ \mc{R}_{\mc{S},3}[\bm{\varphi}]\,, \\
\mc{R}_{\mc{S},0}[\bm{\varphi}] &= -\bigg(\int_{-\infty}^{-1/2}+\int_{1/2}^\infty\bigg)\int_{-\pi}^\pi \overline{\mc{G}}(s,\bars,\theta,\bartheta)\,\Phi^{-1}\bm{\varphi}(s-\bars,\theta-\bartheta) \, \epsilon d\bartheta d\bars \\
\mc{R}_{\mc{S},1}[\bm{\varphi}] &= 
\frac{1}{8\pi}\bigg(\Phi^{-1}\int_{-1/2}^{1/2}\int_{-\pi}^{\pi} \frac{1}{\abs{\bR}}\bm{\varphi}(s-\bars,\theta-\bartheta)\,\epsilon \,d\bars\,d\bartheta \\
&\qquad\qquad - 
\int_{-1/2}^{1/2}\int_{-\pi}^{\pi} \frac{1}{|\barR|}(\Phi^{-1}\bm{\varphi})(s-\bars,\theta-\bartheta)\,\epsilon\, d\bars\,d\bartheta\bigg) \\
\mc{R}_{\mc{S},2}[\bm{\varphi}] &= 
\frac{1}{8\pi}\bigg(\Phi^{-1}\int_{-1/2}^{1/2}\int_{-\pi}^{\pi} \frac{\bR\otimes\bR}{\abs{\bR}^3}\bm{\varphi}(s-\bars,\theta-\bartheta)\,\epsilon \,d\bars\,d\bartheta \\
&\qquad\qquad - 
\int_{-1/2}^{1/2}\int_{-\pi}^{\pi} \frac{\barR\otimes\barR}{|\barR|^3}(\Phi^{-1}\bm{\varphi})(s-\bars,\theta-\bartheta)\,\epsilon\, d\bars\,d\bartheta\bigg) \\
\mc{R}_{\mc{S},3}[\bm{\varphi}] &= -\Phi^{-1}\int_{-1/2}^{1/2}\int_{-\pi}^\pi \mc{G}(s,\bars,\theta,\bartheta)\bm{\varphi}(s-\bars,\theta-\bartheta)\, \epsilon^2\wh\kappa\,d\bartheta d\bars\,.
\end{aligned}
\end{equation}

We begin with bounds for the smooth remainder $\mc{R}_{\mc{S},0}$ away from the singularity at $(\bars,\bartheta)=(0,0)$. Since $\int_\T \Phi^{-1}\bm{\varphi}(s,\theta)\,ds=0$, we may write $\Phi^{-1}\bm{\varphi}(s,\theta)=\p_s\overline{\bm{\varphi}}(s,\theta)$ for some $\overline{\bm{\varphi}}$ with $\overline{\bm{\varphi}}(0,\theta)=0$. 
Using $\p_s\overline{\bm{\varphi}}(s,\theta)$ in the expression for $\mc{R}_{\mc{S},0}$, we may then integrate by parts in $\bars$ to obtain 
\begin{align*}
\mc{R}_{\mc{S},0}[\bm{\varphi}] &= -\bigg(\int_{-\infty}^{-1/2}+\int_{1/2}^\infty\bigg)\int_{-\pi}^\pi \big(\p_{\bars}\,\overline{\mc{G}}\big)\;\overline{\bm{\varphi}}(s-\bars,\theta-\bartheta) \, \epsilon d\bartheta d\bars \\
&\qquad + \int_{-\pi}^\pi \overline{\mc{G}}\;\overline{\bm{\varphi}}(s-\bars,\theta-\bartheta) \, \epsilon d\bartheta d\bars \bigg|_{\bars=-1/2}^{\bars=1/2}\,.
\end{align*}
Using the form \eqref{eq:stokeslet} of the Stokeslet kernel $\mc{G}$, we have
\begin{align*}
\abs{\mc{R}_{\mc{S},0}} \le c\,\epsilon\norm{\overline{\bm{\varphi}}}_{L^\infty}\int_{1/2}^\infty \frac{1}{\bars^2}\,d\bars + c\,\epsilon\norm{\overline{\bm{\varphi}}}_{L^\infty} 
\le c\,\epsilon\norm{\overline{\bm{\varphi}}}_{L^\infty} 
= c\,\epsilon\norm{\int_0^s(\Phi^{-1}\bm{\varphi})\,ds'}_{L^\infty} 
\le c\,\epsilon\norm{\bm{\varphi}}_{L^\infty} \,.
\end{align*}
Furthermore, we may estimate 
\begin{align*}
\norm{\p_s^2\mc{R}_{\mc{S},0}}_{L^\infty} \le c\,\epsilon\norm{\bm{\varphi}}_{L^\infty}\int_{1/2}^\infty \bigg(\frac{1}{\abs{\bars}^3} + \frac{\epsilon}{\abs{\bars}^4}+ \frac{\epsilon^2}{\abs{\bars}^5}\bigg)\,d\bars 
\le c\,\epsilon\norm{\bm{\varphi}}_{L^\infty} \,.
\end{align*}
Finally, given two nearby curves $\X^{(a)}$ and $\X^{(b)}$ satisfying Lemma \ref{lem:XaXb_C2beta}, the corresponding remainder pieces $\mc{R}_{\mc{S},0}^{(a)}$ and $\mc{R}_{\mc{S},0}^{(b)}$ are nearly identical except for slight differences in the $s$-means $\int_\T(\Phi^{(a)})^{-1}\bm{\varphi}\,ds$ and $\int_\T(\Phi^{(b)})^{-1}\bm{\varphi}\,ds$. We may write
\begin{align*}
&\abs{\mc{R}_{\mc{S},0}^{(a)}[\bm{\varphi}_0^{\Phi^{(a)}}]-\mc{R}_{\mc{S},0}^{(b)}[\bm{\varphi}_0^{\Phi^{(b)}}]}\\
&\qquad = \abs{\bigg(\int_{-\infty}^{-1/2}+\int_{1/2}^\infty\bigg)\int_{-\pi}^\pi \overline{\mc{G}}\,((\Phi^{(a)})^{-1}\bm{\varphi}_0^{\Phi^{(a)}}-(\Phi^{(b)})^{-1}\bm{\varphi}_0^{\Phi^{(b)}}) \, \epsilon d\bartheta d\bars}\\
&\qquad\le \abs{\bigg(\int_{-\infty}^{-1/2}+\int_{1/2}^\infty\bigg)\int_{-\pi}^\pi \big(\p_{\bars}\,\overline{\mc{G}}\big)\;(\overline{\bm{\varphi}}^{(a)}-\overline{\bm{\varphi}}^{(b)}) \, \epsilon d\bartheta d\bars} \\
&\qquad \qquad + \abs{\int_{-\pi}^\pi \overline{\mc{G}}\;(\overline{\bm{\varphi}}^{(a)}-\overline{\bm{\varphi}}^{(b)}) \, \epsilon d\bartheta d\bars \bigg|_{\bars=-1/2}^{\bars=1/2}}\\
&\qquad \le c\,\epsilon\norm{\overline{\bm{\varphi}}^{(a)}-\overline{\bm{\varphi}}^{(b)}}_{L^\infty} 
\le c\,\epsilon\norm{\int_0^s\big((\Phi^{(a)})^{-1}\bm{\varphi}_0^{\Phi^{(a)}}-(\Phi^{(b)})^{-1}\bm{\varphi}_0^{\Phi^{(b)}}\big)\,ds'}_{L^\infty}  \\
&\qquad \le c\,\epsilon\norm{(\Phi^{(a)})^{-1}\bm{\varphi}_0^{\Phi^{(a)}}-(\Phi^{(b)})^{-1}\bm{\varphi}_0^{\Phi^{(b)}}}_{L^\infty}
\le c(\kappa_*^{(a)},\kappa_*^{(b)})\,\epsilon \norm{\X^{(a)}-\X^{(b)}}_{C^2}\norm{\bm{\varphi}}_{L^\infty}\,,
\end{align*}
by the bound \eqref{eq:Phi_lip_est}. By a similar calculation, we additionally obtain 
\begin{align*}
\norm{\p_s^2\mc{R}_{\mc{S},0}^{(a)}[\bm{\varphi}_0^{\Phi^{(a)}}]-\p_s^2\mc{R}_{\mc{S},0}^{(b)}[\bm{\varphi}_0^{\Phi^{(b)}}]}_{L^\infty} &\le c\,\epsilon\norm{(\Phi^{(a)})^{-1}\bm{\varphi}_0^{\Phi^{(a)}}-(\Phi^{(b)})^{-1}\bm{\varphi}_0^{\Phi^{(b)}}}_{L^\infty}\\
&\le c(\kappa_*^{(a)},\kappa_*^{(b)})\,\epsilon \norm{\X^{(a)}-\X^{(b)}}_{C^2}\,\norm{\bm{\varphi}}_{L^\infty}\,.
\end{align*}
In summary, the remainder term $\mc{R}_{\mc{S},0}$ satisfies the estimates 
\begin{equation}\label{eq:RS0_bds}
\begin{aligned}
\norm{\mc{R}_{\mc{S},0}[\bm{\varphi}]}_{C^2} &\le c\,\epsilon\norm{\bm{\varphi}}_{L^\infty} \\
\norm{\mc{R}_{\mc{S},0}^{(a)}[\bm{\varphi}_0^{\Phi^{(a)}}]-\mc{R}_{\mc{S},0}^{(b)}[\bm{\varphi}_0^{\Phi^{(b)}}]}_{C^2} 
&\le c(\kappa_*^{(a)},\kappa_*^{(b)})\,\epsilon \norm{\X^{(a)}-\X^{(b)}}_{C^2}\,\norm{\bm{\varphi}}_{L^\infty}\,.
\end{aligned}
\end{equation}

We now turn to the remainders $\mc{R}_{\mc{S},1}$ and $\mc{R}_{\mc{S},2}$, which both contain some components which are small in $\epsilon$ and some components which are more regular. To separate these components, we define an auxiliary function
\begin{equation}\label{eq:Rtdef}
\bR_{\rm t}(s,\bars,\theta,\bartheta) = \bars\be_{\rm t}(s) + \epsilon\big(\be_r(s,\theta)-\be_r(s-\bars,\theta-\bartheta)\big)\,, 
\end{equation}
as in \cite[Section 3.2]{laplace}, and note that 
\begin{equation}\label{eq:Rtsq}
\begin{aligned}
|\bR_{\rm t}|^2 
&= \textstyle
|\barR|^2 + \epsilon\bars^2Q_{S,1} + \epsilon^2\bars\sin(\frac{\bartheta}{2})Q_{S,2} \\
|\bR|^2 
&= \textstyle
|\bR_{\rm t}|^2 + \bars^3 Q_{S,3} + \epsilon \bars^2\sin(\frac{\bartheta}{2}) Q_{S,4}\,,
\end{aligned}
\end{equation}
where the terms $Q_{S,j}(s,\theta,\bars,\bartheta)$ are given by 
\begin{align*}
Q_{S,1} &= \textstyle -2\wh\kappa(s,\theta)+\epsilon\wh\kappa(s,\theta)^2+\epsilon\kappa_3^2 -4\sin(\frac{\bartheta}{2})\be_{\rm t}(s)\cdot\bm{Q}_\theta(s,\bars,\theta-\frac{\bartheta}{2}) \\
&\qquad +  \textstyle 2\bars Q_{0,1}(s,\bars,\theta) +\epsilon\bars Q_{0,4}(s,\bars,\theta) \\
Q_{S,2} &=\textstyle 4\kappa_3\cos(\frac{\bartheta}{2})+4\bars Q_{0,3}(s,\theta,\bars,\bartheta) -4\bars \bm{Q}_\theta\cdot\bm{Q}_r(s,\bars,\theta)+4\bars \sin(\frac{\bartheta}{2})\abs{\bm{Q}_\theta}^2\\
Q_{S,3} &= \textstyle 2\be_{\rm t}\cdot\bm{Q}_{\rm t} +2\epsilon \bm{Q}_{\rm t}(s,\bars)\cdot\bm{Q}_r(s,\bars,\theta-\bartheta) +\bars\abs{\bm{Q}_{\rm t}}^2 \\
Q_{S,4} &= \textstyle - 4\bm{Q}_{\rm t}(s,\bars)\cdot\be_\theta(s,\theta-\frac{\bartheta}{2})
\end{align*}
for $Q_{0,j}$ as in \eqref{eq:Qexpand}.
In addition, each $Q_{S,j}$ satisfies an estimate of the form \eqref{eq:Qj_Cbeta}.
We emphasize that the difference between $\bR_{\rm t}$ and $\bR$ is higher order in $\bars$ and the difference between $\bR_{\rm t}$ and $\barR$ is higher order in $\epsilon$. 
Furthermore, $\bR_{\rm t}$ may be shown to satisfy the same estimates as $\bR$ in each of Lemmas \ref{lem:Rests}, \ref{lem:basic_est}, \ref{lem:odd_nm}, and \ref{lem:alpha_est} (see \cite{laplace}).

For integers $k\ge 1$, we additionally note the following expansions:
\begin{equation}\label{eq:Rt_inv_diff}
\begin{aligned}
\frac{1}{|\bR|^k}-\frac{1}{|\bR_{\rm t}|^k} &= \frac{\bars^3 Q_{S,3} + \epsilon \bars^2\sin(\frac{\bartheta}{2})Q_{S,4}}{\abs{\bR}|\bR_{\rm t}|(\abs{\bR}+|\bR_{\rm t}|)}\sum_{\ell=0}^{k-1}\frac{1}{|\bR|^\ell|\bR_{\rm t}|^{k-1-\ell}}\\
\frac{1}{|\bR_{\rm t}|^k}-\frac{1}{|\barR|^k} &= \frac{\epsilon\bars^2Q_{S,1} + \epsilon^2\bars\sin(\frac{\bartheta}{2})Q_{S,2}}{|\bR_{\rm t}||\barR|(|\bR_{\rm t}|+|\barR|)}\sum_{\ell=0}^{k-1}\frac{1}{|\bR_{\rm t}|^\ell|\barR|^{k-1-\ell}}\\
\frac{1}{\big|\bR_{\rm t}^{(a)}\big|^k}-\frac{1}{\big|\bR_{\rm t}^{(b)}\big|^k} &= \frac{\epsilon\bars^2(Q_{S,1}^{(a)}-Q_{S,1}^{(b)}) + \epsilon^2\bars\sin(\frac{\bartheta}{2})(Q_{S,2}^{(a)}-Q_{S,2}^{(b)})}{\big|\bR_{\rm t}^{(a)}\big| \big|\bR_{\rm t}^{(b)}\big|\big(\big|\bR_{\rm t}^{(a)}\big|+ \big|\bR_{\rm t}^{(b)}\big|\big)}\sum_{\ell=0}^{k-1}\frac{1}{\big|\bR_{\rm t}^{(a)}\big|^\ell\big|\bR_{\rm t}^{(b)}\big|^{k-1-\ell}}\,.
\end{aligned}
\end{equation}
Here, as usual, the superscripts $(a)$ and $(b)$ are used to denote quantities corresponding to two different but nearby filament centerlines $\X^{(a)}$ and $\X^{(b)}$, respectively, satisfying Lemma \ref{lem:XaXb_C2beta}.

Using $\bR_{\rm t}$, we further subdivide $\mc{R}_{\mc{S},1}$ as 
\begin{align*}
\mc{R}_{\mc{S},1}[\bm{\varphi}] &= \frac{1}{8\pi} \big(\mc{R}_{\mc{S},1,1}[\bm{\varphi}] + \mc{R}_{\mc{S},1,2}[\bm{\varphi}] \big)\,, \\
\mc{R}_{\mc{S},1,1}[\bm{\varphi}] &= \Phi^{-1}\int_{-1/2}^{1/2}\int_{-\pi}^\pi \bigg(\frac{1}{\abs{\bR}} - \frac{1}{|\bR_{\rm t}|} \bigg) \bm{\varphi}(s-\bars,\theta-\bartheta)\,\epsilon \,d\bartheta d\bars \\
\overline{\be}_j\cdot\mc{R}_{\mc{S},1,2}[\bm{\varphi}] &= \int_{-1/2}^{1/2}\int_{-\pi}^\pi\bigg(\frac{1}{|\bR_{\rm t}|}-\frac{1}{|\barR|}\bigg)\be_j\cdot\bm{\varphi}(s-\bars,\theta-\bartheta)\,\epsilon \,d\bartheta d\bars\,.
\end{align*}
Here we take the subscript $j\in \{{\rm t},r,\theta\}$ and use $\overline\be_j$ to denote the corresponding basis vector with respect to the straight frame about $\mc{C}_\epsilon$. Note that in this notation, $\overline\be_{\rm t}=\be_z$. 
We additionally subdivide $\mc{R}_{\mc{S},2}$ as
\begin{align*}
\mc{R}_{\mc{S},2}[\bm{\varphi}] &= \frac{1}{8\pi} \big(\mc{R}_{\mc{S},2,1}[\bm{\varphi}] + \mc{R}_{\mc{S},2,2}[\bm{\varphi}] \big)\,, \\
\overline{\be}_j\cdot\mc{R}_{\mc{S},2,1}[\bm{\varphi}] &= \int_{-1/2}^{1/2}\int_{-\pi}^\pi \bigg(\frac{(\be_j\cdot\bR)(\bR\cdot\bm{\varphi})}{|\bR|^3} - \frac{(\be_j\cdot\bR_{\rm t})(\bR_{\rm t}\cdot\bm{\varphi})}{|\bR_{\rm t}|^3} \bigg) \,\epsilon \,d\bartheta d\bars \\
\overline{\be}_j\cdot\mc{R}_{\mc{S},2,2}[\bm{\varphi}] &= \int_{-1/2}^{1/2}\int_{-\pi}^\pi \bigg(\frac{(\be_j\cdot\bR_{\rm t})(\bR_{\rm t}\cdot\bm{\varphi})}{|\bR_{\rm t}|^3} - \frac{(\overline{\be}_j\cdot\barR)(\barR\cdot(\Phi^{-1}\bm{\varphi}))}{|\barR|^3} \bigg) \,\epsilon \,d\bartheta d\bars\,.
\end{align*}
Using the above expressions along with the forms of $\barR$ \eqref{eq:barR}, $\bR$ \eqref{eq:curvedR}, $\bR_{\rm t}$ \eqref{eq:Rtdef}, and the differences \eqref{eq:RaRb_diff}, \eqref{eq:Rdiffk}, and \eqref{eq:Rt_inv_diff}, by Lemma \ref{lem:basic_est} we have 
\begin{equation}\label{eq:RS_lower_est1}
\begin{aligned}
\abs{\mc{R}_{\mc{S},1,1}[\bm{\varphi}]} &\le c(\kappa_*)\,\norm{\bm{\varphi}}_{L^\infty}\int_{-1/2}^{1/2}\int_{-\pi}^{\pi}\,\epsilon\,d\bartheta d\bars \le c(\kappa_*)\,\epsilon \norm{\bm{\varphi}}_{L^\infty}\\
\abs{\mc{R}_{\mc{S},1,2}[\bm{\varphi}]} &\le c(\kappa_*,c_\Gamma)\,\norm{\bm{\varphi}}_{L^\infty}\int_{-1/2}^{1/2}\int_{-\pi}^{\pi}\frac{\epsilon}{|\barR|}\,\epsilon\,d\bartheta d\bars \le c(\kappa_*,c_\Gamma)\,\epsilon^2\norm{\bm{\varphi}}_{L^\infty}\\
\abs{\mc{R}_{\mc{S},2,1}[\bm{\varphi}]} &\le c(\kappa_*)\,\norm{\bm{\varphi}}_{L^\infty}\int_{-1/2}^{1/2}\int_{-\pi}^{\pi}\,\epsilon\,d\bartheta d\bars \le c(\kappa_*)\,\epsilon \norm{\bm{\varphi}}_{L^\infty}\\
\abs{\mc{R}_{\mc{S},2,2}[\bm{\varphi}]} &\le c(\kappa_*,c_\Gamma)\,\norm{\bm{\varphi}}_{L^\infty}\int_{-1/2}^{1/2}\int_{-\pi}^{\pi}\frac{\epsilon}{|\barR|}\,\epsilon\,d\bartheta d\bars \le c(\kappa_*,c_\Gamma)\,\epsilon^2\norm{\bm{\varphi}}_{L^\infty}\\
\abs{\mc{R}_{\mc{S},3}[\bm{\varphi}]} &\le c(\kappa_*)\,\norm{\bm{\varphi}}_{L^\infty}\int_{-1/2}^{1/2}\int_{-\pi}^{\pi}\frac{1}{\abs{\bR}}\,\epsilon^2\,d\bartheta d\bars \le c(\kappa_*,c_\Gamma)\,\epsilon^2 \norm{\bm{\varphi}}_{L^\infty}  \,.
\end{aligned}
\end{equation}
In addition, using case (1) of Lemma \ref{lem:alpha_est}, we may estimate 
\begin{equation}\label{eq:RS_lower_est2}
\begin{aligned}
\abs{\mc{R}_{\mc{S},1,1}[\bm{\varphi}]}_{\dot C^{0,\alpha}} &\le c(\kappa_{*,\alpha},c_\Gamma)\,\epsilon^{1-\alpha} \norm{\bm{\varphi}}_{L^\infty} \\
\abs{\mc{R}_{\mc{S},1,2}[\bm{\varphi}]}_{\dot C^{0,\alpha}} &\le c(\kappa_{*,\alpha},c_\Gamma)\,\epsilon^{2-\alpha} \norm{\bm{\varphi}}_{L^\infty} \\
\abs{\mc{R}_{\mc{S},2,1}[\bm{\varphi}]}_{\dot C^{0,\alpha}} &\le c(\kappa_{*,\alpha},c_\Gamma)\,\epsilon^{1-\alpha} \norm{\bm{\varphi}}_{L^\infty} \\
\abs{\mc{R}_{\mc{S},2,2}[\bm{\varphi}]}_{\dot C^{0,\alpha}} &\le c(\kappa_{*,\alpha},c_\Gamma)\,\epsilon^{2-\alpha} \norm{\bm{\varphi}}_{L^\infty} \\
\abs{\mc{R}_{\mc{S},3}[\bm{\varphi}]}_{\dot C^{0,\alpha}} &\le c(\kappa_{*,\alpha},c_\Gamma)\,\epsilon^{2-\alpha} \norm{\bm{\varphi}}_{L^\infty} \,.
\end{aligned}
\end{equation}
Using that the remainder coefficients in the expansions \eqref{eq:curvedR}, \eqref{eq:Rtsq}, \eqref{eq:Rtsq}, and \eqref{eq:Rt_inv_diff} each satisfy a Lipschitz bound of the form \eqref{eq:Qj_Cbeta}, and using the bounds \eqref{eq:Phi_lip_est}, given two curves $\X^{(a)}$ and $\X^{(b)}$ satisfying Lemma \ref{lem:XaXb_C2beta}, the forms of the above kernels also yield 
\begin{equation}\label{eq:RS_lower_est3}
\begin{aligned}
\norm{\mc{R}_{\mc{S},1,1}^{(a)}[\bm{\varphi}_0^{\Phi^{(a)}}]-\mc{R}_{\mc{S},1,1}^{(b)}[\bm{\varphi}_0^{\Phi^{(b)}}]}_{\dot C^{0,\alpha}} &\le c(\kappa_{*,\alpha}^{(a)},\kappa_{*,\alpha}^{(b)},c_\Gamma)\,\epsilon^{1-\alpha}\norm{\X^{(a)}-\X^{(b)}}_{C^{2,\alpha}} \norm{\bm{\varphi}}_{L^\infty} \\
\norm{\mc{R}_{\mc{S},1,2}^{(a)}[\bm{\varphi}_0^{\Phi^{(a)}}]-\mc{R}_{\mc{S},1,2}^{(b)}[\bm{\varphi}_0^{\Phi^{(b)}}]}_{\dot C^{0,\alpha}} &\le c(\kappa_{*,\alpha}^{(a)},\kappa_{*,\alpha}^{(b)},c_\Gamma)\,\epsilon^{2-\alpha}\norm{\X^{(a)}-\X^{(b)}}_{C^{2,\alpha}} \norm{\bm{\varphi}}_{L^\infty} \\
\norm{\mc{R}_{\mc{S},2,1}^{(a)}[\bm{\varphi}_0^{\Phi^{(a)}}]-\mc{R}_{\mc{S},2,1}^{(b)}[\bm{\varphi}_0^{\Phi^{(b)}}]}_{\dot C^{0,\alpha}} &\le c(\kappa_{*,\alpha}^{(a)},\kappa_{*,\alpha}^{(b)},c_\Gamma)\,\epsilon^{1-\alpha}\norm{\X^{(a)}-\X^{(b)}}_{C^{2,\alpha}} \norm{\bm{\varphi}}_{L^\infty} \\
\norm{\mc{R}_{\mc{S},2,2}^{(a)}[\bm{\varphi}_0^{\Phi^{(a)}}]-\mc{R}_{\mc{S},2,2}^{(b)}[\bm{\varphi}_0^{\Phi^{(b)}}]}_{\dot C^{0,\alpha}} &\le c(\kappa_{*,\alpha}^{(a)},\kappa_{*,\alpha}^{(b)},c_\Gamma)\,\epsilon^{2-\alpha} \norm{\X^{(a)}-\X^{(b)}}_{C^{2,\alpha}}\norm{\bm{\varphi}}_{L^\infty} \\
\norm{\mc{R}_{\mc{S},3}^{(a)}[\bm{\varphi}_0^{\Phi^{(a)}}]-\mc{R}_{\mc{S},3}^{(b)}[\bm{\varphi}_0^{\Phi^{(b)}}]}_{\dot C^{0,\alpha}} &\le c(\kappa_{*,\alpha}^{(a)},\kappa_{*,\alpha}^{(b)},c_\Gamma)\,\epsilon^{2-\alpha}\norm{\X^{(a)}-\X^{(b)}}_{C^{2,\alpha}} \norm{\bm{\varphi}}_{L^\infty} \,.
\end{aligned}
\end{equation}

We now turn to bounds for the derivatives of $\mc{R}_{\mc{S},i}$, $i=1,2,3$, along the surface $\Gamma_\epsilon$. We begin with $s$-derivatives and note the expansions 
\begin{equation}\label{eq:psR}
\begin{aligned}
\p_s\bR &= (1-\epsilon\wh\kappa(s,\theta))\be_{\rm t}(s) + \epsilon\kappa_3\be_\theta(s,\theta)\\
\p_s\bR_{\rm t} &= (1-\epsilon\wh\kappa(s,\theta))\be_{\rm t}(s) +\bars(\kappa_1\be_{\rm n_1}(s)+\kappa_2\be_{\rm n_2}(s)) + \epsilon\kappa_3\be_\theta(s,\theta)\\
\p_s\bR\cdot\bR &= \textstyle \bars+2\epsilon^2\sin(\frac{\bartheta}{2})\sin(\bartheta)+ \epsilon\bars Q_{\rm Rs1} + \bars^2Q_{\rm Rs2}\\
\p_s\bR \cdot\bm{n}' &= \p_s\bR\cdot\be_r(s-\bars,\theta-\bartheta) = \bars\wh{\kappa}(s,\theta) - \kappa_3\epsilon\sin(\bartheta)+\epsilon\bars Q_{\rm Rs3} \\ 
\p_s\bR \cdot\bm{n} &= \p_s\bR\cdot\be_r(s,\theta) = 0\,, \qquad 
\p_s\bR \cdot\be_{\rm t}(s) = (1-\epsilon\wh\kappa(s,\theta))\,, \qquad
\p_s\bR \cdot\be_\theta(s,\theta) = \epsilon\kappa_3 \,.
\end{aligned}
\end{equation}
Again the remainders $Q_{{\rm Rs}j}$ each satisfy estimates of the form \eqref{eq:Qj_Cbeta}. 
%
%
We may then write
\begin{equation}\label{eq:psRt_expand}
\begin{aligned}
\p_s\bR_{\rm t}\cdot\bR_{\rm t} &= \p_s\barR\cdot\barR +\epsilon\bars Q_{S,5}(s,\bars,\theta,\bartheta) -\epsilon^2\kappa_3\sin(\bartheta) \\
&= \p_s\bR\cdot\bR + \bars^2 Q_{S,6}(s,\bars,\theta,\bartheta) \,,
\end{aligned}
\end{equation}
where $Q_{S,5}$ and $Q_{S,6}$ also satisfy \eqref{eq:Qj_Cbeta}.

We begin by estimating $\p_s\mc{R}_{\mc{S},1}$, which we again consider in two pieces as $\p_s\mc{R}_{\mc{S},1,1}+\p_s\mc{R}_{\mc{S},1,2}$. Note that we will treat the commutator term $[\p_s,\Phi^{-1}]$ as part of the smoother remainder $\p_s\mc{R}_{\mc{S},1,1}$. 
For each $j={\rm t},r,\theta$, we may write 
\begin{align*}
\overline{\be}_j\cdot \p_s\mc{R}_{\mc{S},1,1} &= {\rm p.v.}\int_{-1/2}^{1/2}\int_{-\pi}^\pi S_{1,1} \,\epsilon\,d\bartheta d\bars + \frac{\overline\be_j}{8\pi}\cdot\big[\p_s,\Phi^{-1} \big]\int_{-1/2}^{1/2}\int_{-\pi}^\pi\frac{1}{\abs{\bR}}\bm{\varphi}\,\epsilon \,d\bartheta d\bars\,,\\
S_{1,1} &= \bigg(\frac{\p_s\bR_{\rm t}\cdot\bR_{\rm t}-\p_s\bR\cdot\bR}{|\bR|^3}+\bigg(\frac{1}{|\bR_{\rm t}|^3}-\frac{1}{|\bR|^3}\bigg)(\p_s\bR_{\rm t}\cdot\bR_{\rm t} ) \bigg) \be_j\cdot\bm{\varphi} \\
&= \bigg(\frac{\bars^2Q_{S,6}}{|\bR|^3}+\bigg(\frac{1}{|\bR_{\rm t}|^3}-\frac{1}{|\bR|^3}\bigg)(\bars+\epsilon\bars Q_{S,5}-\epsilon^2\kappa_3\sin(\bartheta) ) \bigg) \be_j\cdot\bm{\varphi} \,.
\end{align*}
Using \eqref{eq:Rt_inv_diff} and Lemma \ref{lem:basic_est} along with the commutator estimate \eqref{eq:Phi_commutator_est}, we have 
\begin{align*}
\abs{\p_s\mc{R}_{\mc{S},1,1}[\bm{\varphi}]} &\le c(\kappa_*)\norm{\bm{\varphi}}_{L^\infty}\int_{-1/2}^{1/2}\int_{-\pi}^\pi \frac{1}{\abs{\bR}} \,\epsilon\,d\bartheta d\bars
\le c(\kappa_*,c_\Gamma)\,\epsilon\norm{\bm{\varphi}}_{L^\infty}\,.
\end{align*}
In addition, for $\alpha<\beta<1$, by case (1) of Lemma \ref{lem:alpha_est} we have
\begin{equation}\label{eq:psRS11_Cbeta}
\begin{aligned}
\abs{\p_s\mc{R}_{\mc{S},1,1}[\bm{\varphi}]}_{\dot C^{0,\beta}} &\le c(\kappa_*,c_\Gamma)\,\epsilon^{1-\beta}\left(\max_i\norm{Q_i}_{C_1^{0,\beta}} \right)\norm{\bm{\varphi}}_{L^\infty}
\le c(\kappa_{*,\beta},c_\Gamma)\,\epsilon^{1-2\beta}\norm{\bm{\varphi}}_{L^\infty}\,.
\end{aligned}
\end{equation}
Here the $Q_i$ include the remainder coefficients $Q_{S,3}$, $Q_{S,4}$, $Q_{S,5}$, and $Q_{S,6}$. 
Furthermore, given two filaments with centerlines $\X^{(a)}$, $\X^{(b)}$ as in Lemma \ref{lem:XaXb_C2beta}, we have 
\begin{align*}
S_{1,1}^{(a)}-S_{1,1}^{(b)} &=\bigg(\frac{\bars^2(Q_{S,6}^{(a)}-Q_{S,6}^{(b)})}{|\bR^{(a)}|^3}+ \bigg(\frac{1}{|\bR^{(a)}|^3}-\frac{1}{|\bR^{(b)}|^3}\bigg)\bars^2Q_{S,6}^{(b)} \\
&\quad +\bigg(\frac{1}{\big|\bR_{\rm t}^{(a)}\big|^3}-\frac{1}{|\bR^{(a)}|^3}\bigg)(\epsilon\bars (Q_{S,5}^{(a)}-Q_{S,5}^{(b)})-\epsilon^2(\kappa_3^{(a)}-\kappa_3^{(b)})\sin(\bartheta) ) \\
&\quad +\bigg(\frac{1}{\big|\bR_{\rm t}^{(a)}\big|^3}-\frac{1}{|\bR^{(a)}|^3}-\frac{1}{\big|\bR_{\rm t}^{(b)}\big|^3}+\frac{1}{|\bR^{(b)}|^3}\bigg)(\bars+\epsilon\bars Q_{S,5}^{(b)}-\epsilon^2\kappa_3^{(b)}\sin(\bartheta) )\bigg) \be_j^{(a)}\cdot\bm{\varphi}_0^{\Phi^{(a)}} \\
&\quad + \bigg(\frac{\bars^2Q_{S,6}^{(b)}}{|\bR^{(b)}|^3}+\bigg(\frac{1}{|\bR_{\rm t}^{(b)}|^3}-\frac{1}{|\bR^{(b)}|^3}\bigg)(\bars+\epsilon\bars Q_{S,5}^{(b)}-\epsilon^2\kappa_3^{(b)}\sin(\bartheta) ) \bigg) \big(\be_j^{(a)}\cdot\bm{\varphi}_0^{\Phi^{(a)}}-\be_j^{(b)}\cdot\bm{\varphi}_0^{\Phi^{(b)}}\big) \,.
\end{align*}
Since each remainder coefficient satisfies the Lipschitz estimate \eqref{eq:Qj_Cbeta}, using \eqref{eq:RaRb_diff}, \eqref{eq:Rt_inv_diff}, and Lemma \ref{lem:alpha_est}, case (1), along with the bounds \eqref{eq:Phi_lip_est} and \eqref{eq:Phi_comm_lip_est}, we have 
\begin{equation}\label{eq:psRS11_Cbeta_lip}
\begin{aligned}
\abs{\p_s\mc{R}_{\mc{S},1,1}^{(a)}[\bm{\varphi}_0^{\Phi^{(a)}}]-\p_s\mc{R}_{\mc{S},1,1}^{(b)}[\bm{\varphi}_0^{\Phi^{(b)}}]}_{\dot C^{0,\beta}} &\le
 c(\kappa_{*,\beta}^{(a)},\kappa_{*,\beta}^{(b)},c_\Gamma)\,\epsilon^{1-2\beta}\,\norm{\X^{(a)}-\X^{(b)}}_{C^{2,\beta}}\norm{\bm{\varphi}}_{L^\infty}\,.
\end{aligned}
\end{equation}
Similarly, for $j={\rm t},r,\theta$, we write
\begin{align*}
\overline{\be}_j\cdot \p_s\mc{R}_{\mc{S},1,2} &= {\rm p.v.}\int_{-1/2}^{1/2}\int_{-\pi}^\pi S_{1,2} \,\epsilon\,d\bartheta d\bars\,,\\
S_{1,2} &= \bigg(\frac{\p_s\barR\cdot\barR-\p_s\bR_{\rm t}\cdot\bR_{\rm t}}{|\bR_{\rm t}|^3}+\bigg(\frac{1}{|\barR|^3}-\frac{1}{|\bR_{\rm t}|^3}\bigg)\p_s\barR\cdot\barR \bigg) \be_j\cdot\bm{\varphi} \\
&= \bigg(-\frac{\epsilon\bars Q_{S,5}-\epsilon^2\kappa_3\sin(\bartheta)}{|\bR|^3}+\bars\bigg(\frac{1}{|\barR|^3}-\frac{1}{|\bR_{\rm t}|^3}\bigg) \bigg) \be_j\cdot\bm{\varphi} \,.
\end{align*}
Using \eqref{eq:Rt_inv_diff} and Lemmas \ref{lem:odd_nm} and \ref{lem:basic_est}, we have
\begin{align*}
\abs{\p_s\mc{R}_{\mc{S},1,2}[\bm{\varphi}]} &\le c(\kappa_*,c_\Gamma)\,\epsilon^{1+\alpha}\left( \max_i\norm{Q_i}_{C_2^{0,\alpha}}\right)\norm{\bm{\varphi}}_{C^{0,\alpha}}
\le c(\kappa_{*,\alpha},c_\Gamma)\,\epsilon\norm{\bm{\varphi}}_{C^{0,\alpha}}\,,
\end{align*}
where the $Q_i$ include $Q_{S,1}$, $Q_{S,2}$, and $Q_{S,5}$.
In addition, using case (2) of Lemma \ref{lem:alpha_est} along with \eqref{eq:Phi_lip_est} and \eqref{eq:Phi_comm_lip_est}, we have
\begin{equation}\label{eq:psRS12_Cbeta}
\begin{aligned}
\abs{\p_s\mc{R}_{\mc{S},1,2}[\bm{\varphi}]}_{\dot C^{0,\alpha}} &\le c(\kappa_*,c_\Gamma)\,\epsilon \,\max_i\left(\norm{Q_i}_{C_1^{0,\alpha^+}}+\norm{Q_i}_{C_2^{0,\alpha}}\right)\norm{\bm{\varphi}}_{C^{0,\alpha}}\\
&\le c(\kappa_{*,\alpha^+},c_\Gamma)\,\epsilon^{1-\alpha^+}\norm{\bm{\varphi}}_{C^{0,\alpha}}\,, \\
\abs{\p_s\mc{R}_{\mc{S},1,2}^{(a)}[\bm{\varphi}_0^{\Phi^{(a)}}]-\p_s\mc{R}_{\mc{S},1,2}^{(b)}[\bm{\varphi}_0^{\Phi^{(b)}}]}_{\dot C^{0,\alpha}} &\le 
c(\kappa_{*,\alpha^+}^{(a)},\kappa_{*,\alpha^+}^{(b)},c_\Gamma)\,\epsilon^{1-\alpha^+}\,\norm{\X^{(a)}-\X^{(b)}}_{C^{2,\alpha^+}}\norm{\bm{\varphi}}_{C^{0,\alpha}}\,.
\end{aligned}
\end{equation}

We next consider $\p_s\mc{R}_{\mc{S},2}$, which we again treat in two parts as $\p_s\mc{R}_{\mc{S},2,1}$ and $\p_s\mc{R}_{\mc{S},2,2}$. Again, we will treat the commutator term $[\p_s,\Phi^{-1}]$ as part of the smoother remainder $\p_s\mc{R}_{\mc{S},2,1}$.  
For each $j\in\{{\rm t},r,\theta\}$, we expand the components of $\p_s\mc{R}_{\mc{S},2,1}$ as 
\begin{align*}
\overline{\be}_j\cdot \p_s\mc{R}_{\mc{S},2,1} &= {\rm p.v.}\int_{-1/2}^{1/2}\int_{-\pi}^\pi \big( S_{2,1,j}+S_{2,2,j} +S_{2,3,j}\big) \,\epsilon\,d\bartheta d\bars  \\
&\qquad\qquad  + \frac{\overline\be_j}{8\pi}\cdot\big[\p_s,\Phi^{-1} \big]\int_{-1/2}^{1/2}\int_{-\pi}^\pi\frac{\bR(\bR\cdot\bm{\varphi})}{\abs{\bR}^3}\,\epsilon \,d\bartheta d\bars\,,\\
S_{2,1,j} &= \frac{(\be_j\cdot\p_s\bR)(\bR\cdot\bm{\varphi})}{|\bR|^3} - \frac{(\be_j\cdot\p_s\bR_{\rm t})(\bR_{\rm t}\cdot\bm{\varphi})}{|\bR_{\rm t}|^3} \\
S_{2,2,j} &= \frac{(\be_j\cdot\bR)(\p_s\bR\cdot\bm{\varphi})}{|\bR|^3} - \frac{(\be_j\cdot\bR_{\rm t})(\p_s\bR_{\rm t}\cdot\bm{\varphi})}{|\bR_{\rm t}|^3} \\
S_{2,3,j} &= -3\frac{(\be_j\cdot\bR)(\bR\cdot\bm{\varphi})(\p_s\bR\cdot\bR)}{|\bR|^5} + 3\frac{(\be_j\cdot\bR_{\rm t})(\bR_{\rm t}\cdot\bm{\varphi})(\p_s\bR_{\rm t}\cdot\bR_{\rm t})}{|\bR_{\rm t}|^5}\,.
\end{align*}
Using the expansions \eqref{eq:Qexpand}, \eqref{eq:curvedR}, \eqref{eq:psR}, and \eqref{eq:psRt_expand}, we may further expand the tangential direction integrand as 
\begin{align*}
S_{2,1,\rm t} &= \frac{-\bars^2(1-\epsilon\wh\kappa)(\bm{Q}_{\rm t}\cdot\bm{\varphi})}{|\bR|^3} + \bigg(\frac{1}{|\bR|^3}-\frac{1}{|\bR_{\rm t}|^3}\bigg)\textstyle(1-\epsilon\wh\kappa)(\bars\be_{\rm t}+2\epsilon\sin(\frac{\bartheta}{2})\be_\theta+\epsilon\bars\bm{Q}_r)\cdot\bm{\varphi} \\
S_{2,2,\rm t} &= \frac{-\bars^3Q_{\rm t}((1-\epsilon\wh\kappa)\be_{\rm t}+\epsilon\kappa_3\be_\theta)\cdot\bm{\varphi}}{|\bR|^3} 
- \frac{\bars^2(\kappa_1\be_{\rm n_1}+\kappa_2\be_{\rm n_2})\cdot\bm{\varphi}\,(1+\epsilon\bm{Q}_r\cdot\be_{\rm t})}{|\bR|^3} \\
&\qquad + \bigg(\frac{1}{|\bR|^3}-\frac{1}{|\bR_{\rm t}|^3}\bigg)\bars(1+\epsilon \bm{Q}_r\cdot\be_{\rm t})\big((1-\epsilon\wh\kappa)\be_{\rm t}+\bars(\kappa_1\be_{\rm n_1}+\kappa_2\be_{\rm n_2})+\epsilon\kappa_3\be_\theta\big)\cdot\bm{\varphi}  \\
S_{2,3,\rm t} &= \frac{3\bars^3Q_{S,6}(1+\epsilon \bm{Q}_r\cdot\be_{\rm t})(\bars\be_{\rm t}+2\epsilon\sin(\frac{\bartheta}{2})\be_\theta+\epsilon\bars\bm{Q}_r)\cdot\bm{\varphi}}{|\bR|^5} \\
& 
+ \frac{3\bars^3\big[Q_{\rm t}(\bars\wt{\bm{Q}}_{\rm t}+2\epsilon\sin(\frac{\bartheta}{2})\be_\theta) + (1+\epsilon\bm{Q}_r\cdot\be_{\rm t})\bm{Q}_{\rm t}\big]\cdot\bm{\varphi}\,(\bars\wt Q_{\rm Rs1}+2\epsilon^2\sin(\frac{\bartheta}{2})\sin(\bartheta))}{|\bR|^5} \\
& -3\bigg(\frac{1}{|\bR|^5}-\frac{1}{|\bR_{\rm t}|^5}\bigg)\textstyle \bars(1+\epsilon \bm{Q}_r\cdot\be_{\rm t})(\bars\be_{\rm t}+2\epsilon\sin(\frac{\bartheta}{2})\be_\theta+\epsilon\bars\bm{Q}_r)\cdot\bm{\varphi}\,(\bars\wt Q_{\rm Rs2}+2\epsilon^2\sin(\frac{\bartheta}{2})\sin(\bartheta))\,,
\end{align*}
where we define $\wt{\bm{Q}}_{\rm t}=\be_{\rm t}-\bars\bm{Q}_{\rm t}+\epsilon\bm{Q}_r$, $\wt Q_{\rm Rs1}=1+\epsilon Q_{\rm Rs1}+ \bars Q_{\rm Rs2}$, and $\wt Q_{\rm Rs2}=1+\epsilon Q_{\rm Rs1}+ \bars(Q_{\rm Rs2}-Q_{S,6})$. Using the expansion \eqref{eq:Rt_inv_diff} for $\frac{1}{|\bR|^3}-\frac{1}{|\bR_{\rm t}|^3}$ and $\frac{1}{|\bR|^5}-\frac{1}{|\bR_{\rm t}|^5}$, along with the commutator bound \eqref{eq:Phi_commutator_est}, we may apply Lemma \ref{lem:alpha_est}, case (1), to obtain, for $\alpha<\beta<1$,
\begin{align*}
\abs{\be_z\cdot \p_s\mc{R}_{\mc{S},2,1}[\bm{\varphi}]}_{\dot C^{0,\beta}}
\le c(\kappa_*,c_\Gamma)\,\epsilon^{1-\beta}\left(\max_i\norm{Q_i}_{C_1^{0,\beta}} \right)\norm{\bm{\varphi}}_{L^\infty}
\le c(\kappa_{*,\beta},c_\Gamma)\,\epsilon^{1-2\beta}\norm{\bm{\varphi}}_{L^\infty}\,.
\end{align*}
Here $Q_i$ is used as a catch-all notation for each of the coefficients appearing in the expressions above, including $\wh\kappa(s,\theta)$, $\kappa_j(s)$, $Q_{\rm t}$, $\bm{Q}_{\rm t}$, $\bm{Q}_r$, and additional $Q$ remainders.
In addition, using \eqref{eq:Qj_Cbeta}, \eqref{eq:Rdiffk}, \eqref{eq:Rt_inv_diff}, \eqref{eq:Phi_lip_est}, and \eqref{eq:Phi_comm_lip_est}, by Lemma \ref{lem:alpha_est}, we have
\begin{align*}
\abs{\be_z\cdot \p_s\mc{R}_{\mc{S},2,1}^{(a)}[\bm{\varphi}_0^{\Phi^{(a)}}]-\be_z\cdot\p_s\mc{R}_{\mc{S},2,1}^{(b)}[\bm{\varphi}_0^{\Phi^{(b)}}]}_{\dot C^{0,\beta}}
\le c(\kappa_{*,\beta}^{(a)},\kappa_{*,\beta}^{(b)},c_\Gamma)\,\epsilon^{1-2\beta}\,\norm{\X^{(a)}-\X^{(b)}}_{C^{2,\beta}} \norm{\bm{\varphi}}_{L^\infty}\,.
\end{align*}

We next expand the $\be_r$-direction integrand of the first integral in $\p_s\mc{R}_{\mc{S},2,1}$ using \eqref{eq:Qexpand}, \eqref{eq:curvedR}, \eqref{eq:psR}, and \eqref{eq:psRt_expand}. We have
\begin{align*}
S_{2,1,r} &= \frac{-\bars\wh\kappa\big(\bars\wt{\bm{Q}}_{\rm t}+2\epsilon\sin(\frac{\bartheta}{2})\be_\theta+\bars^2\bm{Q}_{\rm t}\big)\cdot\bm{\varphi}}{|\bR|^3} 
+ \bigg(\frac{1}{|\bR|^3}-\frac{1}{|\bR_{\rm t}|^3}\bigg)\textstyle \bars\wh\kappa(\bars\be_{\rm t}+2\epsilon\sin(\frac{\bartheta}{2})\be_\theta+\epsilon\bars\bm{Q}_r)\cdot\bm{\varphi} \\
S_{2,2,r} &= \frac{-\bars^2(\be_r\cdot\bm{Q}_{\rm t})((1-\epsilon\wh\kappa)\be_{\rm t}+\epsilon\kappa_3\be_\theta)\cdot\bm{\varphi}}{|\bR|^3} 
- \frac{\bars(2\epsilon\sin^2(\frac{\bartheta}{2})+\epsilon\bars^2Q_{0,5})(\kappa_1\be_{\rm n_1}+\kappa_2\be_{\rm n_2})\cdot\bm{\varphi}}{|\bR|^3} \\
&\qquad + \bigg(\frac{1}{|\bR|^3}-\frac{1}{|\bR_{\rm t}|^3}\bigg)\textstyle (2\epsilon\sin^2(\frac{\bartheta}{2})+\epsilon\bars^2 Q_{0,5})((1-\epsilon\wh\kappa)\be_{\rm t}+\bars(\kappa_1\be_{\rm n_1}+\kappa_2\be_{\rm n_2})+\epsilon\kappa_3\be_\theta)\cdot\bm{\varphi}  \\
S_{2,3,r} &= \frac{3\bars^3\wh\kappa Q_{S,6}(\bars\be_{\rm t}+2\epsilon\sin(\frac{\bartheta}{2})\be_\theta+\epsilon\bars\bm{Q}_r)\cdot\bm{\varphi}}{|\bR|^5} \\
&+ \frac{3\bars^2\big[(\be_r\cdot\bm{Q}_{\rm t})(\bars\wh{\bm{Q}}_{\rm t}+2\epsilon\sin(\frac{\bartheta}{2})\be_\theta) + (2\epsilon\sin^2(\frac{\bartheta}{2})+\epsilon\bars^2Q_{0,5})\bm{Q}_{\rm t}\big]\cdot\bm{\varphi}\,(\bars\wt Q_{\rm Rs1}+2\epsilon^2\sin(\frac{\bartheta}{2})\sin(\bartheta))}{|\bR|^5} \\
&
-3\bigg(\frac{1}{|\bR|^5}-\frac{1}{|\bR_{\rm t}|^5}\bigg)\textstyle (2\epsilon\sin^2(\frac{\bartheta}{2})+\epsilon\bars^2Q_{0,5})(\bars\be_{\rm t}+2\epsilon\sin(\frac{\bartheta}{2})\be_\theta+\epsilon\bars\bm{Q}_r)\cdot\bm{\varphi}\,(\bars \wt Q_{\rm Rs2}+2\epsilon^2\sin(\frac{\bartheta}{2})\sin(\bartheta) )\,,
\end{align*}
where $\wt{\bm{Q}}_{\rm t}$, $\wt Q_{\rm Rs1}$, and $\wt Q_{\rm Rs2}$ are as defined above. We again use the expansion \eqref{eq:Rt_inv_diff} for $\frac{1}{|\bR|^3}-\frac{1}{|\bR_{\rm t}|^3}$ and $\frac{1}{|\bR|^5}-\frac{1}{|\bR_{\rm t}|^5}$, along with the commutator estimate \eqref{eq:Phi_commutator_est}, and apply case (1) of Lemma \ref{lem:alpha_est} to obtain, for $\alpha<\beta<1$,
\begin{align*}
\abs{\overline{\be}_r\cdot \p_s\mc{R}_{\mc{S},2,1}[\bm{\varphi}]}_{\dot C^{0,\beta}}
\le c(\kappa_*,c_\Gamma)\epsilon^{1-\beta}\left(\max_i\norm{Q_i}_{C_1^{0,\beta}} \right)\norm{\bm{\varphi}}_{L^\infty}
\le c(\kappa_{*,\beta},c_\Gamma)\epsilon^{1-2\beta}\norm{\bm{\varphi}}_{L^\infty}\,,\\
\abs{\overline{\be}_r\cdot \p_s\mc{R}_{\mc{S},2,1}^{(a)}[\bm{\varphi}_0^{\Phi^{(a)}}]-\overline{\be}_r\cdot \p_s\mc{R}_{\mc{S},2,1}^{(b)}[\bm{\varphi}_0^{\Phi^{(b)}}]}_{\dot C^{0,\beta}}
\le c(\kappa_{*,\beta}^{(a)},\kappa_{*,\beta}^{(b)},c_\Gamma)\,\epsilon^{1-2\beta}\,\norm{\X^{(a)}-\X^{(b)}}_{C^{2,\beta}} \norm{\bm{\varphi}}_{L^\infty}\,.
\end{align*}

Finally, we expand the $\be_\theta$-direction integrand of the first integral of $\p_s\mc{R}_{\mc{S},2,1}$ using \eqref{eq:Qexpand}, \eqref{eq:curvedR}, \eqref{eq:psR}, and \eqref{eq:psRt_expand}. We have
\begin{align*}
S_{2,1,\theta} &= \frac{-\bars\,\p_\theta\wh\kappa(\bars\wt{\bm{Q}}_{\rm t}+2\epsilon\sin(\frac{\bartheta}{2})\be_\theta)\cdot\bm{\varphi}}{|\bR|^3}
-\frac{\bars^2(\bm{Q}_{\rm t}\cdot\bm{\varphi})(\epsilon\kappa_3+\bars\p_\theta\wh\kappa)}{|\bR|^3} \\
&\qquad + \bigg(\frac{1}{|\bR|^3}-\frac{1}{|\bR_{\rm t}|^3}\bigg)\textstyle (\epsilon\kappa_3+\bars\p_\theta\wh\kappa)\big(\bars\be_{\rm t}+2\epsilon\sin(\frac{\bartheta}{2})\be_\theta+\epsilon\bars\bm{Q}_r\big)\cdot\bm{\varphi} \\
S_{2,2,\theta} &= \frac{-\bars^2(\be_\theta\cdot\bm{Q}_{\rm t})((1-\epsilon\wh\kappa)\be_{\rm t}+\epsilon\kappa_3\be_\theta)\cdot\bm{\varphi}}{|\bR|^3} 
-\frac{\bars(\kappa_1\be_{\rm n_1}+\kappa_2\be_{\rm n_2})\cdot\bm{\varphi}\,(\epsilon\sin(\bartheta)+\epsilon\bars\bm{Q}_r\cdot\be_\theta)}{|\bR|^3}  \\
&\qquad + \bigg(\frac{1}{|\bR|^3}-\frac{1}{|\bR_{\rm t}|^3}\bigg)\textstyle (\epsilon\sin(\bartheta)+\epsilon\bars\bm{Q}_r\cdot\be_\theta)((1-\epsilon\wh\kappa)\be_{\rm t}+\bars(\kappa_1\be_{\rm n_1}+\kappa_2\be_{\rm n_2})+\epsilon\kappa_3\be_\theta)\cdot\bm{\varphi}  \\
S_{2,3,\theta} &= \frac{3\bars^2Q_{S,6}(\epsilon\sin(\bartheta)+\epsilon\bars\bm{Q}_r\cdot\be_\theta)(\bars\be_{\rm t}+2\epsilon\sin(\frac{\bartheta}{2})\be_\theta+\epsilon\bars\bm{Q}_r)\cdot\bm{\varphi}}{|\bR|^5}  \\
&+ \frac{3\bars^2\big[\be_\theta\cdot\bm{Q}_{\rm t}(\bars\wt{\bm{Q}}_{\rm t}+2\epsilon\sin(\frac{\bartheta}{2})\be_\theta)+ \epsilon(\sin(\bartheta)+\bars\bm{Q}_r\cdot\be_\theta)\bm{Q}_{\rm t}\big]\cdot\bm{\varphi}\,(\bars\wt Q_{\rm Rs1}+2\epsilon^2\sin(\frac{\bartheta}{2})\sin(\bartheta))}{|\bR|^5}\\
&-3\bigg(\frac{1}{|\bR|^5}-\frac{1}{|\bR_{\rm t}|^5}\bigg)\textstyle \epsilon(\sin(\bartheta)+\bars\bm{Q}_r\cdot\be_\theta)(\bars\be_{\rm t}+2\epsilon\sin(\frac{\bartheta}{2})\be_\theta+\epsilon\bars\bm{Q}_r)\cdot\bm{\varphi}\,(\bars\wt Q_{\rm Rs2}+2\epsilon^2\sin(\frac{\bartheta}{2})\sin(\bartheta))\,,
\end{align*}
with $\wt{\bm{Q}}_{\rm t}$, $\wt Q_{\rm Rs1}$, and $\wt Q_{\rm Rs2}$ as before. Again expanding $\frac{1}{|\bR|^3}-\frac{1}{|\bR_{\rm t}|^3}$ and $\frac{1}{|\bR|^5}-\frac{1}{|\bR_{\rm t}|^5}$ using \eqref{eq:Rt_inv_diff}, and using the commutator estimate \eqref{eq:Phi_commutator_est}, by Lemma \ref{lem:alpha_est}, case (1), we have, for $\alpha<\beta<1$,
\begin{align*}
\abs{\overline{\be}_\theta\cdot \p_s\mc{R}_{\mc{S},2,1}[\bm{\varphi}]}_{\dot C^{0,\beta}}
\le c(\kappa_*,c_\Gamma)\,\epsilon^{1-\beta}\left(\max_i\norm{Q_i}_{C_1^{0,\beta}} \right)\norm{\bm{\varphi}}_{L^\infty}
\le c(\kappa_*,c_\Gamma)\,\epsilon^{1-2\beta}\norm{\bm{\varphi}}_{L^\infty}\,,\\
\abs{\overline{\be}_\theta\cdot \p_s\mc{R}_{\mc{S},2,1}^{(a)}[\bm{\varphi}_0^{\Phi^{(a)}}]-\overline{\be}_\theta\cdot \p_s\mc{R}_{\mc{S},2,1}^{(b)}[\bm{\varphi}_0^{\Phi^{(b)}}]}_{\dot C^{0,\beta}}
\le c(\kappa_{*,\beta}^{(a)},\kappa_{*,\beta}^{(b)},c_\Gamma)\,\epsilon^{1-2\beta}\,\norm{\X^{(a)}-\X^{(b)}}_{C^{2,\beta}} \norm{\bm{\varphi}}_{L^\infty}\,.
\end{align*}

In total, for $\alpha<\beta<1$, we have
\begin{equation}\label{eq:psRS21_C0beta}
\begin{aligned}
\abs{\p_s\mc{R}_{\mc{S},2,1}[\bm{\varphi}]}_{\dot C^{0,\beta}}&\le c(\kappa_*,c_\Gamma)\epsilon^{1-2\beta}\norm{\bm{\varphi}}_{L^\infty} \\
\abs{\p_s\mc{R}_{\mc{S},2,1}^{(a)}[\bm{\varphi}_0^{\Phi^{(a)}}]-\p_s\mc{R}_{\mc{S},2,1}^{(b)}[\bm{\varphi}_0^{\Phi^{(b)}}]}_{\dot C^{0,\beta}}
&\le c(\kappa_{*,\beta}^{(a)},\kappa_{*,\beta}^{(b)},c_\Gamma)\,\epsilon^{1-2\beta}\,\norm{\X^{(a)}-\X^{(b)}}_{C^{2,\beta}} \norm{\bm{\varphi}}_{L^\infty}\,. \\
\end{aligned}
\end{equation}

We now turn to $\p_s\mc{R}_{\mc{S},2,2}$. For $j\in\{{\rm t},r,\theta\}$, we again expand the components of $\p_s\mc{R}_{\mc{S},2,2}$ as
\begin{align*}
\overline{\be}_j\cdot \p_s\mc{R}_{\mc{S},2,2} &= {\rm p.v.}\int_{-1/2}^{1/2}\int_{-\pi}^\pi \big( S_{2,4,j}+S_{2,5,j} +S_{2,6,j}\big) \,\epsilon\,d\bartheta d\bars\,,\\
S_{2,4,j} &= \frac{(\be_j\cdot\p_s\bR_{\rm t})(\bR_{\rm t}\cdot\bm{\varphi})}{|\bR_{\rm t}|^3} - \frac{(\overline{\be}_j\cdot\p_s\barR)(\barR\cdot(\Phi^{-1}\bm{\varphi}))}{|\barR|^3}  \\
S_{2,5,j} &= \frac{(\be_j\cdot\bR_{\rm t})(\p_s\bR_{\rm t}\cdot\bm{\varphi})}{|\bR_{\rm t}|^3} - \frac{(\overline{\be}_j\cdot\barR)(\p_s\barR\cdot(\Phi^{-1}\bm{\varphi}))}{|\barR|^3} \\
 S_{2,6,j} &= -3\frac{(\be_j\cdot\bR_{\rm t})(\bR_{\rm t}\cdot\bm{\varphi})(\p_s\bR_{\rm t}\cdot\bR_{\rm t})}{|\bR_{\rm t}|^5} + 3\frac{(\overline{\be}_j\cdot\barR)(\barR\cdot(\Phi^{-1}\bm{\varphi}))(\p_s\barR\cdot\barR)}{|\barR|^5}\,.
\end{align*}
Similar to before, we use \eqref{eq:barR}, \eqref{eq:Qexpand}, and \eqref{eq:psRt_expand} to expand the tangential direction integrand as 
\begin{align*}
S_{2,4,\rm t} &= \frac{\epsilon\wh\kappa(\bars\be_{\rm t}+2\epsilon\sin(\frac{\bartheta}{2})\be_\theta+\epsilon\bars\bm{Q}_r)\cdot\bm{\varphi}+\epsilon\bars(\bm{Q}_r\cdot\bm{\varphi})}{|\bR_{\rm t}|^3}  \\
&\qquad + \bigg(\frac{1}{|\bR_{\rm t}|^3}-\frac{1}{|\barR|^3}\bigg)\textstyle(\bars\be_{\rm t}+2\epsilon\sin(\frac{\bartheta}{2})\be_\theta)\cdot\bm{\varphi}  \\
S_{2,5,\rm t} &= \frac{\epsilon\bars(\bm{Q}_r\cdot\be_{\rm t})(\be_{\rm t}\cdot\bm{\varphi})}{|\bR_{\rm t}|^3}+\frac{\bars(1+\epsilon\bm{Q}_r\cdot\be_{\rm t})(-\epsilon\wh\kappa\be_{\rm t}+\bars(\kappa_1\be_{\rm n_1}+\kappa_2\be_{\rm n_2})+\epsilon\kappa_3\be_\theta)\cdot\bm{\varphi}}{|\bR_{\rm t}|^3} \\
&\qquad + \bigg(\frac{1}{|\bR_{\rm t}|^3}-\frac{1}{|\barR|^3} \bigg)\bars(\be_{\rm t}\cdot\bm{\varphi})\\
 S_{2,6,\rm t} &= -\frac{3\epsilon\bars(\bm{Q}_r\cdot\be_{\rm t})(\bars\be_{\rm t}+2\epsilon\sin(\frac{\bartheta}{2})\be_\theta+\epsilon\bars\bm{Q}_r)\cdot\bm{\varphi}\,(\bars+\epsilon\bars Q_{S,5}-\epsilon^2\kappa_3\sin(\bartheta))}{|\bR_{\rm t}|^5} \\
& -\frac{3\epsilon\bars\big[\bars(\bars+\epsilon\bars Q_{S,5}-\epsilon^2\kappa_3\sin(\bartheta))\bm{Q}_r +(\bars Q_{S,5}-\epsilon\kappa_3\sin(\bartheta))(\bars\be_{\rm t}+2\epsilon\sin(\frac{\bartheta}{2})\be_\theta)\big]\cdot\bm{\varphi}}{|\bR_{\rm t}|^5} \\
&\qquad - 3\bigg(\frac{1}{|\bR_{\rm t}|^5}- \frac{1}{|\barR|^5}\bigg)\textstyle \bars^2(\bars\be_{\rm t}+2\epsilon\sin(\frac{\bartheta}{2})\be_\theta)\cdot\bm{\varphi}\,.
\end{align*}
Here most terms appear with an additional factor of $\epsilon$ compared to those appearing in $\p_s\mc{R}_{\mc{S},2,1}$, including the terms involving $\frac{1}{|\bR_{\rm t}|^3}- \frac{1}{|\barR|^3}$ and $\frac{1}{|\bR_{\rm t}|^5}- \frac{1}{|\barR|^5}$, by \eqref{eq:Rt_inv_diff}. The second term in $S_{2,5,\rm t}$ does not carry an additional factor of $\epsilon$ in all components. In particular, there is a term of the form $\frac{\bars^2(\kappa_1\be_{\rm n_1}+\kappa_2\be_{\rm n_2})\cdot\bm{\varphi}}{\abs{\bR_{\rm t}}^3}$. We emphasize that this term instead includes an additional factor of $\bars$ in the numerator, and that $\kappa_1\be_{\rm n_1}+\kappa_2\be_{\rm n_2}$ depends only on $s$ and not $\theta$. By case (1) of Lemma \ref{lem:alpha_est}, this term satisfies
\begin{align*}
 \abs{\int_{-1/2}^{1/2}\int_{-\pi}^\pi \frac{\bars^2(\kappa_1\be_{\rm n_1}+\kappa_2\be_{\rm n_2})\cdot\bm{\varphi}}{\abs{\bR_{\rm t}}^3}\,\epsilon\,d\bartheta d\bars }_{\dot C^{0,\alpha}}
 &\le c(\kappa_*,c_\Gamma)\,\epsilon^{1-\alpha}\norm{\X_{ss}}_{C^{0,\alpha}(\T)}\norm{\bm{\varphi}}_{L^\infty}\\
&\le c(\kappa_{*,\alpha},c_\Gamma)\,\epsilon^{1-\alpha}\norm{\bm{\varphi}}_{L^\infty}\,.
\end{align*} 
In particular, since $\kappa_1\be_{\rm n_1}+\kappa_2\be_{\rm n_2}$ depends only on $s$, we do not lose an additional factor of $\epsilon^{-\alpha}$.
The remaining expressions making up $\be_z\cdot \p_s\mc{R}_{\mc{S},2,2}$ may be estimated using case (2) of Lemma \ref{lem:alpha_est}. In total, we obtain
\begin{align*}
\abs{\be_z\cdot \p_s\mc{R}_{\mc{S},2,2}}_{\dot C^{0,\alpha}} &\le c(\kappa_{*,\alpha},c_\Gamma)\,\epsilon^{1-\alpha}\norm{\bm{\varphi}}_{L^\infty}
+ c(\kappa_*,c_\Gamma)\,\epsilon \,\max_i\left(\norm{Q_i}_{C_1^{0,\alpha^+}}+\norm{Q_i}_{C_2^{0,\alpha}}\right)\norm{\bm{\varphi}}_{C^{0,\alpha}}\\
&\le c(\kappa_{*,\alpha^+},c_\Gamma)\,\epsilon^{1-\alpha^+}\norm{\bm{\varphi}}_{C^{0,\alpha}}\,.
\end{align*}
By similar arguments and estimate \eqref{eq:Phi_lip_est}, for two filaments with centerlines $\X^{(a)}$, $\X^{(b)}$ satisfying Lemma \ref{lem:XaXb_C2beta}, we may additionally obtain the Lipschitz estimate
\begin{align*}
\abs{\be_z\cdot \p_s\mc{R}_{\mc{S},2,2}^{(a)}-\be_z\cdot \p_s\mc{R}_{\mc{S},2,2}^{(b)}}_{\dot C^{0,\alpha}} 
&\le c(\kappa_{*,\alpha^+}^{(a)},\kappa_{*,\alpha^+}^{(b)},c_\Gamma)\,\epsilon^{1-\alpha^+}\,\norm{\X^{(a)}-\X^{(b)}}_{C^{2,\alpha^+}}\norm{\bm{\varphi}}_{C^{0,\alpha}}\,.
\end{align*}

We next consider $\p_s\mc{R}_{\mc{S},2,2}$ in directions normal to the filament centerline. It will be convenient to consider $S_{2,4,{\rm n_1}}$ and $S_{2,4,{\rm n_2}}$ (in the directions $\be_{\rm n_1}$ and $\be_{\rm n_2}$) instead of $S_{2,4,r}$ and $S_{2,4,\theta}$ to avoid introducing artificial $\theta$-dependence resulting in a loss of $\epsilon^{-\alpha}$. The expressions $S_{2,4,{\rm n_1}}$ and $S_{2,4,{\rm n_2}}$ are given by 
\begin{align*}
S_{2,4,{\rm n_1}} &= \frac{(\bars\kappa_1(s)-\epsilon\kappa_3\sin(\theta))(\bars\be_{\rm t}+2\epsilon\sin(\frac{\bartheta}{2})\be_\theta+\epsilon\bars\bm{Q}_r)\cdot\bm{\varphi}}{|\bR_{\rm t}|^3} \\
S_{2,4,{\rm n_2}} &= \frac{(\bars\kappa_2(s)+\epsilon\kappa_3\cos(\theta))(\bars\be_{\rm t}+2\epsilon\sin(\frac{\bartheta}{2})\be_\theta+\epsilon\bars\bm{Q}_r)\cdot\bm{\varphi}}{|\bR_{\rm t}|^3} \,.
\end{align*}
Both expressions contain terms proportional to $\frac{\bars^2}{|\bR_{\rm t}|^3}$, $\frac{\bars \epsilon\sin(\frac{\bartheta}{2})}{|\bR_{\rm t}|^3}$, and terms which are higher order in $\epsilon$. For the terms of the first form, using case (1) of Lemma \ref{lem:alpha_est}, we have
\begin{align*}
 \abs{\int_{-1/2}^{1/2}\int_{-\pi}^\pi \frac{\bars^2 \kappa_i(s)\be_{\rm t}\cdot\bm{\varphi}}{\abs{\bR_{\rm t}}^3}\,\epsilon\,d\bartheta d\bars }_{\dot C^{0,\alpha}}
 &\le c(\kappa_*,c_\Gamma)\,\epsilon^{1-\alpha}\norm{\X_{ss}}_{C^{0,\alpha}(\T)}\norm{\bm{\varphi}}_{L^\infty}\\
&\le c(\kappa_{*,\alpha},c_\Gamma)\,\epsilon^{1-\alpha}\norm{\bm{\varphi}}_{L^\infty}\,, 
\end{align*} 
since $\kappa_i(s)\be_{\rm t}(s)$, $i=1,2$, depends only on $s$ and not $\theta$ or $\bartheta$. In particular, we do not lose an additional factor of $\epsilon^{-\alpha}$ from estimating these terms in $C^{0,\alpha}$. For terms proportional to $\frac{\bars \epsilon\sin(\frac{\bartheta}{2})}{|\bR_{\rm t}|^3}$, we treat $\sin(\frac{\bartheta}{2})$ as a coefficient, i.e. let $\bm{Q}(s,\bars,\theta,\bartheta)=2\kappa_i(s)\sin(\frac{\bartheta}{2})\be_\theta(s,\theta)$, and use case (2) of Lemma \ref{lem:alpha_est} to estimate
\begin{equation}\label{eq:sine_treatment}
\begin{aligned}
 \abs{{\rm p.v.}\int_{-1/2}^{1/2}\int_{-\pi}^\pi \frac{\epsilon \bars \bm{Q}\cdot\bm{\varphi}}{\abs{\bR_{\rm t}}^3}\,\epsilon\,d\bartheta d\bars }_{\dot C^{0,\alpha}}
 &\le c(\kappa_*,c_\Gamma)\,\epsilon\,\left(\norm{\bm{Q}}_{C_1^{0,\alpha^+}}+\norm{\bm{Q}}_{C_2^{0,\alpha}} \right)\norm{\bm{\varphi}}_{C^{0,\alpha}}\\
&\le c(\kappa_{*,\alpha^+},c_\Gamma)\,\epsilon^{1-\alpha^+}\norm{\bm{\varphi}}_{C^{0,\alpha}}\,. 
\end{aligned}
\end{equation}
The remaining terms may be treated using case (2) of Lemma \ref{lem:alpha_est}, yielding, for $j\in\{{\rm n_1,n_2}\}$, 
\begin{align*}
\abs{{\rm p.v.}\int_{-1/2}^{1/2}\int_{-\pi}^\pi S_{2,4,j}\,\epsilon\,d\bartheta d\bars }_{\dot C^{0,\alpha}}
\le c(\kappa_{*,\alpha^+},c_\Gamma)\,\epsilon^{1-\alpha^+}\norm{\bm{\varphi}}_{C^{0,\alpha}}\,.
\end{align*}
In addition, using \eqref{eq:Qj_Cbeta}, \eqref{eq:Rdiffk}, \eqref{eq:Rt_inv_diff}, and \eqref{eq:Phi_lip_est}, by Lemma \ref{lem:alpha_est}, case (2), we obtain 
\begin{align*}
\abs{{\rm p.v.}\int_{-1/2}^{1/2}\int_{-\pi}^\pi \big(S_{2,4,j}^{(a)}-S_{2,4,j}^{(b)}\big)\,\epsilon\,d\bartheta d\bars }_{\dot C^{0,\alpha}}
\le c(\kappa_{*,\alpha^+}^{(a)},\kappa_{*,\alpha^+}^{(b)},c_\Gamma)\,\epsilon^{1-\alpha^+}\,\norm{\X^{(a)}-\X^{(b)}}_{C^{2,\alpha^+}} \norm{\bm{\varphi}}_{C^{0,\alpha}}\,.
\end{align*}

We next use \eqref{eq:barR}, \eqref{eq:Qexpand}, and \eqref{eq:psRt_expand} to expand the remaining terms in the $\be_r$- and $\be_\theta$-direction integrands as 
\begin{align*}
S_{2,5,r} &= \frac{\epsilon\bars^2Q_{0,5}(\be_{\rm t}\cdot\bm{\varphi})}{|\bR_{\rm t}|^3}+\frac{(\epsilon\bars^2Q_{0,5}+ 2\epsilon\sin^2(\frac{\bartheta}{2}))(-\epsilon\wh\kappa\be_{\rm t}+\bars(\kappa_1(s)\be_{\rm n_1}(s)+\kappa_2(s)\be_{\rm n_2}(s))+\epsilon\kappa_3\be_\theta)\cdot\bm{\varphi}}{|\bR_{\rm t}|^3} \\
 &\qquad + \bigg(\frac{1}{|\bR_{\rm t}|^3}-\frac{1}{|\barR|^3} \bigg)\textstyle 2\epsilon\sin^2(\frac{\bartheta}{2}) (\be_{\rm t}\cdot\bm{\varphi})\\
 S_{2,6,r} &= -\frac{3\epsilon\bars\big[\bars Q_{0,5}(\bars\be_{\rm t}+2\epsilon\sin(\frac{\bartheta}{2})\be_\theta+\epsilon\bars\bm{Q}_r) + 2\epsilon\sin^2(\frac{\bartheta}{2})\bm{Q}_r\big]\cdot\bm{\varphi}\,(\bars +\epsilon\bars Q_{S,5} -\epsilon^2\kappa_3\sin(\bartheta))}{|\bR_{\rm t}|^5} \\
&\qquad -\frac{6\epsilon^2\sin^2(\frac{\bartheta}{2})(\bars Q_{S,5}-\epsilon\kappa_3\sin(\bartheta))(\bars\be_{\rm t}+2\epsilon\sin(\frac{\bartheta}{2})\be_\theta)\cdot\bm{\varphi}}{|\bR_{\rm t}|^5} \\
&\qquad - \bigg(\frac{1}{|\bR_{\rm t}|^5}- \frac{1}{|\barR|^5}\bigg)\textstyle 6\bars\epsilon\sin^2(\frac{\bartheta}{2})(\bars\be_{\rm t}+2\epsilon\sin(\frac{\bartheta}{2})\be_\theta)\cdot\bm{\varphi} \\
S_{2,5,\theta} &= \frac{\epsilon\bars(\bm{Q}_r\cdot\be_\theta)\be_{\rm t}\cdot\bm{\varphi}}{|\bR_{\rm t}|^3}
+ \frac{\epsilon(\bars(\bm{Q}_r\cdot\be_\theta) + \sin(\bartheta))(-\epsilon\wh\kappa\be_{\rm t}+\bars(\kappa_1\be_{\rm n_1}+\kappa_2\be_{\rm n_2})+\epsilon\kappa_3\be_\theta)\cdot\bm{\varphi}}{|\bR_{\rm t}|^3} \\
&\qquad + \bigg(\frac{1}{|\bR_{\rm t}|^3}-\frac{1}{|\barR|^3} \bigg)\epsilon\sin(\bartheta)(\be_{\rm t}\cdot\bm{\varphi})\\
 S_{2,6,\theta} &= -\frac{3\epsilon\bars\big[(\bm{Q}_r\cdot\be_\theta)(\bars\be_{\rm t}+2\epsilon\sin(\frac{\bartheta}{2})\be_\theta+\epsilon\bars\bm{Q}_r)+ \epsilon\sin(\bartheta)\bm{Q}_r\big]\cdot\bm{\varphi}\,(\bars +\epsilon\bars Q_{S,5} -\epsilon^2\kappa_3\sin(\bartheta))}{|\bR_{\rm t}|^5} \\
&\qquad -\frac{3\epsilon^2\sin(\bartheta)(\bars Q_{S,5}-\epsilon\kappa_3\sin(\bartheta))(\bars\be_{\rm t}+2\epsilon\sin(\frac{\bartheta}{2})\be_\theta)\cdot\bm{\varphi}}{|\bR_{\rm t}|^5} \\
&\qquad - 3\bigg(\frac{1}{|\bR_{\rm t}|^5}- \frac{1}{|\barR|^5}\bigg)\textstyle \epsilon\sin(\bartheta)\bars(\bars\be_{\rm t}+2\epsilon\sin(\frac{\bartheta}{2})\be_\theta)\cdot\bm{\varphi} \,.
\end{align*}
The second and third terms in $S_{2,5,r}$ and $S_{2,6,r}$ both contain components that must be treated as in \eqref{eq:sine_treatment} to avoid loss of $\epsilon^{-\alpha}$. In total, using case (2) of Lemma \ref{lem:alpha_est}, for both $j=r$ and $j=\theta$ we obtain the bound
\begin{align*}
&\abs{{\rm p.v.}\int_{-1/2}^{1/2}\int_{-\pi}^\pi \big( S_{2,5,j}+ S_{2,6,j}\big)\,\epsilon\,d\bartheta d\bars }_{\dot C^{0,\alpha}}
\le c(\kappa_{*,\alpha^+},c_\Gamma)\,\epsilon^{1-\alpha^+}\norm{\bm{\varphi}}_{C^{0,\alpha}}\,,\\
&\abs{{\rm p.v.}\int_{-1/2}^{1/2}\int_{-\pi}^\pi \big( S_{2,5,j}^{(a)}-S_{2,5,j}^{(b)}+ S_{2,6,j}^{(a)}-S_{2,6,j}^{(b)}\big)\,\epsilon\,d\bartheta d\bars }_{\dot C^{0,\alpha}}\\
&\qquad\le c(\kappa_{*,\alpha^+}^{(a)},\kappa_{*,\alpha^+}^{(b)},c_\Gamma)\,\epsilon^{1-\alpha^+}\,\norm{\X^{(a)}-\X^{(b)}}_{C^{2,\alpha^+}}\norm{\bm{\varphi}}_{C^{0,\alpha}}\,.
\end{align*}

Altogether, for $\alpha^+$ satisfying $\alpha<\alpha^+<1$, we have
\begin{equation}\label{eq:psRS22_C0alpha}
\begin{aligned}
\abs{\p_s\mc{R}_{\mc{S},2,2}[\bm{\varphi}]}_{\dot C^{0,\alpha}}&\le c(\kappa_{*,\alpha^+},c_\Gamma)\,\epsilon^{1-\alpha^+}\norm{\bm{\varphi}}_{C^{0,\alpha}}\,, \\
\abs{\p_s\mc{R}_{\mc{S},2,2}^{(a)}[\bm{\varphi}_0^{\Phi^{(a)}}]-\p_s\mc{R}_{\mc{S},2,2}^{(b)}[\bm{\varphi}_0^{\Phi^{(b)}}]}_{\dot C^{0,\alpha}} &\le c(\kappa_{*,\alpha^+}^{(a)},\kappa_{*,\alpha^+}^{(b)},c_\Gamma)\,\epsilon^{1-\alpha^+}\,\norm{\X^{(a)}-\X^{(b)}}_{C^{2,\alpha^+}}\norm{\bm{\varphi}}_{C^{0,\alpha}}\,. 
\end{aligned}
\end{equation}

Finally, we derive bounds for $\p_s\mc{R}_{\mc{S},3}$, given by
\begin{align*}
\p_s\mc{R}_{\mc{S},3}[\bm{\varphi}] &= -\Phi^{-1}\,{\rm p.v.}\int_{-1/2}^{1/2}\int_{-\pi}^\pi (\p_s\mc{G})\,\bm{\varphi}\, \epsilon^2\wh\kappa \,d\bartheta d\bars
- [\p_s,\Phi^{-1}]\int_{-1/2}^{1/2}\int_{-\pi}^\pi \mc{G}\,\bm{\varphi}\, \epsilon^2\wh\kappa\,d\bartheta d\bars\,.
\end{align*}
 We may write the integrand of the first term of $\p_s\mc{R}_{\mc{S},3}$ as
\begin{align*}
&\big(\p_s \mc{G}\,\bm{\varphi}\big)\epsilon^2\wh\kappa =  \bigg(-\frac{(\p_s\bR\cdot\bR)\bm{\varphi}}{|\bR|^3}+\frac{\p_s\bR(\bR\cdot\bm{\varphi})}{|\bR|^3} + \frac{\bR(\p_s\bR\cdot\bm{\varphi})}{|\bR|^3} -3\frac{\bR(\bR\cdot\bm{\varphi})(\p_s\bR\cdot\bR)}{|\bR|^5}\bigg)\epsilon^2\wh\kappa \\
&=  \bigg(-\frac{(\bars(1-\epsilon\wh\kappa)\be_{\rm t}\cdot\wt{\bm{Q}}_{\rm t}+2\bars\epsilon^2\kappa_3\be_\theta\cdot\wt{\bm{Q}}_{\rm t}+\epsilon^2\kappa_3\sin(\bartheta))\bm{\varphi}}{|\bR|^3}\\
&+ \frac{((1-\epsilon\wh\kappa)\be_{\rm t}+\epsilon\kappa_3\be_\theta)(\bars\wt{\bm{Q}}_{\rm t}+2\epsilon\sin(\frac{\bartheta}{2})\be_\theta)\cdot\bm{\varphi}}{|\bR|^3} + \frac{(\bars\wt{\bm{Q}}_{\rm t}+2\epsilon\sin(\frac{\bartheta}{2})\be_\theta)((1-\epsilon\wh\kappa)\be_{\rm t}+\epsilon\kappa_3\be_\theta)\cdot\bm{\varphi}}{|\bR|^3} \\
& -3\frac{(\bars\wt{\bm{Q}}_{\rm t}+2\epsilon\sin(\frac{\bartheta}{2})\be_\theta)(\bars\wt{\bm{Q}}_{\rm t}+2\epsilon\sin(\frac{\bartheta}{2})\be_\theta)\cdot\bm{\varphi}\,(\bars(1-\epsilon\wh\kappa)\be_{\rm t}\cdot\wt{\bm{Q}}_{\rm t}+2\bars\epsilon^2\kappa_3\be_\theta\cdot\wt{\bm{Q}}_{\rm t}+\epsilon^2\kappa_3\sin(\bartheta))}{|\bR|^5}\bigg)\epsilon^2\wh\kappa \,.
\end{align*} 
Noting the additional factor of $\epsilon$ due to \eqref{eq:jacfac}, by Lemma \ref{lem:alpha_est}, case (2), we then have 
\begin{align*}
\abs{\Phi^{-1}\,{\rm p.v.}\int_{-1/2}^{1/2}\int_{-\pi}^\pi (\p_s\mc{G})\,\bm{\varphi}\, \epsilon^2\wh\kappa \,d\bartheta d\bars}_{\dot C^{0,\alpha}} &\le c(\kappa_{*,\alpha^+},c_\Gamma)\,\epsilon^{1-\alpha^+}\norm{\bm{\varphi}}_{C^{0,\alpha}}\,.
\end{align*}
Using the commutator bound \eqref{eq:Phi_commutator_est} to estimate the second term of $\p_s\mc{R}_{\mc{S},3}$, we then have
\begin{equation}\label{eq:psRS3_C0alpha}
\begin{aligned}
\abs{\p_s\mc{R}_{\mc{S},3}[\bm{\varphi}]}_{\dot C^{0,\alpha}} &\le c(\kappa_{*,\alpha^+},c_\Gamma)\,\epsilon^{1-\alpha^+}\norm{\bm{\varphi}}_{C^{0,\alpha}}\,,\\
\abs{\p_s\mc{R}_{\mc{S},3}^{(a)}[\bm{\varphi}_0^{\Phi^{(a)}}]-\p_s\mc{R}_{\mc{S},3}^{(b)}[\bm{\varphi}_0^{\Phi^{(b)}}]}_{\dot C^{0,\alpha}} &\le c(\kappa_{*,\alpha^+}^{(a)},\kappa_{*,\alpha^+}^{(b)},c_\Gamma)\,\epsilon^{1-\alpha^+}\,\norm{\X^{(a)}-\X^{(b)}}_{C^{2,\alpha^+}} \norm{\bm{\varphi}}_{C^{0,\alpha}}\,. 
\end{aligned}
\end{equation}

We may now put everything together. Define 
\begin{align*}
\mc{R}_{\mc{S},+}[\bm{\varphi}] &= \mc{R}_{\mc{S},0}[\bm{\varphi}]+\mc{R}_{\mc{S},1,1}[\bm{\varphi}] + \mc{R}_{\mc{S},2,1}[\bm{\varphi}] \,, \\
\mc{R}_{\mc{S},\epsilon}[\bm{\varphi}] &=\mc{R}_{\mc{S},1,2}[\bm{\varphi}] + \mc{R}_{\mc{S},2,2}[\bm{\varphi}] + \mc{R}_{\mc{S},3}[\bm{\varphi}]\,.
\end{align*}
Then, combining the bounds \eqref{eq:RS0_bds}, \eqref{eq:RS_lower_est1}, \eqref{eq:RS_lower_est2}, \eqref{eq:RS_lower_est3}, \eqref{eq:psRS11_Cbeta}, \eqref{eq:psRS11_Cbeta_lip}, \eqref{eq:psRS21_C0beta}, we obtain the estimates \eqref{eq:RS_ests1} and \eqref{eq:RS_ests2} for $\mc{R}_{\mc{S},+}$. 
Additionally, combining the bounds \eqref{eq:RS_lower_est1}, \eqref{eq:RS_lower_est2}, \eqref{eq:RS_lower_est3}, \eqref{eq:psRS12_Cbeta}, \eqref{eq:psRS22_C0alpha}, and \eqref{eq:psRS3_C0alpha}, we obtain the estimates \eqref{eq:RS_ests1} and \eqref{eq:RS_ests2} for $\mc{R}_{\mc{S},\epsilon}$. In total, we obtain Lemma \ref{lem:single_layer}.
\end{proof}


\subsection{Double layer remainder terms}\label{subsec:double_layer}
We next prove Lemma \ref{lem:double_layer} regarding the mapping properties of the double layer remainder $\mc{R}_\mc{D}$, defined in \eqref{eq:RS_RD_def}. We again restate the lemma here for convenience.

\begin{lemma}[Double layer remainder]\label{lem:double_layer0}
Let $0<\gamma<\beta<1$ and consider a filament $\Sigma_\epsilon$ with centerline $\X(s)\in C^{2,\beta}(\T)$. Let the double layer remainder $\mc{R}_{\mc{D}}$ be as defined in \eqref{eq:RS_RD_def}. Given $\bm{\psi}\in C^{0,\gamma}(\Gamma_\epsilon)$, we have that $\mc{R}_{\mc{D}}$ satisfies
\begin{equation}\label{eq:RD_est0}
\norm{\mc{R}_{\mc{D}}[\bm{\psi}]}_{C^{1,\gamma}}\le c(\kappa_{*,\gamma^+},c_\Gamma)\,\epsilon^{-\gamma^+}\norm{\bm{\psi}}_{C^{0,\gamma}}
\end{equation}
for any $\gamma^+\in(\gamma,\beta]$.
Furthermore, given two nearby filaments with centerlines $\X^{(a)}(s)$, $\X^{(b)}(s)$ satisfying Lemma \ref{lem:XaXb_C2beta}, the difference between their corresponding double layer remainders $\mc{R}_{\mc{D}}^{(a)}$ and $\mc{R}_{\mc{D}}^{(b)}$ satisfies 
\begin{equation}\label{eq:RD_est_lip0}
\norm{\mc{R}_{\mc{D}}^{(a)}[\bm{\psi}]-\mc{R}_{\mc{D}}^{(b)}[\bm{\psi}]}_{C^{1,\gamma}}\le c(\kappa_{*,\gamma^+}^{(a)},\kappa_{*,\gamma^+}^{(b)},c_\Gamma)\,\epsilon^{-\gamma^+}\norm{\X^{(a)}-\X^{(b)}}_{C^{2,\gamma^+}}\norm{\bm{\psi}}_{C^{0,\gamma}}\,. 
\end{equation}
\end{lemma}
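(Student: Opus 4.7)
The strategy mirrors that of Lemma \ref{lem:single_layer}, exploiting the additional regularity provided by the double-layer kernel factor $\bR\cdot\bm{n}'$. The central structural input is \eqref{eq:RdotN}: along the straight cylinder $\barR\cdot\overline{\bm{n}}' = -2\epsilon\sin^2(\bartheta/2)$, whereas on the curved filament
\[
\bR\cdot\bm{n}' = -2\epsilon\sin^2(\tfrac{\bartheta}{2}) + \bars^2\, Q_{\rm Rn'}(s,\bars,\theta-\bartheta),
\]
so the curvature correction is \emph{two} powers higher in $\bars$ than the $O(\bars)$ gain one would obtain from $\bR-\barR$ alone. This is what makes the double-layer remainder one derivative smoother than the full double layer.

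First I would split $\mc{R}_\mc{D}[\bm{\psi}] = \mc{R}_{\mc{D},0}[\bm{\psi}] + \mc{R}_{\mc{D},1}[\bm{\psi}] + \mc{R}_{\mc{D},2}[\bm{\psi}]$, where $\mc{R}_{\mc{D},0}$ is the far-field contribution from $|\bars|>1/2$ (handled by integration by parts in $\bars$ and a direct $L^\infty$ bound on the smooth kernel, exactly as for $\mc{R}_{\mc{S},0}$, yielding $O(\epsilon)\norm{\bm{\psi}}_{L^\infty}$ in $C^2(\T)$); $\mc{R}_{\mc{D},1}$ collects the Jacobian correction $\mc{J}_\epsilon-\epsilon=-\epsilon^2\wh\kappa$ and the commutator $[\Phi^{-1},\overline{\mc{D}}]$; and $\mc{R}_{\mc{D},2}$ is the genuinely singular piece in which the curved and straight kernels are directly compared. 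For $\mc{R}_{\mc{D},2}$ I would expand the kernel difference as a telescoping sum over its three constituents $\bR\cdot\bm{n}'\to\barR\cdot\overline{\bm{n}}'$ (via \eqref{eq:RdotN}), $\bR\otimes\bR \to \barR\otimes\barR$ (via \eqref{eq:Rdots}), and $|\bR|^{-5}\to|\barR|^{-5}$ (via \eqref{eq:RaRb_diff}--\eqref{eq:Rdiffk}). Each resulting integrand takes the form $\bars^\ell(\epsilon\sin(\bartheta/2))^k Q/(|\barR|^m|\bR|^n)$ covered by Lemma \ref{lem:alpha_est}; combined with Lemmas \ref{lem:basic_est} and \ref{lem:odd_nm}, this controls $\mc{R}_{\mc{D},2}$ in $L^\infty\cap\dot C^{0,\gamma}$, the $\epsilon^{-\gamma^+}$ factor arising solely from estimating the $\theta$-dependent $Q$-coefficients in the $C^{0,\gamma^+}_1$ norm via \eqref{eq:Qj_Cbeta}.

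The $C^{1,\gamma}$ bound is the main technical step. Applying $\p_s$ to $\mc{R}_{\mc{D},2}$ produces new terms from $\p_s\bR$, $\p_s\bm{n}'$, and $\p_s|\bR|^{-k}$. Using \eqref{eq:psR},
\[
\p_s\bR\cdot\bm{n}' = \bars\wh\kappa - \kappa_3\epsilon\sin(\bartheta) + \epsilon\bars\, Q_{\rm Rs3},
\]
while the straight analogue satisfies $\p_s\barR\cdot\overline{\bm{n}}' = \be_z\cdot\be_r(\theta-\bartheta)=0$; consequently the differentiated kernel remainder still gains a full power of $\bars$ (or of $\epsilon\sin(\bartheta)$) over the differentiated straight kernel. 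Combining this with the gain from $\bR-\barR$, every term in $\p_s\mc{R}_{\mc{D},2}$ remains in the regime where Lemma \ref{lem:alpha_est} applies and yields the target bound $c\,\epsilon^{-\gamma^+}\norm{\bm{\psi}}_{C^{0,\gamma}}$. As in the single-layer proof, care must be taken---for instance with terms of type $\bars\epsilon\sin(\bartheta/2)/|\bR|^3$---to treat $\sin(\bartheta/2)$ as part of the kernel rather than absorbing it into a $\theta$-dependent coefficient, which would cost an additional $\epsilon^{-\gamma}$. The contribution of $\mc{R}_{\mc{D},1}$ is controlled using \eqref{eq:Phi_commutator_est} together with the standard principal-value bound for $\overline{\mc{D}}$ on $\mc{C}_\epsilon$, and yields an even smaller $O(\epsilon\abs{\log\epsilon})$ remainder.

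The Lipschitz bound \eqref{eq:RD_est_lip0} comes by running the same decomposition for $\mc{R}_\mc{D}^{(a)}-\mc{R}_\mc{D}^{(b)}$: each piece of the telescoping produces an analogous difference, bounded via \eqref{eq:Ra_minus_Rb}, \eqref{eq:RaRb_diff}, \eqref{eq:Rdiffk}, \eqref{eq:Qj_Cbeta2}, \eqref{eq:Phi_lip_est}, and \eqref{eq:Phi_comm_lip_est}. The main obstacle throughout will be the near-singularity of $\p_s\bm{K}_\mc{D}$ (which, without cancellations, scales as $|\bR|^{-3}$): the proof succeeds only because of two distinct gains---one from $\bR\cdot\bm{n}' = O(\epsilon\sin^2(\bartheta/2))+O(\bars^2)$ and a second from $\p_s\bR\cdot\bm{n}' = O(\bars)+O(\epsilon\sin(\bartheta))$---which together keep the differentiated integrands inside the parity/degree regime required by Lemma \ref{lem:alpha_est}. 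Bookkeeping these cancellations term by term, rather than the underlying ideas, is what makes the proof lengthy.
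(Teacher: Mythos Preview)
Your approach is essentially the same as the paper's: split into far-field, Jacobian correction, and kernel-difference pieces, then telescope the kernel difference using \eqref{eq:RdotN}, \eqref{eq:Rdots}, \eqref{eq:RaRb_diff}--\eqref{eq:Rdiffk} and apply Lemmas \ref{lem:basic_est}, \ref{lem:odd_nm}, \ref{lem:alpha_est}. The key structural observations---that $\bR\cdot\bm{n}'$ gains $\bars^2$ and $\p_s\bR\cdot\bm{n}'$ gains $\bars$ or $\epsilon\sin(\bartheta)$---are correctly identified.

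Two corrections. First, your treatment of the far-field $\mc{R}_{\mc{D},0}$ is not quite right: unlike in Lemma \ref{lem:single_layer}, there is no mean-zero assumption on $\bm{\psi}$, so integration by parts in $\bars$ is unavailable. It is also unnecessary: the straight double-layer kernel $\overline{\bm{K}}_{\mc D}$ already carries the factor $\barR\cdot\overline{\bm{n}}'=-2\epsilon\sin^2(\bartheta/2)$ and hence decays like $\epsilon/|\bars|^3$ for large $|\bars|$, which is directly integrable (and remains so after two derivatives). Second, the lemma asks for the full $C^{1,\gamma}(\Gamma_\epsilon)$ norm, so you must also estimate $\frac{1}{\epsilon}\p_\theta\mc{R}_{\mc{D}}$, not just $\p_s\mc{R}_{\mc{D}}$; the paper does both, using $\frac{1}{\epsilon}\p_\theta\bR\cdot\bm{n}' = -\sin(\bartheta)+\bars Q_{\rm R\theta 3}$ from \eqref{eq:pthetaR} in place of \eqref{eq:psR}. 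The $\theta$-derivative analysis is entirely parallel to the $s$-derivative one, but it cannot be omitted. Finally, there is no separate commutator $[\Phi^{-1},\overline{\mc{D}}]$ in the decomposition of $\mc{R}_{\mc{D}}$ itself; the commutator that actually appears is $[\p_s,\Phi^{-1}]$, and it arises only after differentiating, where it is handled by \eqref{eq:Phi_commutator_est}.
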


\begin{proof}[Proof of Lemma \ref{lem:double_layer}]
Note that although we do not actually need an estimate for the full $C^{1,\gamma}(\Gamma_\epsilon)$ norm of $\mc{R}_{\mc{D}}$, as $C_s^{1,\gamma}$ will suffice, we show the full bound here because the double layer estimates will be used later on in bounding the full Neumann data $\bw$ (Lemma \ref{lem:full_neumann}).

We begin by introducing some notation. Given a vector-valued density $\bm{\psi}\in C^{0,\gamma}(\Gamma_\epsilon)$ and matrix-valued kernel $\bm{H}$ defined along $\Gamma_\epsilon$, we denote 
\begin{equation}\label{eq:Wdef}
\begin{aligned}
\bm{W}(\bm{\psi};\bm{H}) &=\Phi^{-1}\,{\rm p.v.}\int_{-1/2}^{1/2}\int_{-\pi}^{\pi} \bm{H}(s,\bars,\theta,\bartheta)\bm{\psi}(s-\bars,\theta-\bartheta)\,\epsilon \,d\bartheta\,d\bars \\
&\qquad - 
{\rm p.v.}\int_{-1/2}^{1/2}\int_{-\pi}^{\pi} \overline{\bm{H}}(s,\bars,\theta,\bartheta)(\Phi^{-1}\bm{\psi})(s-\bars,\theta-\bartheta)\,\epsilon\, d\bartheta\,d\bars\,,
\end{aligned}
\end{equation}
where $\overline{\bm{H}}$ is the corresponding matrix kernel defined along the straight filament $\p\mc{C}_\epsilon$. Noting that $\bm{W}(\bm{\psi};\bm{H})$ lives on straight basis vectors, we use $W_j$ to denote the scalar-valued kernel resulting from dotting $\bm{W}(\bm{\psi};\bm{H})$ with the straight basis vector $\overline{\be}_j$, $j\in \{{\rm t},r,\theta\}$: 
\begin{equation}\label{eq:Wj_def}
\begin{aligned}
\bm{W}(\bm{\psi};\bm{H})\cdot\overline{\be}_j &= {\rm p.v.}\int_{-1/2}^{1/2}\int_{-\pi}^{\pi}
W_j(\bm{\psi};\bm{H})\,\epsilon\, d\bartheta d\bars\,;\\ 
W_j(\bm{\psi};\bm{H})&:= (\bm{H}\bm{\psi})\cdot\be_j(s)-\overline{\be}_j\cdot(\overline{\bm{H}}\Phi^{-1}\bm{\psi})\,.
\end{aligned}
\end{equation}
We again note that $\overline{\be}_{\rm t}=\be_z$, but here we will use the overline notation for convenience. \\

Recalling the definition of $\mc{R}_{\mc{D}}$ from \eqref{eq:RS_RD_def} and rewriting the integrands in terms of $(s,\bars,\theta,\bartheta)$, we may write
\begin{align*}
\mc{R}_{\mc{D}}[\bm{\psi}(s,\theta)] &= \Phi^{-1}\mc{D}[\bm{\psi}(s,\theta)]-\overline{\mc{D}}[(\Phi^{-1}\bm{\psi})(s,\theta)] \\
&= \mc{R}_{\mc{D},0}[\bm{\psi}] + \mc{R}_{\mc{D},1}[\bm{\psi}] + \mc{R}_{\mc{D},2}[\bm{\psi}]\,, \\
\mc{R}_{\mc{D},0}[\bm{\psi}] &= -\bigg(\int_{-\infty}^{-1/2}+\int_{1/2}^\infty\bigg)\int_{-\pi}^\pi \overline{\bm{K}}_{\mc D}(s,\bars,\theta,\bartheta)(\Phi^{-1}\bm{\psi})(s-\bars,\theta-\bartheta) \, \epsilon \,d\bartheta d\bars \\
\mc{R}_{\mc{D},1}[\bm{\psi}] &= \bm{W}(\bm{\psi};\bm{K}_{\mc D}) \\
\mc{R}_{\mc{D},2}[\bm{\psi}] &= -\Phi^{-1}\int_{-1/2}^{1/2}\int_{-\pi}^\pi \bm{K}_{\mc D}(s,\bars,\theta,\bartheta)\bm{\psi}(s-\bars,\theta-\bartheta)\, \epsilon^2\wh\kappa\,d\bartheta d\bars\,.
\end{align*}
Here the form of $\mc{R}_{\mc{D},1}$ is as defined in \eqref{eq:Wdef} and the terms $\mc{R}_{\mc{D},0}$ and $\mc{R}_{\mc{D},2}$ arise due to the integration limits in the straight setting and the Jacobian factor \eqref{eq:jacfac}, respectively. 

As we did for the single layer, we begin with bounds for the smooth remainder $\mc{R}_{\mc{D},0}$ away from the singularity at $(\bars,\bartheta)=(0,0)$. Using the form \eqref{eq:stresslet} of the straight kernel $\overline{\bm{K}}_{\mc D}$ and the expression \eqref{eq:barR} for $\barR$, we have 
\begin{equation}\label{eq:RD0_bd}
\norm{\mc{R}_{\mc{D},0}[\bm{\psi}]}_{C^2} \le c\,\epsilon\norm{\bm{\psi}}_{L^\infty}\int_{1/2}^{\infty} \bigg( \frac{1}{\abs{\bars}^2}+ \frac{1}{\abs{\bars}^3} +\frac{1}{\abs{\bars}^4}\bigg) \, d\bars \le c\,\epsilon\norm{\bm{\psi}}_{L^\infty}\,.
\end{equation}
In addition, given two nearby curves $\X^{(a)}$, $\X^{(b)}$ satisfying Lemma \ref{lem:XaXb_C2beta}, the corresponding remainders $\mc{R}_{\mc{D},0}^{(a)}$ and $\mc{R}_{\mc{D},0}^{(b)}$ are nearly identical except for the map $\Phi$ identifying the tangential and normal components of $\bm{\psi}$ with straight basis vectors. In particular, we may estimate 
\begin{equation}\label{eq:RD0_bd_lip}
\begin{aligned}
\norm{\mc{R}_{\mc{D},0}^{(a)}[\bm{\psi}]-\mc{R}_{\mc{S},0}^{(b)}[\bm{\psi}]}_{C^2}
&= \norm{\bigg(\int_{-\infty}^{-1/2}+\int_{1/2}^\infty\bigg)\int_{-\pi}^\pi \overline{\bm{K}}_\mc{D}\,((\Phi^{(a)})^{-1}\bm{\psi}-(\Phi^{(b)})^{-1}\bm{\psi}) \, \epsilon d\bartheta d\bars}_{C^2}\\
&\le c\,\epsilon\norm{(\Phi^{(a)})^{-1}\bm{\psi}-(\Phi^{(b)})^{-1}\bm{\psi}}_{L^\infty} \\
&\le c(\kappa_*^{(a)},\kappa_*^{(b)})\,\epsilon \norm{\X^{(a)}-\X^{(b)}}_{C^2}\norm{\bm{\psi}}_{L^\infty}\,,
\end{aligned}
\end{equation}
by \eqref{eq:Phi_lip_1}. 

For the remainders $\mc{R}_{{\mc D},1}$ and $\mc{R}_{{\mc D},2}$, we start with $L^\infty$ bounds. Using the form \eqref{eq:stresslet} of $\bm{K}_{\mc D}$, we may write the integrands $W_j$ given by \eqref{eq:Wj_def} as 
\begin{align*}
&W_j(\bm{\psi};\bm{K}_{\mc D}) = \frac{3}{4\pi}\bigg(
\frac{\be_j(s)\cdot\bR}{|\bR|^5}\,(\bR\cdot\bm{n}')\big(\bR\cdot\bm{\psi} - \barR\cdot(\Phi^{-1}\bm{\psi})\big) \\
&\qquad + \frac{\be_j(s)\cdot\bR}{|\bR|^5} \big(\bR\cdot\bm{n}' - \barR\cdot\overline{\bm{n}}'\big)\,\barR\cdot(\Phi^{-1}\bm{\psi}) 
+ \frac{(\be_j(s)\cdot\bR)-(\overline{\be}_j\cdot\barR)}{|\bR|^5} (\barR\cdot\overline{\bm{n}}')\,\barR\cdot(\Phi^{-1}\bm{\psi})\\
&\qquad\qquad\qquad +\bigg(\frac{1}{|\bR|^5} - \frac{1}{|\barR|^5} \bigg)(\overline{\be}_j\cdot\barR)(\barR\cdot\overline{\bm{n}}')\,\barR\cdot(\Phi^{-1}\bm{\psi}) \bigg)\,.
\end{align*}
Using the expansions \eqref{eq:RdotN} and \eqref{eq:Rdots} along with \eqref{eq:Rdiffk} and Lemma \ref{lem:Rests}, for each $j=t,r,\theta$, we may bound 
\begin{align*}
\abs{W_j(\bm{\psi};\bm{K}_{\mc D})} &\le c(\kappa_*,c_\Gamma)\,\norm{\bm{\psi}}_{L^\infty}\frac{1}{|\barR|}\,.
\end{align*}
Using the above integrands in the definition \eqref{eq:Wj_def} of $\bm{W}(\bm{\psi};\bm{K}_{\mc D})=\mc{R}_{\mc{D},1}$, by Lemma \ref{lem:basic_est} we have
\begin{equation}\label{eq:RD1_inftybd}
\norm{\mc{R}_{\mc{D},1}[\bm{\psi}]}_{L^\infty} \le c(\kappa_*,c_\Gamma)\,\epsilon \norm{\bm{\psi}}_{L^\infty}\,.
\end{equation}
In addition, for two filaments with centerlines $\X^{(a)}$ and $\X^{(b)}$ satisfying Lemma \ref{lem:XaXb_C2beta}, we may use the above expansion of $W_j(\bm{\psi};\bm{K}_{\mc D})$ along with \eqref{eq:Rdiffk}, \eqref{eq:RdotN}, \eqref{eq:Rdots}, and the bounds \eqref{eq:Qj_Cbeta} for each remainder term $Q$ appearing in the expansion to obtain 
\begin{equation}\label{eq:RD1_inftybd_lip}
\norm{\mc{R}_{\mc{D},1}^{(a)}[\bm{\psi}]-\mc{R}_{\mc{D},1}^{(b)}[\bm{\psi}]}_{L^\infty} \le c(\kappa_*^{(a)},\kappa_*^{(b)},c_\Gamma)\,\epsilon \norm{\X^{(a)}-\X^{(b)}}_{C^2}\norm{\bm{\psi}}_{L^\infty}\,,
\end{equation}
by Lemma \ref{lem:basic_est}. 
Furthermore, using the form \eqref{eq:stresslet} of $\bm{K}_{\mc D}$ and \eqref{eq:RdotN}, we have
\begin{equation}\label{eq:RD2_linfty}
\begin{aligned}
\norm{\mc{R}_{\mc{D},2}[\bm{\psi}]}_{L^\infty} &\le c(\kappa_*,c_\Gamma)\,\norm{\bm{\varphi}}_{L^\infty}\int_{-1/2}^{1/2}\int_{-\pi}^\pi \frac{\epsilon^{-1}}{|\barR|} \,\epsilon^2\,d\bartheta d\bars 
\le \epsilon \, c(\kappa_*,c_\Gamma)\,\norm{\bm{\psi}}_{L^\infty}\,,\\
\norm{\mc{R}_{\mc{D},2}^{(a)}[\bm{\psi}]-\mc{R}_{\mc{D},2}^{(b)}[\bm{\psi}]}_{L^\infty} &\le  c(\kappa_*^{(a)},\kappa_*^{(b)},c_\Gamma)\,\epsilon \norm{\X^{(a)}-\X^{(b)}}_{C^2}\norm{\bm{\psi}}_{L^\infty}\,.
\end{aligned}
\end{equation}

We now turn to $\dot C^{0,\gamma}$ estimates for the components $\frac{1}{\epsilon}\p_\theta\mc{R}_{{\mc D},1}$, $\frac{1}{\epsilon}\p_\theta\mc{R}_{{\mc D},2}$, $\p_s\mc{R}_{{\mc D},1}$, $\p_s\mc{R}_{{\mc D},2}$ of the gradient of $\mc{R}_{\mc{D}}$ along the surface $\Gamma_\epsilon$.
We note the following expansions:
\begin{equation}\label{eq:pthetaR}
\begin{aligned}
\textstyle \frac{1}{\epsilon}\p_\theta\bR &= \textstyle \frac{1}{\epsilon}\p_\theta\bR_{\rm t}=  \be_\theta(s,\theta)\\
\textstyle \frac{1}{\epsilon}\p_\theta\bR\cdot\bR &= \epsilon\sin(\bartheta)+\bars^2Q_{\rm R\theta1}+ \epsilon\bars Q_{\rm R\theta2}\\
\textstyle \frac{1}{\epsilon}\p_\theta\bR \cdot\bm{n}' &= \textstyle \frac{1}{\epsilon}\p_\theta\bR\cdot\be_r(s-\bars,\theta-\bartheta) = -\sin(\bartheta)+\bars Q_{\rm R\theta3} \\ 
\textstyle \frac{1}{\epsilon}\p_\theta\bR \cdot\bm{n} &= \textstyle \frac{1}{\epsilon}\p_\theta\bR\cdot\be_r(s,\theta) = 0 \,, 
\qquad \frac{1}{\epsilon}\p_\theta\bR \cdot\be_{\rm t}(s) = 0\,,
\qquad \frac{1}{\epsilon}\p_\theta\bR \cdot\be_\theta(s,\theta) = 1 \,.
\end{aligned}
\end{equation}
Here the terms $Q_{{\rm R\theta}j}$ each satisfy bounds of the form \eqref{eq:Qj_Cbeta}.

We start with the $\theta$-derivatives $\frac{1}{\epsilon}\p_\theta\mc{R}_{\mc{D},1}$ and $\frac{1}{\epsilon}\p_\theta\mc{R}_{\mc{D},2}$, which may be written as follows: 
\begin{align*}
\textstyle \frac{1}{\epsilon}\p_\theta\mc{R}_{\mc{D},1}[\bm{\psi}]&= \bm{W}[\bm{\psi};\textstyle \frac{1}{\epsilon}\p_\theta\bm{K}_{\mc D}] \\
\textstyle \frac{1}{\epsilon}\p_\theta\mc{R}_{\mc{D},2}[\bm{\psi}]&= -\Phi^{-1}\,{\rm p.v.}\int_{-1/2}^{1/2}\int_{-\pi}^\pi \textstyle \frac{1}{\epsilon}\p_\theta\bm{K}_{\mc D}(s,\bars,\theta,\bartheta)\bm{\psi}(s-\bars,\theta-\bartheta)\, \epsilon^2\wh\kappa\,d\bartheta d\bars\,.
\end{align*}
We begin by estimating $\frac{1}{\epsilon}\p_\theta\mc{R}_{\mc{D},1}[\bm{\psi}]$. Using the definition \eqref{eq:Wj_def} of the integrands $W_j$, we may write $W_{\rm t}(\bm{\psi};\frac{1}{\epsilon}\p_\theta\bm{K}_{\mc D})$ as
\begin{align*} 
W_{\rm t}(\bm{\psi};&\textstyle \frac{1}{\epsilon}\p_\theta\bm{K}_{\mc D}) = \displaystyle \frac{3}{4\pi}\big(W_{\rm t,1}+W_{\rm t,2}+W_{\rm t,3} \big)\,, \\
W_{\rm t,1} &= \bigg(\frac{(\bR\cdot\be_{\rm t})(\bR\cdot\bm{n}')}{|\bR|^5} - \frac{(\barR\cdot\be_z)(\barR\cdot\overline{\bm{n}}')}{|\barR|^5}\bigg)(\be_\theta\cdot\bm{\psi}) \\
W_{\rm t,2} &= \frac{(\bR\cdot\be_{\rm t})(\bR\cdot\bm{\psi}) (\frac{1}{\epsilon}\p_\theta\bR\cdot\bm{n}')}{|\bR|^5} - \frac{(\barR\cdot\be_z)(\barR\cdot(\Phi^{-1}\bm{\psi})) (\frac{1}{\epsilon}\p_\theta\barR\cdot\overline{\bm{n}}')}{|\barR|^5} \\
W_{\rm t,3} &= -\frac{5(\bR\cdot\be_{\rm t})(\bR\cdot\bm{\psi}) (\bR\cdot\bm{n}')(\frac{1}{\epsilon}\p_\theta\bR\cdot\bR)}{|\bR|^7} + \frac{5(\barR\cdot\be_z)(\barR\cdot(\Phi^{-1}\bm{\psi})) (\barR\cdot\overline{\bm{n}}')(\frac{1}{\epsilon}\p_\theta\barR\cdot\barR)}{|\barR|^7}\,.
\end{align*}
Using the expansions \eqref{eq:RdotN}, \eqref{eq:Rdots}, and \eqref{eq:pthetaR}, we may write the components $W_{{\rm t},j}$ in more detail:
\begin{align*}
W_{\rm t,1} &=\bigg(\frac{(\epsilon\bars\wh\kappa + \bars^2Q_{\rm Rt})(-2\epsilon\sin^2(\frac{\bartheta}{2})+\bars^2Q_{\rm Rn'})+ \bars^3Q_{\rm Rn'}}{|\bR|^5} - \bigg(\frac{1}{|\bR|^5}-\frac{1}{|\barR|^5} \bigg) \textstyle 2\epsilon \bars \sin^2(\frac{\bartheta}{2})\bigg)(\be_\theta\cdot\bm{\psi}) \\
W_{\rm t,2} 
&=\frac{(\epsilon\bars\wh\kappa+\bars^2Q_{\rm Rt})(-\sin(\bartheta)+\bars Q_{\rm R\theta3}) + \bars^2 Q_{\rm R\theta3}}{|\bR|^5}\textstyle (\bars\wt{\bm{Q}}_{\rm t}\cdot\bm{\psi}+ 2\epsilon\sin(\frac{\bartheta}{2})\be_\theta\cdot\bm{\psi})  \\
&\quad+\frac{\bars\sin(\bartheta)(\bars^2\bm{Q}_{\rm t}\cdot{\bm\psi}-\epsilon\bars\bm{Q}_r\cdot\bm{\psi}) }{|\bR|^5} 
-\bigg(\frac{1}{|\bR|^5}-\frac{1}{|\barR|^5} \bigg) \textstyle \bars\sin(\bartheta)\big(\bars\be_{\rm t}\cdot\bm{\psi} + 2\epsilon\sin(\frac{\bartheta}{2})\be_\theta\cdot\bm{\psi}\big)   \\
W_{\rm t,3} 
&= \frac{10\epsilon\bars\sin^2(\frac{\bartheta}{2})(\bars\be_{\rm t}\cdot\bm{\varphi} + 2\epsilon\sin(\frac{\bartheta}{2})\be_\theta\cdot\bm{\varphi})(\bars^2Q_{\rm R\theta1}+\epsilon\bars Q_{\rm R\theta2}) }{|\bR|^7} \\
&\quad -\frac{5\bars^3Q_{\rm Rn'}(\bars\be_{\rm t}\cdot\bm{\varphi} + 2\epsilon\sin(\frac{\bartheta}{2})\be_\theta\cdot\bm{\varphi}) (\epsilon\sin(\bartheta)+\bars^2Q_{\rm R\theta1}+ \epsilon\bars Q_{\rm R\theta2})}{|\bR|^7} \\
&\quad +\frac{5\bars (\bars^2\bm{Q}_{\rm t}\cdot\bm{\psi}-\epsilon\bars\bm{Q}_r\cdot\bm{\psi})(-2\epsilon\sin^2(\frac{\bartheta}{2})+\bars^2Q_{\rm n'})(\epsilon\sin(\bartheta)+\bars^2Q_{\rm R\theta1}+ \epsilon\bars Q_{\rm R\theta2})}{|\bR|^7} \\
& + \frac{5(\epsilon\bars\wh\kappa+\bars^2Q_{\rm Rt})(\bars\wt{\bm{Q}}_{\rm t}\cdot\bm{\psi}+ 2\epsilon\sin(\frac{\bartheta}{2})\be_\theta\cdot\bm{\psi}) (2\epsilon\sin^2(\frac{\bartheta}{2})-\bars^2Q_{\rm n'})(\epsilon\sin(\bartheta)+\bars^2Q_{\rm R\theta1}+ \epsilon\bars Q_{\rm R\theta2})}{|\bR|^7}  \\
&\quad -\bigg(\frac{1}{|\bR|^7}-\frac{1}{|\barR|^7} \bigg)\textstyle 10\epsilon^2\bars\sin^2(\frac{\bartheta}{2})\sin(\bartheta)(\bars\be_{\rm t}\cdot\bm{\psi} + 2\epsilon\sin(\frac{\bartheta}{2})\be_\theta\cdot\bm{\psi}) \,,
\end{align*}
where $\wt{\bm{Q}}_{\rm t}= \be_{\rm t}-\bars\bm{Q}_{\rm t} + \epsilon\bm{Q}_r$. Using the expansion \eqref{eq:RaRb_diff}, \eqref{eq:Rdiffk} for $\frac{1}{|\bR|^5}-\frac{1}{|\barR|^5}$ and $\frac{1}{|\bR|^7}-\frac{1}{|\barR|^7}$, we may apply Lemma \ref{lem:alpha_est} to obtain 
\begin{equation}\label{eq:Wz_theta}
\begin{aligned}
\textstyle \abs{\bm{W}(\bm{\psi};\frac{1}{\epsilon}\p_\theta\bm{K}_{\mc D})\cdot\be_z}_{\dot C^{0,\gamma}} 
&\le c(\kappa_*,c_\Gamma)\max_j\big(\norm{Q_j}_{C^{0,\gamma^+}_1}+\norm{Q_j}_{C^{0,\gamma}_2} \big)\norm{\bm{\psi}}_{C^{0,\gamma}} \\
&\le c(\kappa_{*,\gamma^+},c_\Gamma)\,\epsilon^{-\gamma^+}\norm{\bm{\psi}}_{C^{0,\gamma}}\,.
\end{aligned}
\end{equation}
Here the $\max_j$ is taken over all forms of remainder terms appearing in the above expansions. We also note that terms of the form, for example, $\frac{\bars^4\epsilon\sin^2(\frac{\bartheta}{2})Q}{\abs{\bR}}$ may be treated as $\frac{\bars^4\epsilon\sin(\frac{\bartheta}{2})\wt Q}{\abs{\bR}}$, $\wt Q=\sin(\frac{\bartheta}{2})Q$, so as not to lose a factor of $\epsilon^{-\gamma}$ from using case (1) of Lemma \ref{lem:alpha_est}. 

Furthermore, using the above expansions of $W_{{\rm t},j}$ and the Lipschitz estimates \eqref{eq:Qj_Cbeta} for each remainder $Q_j$, we additionally note that, given two curves $\X^{(a)}$ and $\X^{(b)}$ satisfying Lemma \ref{lem:XaXb_C2beta}, we have  
\begin{equation}\label{eq:Wz_theta_lip}
\begin{aligned}
\textstyle \abs{\bm{W}(\bm{\psi};\frac{1}{\epsilon}\p_\theta(\bm{K}_{\mc D}^{(a)}-\bm{K}_{\mc D}^{(b)}))\cdot\be_z}_{\dot C^{0,\gamma}}
&\le c(\kappa_{*,\gamma^+}^{(a)},\kappa_{*,\gamma^+}^{(b)},c_\Gamma)\,\epsilon^{-\gamma^+}\norm{\X^{(a)}-\X^{(b)}}_{C^{2,\gamma^+}}\norm{\bm{\psi}}_{C^{0,\gamma}}\,.
\end{aligned}
\end{equation}

We next consider the integrand $W_r(\bm{\psi};\frac{1}{\epsilon}\p_\theta\bm{K}_{\mc D})$ as defined in \eqref{eq:Wj_def}. We may write
\begin{align*} 
\textstyle W_r(\bm{\psi};\frac{1}{\epsilon}\p_\theta\bm{K}_{\mc D}) &= \frac{3}{4\pi}\big(W_{r,1}+W_{r,2}+W_{r,3} \big)\,, \\
W_{r,1} &= \bigg(\frac{(\bR\cdot\be_r)\bR\cdot\bm{n}'}{|\bR|^5} - \frac{(\barR\cdot\overline{\be}_r)\barR\cdot\overline{\bm{n}}'}{|\barR|^5}\bigg)(\be_\theta\cdot\bm{\psi}) \\
W_{r,2} &= \frac{(\bR\cdot\be_r)(\bR\cdot\bm{\psi}) (\frac{1}{\epsilon}\p_\theta\bR\cdot\bm{n}')}{|\bR|^5}-\frac{(\barR\cdot\overline{\be}_r)(\barR\cdot(\Phi^{-1}\bm{\psi})) (\frac{1}{\epsilon}\p_\theta\barR\cdot\overline{\bm{n}}')}{|\barR|^5}   \\
W_{r,3} &= -\frac{5(\bR\cdot\be_r)(\bR\cdot\bm{\psi}) (\bR\cdot\bm{n}')(\frac{1}{\epsilon}\p_\theta\bR\cdot\bR)}{|\bR|^7} + \frac{5(\barR\cdot\overline{\be}_r)(\barR\cdot(\Phi^{-1}\bm{\psi})) (\barR\cdot\overline{\bm{n}}')(\frac{1}{\epsilon}\p_\theta\barR\cdot\barR)}{|\barR|^7}\,.
\end{align*}
Again by \eqref{eq:RdotN}, \eqref{eq:Rdots}, and \eqref{eq:pthetaR}, we may further expand each $W_{r,j}$ as
\begin{align*}
W_{r,1} &= \bigg(\frac{-2\epsilon\bars^2\sin^2(\frac{\bartheta}{2})(Q_{\rm Rn}+Q_{\rm Rn'})+\bars^4Q_{\rm Rn}Q_{\rm Rn'} }{|\bR|^5}
  - \textstyle 4\epsilon^2\sin^4(\frac{\bartheta}{2}) \displaystyle\bigg(\frac{1}{|\bR|^5} - \frac{1}{|\barR|^5}\bigg)\bigg)(\be_\theta\cdot\bm{\psi})\\
W_{r,2} &= \frac{\bars^2Q_{\rm Rn}(\bars\wt{\bm{Q}}_{\rm t}\cdot\bm{\psi}+ 2\epsilon\sin(\frac{\bartheta}{2})\be_\theta\cdot\bm{\psi}) (-\sin(\bartheta)+\bars Q_{\rm R\theta3})}{|\bR|^5} \\
&\quad + \frac{2\epsilon\sin^2(\frac{\bartheta}{2})\big[(\bars^2\bm{Q}_{\rm t}\cdot\bm{\psi}  - \epsilon\bars\bm{Q}_r\cdot\bm{\psi}) (\sin(\bartheta)-\bars Q_{\rm R\theta3})+ \bars Q_{\rm R\theta3}(\bars\be_{\rm t}\cdot\bm{\psi} + 2\epsilon\sin(\frac{\bartheta}{2})\be_\theta\cdot\bm{\psi}) \big]}{|\bR|^5} \\
&\qquad - \bigg(\frac{1}{|\bR|^5}-\frac{1}{|\barR|^5} \bigg)\textstyle 2\epsilon\sin^2(\frac{\bartheta}{2})\sin(\bartheta)(\bars\be_{\rm t}\cdot\bm{\psi} + 2\epsilon\sin(\frac{\bartheta}{2})\be_\theta\cdot\bm{\psi})  \\
W_{r,3}&= \frac{5\bars^2Q_{\rm Rn}(\bars\wt{\bm{Q}}_{\rm t}\cdot\bm{\psi}+ 2\epsilon\sin(\frac{\bartheta}{2})\be_\theta\cdot\bm{\psi})(2\epsilon\sin^2(\frac{\bartheta}{2})-\bars^2Q_{\rm Rn'})(\epsilon\sin(\bartheta)+\bars^2Q_{\rm R\theta1}+ \epsilon\bars Q_{\rm R\theta2})}{|\bR|^7} \\
&\quad -\frac{10\epsilon\sin^2(\frac{\bartheta}{2})(\bars^2\bm{Q}_{\rm t}\cdot\bm{\psi} - \epsilon\bars\bm{Q}_r\cdot\bm{\psi})(2\epsilon\sin^2(\frac{\bartheta}{2})-\bars^2Q_{\rm Rn'})(\epsilon\sin(\bartheta)+\bars^2Q_{\rm R\theta1}+ \epsilon\bars Q_{\rm R\theta2})}{|\bR|^7}\\
&\qquad -\frac{10\epsilon\sin^2(\frac{\bartheta}{2})\bars^2Q_{\rm Rn'}(\bars\be_{\rm t}\cdot\bm{\psi} + 2\epsilon\sin(\frac{\bartheta}{2})\be_\theta\cdot\bm{\psi})(\epsilon\sin(\bartheta)+\bars^2Q_{\rm R\theta1}+ \epsilon\bars Q_{\rm R\theta2})}{|\bR|^7} \\
&\qquad + \frac{20\epsilon^2\sin^4(\frac{\bartheta}{2})(\bars^2Q_{\rm R\theta1}+ \epsilon\bars Q_{\rm R\theta2})(\bars\be_{\rm t}\cdot\bm{\psi} + 2\epsilon\sin(\frac{\bartheta}{2})\be_\theta\cdot\bm{\psi}) }{|\bR|^7} \\
&\qquad +\bigg(\frac{1}{|\bR|^7}-\frac{1}{|\barR|^7}\bigg)\textstyle 20\epsilon^3\sin^4(\frac{\bartheta}{2})\sin(\bartheta) (\bars\be_{\rm t}\cdot\bm{\psi} + 2\epsilon\sin(\frac{\bartheta}{2})\be_\theta\cdot\bm{\psi}) \,.
\end{align*}
Here again $\wt{\bm{Q}}_{\rm t}= \be_{\rm t}-\bars\bm{Q}_{\rm t} + \epsilon\bm{Q}_r$, and we may use the expansion \eqref{eq:RaRb_diff}, \eqref{eq:Rdiffk} for $\frac{1}{|\bR|^5}-\frac{1}{|\barR|^5}$ and $\frac{1}{|\bR|^7}-\frac{1}{|\barR|^7}$. We may then apply Lemma \ref{lem:alpha_est} to obtain 
\begin{equation}\label{eq:Wr_theta}
\begin{aligned}
\textstyle \abs{\bm{W}(\bm{\psi};\frac{1}{\epsilon}\p_\theta\bm{K}_{\mc D})\cdot\overline{\be}_r}_{\dot C^{0,\gamma}} 
&\le c(\kappa_*,c_\Gamma)\max_j\big(\norm{Q_j}_{C^{0,\gamma^+}_1}+\norm{Q_j}_{C^{0,\gamma}_2} \big)\norm{\bm{\psi}}_{C^{0,\gamma}} \\
&\le c(\kappa_{*,\gamma^+},c_\Gamma)\,\epsilon^{-\gamma^+}\norm{\bm{\psi}}_{C^{0,\gamma}}\,, \\
\textstyle \abs{\bm{W}(\bm{\psi};\frac{1}{\epsilon}\p_\theta(\bm{K}_{\mc D}^{(a)}-\bm{K}_{\mc D}^{(b)}))\cdot\overline{\be}_r }_{\dot C^{0,\gamma}} 
&\le c(\kappa_{*,\gamma^+}^{(a)},\kappa_{*,\gamma^+}^{(b)},c_\Gamma)\,\epsilon^{-\gamma^+}\norm{\X^{(a)}-\X^{(b)}}_{C^{2,\gamma^+}}\norm{\bm{\psi}}_{C^{0,\gamma}}\,.
\end{aligned} 
\end{equation}
Where necessary, we again may exchange some regularity for a factor of $\epsilon^\gamma$ by treating terms of the form, e.g., $\frac{\bars^4\epsilon\sin^2(\frac{\bartheta}{2})Q}{\abs{\bR}}$ as $\frac{\bars^4\epsilon\sin(\frac{\bartheta}{2})\wt Q}{\abs{\bR}}$ with $\wt Q=\sin(\frac{\bartheta}{2})Q$. Then case (2) of Lemma \ref{lem:alpha_est} yields the bound \eqref{eq:Wr_theta}.

Finally we turn to the integrand $W_\theta(\bm{\psi};\frac{1}{\epsilon}\p_\theta\bm{K}_{\mc D})$ as defined in \eqref{eq:Wj_def}. We may write
\begin{align*} 
 W_\theta(\bm{\psi};&\textstyle\frac{1}{\epsilon}\p_\theta\bm{K}_{\mc D}) = \displaystyle\frac{3}{4\pi}\big(W_{\theta,1}+W_{\theta,2}+W_{\theta,3}+W_{\theta,4} \big)\,, \\
W_{\theta,1} &= \frac{(\bR\cdot\bm{\psi})(\bR\cdot\bm{n}')}{|\bR|^5}-\frac{(\barR\cdot(\Phi^{-1}\bm{\psi}))(\barR\cdot\overline{\bm{n}}')}{|\barR|^5} \\
W_{\theta,2} &= \frac{(\bR\cdot\be_\theta)(\frac{1}{\epsilon}\p_\theta\bR\cdot\bm{\psi})(\bR\cdot\bm{n}')}{|\bR|^5} - \frac{(\barR\cdot\overline{\be}_\theta)(\frac{1}{\epsilon}\p_\theta\barR\cdot(\Phi^{-1}\bm{\psi}))(\barR\cdot\overline{\bm{n}}')}{|\barR|^5} \\
W_{\theta,3} &= \frac{(\bR\cdot\be_\theta)(\bR\cdot\bm{\psi}) (\frac{1}{\epsilon}\p_\theta\bR\cdot\bm{n}')}{|\bR|^5} - \frac{(\barR\cdot\overline{\be}_\theta)(\barR\cdot(\Phi^{-1}\bm{\psi})) (\frac{1}{\epsilon}\p_\theta\barR\cdot\overline{\bm{n}}')}{|\barR|^5} \\
W_{\theta,4} &= -\frac{5(\bR\cdot\be_\theta)(\bR\cdot\bm{\psi}) (\bR\cdot\bm{n}')(\frac{1}{\epsilon}\p_\theta\bR\cdot\bR)}{|\bR|^7}
+ \frac{5(\barR\cdot\overline{\be}_\theta)(\barR\cdot(\Phi^{-1}\bm{\psi})) (\barR\cdot\overline{\bm{n}}')(\frac{1}{\epsilon}\p_\theta\barR\cdot\barR)}{|\barR|^7}\,.
\end{align*}
We may again use \eqref{eq:RdotN}, \eqref{eq:Rdots}, and \eqref{eq:pthetaR} to expand each $W_{\theta,j}$ as
\begin{align*}
W_{\theta,1} &= \frac{(2\epsilon\sin^2(\frac{\bartheta}{2})-\bars^2Q_{\rm Rn'})(\bars^2\bm{Q}_{\rm t}\cdot\bm{\psi}- \epsilon\bars\bm{Q}_r\cdot\bm{\psi}) +\bars^2Q_{\rm Rn'}(\bars\be_{\rm t}\cdot\bm{\psi} + 2\epsilon\sin(\frac{\bartheta}{2})\be_\theta\cdot\bm{\psi})}{|\bR|^5} \\
&\qquad - \bigg(\frac{1}{|\bR|^5}-\frac{1}{|\barR|^5}\bigg)\textstyle 2\epsilon\sin^2(\frac{\bartheta}{2})(\bars\be_{\rm t}\cdot\bm{\psi} + 2\epsilon\sin(\frac{\bartheta}{2})\be_\theta\cdot\bm{\psi}) \\
W_{\theta,2} &= \frac{(\epsilon\bars\kappa_3\cos(\theta-\bartheta)+\bars^2Q_{\rm R\theta})(-2\epsilon\sin^2(\frac{\bartheta}{2})+\bars^2Q_{\rm Rn'})+\epsilon\sin(\bartheta)\bars^2Q_{\rm Rn'}}{|\bR|^5}(\bm{\psi}\cdot\be_\theta) \\
&\qquad +\bigg(\frac{1}{|\bR|^5}-\frac{1}{|\barR|^5} \bigg)\textstyle \epsilon\sin(\bartheta)(-2\epsilon\sin^2(\frac{\bartheta}{2})+\bars^2Q_{\rm Rn'})(\bm{\psi}\cdot\be_\theta) \\
W_{\theta,3} &= \frac{(\epsilon\bars\kappa_3\cos(\theta-\bartheta)+\bars^2Q_{\rm R\theta})(\bars\wt{\bm{Q}}_{\rm t}\cdot\bm{\psi}+ 2\epsilon\sin(\frac{\bartheta}{2})\be_\theta\cdot\bm{\psi}) (-\sin(\bartheta)+\bars Q_{\rm R\theta3})}{|\bR|^5} \\
&\quad + \frac{ \epsilon\sin(\bartheta)\bars\big[(\sin(\bartheta)-\bars Q_{\rm R\theta3})(\bars\bm{Q}_{\rm t}\cdot\bm{\psi}- \epsilon\bm{Q}_r\cdot\bm{\psi}) + Q_{\rm R\theta3}(\bars\be_{\rm t}\cdot\bm{\psi} + 2\epsilon\sin(\frac{\bartheta}{2})\be_\theta\cdot\bm{\psi})\big]}{|\bR|^5} \\
&\qquad - \bigg(\frac{1}{|\bR|^5}-\frac{1}{|\barR|^5}\bigg)\textstyle \epsilon\sin^2(\bartheta)(\bars\be_{\rm t}\cdot\bm{\psi} + 2\epsilon\sin(\frac{\bartheta}{2})\be_\theta\cdot\bm{\psi})  \\
W_{\theta,4} &= \frac{5(\bars\wt{\bm{Q}}_{\rm t}\cdot\bm{\psi}+ 2\epsilon\sin(\frac{\bartheta}{2})\be_\theta\cdot\bm{\psi})(\epsilon\bars\kappa_3\cos(\theta-\bartheta)+\bars^2Q_{\rm R\theta}) (2\epsilon\sin^2(\frac{\bartheta}{2})-\bars^2Q_{\rm Rn'})(\epsilon\sin(\bartheta)+\bars^2Q_{\rm R\theta1}+ \epsilon\bars Q_{\rm R\theta2})}{|\bR|^7} \\
&\qquad +\frac{5\epsilon\sin(\bartheta)(\bars^2\bm{Q}_{\rm t}\cdot\bm{\psi}- \epsilon\bars\bm{Q}_r\cdot\bm{\psi})(-2\epsilon\sin^2(\frac{\bartheta}{2})+\bars^2Q_{\rm Rn'})(\epsilon\sin(\bartheta)+\bars^2Q_{\rm R\theta1}+ \epsilon\bars Q_{\rm R\theta2})}{|\bR|^7} \\
&\qquad -\frac{5\epsilon\sin(\bartheta)\bars^2Q_{\rm Rn'}(\bars\be_{\rm t}\cdot\bm{\psi} + 2\epsilon\sin(\frac{\bartheta}{2})\be_\theta\cdot\bm{\psi}) (\epsilon\sin(\bartheta)+\bars^2Q_{\rm R\theta1}+ \epsilon\bars Q_{\rm R\theta2})}{|\bR|^7} \\
&\qquad +\frac{10\epsilon^2\sin^2(\frac{\bartheta}{2})\sin(\bartheta)(\bars^2Q_{\rm R\theta1}+ \epsilon\bars Q_{\rm R\theta2})(\bars\be_{\rm t}\cdot\bm{\psi} + 2\epsilon\sin(\frac{\bartheta}{2})\be_\theta\cdot\bm{\psi}) }{|\bR|^7} \\
&\qquad +\bigg(\frac{1}{|\bR|^7}-\frac{1}{|\barR|^7}\bigg) \textstyle 10\epsilon^3\sin^2(\frac{\bartheta}{2})\sin^2(\bartheta)(\bars\be_{\rm t}\cdot\bm{\psi} + 2\epsilon\sin(\frac{\bartheta}{2})\be_\theta\cdot\bm{\psi}) \,.
\end{align*}
As before, using the expansions \eqref{eq:RaRb_diff}, \eqref{eq:Rdiffk} for $\frac{1}{|\bR|^5}-\frac{1}{|\barR|^5}$ and $\frac{1}{|\bR|^7}-\frac{1}{|\barR|^7}$ and applying Lemma \ref{lem:alpha_est} yields the bounds
\begin{equation}\label{eq:Wthe_theta}
\begin{aligned}
\textstyle \abs{\bm{W}(\bm{\psi};\frac{1}{\epsilon}\p_\theta\bm{K}_{\mc D})\cdot\overline{\be}_\theta}_{\dot C^{0,\gamma}} 
&\le c(\kappa_*,c_\Gamma)\max_j\big(\norm{Q_j}_{C^{0,\gamma^+}_1}+\norm{Q_j}_{C^{0,\gamma}_2} \big)\norm{\bm{\psi}}_{C^{0,\gamma}} \\
&\le c(\kappa_{*,\gamma^+},c_\Gamma)\,\epsilon^{-\gamma^+}\norm{\bm{\psi}}_{C^{0,\gamma}}\,,\\
\textstyle \abs{\bm{W}(\bm{\psi};\frac{1}{\epsilon}\p_\theta(\bm{K}_{\mc D}^{(a)}-\bm{K}_{\mc D}^{(b)}))\cdot\overline{\be}_\theta}_{\dot C^{0,\gamma}} 
&\le c(\kappa_{*,\gamma^+}^{(a)},\kappa_{*,\gamma^+}^{(b)},c_\Gamma)\,\epsilon^{-\gamma^+}\norm{\X^{(a)}-\X^{(b)}}_{C^{2,\gamma^+}} \norm{\bm{\psi}}_{C^{0,\gamma}}\,.
\end{aligned}
\end{equation}
Combining \eqref{eq:Wz_theta}, \eqref{eq:Wz_theta_lip}, \eqref{eq:Wr_theta}, and \eqref{eq:Wthe_theta}, we obtain 
\begin{equation}\label{eq:pthetaRD1_Calpha}
\begin{aligned}
\textstyle \abs{\frac{1}{\epsilon}\p_\theta\mc{R}_{\mc{D},1}[\bm{\psi}]}_{\dot C^{0,\gamma}} &= \textstyle \abs{\bm{W}(\bm{\psi};\frac{1}{\epsilon}\p_\theta\bm{K}_{\mc D})}_{\dot C^{0,\gamma}} 
\le c(\kappa_{*,\gamma^+},c_\Gamma)\,\epsilon^{-\gamma^+}\norm{\bm{\psi}}_{C^{0,\gamma}}\,,\\
\textstyle \abs{\frac{1}{\epsilon}\p_\theta\mc{R}_{\mc{D},1}^{(a)}[\bm{\psi}] - \frac{1}{\epsilon}\p_\theta\mc{R}_{\mc{D},1}^{(b)}[\bm{\psi}]}_{\dot C^{0,\gamma}} &\le c(\kappa_{*,\gamma^+}^{(a)},\kappa_{*,\gamma^+}^{(b)},c_\Gamma)\,\epsilon^{-\gamma^+}\norm{\X^{(a)}-\X^{(b)}}_{C^{2,\gamma^+}}\norm{\bm{\psi}}_{C^{0,\gamma}} \,.
\end{aligned}
\end{equation}

We next turn to $\frac{1}{\epsilon}\p_\theta\mc{R}_{\mc{D},2}[\bm{\psi}]$, which we note may be written 
\begin{align*}
\textstyle \frac{1}{\epsilon}\p_\theta\mc{R}_{\mc{D},2}[\bm{\psi}] &= -\bm{W}[\bm{\psi};\wh\kappa \p_\theta\bm{K}_{\mc D}] - {\rm p.v.}\int_{-1/2}^{1/2}\int_{-\pi}^\pi \textstyle \wh\kappa\p_\theta\overline{\bm{K}}_{\mc D}\,(\Phi^{-1}\bm{\psi})(s-\bars,\theta-\bartheta)\, \epsilon \,d\bartheta d\bars\,.
\end{align*}
Note the additional factor of $\epsilon\wh\kappa$ is due to the Jacobian factor \eqref{eq:jacfac}.
Using that \eqref{eq:pthetaRD1_Calpha} implies
\begin{align*}
\abs{\bm{W}[\bm{\psi};\wh\kappa \p_\theta\bm{K}_{\mc D}]}_{\dot C^{0,\gamma}} &\le c(\kappa_{*,\gamma^+},c_\Gamma)\,\epsilon^{1-\gamma^+}\norm{\bm{\psi}}_{C^{0,\gamma}}\,,\\
\abs{\bm{W}[\bm{\psi};\wh\kappa^{(a)} \p_\theta\bm{K}_{\mc D}^{(a)}-\wh\kappa^{(b)} \p_\theta\bm{K}_{\mc D}^{(b)}]}_{\dot C^{0,\gamma}} &\le c(\kappa_{*,\gamma^+}^{(a)},\kappa_{*,\gamma^+}^{(b)},c_\Gamma)\,\epsilon^{1-\gamma^+}\norm{\X^{(a)}-\X^{(b)}}_{C^{2,\gamma^+}}\norm{\bm{\psi}}_{C^{0,\gamma}}\,,
\end{align*}
it remains to bound the second term of the $\frac{1}{\epsilon}\p_\theta\mc{R}_{\mc{D},2}[\bm{\psi}]$ expression. Using the form \eqref{eq:barR} of $\barR$, we may write 
\begin{align*}
\textstyle \be_z\cdot\big(\wh\kappa\p_\theta \overline{\bm{K}}_{\mc D}(\Phi^{-1}\bm{\psi})\big) 
&= \frac{3\epsilon\wh\kappa}{4\pi} \bigg( -\frac{2\epsilon\sin^2(\frac{\bartheta}{2})\bars(\be_\theta\cdot\bm{\psi})}{|\barR|^5}-\frac{\bars\sin(\bartheta)(\bars\be_{\rm t}\cdot\bm{\psi}+2\epsilon\sin(\frac{\bartheta}{2})\be_\theta\cdot{\bm\psi}) }{|\barR|^5} \\
&\qquad +\frac{10\epsilon^2\sin^2(\frac{\bartheta}{2})\sin(\wh\theta)\bars(\bars\be_{\rm t}\cdot\bm{\psi}+2\epsilon\sin(\frac{\bartheta}{2})\be_\theta\cdot{\bm\psi}) }{|\barR|^7} \bigg)\\
\textstyle \overline{\be}_r\cdot\big(\wh\kappa\p_\theta \overline{\bm{K}}_{\mc D}(\Phi^{-1}\bm{\psi})\big) 
&= \frac{3\epsilon\wh\kappa}{4\pi} \bigg( -\frac{2\epsilon\sin^2(\frac{\bartheta}{2})\big(\bars\be_{\rm t}\cdot\bm{\psi}+2\epsilon\sin(\frac{\bartheta}{2})\be_\theta\cdot\bm{\psi} + \epsilon\sin(\bartheta)\be_\theta\cdot\bm{\psi}\big)}{|\barR|^5} \\
&\hspace{-2cm} -\frac{\epsilon\sin^2(\bartheta)\big(\bars\be_{\rm t}\cdot\bm{\psi}+2\epsilon\sin(\frac{\bartheta}{2})\be_\theta\cdot\bm{\psi}\big) }{|\barR|^5} 
+\frac{10\epsilon^3\sin^2(\frac{\bartheta}{2})\sin^2(\bartheta)\big(\bars\be_{\rm t}\cdot\bm{\psi}+2\epsilon\sin(\frac{\bartheta}{2})\be_\theta\cdot\bm{\psi}\big) }{|\barR|^7} \bigg)\\
\textstyle \overline{\be}_\theta\cdot\big(\wh\kappa\p_\theta \overline{\bm{K}}_{\mc D}(\Phi^{-1}\bm{\psi})\big) 
&= \frac{3\epsilon\wh\kappa}{4\pi} \bigg( -\frac{4\epsilon^2\sin^4(\frac{\bartheta}{2})(\be_\theta\cdot\bm{\psi})}{|\barR|^5}
-\frac{2\epsilon\sin^2(\frac{\bartheta}{2})\sin(\bartheta)\big(\bars\be_{\rm t}\cdot\bm{\psi}+2\epsilon\sin(\frac{\bartheta}{2})\be_\theta\cdot\bm{\psi}\big)}{|\barR|^5} \\
&\qquad +\frac{20\epsilon^3\sin^4(\frac{\bartheta}{2})\sin(\bartheta)\big(\bars\be_{\rm t}\cdot\bm{\psi}+2\epsilon\sin(\frac{\bartheta}{2})\be_\theta\cdot\bm{\psi}\big) }{|\barR|^7} \bigg)\,.
\end{align*}
Using Lemma \ref{lem:alpha_est}, we thus obtain 
\begin{equation}\label{eq:pthetaRD2_Calpha}
\begin{aligned}
\textstyle \abs{\frac{1}{\epsilon}\p_\theta\mc{R}_{\mc{D},2}[\bm{\psi}]}_{\dot C^{0,\gamma}} &\le \abs{\bm{W}[\bm{\psi};\wh\kappa \p_\theta\bm{K}_{\mc D}]}_{\dot C^{0,\gamma}} + \bigg|\int_{-1/2}^{1/2}\int_{-\pi}^\pi \textstyle \wh\kappa\p_\theta\overline{\bm{K}}_{\mc D}\,(\Phi^{-1}\bm{\psi})\, \epsilon \,d\bartheta d\bars\bigg|_{\dot C^{0,\gamma}} \\
&\le c(\kappa_{*,\gamma^+},c_\Gamma)\,\epsilon^{-\gamma^+}\norm{\bm{\psi}}_{C^{0,\gamma}}\,, \\
\textstyle \abs{\frac{1}{\epsilon}\p_\theta\mc{R}_{\mc{D},2}^{(a)}[\bm{\psi}]-\frac{1}{\epsilon}\p_\theta\mc{R}_{\mc{D},2}^{(b)}[\bm{\psi}]}_{\dot C^{0,\gamma}} 
&\le c(\kappa_{*,\gamma^+}^{(a)},\kappa_{*,\gamma^+}^{(b)},c_\Gamma)\,\epsilon^{-\gamma^+}\norm{\X^{(a)}-\X^{(b)}}_{C^{2,\gamma^+}}\norm{\bm{\psi}}_{C^{0,\gamma}} \,. \\
\end{aligned}
\end{equation}


We now turn to $\dot C^{0,\gamma}$ estimates for the $s$-derivatives $\p_s\mc{R}_{{\mc D},1}$ and $\p_s\mc{R}_{{\mc D},2}$ along $\Gamma_\epsilon$. These may be written as 
\begin{align*}
\p_s\mc{R}_{\mc{D},1}[\bm{\psi}]&= \bm{W}[\bm{\psi};\p_s\bm{K}_{\mc D}] + [\p_s,\Phi^{-1}]\int_{-1/2}^{1/2}\int_{-\pi}^\pi\bm{K}_{\mc D}\,\bm{\psi}\,\epsilon\,d\bartheta d\bars  \\
\p_s\mc{R}_{\mc{D},2}[\bm{\psi}]&= 
 -\Phi^{-1}{\rm p.v.}\int_{-1/2}^{1/2}\int_{-\pi}^\pi \p_s\bm{K}_{\mc D}\,\bm{\psi}\, \epsilon^2\wh\kappa\,d\bartheta d\bars 
-[\p_s,\Phi^{-1}]\int_{-1/2}^{1/2}\int_{-\pi}^\pi \bm{K}_{\mc D}\,\bm{\psi}\, \epsilon^2\wh\kappa\,d\bartheta d\bars\,.
\end{align*}

We may estimate $\bm{W}[\bm{\psi};\p_s\bm{K}_{\mc D}]$ by first using the definition \eqref{eq:Wj_def} of the integrands $W_j$ to write 
\begin{align*}
W_{\rm t}(\bm{\psi};&\p_s\bm{K}_{\mc D}) = \frac{3}{4\pi}\big(W_{\rm t,1}+ W_{\rm t,2}+ W_{\rm t,3} \big)\,, \\
W_{\rm t,1}&= \frac{\big((1-\epsilon\wh\kappa)(\bR\cdot\bm{\psi})+(\bR\cdot\be_{\rm t})(\p_s\bR\cdot\bm{\psi})\big)\bR\cdot\bm{n}'}{|\bR|^5} 
- \frac{\big(\barR\cdot(\Phi^{-1}\bm{\psi})+(\barR\cdot\be_z)(\be_{\rm t}\cdot\bm{\psi})\big)\barR\cdot\overline{\bm{n}}'}{|\barR|^5} \\
W_{\rm t,2}&= \frac{(\bR\cdot\be_{\rm t})(\bR\cdot\bm{\psi}) (\p_s\bR\cdot\bm{n}')}{|\bR|^5}
-\frac{(\barR\cdot\be_z)(\barR\cdot(\Phi^{-1}\bm{\psi})) (\p_s\barR\cdot\overline{\bm{n}}')}{|\barR|^5} \\
W_{\rm t,3}&= -\frac{5(\bR\cdot\be_{\rm t})(\bR\cdot\bm{\psi}) (\bR\cdot\bm{n}')\bR\cdot\p_s\bR}{|\bR|^7}
+\frac{5(\barR\cdot\be_z)(\barR\cdot(\Phi^{-1}\bm{\psi})) (\barR\cdot\overline{\bm{n}}')\barR\cdot\p_s\barR}{|\barR|^7}\,.
\end{align*}
Using the expansions \eqref{eq:RdotN}, \eqref{eq:Rdots}, and \eqref{eq:psR}, we may expand each $W_{\rm t,j}$ as follows:
\begin{align*}
W_{\rm t,1}&= \frac{\big( \bars^2\bm{Q}_{\rm t}\cdot\bm{\psi}-\epsilon\bars\bm{Q}_r\cdot\bm{\psi} +\epsilon\wh\kappa(\bars\wt{\bm{Q}}_{\rm t}\cdot\bm{\psi}- 2\epsilon\sin(\frac{\bartheta}{2})\be_\theta\cdot\bm{\psi})\big)(2\epsilon\sin^2(\frac{\bartheta}{2})- \bars^2Q_{\rm Rn'})}{|\bR|^5}\\
& - \frac{\big((\epsilon\bars\wh\kappa+\bars^2Q_{\rm Rt})((1-\epsilon\wh\kappa)\be_{\rm t}\cdot{\bm \psi} + \epsilon\kappa_3\be_\theta\cdot\bm{\psi})-\epsilon\bars(\wh\kappa\be_{\rm t}\cdot{\bm \psi} -\kappa_3\be_\theta\cdot\bm{\psi})\big)(2\epsilon\sin^2(\frac{\bartheta}{2})- \bars^2Q_{\rm Rn'})}{|\bR|^5}\\
& + \frac{2\bars^2Q_{\rm Rn'}\big(\bars\be_{\rm t}\cdot\bm{\psi} + \epsilon\sin(\frac{\bartheta}{2})\be_\theta\cdot\bm{\psi}\big)}{|\bR|^5}
-\bigg(\frac{1}{|\bR|^5}-\frac{1}{|\barR|^5}\bigg) \textstyle 4\epsilon\sin^2(\frac{\bartheta}{2})\big(\bars\be_{\rm t}\cdot\bm{\psi} + \epsilon\sin(\frac{\bartheta}{2})\be_\theta\cdot\bm{\psi}\big)
 \\
W_{\rm t,2}&= \frac{(\epsilon\bars\wh\kappa+\bars^2Q_{\rm Rt})(\bars\wt{\bm{Q}}_{\rm t}\cdot\bm{\psi}+ 2\epsilon\sin(\frac{\bartheta}{2})\be_\theta\cdot\bm{\psi}) (\bars\wh{\kappa} - \kappa_3\epsilon\sin(\bartheta)+\epsilon\bars Q_{\rm Rs3})}{|\bR|^5} \\
&\qquad - \frac{\bars (\bars^2\bm{Q}_{\rm t}\cdot\bm{\psi}-\epsilon\bars\bm{Q}_r\cdot\bm{\psi})(\bars\wh{\kappa} - \kappa_3\epsilon\sin(\bartheta)+\epsilon\bars Q_{\rm Rs3})}{|\bR|^5} \\
&\qquad + \bigg(\frac{1}{|\bR|^5}-\frac{1}{|\barR|^5} \bigg)\textstyle \bars(\bars\be_{\rm t}\cdot\bm{\psi} + 2\epsilon\sin(\frac{\bartheta}{2})\be_\theta\cdot\bm{\psi}) (\bars\wh{\kappa} - \kappa_3\epsilon\sin(\bartheta)+\epsilon\bars Q_{\rm Rs3}) \\
W_{\rm t,3}&= \frac{5(\epsilon\bars\wh\kappa+\bars^2Q_{\rm Rt})(\bars\wt{\bm{Q}}_{\rm t}\cdot\bm{\psi}+ 2\epsilon\sin(\frac{\bartheta}{2})\be_\theta\cdot\bm{\psi}) (2\epsilon\sin^2(\frac{\bartheta}{2})- \bars^2Q_{\rm Rn'})(\bars\wt Q_{\rm R}+2\epsilon^2\sin(\frac{\bartheta}{2})\sin(\bartheta))}{|\bR|^7} \\
&\qquad -\frac{5\bars(\bars^2\bm{Q}_{\rm t}\cdot\bm{\psi}-\epsilon\bars\bm{Q}_r\cdot\bm{\psi})(2\epsilon\sin^2(\frac{\bartheta}{2})- \bars^2Q_{\rm Rn'})(\bars\wt Q_{\rm R}+2\epsilon^2\sin(\frac{\bartheta}{2})\sin(\bartheta))}{|\bR|^7} \\
&\qquad -\frac{5\bars^3Q_{\rm Rn'}(\bars\be_{\rm t}\cdot\bm{\psi} + 2\epsilon\sin(\frac{\bartheta}{2})\be_\theta\cdot\bm{\psi})(\bars\wt Q_{\rm R}+2\epsilon^2\sin(\frac{\bartheta}{2})\sin(\bartheta))}{|\bR|^7} \\
&\qquad + \frac{10\bars\epsilon\sin^2(\frac{\bartheta}{2})(\epsilon\bars Q_{\rm Rs1} + \bars^2Q_{\rm Rs2})(\bars\be_{\rm t}\cdot\bm{\psi} + 2\epsilon\sin(\frac{\bartheta}{2})\be_\theta\cdot\bm{\psi})}{|\bR|^7} \\
&\qquad +\bigg(\frac{1}{|\bR|^7}-\frac{1}{|\barR|^7} \bigg)\textstyle 10\bars\epsilon\sin^2(\frac{\bartheta}{2})(\bars\be_{\rm t}\cdot\bm{\psi} + 2\epsilon\sin(\frac{\bartheta}{2})\be_\theta\cdot\bm{\psi}) (\bars+2\epsilon^2\sin(\frac{\bartheta}{2})\sin(\bartheta))\,.
\end{align*}
As above, $\wt{\bm Q}_{\rm t}=\be_{\rm t}-\bars\bm{Q}_{\rm t}+\epsilon\bm{Q}_r$, and here we define $\wt Q_{\rm R}=1+ \epsilon Q_{\rm Rs1} + \bars Q_{\rm Rs2}$. Expanding $\frac{1}{|\bR|^5}-\frac{1}{|\barR|^5}$ and $\frac{1}{|\bR|^7}-\frac{1}{|\barR|^7}$ using \eqref{eq:RaRb_diff} and \eqref{eq:Rdiffk}, we may use Lemma \ref{lem:alpha_est} to estimate
\begin{equation}\label{eq:Wz_s}
\begin{aligned}
\abs{\bm{W}(\bm{\psi};\p_s\bm{K}_{\mc D})\cdot\be_z}_{\dot C^{0,\gamma}} &\le c(\kappa_*,c_\Gamma)\max_j\big(\norm{Q_j}_{C^{0,\gamma^+}_1}+\norm{Q_j}_{C^{0,\gamma}_2} \big)\norm{\bm{\psi}}_{C^{0,\gamma}} \\
&\le c(\kappa_{*,\gamma^+},c_\Gamma)\,\epsilon^{-\gamma^+}\norm{\bm{\psi}}_{C^{0,\gamma}}\,.
\end{aligned}
\end{equation}
Furthermore, using that each remainder term $Q_i$ satisfies a Lipschitz estimate of the form \eqref{eq:Qj_Cbeta}, by the above expansions and Lemma \ref{lem:alpha_est} we have 
\begin{equation}\label{eq:Wz_s_lip}
\begin{aligned}
\abs{\bm{W}(\bm{\psi};\p_s\bm{K}_{\mc D}^{(a)}-\p_s\bm{K}_{\mc D}^{(b)})\cdot\be_z}_{\dot C^{0,\gamma}} 
&\le c(\kappa_{*,\gamma^+}^{(a)},\kappa_{*,\gamma^+}^{(b)},c_\Gamma)\,\epsilon^{-\gamma^+}\norm{\X^{(a)}-\X^{(b)}}_{C^{2,\gamma^+}} \norm{\bm{\psi}}_{C^{0,\gamma}}\,.
\end{aligned}
\end{equation}

We next consider $W_r(\bm{\psi};\p_s\bm{K}_{\mc D})$ as given by \eqref{eq:Wj_def}. We write
\begin{align*}
W_r(\bm{\psi};&\p_s\bm{K}_{\mc D}) = \frac{3}{4\pi}\big(W_{r,1}+ W_{r,2}+ W_{r,3} \big)\,, \\
W_{r,1}&= \frac{(\bR\cdot\be_r)(\p_s\bR\cdot\bm{\psi})\bR\cdot\bm{n}'}{|\bR|^5}
-\frac{(\barR\cdot\overline\be_r)(\be_{\rm t}\cdot\bm{\psi})\barR\cdot\overline{\bm{n}}'}{|\barR|^5}\\
W_{r,2}&= \frac{(\bR\cdot\be_r)(\bR\cdot\bm{\psi}) (\p_s\bR\cdot\bm{n}')}{|\bR|^5}
-\frac{(\barR\cdot\overline\be_r)(\barR\cdot(\Phi^{-1}\bm{\psi})) (\p_s\barR\cdot\overline{\bm{n}}')}{|\barR|^5} \\
W_{r,3}&= -\frac{5(\bR\cdot\be_r)(\bR\cdot\bm{\psi}) (\bR\cdot\bm{n}')\bR\cdot\p_s\bR}{|\bR|^7}
+\frac{5(\barR\cdot\overline\be_r)(\barR\cdot(\Phi^{-1}\bm{\psi})) (\barR\cdot\overline{\bm{n}}')\barR\cdot\p_s\barR}{|\barR|^7}\,.
\end{align*}
Using \eqref{eq:RdotN}, \eqref{eq:Rdots}, and \eqref{eq:psR}, we may expand each $W_{r,j}$ as
\begin{align*}
W_{r,1}&= -\frac{\bars^2Q_{\rm Rn}((1-\epsilon\wh\kappa)\be_{\rm t}\cdot{\bm \psi} + \epsilon\kappa_3\be_\theta\cdot\bm{\psi})(2\epsilon\sin^2(\frac{\bartheta}{2})-\bars^2Q_{\rm Rn'})}{|\bR|^5} \\
&\qquad + \frac{2\epsilon\sin^2(\frac{\bartheta}{2})(\epsilon\wh\kappa\be_{\rm t}\cdot{\bm \psi} - \epsilon\kappa_3\be_\theta\cdot\bm{\psi})(2\epsilon\sin^2(\frac{\bartheta}{2})-\bars^2Q_{\rm Rn'})}{|\bR|^5}\\
&\qquad + \frac{2\epsilon\sin^2(\frac{\bartheta}{2})\bars^2Q_{\rm Rn'}(\be_{\rm t}\cdot\bm{\psi})}{|\bR|^5} - \bigg(\frac{1}{|\bR|^5}-\frac{1}{|\barR|^5}\bigg)\textstyle4\epsilon^2\sin^4(\frac{\bartheta}{2})(\be_{\rm t}\cdot\bm{\psi})
\\
W_{r,2}&= \frac{\bars^2Q_{\rm Rn}(\bars\wt{\bm{Q}}_{\rm t}\cdot\bm{\psi}+ 2\epsilon\sin(\frac{\bartheta}{2})\be_\theta\cdot\bm{\psi}) (\bars\wh{\kappa} - \kappa_3\epsilon\sin(\bartheta)+\epsilon\bars Q_{\rm Rs3})}{|\bR|^5} \\
&\qquad  - \frac{2\epsilon\sin^2(\frac{\bartheta}{2})(\bars^2\bm{Q}_{\rm t}\cdot\bm{\psi}-\epsilon\bars\bm{Q}_r\cdot\bm{\psi})(\bars\wh{\kappa} - \kappa_3\epsilon\sin(\bartheta)+\epsilon\bars Q_{\rm Rs3})}{|\bR|^5}\\
&\qquad +\bigg(\frac{1}{|\bR|^5}-\frac{1}{|\barR|^5}\bigg)\textstyle 2\epsilon\sin^2(\frac{\bartheta}{2})(\bars\be_{\rm t}\cdot\bm{\psi} + 2\epsilon\sin(\frac{\bartheta}{2})\be_\theta\cdot\bm{\psi}) (\bars\wh{\kappa} - \kappa_3\epsilon\sin(\bartheta)+\epsilon\bars Q_{\rm Rs3})
 \\
W_{r,3}&= \frac{5\bars^2Q_{\rm Rn}(\bars\wt{\bm{Q}}_{\rm t}\cdot\bm{\psi}+ 2\epsilon\sin(\frac{\bartheta}{2})\be_\theta\cdot\bm{\psi}) (2\epsilon\sin^2(\frac{\bartheta}{2})- \bars^2Q_{\rm Rn'})(\bars\wt Q_{\rm R}+2\epsilon^2\sin(\frac{\bartheta}{2})\sin(\bartheta))}{|\bR|^7} \\
&\qquad -\frac{10\epsilon\sin^2(\frac{\bartheta}{2})(\bars^2\bm{Q}_{\rm t}\cdot\bm{\psi}-\epsilon\bars\bm{Q}_r\cdot\bm{\psi})(2\epsilon\sin^2(\frac{\bartheta}{2})- \bars^2Q_{\rm Rn'})(\bars\wt Q_{\rm R} +2\epsilon^2\sin(\frac{\bartheta}{2})\sin(\bartheta))}{|\bR|^7} \\
&\qquad -\frac{10\epsilon\sin^2(\frac{\bartheta}{2})\bars^2Q_{\rm Rn'}(\bars\be_{\rm t}\cdot\bm{\psi} + 2\epsilon\sin(\frac{\bartheta}{2})\be_\theta\cdot\bm{\psi})(\bars\wt Q_{\rm R}+2\epsilon^2\sin(\frac{\bartheta}{2})\sin(\bartheta))}{|\bR|^7} \\
&\qquad +\frac{20\epsilon^2\sin^4(\frac{\bartheta}{2})(\epsilon\bars Q_{\rm Rs1} + \bars^2Q_{\rm Rs2})(\bars\be_{\rm t}\cdot\bm{\psi} + 2\epsilon\sin(\frac{\bartheta}{2})\be_\theta\cdot\bm{\psi})}{|\bR|^7} \\
&\qquad +\bigg(\frac{1}{|\bR|^7}-\frac{1}{|\barR|^7}\bigg)\textstyle 20\epsilon^2\sin^4(\frac{\bartheta}{2})(\bars\be_{\rm t}\cdot\bm{\psi} + 2\epsilon\sin(\frac{\bartheta}{2})\be_\theta\cdot\bm{\psi}) (\bars+2\epsilon^2\sin(\frac{\bartheta}{2})\sin(\bartheta))\,.
\end{align*}
Again we have $\wt{\bm Q}_{\rm t}=\be_{\rm t}-\bars\bm{Q}_{\rm t}+\epsilon\bm{Q}_r$ and $\wt Q_{\rm R}=1+ \epsilon Q_{\rm Rs1} + \bars Q_{\rm Rs2}$. Expanding $\frac{1}{|\bR|^5}-\frac{1}{|\barR|^5}$ and $\frac{1}{|\bR|^7}-\frac{1}{|\barR|^7}$ via \eqref{eq:RaRb_diff} and \eqref{eq:Rdiffk}, by Lemma \ref{lem:alpha_est} we have
\begin{equation}\label{eq:Wr_s}
\begin{aligned}
\abs{\bm{W}(\bm{\psi};\p_s\bm{K}_{\mc D})\cdot\overline\be_r}_{\dot C^{0,\gamma}} &\le c(\kappa_*,c_\Gamma)\max_j\big(\norm{Q_j}_{C^{0,\gamma^+}_1}+\norm{Q_j}_{C^{0,\gamma}_2} \big)\norm{\bm{\psi}}_{C^{0,\gamma}} \\
&\le c(\kappa_{*,\gamma^+},c_\Gamma)\,\epsilon^{-\gamma^+}\norm{\bm{\psi}}_{C^{0,\gamma}}\,,\\
\abs{\bm{W}(\bm{\psi};\p_s\bm{K}_{\mc D}^{(a)}-\p_s\bm{K}_{\mc D}^{(b)})\cdot\overline{\be}_r}_{\dot C^{0,\gamma}} 
&\le c(\kappa_{*,\gamma^+}^{(a)},\kappa_{*,\gamma^+}^{(b)},c_\Gamma)\,\epsilon^{-\gamma^+}\norm{\X^{(a)}-\X^{(b)}}_{C^{2,\gamma^+}}\norm{\bm{\psi}}_{C^{0,\gamma}}\,.
\end{aligned}
\end{equation}

We finally turn to $W_\theta(\bm{\psi};\p_s\bm{K}_{\mc D})$ as in \eqref{eq:Wj_def}, which we may write as
\begin{align*}
W_\theta(\bm{\psi};&\p_s\bm{K}_{\mc D}) = \frac{3}{4\pi}\big( W_{\theta,1}+W_{\theta,2}+W_{\theta,3} \big)\,, \\
W_{\theta,1}&= \frac{\big(\epsilon\kappa_3(\bR\cdot\bm{\psi})+(\bR\cdot\be_\theta)(\p_s\bR\cdot\bm{\psi})\big)\bR\cdot\bm{n}'}{|\bR|^5}
-\frac{(\barR\cdot\overline\be_\theta)(\be_{\rm t}\cdot\bm{\psi})\barR\cdot\overline{\bm{n}}'}{|\barR|^5}\\
W_{\theta,2}&= \frac{(\bR\cdot\be_\theta)(\bR\cdot\bm{\psi}) (\p_s\bR\cdot\bm{n}')}{|\bR|^5}
-\frac{(\barR\cdot\overline\be_\theta)(\barR\cdot(\Phi^{-1}\bm{\psi})) (\p_s\barR\cdot\overline{\bm{n}}')}{|\barR|^5} \\
W_{\theta,3}&= -\frac{5(\bR\cdot\be_\theta)(\bR\cdot\bm{\psi}) (\bR\cdot\bm{n}')\bR\cdot\p_s\bR}{|\bR|^7}
+\frac{5(\barR\cdot\overline\be_\theta)(\barR\cdot(\Phi^{-1}\bm{\psi})) (\barR\cdot\overline{\bm{n}}')\barR\cdot\p_s\barR}{|\barR|^7}\,.
\end{align*}
By \eqref{eq:RdotN}, \eqref{eq:Rdots}, and \eqref{eq:psR}, we may expand $W_{\theta,j}$ as 
\begin{align*}
W_{\theta,1}&= - \frac{(\epsilon\bars\kappa_3\cos(\theta-\bartheta) + \bars^2Q_{\rm R\theta})((1-\epsilon\wh\kappa)\be_{\rm t}\cdot{\bm \psi} + \epsilon\kappa_3\be_\theta\cdot\bm{\psi})(2\epsilon\sin^2(\frac{\bartheta}{2})-\bars^2Q_{\rm Rn'})}{|\bR|^5} \\
&\qquad -\frac{\epsilon\kappa_3(\bars\wt{\bm{Q}}_{\rm t}\cdot\bm{\psi}+ 2\epsilon\sin(\frac{\bartheta}{2})\be_\theta\cdot\bm{\psi})(2\epsilon\sin^2(\frac{\bartheta}{2})- \bars^2Q_{\rm Rn'})}{|\bR|^5} \\
&\qquad + \frac{\epsilon\sin(\bartheta)(\epsilon\wh\kappa\be_{\rm t}\cdot{\bm \psi} + \epsilon\kappa_3\be_\theta\cdot\bm{\psi})(2\epsilon\sin^2(\frac{\bartheta}{2})- \bars^2Q_{\rm Rn'})}{|\bR|^5} \\
&\qquad + \frac{\epsilon\sin(\bartheta)\bars^2Q_{\rm Rn'}(\be_{\rm t}\cdot\bm{\psi})}{|\bR|^5} - \bigg(\frac{1}{|\bR|^5}-\frac{1}{|\barR|^5}\bigg)\textstyle 2\epsilon^2\sin^2(\frac{\bartheta}{2})\sin(\bartheta)(\be_{\rm t}\cdot\bm{\psi})
\\
W_{\theta,2}&= \frac{(\epsilon\bars \kappa_3\cos(\theta-\bartheta) + \bars^2Q_{\rm R\theta})(\bars\wt{\bm{Q}}_{\rm t}\cdot\bm{\psi}+ 2\epsilon\sin(\frac{\bartheta}{2})\be_\theta\cdot\bm{\psi}) (\bars\wh{\kappa} - \kappa_3\epsilon\sin(\bartheta)+\epsilon\bars Q_{\rm Rs3})}{|\bR|^5} \\
&\qquad - \frac{\epsilon\sin(\bartheta)(\bars^2\bm{Q}_{\rm t}\cdot\bm{\psi}-\epsilon\bars\bm{Q}_r\cdot\bm{\psi})(\bars\wh{\kappa} - \kappa_3\epsilon\sin(\bartheta)+\epsilon\bars Q_{\rm Rs3})}{|\bR|^5} \\
&\qquad + \bigg(\frac{1}{|\bR|^5}-\frac{1}{|\barR|^5}\bigg)\textstyle \epsilon\sin(\bartheta)(\bars\be_{\rm t}\cdot\bm{\psi} + 2\epsilon\sin(\frac{\bartheta}{2})\be_\theta\cdot\bm{\psi})(\bars\wh{\kappa} - \kappa_3\epsilon\sin(\bartheta)+\epsilon\bars Q_{\rm Rs3}) \\
W_{\theta,3}&= \frac{5(\epsilon\bars \kappa_3\cos(\theta-\bartheta) + \bars^2Q_{\rm R\theta})(\bars\wt{\bm{Q}}_{\rm t}\cdot\bm{\psi}+ 2\epsilon\sin(\frac{\bartheta}{2})\be_\theta\cdot\bm{\psi}) (2\epsilon\sin^2(\frac{\bartheta}{2})- \bars^2Q_{\rm Rn'})(\bars\wt Q_{\rm R}+2\epsilon^2\sin(\frac{\bartheta}{2})\sin(\bartheta))}{|\bR|^7} \\
&\qquad -\frac{5\epsilon\sin(\bartheta)(\bars^2\bm{Q}_{\rm t}\cdot\bm{\psi}-\epsilon\bars\bm{Q}_r\cdot\bm{\psi})(2\epsilon\sin^2(\frac{\bartheta}{2})- \bars^2Q_{\rm Rn'})(\bars\wt Q_{\rm R}+2\epsilon^2\sin(\frac{\bartheta}{2})\sin(\bartheta))}{|\bR|^7} \\
&\qquad -\frac{5\epsilon\sin(\bartheta)\bars^2Q_{\rm Rn'}(\bars\be_{\rm t}\cdot\bm{\psi} + 2\epsilon\sin(\frac{\bartheta}{2})\be_\theta\cdot\bm{\psi})(\bars \wt Q_{\rm R}+2\epsilon^2\sin(\frac{\bartheta}{2})\sin(\bartheta))}{|\bR|^7} \\
&\qquad +\frac{10\epsilon^2\sin^2(\frac{\bartheta}{2})\sin(\bartheta)(\epsilon\bars Q_{\rm Rs1} + \bars^2Q_{\rm Rs2})(\bars\be_{\rm t}\cdot\bm{\psi} + 2\epsilon\sin(\frac{\bartheta}{2})\be_\theta\cdot\bm{\psi})}{|\bR|^7} \\
&\qquad +\bigg(\frac{1}{|\bR|^7}-\frac{1}{|\barR|^7}\bigg)\textstyle 10\epsilon^2\sin^2(\frac{\bartheta}{2})\sin(\bartheta)(\bars\be_{\rm t}\cdot\bm{\psi} + 2\epsilon\sin(\frac{\bartheta}{2})\be_\theta\cdot\bm{\psi}) (\bars+2\epsilon^2\sin(\frac{\bartheta}{2})\sin(\bartheta))\,.
\end{align*}
We again expand $\frac{1}{|\bR|^7}-\frac{1}{|\barR|^7}$ and $\frac{1}{|\bR|^7}-\frac{1}{|\barR|^7}$ using \eqref{eq:RaRb_diff} and \eqref{eq:Rdiffk}, and apply Lemma \ref{lem:alpha_est} to obtain
\begin{equation}\label{eq:Wthe_s}
\begin{aligned}
\abs{\bm{W}(\bm{\psi};\p_s\bm{K}_{\mc D})\cdot\overline\be_\theta}_{\dot C^{0,\gamma}} &\le  c(\kappa_*,c_\Gamma)\max_j\big(\norm{Q_j}_{C^{0,\gamma^+}_1}+\norm{Q_j}_{C^{0,\gamma}_2} \big)\norm{\bm{\psi}}_{C^{0,\gamma}} \\
&\le c(\kappa_{*,\gamma^+},c_\Gamma)\,\epsilon^{-\gamma^+}\norm{\bm{\psi}}_{C^{0,\gamma}}\,,\\
\abs{\bm{W}(\bm{\psi};\p_s\bm{K}_{\mc D}^{(a)}-\p_s\bm{K}_{\mc D}^{(b)})\cdot\overline\be_\theta}_{\dot C^{0,\gamma}} 
&\le c(\kappa_{*,\gamma^+}^{(a)},\kappa_{*,\gamma^+}^{(b)},c_\Gamma)\,\epsilon^{-\gamma^+}\norm{\X^{(a)}-\X^{(b)}}_{C^{2,\gamma^+}}\norm{\bm{\psi}}_{C^{0,\gamma}} \,.
\end{aligned}
\end{equation}

Next, noting that, by the form \eqref{eq:RdotN} of $\bR\cdot\bm{n}'$, we have 
\begin{equation}\label{eq:KD_form}
\begin{aligned}
\bm{K}_\mc{D} = \frac{3}{4\pi}\frac{\bR\otimes\bR (\bR\cdot\bm{n}')}{\abs{\bR}^5} = \frac{3}{4\pi}\frac{\bR\otimes\bR (-2\epsilon\sin^2(\frac{\bartheta}{2})+\bars^2Q_{\rm Rn'})}{\abs{\bR}^5}\,,
\end{aligned}
\end{equation}
we may use the commutator bounds \eqref{eq:Phi_commutator_est} and Lemmas \ref{lem:basic_est} and \ref{lem:alpha_est} to estimate the second term of $\mc{R}_{\mc{D},1}$ involving $[\p_s, \Phi^{-1}]$: 
\begin{equation}\label{eq:psRD1_comm_est}
\begin{aligned}
\abs{[\p_s, \Phi^{-1}]\int_{-1/2}^{1/2}\int_{-\pi}^\pi\bm{K}_{\mc D}\,\bm{\psi}\,\epsilon\,d\bartheta d\bars}_{\dot C^{0,\gamma}} &\le
c(\kappa_{*,\gamma})\norm{\int_{-1/2}^{1/2}\int_{-\pi}^\pi\bm{K}_{\mc D}\,\bm{\psi}\,\epsilon\,d\bartheta d\bars}_{C^{0,\gamma}}\\
&\le c(\kappa_{*,\gamma},c_\Gamma)\,\epsilon^{-\gamma}\norm{\bm{\psi}}_{L^\infty}  \\
\bigg|[\p_s, (\Phi^{(a)})^{-1}]\int_{-1/2}^{1/2}\int_{-\pi}^\pi\bm{K}_{\mc D}^{(a)}\,\bm{\psi}\,\epsilon\,d\bartheta d\bars &-  [\p_s, (\Phi^{b})^{-1}]\int_{-1/2}^{1/2}\int_{-\pi}^\pi\bm{K}_{\mc D}^{(b)}\,\bm{\psi}\,\epsilon\,d\bartheta d\bars \bigg|_{\dot C^{0,\gamma}}\\
&\le c(\kappa_{*,\gamma}^{(a)},\kappa_{*,\gamma}^{(b)},c_\Gamma)\,\epsilon^{-\gamma}\norm{\X^{(a)}-\X^{(b)}}_{C^{2,\gamma}}\norm{\bm{\psi}}_{L^\infty}\,.
\end{aligned}
\end{equation}
We may then combine \eqref{eq:Wz_s}, \eqref{eq:Wz_s_lip}, \eqref{eq:Wr_s}, \eqref{eq:Wthe_s}, and \eqref{eq:psRD1_comm_est} to obtain 
\begin{equation}\label{eq:psRD1_Calpha}
\begin{aligned}
\abs{\p_s\mc{R}_{\mc{D},1}[\bm{\psi}]}_{\dot C^{0,\gamma}} &\le  
 \abs{\bm{W}(\bm{\psi};\p_s\bm{K}_{\mc D})}_{\dot C^{0,\gamma}}
 + \abs{[\p_s, \Phi^{-1}]\int_{-1/2}^{1/2}\int_{-\pi}^\pi\bm{K}_{\mc D}\,\bm{\psi}\,\epsilon\,d\bartheta d\bars}_{\dot C^{0,\gamma}} \\
&\le c(\kappa_{*,\gamma^+},c_\Gamma)\,\epsilon^{-\gamma^+}\norm{\bm{\psi}}_{C^{0,\gamma}}\,,\\
\abs{\p_s\mc{R}_{\mc{D},1}^{(a)}[\bm{\psi}]-\p_s\mc{R}_{\mc{D},1}^{(b)}[\bm{\psi}]}_{\dot C^{0,\gamma}} &\le 
 c(\kappa_{*,\gamma^+}^{(a)},\kappa_{*,\gamma^+}^{(b)},c_\Gamma)\,\epsilon^{-\gamma^+}\norm{\X^{(a)}-\X^{(b)}}_{C^{2,\gamma^+}}\norm{\bm{\psi}}_{C^{0,\gamma}} \,.
\end{aligned}
\end{equation}

We finally turn to $\p_s\mc{R}_{\mc{D},2}[\bm{\psi}]$, which we write as 
\begin{align*}
\p_s\mc{R}_{\mc{D},2}[\bm{\psi}] &= -\bm{W}[\bm{\psi};\epsilon\wh\kappa\p_s\bm{K}_{\mc D}] - {\rm p.v.}\int_{-1/2}^{1/2}\int_{-\pi}^{\pi}\epsilon\wh\kappa \p_s\overline{\bm{K}}_{\mc D}(\Phi^{-1}\bm{\psi})(s-\bars,\theta-\bartheta)\,\epsilon\,d\bartheta d\bars\\
&\qquad - [\p_s,\Phi^{-1}]\int_{-1/2}^{1/2}\int_{-\pi}^\pi\bm{K}_{\mc D}\,\bm{\psi}\,\epsilon^2\wh\kappa\,d\bartheta d\bars \,.
\end{align*}
We may use that \eqref{eq:Wz_s}, \eqref{eq:Wr_s}, and \eqref{eq:Wthe_s} imply that 
\begin{align*}
\abs{\bm{W}[\bm{\psi};\epsilon\wh\kappa\p_s\bm{K}_{\mc D}]}_{\dot C^{0,\gamma}} &\le c(\kappa_{*,\gamma^+},c_\Gamma)\,\epsilon^{1-\gamma^+}\,\norm{\bm{\psi}}_{C^{0,\gamma}}\,,\\
\abs{\bm{W}\big[\bm{\psi};\epsilon\wh\kappa^{(a)}\p_s\bm{K}_{\mc D}^{(a)}-\epsilon\wh\kappa^{(b)}\p_s\bm{K}_{\mc D}^{(b)}\big]}_{\dot C^{0,\gamma}} 
&\le c(\kappa_{*,\gamma^+}^{(a)},\kappa_{*,\gamma^+}^{(b)},c_\Gamma)\,\epsilon^{1-\gamma^+}\norm{\X^{(a)}-\X^{(b)}}_{C^{2,\gamma^+}}\norm{\bm{\psi}}_{C^{0,\gamma}} \,.
\end{align*}
Furthermore, using \eqref{eq:Phi_commutator_est}, \eqref{eq:KD_form}, and Lemma \ref{lem:basic_est}, the third term involving the commutator $[\p_s,\Phi^{-1}]$ satisfies  
\begin{align*}
\abs{[\p_s,\Phi^{-1}]\int_{-1/2}^{1/2}\int_{-\pi}^\pi\bm{K}_{\mc D}\,\bm{\psi}\,\epsilon^2\wh\kappa\,d\bartheta d\bars}_{\dot C^{0,\gamma}} 
&\le c(\kappa_{*,\gamma})\norm{\int_{-1/2}^{1/2}\int_{-\pi}^\pi\bm{K}_{\mc D}\,\bm{\psi}\,\epsilon^2\wh\kappa\,d\bartheta d\bars}_{\dot C^{0,\gamma}}\\
&\le c(\kappa_{*,\gamma},c_\Gamma)\,\epsilon^{1-\gamma}\norm{\bm{\psi}}_{L^\infty}\,,\\
\bigg|[\p_s,(\Phi^{(a)})^{-1}]\int_{-1/2}^{1/2}\int_{-\pi}^\pi\bm{K}_{\mc D}^{(a)}\,\bm{\psi}\,\epsilon^2\wh\kappa^{(a)}\,d\bartheta d\bars&- [\p_s,(\Phi^{(b)})^{-1}]\int_{-1/2}^{1/2}\int_{-\pi}^\pi\bm{K}_{\mc D}^{(b)}\,\bm{\psi}\,\epsilon^2\wh\kappa^{(b)}\,d\bartheta d\bars\bigg|_{\dot C^{0,\gamma}} \\
&\le c(\kappa_{*,\gamma}^{(a)},\kappa_{*,\gamma}^{(b)},c_\Gamma)\,\epsilon^{1-\gamma}\norm{\X^{(a)}-\X^{(b)}}_{C^{2,\gamma}}\norm{\bm{\psi}}_{L^\infty}\,.
\end{align*}
It thus remains to bound the middle term of the above expression for $\p_s\mc{R}_{\mc{D},2}[\bm{\psi}]$. Using the form \eqref{eq:barR} of $\barR$, we may write 
\begin{align*}
\textstyle \be_z\cdot\big(\epsilon\wh\kappa\p_s\overline{\bm{K}}_{\mc D}(\Phi^{-1}\bm{\psi})\big) 
 &= \frac{3\epsilon\wh\kappa}{4\pi} \bigg(\frac{10\epsilon\sin^2(\frac{\bartheta}{2})\bars^2\big(\bars\be_{\rm t}\cdot\bm{\psi}+2\epsilon\sin(\frac{\bartheta}{2})\be_\theta\cdot\bm{\psi}\big)}{|\barR|^7} \\
 &\qquad \qquad-\frac{4\epsilon\sin^2(\frac{\bartheta}{2})\big(\bars\be_{\rm t}\cdot\bm{\psi}+\epsilon\sin(\frac{\bartheta}{2})\be_\theta\cdot\bm{\psi}\big)}{|\barR|^5} \bigg)\\
\textstyle \overline\be_r\cdot\big(\epsilon\wh\kappa\p_s \overline{\bm{K}}_{\mc D}(\Phi^{-1}\bm{\psi})\big) 
&= \frac{3\epsilon\wh\kappa}{4\pi} \bigg(\frac{20\epsilon^2\sin^4(\frac{\bartheta}{2})\bars\big(\bars\be_{\rm t}\cdot\bm{\psi}+2\epsilon\sin(\frac{\bartheta}{2})\be_\theta\cdot\bm{\psi}\big)}{|\barR|^7} -\frac{4\epsilon^2\sin^4(\frac{\bartheta}{2})(\be_{\rm t}\cdot\bm{\psi})}{|\barR|^5}\bigg) \\
\textstyle \overline\be_\theta\cdot\big(\epsilon\wh\kappa\p_s \overline{\bm{K}}_{\mc D}(\Phi^{-1}\bm{\psi})\big) 
&= \frac{3\epsilon\wh\kappa}{4\pi} \bigg(\frac{10\epsilon^2\sin^2(\frac{\bartheta}{2})\sin(\bartheta)\bars\big(\bars\be_{\rm t}\cdot\bm{\psi}+2\epsilon\sin(\frac{\bartheta}{2})\be_\theta\cdot\bm{\psi}\big)}{|\barR|^7}\\
&\qquad\qquad -\frac{2\epsilon^2\sin^2(\frac{\bartheta}{2})\sin(\bartheta)(\be_{\rm t}\cdot\bm{\psi})}{|\barR|^5}\bigg)\,.
\end{align*}
By Lemma \ref{lem:alpha_est}, we thus have
\begin{equation}\label{eq:psRD2_Calpha}
\begin{aligned}
&\textstyle \abs{\p_s\mc{R}_{\mc{D},2}[\bm{\psi}]}_{\dot C^{0,\gamma}} \le 
\abs{\bm{W}[\bm{\psi};\epsilon \wh\kappa \p_s\bm{K}_{\mc D}]}_{\dot C^{0,\gamma}} + \displaystyle\bigg|{\rm p.v.}\int_{-1/2}^{1/2}\int_{-\pi}^\pi \textstyle \epsilon\wh\kappa\p_s\overline{\bm{K}}_{\mc D}\,(\Phi^{-1}\bm{\psi})\, \epsilon \,d\bartheta d\bars\bigg|_{\dot C^{0,\gamma}} \\
&\hspace{2.5cm} + \abs{[\p_s,\Phi^{-1}]\int_{-1/2}^{1/2}\int_{-\pi}^\pi\bm{K}_{\mc D}\,\bm{\psi}\,\epsilon^2\wh\kappa\,d\bartheta d\bars}_{\dot C^{0,\gamma}} 
\le c(\kappa_{*,\gamma^+},c_\Gamma)\,\epsilon^{-\gamma^+}\norm{\bm{\psi}}_{C^{0,\gamma}}\,,\\
&\textstyle \abs{\p_s\mc{R}_{\mc{D},2}^{(a)}[\bm{\psi}]-\p_s\mc{R}_{\mc{D},2}^{(b)}[\bm{\psi}]}_{\dot C^{0,\gamma}} 
\le c(\kappa_{*,\gamma^+}^{(a)},\kappa_{*,\gamma^+}^{(b)},c_\Gamma)\,\epsilon^{-\gamma^+}\norm{\X^{(a)}-\X^{(b)}}_{C^{2,\gamma^+}}\norm{\bm{\psi}}_{C^{0,\gamma}} \,. 
\end{aligned}
\end{equation}

Combining the estimates \eqref{eq:RD0_bd}, \eqref{eq:RD0_bd_lip}, \eqref{eq:RD1_inftybd}, \eqref{eq:RD1_inftybd_lip}, \eqref{eq:RD2_linfty}, \eqref{eq:pthetaRD1_Calpha}, \eqref{eq:pthetaRD2_Calpha}, \eqref{eq:psRD1_Calpha}, and \eqref{eq:psRD2_Calpha}, we obtain Lemma \ref{lem:double_layer}.
\end{proof}


\subsection{Single layer applied to constant-in-arclength}\label{subsec:single_layer_mean}
This section is devoted to the proof of Lemma \ref{lem:single_const_in_s} regarding the angle-averaged single layer operator applied to functions that are constant in $s$. We restate the lemma here for convenience. 
\begin{lemma}[Single layer applied to constant-in-$s$]\label{lem:single_const_in_s0}
Let $0<\alpha<\beta<1$ and consider a filament $\Sigma_\epsilon$ with centerline $\X(s)\in C^{2,\beta}(\T)$. Given $\overline{\bm{h}}(\theta)\in C^{0,\alpha}(2\pi\T)$ of the form $\overline{\bm{h}}=h_1(\theta)\be_z+h_2(\theta)\be_x+h_3(\theta)\be_y$, and recalling the map $\Phi$ \eqref{eq:mapPhi_def}, we may decompose
\begin{align*}
\mc{S}[\Phi\overline{\bm{h}}(\theta)] = \mc{H}_\epsilon[\Phi\overline{\bm{h}}(\theta)] + \mc{H}_+[\Phi\overline{\bm{h}}(\theta)]
\end{align*}
where
\begin{equation}
\begin{aligned}
\norm{\mc{H}_\epsilon[\Phi\overline{\bm{h}}]}_{C^{0,\alpha}} &\le c(\kappa_{*,\alpha},c_\Gamma)\,\epsilon^{2-\alpha}\norm{\overline{\bm{h}}}_{L^\infty}\,, \quad
\abs{\mc{H}_\epsilon[\Phi\overline{\bm{h}}]}_{\dot C_s^{1,\alpha}} \le c(\kappa_{*,\alpha^+},c_\Gamma)\,\epsilon^{1-\alpha^+}\norm{\overline{\bm{h}}}_{C^{0,\alpha}} \\
\norm{\mc{H}_+[\Phi\overline{\bm{h}}]}_{C^{0,\gamma}} &\le c(\kappa_{*,\alpha},c_\Gamma)\,\epsilon^{1-\alpha}\norm{\overline{\bm{h}}}_{L^\infty}\,, \quad
\abs{\mc{H}_+[\Phi\overline{\bm{h}}]}_{\dot C_s^{1,\beta}} \le c(\kappa_{*,\beta},c_\Gamma)\,\epsilon^{1-2\beta}\norm{\overline{\bm{h}}}_{C^{0,\alpha}}
\end{aligned}
\end{equation}
for any $\alpha^+\in (\alpha,\beta]$.
In addition, given two nearby curves $\X^{(a)}$, $\X^{(b)}$ satisfying Lemma \ref{lem:XaXb_C2beta}, the corresponding differences $\mc{H}_\epsilon^{(a)}-\mc{H}_\epsilon^{(b)}$ and $\mc{H}_+^{(a)}-\mc{H}_+^{(b)}$ satisfy
\begin{equation}
\begin{aligned}
\norm{\mc{H}_\epsilon^{(a)}[\Phi^{(a)}\overline{\bm{h}}]-\mc{H}_\epsilon^{(b)}[\Phi^{(b)}\overline{\bm{h}}]}_{C^{0,\alpha}} &\le c(\kappa_{*,\alpha}^{(a)},\kappa_{*,\alpha}^{(b)},c_\Gamma)\,\epsilon^{2-\alpha}\norm{\X^{(a)}-\X^{(b)}}_{C^{2,\alpha}}\norm{\overline{\bm{h}}}_{L^\infty} \\
\abs{\mc{H}_\epsilon^{(a)}[\Phi^{(a)}\overline{\bm{h}}]-\mc{H}_\epsilon^{(b)}[\Phi^{(b)}\overline{\bm{h}}]}_{\dot C_s^{1,\alpha}} &\le c(\kappa_{*,\alpha^+}^{(a)},\kappa_{*,\alpha^+}^{(b)},c_\Gamma)\,\epsilon^{1-\alpha^+}\norm{\X^{(a)}-\X^{(b)}}_{C^{2,\alpha^+}}\norm{\overline{\bm{h}}}_{C^{0,\alpha}} \\
\norm{\mc{H}_+^{(a)}[\Phi^{(a)}\overline{\bm{h}}]-\mc{H}_+^{(b)}[\Phi^{(b)}\overline{\bm{h}}]}_{C^{0,\alpha}} &\le c(\kappa_{*,\alpha}^{(a)},\kappa_{*,\alpha}^{(b)},c_\Gamma)\,\epsilon^{1-\alpha}\norm{\X^{(a)}-\X^{(b)}}_{C^{2,\alpha}}\norm{\overline{\bm{h}}}_{L^\infty}\\
\abs{\mc{H}_+^{(a)}[\Phi^{(a)}\overline{\bm{h}}]-\mc{H}_+^{(b)}[\Phi^{(b)}\overline{\bm{h}}]}_{\dot C_s^{1,\beta}} &\le c(\kappa_{*,\beta}^{(a)},\kappa_{*,\beta}^{(b)},c_\Gamma)\,\epsilon^{1-2\beta}\norm{\X^{(a)}-\X^{(b)}}_{C^{2,\beta}}\norm{\overline{\bm{h}}}_{C^{0,\alpha}}\,.
\end{aligned}
\end{equation}
\end{lemma}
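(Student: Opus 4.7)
The plan is to adapt the strategy of Lemma \ref{lem:single_layer}, now with the straight single layer supplying the ``main part'' of the decomposition rather than being canceled off. Because $\overline{\bm{h}}(\theta)$ is independent of $s$, the translation invariance of $\mc{C}_\epsilon$ in the axial direction implies $\overline{\mc{S}}[\overline{\bm{h}}(\theta)]$ is a function of $\theta$ alone. I would thus write
\begin{equation*}
\mc{S}[\Phi\overline{\bm{h}}(\theta)] \;=\; \Phi\,\overline{\mc{S}}\big[\overline{\bm{h}}(\theta)\big] \;+\; \Phi\,\mc{R}_{\mc{S}}^\circ[\Phi\overline{\bm{h}}],
\end{equation*}
where $\mc{R}_{\mc{S}}^\circ$ is defined exactly as $\mc{R}_{\mc{S}}$ in \eqref{eq:RS_RD_def} but with the far-field piece $\mc{R}_{\mc{S},0}$ absent: the curved and straight integration domains are identical here, so no $\R$-extension and no mean-zero hypothesis is needed. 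Following the proof of Lemma \ref{lem:single_layer}, I then subdivide $\mc{R}_{\mc{S}}^\circ = \mc{R}_{\mc{S},1} + \mc{R}_{\mc{S},2} + \mc{R}_{\mc{S},3}$ as in \eqref{eq:RS_defs}, split $\mc{R}_{\mc{S},j} = \mc{R}_{\mc{S},j,1}+\mc{R}_{\mc{S},j,2}$ ($j=1,2$) via the auxiliary kernel $\bR_{\rm t}$ from \eqref{eq:Rtdef}, and set
\begin{equation*}
\mc{H}_+ := \Phi\,\overline{\mc{S}}[\overline{\bm{h}}] + \Phi(\mc{R}_{\mc{S},1,1} + \mc{R}_{\mc{S},2,1})[\Phi\overline{\bm{h}}], \qquad \mc{H}_\epsilon := \Phi(\mc{R}_{\mc{S},1,2} + \mc{R}_{\mc{S},2,2} + \mc{R}_{\mc{S},3})[\Phi\overline{\bm{h}}].
\end{equation*}

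The estimates on each $\mc{R}_{\mc{S},j,k}$ and $\mc{R}_{\mc{S},3}$ transfer verbatim from the proof of Lemma \ref{lem:single_layer}: the integrand-level arguments there, based on Lemmas \ref{lem:Rests}--\ref{lem:alpha_est} together with the expansions \eqref{eq:Rt_inv_diff}, \eqref{eq:Rdiffk}, depend only on $\|\Phi\overline{\bm{h}}\|_{L^\infty}=\|\overline{\bm{h}}\|_{L^\infty}$ (or the corresponding H\"older norm) and never invoked the mean-zero hypothesis. The single genuinely new estimate is for $\Phi\,\overline{\mc{S}}[\overline{\bm{h}}]$: Lemma \ref{lem:basic_est} applied to the Stokeslet kernel yields $\|\overline{\mc{S}}[\overline{\bm{h}}]\|_{L^\infty}\le c\epsilon\|\overline{\bm{h}}\|_{L^\infty}$ and Lemma \ref{lem:alpha_est} (case (2)) gives $|\overline{\mc{S}}[\overline{\bm{h}}]|_{\dot C^{0,\alpha}(\mc{C}_\epsilon)}\le c\epsilon^{1-\alpha}\|\overline{\bm{h}}\|_{L^\infty}$, which combined with the frame regularity \eqref{eq:Phi_commutator_est} yields the claimed $C^{0,\alpha}$ bound on $\mc{H}_+$. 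For the $\dot C^{1,\beta}_s$ bound, the key observation is that $\overline{\mc{S}}[\overline{\bm{h}}]$ is $s$-independent, so $\p_s\big[\Phi\overline{\mc{S}}[\overline{\bm{h}}]\big] = (\p_s\Phi)\,\overline{\mc{S}}[\overline{\bm{h}}]$ is controlled by the $L^\infty$ estimate above and the $C^{0,\beta}$ regularity of the frame, giving order $\epsilon\|\overline{\bm{h}}\|_{L^\infty}$---finer than the claimed $\epsilon^{1-2\beta}$.

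The Lipschitz estimates for nearby curves $\X^{(a)},\X^{(b)}$ proceed by the same systematic expansion strategy: differences of the rotation maps are handled via \eqref{eq:Phi_lip_1}--\eqref{eq:Phi_lip_est}, differences of $\bR$ via \eqref{eq:Ra_minus_Rb}--\eqref{eq:Rdiffk}, and differences of the remainder coefficients $Q_i$ via \eqref{eq:Qj_Cbeta}--\eqref{eq:Qj_Cbeta2}. The new main-term difference reduces to $(\Phi^{(a)}-\Phi^{(b)})\overline{\mc{S}}[\overline{\bm{h}}]$ since the straight operator is geometry-independent; combined with the $C^{0,\alpha}$ bound on $\overline{\mc{S}}[\overline{\bm{h}}]$, this contributes precisely at the claimed $\epsilon^{1-\alpha}\|\X^{(a)}-\X^{(b)}\|_{C^{2,\alpha}}\|\overline{\bm{h}}\|_{L^\infty}$ level. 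The main obstacle is essentially careful bookkeeping: each piece of the decomposition produces multiple terms under differencing, and one must verify that every curve-dependent factor is properly accounted for and that the surface-parameter seminorm structure \eqref{eq:dot_Calpha_eps} is respected throughout. No fundamentally new technique beyond those already developed in Lemmas \ref{lem:single_layer} and \ref{lem:double_layer} is required.
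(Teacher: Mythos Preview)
Your approach is valid and genuinely different from the paper's. The paper does \emph{not} recycle the $\bR_{\rm t}$-based decomposition from Lemma~\ref{lem:single_layer}; instead it introduces a distinct auxiliary function
\[
\bR_{\rm r} = \X(s)-\X(s')+\epsilon\big(\be_r(s,\theta)-\be_r(s,\theta')\big),
\]
splits the Stokeslet kernel as $\bm{H}_1+\bm{H}_2+\bm{H}_3$ where $\bm{H}_1$ is the Stokeslet built from $\bR_{\rm r}$, and sets $\mc{H}_+=\tfrac{1}{8\pi}\int\bm{H}_1(\Phi\overline{\bm{h}})\,\epsilon\,d\theta'ds'$. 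The $\dot C_s^{1,\beta}$ estimate on $\mc{H}_+$ is obtained by observing that $\p_s\bR_{\rm r}$ differs from $-\p_{s'}\bR_{\rm r}$ only by lower-order terms, so $\p_s\bm{H}_1\approx -\p_{s'}\bm{H}_1$ and one can integrate by parts in $s'$, putting the derivative onto $(\Phi\overline{\bm h})(s',\theta')$ --- which is bounded via~\eqref{eq:hbound} since only the frame carries $s'$-dependence.

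Your route instead takes the truncated straight operator $\overline{\mc{S}}_{\rm trunc}$ (integration over $\bars\in[-\tfrac12,\tfrac12]$) as the main part and reuses the $\mc{R}_{\mc{S},j,k}$ estimates verbatim. The key new ingredient --- that $\overline{\mc{S}}_{\rm trunc}[\overline{\bm h}]$ is $s$-independent so $\p_s$ only hits $\Phi$ --- replaces the paper's integration-by-parts trick. This is economical: you avoid introducing a second auxiliary kernel and reuse essentially all of Lemma~\ref{lem:single_layer}. The paper's route is more self-contained for this lemma and makes the structure of $\mc{H}_+$ more explicit as a single geometric kernel. Two minor remarks: your $L^\infty$ bound on $\overline{\mc{S}}_{\rm trunc}[\overline{\bm h}]$ should really be $O(\epsilon|\log\epsilon|)$ (the $k-\alpha=1$ boundary case of Lemma~\ref{lem:basic_est} picks up a logarithm), which is still well within $\epsilon^{1-\alpha}$; and the $\dot C^{0,\alpha}$ bound comes from case~(1) of Lemma~\ref{lem:alpha_est}, not case~(2).
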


\begin{proof}[Proof of Lemma \ref{lem:single_const_in_s}]
For the proof of this lemma, it will be convenient to keep our integrands in terms of $s'$ and $\theta'$ instead of $\bars=s-s'$ and $\bartheta=\theta-\theta'$. The expansions and bounds of section \ref{subsec:setup} will thus be applied using $s-s'$ and $\theta-\theta'$ in place of $\bars,\bartheta$.

We will first define an auxiliary function akin to the function $\bR_{\rm t}$ appearing in the proof of Lemma \ref{lem:single_layer}. Let
\begin{equation}\label{eq:bRr_def}
\begin{aligned}
\bR_{\rm r} &= \X(s)-\X(s')+\epsilon\big(\be_r(s,\theta)-\be_r(s,\theta') \big)\\
&=\textstyle  \X(s)-\X(s') +2\epsilon\sin(\frac{\theta-\theta'}{2})\be_\theta(s,\frac{\theta+\theta'}{2}) \,,
\end{aligned}
\end{equation}
and note that we have
\begin{equation}\label{eq:bRr_expand}
\begin{aligned}
\bR-\bR_{\rm r} &= \epsilon\big(\be_r(s,\theta')-\be_r(s',\theta') \big) = \epsilon(s-s')\bm{Q}_r \\
\abs{\bR}^2-\abs{\bR_{\rm r}}^2 &= 2\epsilon(s-s')\bm{Q}_r\cdot\bR_{\rm r}+\epsilon^2(s-s')^2\abs{\bm{Q}_r}^2\\
\frac{1}{\abs{\bR}^k}-\frac{1}{\abs{\bR_{\rm r}}^k} &= -\frac{2\epsilon(s-s')\bm{Q}_r\cdot\bR_{\rm r}+\epsilon^2(s-s')^2\abs{\bm{Q}_r}^2}{\abs{\bR}|\bR_{\rm r}|(\abs{\bR}+|\bR_{\rm r}|)}\sum_{\ell=0}^{k-1}\frac{1}{|\bR|^\ell|\bR_{\rm r}|^{k-1-\ell}}\\
\frac{1}{|\bR_{\rm r}^{(a)}|^k}-\frac{1}{|\bR_{\rm r}^{(b)}|^k} &= \frac{2\bars^2\epsilon\sin(\frac{\bartheta}{2})A_{ab}+\bars^4B_{ab}}{|\bR_{\rm r}^{(a)}||\bR_{\rm r}^{(b)}|(|\bR_{\rm r}^{(a)}|+|\bR_{\rm r}^{(b)}|)}\sum_{\ell=0}^{k-1}\frac{1}{|\bR_{\rm r}^{(a)}|^\ell|\bR_{\rm r}^{(b)}|^{k-1-\ell}}\\
\end{aligned}
\end{equation}
where $A_{ab}=\be_\theta^{(a)}\cdot\bm{Q}_{\rm t}^{(a)}-\be_\theta^{(b)}\cdot\bm{Q}_{\rm t}^{(b)}$ and $B_{ab}= 2Q_{\rm t}^{(a)}-2Q_{\rm t}^{(b)} +|\bm{Q}_{\rm t}^{(b)}|^2-|\bm{Q}_{\rm t}^{(a)}|^2$. In addition, $\bR_{\rm r}$ may be shown to satisfy each of Lemmas \ref{lem:Rests}, \ref{lem:basic_est}, \ref{lem:odd_nm}, and \ref{lem:alpha_est}.

Using $\bR_{\rm r}$, we will then write $\mc{S}[\Phi\overline{\bm{h}}(\theta)]$ as 
\begin{align*}
\mc{S}[\Phi\overline{\bm{h}}(\theta)] &= \frac{1}{8\pi}\int_\T\int_0^{2\pi}\big(\bm{H}_1+\bm{H}_2 + \bm{H}_3\big)\,(\Phi\overline{\bm{h}})(s',\theta')\,\epsilon\,d\theta'ds' \,,\\
\bm{H}_1 &= \frac{{\bf I}}{\abs{\bR_{\rm r}}}+ \frac{\bR_{\rm r}\otimes\bR_{\rm r}}{\abs{\bR_{\rm r}}^3}\\
\bm{H}_2 &= \frac{\bR\otimes\bR}{\abs{\bR}^3}- \frac{\bR_{\rm r}\otimes\bR_{\rm r}}{\abs{\bR_{\rm r}}^3}  = \frac{(\bR-\bR_{\rm r})\otimes\bR + \bR_{\rm r}\otimes(\bR-\bR_{\rm r})}{\abs{\bR}^3} + \bigg(\frac{1}{\abs{\bR}^3}-\frac{1}{\abs{\bR_{\rm r}}^3}\bigg)\bR_{\rm r}\otimes\bR_{\rm r}\\
\bm{H}_3 &= -\bigg(\frac{{\bf I}}{\abs{\bR}}+ \frac{\bR\otimes\bR}{\abs{\bR}^3}\bigg)\,\epsilon\wh\kappa(s',\theta')\,.
\end{align*}
Using \eqref{eq:bRr_expand} along with Lemmas \ref{lem:basic_est} and \ref{lem:alpha_est}, we have
\begin{align*}
\norm{\int_\T\int_0^{2\pi}\bm{H}_1\,(\Phi\overline{\bm{h}})\,\epsilon\,d\theta'ds'}_{C^{0,\alpha}}&\le c(\kappa_{*,\alpha},c_\Gamma)\,\epsilon^{1-\alpha}\norm{\Phi\overline{\bm{h}}}_{L^\infty} \le c(\kappa_{*,\alpha},c_\Gamma)\,\epsilon^{1-\alpha}\norm{\overline{\bm{h}}}_{L^\infty} \\
\norm{\int_\T\int_0^{2\pi}\bm{H}_2\,(\Phi\overline{\bm{h}})\,\epsilon\,d\theta'ds'}_{C^{0,\alpha}}
&\le c(\kappa_{*,\alpha},c_\Gamma)\,\epsilon^{2-\alpha}\norm{\Phi\overline{\bm{h}}}_{L^\infty} \le c(\kappa_{*,\alpha},c_\Gamma)\,\epsilon^{2-\alpha}\norm{\overline{\bm{h}}}_{L^\infty} \\
\norm{\int_\T\int_0^{2\pi}\bm{H}_3\,(\Phi\overline{\bm{h}})\,\epsilon\,d\theta'ds'}_{C^{0,\alpha}}&\le c(\kappa_{*,\alpha},c_\Gamma)\,\epsilon^{2-\alpha}\norm{\Phi\overline{\bm{h}}}_{L^\infty} \le c(\kappa_{*,\alpha},c_\Gamma)\,\epsilon^{2-\alpha}\norm{\overline{\bm{h}}}_{L^\infty} \,.
\end{align*}
In addition, using \eqref{eq:bRr_expand} along with the fact that each remainder $\bm{Q}$ satisfies \eqref{eq:Qj_Cbeta}, we may show that for two curves $\X^{(a)}$ and $\X^{(b)}$ satisfying Lemma \ref{lem:XaXb_C2beta}, we have
\begin{align*}
\norm{\int_\T\int_0^{2\pi}\big(\bm{H}_1^{(a)}\,(\Phi^{(a)}\overline{\bm{h}})-\bm{H}_1^{(b)}\,(\Phi^{(b)}\overline{\bm{h}})\big)\,\epsilon\,d\theta'ds'}_{C^{0,\alpha}}&\le c(\kappa_{*,\alpha}^{(a)},\kappa_{*,\alpha}^{(b)},c_\Gamma)\,\epsilon^{1-\alpha}\norm{\X^{(a)}-\X^{(b)}}_{C^{2,\alpha}}\norm{\overline{\bm{h}}}_{L^\infty} \\
\norm{\int_\T\int_0^{2\pi}\big(\bm{H}_2^{(a)}\,(\Phi^{(a)}\overline{\bm{h}})-\bm{H}_2^{(b)}\,(\Phi^{(b)}\overline{\bm{h}})\big)\,\epsilon\,d\theta'ds'}_{C^{0,\alpha}}
&\le c(\kappa_{*,\alpha}^{(a)},\kappa_{*,\alpha}^{(b)},c_\Gamma)\,\epsilon^{2-\alpha}\norm{\X^{(a)}-\X^{(b)}}_{C^{2,\alpha}}\norm{\overline{\bm{h}}}_{L^\infty} \\
\norm{\int_\T\int_0^{2\pi}\big(\bm{H}_3^{(a)}\,(\Phi^{(a)}\overline{\bm{h}})-\bm{H}_3^{(b)}\,(\Phi^{(b)}\overline{\bm{h}})\big)\,\epsilon\,d\theta'ds'}_{C^{0,\alpha}}
&\le c(\kappa_{*,\alpha}^{(a)},\kappa_{*,\alpha}^{(b)},c_\Gamma)\,\epsilon^{2-\alpha}\norm{\X^{(a)}-\X^{(b)}}_{C^{2,\alpha}}\norm{\overline{\bm{h}}}_{L^\infty} \,.
\end{align*}

We next turn to estimates for $\p_s\mc{S}[\Phi\overline{\bm{h}}(\theta)]$. We first note that, since $\overline{\bm{h}}(\theta)$ is independent of $s$, the only $s$-dependence in $(\Phi\overline{\bm{h}})(s,\theta)$ is due to the map $\Phi$, given by \eqref{eq:mapPhi_def}. We may calculate 
\begin{align*}
\p_s(\Phi\overline{\bm{h}})(s,\theta) &= (\p_s\be_{\rm t})(\overline{\bm{h}}\cdot\be_z)+(\p_s\be_{\rm n_1})(\overline{\bm{h}}\cdot\be_x)+(\p_s\be_{\rm n_2})(\overline{\bm{h}}\cdot\be_y)\,.
\end{align*}
In particular, we may bound 
\begin{equation}\label{eq:hbound}
\norm{\p_s(\Phi\overline{\bm{h}})}_{L^\infty(\Gamma_\epsilon)} \le c(\kappa_*)\norm{\overline{\bm{h}}}_{L^\infty(2\pi\T)}\,, \quad
\norm{\p_s(\Phi\overline{\bm{h}})}_{C^{0,\alpha}(\Gamma_\epsilon)} \le c(\kappa_{*,\alpha})\norm{\overline{\bm{h}}}_{L^\infty(2\pi\T)}\,.
\end{equation}
In addition, we note the following expansion for $\p_s\bR_{\rm r}$:
\begin{equation}\label{eq:ps_bRr}
\begin{aligned}
-\p_{s'}\bR_{\rm r} &=\be_{\rm t}(s') \\
\p_s\bR_{\rm r} &= \textstyle \be_{\rm t}(s) + 2\epsilon\sin(\frac{\theta-\theta'}{2})\big(\p_\theta\wh\kappa(s,\theta)\be_{\rm t}(s) -\kappa_3\be_r(s,\theta)\big) \\
&=: \textstyle-\p_{s'}\bR_{\rm r}+ (s-s')\bm{Q}_{\rm t}(s,s') + \epsilon\sin(\frac{\theta-\theta'}{2})\bm{Q}_{X1}\,.
\end{aligned}
\end{equation}
The closeness of $\p_s\bR_{\rm r}$ to $-\p_{s'}\bR_{\rm r}$ will be of particular importance in estimating the kernel $\bm{H}_1$. Finally, we record the following expansions of $\p_s\bR$ and $\p_s(\bR-\bR_{\rm r})$:
\begin{equation}\label{eq:ps_bRr_etc}
\begin{aligned}
\p_s\bR &= (1-\epsilon\wh\kappa(s,\theta))\be_{\rm t}(s)+\epsilon\kappa_3\be_\theta(s,\theta) =: \be_{\rm t}(s) + \epsilon\bm{Q}_{X2}(s,\theta) \\
\p_s(\bR-\bR_{\rm r}) &= \epsilon\wh\kappa(s,\theta')\be_{\rm t}(s)+\epsilon\kappa_3\be_\theta(s,\theta')
=: \epsilon \bm{Q}_{X2}(s,\theta')\,.
\end{aligned}
\end{equation}
Here, as usual, the remainders $\bm{Q}_{X1}$ and $\bm{Q}_{X2}$ satisfy \eqref{eq:Qj_Cbeta}.

We may use the expansion \eqref{eq:ps_bRr} of $\p_s\bR_{\rm r}$ to write $\bm{H}_1$ as 
\begin{align*}
\p_s\bm{H}_1&= -\frac{{\bf I}(\p_s\bR_{\rm r}\cdot\bR_{\rm r})}{\abs{\bR_{\rm r}}^3}+ \frac{\p_s\bR_{\rm r}\otimes\bR_{\rm r}+\bR_{\rm r}\otimes\p_s\bR_{\rm r}}{\abs{\bR_{\rm r}}^3} -\frac{3\bR_{\rm r}\otimes\bR_{\rm r}\,(\p_s\bR_{\rm r}\cdot\bR_{\rm r})}{\abs{\bR_{\rm r}}^5}\\
&= -\p_{s'}\bm{H}_1 
+ \frac{\big((s-s')\bm{Q}_{\rm t}+\epsilon\sin(\frac{\theta-\theta'}{2})\bm{Q}_{X1}\big)\otimes\bR_{\rm r}+\bR_{\rm r}\otimes\big((s-s')\bm{Q}_{\rm t}+\epsilon\sin(\frac{\theta-\theta'}{2})\bm{Q}_{X1}\big)}{\abs{\bR_{\rm r}}^3} \\
&\quad -\frac{{\bf I}\big((s-s')\bm{Q}_{\rm t}+\epsilon\sin(\frac{\theta-\theta'}{2})\bm{Q}_{X1}\big)\cdot\bR_{\rm r}}{\abs{\bR_{\rm r}}^3}
-\frac{3\bR_{\rm r}\otimes\bR_{\rm r}\,\big((s-s')\bm{Q}_{\rm t}+\epsilon\sin(\frac{\theta-\theta'}{2})\bm{Q}_{X1}\big)\cdot\bR_{\rm r}}{\abs{\bR_{\rm r}}^5}\,.
\end{align*}
For the kernel $-\p_{s'}\bm{H}_1$, we may integrate by parts in $s'$ and use Lemma \ref{lem:alpha_est}, case (1), along with \eqref{eq:hbound} to obtain 
\begin{align*}
\abs{{\rm p.v.}\int_\T\int_0^{2\pi}\p_{s'}\bm{H}_1\,(\Phi\overline{\bm{h}})\,\epsilon\,d\theta'ds'}_{\dot C^{0,\beta}}
&= \abs{\int_\T\int_0^{2\pi}\bm{H}_1\,\p_{s'}(\Phi\overline{\bm{h}})\,\epsilon\,d\theta'ds'}_{\dot C^{0,\beta}} \\
&\le c(\kappa_{*,\beta},c_\Gamma)\,\epsilon^{1-2\beta}\norm{\overline{\bm{h}}}_{L^\infty}\,.
\end{align*}
For the remaining terms in the above expansion of $\p_s\bm{H}_1$, we may use Lemma \ref{lem:alpha_est}, case (1), to estimate 
\begin{align*}
\abs{{\rm p.v.}\int_\T\int_0^{2\pi}\p_s\bm{H}_1\,(\Phi\overline{\bm{h}})\,\epsilon\,d\theta'ds'}_{\dot C^{0,\beta}} &\le c(\kappa_{*,\beta},c_\Gamma)\,\epsilon^{1-2\beta}\norm{\Phi\overline{\bm{h}}}_{L^\infty}
\le c(\kappa_{*,\beta},c_\Gamma)\,\epsilon^{1-2\beta}\norm{\overline{\bm{h}}}_{L^\infty}\,.
\end{align*}
Using the expansion for $\p_s\bm{H}_1$ and that each remainder $\bm{Q}$ satisfies \eqref{eq:Qj_Cbeta}, we may use a similar series of arguments to show that, for two curves $\X^{(a)}$ and $\X^{(b)}$ satisfying Lemma \ref{lem:XaXb_C2beta}, we have
\begin{align*}
&\abs{{\rm p.v.}\int_\T\int_0^{2\pi}\big(\p_s\bm{H}_1^{(a)}\,(\Phi^{(a)}\overline{\bm{h}})-\p_s\bm{H}_1^{(b)}\,(\Phi^{(b)}\overline{\bm{h}})\big)\,\epsilon\,d\theta'ds'}_{\dot C^{0,\beta}}\\ 
&\qquad \le c(\kappa_{*,\beta}^{(a)},\kappa_{*,\beta}^{(b)},c_\Gamma)\,\epsilon^{1-2\beta}\norm{\X^{(a)}-\X^{(b)}}_{C^{2,\beta}}\norm{\overline{\bm{h}}}_{L^\infty}\,.
\end{align*}

We next consider the kernel $\bm{H}_2$. Using \eqref{eq:bRr_expand} and \eqref{eq:ps_bRr_etc}, we may write
\begin{align*}
&\p_s \bm{H}_2 = \frac{\p_s\bR\otimes(\bR-\bR_{\rm r})}{\abs{\bR}^3}+ \frac{\bR\otimes\p_s(\bR-\bR_{\rm r})}{\abs{\bR}^3} - \frac{3\bR\otimes(\bR-\bR_{\rm r})(\p_s\bR\cdot\bR)}{\abs{\bR}^5} \\
&\qquad +\frac{\p_s(\bR-\bR_{\rm r})\otimes\bR_{\rm r}}{\abs{\bR}^3}  + \frac{(\bR-\bR_{\rm r})\otimes\p_s\bR_{\rm r}}{\abs{\bR}^3} - \frac{3(\bR-\bR_{\rm r})\otimes\bR_{\rm r}(\p_s\bR\cdot\bR)}{\abs{\bR}^5}  \\
&\qquad -3\frac{\p_s(\bR-\bR_{\rm r})\cdot\bR + \p_s\bR_{\rm r}\cdot(\bR-\bR_{\rm r})}{\abs{\bR}^5}\bR_{\rm r}\otimes\bR_{\rm r} \\
&\quad + \bigg(\frac{1}{\abs{\bR}^3}-\frac{1}{\abs{\bR_{\rm r}}^3} \bigg)\big(\p_s\bR_{\rm r}\otimes\bR_{\rm r}+\bR_{\rm r}\otimes\p_s\bR_{\rm r}\big)
 -3\bigg(\frac{1}{\abs{\bR}^5}-\frac{1}{\abs{\bR_{\rm r}}^5} \bigg)(\p_s\bR_{\rm r}\cdot\bR_{\rm r})\bR_{\rm r}\otimes\bR_{\rm r} \\
&= \frac{\epsilon(s-s')\p_s\bR\otimes\bm{Q}_r}{\abs{\bR}^3}
+ \frac{\epsilon\bR\otimes\bm{Q}_{X2}}{\abs{\bR}^3} 
- \frac{3\epsilon(s-s')\bR\otimes\bm{Q}_r(\p_s\bR\cdot\bR)}{\abs{\bR}^5} \\
&\qquad +\frac{\epsilon\bm{Q}_{X2}\otimes\bR_{\rm r}}{\abs{\bR}^3}  + \frac{\epsilon(s-s')\bm{Q}_r\otimes\p_s\bR_{\rm r}}{\abs{\bR}^3} - \frac{3\epsilon(s-s')\bm{Q}_r\otimes\bR_{\rm r}(\p_s\bR\cdot\bR)}{\abs{\bR}^5}  \\
&\qquad -3\frac{\epsilon\bm{Q}_{X2}\cdot\bR + \epsilon(s-s')\p_s\bR_{\rm r}\cdot\bm{Q}_r}{\abs{\bR}^5}\bR_{\rm r}\otimes\bR_{\rm r} \\
&\quad - \bigg(\frac{2\epsilon(s-s')\bm{Q}_r\cdot\bR_{\rm r}+\epsilon^2(s-s')^2\abs{\bm{Q}_r}^2}{\abs{\bR}|\bR_{\rm r}|(\abs{\bR}+|\bR_{\rm r}|)}\sum_{\ell=0}^{2}\frac{1}{|\bR|^\ell|\bR_{\rm r}|^{2-\ell}} \bigg)\big(\p_s\bR_{\rm r}\otimes\bR_{\rm r}+\bR_{\rm r}\otimes\p_s\bR_{\rm r}\big)\\
&\quad +3\bigg(\frac{2\epsilon(s-s')\bm{Q}_r\cdot\bR_{\rm r}+\epsilon^2(s-s')^2\abs{\bm{Q}_r}^2}{\abs{\bR}|\bR_{\rm r}|(\abs{\bR}+|\bR_{\rm r}|)}\sum_{\ell=0}^{4}\frac{1}{|\bR|^\ell|\bR_{\rm r}|^{4-\ell}}\bigg)(\p_s\bR_{\rm r}\cdot\bR_{\rm r})\bR_{\rm r}\otimes\bR_{\rm r} \,.
\end{align*}
Note that each term contains an additional factor of $\epsilon$. Using Lemma \ref{lem:alpha_est}, case (2), we thus have
\begin{align*}
\abs{{\rm p.v.}\int_\T\int_0^{2\pi}\p_s\bm{H}_2\,(\Phi\overline{\bm{h}})\,\epsilon\,d\theta'ds'}_{\dot C^{0,\alpha}} &\le c(\kappa_{*,\alpha^+},c_\Gamma)\,\epsilon^{1-\alpha^+}\norm{\Phi\overline{\bm{h}}}_{C^{0,\alpha}}
\le c(\kappa_{*,\alpha^+},c_\Gamma)\,\epsilon^{1-\alpha^+}\norm{\overline{\bm{h}}}_{C^{0,\alpha}}\,.
\end{align*}
In addition, using that each of the remainder terms $\bm{Q}$ satisfy \eqref{eq:Qj_Cbeta}, we may use the above expansion of $\bm{H}_2$ to show that, for two curves $\X^{(a)}$ and $\X^{(b)}$ satisfying Lemma \ref{lem:XaXb_C2beta}, we have
\begin{align*}
&\abs{{\rm p.v.}\int_\T\int_0^{2\pi}\big(\p_s\bm{H}_2^{(a)}\,(\Phi^{(a)}\overline{\bm{h}})-\p_s\bm{H}_2^{(b)}\,(\Phi^{(b)}\overline{\bm{h}})\big)\,\epsilon\,d\theta'ds'}_{\dot C^{0,\alpha}}\\ 
&\qquad \le c(\kappa_{*,\alpha^+}^{(a)},\kappa_{*,\alpha^+}^{(b)},c_\Gamma)\,\epsilon^{1-\alpha^+}\norm{\X^{(a)}-\X^{(b)}}_{C^{2,\alpha^+}}\norm{\overline{\bm{h}}}_{C^{0,\alpha}}\,.
\end{align*}

Finally, we turn to the kernel $\bm{H}_3$. We have
\begin{align*}
\p_s\bm{H}_3 &= \bigg(\frac{{\bf I}\,\p_s\bR\cdot\bR}{\abs{\bR}^3}+ \frac{\p_s\bR\otimes\bR+\bR\otimes\p_s\bR}{\abs{\bR}^3} + 3\frac{(\bR\otimes\bR)\,\p_s\bR\cdot\bR}{\abs{\bR}^5}\bigg)\epsilon\wh\kappa(s',\theta')\,.
\end{align*}
Noting the additional factor of $\epsilon$ and using the form \eqref{eq:ps_bRr_etc} of $\p_s\bR$, we may use Lemma \ref{lem:alpha_est} to obtain
\begin{align*}
\abs{{\rm p.v.}\int_\T\int_0^{2\pi}\p_s\bm{H}_3\,(\Phi\overline{\bm{h}})\,\epsilon\,d\theta'ds'}_{\dot C^{0,\alpha}} &\le c(\kappa_{*,\alpha^+},c_\Gamma)\,\epsilon^{1-\alpha^+}\norm{\Phi\overline{\bm{h}}}_{C^{0,\alpha}}
\le c(\kappa_{*,\alpha^+},c_\Gamma)\,\epsilon^{1-\alpha^+}\norm{\overline{\bm{h}}}_{C^{0,\alpha}}\,.
\end{align*}
Furthermore, for two nearby curves $\X^{(a)}$ and $\X^{(b)}$, we have
\begin{align*}
&\abs{{\rm p.v.}\int_\T\int_0^{2\pi}\big(\p_s\bm{H}_3^{(a)}\,(\Phi^{(a)}\overline{\bm{h}})-\p_s\bm{H}_3^{(b)}\,(\Phi^{(b)}\overline{\bm{h}})\big)\,\epsilon\,d\theta'ds'}_{\dot C^{0,\alpha}}\\ 
&\qquad \le c(\kappa_{*,\alpha^+}^{(a)},\kappa_{*,\alpha^+}^{(b)},c_\Gamma)\,\epsilon^{1-\alpha^+}\norm{\X^{(a)}-\X^{(b)}}_{C^{2,\alpha^+}}\norm{\overline{\bm{h}}}_{C^{0,\alpha}}\,.
\end{align*}

Defining 
\begin{align*}
\mc{H}_+[\Phi\overline{\bm{h}}] = \frac{1}{8\pi}\int_\T\int_0^{2\pi}\bm{H}_1\,(\Phi\overline{\bm{h}})(s',\theta')\,\epsilon\,d\theta'ds'\,, \quad 
\mc{H}_\epsilon[\Phi\overline{\bm{h}}] = \frac{1}{8\pi}\int_\T\int_0^{2\pi}\big(\bm{H}_2 + \bm{H}_3\big)\,(\Phi\overline{\bm{h}})(s',\theta')\,\epsilon\,d\theta'ds'\,,
\end{align*}
we obtain Lemma \ref{lem:single_const_in_s}.
\end{proof}


\section{Bounds for full Neumann data}\label{sec:full_neumann}
This section is concerned with the bounds for the full Neumann data $\bw(s,\theta)=-\bm{\sigma}[\bu]\bm{n}\big|_{\Gamma_\epsilon}$ stated in Lemma \ref{lem:full_neumann}. We restate the lemma below for convenience. 
\begin{lemma}[Bounds for $\bw(s,\theta)$, restated]\label{lem:full_neumann0}
Let $0<\alpha<\gamma<\beta<1$ and consider a slender body $\Sigma_\epsilon$ with centerline $\X(s)\in C^{2,\beta}(\T)$.
Given $\theta$-independent Dirichlet data $\bv(s)\in C^{1,\alpha}(\T)$ along the filament surface $\Gamma_\epsilon$, let $\bu(\bx)$, $\bx\in\Omega_\epsilon$, denote the corresponding solution to the Stokes equations in $\Omega_\epsilon$, and let $\bw(s,\theta)$ denote the corresponding Neumann boundary value $\bw=-\bm{\sigma}[\bu]\bm{n}\big|_{\Gamma_\epsilon}$. Then $\bw$ may be decomposed as 
\begin{align*}
\bw(s,\theta) = \bw_\epsilon(s,\theta) + \bw_+(s,\theta)
\end{align*}
where
\begin{equation}\label{eq:fullN0}
\begin{aligned}
\norm{\bw_\epsilon}_{C^{0,\alpha}} &\le c(\kappa_*,c_\Gamma)\norm{\bv}_{C^{1,\alpha}}\\
\norm{\bw_+}_{C^{0,\gamma}}&\le c(\epsilon,\kappa_{*,\gamma^+},c_\Gamma)\,\norm{\bv}_{C^{0,\gamma}}
\end{aligned}
\end{equation}
for any $\gamma^+\in(\gamma,\beta]$.
In addition, given two nearby filaments $\Sigma_\epsilon^{(a)},\Sigma_\epsilon^{(b)}$ with centerlines $\X^{(a)}$ and $\X^{(b)}$ satisfying Lemma \ref{lem:XaXb_C2beta}, we have
\begin{equation}\label{eq:fullN_lip0}
\begin{aligned}
\norm{\bw_\epsilon^{(a)}-\bw_\epsilon^{(b)}}_{C^{0,\alpha}}&\le c(\kappa_*^{(a)},\kappa_*^{(b)},c_\Gamma)\norm{\X^{(a)}-\X^{(b)}}_{C^2}\norm{\bv}_{C^{1,\alpha}}\\
\norm{\bw_+^{(a)}-\bw_+^{(b)}}_{C^{0,\gamma}}&\le c(\epsilon,\kappa_{*,\gamma^+}^{(a)},\kappa_{*,\gamma^+}^{(b)},c_\Gamma)\,\norm{\X^{(a)}-\X^{(b)}}_{C^{2,\gamma^+}}\norm{\bv}_{C^{0,\gamma}}\,,
\end{aligned}
\end{equation}
where $\bw^{(a)},\bw^{(b)}$ denote the Neumann data along filaments $\Sigma_\epsilon^{(a)}$ and $\Sigma_\epsilon^{(b)}$, respectively.

In both \eqref{eq:fullN} and \eqref{eq:fullN_lip}, the $\epsilon$-dependence is explicit in the bounds for $\bw_\epsilon$ but not for $\bw_+$.
\end{lemma}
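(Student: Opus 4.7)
The plan is to start from the boundary integral identity \eqref{eq:full_DtN}, $\mc{S}[\bw] = \frac{1}{2}\bv - \mc{D}[\bv]$, and invert it so as to peel off the straight-filament leading-order piece from a smoother curvature-induced remainder. Lemma \ref{lem:double_layer} already splits $\mc{D}[\bv] = \Phi\overline{\mc{D}}[\Phi^{-1}\bv] + \mc{R}_\mc{D}[\bv]$ with the remainder $\mc{R}_\mc{D}[\bv] \in C^{1,\gamma}$ bounded by $\norm{\bv}_{C^{0,\gamma}}$ up to $\epsilon^{-\gamma^+}$. Because $\Phi^{-1}\bv$ is $\theta$-independent, the explicit formulas in Lemma \ref{lem:straight_SD} confirm that $\overline{\mc{D}}[\Phi^{-1}\bv]$ lies in the zero- and one-$\theta$-mode range $\P_{01}$, so the straight inverse $\overline{\mc{S}}^{-1}$ is directly applicable.

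First I would set
\[
\bw_\epsilon := \Phi\,\overline{\mc{S}}^{-1}\bigl[\tfrac{1}{2}\Phi^{-1}\bv - \overline{\mc{D}}[\Phi^{-1}\bv]\bigr]
\]
and derive the $C^{0,\alpha}$ bound via the same multiplier/Besov machinery behind Lemmas \ref{lem:Sinv_mapping} and \ref{lem:straight_Leps_Holder}, now applied to $\overline{\mc{S}}^{-1}$ itself rather than to the angle-averaged $\overline{\mc{A}}_\epsilon$. The matrix multipliers $\bm{M}_{\rm S,t}^{-1}, \bm{M}_{\rm S,n}^{-1}$ from Lemma \ref{lem:straight_SD}, combined with the Bessel-ratio estimates in Propositions \ref{prop:besselK_bounds} and \ref{prop:besselI_2}, should yield $\norm{\bw_\epsilon}_{C^{0,\alpha}(\Gamma_\epsilon)} \le c(\kappa_*,c_\Gamma)\norm{\bv}_{C^{1,\alpha}(\T)}$, with the one derivative lost by $\overline{\mc{S}}^{-1}$ absorbed by the $C^{1,\alpha}$ norm of $\bv$. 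Setting $\bw_+ := \bw - \bw_\epsilon$ and subtracting the straight-geometry BIE for $\bw_\epsilon$ from the BIE for $\bw$ yields an equation for $\mc{S}[\bw_+]$ whose right-hand side consists of $-\mc{R}_\mc{D}[\bv]$, a single-layer remainder $-\mc{R}_\mc{S}[\bw_\epsilon]$, and a constant-in-$s$ correction handled by Lemma \ref{lem:single_const_in_s}; each such term lives in $C^{1,\gamma}$ and is controlled by $c(\epsilon,\kappa_{*,\gamma^+},c_\Gamma)\norm{\bv}_{C^{0,\gamma}}$ through Lemmas \ref{lem:single_layer}, \ref{lem:double_layer}, and \ref{lem:single_const_in_s}.

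To close the argument I would invert $\mc{S}$ on this smoother data. Rather than an explicit layer-potential inversion (which is unavailable off $\P_{01}$), $\bw_+$ should be viewed as the Neumann trace of the Stokes solution on $\Omega_\epsilon$ produced by the smoother right-hand side above; classical exterior Schauder estimates on the thin domain $\Omega_\epsilon$, with constants tracked in $\epsilon$ through the static well-posedness of section \ref{sec:SBNtD_holder}, then deliver $\norm{\bw_+}_{C^{0,\gamma}(\Gamma_\epsilon)} \le c(\epsilon,\kappa_{*,\gamma^+},c_\Gamma)\norm{\bv}_{C^{0,\gamma}(\T)}$. The Lipschitz estimates \eqref{eq:fullN_lip} then follow routinely by writing the analogous decomposition for $\bw^{(a)} - \bw^{(b)}$ and invoking the Lipschitz halves of Lemmas \ref{lem:single_layer}, \ref{lem:double_layer}, and \ref{lem:single_const_in_s} together with the frame-change bounds \eqref{eq:Phi_lip_est}.

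The main obstacle I anticipate is making the $C^{0,\alpha}$ bound on $\bw_\epsilon$ genuinely $\epsilon$-independent: this forces careful bookkeeping when converting between the $C^{0,\alpha}_s(\Gamma_\epsilon)$ norms on the curved surface and the $C^{0,\alpha}(\T)$ norms produced after applying $\Phi^{-1}$, in order to avoid inadvertently incurring factors of $\epsilon^{-\alpha}$. A secondary difficulty is verifying that all higher-$\theta$-mode content of $\bw$ is in fact generated by the remainders $\mc{R}_\mc{S}[\bw_\epsilon]$ and $\mc{R}_\mc{D}[\bv]$, which carry such content through their $(s,\theta,\bars,\bartheta)$-expansions in Section \ref{subsec:setup}, rather than being left stranded in the $\overline{\mc{S}}^{-1}$ step where no explicit multiplier is available.
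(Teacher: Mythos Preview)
Your approach is genuinely different from the paper's, and the difference matters. The paper does \emph{not} invert the single-layer relation \eqref{eq:full_DtN}. Instead it represents $\bu$ via the \emph{completed double layer} \eqref{eq:u_completeddouble}--\eqref{eq:completed_double}, so that $\bw = \mc{T}\big[(\tfrac{1}{2}{\bf I}+\mc{D}+\bm{V})^{-1}[\bv]\big]$ with $\mc{T}$ the hypersingular operator \eqref{eq:T_def}. Writing $(\tfrac{1}{2}{\bf I}+\mc{D}+\bm{V})^{-1}[\bv] = 2\bv - \mc{M}[\bv]$, where $\mc{M}[\bv]\in C^{1,\gamma}$ is the smoothing part, and splitting $\mc{T}=\mc{T}_\epsilon+\mc{T}_+$ (Lemma~\ref{lem:hyper}, proved via the specialized $C^{0,\alpha}$ estimate of Lemma~\ref{lem:new_alpha_est}), one sets $\bw_\epsilon := 2\mc{T}_\epsilon[\bv]$ and puts everything else into $\bw_+$. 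The inversion step is a \emph{second-kind} equation, handled by compactness (Lemma~\ref{lem:inv_doub_layer}).

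Your route has a real gap at the ``invert $\mc{S}$'' step. After subtracting, you arrive at a \emph{first-kind} equation $\mc{S}[\bw_+]=g$ with $g\in C^{1,\gamma}$, and you need $\bw_+\in C^{0,\gamma}$. The paper furnishes no tool for this: Section~\ref{sec:SBNtD_holder} treats the slender body Neumann-to-Dirichlet map, not $\mc{S}^{-1}$. Your suggestion to view $\bw_+$ as the Neumann trace of a Stokes solution does not work directly, because $\bw_\epsilon$ is (by your definition) the Neumann trace for the \emph{straight} problem pulled back via $\Phi$, not the trace of any Stokes solution on the curved exterior $\Omega_\epsilon$; hence $\bw_+=\bw-\bw_\epsilon$ is not the stress trace of a single Stokes flow on which Schauder theory could act. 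Inverting $\mc{S}$ on $\Gamma_\epsilon$ with $\epsilon$-dependent but shape-uniform constants is precisely the sort of difficulty the completed-double-layer formulation is introduced to avoid.

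There is a secondary issue in your $\bw_\epsilon$ bound. You need $C^{0,\alpha}(\Gamma_\epsilon)$ control of the full output $\overline{\mc{S}}^{-1}[\tfrac{1}{2}\Phi^{-1}\bv-\overline{\mc{D}}[\Phi^{-1}\bv]]$, which is $\theta$-dependent (e.g.\ the $\be_r$ component in the tangential sector involves $m_{\epsilon,\rm tC}$). Lemmas~\ref{lem:SLbounds_tang} and~\ref{lem:SLbounds_norm} bound only the angle-averaged combinations in Corollary~\ref{cor:Aeps}; the individual entries $m_{\epsilon,\rm tC}$, $m_{\epsilon,\rm nA},\dots,m_{\epsilon,\rm nF}$ are never estimated separately. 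This is additional multiplier work you would have to supply, and the $\epsilon$-independence you flag as ``the main obstacle'' is exactly the point at which it becomes delicate. The paper sidesteps this entirely: the $\epsilon$-independent piece $\mc{T}_\epsilon$ is built from the hypersingular kernel $\bm{K}_{\mc{T}}$ directly in physical space, with Lemma~\ref{lem:new_alpha_est} doing the work.
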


The proof of Lemma \ref{lem:full_neumann} will again rely on a boundary integral formulation of the full Stokes Dirichlet-to-Neumann map along the slender body surface $\Gamma_\epsilon$. However, it will be more convenient to use a different formulation from the one presented in section \ref{subsubsec:SB_DtN_boundaryintegral} in order to extract more precise information.

Following \cite{pozrikidis1992boundary,power1987second}, given Dirichlet data $\bv(\bx)$ on $\Gamma_\epsilon$, the Stokes flow outside of the slender body $\Sigma_\epsilon$ may be represented by 
\begin{equation}\label{eq:u_completeddouble}
\bu(\bx) = \mc{D}[\bm{\varphi}](\bx) + \bm{V}[\bm{\varphi}](\bx)
\end{equation}
where $\mc{D}$ is the double layer operator given by \eqref{eq:S_and_D} and $\bm{V}$ is a \emph{completion flow} serving the following purpose. Along the slender body surface, the density $\bm{\varphi}$ must satisfy the integral equation 
\begin{equation}\label{eq:completed_double}
\bv(\bx) = \textstyle (\frac{1}{2}{\bf I}+\mc{D})[\bm{\varphi}](\bx) + \bm{V}[\bm{\varphi}](\bx)\,, \qquad \bx\in\Gamma_\epsilon\,,
\end{equation}
where the $\frac{1}{2}{\bf I}$ arises from the jump condition for the double layer operator on $\Gamma_\epsilon$. Since the operator $\frac{1}{2}{\bf I}+\mc{D}$ has a well-known 6-dimensional nullspace along $\Gamma_\epsilon$ consisting of rigid motions \cite{pozrikidis1992boundary}, an appropriate choice of completion flow $\bm{V}$ is necessary for invertibility. 

There are many possible choices for $\bm{V}$ (see \cite{corona2017integral,power1987second}). We will use a form of $\bm{V}$ proposed by \cite{keaveny2011applying} that is particularly well-suited for slender filaments. 
Consider $\bx'\in\Gamma_\epsilon$, denoted by $\bx'=\X(s')+\epsilon\be_r(s',\theta')$, and consider the velocity field $\bm{V}$ given by 
\begin{equation}\label{eq:completion}
\bm{V}[\bm{\varphi}](\bx) = \int_{\Gamma_\epsilon}\mc{G}(\bx,\X(s'))\,\bm{\varphi}(\bx')\,dS_{x'} + \int_{\Gamma_\epsilon}\bm{l}_{\mc R}(\bx,\X(s')) \times \big((\bx'-\X(s'))\times\bm{\varphi}(\bx')\big)\,dS_{x'}\,.
\end{equation}
We emphasize that $\X(s')$ corresponds to $\bx'$ while $\bx$ is independent.
Here $\bm{l}_{\mc R}$ is a \emph{rotlet}, given by
\begin{equation}\label{eq:rotlet}
\bm{l}_{\mc R}(\bx,\by) = -\frac{(\bx-\by)}{\abs{\bx-\by}^3}\,,
\end{equation} 
and describes the Stokes flow due to a point torque at $\by$ \cite{chwang1975hydromechanics}. The completion flow \eqref{eq:completion} may be understood as the Stokes flow due to a line density of point forces and point torques distributed along the filament centerline. Note that $\bm{V}$ is a smooth function of $\bx$ along $\Gamma_\epsilon$ since these singularities lie along the centerline of the filament rather than the surface.

Given the representation \eqref{eq:u_completeddouble}, \eqref{eq:completed_double} of the Stokes flow $\bu(\bx)$ in $\Omega_\epsilon$ with Dirichlet data $\bv(\bx)$, we may calculate the corresponding Neumann boundary value for $\bu$ along $\Gamma_\epsilon$. We begin by noting the following expression for the stress due to the double layer $\mc{D}[\bm{\varphi}](\bx)$, $\bx\in\Omega_\epsilon$:
\begin{equation}\label{eq:sigmaD}
\begin{aligned}
\bm{\sigma}[\mc{D}[\bm{\varphi}]](\bx) &= \nabla_x\mc{D}[\bm{\varphi}]+ (\nabla_x\mc{D}[\bm{\varphi}])^{\rm T} - p^\mc{D}[\bm{\varphi}](\bx){\bf I}\,, \\
p^{\mc D}[\bm{\varphi}](\bx) &= \int_{\Gamma_\epsilon}\bm{p}^{\mc D}(\bx,\bx')\cdot \bm{\varphi}(\bx')\, dS_{x'}\,, \\
\bm{p}^{\mc D}(\bx,\bx';\bm{n}(\bx')) &= \frac{1}{2\pi}\bigg(-\frac{\bm{n}(\bx')}{\abs{\bx-\bx'}^3}+3\frac{(\bx-\bx')(\bx-\bx')\cdot\bm{n}(\bx')}{\abs{\bx-\bx'}^5} \bigg)\,.
\end{aligned}
\end{equation}
Here $p^{\mc D}$ is the pressure associated with the double layer representation of $\bu$ and $\bm{p}^{\mc D}$ is the vector-valued pressure kernel for this representation \cite{chwang1975hydromechanics,mitchell2017generalized,pozrikidis1992boundary}.

Letting $\bR=\bx-\bx'$, $\bm{n}=\bm{n}(\bx)$, and $\bm{n}'=\bm{n}(\bx')$, we define the following kernel corresponding to the double layer stress tensor \eqref{eq:sigmaD}: 
\begin{equation}\label{eq:KT}
\begin{aligned}
\bm{K}_{\mc T}(\bx,\bx';\bm{n},\bm{n}') &= \frac{1}{4\pi}\bigg( 2\frac{\bm{n}\otimes\bm{n}'}{\abs{\bR}^3} + 3\frac{(\bR\cdot\bm{n}')\bR\otimes\bm{n} + (\bm{n}\cdot\bm{n}')\bR\otimes\bR}{\abs{\bR}^5} \\
&\qquad + 3\frac{(\bR\cdot\bm{n})(\bR\cdot\bm{n}'){\bf I} + (\bR\cdot\bm{n})\bm{n}'\otimes\bR}{\abs{\bR}^5} -30\frac{(\bR\cdot\bm{n})(\bR\cdot\bm{n}')\bR\otimes\bR}{\abs{\bR}^7}
\bigg)\,.
\end{aligned}
\end{equation}
Then for $\bx\in\Gamma_\epsilon$, on the surface of the filament, we may consider the stress associated to the double layer operator as 
\begin{equation}\label{eq:sigmaDn}
\bm{\sigma}[\mc{D}[\bm{\varphi}]]\bm{n}(\bx) = {\rm p.v.}\int_{\Gamma_\epsilon}\bm{K}_{\mc T}(\bx,\bx';\bm{n},\bm{n}')\big( \bm{\varphi}(\bx')-\bm{\varphi}(\bx)\big)\,dS_{x'}\,.
\end{equation}
Here the normal vectors $\bm{n}$ and $\bm{n}'$ both point out of the slender body $\Sigma_\epsilon$, into the fluid domain $\Omega_\epsilon$. 

To obtain the representation \eqref{eq:sigmaDn}, we use that both the double layer kernel $\bm{K}_{\mc D}$ \eqref{eq:stresslet} and the corresponding pressure kernel $\bm{p}^\mc{D}$ \eqref{eq:sigmaD} integrate to zero within the exterior domain $\Omega_\epsilon$, i.e.
\begin{align*}
\int_{\Gamma_\epsilon}\bm{K}_{\mc{D}}(\by,\bx')\,dS_{x'} = 0\,, \quad
\int_{\Gamma_\epsilon}\bm{p}^{\mc D}(\by,\bx';\bm{n}(\bx'))\,dS_{x'} &= 0 \,,
\qquad \by\in \Omega_\epsilon\,.
\end{align*}
For any constant-in-$\by$ vector-valued surface density $\bm{\varphi}(\bx)$, $\bx\in\Gamma_\epsilon$, we then have
\begin{align*}
\nabla_y\int_{\Gamma_\epsilon}\bm{K}_{\mc{D}}(\by,\bx')\,\bm{\varphi}(\bx)\,dS_{x'}=0\,,
\quad \int_{\Gamma_\epsilon}\bm{p}^{\mc D}(\by,\bx';\bm{n}(\bx'))\cdot\bm{\varphi}(\bx)\,dS_{x'}=0\,;
\qquad \by\in \Omega_\epsilon\,, \; \bx\in\Gamma_\epsilon\,.
\end{align*}
Putting these together, for $\bx\in\Gamma_\epsilon$, we may understand $\bm{\sigma}[\mc{D}[\bm{\varphi}]]\bm{n}(\bx)$ as
\begin{align*}
\bm{\sigma}[\mc{D}[\bm{\varphi}]]\bm{n}(\bx) &= \lim_{\by\to\bx} \bigg( \nabla_y\int_{\Gamma_\epsilon}\bm{K}_{\mc{D}}(\by,\bx')\,\big(\bm{\varphi}(\bx')-\bm{\varphi}(\bx)\big)\,dS_{x'} \\
&\qquad + \bigg(\nabla_y\int_{\Gamma_\epsilon}\bm{K}_{\mc{D}}(\by,\bx')\,\big(\bm{\varphi}(\bx')-\bm{\varphi}(\bx)\big)\,dS_{x'} \bigg)^{\rm T} \\
&\qquad -{\bf I}\int_{\Gamma_\epsilon}\bm{p}^{\mc D}(\by,\bx';\bm{n}(\bx'))\cdot(\bm{\varphi}(\bx')-\bm{\varphi}(\bx))\,dS_{x'}\bigg)\bm{n}(\bx) \\
&= \lim_{\by\to\bx}\int_{\Gamma_\epsilon}\bm{K}_{\mc T}(\by,\bx';\bm{n},\bm{n}')\big( \bm{\varphi}(\bx')-\bm{\varphi}(\bx)\big)\,dS_{x'}\,, 
\quad \by\in\Omega_\epsilon\,.
\end{align*}

We may also calculate the stress corresponding to the completion flow $\bm{V}$, given by
\begin{align*}
\bm{\sigma}[\bm{V}[\bm{\varphi}]](\bx) &=\nabla_x\bm{V}[\bm{\varphi}] + (\nabla_x\bm{V}[\bm{\varphi}])^{\rm T} + p^V(\bx){\bf I}\,,\\
p^V(\bx) &= \frac{1}{4\pi}\int_{\Gamma_\epsilon}\frac{(\bx-\X(s'))\cdot\bm{\varphi}(\bx')}{\abs{\bx-\X(s')}^3}\,dS_{x'}\,.
\end{align*}
Here $p^V$ is the pressure corresponding to the flow $\bm{V}$ \cite{mitchell2017generalized}. For $\bx\in \Gamma_\epsilon$, we define $\bR_X=\bx-\X(s')$ and let $\bm{n}=\bm{n}(\bx)$. Along $\Gamma_\epsilon$, we may then write
\begin{equation}\label{eq:sigmaVn}
\begin{aligned}
\bm{\sigma}[\bm{V}[\bm{\varphi}]]\bm{n}(\bx) &= \frac{3}{8\pi}\int_{\Gamma_\epsilon}\bigg(-2\frac{\big(\bR_X\cdot\bm{n} \big)(\bR_X\cdot\bm{\varphi}(\bx'))\bR_X}{\abs{\bR_X}^5} 
+ \frac{\bR_X\cdot\bm{n}}{\abs{\bR_X}^5}\,\bR_X\times \big((\bx'-\X(s'))\times\bm{\varphi}(\bx') \big) \\
&\qquad + \frac{\bR_X}{\abs{\bR_X}^5}\,\bm{n}\cdot\big(\bR_X\times \big((\bx'-\X(s'))\times\bm{\varphi}(\bx')\big)\big)\bigg)\,dS_{x'}\,.
\end{aligned}
\end{equation}

Given the representations \eqref{eq:sigmaDn} and \eqref{eq:sigmaVn}, we define the hypersingular operator $\mc{T}$ to be the Neumann boundary value associated with the flow \eqref{eq:u_completeddouble}, given by 
\begin{equation}\label{eq:T_def}
\mc{T}[\bm{\varphi}](\bx) = -\bm{\sigma}[\mc{D}[\bm{\varphi}]]\bm{n}(\bx) -\bm{\sigma}[\bm{V}[\bm{\varphi}]]\bm{n}(\bx)\,, \qquad \bx\in \Gamma_\epsilon\,.
\end{equation}
Here we note that the negative sign arises since $\bm{n}$ points into the fluid domain, out of the slender body $\Sigma_\epsilon$.

Given Dirichlet data $\bv(\bx)$ along $\Gamma_\epsilon$, we may use the completed double layer representation \eqref{eq:u_completeddouble}, \eqref{eq:completed_double} of $\bu$ to calculate the corresponding Neumann boundary value $\bw$ via
\begin{equation}\label{eq:eq_for_w}
\bw(\bx) = \textstyle \mc{T}\big[(\frac{1}{2}{\bf I}+\mc{D}+\bm{V})^{-1}[\bv] \big](\bx)\, , \qquad \bx\in \Gamma_\epsilon\,.
\end{equation}

We will use the representation \eqref{eq:eq_for_w} of the full Neumann boundary value $\bw$ to prove Lemma \ref{lem:full_neumann}. In order to do so, we will rely on the following series of lemmas bounding each of the components of \eqref{eq:eq_for_w}. We state each lemma below and then immediately use the lemmas to prove Lemma \ref{lem:full_neumann}. The proof of each component lemma is then given in subsequent sections \ref{subsec:comp_doub_layer}, \ref{subsec:inv_doub_layer}, and \ref{subsec:hyper}.

We begin with bounds for the completed double layer operator \eqref{eq:completed_double} along $\Gamma_\epsilon$. 
\begin{lemma}[Completed double layer smoothing]\label{lem:comp_double_layer}
Let $0<\gamma<\beta<1$ and consider a filament $\Sigma_\epsilon$ with centerline $\X\in C^{2,\beta}$. Given $\bm{\varphi}\in C^{1,\alpha}(\Gamma_\epsilon)$, the double layer operator $\mc{D}$ \eqref{eq:S_and_D} and completion flow $\bm{V}$ \eqref{eq:completion} along $\Gamma_\epsilon$ satisfy
\begin{equation}
\norm{(\mc{D}+\bm{V})[\bm{\varphi}]}_{C^{1,\gamma}(\Gamma_\epsilon)} \le c(\kappa_{*,\gamma^+},c_\Gamma)\,\epsilon^{-1-\gamma}\norm{\bm{\varphi}}_{C^{0,\gamma}(\Gamma_\epsilon)}
\end{equation}
for any $\gamma^+\in(\gamma,\beta]$.
Furthermore, given two nearby filaments with centerlines $\X^{(a)}$, $\X^{(b)}$ satisfying Lemma \ref{lem:XaXb_C2beta}, we have 
\begin{equation}
\norm{(\mc{D}^{(a)}+\bm{V}^{(a)}-\mc{D}^{(b)}-\bm{V}^{(b)})[\bm{\varphi}]}_{C^{1,\gamma}} \le c(\kappa_{*,\gamma^+}^{(a)},\kappa_{*,\gamma^+}^{(b)},c_\Gamma)\,\epsilon^{-1-\gamma}\norm{\X^{(a)}-\X^{(b)}}_{C^{2,\gamma^+}}\norm{\bm{\varphi}}_{C^{0,\gamma}}\,.
\end{equation}
\end{lemma}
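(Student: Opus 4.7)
The proof naturally splits into separate estimates for $\mc{D}[\bm\varphi]$ and $\bm{V}[\bm\varphi]$, with the double layer providing the dominant contribution. The plan is to leverage the existing decomposition in \eqref{eq:RS_RD_def} together with the machinery developed in Lemma \ref{lem:double_layer}.

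For $\bm{V}[\bm\varphi]$: the kernels $\mc{G}(\bx,\X(s'))$ and $\bm{l}_{\mc{R}}(\bx,\X(s'))$ have their singular sources along the centerline $\X$, which by the non-self-intersection hypothesis \eqref{eq:cGamma} lies at distance $\gtrsim \max(|s-s'|,\epsilon)$ from any point $\bx\in\Gamma_\epsilon$. Hence $\bm{V}[\bm\varphi]$ is a smooth function of $\bx$ on $\Gamma_\epsilon$, and one obtains $\|\bm{V}[\bm\varphi]\|_{C^{1,\gamma}(\Gamma_\epsilon)}\le c(\epsilon,\kappa_{*,\gamma^+},c_\Gamma)\,\|\bm\varphi\|_{L^\infty}$ by direct pointwise estimates on the kernels and their derivatives, followed by a standard Hölder interpolation. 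The corresponding Lipschitz bound for the difference $\bm{V}^{(a)}-\bm{V}^{(b)}$ comes from expanding kernel differences as in \eqref{eq:Ra_minus_Rb} and using the Lipschitz estimates from \eqref{eq:Qj_Cbeta2}.

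For $\mc{D}[\bm\varphi]$: apply the decomposition $\Phi^{-1}\mc{D}[\bm\varphi]=\overline{\mc{D}}[\Phi^{-1}\bm\varphi]+\mc{R}_{\mc{D}}[\bm\varphi]$ from \eqref{eq:RS_RD_def}. The curvature remainder $\mc{R}_{\mc{D}}[\bm\varphi]$ is handled directly by Lemma \ref{lem:double_layer}, giving $\|\mc{R}_{\mc{D}}[\bm\varphi]\|_{C^{1,\gamma}}\le c\,\epsilon^{-\gamma^+}\|\bm\varphi\|_{C^{0,\gamma}}$, which is strictly better than the target $\epsilon^{-1-\gamma}$. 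The bulk of the work is then to estimate the straight double layer $\overline{\mc{D}}[\Phi^{-1}\bm\varphi]$ acting on a general $\theta$-dependent surface density along $\mc{C}_\epsilon$. Here the kernel has the explicit form $\frac{-6\epsilon\sin^2(\bartheta/2)}{4\pi}\frac{\barR\otimes\barR}{|\barR|^5}$, since $\barR\cdot\overline{\bm n}'=-2\epsilon\sin^2(\bartheta/2)$. The factor $\epsilon\sin^2(\bartheta/2)$ provides the regularization needed for the $L^\infty$ bound on $\overline{\mc{D}}[\Phi^{-1}\bm\varphi]$; a direct computation via the substitution $\bars=\epsilon u$ shows $\int\!\int\frac{\epsilon\sin^2(\bartheta/2)}{|\barR|^3}\,\epsilon\,d\bars\,d\bartheta\le c$, yielding a uniform bound. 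Each surface derivative contributes an additional factor of $|\barR|^{-1}$, producing integrals of the shape $\int\!\int\frac{\epsilon\sin^2(\bartheta/2)}{|\barR|^4}\,\epsilon\,d\bars\,d\bartheta\le c\,\epsilon^{-1}|\log\epsilon|$, and a final Hölder estimate in the spirit of Lemma \ref{lem:alpha_est} (with odd-kernel cancellations invoked via Lemma \ref{lem:odd_nm} where relevant) costs an extra $\epsilon^{-\gamma}$. The logarithm is absorbed into the $\epsilon^{-\gamma^+}$ slack for $\gamma^+>\gamma$, giving $\|\overline{\mc{D}}[\Phi^{-1}\bm\varphi]\|_{C^{1,\gamma}}\le c\,\epsilon^{-1-\gamma}\|\bm\varphi\|_{C^{0,\gamma}}$.

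The Lipschitz estimate for nearby filaments follows the same architecture: the $\mc{R}_{\mc{D}}^{(a)}-\mc{R}_{\mc{D}}^{(b)}$ piece is controlled by Lemma \ref{lem:double_layer}, while the straight-operator contribution reduces to $\overline{\mc{D}}\big[((\Phi^{(a)})^{-1}-(\Phi^{(b)})^{-1})\bm\varphi\big]$ (same $\overline{\mc{D}}$ for both geometries), which is controlled by the estimate just established together with the frame-difference bound \eqref{eq:Phi_lip_est}. The main technical obstacle is the general-$\theta$-mode estimate on $\overline{\mc{D}}$: the explicit Fourier multipliers in Lemma \ref{lem:straight_SD} give the symbols only for zero and one modes in $\theta$, so the required bounds must be proved by direct kernel analysis, carefully tracking how the factor $\epsilon\sin^2(\bartheta/2)$ balances the singular powers $|\barR|^{-k}$ and avoiding the logarithmic losses that naturally appear when integrating $|\barR|^{-2}$ across all angular modes.
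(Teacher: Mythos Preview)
Your approach is essentially the same as the paper's: both split $\mc{D}=\Phi\overline{\mc{D}}\Phi^{-1}+\Phi\mc{R}_{\mc{D}}$, invoke Lemma~\ref{lem:double_layer} for the curvature remainder, estimate the straight double layer $\overline{\mc{D}}$ by direct kernel analysis using Lemmas~\ref{lem:basic_est} and~\ref{lem:alpha_est}, and treat $\bm{V}$ via pointwise kernel bounds exploiting $|\bR_X|\gtrsim\sqrt{\bars^2+\epsilon^2}$. One bookkeeping point: the paper does not encounter a logarithm in the $\overline{\mc{D}}$ derivative estimate to be absorbed into $\epsilon^{-\gamma^+}$; instead, after writing out $\p_s\overline{\mc{D}}$ and $\tfrac{1}{\epsilon}\p_\theta\overline{\mc{D}}$ explicitly, each term fits case~(2) of Lemma~\ref{lem:alpha_est} directly, and the extra $\epsilon^{-\gamma}$ comes from the $\theta$-dependence of the curved basis vector $\be_\theta(s,\theta-\tfrac{\bartheta}{2})$ appearing as a coefficient (cf.~\eqref{eq:barD_theta}, \eqref{eq:barD_s}). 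Your rough power-counting $\int\!\int\epsilon\sin^2(\tfrac{\bartheta}{2})|\barR|^{-4}\,\epsilon\,d\bars\,d\bartheta$ would actually diverge near $\bartheta=0$ without oddness; the correct mechanism is that all the derivative integrands are of odd type, so case~(2) of Lemma~\ref{lem:alpha_est} applies and yields the $\dot C^{0,\gamma}$ seminorm cleanly.
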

The proof of Lemma \ref{lem:comp_double_layer} appears in section \ref{subsec:comp_doub_layer}. 
%
%
%
We will additionally rely on the following uniform bound for $(\frac{1}{2}{\bf I}+\mc{D}+\bm{V})^{-1}$ with respect to the filament centerline shape.
\begin{lemma}[Inverse completed double layer]\label{lem:inv_doub_layer}
Let $0<\gamma<\beta<1$, and let $\epsilon>0$ be fixed. Given any filament $\Sigma_\epsilon$ with centerline $\X(s)\in C^{2,\beta}(\T)$ satisfying the non-self-intersection condition \eqref{eq:cGamma}, the operator $(\frac{1}{2}{\bf I}+\mc{D}+\bm{V})^{-1}$ is uniformly bounded with respect to the filament centerline shape. In particular, for $\bm{\varphi}\in C^{0,\gamma}(\Gamma_\epsilon)$, we have
\begin{equation}\label{eq:inv_doub}
\textstyle \norm{(\frac{1}{2}{\bf I}+\mc{D}+\bm{V})^{-1}[\bm{\varphi}]}_{C^{0,\gamma}}
\le c(\epsilon,\kappa_{*,\gamma^+},c_\Gamma)\norm{\bm{\varphi}}_{C^{0,\gamma}}
\end{equation}
for any $\gamma^+\in(\gamma,\beta]$.
Moreover, given two nearby filaments with centerlines $\X^{(a)}$, $\X^{(b)}$ satisfying Lemma \ref{lem:XaXb_C2beta}, we have 
\begin{equation}\label{eq:inv_doub_lip}
\begin{aligned}
&\textstyle \norm{(\frac{1}{2}{\bf I}+\mc{D}^{(a)}+\bm{V}^{(a)})^{-1}[\bm{\varphi}]-(\frac{1}{2}{\bf I}+\mc{D}^{(b)}+\bm{V}^{(b)})^{-1}[\bm{\varphi}]}_{C^{0,\gamma}}\\
&\qquad \le c(\epsilon,\kappa_{*,\gamma^+}^{(a)},\kappa_{*,\gamma^+}^{(b)},c_\Gamma)\norm{\X^{(a)}- \X^{(b)}}_{C^{2,\gamma^+}}\norm{\bm{\varphi}}_{C^{0,\gamma}}\,.
\end{aligned}
\end{equation}
\end{lemma}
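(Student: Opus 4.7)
The plan is to combine a Fredholm alternative with a compactness argument for the uniform-in-geometry bound, and then deduce the Lipschitz estimate from the resolvent identity. First, by Lemma \ref{lem:comp_double_layer}, the operator $\mc{D}+\bm{V}$ maps $C^{0,\gamma}(\Gamma_\epsilon)$ into $C^{1,\gamma}(\Gamma_\epsilon)$ boundedly; since $\Gamma_\epsilon$ is a compact two-dimensional manifold, the embedding $C^{1,\gamma}\hookrightarrow C^{0,\gamma}$ is compact, so $\mc{D}+\bm{V}$ is a compact operator on $C^{0,\gamma}(\Gamma_\epsilon)$. Hence $\tfrac{1}{2}{\bf I}+\mc{D}+\bm{V}$ is Fredholm of index zero.

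Next I would verify injectivity. Given $\bm{\varphi}$ with $(\tfrac{1}{2}{\bf I}+\mc{D}+\bm{V})[\bm{\varphi}]=0$, define the candidate flow $\bu=\mc{D}[\bm{\varphi}]+\bm{V}[\bm{\varphi}]$ in $\R^3\setminus(\Gamma_\epsilon\cup\X(\T))$. The assumption gives $\bu|_{\Gamma_\epsilon}=0$ from the exterior, so by uniqueness of exterior Stokes with appropriate decay, $\bu\equiv 0$ in $\Omega_\epsilon$; consequently the exterior traction vanishes on $\Gamma_\epsilon$. Since the double layer traction is continuous across $\Gamma_\epsilon$ and $\bm{V}$ is smooth across $\Gamma_\epsilon$ (its singularities lie on the centerline), the interior traction also vanishes, while the interior trace $\bu_i|_{\Gamma_\epsilon}=-\bm{\varphi}$. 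Global force/torque balance for the interior Stokes problem then forces the $\theta$-averaged Stokeslet density $\int_0^{2\pi}\bm{\varphi}(s,\theta)\mc{J}_\epsilon\,d\theta$ and the corresponding rotlet density along the centerline to vanish, and standard completed-double-layer theory \cite{power1987second,keaveny2011applying} applied to the resulting interior problem yields $\bm{\varphi}\equiv 0$. Thus $\tfrac{1}{2}{\bf I}+\mc{D}+\bm{V}$ is invertible on $C^{0,\gamma}(\Gamma_\epsilon)$ for each admissible centerline.

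For the uniform bound \eqref{eq:inv_doub}, I would argue by contradiction. Suppose there are $\X^{(n)}\in C^{2,\beta}(\T)$ with $\kappa_{*,\gamma^+}^{(n)}\le K$ and $c_\Gamma^{(n)}\ge c_\Gamma$, and densities $\bm{\varphi}^{(n)}$ (pulled back to $\T\times 2\pi\T$) with $\norm{\bm{\varphi}^{(n)}}_{C^{0,\gamma}}=1$ but $\norm{(\tfrac{1}{2}{\bf I}+\mc{D}^{(n)}+\bm{V}^{(n)})[\bm{\varphi}^{(n)}]}_{C^{0,\gamma}}\to 0$. Arzelà--Ascoli applied to $\X^{(n)}$ in $C^{2,\gamma^+}$ extracts a subsequence converging in $C^{2,\gamma}$ to some $\X^{(\infty)}$ still satisfying \eqref{eq:cGamma} and the curvature bound. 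Lemma \ref{lem:comp_double_layer} gives uniform $C^{1,\gamma}$ bounds on $(\mc{D}^{(n)}+\bm{V}^{(n)})[\bm{\varphi}^{(n)}]$, so a further diagonal extraction produces $\bm{\varphi}^{(n)}\to\bm{\varphi}^{(\infty)}$ in $C^{0,\gamma^-}$ and $(\mc{D}^{(n)}+\bm{V}^{(n)})[\bm{\varphi}^{(n)}]\to(\mc{D}^{(\infty)}+\bm{V}^{(\infty)})[\bm{\varphi}^{(\infty)}]$ in $C^{0,\gamma}$ (using the Lipschitz estimate in Lemma \ref{lem:comp_double_layer} to handle the shape dependence of the kernels). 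Rewriting $\tfrac{1}{2}\bm{\varphi}^{(n)}=(\tfrac{1}{2}{\bf I}+\mc{D}^{(n)}+\bm{V}^{(n)})[\bm{\varphi}^{(n)}]-(\mc{D}^{(n)}+\bm{V}^{(n)})[\bm{\varphi}^{(n)}]$ and passing to the limit shows $\bm{\varphi}^{(\infty)}$ lies in the kernel of $\tfrac{1}{2}{\bf I}+\mc{D}^{(\infty)}+\bm{V}^{(\infty)}$, hence $\bm{\varphi}^{(\infty)}=0$ by injectivity; but the identity also upgrades the convergence $\bm{\varphi}^{(n)}\to 0$ to $C^{0,\gamma}$, contradicting $\norm{\bm{\varphi}^{(n)}}_{C^{0,\gamma}}=1$.

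Finally, \eqref{eq:inv_doub_lip} follows from the resolvent identity
\[
\textstyle(\tfrac{1}{2}{\bf I}+\mc{D}^{(a)}+\bm{V}^{(a)})^{-1}-(\tfrac{1}{2}{\bf I}+\mc{D}^{(b)}+\bm{V}^{(b)})^{-1}=(\tfrac{1}{2}{\bf I}+\mc{D}^{(a)}+\bm{V}^{(a)})^{-1}\big[(\mc{D}^{(b)}+\bm{V}^{(b)})-(\mc{D}^{(a)}+\bm{V}^{(a)})\big](\tfrac{1}{2}{\bf I}+\mc{D}^{(b)}+\bm{V}^{(b)})^{-1},
\]
combining \eqref{eq:inv_doub} for each inverse with the Lipschitz estimate for $\mc{D}+\bm{V}$ from Lemma \ref{lem:comp_double_layer}. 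The main obstacle is the injectivity step, since the completion flow $\bm{V}$ distributes its stabilizing forces and torques along the centerline curve rather than at an isolated interior point; extracting the vanishing of the full six-dimensional rigid-motion nullspace requires carefully tracking how the centerline-supported Stokeslet and rotlet densities interact with the interior traction-free Stokes problem on the thin domain $\Sigma_\epsilon$.
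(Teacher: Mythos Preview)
Your proposal is correct and follows essentially the same route as the paper: compactness of $\mc{D}+\bm{V}$ via Lemma \ref{lem:comp_double_layer}, a contradiction argument over a sequence of curves (the paper phrases it by fixing $\bm{\varphi}$ first and invoking the uniform boundedness principle, while you vary $\bm{\varphi}^{(n)}$ directly, but the mechanics are identical), and the resolvent identity for the Lipschitz bound. The paper cites injectivity of the completed double layer to \cite{power1987second,keaveny2011applying} without the detailed interior/exterior traction argument you sketch, so your concern about that step is one the paper simply does not address beyond the reference.
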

The proof of Lemma \ref{lem:inv_doub_layer} is given in section \ref{subsec:inv_doub_layer}. Note that the $\epsilon$-dependence here is not explicit.

The final ingredient used in the proof of Lemma \ref{lem:full_neumann} involves the mapping properties of the hypersingular operator $\mc{T}$ given by \eqref{eq:T_def}. To determine these mapping properties, we will first need the following auxiliary lemma. The bound below is a more general version of the result appearing in \cite[Lemma 4.3]{laplace}.
\begin{lemma}[Hypersingular operator: estimating $\abs{\cdot}_{\dot C^{0,\alpha}}$ seminorms]\label{lem:new_alpha_est}
Let $\ell,k,n$ be nonnegative integers satisfying $\ell+k+3=n$ and $\ell+k$ is even. Suppose $\varphi(s,\theta)\in C^{1,\alpha}(\Gamma_\epsilon)$ and $g(s,\theta,s-\bars,\theta-\bartheta)\in C^{0,\alpha^+}(\Gamma_\epsilon\times\Gamma_\epsilon)$ for any $0<\alpha<\alpha^+<1$. For $\bR$ as in \eqref{eq:curvedR}, let $I_{\ell kn}(s,\theta,\bars,\bartheta)$ denote 
\begin{equation}\label{eq:I_lkn}
 I_{\varphi;\ell kn}(s,\theta,\bars,\bartheta):= \frac{\bars^\ell(\epsilon\sin(\frac{\bartheta}{2}))^k\,g(s,\theta,s-\bars,\theta-\bartheta)}{|\bR(s,\theta,\bars,\bartheta)|^n}\, \big(\varphi(s-\bars,\theta-\bartheta)-\varphi(s,\theta)\big)\,.
\end{equation} 
Then for any $(s_0,\theta_0)\in[-1/2,1/2]\times[-\pi,\pi]$ with $s_0^2+\theta_0^2\neq 0$ we have 
\begin{equation}\label{eq:new_alphalem}
\begin{aligned}
\bigg|{\rm p.v.}&\int_{-s_0-1/2}^{-s_0+1/2}\int_{-\theta_0-\pi}^{-\theta_0+\pi}I_{\varphi,\ell kn}(s_0+s,\theta_0+\theta,s_0+\bars,\theta_0+\bartheta)\,\epsilon\, d\bartheta d\bars \\
&\qquad\qquad\qquad\qquad\qquad  - {\rm p.v.}\int_{-1/2}^{1/2}\int_{-\pi}^{\pi}I_{\varphi,\ell kn}(s,\theta,\bars,\bartheta)\,\epsilon\, d\bartheta d\bars\,\bigg| \\
&\le c(\kappa_*,c_\Gamma)\bigg(\big(\textstyle \norm{\p_s\varphi}_{L^\infty}+ \norm{\frac{1}{\epsilon}\p_\theta\varphi}_{L^\infty}\big)\big(\norm{g}_{C^{0,\alpha^+}_1}+\norm{g}_{C^{0,\alpha}_2}\big) \\
&\qquad + \textstyle \big(\norm{\p_s\varphi}_{C^{0,\alpha}}+ \norm{\frac{1}{\epsilon}\p_\theta\varphi}_{C^{0,\alpha}}\big)\norm{g}_{L^\infty}\bigg)  \sqrt{s_0^2+\epsilon^2\theta_0^2}^{\,\alpha}\,,
\end{aligned}
\end{equation}
where $\norm{\cdot}_{C^{0,\alpha^+}_1}$ and $\norm{\cdot}_{C^{0,\alpha}_2}$ are as in \eqref{eq:Y_norms}.
\end{lemma}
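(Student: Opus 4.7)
The plan is to reduce the lemma to case~(2) of Lemma~\ref{lem:alpha_est} by exploiting the cancellation built into the $\varphi$-difference. Applying the fundamental theorem of calculus along the straight-line path from $(s-\bars,\theta-\bartheta)$ to $(s,\theta)$, and using the identity $\bartheta = 2\sin(\bartheta/2)\,q(\bartheta)$ where $q(\bartheta):=(\bartheta/2)/\sin(\bartheta/2)$ is smooth and uniformly bounded on $[-\pi,\pi]$, one writes
\[
\varphi(s-\bars,\theta-\bartheta) - \varphi(s,\theta) = -\bars\,\wt\varphi_s(s,\theta,\bars,\bartheta) - 2\epsilon\sin(\bartheta/2)\,q(\bartheta)\,\wt\varphi_\theta(s,\theta,\bars,\bartheta),
\]
with $\wt\varphi_s := \int_0^1 \p_s\varphi(s-\tau\bars,\theta-\tau\bartheta)\,d\tau$ and $\wt\varphi_\theta := \int_0^1 \frac{1}{\epsilon}\p_\theta\varphi(s-\tau\bars,\theta-\tau\bartheta)\,d\tau$. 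Substituting into the definition of $I_{\varphi;\ell k n}$ splits it into two pieces whose remaining singular kernels carry exponents $(\ell+1,k,0,n)$ and $(\ell,k+1,0,n)$ respectively. In each case the new total degree satisfies $\ell'+k' = \ell+k+1 = n-2$ and is odd (since the original $\ell+k$ is even)---precisely the regime of case~(2) of Lemma~\ref{lem:alpha_est}.

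The obstruction to invoking Lemma~\ref{lem:alpha_est} verbatim is that $\wt\varphi_s$ and $\wt\varphi_\theta$ depend on all four variables $(s,\theta,\bars,\bartheta)$ through the path integrand, rather than being of the form $\varphi(s-\bars,\theta-\bartheta)$. I would resolve this by applying Fubini to bring the $\tau$-integral outside, and then, for each fixed $\tau\in[0,1]$, repeating the near-field/far-field dissection that underlies the proof of Lemma~\ref{lem:alpha_est}. Inside the disk of radius $2\sqrt{s_0^2+\epsilon^2\theta_0^2}$ in $(\bars,\epsilon\bartheta)$-space, the extracted factor of $\bars$ or $\epsilon\sin(\bartheta/2)$ renders the kernel (locally) integrable, and one uses $\|\wt\varphi_s\|_{L^\infty} \le \|\p_s\varphi\|_{L^\infty}$ together with the $C^{0,\alpha^+}_1\cap C^{0,\alpha}_2$ regularity of $g$. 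Outside, the kernel is smooth in the outer variable; translating $(s_0,\theta_0)$ shifts the argument of $\p_s\varphi(\,\cdot-\tau\bars,\,\cdot-\tau\bartheta)$ by at most $\sqrt{s_0^2+\epsilon^2\theta_0^2}$ \emph{uniformly in} $\tau$, so the Hölder modulus $\|\p_s\varphi\|_{C^{0,\alpha}}$ paired with $\|g\|_{L^\infty}$ controls the far-field difference.

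The main technical obstacle is the bookkeeping needed to track how translating the outer base point $(s_0,\theta_0)$ propagates through the product $g\cdot\wt\varphi_s$ (and analogously $g\cdot\wt\varphi_\theta$): either factor can absorb the Hölder gain, and the two-term structure $\|\p_s\varphi\|_{L^\infty}(\|g\|_{C^{0,\alpha^+}_1}+\|g\|_{C^{0,\alpha}_2}) + \|\p_s\varphi\|_{C^{0,\alpha}}\|g\|_{L^\infty}$ on the right-hand side of~\eqref{eq:new_alphalem} reflects exactly which factor provides the modulus in each subregion. A secondary care point is that one cannot perform the rescaling $(\bars',\bartheta')=(\tau\bars,\tau\bartheta)$ cleanly, because $|\bR(s,\theta,\bars,\bartheta)|$ does not scale homogeneously with $\tau$ (the contribution from $Q_{R,0}$ in \eqref{eq:Q_Rj}, which is $\bars$-independent, spoils the scaling). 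Accordingly, one must carry out the singular-integral estimate in the original variables and treat $\tau$ merely as an outer parameter, using the uniform bound $|(\tau\bars,\tau\bartheta)| \le |(\bars,\bartheta)|$ at each stage. Once this is in place, the remainder of the argument is a direct transposition of the proof of Lemma~\ref{lem:alpha_est}, yielding the claimed $\sqrt{s_0^2+\epsilon^2\theta_0^2}^{\,\alpha}$ bound.
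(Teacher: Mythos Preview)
Your reduction to case~(2) of Lemma~\ref{lem:alpha_est} has a genuine gap in the far-field estimate. After extracting the factor $\bars$ (or $\epsilon\sin(\bartheta/2)$), the kernel has order $1/|\bR|^2$, which is only \emph{logarithmically} integrable over the far-field annulus $R_2=\{\rho>c\sqrt{s_0^2+\epsilon^2\theta_0^2}\}$. Your density $\p_s\varphi(s-\tau\bars,\theta-\tau\bartheta)$ is \emph{not} invariant when the outer point and the integration variable are shifted simultaneously by $(s_0,\theta_0)$: the argument moves by $(1-\tau)(s_0,\theta_0)$, so the density change is $O(\|\p_s\varphi\|_{C^{0,\alpha}}\,r_0^{\alpha})$ with $r_0=\sqrt{s_0^2+\epsilon^2\theta_0^2}$. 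Pairing this with $\|g\|_{L^\infty}$ and integrating $\int_{R_2}|\bR|^{-2}$ yields $\|g\|_{L^\infty}\|\p_s\varphi\|_{C^{0,\alpha}}\,r_0^{\alpha}\,|\log r_0|$, not $r_0^{\alpha}$. Note that the invariance of the density under the simultaneous shift is exactly what makes case~(2) of Lemma~\ref{lem:alpha_est} work when the density has the form $\varphi(s-\bars,\theta-\bartheta)$; for $\tau<1$ you lose it, and Fubini over $\tau$ does not recover it.

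The paper avoids this by using the Taylor expansion with H\"older remainder rather than the integral remainder: one writes $\varphi(s-\bars,\theta-\bartheta)-\varphi(s,\theta)=-\bars\,\p_s\varphi(s,\theta)-\epsilon\bartheta\,\tfrac{1}{\epsilon}\p_\theta\varphi(s,\theta)+|\bars|^{1+\alpha}Q_{\varphi,s}+|\epsilon\bartheta|^{1+\alpha}Q_{\varphi,\theta}$, so the leading coefficients are \emph{constants} in $(\bars,\bartheta)$. The crucial second step is that for the integral at the shifted base point $(s_0+s,\theta_0+\theta)$, one uses a second expansion still anchored at $(s,\theta)$ (equation~\eqref{eq:rem2}), so that the same constant $\p_s\varphi(s,\theta)$ appears in both integrals and factors out of the H\"older difference. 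What remains is a difference of odd kernels (handled by introducing an auxiliary $|\bR_{\rm even}|$ that is exactly even in $(\bars,\bartheta)$, so the odd part integrates to zero) plus remainder terms that are either $O(|s_0|^{1+\alpha})$ or carry an extra $|\bR|^{\alpha}$ from the $g-g^0$ subtraction---in either case killing the logarithm. Your path-integral coefficient $\wt\varphi_s$ cannot be factored out in this way precisely because it varies with the base point; to repair your argument you would have to split $\wt\varphi_s=\p_s\varphi(s,\theta)+[\wt\varphi_s-\p_s\varphi(s,\theta)]$ and handle the constant via oddness, at which point you have reproduced the paper's expansion.
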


The proof of Lemma \ref{lem:new_alpha_est} appears in Appendix \ref{app:new_alphalem}. 
Using Lemma \ref{lem:new_alpha_est}, we may then show the final ingredient needed in the proof of Lemma \ref{lem:full_neumann}. In particular, we show the following bounds for the hypersingular operator $\mc{T}$ given by \eqref{eq:T_def}. 
\begin{lemma}[Hypersingular operator bounds]\label{lem:hyper}
Let $0<\alpha<\beta<1$. Given a filament $\Sigma_\epsilon$ with centerline $\X\in C^{2,\beta}$, we consider the hypersingular operator $\mc{T}$ given by \eqref{eq:T_def}. Given $\bm{\varphi}\in C^{1,\alpha}(\Gamma_\epsilon)$, we may decompose $\mc{T}[\bm{\varphi}]$ as
\begin{align*}
\mc{T}[\bm{\varphi}] = \mc{T}_\epsilon[\bm{\varphi}] + \mc{T}_{+}[\bm{\varphi}]
\end{align*}
where 
\begin{equation}\label{eq:hyperest}
\begin{aligned}
\norm{\mc{T}_\epsilon[\bm{\varphi}]}_{C^{0,\alpha}} &\le 
c(\kappa_*,c_\Gamma) \textstyle \big(\norm{\p_s\bm{\varphi}}_{C^{0,\alpha}}+ \norm{\frac{1}{\epsilon}\p_\theta\bm{\varphi}}_{C^{0,\alpha}}\big) \\
\norm{\mc{T}_+[\bm{\varphi}]}_{C^{0,\alpha}} &\le c(\kappa_{*,\alpha^+},c_\Gamma)\,\epsilon^{-1-\alpha^+} \norm{\bm{\varphi}}_{C^1}
\end{aligned}
\end{equation}
for any $\alpha^+\in(\alpha,\beta]$.
In addition, given two nearby curves $\X^{(a)}$ and $\X^{(b)}$ satisfying Lemma \ref{lem:XaXb_C2beta}, we have the Lipschitz estimate 
\begin{equation}\label{eq:hyperest_lip}
\begin{aligned}
\norm{\mc{T}_\epsilon^{(a)}[\bm{\varphi}]-\mc{T}_\epsilon^{(b)}[\bm{\varphi}]}_{C^{0,\alpha}} &\le 
c(\kappa_*^{(a)},\kappa_*^{(b)},c_\Gamma)\, \norm{\X^{(a)}-\X^{(b)}}_{C^2} \textstyle \big(\norm{\p_s\bm{\varphi}}_{C^{0,\alpha}}+ \norm{\frac{1}{\epsilon}\p_\theta\bm{\varphi}}_{C^{0,\alpha}}\big) \\
\norm{\mc{T}_+^{(a)}[\bm{\varphi}]-\mc{T}_+^{(b)}[\bm{\varphi}]}_{C^{0,\alpha}} &\le c(\kappa_{*,\alpha^+}^{(a)},\kappa_{*,\alpha^+}^{(b)},c_\Gamma)\,\epsilon^{-1-\alpha^+} \norm{\X^{(a)}-\X^{(b)}}_{C^{2,\alpha^+}} \norm{\bm{\varphi}}_{C^1}\,.
\end{aligned}
\end{equation}
\end{lemma}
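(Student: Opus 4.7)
The plan is to split $\mc{T}[\bm{\varphi}] = -\bm{\sigma}[\mc{D}[\bm{\varphi}]]\bm{n} - \bm{\sigma}[\bm{V}[\bm{\varphi}]]\bm{n}$ and treat the two pieces separately. For the completion flow stress \eqref{eq:sigmaVn}, every integrand involves $\bR_X = \bx - \X(s')$ rather than $\bR = \bx-\bx'$, so the singularity sits on the centerline a distance $\epsilon$ away from $\Gamma_\epsilon$. This gives $|\bR_X|\ge c\,\epsilon$ in the $\bartheta$-direction regardless of $\bars$, so all integrals converge absolutely and we can estimate everything by $L^\infty$ times explicit negative powers of $\epsilon$, obtaining $\norm{\bm{\sigma}[\bm{V}[\bm{\varphi}]]\bm{n}}_{C^{0,\alpha}}\le c(\kappa_{*,\alpha^+},c_\Gamma)\,\epsilon^{-1-\alpha^+}\norm{\bm{\varphi}}_{C^1}$; this entire term, together with its Lipschitz difference under a curve perturbation, goes into $\mc{T}_+$. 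The Lipschitz version uses \eqref{eq:Qj_Cbeta}--\eqref{eq:Qj_Cbeta2} applied to the curve-dependent pieces of $\bR_X$ and $\bm{n}$.

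For the double layer stress, the key is the representation \eqref{eq:sigmaDn} where the subtraction $\bm{\varphi}(\bx')-\bm{\varphi}(\bx)$ converts hypersingular kernels into integrands of the form \eqref{eq:I_lkn} from Lemma \ref{lem:new_alpha_est}. I would expand each of the five matrix terms in $\bm{K}_\mc{T}$ \eqref{eq:KT} by substituting the surface expansions: $\bR\cdot\bm{n} = 2\epsilon\sin^2(\bartheta/2) + \bars^2 Q_{\rm Rn}$ and $\bR\cdot\bm{n}' = -2\epsilon\sin^2(\bartheta/2)+\bars^2 Q_{\rm Rn'}$ from \eqref{eq:RdotN}, $\bm{n}\cdot\bm{n}' = \cos(\bartheta)+\bars\,Q$ from the analogous calculation, $\bR$ expanded via \eqref{eq:curvedR}, and the identity $\dS_{x'} = \mc{J}_\epsilon \,d\theta' ds' = \epsilon(1-\epsilon\wh\kappa)\,d\theta' ds'$. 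This generates many terms, each of the form $\bars^\ell(\epsilon\sin(\bartheta/2))^k Q(s,\theta,\bars,\bartheta)/|\bR|^n$ for various exponents; by construction of $\bm{K}_\mc{T}$, all leading singular terms have $\ell + k + 3 = n$ with $\ell+k$ even, and Lemma \ref{lem:new_alpha_est} applies directly to these, contributing to $\mc{T}_\epsilon$ with the bound $c(\kappa_*,c_\Gamma)(\norm{\p_s\bm{\varphi}}_{C^{0,\alpha}}+\norm{\tfrac{1}{\epsilon}\p_\theta\bm{\varphi}}_{C^{0,\alpha}})$. Remainder terms picking up one or more extra powers of $\bars$ or $\epsilon$ (from the $Q$-coefficients) are less singular: they satisfy $\ell + k + 2 = n$ or better, so after absorbing one power of $\bars$ or $\epsilon\sin(\bartheta/2)$ into the mean-value estimate $|\bm{\varphi}(\bx')-\bm{\varphi}(\bx)| \le \norm{\bm{\varphi}}_{C^1}\sqrt{\bars^2+\epsilon^2\theta^2}$, they reduce to the setting of Lemma \ref{lem:alpha_est} and yield the $\mc{T}_+$ bound with an extra factor of $\epsilon^{-\alpha^+}$.

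For the Lipschitz estimates \eqref{eq:hyperest_lip} under a curve perturbation satisfying Lemma \ref{lem:XaXb_C2beta}, I would compute pointwise differences of each expanded integrand, splitting each product $A^{(a)}B^{(a)}-A^{(b)}B^{(b)}=(A^{(a)}-A^{(b)})B^{(a)}+A^{(b)}(B^{(a)}-B^{(b)})$. Every remainder coefficient $Q$ satisfies an estimate of the form \eqref{eq:Qj_Cbeta2}, the differences $|\bR^{(a)}|^{-k}-|\bR^{(b)}|^{-k}$ are handled by \eqref{eq:RaRb_diff}--\eqref{eq:Rdiffk}, and differences of normal vectors are $O(\norm{\X^{(a)}-\X^{(b)}}_{C^2})$ by Lemma \ref{lem:XaXb_C2beta}; applying Lemmas \ref{lem:new_alpha_est} and \ref{lem:alpha_est} to each piece gives the stated Lipschitz bounds.

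The main obstacle is bookkeeping: $\bm{K}_\mc{T}$ has five distinct tensor pieces and each, once the normal vectors and $\bR$ are expanded in $(\bars,\bartheta)$, produces a sizeable list of terms that must be individually classified as leading (Lemma \ref{lem:new_alpha_est} applicable, contributing to $\mc{T}_\epsilon$) or as remainder (contributing to $\mc{T}_+$). The cancellations in the highest-order contributions rely crucially on the fact that $\bR\cdot\bm{n}$ and $\bR\cdot\bm{n}'$ each vanish to leading order in $\bars$, producing the extra $\epsilon\sin^2(\bartheta/2)$ factor that makes the count $\ell+k+3=n$ with $\ell+k$ even; verifying this combinatorial structure across all five kernel pieces is the most delicate part of the argument.
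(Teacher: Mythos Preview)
Your plan is correct and follows essentially the same route as the paper's proof. The paper makes the decomposition explicit by writing $\bm{K}_{\mc T}=\bm{K}_{\mc T,0}+\bm{K}_{\mc T,1}+\bm{K}_{\mc T,2}$ according to order of singularity, applying Lemma~\ref{lem:new_alpha_est} to the leading piece $\bm{K}_{\mc T,0}$ (your $\ell+k+3=n$, $\ell+k$ even terms) and Lemmas~\ref{lem:basic_est}, \ref{lem:odd_nm}, \ref{lem:alpha_est} to the remainders; the completion flow stress is handled exactly as you describe via $|\bR_X|\ge c\,\epsilon$. One small difference: for the intermediate terms ($\ell+k+2=n$, $\ell+k$ odd) the paper exploits oddness via Lemma~\ref{lem:odd_nm} rather than the mean-value bound on $\bm{\varphi}$, yielding control by $\norm{\bm{\varphi}}_{C^{0,\alpha}}$ instead of $\norm{\bm{\varphi}}_{C^1}$, but either suffices for the stated $\mc{T}_+$ estimate.
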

The proof of Lemma \ref{lem:hyper} appears in section \ref{subsec:hyper}. \\

Given Lemmas \ref{lem:comp_double_layer}, \ref{lem:inv_doub_layer}, and \ref{lem:hyper}, we may proceed to the proof of the main result of this section, Lemma \ref{lem:full_neumann}.
\begin{proof}[Proof of Lemma \ref{lem:full_neumann}: bounds on full Neumann boundary value]
Given $\theta$-independent Dirichlet data $\bv(s)$, we may solve for the intermediate density $\bm{\varphi}(\bx)=\bm{\varphi}(s,\theta)$ in the completed double layer representation \eqref{eq:completed_double} as
\begin{align*}
\bm{\varphi}(s,\theta) = 2\bv(s)-\mc{M}[\bm{v}(s)]\,, \qquad \textstyle \mc{M}=(\mc{D}+\bm{V})(\frac{1}{2}{\bf I}+ \mc{D}+\bm{V})^{-1}\,.
\end{align*}
Using Lemmas \ref{lem:comp_double_layer} and \ref{lem:inv_doub_layer}, we may estimate
\begin{align*}
\norm{\mc{M}[\bm{v}]}_{C^{1,\gamma}} &\le c(\kappa_{*,\gamma^+},c_\Gamma)\,\epsilon^{-1-\gamma}\textstyle\norm{(\frac{1}{2}{\bf I}+ \mc{D}+\bm{V})^{-1}[\bv]}_{C^{0,\gamma}} \\
&\le c(\epsilon,\kappa_{*,\gamma^+},c_\Gamma)\norm{\bv}_{C^{0,\gamma}}\,, \qquad 0<\gamma<\gamma^+<1\,.
\end{align*}
In addition, for two curves $\X^{(a)}$, $\X^{(b)}$ satisfying Lemma \ref{lem:XaXb_C2beta}, we obtain the estimate 
\begin{align*}
\norm{\mc{M}^{(a)}[\bm{v}]-\mc{M}^{(b)}[\bm{v}]}_{C^{1,\gamma}} 
&\le c(\epsilon,\kappa_{*,\gamma^+}^{(a)},\kappa_{*,\gamma^+}^{(b)},c_\Gamma)\norm{\X^{(a)}-\X^{(b)}}_{C^{2,\gamma^+}}\norm{\bv}_{C^{0,\gamma}}\,.
\end{align*}

Using Lemma \ref{lem:hyper}, we may write $\bw(s,\theta)$ from \eqref{eq:eq_for_w} as 
\begin{align*}
\bw(s,\theta) &= \mc{T}[2\bv(s)]-\mc{T}[\mc{M}[\bv(s)]] \\
&= 2\mc{T}_\epsilon[\bv(s)]+ 2\mc{T}_+[\bv(s)] -\mc{T}_\epsilon[\mc{M}[\bv(s)]] -\mc{T}_+[\mc{M}[\bv(s)]]\,.
\end{align*}
For $\bv\in C^{1,\alpha}(\T)$, we have  
\begin{align*}
2\norm{\mc{T}_\epsilon[\bv]}_{C^{0,\alpha}}&\le c(\kappa_*,c_\Gamma)\norm{\bv}_{C^{1,\alpha}}\,, \\
2\norm{\mc{T}_\epsilon^{(a)}[\bv]-\mc{T}_\epsilon^{(b)}[\bv]}_{C^{0,\alpha}}&\le c(\kappa_*^{(a)},\kappa_*^{(b)},c_\Gamma)\norm{\X^{(a)}-\X^{(b)}}_{C^2}\norm{\bv}_{C^{1,\alpha}}\,,
\end{align*}
while each of the remaining terms are smoother. In particular, for $0<\alpha<\gamma<\gamma^+<1$, we may estimate 
\begin{align*}
\norm{\mc{T}_+[\bv]}_{C^{0,\gamma}}&\le c(\kappa_{*,\gamma^+},c_\Gamma)\,\epsilon^{-1-\gamma^+}\norm{\bv}_{C^1} \\
\norm{\mc{T}_\epsilon[\mc{M}[\bv]]}_{C^{0,\gamma}}&\le c(\epsilon,\kappa_*,c_\Gamma)\norm{\mc{M}[\bv]}_{C^{1,\gamma}}
\le c(\epsilon,\kappa_{*,\gamma^+},c_\Gamma)\norm{\bv}_{C^{0,\gamma}} \\
\norm{\mc{T}_+[\mc{M}[\bv]]}_{C^{0,\gamma}}&\le c(\kappa_{*,\gamma^+},c_\Gamma)\epsilon^{-1-\gamma^+}\norm{\mc{M}[\bv]}_{C^1}
\le c(\epsilon,\kappa_{*,\gamma^+},c_\Gamma)\norm{\bv}_{C^{0,\gamma}}\,.
\end{align*}
In addition, we obtain the Lipschitz estimates 
\begin{align*}
&\norm{\mc{T}_+^{(a)}[\bv]-\mc{T}_+^{(b)}[\bv]}_{C^{0,\gamma}}\le c(\kappa_{*,\gamma^+}^{(a)},\kappa_{*,\gamma^+}^{(b)},c_\Gamma)\,\epsilon^{-1-\gamma^+}\norm{\X^{(a)}-\X^{(b)}}_{C^{2,\gamma^+}}\norm{\bv}_{C^1} \\
&\norm{\mc{T}_\epsilon^{(a)}[\mc{M}^{(a)}[\bv]]-\mc{T}_\epsilon^{(b)}[\mc{M}^{(b)}[\bv]]}_{C^{0,\gamma}} \\
&\qquad\le \norm{\mc{T}_\epsilon^{(a)}\big[\mc{M}^{(a)}[\bv]-\mc{M}^{(b)}[\bv]\big]}_{C^{0,\gamma}} + \norm{\mc{T}_\epsilon^{(a)}[\mc{M}^{(b)}[\bv]]-\mc{T}_\epsilon^{(b)}[\mc{M}^{(b)}[\bv]]}_{C^{0,\gamma}} \\
&\qquad \le c(\epsilon,\kappa_*^{(a)},\kappa_*^{(b)},c_\Gamma)\bigg(\norm{\mc{M}^{(a)}[\bv]-\mc{M}^{(b)}[\bv]}_{C^{1,\gamma}} +\norm{\X^{(a)}-\X^{(b)}}_{C^2}\norm{\mc{M}^{(b)}[\bv]}_{C^{1,\gamma}}\bigg) \\
&\qquad \le c(\epsilon,\kappa_{*,\gamma^+}^{(a)},\kappa_{*,\gamma^+}^{(b)},c_\Gamma)\,\norm{\X^{(a)}-\X^{(b)}}_{C^{2,\gamma^+}}\norm{\bv}_{C^{0,\gamma}} \\
&\norm{\mc{T}_+^{(a)}[\mc{M}^{(a)}[\bv]]-\mc{T}_+^{(b)}[\mc{M}^{(b)}[\bv]]}_{C^{0,\gamma}}\\
&\qquad \le \norm{\mc{T}_+^{(a)}\big[\mc{M}^{(a)}[\bv]-\mc{M}^{(b)}[\bv]\big]}_{C^{0,\gamma}} + \norm{\mc{T}_+^{(a)}[\mc{M}^{(b)}[\bv]]-\mc{T}_+^{(b)}[\mc{M}^{(b)}[\bv]]}_{C^{0,\gamma}} \\
&\qquad \le c(\kappa_{*,\gamma^+}^{(a)},\kappa_{*,\gamma^+}^{(b)},c_\Gamma)\,\epsilon^{-1-\gamma^+}\bigg(\norm{\mc{M}^{(a)}[\bv]-\mc{M}^{(b)}[\bv]}_{C^1} + \norm{\X^{(a)}-\X^{(b)}}_{C^{2,\gamma^+}}\norm{\mc{M}^{(b)}[\bv]}_{C^1}\bigg) \\
&\qquad \le c(\epsilon,\kappa_{*,\gamma^+}^{(a)},\kappa_{*,\gamma^+}^{(b)},c_\Gamma)\,\norm{\X^{(a)}-\X^{(b)}}_{C^{2,\gamma^+}}\norm{\bv}_{C^{0,\gamma}}\,.
\end{align*}
Combining the above bounds, we obtain Lemma \ref{lem:full_neumann}.
\end{proof}

In the subsequent sections we prove each of the building block lemmas used in the proof of Lemma \ref{lem:full_neumann}. Section \ref{subsec:comp_doub_layer} contains the proof of Lemma \ref{lem:comp_double_layer}, section \ref{subsec:inv_doub_layer} contains the proof of Lemma \ref{lem:inv_doub_layer}, and section \ref{subsec:hyper} contains the proof of Lemma \ref{lem:hyper}.


\subsection{Proof of Lemma \ref{lem:comp_double_layer}: completed double layer}\label{subsec:comp_doub_layer}
We begin by considering the double layer operator $\mc{D}$ as 
\begin{align*}
\mc{D}[\bm{\varphi}]= \overline{\mc{D}}[\Phi^{-1}\bm{\varphi}]+\mc{R}_{\mc{D}}[\bm{\varphi}] 
\end{align*}
where $\overline{\mc{D}}$ and $\mc{R}_{\mc{D}}$ are as in \eqref{eq:decomp_step1}. By Lemma \ref{lem:double_layer}, we have that $\mc{R}_{\mc{D}}$ satisfies
\begin{equation}\label{eq:RD_satisfies}
\begin{aligned}
\norm{\mc{R}_{\mc{D}}[\bm{\varphi}]}_{C^{1,\gamma}} &\le c(\kappa_{*,\gamma^+},c_\Gamma)\,\epsilon^{-\gamma^+}\norm{\bm{\varphi}}_{C^{0,\gamma}} \\
\norm{\mc{R}_{\mc{D}}^{(a)}[\bm{\varphi}]-\mc{R}_{\mc{D}}^{(b)}[\bm{\varphi}]}_{C^{1,\gamma}} &\le c(\kappa_{*,\gamma^+}^{(a)},\kappa_{*,\gamma^+}^{(b)},c_\Gamma)\,\epsilon^{-\gamma^+}\norm{\X^{(a)}-\X^{(b)}}_{C^{2,\gamma^+}}\norm{\bm{\varphi}}_{C^{0,\gamma}}\,.
\end{aligned}
\end{equation}
To estimate $\overline{\mc{D}}$, we use the form \eqref{eq:stresslet} of the kernel $\bm{K}_{\mc D}$ and the form \eqref{eq:RdotN} of $\barR\cdot\overline{\bm{n}}'$ to write 
\begin{align*}
\overline{\mc{D}}[\Phi^{-1}\bm{\varphi}] &= \frac{3}{4\pi}\int_{-1/2}^{1/2}\int_{-\pi}^{\pi}\frac{\barR \, (\barR\cdot\overline{\bm{n}}')\barR\cdot(\Phi^{-1}\bm{\varphi})}{|\barR|^5}\,\epsilon\,d\bartheta d\bars \\
&= -\frac{3}{2\pi}\int_{-1/2}^{1/2}\int_{-\pi}^{\pi}\frac{\epsilon\sin^2(\frac{\bartheta}{2})\,\barR \, \barR\cdot(\Phi^{-1}\bm{\varphi})}{|\barR|^5}\,\epsilon\,d\bartheta d\bars\,.
\end{align*}
Then, by Lemma \ref{lem:basic_est}, we have
\begin{equation}\label{eq:barD_linfty}
\begin{aligned}
\abs{\overline{\mc{D}}[\Phi^{-1}\bm{\varphi}]} &\le c\,\norm{\bm{\varphi}}_{L^\infty}\,,\\
\abs{\overline{\mc{D}}[(\Phi^{(a)})^{-1}\bm{\varphi}-(\Phi^{(b)})^{-1}\bm{\varphi}]} &\le c\,\norm{\X^{(a)}-\X^{(b)}}_{C^1}\norm{\bm{\varphi}}_{L^\infty}\,.
\end{aligned}
\end{equation}
Next, taking $\theta$-derivatives along $\Gamma_\epsilon$ and using the expansions \eqref{eq:barR}, \eqref{eq:RdotN}, \eqref{eq:Rdots}, \eqref{eq:pthetaR}, we may write
\begin{align*} 
\textstyle \frac{1}{\epsilon}\p_\theta\overline{\mc{D}}[\Phi^{-1}\bm{\varphi}] 
&= \frac{3}{4\pi}\,{\rm p.v.}\int_{-1/2}^{1/2}\int_{-\pi}^{\pi}\bigg( 
\frac{\barR\,(\barR\cdot\overline{\bm{n}}')(\be_\theta\cdot\bm{\varphi})}{|\barR|^5}
+ \frac{\barR\,(\barR\cdot(\Phi^{-1}\bm{\varphi})) (\frac{1}{\epsilon}\p_\theta\barR\cdot\overline{\bm{n}}')}{|\barR|^5} \\
&\qquad\qquad + \frac{5\barR\,(\barR\cdot(\Phi^{-1}\bm{\varphi})) (\barR\cdot\overline{\bm{n}}')(\frac{1}{\epsilon}\p_\theta\barR\cdot\barR)}{|\barR|^7}\bigg)\,\epsilon\, d\bartheta d\bars  \\
&=\frac{3}{4\pi}\,{\rm p.v.}\int_{-1/2}^{1/2}\int_{-\pi}^{\pi}\bigg( 
\frac{-2\epsilon\sin^2(\frac{\bartheta}{2})(\bars\be_z+2\epsilon\sin(\frac{\bartheta}{2})\overline\be_\theta)(\be_\theta\cdot\bm{\varphi})}{|\barR|^5} \\
&\qquad\qquad - \frac{\sin(\bartheta)(\bars\be_z+2\epsilon\sin(\frac{\bartheta}{2})\overline\be_\theta)(\bars\be_{\rm t}\cdot\bm{\varphi} + 2\epsilon\sin(\frac{\bartheta}{2})\be_\theta\cdot\bm{\varphi})}{|\barR|^5} \\
&\quad - \frac{10\epsilon^2\sin^2(\frac{\bartheta}{2})\sin(\bartheta)(\bars\be_z+2\epsilon\sin(\frac{\bartheta}{2})\overline\be_\theta)\,(\bars\be_{\rm t}\cdot\bm{\varphi} + 2\epsilon\sin(\frac{\bartheta}{2})\be_\theta\cdot\bm{\varphi})}{|\barR|^7}\bigg)\,\epsilon\, d\bartheta d\bars\,.
\end{align*}
Using Lemma \ref{lem:alpha_est}, case (2), we then obtain
\begin{equation}\label{eq:barD_theta}
\begin{aligned}
\textstyle \abs{\frac{1}{\epsilon}\p_\theta\overline{\mc{D}}[\Phi^{-1}\bm{\varphi}]}_{\dot C^{0,\gamma}} &\le c(\kappa_{*,\gamma},c_\Gamma)\,\epsilon^{-1-\gamma}\norm{\bm{\varphi}}_{C^{0,\gamma}} \\
\textstyle \abs{\frac{1}{\epsilon}\p_\theta\overline{\mc{D}}[(\Phi^{(a)})^{-1}\bm{\varphi}-(\Phi^{(b)})^{-1}\bm{\varphi}]}_{\dot C^{0,\gamma}} &\le c(\kappa_{*,\gamma}^{(a)},\kappa_{*,\gamma}^{(b)},c_\Gamma)\,\epsilon^{-1-\gamma}\norm{\X^{(a)}-\X^{(b)}}_{C^{2,\gamma}}\norm{\bm{\varphi}}_{C^{0,\gamma}}\,.
\end{aligned}
\end{equation}
Here the additional $\epsilon^{-\gamma}$ arises due to the dependence on the curved frame vector $\be_\theta(s,\theta-\frac{\bartheta}{2})$ in the above expansion.
Taking $s$-derivatives along $\Gamma_\epsilon$, we next have
\begin{align*}
\p_s\overline{\mc{D}}[\Phi^{-1}\bm{\varphi}] 
&= \frac{3}{4\pi}\,{\rm p.v.}\int_{-1/2}^{1/2}\int_{-\pi}^{\pi}\bigg( 
\frac{\barR\,(\barR\cdot\overline{\bm{n}}')(\be_{\rm t}\cdot\bm{\varphi})}{|\barR|^5}
+ \frac{\barR\,(\barR\cdot(\Phi^{-1}\bm{\varphi})) (\p_s\barR\cdot\overline{\bm{n}}')}{|\barR|^5} \\
&\qquad\qquad + \frac{5\barR\,(\barR\cdot(\Phi^{-1}\bm{\varphi})) (\barR\cdot\overline{\bm{n}}')(\p_s\barR\cdot\barR)}{|\barR|^7}\bigg)\,\epsilon\, d\bartheta d\bars  \\
&= \frac{3}{4\pi}\,{\rm p.v.}\int_{-1/2}^{1/2}\int_{-\pi}^{\pi}\bigg( 
\frac{-2\epsilon\sin^2(\frac{\bartheta}{2})(\bars\be_z+2\epsilon\sin(\frac{\bartheta}{2})\overline\be_\theta)(\be_{\rm t}\cdot\bm{\varphi})}{|\barR|^5}\\
&\hspace{-1cm} -\frac{10\epsilon\sin^2(\frac{\bartheta}{2})(\bars+2\epsilon^2\sin(\frac{\bartheta}{2})\sin(\bartheta))(\bars\be_z+2\epsilon\sin(\frac{\bartheta}{2})\overline\be_\theta)(\bars\be_{\rm t}\cdot\bm{\varphi} + 2\epsilon\sin(\frac{\bartheta}{2})\be_\theta\cdot\bm{\varphi})}{|\barR|^7}\bigg)\,\epsilon\, d\bartheta d\bars \,.
\end{align*}
Again using Lemma \ref{lem:alpha_est}, case (2), we have
\begin{equation}\label{eq:barD_s}
\begin{aligned}
\abs{\p_s\overline{\mc{D}}[\Phi^{-1}\bm{\varphi}]}_{\dot C^{0,\gamma}} &\le 
c(\kappa_{*,\gamma},c_\Gamma)\,\epsilon^{-1-\gamma}\norm{\bm{\varphi}}_{C^{0,\gamma}} \\
\abs{\p_s\overline{\mc{D}}[(\Phi^{(a)})^{-1}\bm{\varphi}-(\Phi^{(b)})^{-1}\bm{\varphi}]}_{\dot C^{0,\gamma}} &\le c(\kappa_{*,\gamma}^{(a)},\kappa_{*,\gamma}^{(b)},c_\Gamma)\,\epsilon^{-1-\gamma}\norm{\X^{(a)}-\X^{(b)}}_{C^{2,\gamma}}\norm{\bm{\varphi}}_{C^{0,\gamma}}\,.
\end{aligned}
\end{equation}
Here again we lose a factor of $\epsilon^{-\gamma}$ from the single $\be_\theta(s,\theta-\frac{\bartheta}{2})$ term above.
Combining \eqref{eq:RD_satisfies}, \eqref{eq:barD_linfty}, \eqref{eq:barD_theta}, and \eqref{eq:barD_s}, we obtain
\begin{equation}\label{eq:D_smooths}
\begin{aligned}
\norm{\mc{D}[\bm{\varphi}]}_{C^{1,\gamma}} &\le 
c(\kappa_{*,\gamma^+},c_\Gamma)\,\epsilon^{-1-\gamma}\norm{\bm{\varphi}}_{C^{0,\gamma}} \\
\norm{\mc{D}^{(a)}[\bm{\varphi}]-\mc{D}^{(b)}[\bm{\varphi}]}_{C^{1,\gamma}} &\le c(\kappa_{*,\gamma^+}^{(a)},\kappa_{*,\gamma^+}^{(b)},c_\Gamma)\,\epsilon^{-1-\gamma}\norm{\X^{(a)}-\X^{(b)}}_{C^{2,\gamma^+}}\norm{\bm{\varphi}}_{C^{0,\gamma}}\,.
\end{aligned}
\end{equation}

It remains to estimate the completion flow $\bm{V}[\bm{\varphi}]$, given by \eqref{eq:completion}. For $\bx\in\Gamma_\epsilon$, it will be convenient to rewrite $\bR_X=\bx(s,\theta)-\X(s')$ in terms of $\bars=s-s'$ and $s,\theta$. Using the expansions \eqref{eq:s_expand}, we may write $\bR_X$ as 
\begin{align*}
\bR_X(s,\bars,\theta) = \bars\be_{\rm t}(s) - \bars^2\bm{Q}_{\rm t}(s,\bars) + \epsilon\be_r(s,\theta)\,. 
\end{align*}
For two filaments $\X^{(a)}$, $\X^{(b)}$ satisfying Lemma \ref{lem:XaXb_C2beta}, we note the expansions 
\begin{align*}
\bR_X^{(a)} - \bR_X^{(b)} &= \bars\big(\be_{\rm t}^{(a)}-\be_{\rm t}^{(b)} - \bars(\bm{Q}_{\rm t}^{(a)}-\bm{Q}_{\rm t}^{(b)})\big) + \epsilon(\be_r^{(a)}-\be_r^{(b)}) \\
\frac{1}{|\bR_X^{(a)}|^k}- \frac{1}{|\bR_X^{(b)}|^k} &= \bigg(-2\bars(\bm{Q}_{\rm t}^{(b)}\cdot\be_{\rm t}^{(b)}-\bm{Q}_{\rm t}^{(a)}\cdot\be_{\rm t}^{(a)})
-2\epsilon(\bm{Q}_{\rm t}^{(b)}\cdot\be_r^{(b)}-\bm{Q}_{\rm t}^{(a)}\cdot\be_r^{(a)}) \\
& +\bars^2(|\bm{Q}_{\rm t}^{(b)}|^2-|\bm{Q}_{\rm t}^{(a)}|^2)\bigg)
\frac{\bars^2}{|\bR_X^{(a)}||\bR_X^{(b)}|(|\bR_X^{(a)}|+|\bR_X^{(b)}|)} \sum_{\ell=0}^{k-1}\frac{1}{|\bR_X^{(a)}|^\ell|\bR_X^{(b)}|^{k-1-\ell}}  \,.
\end{align*}
From \cite[Lemma 3.1]{closed_loop}, we have 
\begin{equation}
\abs{\bR_X}\ge c(\kappa_*,c_\Gamma)\sqrt{\bars^2+\epsilon^2}\,;
\end{equation}
in particular, $\frac{\abs{\bars}}{|\bR_X|}\le c(\kappa_*,c_\Gamma)$.
Then, using the forms \eqref{eq:stokeslet}, \eqref{eq:rotlet} of $\mc{G}$ and $\bm{l}_\mc{R}$ as well as the fact that $\abs{\bR_X}\ge\epsilon$ while $\abs{\bx'-\X(s')}=\epsilon$, we have
\begin{align*}
\abs{\bm{V}[\bm{\varphi}]}&\le c(\kappa_*)\norm{\bm{\varphi}}_{L^\infty}\int_{\Gamma_\epsilon}\frac{1}{\epsilon}\,dS_{x'}
\le c(\kappa_*)\norm{\bm{\varphi}}_{L^\infty}\,,\\
\abs{\bm{V}^{(a)}[\bm{\varphi}]-\bm{V}^{(b)}[\bm{\varphi}]}
&\le c(\kappa_*^{(a)},\kappa_*^{(b)},c_\Gamma)\norm{\X^{(a)}-\X^{(b)}}_{C^2}\norm{\bm{\varphi}}_{L^\infty}\,.
\end{align*}
Here we recall that $dS_{x'}=\mc{J}_\epsilon(s',\theta')\,d\theta'ds'$.

We next take derivatives of $\bm{V}[\bm{\varphi}]$ along $\Gamma_\epsilon$. Here we use $\p_j$ to denote $\p_s$ or $\frac{1}{\epsilon}\p_\theta$. We have
\begin{align*}
\p_j\bm{V}[\bm{\varphi}] &= \int_{\Gamma_\epsilon}\p_j\mc{G}(\bx,\X(s'))\,\bm{\varphi}(\bx')\,dS_{x'} + \int_{\Gamma_\epsilon}\p_j\bm{l}_{\mc R}(\bx,\X(s')) \times \big((\bx'-\X(s'))\times\bm{\varphi}(\bx')\big)\,dS_{x'} \,.
\end{align*}
From the forms \eqref{eq:stokeslet}, \eqref{eq:rotlet} of $\mc{G}$ and $\bm{l}_\mc{R}$, we may calculate
\begin{align*}
\p_j\mc{G}(\bx,\X(s')) &= -\frac{(\p_j\bR_X\cdot\bR_X){\bf I}}{|\bR_X|^3} + \frac{\p_j\bR_X\otimes\bR_X+\bR_X\otimes\p_j\bR_X}{|\bR_X|^3} - 3\frac{(\p_j\bR_X\cdot\bR_X)\bR_X\otimes\bR_X}{|\bR_X|^5} \,;\\
\p_j{\bm l}_\mc{R}(\bx,\X(s')) &= -\frac{\p_j\bR_X}{|\bR_X|^3} + 3\frac{(\p_j\bR_X\cdot\bR_X)\bR_X}{|\bR_X|^5}\\
\p_s\bR_X &= (1-\epsilon\wh\kappa(s,\theta))\be_{\rm t}(s) + \epsilon\kappa_3\be_\theta(s,\theta)\,, \qquad
\textstyle \frac{1}{\epsilon}\p_\theta\bR_X = \be_\theta(s,\theta)\,.
\end{align*}
Using the above expressions as well as the form \eqref{eq:dot_Calpha_eps} of the $\dot C^{0,\gamma}$ seminorm, we obtain the bounds 
\begin{align*}
&\abs{\p_j\bm{V}[\bm{\varphi}]}_{\dot C^{0,\gamma}} \le c(\kappa_{*,\gamma})\,\norm{\bm{\varphi}}_{L^\infty}\bigg( \int_{\Gamma_\epsilon}\frac{1}{\epsilon^{2+\gamma}}\,dS_{x'} + \int_{\Gamma_\epsilon}\frac{1}{\epsilon^{3+\gamma}}\,\epsilon\,dS_{x'}\bigg) 
\le c(\kappa_{*,\gamma})\,\epsilon^{-1-\gamma}\norm{\bm{\varphi}}_{L^\infty}\,,\\
&\abs{\p_j\bm{V}^{(a)}[\bm{\varphi}]-\p_j\bm{V}^{(b)}[\bm{\varphi}]}_{\dot C^{0,\gamma}} \le c(\kappa_{*,\gamma}^{(a)},\kappa_{*,\gamma}^{(b)},c_\Gamma)\,\epsilon^{-1-\gamma}\norm{\X^{(a)}-\X^{(b)}}_{C^{2,\gamma}}\norm{\bm{\varphi}}_{L^\infty}\,.
\end{align*}
Combining the estimates for $\bm{V}$ with the bounds for $\mc{D}$, we obtain Lemma \ref{lem:comp_double_layer}.
\hfill \qedsymbol

\subsection{Proof of Lemma \ref{lem:inv_doub_layer}: inverse double layer}\label{subsec:inv_doub_layer}
The proof of the bound \eqref{eq:inv_doub} relies on a compactness argument and is nearly identical to the Laplace setting \cite[Section 4.2]{laplace}.

Given fixed constants $\kappa_{*,\gamma^+}$ and $c_\Gamma$, we begin by fixing a nonzero function $\bm{\varphi}=\bm{\varphi}(s,\theta)$ in $C^{0,\gamma}(\Gamma_\epsilon)$, where the space $C^{0,\gamma}(\Gamma_\epsilon)$ is understood with respect to the norm \eqref{eq:dot_Calpha_eps}; in particular, without reference to the filament centerline shape. We will show that 
\begin{equation}\label{eq:doub_inv_bd}
\textstyle \norm{(\frac{1}{2}{\bf I}+\mc{D}+\bm{V})^{-1}[\bm{\varphi}]}_{C^{0,\gamma}} \le c_\varphi(\kappa_{*,\gamma^+},c_\Gamma)\norm{\bm{\varphi}}_{C^{0,\gamma}}
\end{equation}
for any filament $\Sigma_\epsilon$ with centerline curve $\X(s)$ satisfying $\norm{\X_{ss}}_{C^{0,\gamma^+}}\le \kappa_{*,\gamma^+}$ and the non-self-intersection condition \eqref{eq:cGamma} with the fixed choice of $c_\Gamma$. Since $\bm{\varphi}$ is arbitrary, the uniform boundedness principle then implies that the bound \eqref{eq:doub_inv_bd} holds independent of $\bm{\varphi}$.

To show \eqref{eq:doub_inv_bd}, we fix $\norm{\bm{\varphi}}_{C^{0,\gamma}}=1$. Proceeding by contradiction, if the bound \eqref{eq:doub_inv_bd} does not hold, then there exists a sequence of curves $\X_j$, $j\in\N$, satisfying \eqref{eq:cGamma} and $\norm{(\X_j)_{ss}}_{C^{0,\gamma^+}}\le \kappa_{*,\gamma^+}$ such that
\begin{align*}
\textstyle \norm{(\frac{1}{2}{\bf I}+\mc{D}_j+\bm{V}_j)^{-1}[\bm{\varphi}]}_{C^{0,\gamma}} > j\,.
\end{align*}
Here $\mc{D}_j+\bm{V}_j$ denotes the double layer operator and completion flow corresponding to curve $j$. We define
\begin{align*}
\bm{h}_j =\frac{\bm{\varphi}}{\norm{(\frac{1}{2}{\bf I}+\mc{D}_j+\bm{V}_j)^{-1}[\bm{\varphi}]}_{C^{0,\gamma}}}\,, \qquad
\bm{g}_j =\frac{(\frac{1}{2}{\bf I}+\mc{D}_j+\bm{V}_j)^{-1}[\bm{\varphi}]}{\norm{(\frac{1}{2}{\bf I}+\mc{D}_j+\bm{V}_j)^{-1}[\bm{\varphi}]}_{C^{0,\gamma}}}\,.
\end{align*}
Since $\norm{\bm{g}_j}_{C^{0,\gamma}}=1$, we then have
\begin{equation}\label{eq:to_zero}
\textstyle (\frac{1}{2}{\bf I}+\mc{D}_j+\bm{V}_j)[\bm{g}_j] = \bm{h}_j \to 0 \quad \text{in } C^{0,\gamma}\,.
\end{equation}
Using Lemma \ref{lem:comp_double_layer}, we have that $(\mc{D}_j+\bm{V}_j)[\bm{g}_j]$ is smoother, in fact, bounded in $C^{1,\gamma}$ uniformly in $j$. Thus \eqref{eq:to_zero} implies that along a subsequence $j_\ell$, 
\begin{equation}\label{eq:ginfty}
\bm{g}_{j_\ell}-2\bm{h}_{j_\ell} \to \bm{g}_\infty \qquad \text{strongly in }C^{0,\gamma}
\end{equation}
for some limit $\bm{g}_\infty$ with $\norm{\bm{g}_\infty}_{C^{0,\gamma}}=1$.

Since $\norm{(\X_j)_{ss}}_{C^{0,\gamma^+}}\le \kappa_{*,\gamma^+}$, there exists a subsequence of curves $\X_{j_k}$ converging (after possible translation) strongly in $C^{2,\gamma}$, $0<\gamma<\gamma^+$, to a limit curve $\X_\infty(s)$ satisfying \eqref{eq:cGamma}. Using the form \eqref{eq:S_and_D} of $\mc{D}$, for any $\bm{\psi}\in C^{0,\gamma}(\Gamma_\epsilon)$, we have
\begin{align*}
&(\mc{D}_{j_k}- \mc{D}_\infty)[\bm{\psi}] \\
&= \frac{3}{4\pi}\int_{-1/2}^{1/2}\int_{-\pi}^{\pi} \frac{\big(\wh\X_{j_k}+\epsilon\wh\be_{r,j_k}\big)\big(\wh\X_{j_k}+\epsilon\wh\be_{r,j_k}\big)\cdot\be_{r,j_k}\big(\wh\X_{j_k}+\epsilon\wh\be_{r,j_k}\big)\cdot\bm{\psi}(s',\theta')}{|\wh\X_{j_k}+\epsilon\wh\be_{r,j_k}|^5} \,\mc{J}_{\epsilon,j_k}\,d\theta'ds'\\
&-\frac{3}{4\pi}\int_{-1/2}^{1/2}\int_{-\pi}^{\pi} \frac{\big(\wh\X_\infty+\epsilon\wh\be_{r,\infty}\big)\big(\wh\X_\infty+\epsilon\wh\be_{r,\infty}\big)\cdot\be_{r,\infty}\big(\wh\X_\infty+\epsilon\wh\be_{r,\infty}\big)\cdot\bm{\psi}(s',\theta')}{|\wh\X_\infty+\epsilon\wh\be_{r,\infty}|^5} \,\mc{J}_{\epsilon,\infty}\,d\theta'ds'\,,
\end{align*}
where we denote $\wh\X_{j_k}(s,s')=\X_{j_k}(s)-\X_{j_k}(s')$, $\wh\be_{r,j_k}(s,\theta,s',\theta')=\be_{r,j_k}(s,\theta)-\be_{r,j_k}(s',\theta')$, and each of $\wh\X_j$, $\be_{r,j}$, and $\mc{J}_{\epsilon,j}$ are defined along the curve $\X_j$. From the form of these expressions, we can see that $|(\mc{D}_{j_k}- \mc{D}_\infty)[\bm{\psi}]|\to 0$ as $j_k\to\infty$, since $\X_{j_k}\to \X_\infty$.
Similarly, using the form \eqref{eq:completion} of the completion flow $\bm{V}$, we have
\begin{align*}
(\bm{V}_{j_k} - \bm{V}_\infty)[\bm{\psi}] 
&=\int_{-1/2}^{1/2}\int_{-\pi}^{\pi}\bigg(\frac{\mc{J}_{\epsilon,j_k}(s',\theta')}{|\wh\X_{j_k}+\epsilon\be_{r,j_k}(s,\theta)|}
-\frac{\mc{J}_{\epsilon,\infty}(s',\theta')}{|\wh\X_\infty+\epsilon\be_{r,\infty}(s,\theta)|}\bigg)\,\bm{\psi}(s',\theta') \,d\theta'ds'\\
&\quad+ \int_{-1/2}^{1/2}\int_{-\pi}^{\pi}\bigg(\frac{(\wh\X_{j_k}+\epsilon\be_{r,j_k})\otimes(\wh\X_{j_k}+\epsilon\be_{r,j_k})\mc{J}_{\epsilon,j_k}(s',\theta')}{|\wh\X_{j_k}+\epsilon\be_{r,j_k}(s,\theta)|^3} \\
&\quad\qquad -\frac{(\wh\X_\infty+\epsilon\be_{r,\infty})\otimes(\wh\X_\infty+\epsilon\be_{r,\infty})\mc{J}_{\epsilon,\infty}(s',\theta')}{|\wh\X_\infty+\epsilon\be_{r,\infty}(s,\theta)|^3} \,\bigg)\bm{\psi}(s',\theta')\,d\theta'ds'\\
&- \int_{-1/2}^{1/2}\int_{-\pi}^{\pi}\frac{(\wh\X_{j_k}+\epsilon\be_{r,j_k})}{|\wh\X_{j_k}+\epsilon\be_{r,j_k}(s,\theta)|^3} \times \big(\be_{r,j_k}(s',\theta')\times\bm{\psi}(s',\theta')\big)\,\mc{J}_{\epsilon,j_k}(s',\theta')\,d\theta'ds'\\
&+ \int_{-1/2}^{1/2}\int_{-\pi}^{\pi}\frac{(\wh\X_\infty+\epsilon\be_{r,\infty})}{|\wh\X_\infty+\epsilon\be_{r,\infty}(s,\theta)|^3} \times \big(\be_{r,\infty}(s',\theta')\times\bm{\psi}(s',\theta')\big)\,\mc{J}_{\epsilon,\infty}(s',\theta')\,d\theta'ds'\,,
\end{align*}
and again we see that $(\bm{V}_{j_k} - \bm{V}_\infty)[\bm{\psi}] \to 0$ as $j_k\to\infty$. Together, we have $\abs{(\mc{D}_{j_k}+\bm{V}_{j_k})[\bm{\psi}] - (\mc{D}_\infty+\bm{V}_\infty)[\bm{\psi}]}\to 0$. Therefore, by a diagonalization argument, we have 
\begin{align*}
(\mc{D}_j+\bm{V}_j)[\bm{g}_j] \to (\mc{D}_\infty+\bm{V}_\infty)[\bm{g}_\infty]
\end{align*}
along a subsequence, and, using \eqref{eq:ginfty}, $(\frac{1}{2}{\bf I}+\mc{D}_\infty+\bm{V}_\infty)[\bm{g}_\infty]=0$. By injectivity of the completed double layer operator (see discussion below \eqref{eq:completed_double}), we then have $\bm{g}_\infty=0$, contradicting $\norm{\bm{g}_\infty}_{C^{0,\gamma}}=1$. We thus obtain \eqref{eq:inv_doub}. \\

We now turn to the Lipschitz bound \eqref{eq:inv_doub_lip}. Consider two nearby filaments $\X^{(a)}$ and $\X^{(b)}$ satisfying Lemma \ref{lem:XaXb_C2beta}.
We aim to bound the difference $\bv^{(a)}-\bv^{(b)}$ where $\bv^{(a)},\bv^{(b)}$ satisfy
\begin{align*}
\textstyle (\frac{1}{2}{\bf I}+\mc{D}^{(a)}+\bm{V}^{(a)})[\bv^{(a)}] = \bm{\varphi} = (\frac{1}{2}{\bf I}+\mc{D}^{(b)}+\bm{V}^{(b)})[\bv^{(b)}]\,.
\end{align*} 
Rewriting, we have 
\begin{align*}
\bv^{(a)}-\bv^{(b)}&= 2(\mc{D}^{(b)}+\bm{V}^{(b)})[\bv^{(b)}] - 2(\mc{D}^{(a)}+\bm{V}^{(a)})[\bv^{(a)}] \\
&= 2(\mc{D}^{(b)}+\bm{V}^{(b)}-\mc{D}^{(a)}-\bm{V}^{(a)})[\bv^{(b)}]
+ 2(\mc{D}^{(a)}+\bm{V}^{(a)})[\bv^{(b)}-\bv^{(a)}]\,,
\end{align*}
or, moving all $\bv^{(a)}-\bv^{(b)}$ terms to the left and side, 
\begin{align*}
\textstyle \bv^{(a)}-\bv^{(b)} = \big(\frac{1}{2}{\bf I}+ \mc{D}^{(a)}+\bm{V}^{(a)}\big)^{-1}\big(\mc{D}^{(b)}+\bm{V}^{(b)}-\mc{D}^{(a)}-\bm{V}^{(a)}\big)[\bv^{(b)}]\,.
\end{align*}
Then, using the single filament bound \eqref{eq:inv_doub} and Lemma \ref{lem:comp_double_layer}, we have
\begin{align*}
 \norm{\bv^{(a)}-\bv^{(b)}}_{C^{0,\gamma}} &= \textstyle\norm{\big(\frac{1}{2}{\bf I}+ \mc{D}^{(a)}+\bm{V}^{(a)}\big)^{-1}\big(\mc{D}^{(b)}+\bm{V}^{(b)}-\mc{D}^{(a)}-\bm{V}^{(a)}\big)[\bv^{(b)}]}_{C^{0,\gamma}}\\
 &\le c(\epsilon,\kappa_{*,\gamma^+}^{(a)},c_\Gamma)\norm{\big(\mc{D}^{(b)}+\bm{V}^{(b)}-\mc{D}^{(a)}-\bm{V}^{(a)}\big)[\bv^{(b)}]}_{C^{0,\gamma}} \\
 &\le c(\epsilon,\kappa_{*,\gamma^+}^{(a)},\kappa_{*,\gamma^+}^{(b)},c_\Gamma)\norm{\X^{(a)}- \X^{(b)}}_{C^{2,\gamma^+}}\norm{\bv^{(b)}}_{C^{0,\gamma}} \\
 &\le c(\epsilon,\kappa_{*,\gamma^+}^{(a)},\kappa_{*,\gamma^+}^{(b)},c_\Gamma)\norm{\X^{(a)}- \X^{(b)}}_{C^{2,\gamma^+}}\norm{\bm{\varphi}}_{C^{0,\gamma}}\,.
\end{align*}
\hfill \qedsymbol

\subsection{Proof of Lemma \ref{lem:hyper}: hypersingular operator bounds}\label{subsec:hyper}
We use the expansions \eqref{eq:curvedR} and \eqref{eq:RdotN} to write the kernel $\bm{K}_{\mc T}$ \eqref{eq:KT} in terms of $s,\theta,\bars,\bartheta$ as
\begin{align*}
\bm{K}_{\mc T} &= \frac{1}{4\pi}\big( \bm{K}_{\mc T,0} + \bm{K}_{\mc T,1} + \bm{K}_{\mc T,2}\big)\,,\\
\bm{K}_{\mc T,0} &= 2\frac{\be_r\otimes\be_r}{\abs{\bR}^3} + 3\frac{(\bars\be_{\rm t}+2\epsilon\sin(\frac{\bartheta}{2})\be_\theta+\epsilon\bars\bm{Q}_r)\otimes(\bars\be_{\rm t}+2\epsilon\sin(\frac{\bartheta}{2})\be_\theta+\epsilon\bars\bm{Q}_r)}{\abs{\bR}^5} \\
\bm{K}_{\mc T,1} &= 
- 3\frac{\bars^2\big[\bm{Q}_{\rm t}\otimes(\bars\be_{\rm t}+2\epsilon\sin(\frac{\bartheta}{2})\be_\theta+\epsilon\bars\bm{Q}_r)+ (\bars\be_{\rm t}+2\epsilon\sin(\frac{\bartheta}{2})\be_\theta+\epsilon\bars\bm{Q}_r)\otimes \bm{Q}_{\rm t}\big]}{\abs{\bR}^5} \\
& - 2\frac{\bars\be_r\otimes\bm{Q}_r+2\sin(\frac{\bartheta}{2})\be_r\otimes\be_\theta}{\abs{\bR}^3}
-3\frac{(2\epsilon\sin^2(\frac{\bartheta}{2})-\bars^2Q_{\rm Rn'})(\bars\be_{\rm t}+2\epsilon\sin(\frac{\bartheta}{2})\be_\theta+\epsilon\bars\bm{Q}_r)\otimes\be_r }{\abs{\bR}^5}  \\
&\quad
+ 3\frac{(2\epsilon\sin^2(\frac{\bartheta}{2})+\bars^2Q_{\rm Rn})\be_r\otimes(\bars\be_{\rm t}+2\epsilon\sin(\frac{\bartheta}{2})\be_\theta+\epsilon\bars\bm{Q}_r)}{\abs{\bR}^5}  \\
\bm{K}_{\mc T,2} &= 
6\frac{\bars^4\bm{Q}_{\rm t}\otimes\bm{Q}_{\rm t}}{\abs{\bR}^5}
+3\frac{\bars^2\big[ (2\epsilon\sin^2(\frac{\bartheta}{2})-\bars^2Q_{\rm Rn'})\,\bm{Q}_{\rm t}\otimes\be_r -(2\epsilon\sin^2(\frac{\bartheta}{2})+\bars^2Q_{\rm Rn})\,\be_r\otimes\bm{Q}_{\rm t}\big]}{\abs{\bR}^5}\\
&\quad - 3\frac{(2\epsilon\sin^2(\frac{\bartheta}{2})+\bars^2Q_{\rm Rn})\big[(2\epsilon\sin^2(\frac{\bartheta}{2})-\bars^2Q_{\rm Rn'}){\bf I}+(\bars\bm{Q}_r+2\sin(\frac{\bartheta}{2})\be_\theta)\otimes\bR\big]}{\abs{\bR}^5} \\
&\quad 
- 3\frac{\big[(2\sin^2(\frac{\bartheta}{2})-\bars^2Q_{0,5})\abs{\bR}^2-10(2\epsilon\sin^2(\frac{\bartheta}{2})+\bars^2Q_{\rm Rn})(2\epsilon\sin^2(\frac{\bartheta}{2})-\bars^2Q_{\rm Rn'})\big]\bR\otimes\bR}{\abs{\bR}^7}\,.
\end{align*}
Accordingly, for $j=0,1,2$, we define 
\begin{align*}
\mc{T}_j[\bm{\varphi}] = -{\rm p.v.}\int_{-1/2}^{1/2}\int_{-\pi}^\pi\bm{K}_{\mc{T},j}\,\big(\bm{\varphi}(s-\bars,\theta-\bartheta)-\bm{\varphi}(s,\theta) \big)\,\mc{J}_\epsilon(s-\bars,\theta-\bartheta)\,d\bartheta ds\,.
\end{align*}

We begin with bounds for $\mc{T}_2$. Using Lemma \ref{lem:basic_est}, we have
\begin{align*}
\abs{\mc{T}_2[\bm{\varphi}]} &\le 2\norm{\bm{\varphi}}_{L^\infty}\int_{-1/2}^{1/2}\int_{-\pi}^\pi\abs{\bm{K}_{\mc{T},2}}\,\abs{\mc{J}_\epsilon}\,d\bartheta ds \\
&\le c(\kappa_*)\norm{\bm{\varphi}}_{L^\infty}\int_{-1/2}^{1/2}\int_{-\pi}^\pi \frac{1+\epsilon^{-1}+\epsilon^{-2}}{\abs{\bR}}\,\epsilon\,d\bartheta ds
\le c(\kappa_*,c_\Gamma)\,\epsilon^{-1}\norm{\bm{\varphi}}_{L^\infty}\,.
\end{align*}
Furthermore, using that the remainder terms $Q_i$ above each satisfy an estimate of the form \eqref{eq:Qj_Cbeta}, and using the expansion \eqref{eq:Rdiffk}, for two curves $\X^{(a)}$ and $\X^{(b)}$ satisfying Lemma \ref{lem:XaXb_C2beta}, we have
\begin{align*}
\abs{\mc{T}_2^{(a)}[\bm{\varphi}]-\mc{T}_2^{(b)}[\bm{\varphi}]} &\le 2\norm{\bm{\varphi}}_{L^\infty}\int_{-1/2}^{1/2}\int_{-\pi}^\pi\abs{\bm{K}_{\mc{T},2}^{(a)}\,\mc{J}_\epsilon^{(a)}-\bm{K}_{\mc{T},2}^{(b)}\,\mc{J}_\epsilon^{(b)}}\,d\bartheta ds \\
&\le c(\kappa_*^{(a)},\kappa_*^{(b)},c_\Gamma)\,\epsilon^{-1}\norm{\X^{(a)}-\X^{(b)}}_{C^2}\norm{\bm{\varphi}}_{L^\infty}\,.
\end{align*}
In addition, by Lemma \ref{lem:alpha_est}, we have
\begin{align*}
\abs{\mc{T}_2[\bm{\varphi}]}_{\dot C^{0,\alpha}} &\le c(\kappa_{*,\alpha},c_\Gamma)\,\epsilon^{-1-\alpha}\norm{\bm{\varphi}}_{C^{0,\alpha}}\\
\abs{\mc{T}_2^{(a)}[\bm{\varphi}]-\mc{T}_2^{(b)}[\bm{\varphi}]}_{\dot C^{0,\alpha}} &\le c(\kappa_*^{(a)},\kappa_*^{(b)},c_\Gamma)\,\epsilon^{-1-\alpha}\norm{\X^{(a)}-\X^{(b)}}_{C^{2,\alpha}}\norm{\bm{\varphi}}_{C^{0,\alpha}}\,.
\end{align*}
Here we use case (2) of Lemma \ref{lem:alpha_est} to estimate terms of the form $\frac{\sin^2(\frac{\bartheta}{2})}{\abs{\bR}^3}$, treating one factor of $\sin(\frac{\bartheta}{2})$ as a coefficient, which allows us to save a factor of $\epsilon^{-\alpha}$. We may use case (1) to estimate the remaining terms.

We next estimate $\mc{T}_1[\bm{\varphi}]$. We may first use Lemma \ref{lem:odd_nm} to obtain
\begin{align*}
 \abs{\mc{T}_1[\bm{\varphi}]} &\le \abs{{\rm p.v.}\int_{-1/2}^{1/2}\int_{-\pi}^\pi\bm{K}_{\mc{T},1}\big(\bm{\varphi}(s-\bars,\theta-\bartheta)-\bm{\varphi}(s,\theta) \big)\,\mc{J}_\epsilon\,d\bartheta ds }\\
&\le c(\kappa_*,c_\Gamma)\,\epsilon^{\alpha-1}\big(\max_i\norm{Q_i}_{C^{0,\alpha}}\big)\norm{\bm{\varphi}}_{C^{0,\alpha}}
\le c(\kappa_{*,\alpha},c_\Gamma)\,\epsilon^{-1}\norm{\bm{\varphi}}_{C^{0,\alpha}}\,.
 \end{align*} 
 As usual, here we use $Q_i$ as catch-all notation for the remainder coefficients in the above expansion. Similarly, using the expressions \eqref{eq:Qj_Cbeta} and \eqref{eq:Rdiffk}, for two curves $\X^{(a)}$ and $\X^{(b)}$ satisfying Lemma \ref{lem:XaXb_C2beta}, we have
\begin{align*}
\abs{\mc{T}_1^{(a)}[\bm{\varphi}]-\mc{T}_1^{(b)}[\bm{\varphi}]} &\le \abs{{\rm p.v.}\int_{-1/2}^{1/2}\int_{-\pi}^\pi\big(\bm{K}_{\mc{T},1}^{(a)}\,\mc{J}_\epsilon^{(a)}-\bm{K}_{\mc{T},1}^{(b)}\,\mc{J}_\epsilon^{(b)}\big)\big(\bm{\varphi}(s-\bars,\theta-\bartheta)-\bm{\varphi}(s,\theta) \big)\,d\bartheta ds} \\
&\le c(\kappa_{*,\alpha}^{(a)},\kappa_{*,\alpha}^{(b)},c_\Gamma)\,\epsilon^{-1}\norm{\X^{(a)}-\X^{(b)}}_{C^{2,\alpha}}\norm{\bm{\varphi}}_{C^{0,\alpha}}\,.
 \end{align*}
 We next turn to $\dot C^{0,\alpha}$ estimates. Using Lemma \ref{lem:alpha_est}, case (2), we obtain 
\begin{align*}
 \abs{\mc{T}_1[\bm{\varphi}]}_{\dot C^{0,\alpha}} &\le \abs{{\rm p.v.}\int_{-1/2}^{1/2}\int_{-\pi}^\pi\bm{K}_{\mc{T},1}\,\big(\bm{\varphi}(s-\bars,\theta-\bartheta)-\bm{\varphi}(s,\theta) \big)\,\mc{J}_\epsilon\,d\bartheta ds}_{\dot C^{0,\alpha}} \\
&\le c(\kappa_*,c_\Gamma)\max_i\big(\norm{Q_i}_{C^{0,\alpha^+}_1}+\norm{Q_i}_{C^{0,\alpha}_2}\big)\,\epsilon^{-1}\norm{\bm{\varphi}}_{C^{0,\alpha}}\\
&\le c(\kappa_{*,\alpha^+},c_\Gamma)\,\epsilon^{-1-\alpha^+}\norm{\bm{\varphi}}_{C^{0,\alpha}}\,,\\
\abs{\mc{T}_1^{(a)}[\bm{\varphi}]-\mc{T}_1^{(b)}[\bm{\varphi}]}_{\dot C^{0,\alpha}} &\le c(\kappa_{*,\alpha^+}^{(a)},\kappa_{*,\alpha^+}^{(b)},c_\Gamma)\,\epsilon^{-1-\alpha^+}\norm{\X^{(a)}-\X^{(b)}}_{C^{2,\alpha^+}}\norm{\bm{\varphi}}_{C^{0,\alpha}}\,.
 \end{align*}

Finally, we estimate $\mc{T}_0[\bm{\varphi}]$. Here we will use that $\bm{\varphi}\in C^{1,\alpha}$ and we may thus expand
\begin{equation}\label{eq:phi_expand_1}
\begin{aligned}
\bm{\varphi}(s-\bars,\theta-\bartheta)-\bm{\varphi}(s,\theta) &= \bars\bm{Q}_{\varphi,1}+\epsilon\bartheta\bm{Q}_{\varphi,2}\,,\\
\norm{\bm{Q}_{\varphi,1}}_Y&\le c\norm{\p_s\bm{\varphi}}_{Y}\,,  \quad 
\norm{\bm{Q}_{\varphi,2}}_Y\le \textstyle c\norm{\frac{1}{\epsilon}\p_\theta\bm{\varphi}}_{Y}\,,
\end{aligned}
\end{equation}
where the norms $Y$ are as in \eqref{eq:Y_norms} with $\alpha$ in place of $\beta$. Note that since $\bm{\varphi}$ is a function of just $(s,\theta)$, $\norm{\p_s\bm{\varphi}}_{C^{0,\alpha}_\mu}\le \norm{\p_s\bm{\varphi}}_{C^{0,\alpha}}$ for $\mu=1,2$. 
 Using \eqref{eq:phi_expand_1} along with Lemma \ref{lem:odd_nm}, we obtain the $L^\infty$ estimates
\begin{align*}
\abs{\mc{T}_0[\bm{\varphi}]} &\le \abs{{\rm p.v.}\int_{-1/2}^{1/2}\int_{-\pi}^\pi\bm{K}_{\mc{T},0}\,\big(\bars\bm{Q}_{\varphi,1}+\epsilon\bartheta\bm{Q}_{\varphi,2} \big)\,\mc{J}_\epsilon\,d\bartheta ds} \\
&\le c(\kappa_{*,\alpha},c_\Gamma)\,\epsilon^{\alpha}\big(\norm{\bm{Q}_{\varphi,1}}_{C^{0,\alpha}_2}+\norm{\bm{Q}_{\varphi,2}}_{C^{0,\alpha}_2}\big)\\
&\le c(\kappa_{*,\alpha},c_\Gamma)\,\epsilon^\alpha\big(\norm{\p_s\bm{\varphi}}_{C^{0,\alpha}}+\textstyle \norm{\frac{1}{\epsilon}\p_\theta\bm{\varphi}}_{C^{0,\alpha}}\big)\,.
\end{align*}
In addition, for two curves $\X^{(a)}$ and $\X^{(b)}$ satisfying Lemma \ref{lem:XaXb_C2beta}, we have
\begin{align*}
\abs{\mc{T}_0^{(a)}[\bm{\varphi}]-\mc{T}_0^{(b)}[\bm{\varphi}]} &\le \abs{{\rm p.v.}\int_{-1/2}^{1/2}\int_{-\pi}^\pi\big(\bm{K}_{\mc{T},0}^{(a)}\,\mc{J}_\epsilon^{(a)}-\bm{K}_{\mc{T},0}^{(b)}\,\mc{J}_\epsilon^{(b)}\big)\big(\bars\bm{Q}_{\varphi,1}+\epsilon\bartheta\bm{Q}_{\varphi,2} \big)\,d\bartheta ds} \\
&\le c(\kappa_{*,\alpha}^{(a)},\kappa_{*,\alpha}^{(b)},c_\Gamma)\,\epsilon^\alpha\norm{\X^{(a)}-\X^{(b)}}_{C^{2,\alpha}}\big(\norm{\p_s\bm{\varphi}}_{C^{0,\alpha}}+\textstyle \norm{\frac{1}{\epsilon}\p_\theta\bm{\varphi}}_{C^{0,\alpha}}\big)\,.
\end{align*}
Next, using Lemma \ref{lem:new_alpha_est}, we have
\begin{align*}
\abs{\mc{T}_0[\bm{\varphi}]}_{\dot C^{0,\alpha}} 
&\le c(\kappa_*,c_\Gamma)\bigg(\big(\textstyle \norm{\p_s\bm{\varphi}}_{L^\infty}+ \norm{\frac{1}{\epsilon}\p_\theta\bm{\varphi}}_{L^\infty}\big)\displaystyle \max_i\big(\norm{Q_i}_{C^{0,\alpha^+}_1}+\norm{Q_i}_{C^{0,\alpha}_2}\big) \\
&\qquad + \textstyle \big(\norm{\p_s\bm{\varphi}}_{C^{0,\alpha}}+ \norm{\frac{1}{\epsilon}\p_\theta\bm{\varphi}}_{C^{0,\alpha}}\big)\displaystyle \max_i\norm{Q_i}_{L^\infty}\bigg) \\
&\le c(\kappa_*,c_\Gamma) \textstyle \big(\norm{\p_s\bm{\varphi}}_{C^{0,\alpha}}+ \norm{\frac{1}{\epsilon}\p_\theta\bm{\varphi}}_{C^{0,\alpha}}\big) 
+ c(\kappa_{*,\alpha^+},c_\Gamma)\,\epsilon^{-\alpha^+}\big(\textstyle \norm{\p_s\bm{\varphi}}_{L^\infty}+ \norm{\frac{1}{\epsilon}\p_\theta\bm{\varphi}}_{L^\infty}\big)\,.
\end{align*}
Here the coefficients $Q_i$ include $\epsilon^{-1}\mc{J}_\epsilon$, $\epsilon\bm{Q}_r$, and $\be_r,\be_\theta$, which incur an additional $\epsilon^{-\alpha^+}$ due to $\theta$-dependence. 
Similarly, we have 
\begin{align*}
\abs{\mc{T}_0^{(a)}[\bm{\varphi}]-\mc{T}_0^{(b)}[\bm{\varphi}]}_{\dot C^{0,\alpha}} 
&\le c(\kappa_*^{(a)},\kappa_*^{(b)},c_\Gamma) \norm{\X^{(a)}-\X^{(b)}}_{C^2} \textstyle \big(\norm{\p_s\bm{\varphi}}_{C^{0,\alpha}}+ \norm{\frac{1}{\epsilon}\p_\theta\bm{\varphi}}_{C^{0,\alpha}}\big) \\
&\quad + c(\kappa_{*,\alpha^+}^{(a)},\kappa_{*,\alpha^+}^{(b)},c_\Gamma)\,\epsilon^{-\alpha^+}\norm{\X^{(a)}-\X^{(b)}}_{C^{2,\alpha^+}}\big(\textstyle \norm{\p_s\bm{\varphi}}_{L^\infty}+ \norm{\frac{1}{\epsilon}\p_\theta\bm{\varphi}}_{L^\infty}\big)\,.
\end{align*}

In addition to the stress $\bm{\sigma}[\mc{D}[\bm{\varphi}]]\bm{n}(\bx)=\mc{T}_0+\mc{T}_1+\mc{T}_2$, we need to estimate the stress $\bm{\sigma}[\bm{V}[\bm{\varphi}]]\bm{n}(\bx)$ due to the completion flow, given by \eqref{eq:sigmaVn}.

Using that $dS_x= \mc{J}_\epsilon\,d\theta ds$ and $\abs{\bR_x}\ge \epsilon$ while $|\bx'-\X(s')|\le \epsilon$, we have
\begin{align*}
\abs{\bm{\sigma}[\bm{V}[\bm{\varphi}]]\bm{n}} &\le c \int_{\Gamma_\epsilon}\frac{\abs{\bm{\varphi}}}{\epsilon^2}\,dS_{x'}
\le c(\kappa_*)\,\epsilon^{-1}\norm{\bm{\varphi}}_{L^\infty}\,,\\
\abs{\bm{\sigma}[\bm{V}^{(a)}[\bm{\varphi}]]\bm{n}^{(a)}-\bm{\sigma}[\bm{V}^{(b)}[\bm{\varphi}]]\bm{n}^{(b)}} &\le
c(\kappa_*^{(a)},\kappa_*^{(b)})\,\epsilon^{-1}\norm{\X^{(a)}-\X^{(b)}}_{C^2}\norm{\bm{\varphi}}_{L^\infty}\,.
\end{align*}
In addition, we have 
\begin{align*}
\abs{\bm{\sigma}[\bm{V}[\bm{\varphi}]]\bm{n}}_{\dot C^{0,\alpha}} &
\le c(\kappa_*)\,\epsilon^{-1-\alpha}\norm{\bm{\varphi}}_{L^\infty}\,,\\
\abs{\bm{\sigma}[\bm{V}^{(a)}[\bm{\varphi}]]\bm{n}^{(a)}-\bm{\sigma}[\bm{V}^{(b)}[\bm{\varphi}]]\bm{n}^{(b)}}_{\dot C^{0,\alpha}} &\le
c(\kappa_*^{(a)},\kappa_*^{(b)})\,\epsilon^{-1-\alpha}\norm{\X^{(a)}-\X^{(b)}}_{C^2}\norm{\bm{\varphi}}_{L^\infty}\,.
\end{align*}
Combining the estimates for $\bm{\sigma}[\bm{V}]$ with the estimates for $\mc{T}_0$, $\mc{T}_1$ and $\mc{T}_2$, we obtain Lemma \ref{lem:hyper}.
\hfill \qedsymbol


\section{Slender body NtD in H\"older spaces}\label{sec:SBNtD_holder}
Here we prove Lemma \ref{lem:holder_NtD} regarding H\"older space estimates for the slender body Neumann-to-Dirichlet map $\mc{L}_\epsilon$ about a curved filament. As in the Laplace setting \cite[section 5]{laplace}, we will rely on a Campanato-type argument.

We begin by recalling the $L^2$-based solution theory for the slender body boundary value problem \eqref{eq:SB_PDE}, developed in \cite[Theorem 1.2]{closed_loop}:
\begin{lemma}[$L^2$-based SB BVP solution theory]\label{lem:SB_BVP_L2}
Consider a filament $\Sigma_\epsilon$ with centerline $\X(s)\in C^2(\T)$. Given slender body Neumann data $\bm{f}(s)\in L^2(\T)$, the solution $(\bu,p)$ to the slender body boundary value problem \eqref{eq:SB_PDE} belongs to $D^{1,2}(\Omega_\epsilon)\times L^2(\Omega_\epsilon)$, where
\begin{align*}
D^{1,2}(\Omega_\epsilon):= \{\bu\in L^6(\Omega_\epsilon)\;:\; \nabla\bu\in L^2(\Omega_\epsilon) \}\,,
\end{align*}
and satisfies the bound
\begin{equation}\label{eq:D12_est}
\norm{\bu}_{D^{1,2}(\Omega_\epsilon)}+\norm{p}_{L^2(\Omega_\epsilon)}\equiv\norm{\nabla\bu}_{L^2(\Omega_\epsilon)}+\norm{p}_{L^2(\Omega_\epsilon)}\le c(\kappa_*,c_\Gamma)\abs{\log\epsilon}^{1/2}\norm{\bm{f}}_{L^2(\T)}\,.
\end{equation}
\end{lemma}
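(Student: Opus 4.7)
The plan is to solve the slender body BVP via a variational formulation adapted to the mixed boundary conditions on $\Gamma_\epsilon$. The natural solution space is
\begin{equation*}
\mc{H}_\epsilon := \{\bu \in D^{1,2}(\Omega_\epsilon) \;:\; \div \bu = 0, \; \bu|_{\Gamma_\epsilon} \text{ is independent of } \theta\},
\end{equation*}
equipped with inner product $(\bu,\bv)_{\mc{H}_\epsilon} = \int_{\Omega_\epsilon} 2\bm{\sigma}[\bu]:\bm{\sigma}[\bv]\,dx$. Completeness follows from the $\R^3$ Sobolev embedding $\norm{\bu}_{L^6}\le c\norm{\nabla\bu}_{L^2}$, and Korn's inequality in the exterior domain (rigid motions are ruled out by the $L^6$ decay built into $D^{1,2}$) gives equivalence of this inner product with $\norm{\nabla\cdot}_{L^2}^2$. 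I would then seek $\bu\in\mc{H}_\epsilon$ satisfying
\begin{equation*}
\int_{\Omega_\epsilon} 2\bm{\sigma}[\bu]:\bm{\sigma}[\bm{\phi}]\,dx \;=\; \int_{\T} \bm{f}(s)\cdot \bm{\phi}|_{\Gamma_\epsilon}(s)\,ds \qquad \forall\,\bm{\phi}\in\mc{H}_\epsilon.
\end{equation*}
This is the energy identity formally obtained by pairing $-\Delta\bu+\nabla p=0$ with $\bm{\phi}$, dropping the pressure via $\div\bm{\phi}=0$, and using the $\theta$-independence of $\bm{\phi}|_{\Gamma_\epsilon}$ to collapse the boundary integral against the angle-averaged Neumann condition in \eqref{eq:SB_PDE}.

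The main obstacle is proving that the right-hand side is a bounded linear functional on $\mc{H}_\epsilon$ with the sharp $\abs{\log\epsilon}^{1/2}$ constant. Concretely, I need the trace inequality
\begin{equation*}
\norm{\bm{\phi}|_{\Gamma_\epsilon}}_{L^2(\T)} \;\le\; c(\kappa_*)\,\abs{\log\epsilon}^{1/2}\, \norm{\nabla\bm{\phi}}_{L^2(\Omega_\epsilon)}, \qquad \bm{\phi}\in\mc{H}_\epsilon.
\end{equation*}
This reflects the fact that in $\R^3$ the 1D centerline $\Gamma_0$ has zero Newtonian capacity, so pointwise traces on the curve do not exist; the $\epsilon$-tube $\Gamma_\epsilon$ has capacity of order $\abs{\log\epsilon}^{-1}$, which is the source of the logarithmic loss. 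I would establish the estimate in tubular coordinates $(s,r,\theta)$: for each fixed $s$, the fundamental theorem of calculus along $r\in(\epsilon,r_*)$ together with Cauchy--Schwarz and the $\theta$-independence of the trace yields a pointwise-in-$s$ bound
\begin{equation*}
\abs{\bm{\phi}|_{\Gamma_\epsilon}(s)}^2 \;\lesssim\; \abs{\log(r_*/\epsilon)} \int_\epsilon^{r_*}\!\!\int_0^{2\pi} \abs{\p_r\bm{\phi}}^2 r\,d\theta\,dr + \text{(trace on } r=r_* \text{)},
\end{equation*}
where the boundary term at $r=r_*$ is absorbed by a standard $H^1$-trace on a fixed tubular surface and the $L^6$ decay. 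Integrating in $s$ and absorbing the Jacobian $1-r\wh\kappa$ into the constant $c(\kappa_*)$ gives the inequality.

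With the trace inequality in hand, Riesz representation on $\mc{H}_\epsilon$ produces a unique weak velocity $\bu\in\mc{H}_\epsilon$ with $\norm{\nabla\bu}_{L^2(\Omega_\epsilon)}\le c(\kappa_*)\abs{\log\epsilon}^{1/2}\norm{\bm{f}}_{L^2(\T)}$. Pressure recovery then proceeds by de Rham's theorem: since the weak formulation implies $\int_{\Omega_\epsilon} 2\bm{\sigma}[\bu]:\nabla\bm{\psi}\,dx=0$ for every $\bm{\psi}\in C_c^\infty(\Omega_\epsilon)$ with $\div\bm{\psi}=0$, there exists $p\in L^2_{\rm loc}(\Omega_\epsilon)$, pinned by decay at infinity, with $-\Delta\bu+\nabla p=0$ distributionally. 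A standard dual argument based on the surjectivity of $\div:H^1_0\to L^2_0$ on bounded Lipschitz subdomains, combined with the decay $\bu=O(\abs{x}^{-1})$, $p=O(\abs{x}^{-2})$ consistent with membership in $D^{1,2}\times L^2$, upgrades this to $\norm{p}_{L^2(\Omega_\epsilon)}\le c(\norm{\nabla\bu}_{L^2}+\abs{\log\epsilon}^{1/2}\norm{\bm{f}}_{L^2})$.

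Finally I would verify that the weak solution satisfies \eqref{eq:SB_PDE} in the classical sense. Taking test functions $\bm{\phi}\in\mc{H}_\epsilon$ whose trace ranges over arbitrary $\theta$-independent $\bm{\eta}(s)\in L^2(\T)$ on $\Gamma_\epsilon$ and integrating by parts yields
\begin{equation*}
\int_\T\bigg(\int_0^{2\pi}\bm{\sigma}[\bu]\bm{n}\,\mc{J}_\epsilon\,d\theta - \bm{f}(s)\bigg)\cdot\bm{\eta}(s)\,ds = 0 \quad \forall\,\bm{\eta},
\end{equation*}
which recovers the averaged Neumann condition, while the $\theta$-independence of $\bu|_{\Gamma_\epsilon}$ is built into $\mc{H}_\epsilon$. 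Uniqueness is immediate from the energy identity applied to the difference of two candidates with $\bm{f}=0$. The chief technical work, as flagged above, lies entirely in the logarithmic trace inequality; the remaining ingredients (Korn in the exterior, Riesz, de Rham, and identification of the BCs) are standard once the correct functional framework is in place.
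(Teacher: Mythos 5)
Your proposal is substantially correct and is essentially the argument in the cited reference: note that the paper does not prove this lemma but imports it from \cite[Theorem~1.2]{closed_loop}, where the same variational framework (a divergence-free Hilbert space with $\theta$-independent trace, the logarithmic trace inequality you identify as the crux, Riesz representation, and de Rham/Bogovskii for the pressure) is the proof. Your reconstruction of the trace inequality via the fundamental theorem of calculus in $r$ with the weight split $1 = r^{-1/2}\cdot r^{1/2}$ is exactly where the $\abs{\log\epsilon}^{1/2}$ comes from. One notational slip worth fixing: you write the inner product and weak formulation with $\bm{\sigma}[\bu]$, but $\bm{\sigma}$ is defined in the paper to include the pressure term $-p\,\mathbf{I}$, which is not yet available when setting up the velocity-only variational problem; you clearly intend the symmetric gradient $\frac{1}{2}(\nabla\bu + \nabla\bu^{\rm T})$ (the pressure contribution would vanish against divergence-free test functions anyway, so the resulting bilinear form is the same, but the definition as written is circular).
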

Recalling the definition \eqref{eq:rstar} of $r_*=r_*(c_\Gamma,\kappa_*)$, we may define the curved annular region 
\begin{equation}\label{eq:O_region}
\mc{O}_{r_*}= \big\{ \bx\in \Omega_\epsilon \;:\; \epsilon<\text{ dist}(\bx,\Gamma_0) < r_*\big\}
\end{equation}
about $\Sigma_\epsilon$. Within $\mc{O}_{r_*}$, the solution $(\bu,p)$ to \eqref{eq:SB_PDE} then satisfies  
\begin{equation}\label{eq:uH1_est}
\norm{\bu}_{H^1(\mc{O}_{r_*})}+\norm{p}_{L^2(\mc{O}_{r_*})}
\le c(\kappa_*,c_\Gamma,\abs{\mc{O}_{r_*}})\abs{\log\epsilon}^{1/2}\norm{\bm{f}}_{L^2(\T)}\,.
\end{equation}
We consider the weak form of the slender body BVP \eqref{eq:SB_PDE} in $\mc{O}_{r_*}$, given by
\begin{equation}\label{eq:O_weakform}
\int_{\mc{O}_{r_*}}\bigg(2\E(\bu):\nabla\bm{\xi}- p\,\div(\bm{\xi})\bigg)\,d\bx = \int_{\T}\bm{f}(s)\cdot{\bm \xi}(s)\,ds\,, \quad 
\E(\bu):= \frac{1}{2}(\nabla\bu+\nabla\bu^{\rm T})\,,
\end{equation}
for any $\bm{\xi}\in H^1(\mc{O}_{r_*})$ satisfying $\bm{\xi}\big|_{\Gamma_\epsilon}=\bm{\xi}(s)$ and $\bm{\xi}\big|_{\p\mc{O}_{r_*}\backslash\Gamma_\epsilon}=0$.

Writing $\bx\in\mc{O}_{r_*}$ as $\bx=\X(s)+r\be_r(s,\theta)$, we define a $C^{1,\beta}$ change of variables 
\begin{align*}
\Psi(\bx) &= s\be_z+r\overline{\be}_r(\theta) = s\be_z+r\cos\theta\,\be_x +r\sin\theta\,\be_y
\end{align*}
mapping the curved annulus $\mc{O}_{r_*}$ to the straight annulus $\Psi(\mc{O}_{r_*})$ about the straight cylinder $\mc{C}_\epsilon$. Here $(\be_z,\be_x,\be_y)$ are the standard Cartesian basis vectors.  We may calculate 
\begin{equation}\label{eq:nablaPhi}
\nabla\Psi^{-1} = (1-r\wh\kappa)\be_{\rm t}\otimes\be_z+r\kappa_3\be_\theta(s,\theta)\otimes\be_z+\be_\theta(s,\theta)\otimes\overline{\be}_\theta(\theta) + \be_r(s,\theta)\otimes\overline{\be}_r(\theta) \,,
\end{equation}
as well as 
\begin{equation}\label{eq:Phimatrix}
\begin{aligned}
\nabla\Psi(\nabla\Psi)^{\rm T}\circ \Psi^{-1} &= \frac{1+r^2\kappa_3^2}{(1-r\wh\kappa)^2}\be_z\otimes\be_z - \frac{r\kappa_3}{1-r\wh\kappa}\big(\overline{\be}_\theta(\theta)\otimes\be_z+\be_z\otimes\overline{\be}_\theta(\theta)\big) \\
&\qquad  +\overline{\be}_\theta(\theta)\otimes\overline{\be}_\theta(\theta) +\overline{\be}_r(\theta)\otimes\overline{\be}_r(\theta)
\end{aligned}
\end{equation}
and $\abs{\det\nabla\Psi}^{-1}=1-r\wh\kappa$.
We denote 
\begin{equation}\label{eq:AB_defs}
\bm{A}(s,r,\theta) = \bigg(\frac{1}{\abs{\det\nabla\Psi}}\nabla\Psi(\nabla\Psi)^{\rm T}\bigg)\circ\Psi^{-1}\,, 
\qquad \bm{B}(s,r,\theta) = \bigg(\frac{1}{\abs{\det\nabla\Psi}}\nabla\Psi\bigg)\circ\Psi^{-1}\,, 
\end{equation} 
and note that, using the additional $C^{3,\alpha}$ regularity of $\X$, we have 
\begin{equation}\label{eq:Breg}
\begin{aligned}
\frac{1}{c(\kappa_{*,\beta})}&\le \norm{\bm{A}}_{C^{0,\beta}(\Psi(\mc{O}_{r_*}))}\le c(\kappa_{*,\beta})\,, \\
\frac{1}{c(\norm{\X_{ss}}_{C^{1,\alpha}} )}&\le \norm{\bm{B}}_{C^{1,\alpha}(\Psi(\mc{O}_{r_*}))}\le c(\norm{\X_{ss}}_{C^{1,\alpha}} )\,,
\end{aligned}
\end{equation}
with smooth dependence on $\theta$. 
Here we have used that, using \eqref{eq:frame}, we may write 
\begin{align*}
  (\kappa_1)_s = \X_{sss}\cdot\be_{\rm n_1}+\kappa_2^2\,, \quad 
  (\kappa_2)_s = \X_{sss}\cdot\be_{\rm n_2}+\kappa_1^2\,,
\end{align*}
from which we may obtain $\norm{\kappa_1}_{C^{1,\alpha}}\,,\,\norm{\kappa_2}_{C^{1,\alpha}}\le c\norm{\X_{ss}}_{C^{1,\alpha}}$.

Let $\bm{\psi}= \bm{\xi}\circ\Psi^{-1}$ and note that along the fiber surface, $\bm{\psi}\big|_{\by\in\Psi(\Gamma_\epsilon)} = \bm{\xi}\big|_{\bx\in\Gamma_\epsilon}$ is still $\theta$-independent.
Letting $\overline\bu=\bu\circ\Psi^{-1}$ and $\overline p = p\circ\Psi^{-1}$,
we may rewrite \eqref{eq:O_weakform} in straightened variables $\by=\Psi(\bx)$ as
\begin{equation}\label{eq:SB_BVP_straight}
\int_{\Psi(\mc{O}_{r_*})}\bigg((\nabla\overline\bu\bm{A} +{\bm B}^{\rm T}\nabla\overline\bu^{\rm T}\nabla\Psi^{\rm T}):\nabla\bm{\psi}-\overline p\, \div(\bm{B}\bm{\psi})\bigg)\,d\by = \int_{\T}\bm{f}(s)\cdot{\bm \psi}(s)\,ds\,.
\end{equation}
We will use the formulation \eqref{eq:SB_BVP_straight} along with the analogous Campanato-type argument used in \cite{laplace} to show that $\bu=\overline\bu\circ\Psi$ satisfies Lemma \ref{lem:holder_NtD} along $\Gamma_\epsilon$.

We will make use of a Campanato-type function space tailored to the slender body BVP, which was defined in \cite{laplace} following the more classical constructions in \cite{giaquinta2013introduction,modica1982non}.
Given $\rho\le \frac{r_*}{2}$ and $s_0\in \T$, we define the following annular region about the straight cylinder $\mc{C}_\epsilon$:
\begin{align*}
A_\rho(s_0) &= \big\{ \by=s\be_z+r\be_r(\theta)\in \Psi(\mc{O}_{r_*}) \; : \; s_0-\rho < s < s_0+\rho\,, \; \epsilon\le r < \rho+\epsilon\, , \; 0\le\theta<2\pi \big\}\,.
\end{align*}
We emphasize that the volume of the region $A_\rho(s_0)$ scales like $\rho^2$ as $\rho\to0$.
Given $\bm{g}(\by)$, $\by\in A_\rho(s_0)$, we use the notation $\bm{g}_{s_0,\rho}$ to denote the mean of $\bm{g}$ over $A_\rho(s_0)$:
\begin{equation}\label{eq:mean_def}
\bm{g}_{s_0,\rho} = \fint_{A_\rho(s_0)}\bm{g}(\by)\,d\by\,.
\end{equation}
For $0<\alpha<1$, we define the seminorm $[ \cdot ]_{\mc{A}^{2,\alpha}}$ and full norm $\norm{\cdot}_{\mc{A}^{2,\alpha}}$ by
 \begin{equation}\label{eq:campanato}
 \begin{aligned}
 \big[ \bm{g} \big]_{\mc{A}^{2,\alpha}} &= \sup_{s_0\in \T,\,0<\rho\le r_*/2}\rho^{-\alpha}\bigg(\fint_{A_\rho(s_0)}\abs{\bm{g} - \bm{g}_{s_0,\rho}}^2\,d\by\bigg)^{1/2}\,, \\
 \norm{\bm{g}}_{\mc{A}^{2,\alpha}} &= \norm{\bm{g}}_{L^2(\Psi(\mc{O}_{r_*}))} + \big[ \bm{g} \big]_{\mc{A}^{2,\alpha}}\,.
 \end{aligned}
 \end{equation}

We may relate the function space defined by the norm $\norm{\cdot}_{\mc{A}^{2,\alpha}}$ to the H\"older space $C^{0,\alpha}$ about the straight filament $\mc{C}_\epsilon$ in the following way.
\begin{lemma}[Campanato-type norm bounds]\label{lem:camp}
Consider $\bm{g}$ in $\Psi(\mc{O}_{r_*})$ satisfying $\bm{g}\big|_{\mc{C}_\epsilon}=\bm{g}(s)$, a function of arclength only.
Then 
\begin{equation}\label{eq:Calpha_Aalpha}
\norm{\bm{g}}_{C^{0,\alpha}(\T)}\le c\,\norm{\bm{g}}_{\mc{A}^{2,\alpha}} \le c\,\norm{\bm{g}}_{C^{0,\alpha}(\Psi(\mc{O}_{r_*}))}\,.
\end{equation}
Here the left-hand-side $C^{0,\alpha}$ norm is taken over the filament centerline ($s\in \T$), while the right-hand-side $C^{0,\alpha}$ norm is over the entire annular region $\Psi(\mc{O}_{r_*})$. 
\end{lemma}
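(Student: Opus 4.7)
The plan is to prove the two inequalities separately. The upper bound $\norm{\bm{g}}_{\mc{A}^{2,\alpha}}\le c\norm{\bm{g}}_{C^{0,\alpha}(\Psi(\mc{O}_{r_*}))}$ is routine: since $\mathrm{diam}(A_\rho(s_0))\le c\rho$ uniformly in $\epsilon$, one has $\abs{\bm{g}(\by)-\bm{g}_{s_0,\rho}}\le c\rho^\alpha\abs{\bm{g}}_{\dot C^{0,\alpha}}$ pointwise on $A_\rho(s_0)$, which after averaging controls $[\bm{g}]_{\mc{A}^{2,\alpha}}$; the $L^2$ part is bounded by $\abs{\Psi(\mc{O}_{r_*})}^{1/2}\norm{\bm{g}}_{L^\infty}$.

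For the harder direction $\norm{\bm{g}}_{C^{0,\alpha}(\T)}\le c\norm{\bm{g}}_{\mc{A}^{2,\alpha}}$, I would run the standard Campanato-type argument adapted to the annular geometry of $A_\rho(s_0)$, exactly analogous to the author's Laplace treatment in \cite{laplace}. The key geometric input is that, for fixed $\epsilon$, $\abs{A_\rho(s_0)}\sim\epsilon\rho^2$ as $\rho\to 0$, so volume ratios $\abs{A_{\rho_1}(s_0)}/\abs{A_{\rho_2}(s_0)}$ are controlled by $c(\rho_1/\rho_2)^2$; this two-dimensional volume scaling is precisely what converts the Campanato seminorm into one-dimensional H\"older regularity in $s$.

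The first step establishes dyadic decay of means at a single base point: for $\rho_2=\rho_1/2$ with $0<\rho_1\le r_*/2$, the containment $A_{\rho_2}(s_0)\subset A_{\rho_1}(s_0)$, Cauchy--Schwarz, and the seminorm bound give $\abs{\bm{g}_{s_0,\rho_1}-\bm{g}_{s_0,\rho_2}}\le c\rho_1^\alpha[\bm{g}]_{\mc{A}^{2,\alpha}}$. Telescoping over dyadic scales $\rho_k=2^{-k}\rho_0$ shows $\{\bm{g}_{s_0,\rho_k}\}$ is Cauchy with some limit $\wt{\bm{g}}(s_0)$ satisfying $\abs{\wt{\bm{g}}(s_0)-\bm{g}_{s_0,\rho_0}}\le c\rho_0^\alpha[\bm{g}]_{\mc{A}^{2,\alpha}}$. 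The second step compares two base points $s_1,s_2$ with $\rho_0:=\abs{s_1-s_2}\le r_*/2$: the regions $A_{\rho_0}(s_1),A_{\rho_0}(s_2)$ overlap in a set of comparable volume, so a Cauchy--Schwarz argument on the intersection together with the triangle inequality gives $\abs{\bm{g}_{s_1,\rho_0}-\bm{g}_{s_2,\rho_0}}\le c\rho_0^\alpha[\bm{g}]_{\mc{A}^{2,\alpha}}$. Combining with the dyadic convergence estimate yields the H\"older seminorm bound $\abs{\wt{\bm{g}}(s_1)-\wt{\bm{g}}(s_2)}\le c\abs{s_1-s_2}^\alpha[\bm{g}]_{\mc{A}^{2,\alpha}}$; the $L^\infty$ bound is obtained analogously, controlling $\abs{\bm{g}_{s_0,r_*/2}}$ by the $L^2$ norm at the top scale.

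The main obstacle I anticipate is identifying the continuous representative $\wt{\bm{g}}$ produced by the Campanato machinery with the given boundary trace $\bm{g}(s)$ appearing in the hypothesis. Because the annulus $A_\rho(s_0)$ collapses onto the set $\{s_0\be_z+\epsilon\be_r(\theta)\,:\,\theta\in 2\pi\T\}\subset\mc{C}_\epsilon$ as $\rho\to 0$, and since the trace $\bm{g}(s)$ is $\theta$-independent by assumption, a standard Lebesgue-differentiation argument identifies $\wt{\bm{g}}(s_0)$ with $\bm{g}(s_0)$ for almost every $s_0\in\T$, and the H\"older continuity just established then upgrades this to pointwise equality on all of $\T$.
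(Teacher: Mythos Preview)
Your proposal is correct and follows essentially the same approach as the paper, which does not give a self-contained proof but instead refers to \cite[Lemma 5.2]{laplace}; the argument there is precisely the standard Campanato-to-H\"older machinery you outline, adapted to the annular regions $A_\rho(s_0)$ with their $\rho^2$ volume scaling. Your treatment of the identification step---observing that $A_\rho(s_0)$ collapses onto a full $\theta$-circle rather than a point, and that $\theta$-independence of the trace is what makes the Lebesgue-point argument go through---correctly isolates the only geometry-specific subtlety.
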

The proof of Lemma \ref{lem:camp} is given in \cite[Lemma 5.2]{laplace}.

We next consider a frozen-coefficients version of the slender body BVP \eqref{eq:SB_BVP_straight}. Taking $0<R\le \frac{r_*}{2}$ and $s_0\in\T$, we consider the annular region $A_R(s_0)$ about $\mc{C}_\epsilon$. Let $\bm{A}_0(\theta)= \bm{A}(s_0,\epsilon,\theta)$, $\bm{B}_0(\theta)=\bm{B}(s_0,\epsilon,\theta)$, and $\nabla\Psi_0(\theta)=\nabla\Psi(s_0,\epsilon,\theta)$, and note that $\bm{A}_0(\theta)$, $\bm{B}_0(\theta)$, and $\nabla\Psi_0(\theta)$ are smooth functions of $\theta$ satisfying $\frac{1}{c(\kappa_*)}\le \abs{\p_\theta^\ell{\bm A}_0},\abs{\p_\theta^\ell{\bm B}_0},\abs{\p_\theta^\ell\nabla\Psi_0}\le c(\kappa_*)$ for $c(\kappa_*)>0$ and $0\le\ell\in\Z$.
We consider $({\bm h},q)$ belonging to the space
 \begin{align*}
 \bigg\{({\bm h},q)\in H^1\times L^2(A_R(s_0))\,:\;\div(\bm{B}_0(\theta)\bm{h})=0\,,\; \bm{h}\big|_{\mc{C}_\epsilon}=\bm{h}(s)\,,\; \bm{h}\big|_{\p A_R\backslash\mc{C}_\epsilon}=\bu\,, \;\int_{A_R}q\,d\by=0 \bigg\}
 \end{align*}
 and satisfying 
\begin{equation}\label{eq:AR_weak}
 \begin{aligned}
\int_{A_R(s_0)}\bigg((\nabla\bm{h}\bm{A}_0(\theta)+\bm{B}_0(\theta)^{\rm T}\nabla\bm{h}^{\rm T}\nabla\Psi_0(\theta)^{\rm T})&:\nabla \bm{\psi} - q\,\div(\bm{B}_0(\theta)\bm{\psi})\bigg)\,d\by \\
& = \bm{f}(s_0)\cdot\int_{\abs{s-s_0}\le R}\bm{\psi}(s)\,ds
\end{aligned}
\end{equation}
for any $\bm{\psi}\in H^1(A_R(s_0))$ with $\bm{\psi}\big|_{\mc{C}_\epsilon}=\bm{\psi}(s)$ and $\bm{\psi}\big|_{\p A_R(s_0)\backslash\mc{C}_\epsilon}=0$.
We may show that $(\bm{h},q)$ satisfies the following proposition. 
\begin{proposition}[Higher regularity for $(\bm{h},q)$]\label{prop:h_highreg}
For $m\in\N$ and any $0<\rho<R$, the solution $(\bm{h},q)$ to \eqref{eq:AR_weak} satisfies 
\begin{equation}
\norm{\bm{h}}_{\dot H^{m+1}(A_\rho(s_0))}+ \norm{q}_{\dot H^m(A_\rho(s_0))} \le c(\epsilon,\kappa_*,c_\Gamma) \frac{1}{(R-\rho)^m}\big(\norm{\bm{h}}_{H^1(A_R(s_0))}+ \norm{q}_{L^2(A_R(s_0))}\big)\,.
\end{equation}
\end{proposition}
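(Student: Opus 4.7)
The approach is induction on $m$, using Nirenberg's difference quotient method tangentially (in $s$ and $\theta$) and algebraic elliptic bootstrap radially. Throughout, $\bm{A}_0(\theta)$ and $\bm{B}_0(\theta)$ are smooth in $\theta$ with bounds depending only on $\kappa_*$, and $\bm{A}_0$ is uniformly elliptic with ellipticity constant $c(\kappa_*)^{-1}$ by \eqref{eq:Phimatrix}.

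First I would establish regularity in $s$. The key observation is that $\bm{A}_0, \bm{B}_0$ are $s$-independent and the strength $\bm{f}(s_0)$ in the surface forcing is constant in $s$; hence the tangential difference quotient $D_s^h\bm{h}$ satisfies the weak equation \eqref{eq:AR_weak} with zero right-hand side on any contracted annulus $A_{R'}(s_0)$ with $R'<R$, the constraint $\div(\bm{B}_0 D_s^h\bm{h})=0$ is preserved, and the trace on $\mc{C}_\epsilon$ remains $\theta$-independent. Testing with $\bm{\psi}_h = D_s^{-h}(\eta^2 D_s^h\bm{h}) + \bm{\Psi}_h$, where $\eta$ is a smooth cutoff equal to $1$ on $A_\rho$ and vanishing outside $A_R$ with $|\nabla\eta|\lesssim (R-\rho)^{-1}$, and $\bm{\Psi}_h$ is a Bogovski-type corrector restoring $\div(\bm{B}_0\bm{\psi}_h)=0$, a standard Caccioppoli argument and passage to the limit $h\to 0$ yields
\begin{equation*}
\norm{\p_s\bm{h}}_{H^1(A_\rho)} + \norm{\p_s q}_{L^2(A_\rho)} \le c(\epsilon,\kappa_*)(R-\rho)^{-1}\big(\norm{\bm{h}}_{H^1(A_R)}+\norm{q}_{L^2(A_R)}\big).
\end{equation*}

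For $\theta$-regularity, the difference quotient $D_\theta^h$ generates commutators involving $\p_\theta\bm{A}_0$ and $\p_\theta\bm{B}_0$; these are smooth with bounds $c(\kappa_*)$ and enter as lower-order right-hand sides absorbed into the existing $L^2$ bounds on $\nabla\bm{h}$ and $q$. The trace $\p_\theta\bm{h}|_{\mc{C}_\epsilon}=0$ by $\theta$-independence, so the boundary structure and the surface force vanish for $D_\theta^h\bm{h}$, and an analogous Caccioppoli estimate holds. For the radial direction, the interior PDE $-\div(\bm{A}_0\nabla\bm{h}) + \bm{B}_0^T\nabla q=0$ together with $\div(\bm{B}_0\bm{h})=0$ can, thanks to the lower bound on the $\be_r\otimes\be_r$ block of $\bm{A}_0$, be solved algebraically for $\p_r^2\bm{h}$ and $\p_r q$ in terms of $\p_s$- and $\p_\theta$-derivatives plus lower-order terms. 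Iterating the three steps $m$ times with shrinking radii $\rho=\rho_0<\rho_1<\cdots<\rho_m=R$, each step costing a factor of $(R-\rho)^{-1}$, produces the full estimate.

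The main technical obstacle is the construction of the Bogovski-type corrector $\bm{\Psi}_h$ for the modified constraint $\div(\bm{B}_0\,\cdot\,)=0$ with the slender body boundary conditions. Since $\bm{B}_0$ is smooth and uniformly invertible on $A_R$, this reduces to inverting a standard Bogovski operator on an annular domain against a right-hand side supported in $A_R\setminus A_\rho$; the necessary zero-mean compatibility condition is automatic from the preservation $\div(\bm{B}_0 D_s^h\bm{h})=0$ together with the boundary trace structure. Beyond this, the analysis is routine smooth-coefficient Stokes regularity, with care required only in the radial bootstrap to keep track of how the ellipticity constant of $\bm{A}_0$ (and hence the dependence on $\kappa_*$ and on $\epsilon$, through the factor $(1-r\wh\kappa)$ in \eqref{eq:Phimatrix} evaluated at $r=\epsilon$) enters the algebraic recovery of $\p_r^2\bm{h}$ and $\p_r q$.
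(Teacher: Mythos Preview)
Your proposal is correct and follows essentially the same approach as the paper: tangential difference quotients in $s$ and $\theta$ with a Bogovski-type divergence corrector (the paper's $\bm{d}_k$), followed by algebraic recovery of $\p_r^2\bm{h}$ and $\p_r q$ from the strong form of the constraint and momentum equations, then iteration. One point worth sharpening: with a $\bm{B}_0$-divergence-free test function the pressure term drops out of the Caccioppoli estimate, so $\|\p_k q\|_{L^2}$ does not come from that step directly---the paper obtains it by a separate Bogovski-type argument (testing with $\bm{g}_k$ satisfying $\div(\bm{B}_0\bm{g}_k)=\p_k(\eta^2 q)$), which you should include explicitly.
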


\begin{proof}
Fixing $R'<R$, we begin by defining a cutoff function
\begin{equation}\label{eq:eta_R}
\eta(\bx) = \begin{cases}
1\,, & \bx\in A_{R'}(s_0)  \\
0\,, & \bx\not\in A_R(s_0)\,,
\end{cases} 
\quad R'<R\,,
\end{equation}
with $\abs{\nabla\eta}\lesssim \frac{1}{R-R'}$.
We would like to use $\p_k(\eta^2\p_k\bm{h})$, $k=s,\theta$ as test functions in \eqref{eq:AR_weak} (this may be justified rigorously using finite differences), but first it will be convenient to correct for the divergence. 
For $k=s,\theta$, we consider $\bm{d}_k\in H^1_0(A_R(s_0))$ satisfying 
\begin{equation}\label{eq:dk_def}
\begin{aligned}
\div(\bm{B}_0\,\bm{d}_k) &= \div(\bm{B}_0\,\eta^2\p_k \bm{h}) \qquad \text{in }A_R(s_0)\\
\norm{\nabla\bm{d}_k}_{L^2(A_R(s_0))} &\le c(\kappa_*,c_\Gamma)\norm{\div(\bm{B}_0\,\eta^2\p_k \bm{h})}_{L^2(A_R(s_0))}\,.
\end{aligned}
\end{equation}
Such a $\bm{d}_k$ exists by \cite[Section III.3]{galdi2011introduction}, since $\bm{B}_0$ is invertible. In addition, the constant appearing in the bound \eqref{eq:dk_def} is independent of $\epsilon$ due to \cite[Appendix A.2.5]{closed_loop}, although we will not keep track of $\epsilon$-dependence throughout. 
We further note that, since $\div(\bm{B}_0\bm{h})=0$ in $A_R(s_0)$, and $\div$ commutes with $\p_s$ and $\p_\theta$ in cylindrical coordinates, we have 
\begin{align*}
\norm{\div(\bm{B}_0\,\eta^2\p_k \bm{h})}_{L^2(A_R(s_0))} &\le \norm{\div(\bm{B}_0\,\eta^2)\p_k \bm{h}}_{L^2(A_R(s_0))} + \norm{\eta^2\div((\p_k\bm{B}_0) \bm{h})}_{L^2(A_R(s_0))} \\
&\qquad + \norm{\eta^2\p_k\div(\bm{B}_0 \bm{h})}_{L^2(A_R(s_0))}\\
&\le c(\kappa_*)\norm{\eta\,\bm{h}}_{H^1(A_R(s_0))}\,,
\end{align*}
so, in particular, for $k=s,\theta$, we have 
\begin{align*}
\norm{\nabla\bm{d}_k}_{L^2(A_R(s_0))} \le c(\kappa_*,c_\Gamma)\norm{\eta \,\bm{h}}_{H^1(A_R(s_0))}\,.
\end{align*}

 We then take $\p_k(\eta^2\p_k\bm{h}-\bm{d}_k)$, $k=s,\theta$, as a test function in \eqref{eq:AR_weak}. We have 
\begin{equation}\label{eq:AR_test1}
\begin{aligned}
\int_{A_R(s_0)}\bigg((\nabla\bm{h}\bm{A}_0+\bm{B}_0^{\rm T}\nabla\bm{h}^{\rm T}\nabla\Psi_0^{\rm T})&:\nabla \big(\p_k(\eta^2\p_k\bm{h}-\bm{d}_k) \big) - q\,\div\big(\bm{B}_0\p_k(\eta^2\p_k\bm{h}-\bm{d}_k)\big)\bigg)\,d\by \\
& = \bm{f}(s_0)\cdot\int_{\abs{s-s_0}\le R}\p_k\eta^2\p_k\bm{h}(s)\,ds = 0\,.
\end{aligned}
\end{equation}

Before proceeding, we note that, using the expressions \eqref{eq:nablaPhi}-\eqref{eq:AB_defs} for $\bm{A}$, $\bm{B}$, and $\nabla\Psi$, for any $\bv\in L^2(A_R(s_0))$, we may write
\begin{equation}\label{eq:E0_def}
\begin{aligned}
  &(\nabla\bv\bm{A}_0+\bm{B}_0^{\rm T}\nabla\bv^{\rm T}\nabla\Psi_0^{\rm T}) = (\nabla\bv + \nabla\bv^{\rm T}) - \bm{E}_0(\nabla\bv)\,, \\
  &\qquad\abs{\bm{E}_0}\le 2\max\bigg\{ \abs{\epsilon\wh\kappa(s_0,\theta)},\abs{\epsilon\kappa_3}\,, \frac{\abs{\epsilon\wh\kappa(s_0,\theta)}+\epsilon^2\kappa_3^2}{\abs{1-\epsilon\wh\kappa}(s_0,\theta)} \bigg\}\abs{\nabla\bv} \le \epsilon c(\kappa_*)\abs{\nabla\bv}\,.
\end{aligned}
\end{equation}
Furthermore, for any $\bv$ satisfying $\bv=0$ on $\p A_R(s_0)\backslash \mc{C}_\epsilon$, we may use the homogeneous Korn inequality in $A_R(s_0)$ to bound
\begin{equation}\label{eq:KornyR}
  \int_{A_R(s_0)}\abs{\nabla\bv}^2\,d\by \le c(c_\Gamma,\kappa_*)\int_{A_R(s_0)}\abs{\E(\bv)}^2\,d\by\,, \qquad \E(\bv)= \frac{1}{2}(\nabla\bv + \nabla\bv^{\rm T})\,,
\end{equation}
where, by \cite[Lemma 2.6]{closed_loop}, $c(c_\Gamma,\kappa_*)$ is independent of $\epsilon$. 

Applying the above bounds, we may estimate $\eta\nabla\p_k\bm{h}$, $k=s,\theta$, in $A_R(s_0)$ as
\begin{align*}
  &\int_{A_R(s_0)}\eta^2\abs{\nabla\p_k\bm{h}}^2\,d\by 
\le \int_{A_R(s_0)}\abs{\nabla(\eta\p_k\bm{h})}^2\,d\by + \int_{A_R(s_0)}\abs{\nabla\eta}^2\abs{\p_k\bm{h}}^2\,d\by \\
&\quad \le c(c_\Gamma,\kappa_*)\int_{A_R(s_0)}\abs{\E(\eta\p_k\bm{h})}^2\,d\by + \int_{A_R(s_0)}\abs{\nabla\eta}^2\abs{\p_k\bm{h}}^2\,d\by \\
&\quad \le c(c_\Gamma,\kappa_*)\int_{A_R(s_0)}\eta^2\abs{\E(\p_k\bm{h})}^2\,d\by + c(c_\Gamma,\kappa_*)\int_{A_R(s_0)}\abs{\nabla\eta}^2\abs{\p_k\bm{h}}^2\,d\by \\
&\quad =c(c_\Gamma,\kappa_*)\int_{A_R(s_0)}\eta^2\E(\p_k\bm{h}):\nabla(\p_k\bm{h})\,d\by + c(c_\Gamma,\kappa_*)\int_{A_R(s_0)}\abs{\nabla\eta}^2\abs{\p_k\bm{h}}^2\,d\by \\
&\quad = c(c_\Gamma,\kappa_*)\int_{A_R(s_0)}\eta^2(\nabla(\p_k\bm{h})\bm{A}_0+\bm{B}_0^{\rm T}\nabla(\p_k\bm{h})\nabla\Psi_0^{\rm T}):\nabla(\p_k\bm{h})\,d\by \\
&\qquad + c(c_\Gamma,\kappa_*)\int_{A_R(s_0)}\eta^2\bm{E}_0(\p_k\bm{h}):\nabla(\p_k\bm{h})\,d\by + c(\Gamma,\kappa_*)\int_{A_R(s_0)}\abs{\nabla\eta}^2\abs{\p_k\bm{h}}^2\,d\by\,.
\end{align*}
For $\epsilon$ sufficiently small  (depending on $c_\Gamma$ and $\kappa_*$), the middle term may be absorbed into the left hand side to yield
\begin{equation}\label{eq:nablapk_bd}
\begin{aligned}
  \int_{A_R(s_0)}\eta^2\abs{\nabla\p_k\bm{h}}^2\,d\by &\le c(c_\Gamma,\kappa_*)\underbrace{\int_{A_R(s_0)}\eta^2(\nabla(\p_k\bm{h})\bm{A}_0+\bm{B}_0^{\rm T}\nabla(\p_k\bm{h})\nabla\Psi_0^{\rm T}):\nabla(\p_k\bm{h})\,d\by}_{=:J_0} \\
  &\quad + c(c_\Gamma,\kappa_*)\int_{A_R(s_0)}\abs{\nabla\eta}^2\abs{\p_k\bm{h}}^2\,d\by\,.
\end{aligned}
\end{equation}
To estimate $\eta\nabla\p_k\bm{h}$ in $A_R(s_0)$, we will thus require a bound for the integral $J_0$, which we may obtain from \eqref{eq:AR_test1} after some rearrangement. 

Again, since we are using cylindrical coordinates, $\p_s$ and $\p_\theta$ commute with $\nabla$ and $\div$, but we pick up additional terms in moving $\p_\theta$ throughout the coefficients $\bm{A}_0(\theta)$, $\bm{B}_0(\theta)$, and $\nabla\Psi_0(\theta)$. After commuting, integrating by parts, and using that $\div(\bm{B}_0(\eta^2\p_k\bm{h}-\bm{d}_k))=0$ for $k=s,\theta$, we obtain 
\begin{align*}
\abs{J_0}&\le \int_{A_R(s_0)}\bigg((\abs{\bm{A}_0}+\abs{\bm{B}_0}\abs{\nabla\Psi_0})\abs{\nabla\p_k\bm{h}}\big(\abs{\nabla(\eta^2)}\abs{\p_k\bm{h}}+\abs{\nabla\bm{d}_k} \big)\\
&\quad
 + (\abs{\p_k\bm{A}_0}+\abs{\p_k\bm{B}_0}\abs{\nabla\Psi_0}+ \abs{\bm{B}_0}\abs{\p_k\nabla\Psi_0}) \abs{\nabla\bm{h}}\big(\eta^2\abs{\nabla\p_k\bm{h}}+\abs{\nabla(\eta^2)}\abs{\p_k\bm{h}}+\abs{\nabla\bm{d}_k} \big)\bigg)\,d\by\\
&\quad  + \int_{A_R(s_0)}\abs{q}\bigg(\abs{\nabla\p_k\bm{B}_0}(\eta^2\abs{\p_k\bm{h}}+\abs{\bm{d}_k})\\
&\qquad  +\abs{\p_k\bm{B}_0}(\eta^2|\nabla\p_k\bm{h}|+\abs{\nabla(\eta^2)}|\p_k\bm{h}|+\abs{\nabla\bm{d}_k}) \bigg)\,d\by  \\
&\le c(\epsilon,\kappa_*,c_\Gamma)(R-R')^{-1}\bigg(\norm{\bm{h}}_{H^1(A_R)}+ \norm{q}_{L^2(A_R)}\bigg)\bigg(\norm{\eta\nabla\p_k\bm{h}}_{L^2(A_R)} +\norm{\eta\bm{h}}_{H^1(A_R)}\bigg)\,,
 \end{align*} 
 since $\abs{\nabla\eta}\lesssim \frac{1}{R-R'}$.
Combining with \eqref{eq:nablapk_bd} and using Young's inequality, over the smaller annulus $A_{R'}(s_0)$, we obtain the bounds 
 \begin{equation}\label{eq:H2_tang}
\norm{\nabla\p_k\bm{h}}_{L^2(A_{R'}(s_0))} \le c(\epsilon,c_\Gamma,\kappa_*)(R-R')^{-1}\big(\norm{\bm{h}}_{H^1(A_R(s_0))}+ \norm{q}_{L^2(A_R(s_0))}\big)\,, \qquad k=s,\theta\,.
 \end{equation}

We next consider tangential derivatives $\p_kq$, $k=s,\theta$, of the frozen-coefficient pressure $q$. We consider $\bm{g}_k\in H^1_0(A_{R}(s_0))$ satisfying 
\begin{equation}\label{eq:gk_def}
\begin{aligned}
\div(\bm{B}_0\,\bm{g}_k) &= \p_k (\eta^2 q)  \qquad \text{in }A_{R}(s_0)\\
\norm{\nabla\bm{g}_k}_{L^2(A_R (s_0))} &\le c(\kappa_*,c_\Gamma)\norm{\p_k (\eta^2 q)}_{L^2(A_{R}(s_0))}
\end{aligned}
\end{equation} 
Again, this may be justified by finite differences, and, since $\p_k (\eta^2 q)$ has mean zero on $A_{R}(s_0)$ and $\bm{B}_0$ is invertible, such a $\bm{g}_k$ exists by \cite[Section III.3]{galdi2011introduction}.

We now take $\eta^2\p_k\bm{g}_k$, $k=s,\theta$, as test functions in \eqref{eq:AR_weak}:
\begin{align*}
\int_{A_{R}(s_0)}\bigg((\nabla\bm{h}\bm{A}_0+\bm{B}_0^{\rm T}\nabla\bm{h}^{\rm T}\nabla\Psi_0^{\rm T}):\nabla (\eta^2\p_k\bm{g}_k) - q\,\div\big(\bm{B}_0(\eta^2\p_k\bm{g}_k)\big)\bigg)\,d\by = 0\,.
\end{align*}
Commuting $\p_k$ through the equations (and noting that this picks up a factor of $\p_\theta\bm{A}_0$, $\p_\theta\bm{B}_0$, and $\p_\theta\nabla\Psi_0$), after integrating by parts and using the definition \eqref{eq:gk_def} of $\bm{g}_k$, we obtain 
\begin{align*}
\int_{A_{R}(s_0)}\abs{\p_k(\eta^2q)}^2\,d\by 
&\le 
\int_{A_{R}(s_0)}(\abs{\bm{A}_0}+\abs{\bm{B}_0}\abs{\nabla\Psi_0})\bigg(2\abs{\nabla\bm{h}}\abs{\nabla\bm{g}_k}\abs{\nabla(\eta^2)}+ \eta^2\abs{\nabla\bm{g}_k}\abs{\nabla\p_k\bm{h}} \bigg)\,d\by\\
&\quad +\int_{A_{R}(s_0)}\eta^2\abs{\nabla\bm{g}_k}\big(\abs{\p_k\bm{A}_0}+\abs{\p_k\bm{B}_0}\abs{\nabla\Psi_0}+\abs{\bm{B}_0}\abs{\p_k\nabla\Psi_0}\big)\abs{\nabla\bm{h}} \,d\by\\
&\quad + \int_{A_{R}(s_0)}\abs{q}\bigg(\eta^2\big(\abs{\nabla\p_k\bm{B}_0}\abs{\bm{g}_k}+\abs{\p_k\bm{B}_0}\abs{\nabla\bm{g}_k}\big)+\abs{\bm{B}_0}\abs{\p_k\bm{g}_k}\abs{\nabla(\eta^2)} \bigg)\,d\by\,.
\end{align*}
Using Young's inequality and the bound \eqref{eq:gk_def}, and recalling that $\abs{\nabla\eta}\lesssim \frac{1}{R-R'}$, over the smaller annulus $A_{R'}(s_0)$, we then have 
\begin{equation}\label{eq:tang_q}
\begin{aligned}
\norm{\p_k q}_{L^2(A_{R'})} &\le c(\epsilon,\kappa_*,c_\Gamma)\big(\norm{\eta\nabla\p_k\bm{h}}_{L^2(A_R)} +(R-R')^{-1}\big(\norm{\bm{h}}_{H^1(A_R)} + \norm{q}_{L^2(A_{R})}\big) \big)  \\
&\le c(\epsilon,\kappa_*,c_\Gamma)(R-R')^{-1}\big(\norm{\bm{h}}_{H^1(A_{R})} + \norm{q}_{L^2(A_{R})} \big)\,.
\end{aligned}
\end{equation}
for $k=s,\theta$.
Note that for $\p_sq$, instead of \eqref{eq:gk_def}, we may instead consider $\bm{g}_s\in H^1_0(A_{R'}(s_0))$ satisfying 
\begin{equation}\label{eq:gs_def2}
\begin{aligned}
\div(\bm{B}_0\,\bm{g}_s) &= \p_s q-(\p_s q)_{s_0,R'}  \qquad \text{in }A_{R'}(s_0)\\
\norm{\nabla\bm{g}_s}_{L^2(A_{R'} (s_0))} &\le c(\kappa_*,c_\Gamma)\norm{\p_s q-(\p_s q)_{s_0,R'}}_{L^2(A_{R'}(s_0))}
\end{aligned}
\end{equation} 
where $(\p_s q)_{s_0,R'}$ is given by \eqref{eq:mean_def}. Using $\p_s\bm{g}_s$ as our test function in \eqref{eq:AR_weak}, we may then estimate 
\begin{align*}
\int_{A_{R'}(s_0)}\abs{\p_sq-(\p_s q)_{s_0,R'}}^2\,d\by &\le 
\int_{A_{R'}(s_0)}(\abs{\bm{A}_0}+\abs{\bm{B}_0}\abs{\nabla\Psi_0})\abs{\nabla\p_s\bm{h}}\abs{\nabla\bm{g}_s}\,d\by\\
&\le c(\epsilon,\kappa_*,c_\Gamma)\norm{\nabla\p_s\bm{h}}_{L^2(A_{R'})}\norm{\p_s q-(\p_s q)_{s_0,R'}}_{L^2(A_{R'})}\,.
\end{align*}
In particular, over $A_{R'}(s_0)$, we may estimate 
\begin{equation}\label{eq:tang_q_s}
\norm{\p_s q-(\p_s q)_{s_0,R'}}_{L^2(A_{R'})} \le c(\epsilon,\kappa_*,c_\Gamma)\norm{\nabla\p_s\bm{h}}_{L^2(A_{R'})}\,.
\end{equation}

We now turn to the normal direction derivatives $(\p_{rr}\bm{h},\p_rq)$, for which we must use the strong form of the equations \eqref{eq:AR_weak} within $A_R(s_0)$. 
First, using the form \eqref{eq:AB_defs} of $\bm{B}_0=\bm{B}(s_0,\epsilon,\theta)$ and $\nabla \Psi$ \eqref{eq:nablaPhi}, we may write the condition $\p_r\div(\bm{B}_0\bm{h})=0$ in cylindrical coordinates as
\begin{align*}
\p_{rr}h_r = -\p_r\bigg(\frac{1}{\abs{\det\nabla\Psi}}\bigg(\frac{1}{r} h_r + \frac{1}{r}\p_\theta \big( h_\theta +a_1(\theta)h_z\big) +a_2(\theta)\p_s h_z\bigg)\bigg)
\end{align*}
where $a_1(\theta)= -\frac{\epsilon\kappa_3}{1-\epsilon\wh\kappa(s_0,\theta)}$ and $a_2(\theta)=\frac{1}{1-\epsilon\wh\kappa(s_0,\theta)}$.
In particular, by \eqref{eq:H2_tang} and the boundedness of $a_1$ and $a_2$, we have 
\begin{align*}
\norm{\p_{rr}h_r}_{L^2(A_{R'})}\le c(\epsilon,\kappa_*,c_\Gamma)(R-R')^{-1}\big(\norm{\bm{h}}_{H^1(A_{R})} + \norm{q}_{L^2(A_{R})} \big)\,.
\end{align*}

Bounds for the remaining directions $\p_{rr}(\bm{h}\cdot\be_\theta)$, $\p_{rr}(\bm{h}\cdot\be_z)$, and $\p_rq$ come from the form of the frozen-coefficient straightened Stokes equations in $A_R(s_0)$. We have that $(\bm{h},q)$ satisfies 
\begin{equation}\label{eq:frozen_stokes}
\div(\nabla\bm{h}\bm{A}_0)+\bm{F}(\nabla\bm{h}) +\bm{G}(\bm{h})=\bm{B}_0^{\rm T}\nabla q \qquad \text{in }A_R(s_0)\,.
\end{equation}
Here the lower order terms $\bm{F}$ and $\bm{G}$ arise from the frozen-coefficient symmetric gradient term $\div(\bm{B}_0^{\rm T}\nabla\bm{h}^{\rm T}\nabla\Psi_0^{\rm T})$ and the divergence-free condition $\div(\bm{B}_0\bm{h})=0$.
Using the form \eqref{eq:AB_defs} of $\bm{A}_0=\bm{A}(s_0,\epsilon,\theta)$ and $\bm{B}_0$, we may write the normal direction components of the equation \eqref{eq:frozen_stokes} as 
\begin{align*}
\abs{\det\nabla\Psi}\div(\nabla\bm{h}\bm{A}_0)\cdot\be_r &= \frac{1}{r}\p_r(r\p_rh_r)+\frac{1}{r^2}\p_\theta(\p_\theta h_r-h_\theta)+ \p_{ss}h_r\,,\\
\abs{\det\nabla\Psi}(\bm{B}_0^{\rm T}\nabla q)\cdot\be_r &= \p_r q\,,
\end{align*}
from which we may bound $\p_rq$ by $\p_{rr}h_r$, $\p_{\theta\theta}h_r$, $\p_{ss}h_r$, $\nabla\bm{h}$, and $\bm{h}$ to obtain a full gradient bound for $q$:
\begin{equation}\label{eq:full_gradq}
\norm{\nabla q}_{A_{R'}(s_0)} \le c(\epsilon,\kappa_*,c_\Gamma)(R-R')^{-1}\big(\norm{\bm{h}}_{H^1(A_R)}+ \norm{q}_{L^2(A_R)}\big)\,.
\end{equation}
In the $\be_\theta$, $\be_z$ directions, we obtain 
\begin{align*}
\abs{\det\nabla\Psi}\div(\nabla\bm{h}\bm{A}_0)\cdot\be_\theta &= \frac{1}{r}\p_r(r(\p_rh_\theta+a_1\p_rh_z)+\frac{1}{r^2}\p_\theta(\p_\theta h_\theta+h_r + a_1\p_sh_z)+ \p_{ss}(h_\theta+a_1h_z)\,,\\
\abs{\det\nabla\Psi}(\bm{B}_0^{\rm T}\nabla q)\cdot\be_\theta &= \frac{1}{r}\p_\theta q\,,\\
\abs{\det\nabla\Psi}\div(\nabla\bm{h}\bm{A}_0)\cdot\be_r &= \frac{1}{r}\p_r(r(a_1\p_rh_\theta+a_3\p_rh_z)+\frac{1}{r^2}\p_\theta(a_1\p_\theta h_\theta+ a_1h_r + a_3\p_sh_z)+ \p_{ss}(a_1h_\theta+a_3h_z)\,,\\
\abs{\det\nabla\Psi}(\bm{B}_0^{\rm T}\nabla q)\cdot\be_z &=  a_2\p_sq +a_1\p_\theta q\,,
\end{align*}
where $a_1(\theta)$ and $a_2(\theta)$ are as above and $a_3(\theta)=\frac{1+\epsilon^2\kappa_3^2}{(1-\epsilon\wh\kappa(s_0,\theta))^2}$. Using \eqref{eq:frozen_stokes}, we obtain a system of two equations for the unknown directions $\p_{rr}h_\theta$ and $\p_{rr}h_z$ in terms of the remaining directions. Together, we may thus bound 
\begin{align*}
\norm{\p_{rr}h_\theta}_{L^2(A_{R'})}\,, \norm{\p_{rr}h_z}_{L^2(A_{R'})}\le c(\epsilon,\kappa_*,c_\Gamma)(R-R')^{-1}\big(\norm{\bm{h}}_{H^1(A_{R})} + \norm{q}_{L^2(A_{R})} \big)\,.
\end{align*}

Combining the tangential regularity bound \eqref{eq:H2_tang} with the pressure gradient bound \eqref{eq:full_gradq} and the bounds for $\p_{rr}\bm{h}$, we obtain the full $H^2(A_{R'}(s_0))\times H^1(A_{R'}(s_0))$ bound
\begin{equation}\label{eq:H2_est}
\norm{\bm{h}}_{H^2(A_{R'}(s_0))} + \norm{q}_{H^1(A_{R'}(s_0))} \le c(\epsilon,\kappa_*,c_\Gamma)(R-R')^{-1}\big(\norm{\bm{h}}_{H^1(A_R(s_0))}+ \norm{q}_{L^2(A_R(s_0))}\big)\,.
\end{equation}

This procedure may be iterated over smaller and smaller annuli, using test functions of the form $\p_k\p_\ell(\eta_1^2(\p_k\p_\ell\bm{h}))$, $\p_k\p_\ell\p_m(\eta_2^2(\p_k\p_\ell\p_m\bm{h}))$, etc., where $\eta_j$ are appropriate cutoffs and similar corrections for $\div(\bm{B}_0\cdot)$-freeness are made. 
We thus obtain Proposition \ref{prop:h_highreg}.
\end{proof}

Recalling the notation $\bm{g}_{s_0,R}$ for the mean of a function $\bm{g}$ over the annular region $A_R(s_0)$, we may now use Proposition \ref{prop:h_highreg} to show that the solution $(\bm{h},q)$ to \eqref{eq:AR_weak} satisfies the following. 
\begin{corollary}[Caccioppoli inequality]\label{cor:caccio}
For $0<R'<R$, the solution $(\bm{h},q)$ to \eqref{eq:AR_weak} satisfies 
\begin{equation}\label{eq:cacc}
\begin{aligned}
\int_{A_{R'}(s_0)}\abs{\nabla \p_s\bm{h}}^2\,d\by &\le c(\epsilon,\kappa_*)\frac{1}{(R-R')^2}\int_{A_R(s_0)}\abs{\p_s\bm{h}-(\p_s\bm{h})_{s_0,R}}^2\,d\by \,.
\end{aligned}
\end{equation}
\end{corollary}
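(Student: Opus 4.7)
The plan is to run a standard Caccioppoli-type energy argument on the shifted quantity $\p_s\bm{h} - (\p_s\bm{h})_{s_0,R}$. First, since Proposition \ref{prop:h_highreg} grants enough smoothness on $(\bm{h}, q)$ and since the frozen coefficients $\bm{A}_0(\theta), \bm{B}_0(\theta)$ together with the forcing $\bm{f}(s_0)$ on the right-hand side of \eqref{eq:AR_weak} are all independent of $s$, a difference-quotient argument in $s$ yields the homogeneous weak equation
\[
\int_{A_R(s_0)} \big[(\bm{A}_0 \nabla \p_s\bm{h}) : \nabla \bm{\psi} - \p_s q \, \div(\bm{B}_0 \bm{\psi}) \big] \, d\by = 0
\]
for every $\bm{\psi} \in H^1_0(A_R(s_0))$. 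I would then pick a cutoff $\eta$ with $\eta \equiv 1$ on $A_{R'}(s_0)$, $\eta \equiv 0$ near $\p A_R(s_0)$, and $|\nabla \eta| \lesssim (R-R')^{-1}$, set $\bm{c} = (\p_s\bm{h})_{s_0,R}$ and $\bm{u} = \p_s\bm{h} - \bm{c}$, and use the test function $\bm{\psi} = \eta^2 \bm{u} - \bm{d}$, where $\bm{d} \in H^1_0(A_R(s_0))$ is a Bogovskii-type corrector enforcing $\div(\bm{B}_0 \bm{\psi}) = 0$. The existence and $L^2$ gradient bound for $\bm{d}$ follow from \cite[Section III.3]{galdi2011introduction} as in the proof of Proposition \ref{prop:h_highreg}, and the required mean-zero condition for the right-hand side is automatic from the divergence theorem.

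Using $\div(\bm{B}_0 \p_s\bm{h}) = 0$, the divergence of the main part expands as
\[
\div(\bm{B}_0 \eta^2 \bm{u}) \;=\; 2\eta \nabla \eta \cdot \bm{B}_0 \bm{u} \;-\; \eta^2 (\div \bm{B}_0) \cdot \bm{c}.
\]
Substituting $\bm{\psi}$ into the weak equation, the pressure term drops by construction, and the coercivity and boundedness of $\bm{A}_0$ from \eqref{eq:Breg} together with Young's inequality produce
\[
\int_{A_{R'}} |\nabla \p_s\bm{h}|^2 \, d\by \;\le\; \frac{c(\epsilon,\kappa_*)}{(R-R')^2} \int_{A_R} |\bm{u}|^2 \, d\by \;+\; c(\epsilon,\kappa_*) \| \nabla \bm{d} \|_{L^2(A_R)}^2,
\]
so the task reduces to showing $\| \nabla \bm{d} \|_{L^2(A_R)} \lesssim \|\bm{u}\|_{L^2(A_R)} / (R-R')$.

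The hard part is precisely this final control of $\bm{d}$, and it is the main obstacle. The Bogovskii bound handles the cutoff-derivative piece $2\eta \nabla \eta \cdot \bm{B}_0 \bm{u}$ cleanly at scale $\|\bm{u}\|_{L^2(A_R)}/(R-R')$, but the constant-shift piece $\eta^2 (\div \bm{B}_0) \cdot \bm{c}$ carries a factor $|\bm{c}| \, |A_R|^{1/2}$ that is a priori unrelated to $\|\bm{u}\|_{L^2(A_R)}$. Classical Stokes Caccioppoli avoids this exactly because constant vectors are solenoidal, whereas here $\div(\bm{B}_0 \, \cdot\,)$ does not kill constants due to the curvilinear straightening $\Psi$. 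To circumvent this I would split the corrector as $\bm{d} = \bm{d}_1 + \bm{d}_2$: let $\bm{d}_1$ absorb only the cutoff-derivative piece, and leave the constant-shift piece coupled to the pressure in the test function. The residual pressure contribution is then handled by decomposing $\p_s q = (\p_s q)_{s_0,R} + [\p_s q - (\p_s q)_{s_0,R}]$, where the mean drops out via the divergence theorem and the fluctuation is controlled by a pressure-oscillation bound analogous to \eqref{eq:tang_q_s}. A standard hole-filling iteration over nested radii then reabsorbs the residual $\|\nabla \p_s\bm{h}\|_{L^2}^2$ contribution, closing the estimate and giving \eqref{eq:cacc} with the $\epsilon$-dependent constant.
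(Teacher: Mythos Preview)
Your approach shares the paper's core structure: differentiate the frozen equation in $s$, test with a cutoff times $\p_s\bm{h}$ minus its mean, handle the pressure via the oscillation bound \eqref{eq:tang_q_s}, and close by an iteration over nested radii. The main difference is that the paper bypasses the Bogovskii corrector entirely. It tests directly with $\eta^2(\p_s\bm{h}-\bm{\lambda})$ in the equation for the pair $(\p_s\bm{h},\,\p_sq-(\p_sq)_{s_0,R})$, invoking $\div\big(\bm{B}_0(\p_s\bm{h}-\bm{\lambda})\big)=0$ so that only the $\nabla(\eta^2)$-pressure term survives. After Young's inequality this leaves $\delta\,\|\p_sq-(\p_sq)_{s_0,R}\|_{L^2(A_R)}^2$ on the right, which is immediately converted to $c\,\delta\,\|\nabla\p_s\bm{h}\|_{L^2(A_R)}^2$ via \eqref{eq:tang_q_s} and then absorbed using Giusti's iteration lemma (Proposition~\ref{prop:epsilon_lemma}) rather than hole-filling. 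Your corrector splitting and ``mean drops out'' maneuver are therefore extra machinery aimed at a difficulty the paper disposes of in one line; what your route buys is a more explicit confrontation with the $\div(\bm{B}_0\,\bm{c})$ term, at the cost of a longer argument and a ``mean drops out'' step whose boundary contribution on $\mc{C}_\epsilon$ you have not fully tracked.

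One technical slip: you record the differentiated weak equation only for $\bm{\psi}\in H^1_0(A_R)$, but your own test function $\eta^2\bm{u}-\bm{d}$ does not vanish on $\mc{C}_\epsilon$ (since $\eta\equiv 1$ on $A_{R'}$, which meets $\mc{C}_\epsilon$). The admissible class in \eqref{eq:AR_weak} is larger---$\bm{\psi}|_{\mc{C}_\epsilon}$ need only be $\theta$-independent---and your $\bm{\psi}$ does lie in it, so the argument survives once the statement is corrected.
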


\begin{proof}
Using that the coefficients $\bm{A}_0(\theta)$, $\bm{B}_0(\theta)$, $\nabla\Psi_0(\theta)$ are independent of $s$, for $(\bm{h},q)$ satisfying \eqref{eq:AR_weak}, we have that $(\p_s\bm{h},\p_sq-(\p_sq)_{s_0,R})$ satisfies 
\begin{equation}\label{eq:AR_hs_weak}
\begin{aligned}
\int_{A_R(s_0)}\bigg((\nabla\p_s\bm{h}\bm{A}_0(\theta)&+\bm{B}_0^{\rm T}(\theta)(\nabla\p_s\bm{h})^{\rm T}\nabla\Psi_0^{\rm T}):\nabla \bm{\psi} \\
&- (\p_sq-(\p_sq)_{s_0,R})\,\div(\bm{B}_0(\theta)\bm{\psi})\bigg)\,d\by = 0
\end{aligned}
\end{equation}
for any $\bm{\psi}\in H^1(A_R(s_0))$ with $\bm{\psi}\big|_{\mc{C}_\epsilon}=\bm{\psi}(s)$ and $\bm{\psi}\big|_{\p A_R(s_0)\backslash\mc{C}_\epsilon}=0$. Furthermore, since adding a constant does not affect the $\theta$-independence of $\p_s\bm{h}\big|_{\mc{C}_\epsilon}$, we have that $(\p_s\bm{h}-\bm{\lambda},\p_sq-(\p_sq)_{s_0,R})$ also satisfies \eqref{eq:AR_hs_weak} for any constant $\bm{\lambda}$. 

For $0<R'<R$, taking $\eta$ to be a cutoff function as in \eqref{eq:eta_R}, we will use $\bm{\psi}=\eta^2(\p_s\bm{h}-\bm{\lambda})$ as a test function in \eqref{eq:AR_hs_weak} with $(\p_s\bm{h}-\bm{\lambda},\p_sq-(\p_sq)_{s_0,R})$. We then have 
\begin{align*}
0 &= \int_{A_R(s_0)}\bigg(\big(\nabla(\p_s\bm{h}-\bm{\lambda})\bm{A}_0(\theta) + \bm{B}_0^{\rm T}(\theta)\nabla(\p_s\bm{h}-\bm{\lambda})^{\rm T}\nabla\Psi_0^{\rm T}\big):\nabla \big(\eta^2(\p_s\bm{h}-\bm{\lambda}) \big) \\
&\qquad - (\p_sq-(\p_sq)_{s_0,R})\,\div\big(\bm{B}_0(\theta)\eta^2(\p_s\bm{h}-\bm{\lambda})\big)\bigg)\,d\by \\
&= \int_{A_R(s_0)}\bigg(\eta^2\big(\nabla(\p_s\bm{h}-\bm{\lambda})\bm{A}_0(\theta) + \bm{B}_0^{\rm T}(\theta)\nabla(\p_s\bm{h}-\bm{\lambda})^{\rm T}\nabla\Psi_0^{\rm T}\big):\nabla (\p_s\bm{h}-\bm{\lambda}) \\
&\quad + \big(\nabla(\p_s\bm{h}-\bm{\lambda})\bm{A}_0(\theta)+ \bm{B}_0^{\rm T}(\theta)\nabla(\p_s\bm{h}-\bm{\lambda})\nabla\Psi_0^{\rm T}\big):\big(\nabla(\eta^2) \otimes(\p_s\bm{h}-\bm{\lambda}) \big) \\
&\quad - \eta^2(\p_sq-(\p_sq)_{s_0,R})\,\div\big(\bm{B}_0(\theta)(\p_s\bm{h}-\bm{\lambda})\big)
- (\p_sq-(\p_sq)_{s_0,R})\,\nabla(\eta^2)\cdot\big(\bm{B}_0(\p_s\bm{h}-\bm{\lambda})\big)\bigg)\,d\by\,.
\end{align*}
Since $\div(\bm{B}_0(\theta)(\p_s\bm{h}-\bm{\lambda}))=0$, after using an analogous argument to \eqref{eq:nablapk_bd} to bound $\eta\nabla(\p_s\bm{h}-\bm{\lambda})$ by the first term in the final expression above, we may estimate 
\begin{align*}
&\int_{A_R(s_0)}\eta^2\abs{\nabla(\p_s\bm{h}-\bm{\lambda})}^2\,d\by \\
&\qquad\le c(\epsilon,\kappa_*,c_\Gamma)\int_{A_R(s_0)}\abs{\nabla\eta}\abs{\p_s\bm{h}-\bm{\lambda}}\bigg(\abs{\eta\nabla(\p_s\bm{h}-\bm{\lambda})} + \abs{\p_sq-(\p_sq)_{s_0,R}}\bigg)\,d\by\,.
\end{align*}
Using Young's inequality, the support of $\eta$, and $\nabla\eta\lesssim\frac{1}{\abs{R-R'}}$ from \eqref{eq:eta_R}, we obtain 
\begin{align*}
\int_{A_{R'}(s_0)}\abs{\nabla\p_s\bm{h}}^2\,d\by 
&\le \frac{c(\delta)c(\epsilon,\kappa_*,c_\Gamma)}{(R-R')^2}\int_{A_R(s_0)}\abs{\p_s\bm{h}-\bm{\lambda}}^2\,d\by
+ \delta c(\epsilon,\kappa_*,c_\Gamma)\int_{A_R(s_0)}\abs{\p_sq-(\p_sq)_{s_0,R}}^2\,d\by
\end{align*}
for any small $\delta>0$. Using the estimate \eqref{eq:tang_q_s}, we have 
\begin{align*}
\int_{A_R(s_0)}\abs{\p_sq-(\p_s q)_{s_0,R}}^2\,d\by \le c(\epsilon,\kappa_*,c_\Gamma)\int_{A_R(s_0)}\abs{\nabla\p_s\bm{h}}^2\,d\by\,;
\end{align*}
so, in particular, 
\begin{equation}\label{eq:almost_there}
\int_{A_{R'}(s_0)}\abs{\nabla\p_s\bm{h}}^2\,d\by 
\le \frac{c(\delta)c(\epsilon,\kappa_*,c_\Gamma)}{(R-R')^2}\int_{A_R(s_0)}\abs{\p_s\bm{h}-\bm{\lambda}}^2\,d\by
+ \delta c(\epsilon,\kappa_*,c_\Gamma)\int_{A_R(s_0)}\abs{\nabla\p_s\bm{h}}^2\,d\by\,.
\end{equation}

We now recall the following useful proposition.
\begin{proposition}[Lemma 6.1 from \cite{giusti2003direct}]\label{prop:epsilon_lemma}
Let $Z(\rho)$ be a bounded, nonnegative function in the interval $[R_1,R_2]$. If for $R_1\le R'\le R\le R_2$ we have 
\begin{align*}
Z(R')\le M(R-R')^{-m} + \delta Z(R)
\end{align*}
for $M,m>0$ and $0\le \delta<1$, then
\begin{equation}
Z(R_1)\le c(m,\delta)M(R_2-R_1)^{-m}\,.
\end{equation}
\end{proposition}
Taking $\delta$ sufficiently small, we may apply Proposition \ref{prop:epsilon_lemma} to \eqref{eq:almost_there} with $Z(\rho)=\int_{A_\rho(s_0)}\abs{\nabla\p_s\bm{h}}^2\,d\by$. Taking $\bm{\lambda}=(\p_s\bm{h})_{s_0,R}$, we thus obtain \eqref{eq:cacc}.
\end{proof}

Finally, using Proposition \ref{prop:h_highreg} and Corollary \ref{cor:caccio}, we show that oscillation of $\p_s\bm{h}$ satisfies the following scaling.
\begin{lemma}[Oscillation of $\p_s\bm{h}$]\label{lem:osc_h}
Consider $(\bm{h},q)$ satisfying \eqref{eq:AR_weak}. For $0<\rho<R/2$, we have that $\p_s\bm{h}$ satisfies
\begin{equation}
\begin{aligned}
\fint_{A_\rho(s_0)}\abs{\p_s\bm{h}-(\p_s\bm{h})_{s_0,\rho}}^2\,d\by &\le 
c(\epsilon,\kappa_*,c_\Gamma)\bigg(\frac{\rho}{R}\bigg)^2\fint_{A_R(s_0)}\abs{\p_s \bm{h} - (\p_s \bm{h})_{s_0,R}}^2\,d\by \,.
\end{aligned}
\end{equation}
\end{lemma}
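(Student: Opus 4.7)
My plan is to adapt a standard Campanato-type decay argument to the annular geometry about $\mc{C}_\epsilon$, exploiting crucially the boundary condition that $\p_s\bm{h}|_{\mc{C}_\epsilon}=\p_s\bm{h}(s)$ is $\theta$-independent. I would first split into two cases according to the size of $\rho$ relative to $R$. For $\rho\ge R/4$ the inequality is essentially trivial: the minimizing property of the mean gives
\begin{align*}
\fint_{A_\rho}|\p_s\bm{h}-(\p_s\bm{h})_{s_0,\rho}|^2
\le \fint_{A_\rho}|\p_s\bm{h}-(\p_s\bm{h})_{s_0,R}|^2
\le \frac{|A_R|}{|A_\rho|}\fint_{A_R}|\p_s\bm{h}-(\p_s\bm{h})_{s_0,R}|^2,
\end{align*}
and since $|A_R|/|A_\rho|=R^2(R+2\epsilon)/[\rho^2(\rho+2\epsilon)]$ is bounded by an absolute constant when $\rho\ge R/4$ while $(\rho/R)^2\ge 1/16$, the desired bound holds trivially.

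For $\rho<R/4$ the key step is a pointwise estimate exploiting the $\theta$-independence of $\p_s\bm{h}$ on $\mc{C}_\epsilon$. For any $\by=(s,r,\theta)\in A_\rho$, I would telescope
\begin{align*}
\p_s\bm{h}(s,r,\theta)-\p_s\bm{h}(s_0,\epsilon) = \int_\epsilon^r \p_{r'}\p_s\bm{h}(s,r',\theta)\,dr' + \int_{s_0}^s \p_{s'}^2\bm{h}(s',\epsilon)\,ds',
\end{align*}
where the second integrand is a well-defined function of $s'$ alone precisely because $\p_s\bm{h}|_{\mc{C}_\epsilon}$ is $\theta$-independent. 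Each integral is bounded by $\rho\,\norm{\nabla\p_s\bm{h}}_{L^\infty(A_{R/2})}$, so by the minimizing property of the mean,
\begin{align*}
\fint_{A_\rho}|\p_s\bm{h}-(\p_s\bm{h})_{s_0,\rho}|^2
\le \fint_{A_\rho}|\p_s\bm{h}-\p_s\bm{h}(s_0,\epsilon)|^2
\le 4\rho^2\,\norm{\nabla\p_s\bm{h}}_{L^\infty(A_{R/2})}^2.
\end{align*}

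What remains is to show the gradient estimate
\begin{align*}
\norm{\nabla\p_s\bm{h}}_{L^\infty(A_{R/2})}^2 \le c(\epsilon,\kappa_*,c_\Gamma)\,R^{-2}\fint_{A_R}|\p_s\bm{h}-(\p_s\bm{h})_{s_0,R}|^2.
\end{align*}
Since $(\p_s\bm{h},\,\p_sq-(\p_sq)_{s_0,R})$ satisfies the same frozen-coefficient Stokes system \eqref{eq:AR_hs_weak} as $(\bm{h},q)$, I would apply Proposition \ref{prop:h_highreg} iteratively to $\p_s\bm{h}$ on nested annuli $A_{R/2}\subset A_{3R/4}\subset A_R$ to bound $\norm{\p_s\bm{h}}_{H^k(A_{R/2})}$ by $\norm{\p_s\bm{h}}_{H^1(A_{3R/4})}+\norm{\p_sq-(\p_sq)_{s_0,3R/4}}_{L^2(A_{3R/4})}$. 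Combined with Corollary \ref{cor:caccio} applied to $\p_s\bm{h}-(\p_s\bm{h})_{s_0,R}$ and the pressure estimate \eqref{eq:tang_q_s}, this yields an $H^k(A_{R/2})$ bound of the right order, and Sobolev embedding $H^k\hookrightarrow C^1$ for $k\ge 3$ in three dimensions then converts this to the required $L^\infty$ gradient bound.

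The main obstacle will be tracking the precise $R$-scaling of the constant through the Sobolev embedding step, especially in the thin-annulus regime $R\ll\epsilon$ where $|A_R|\sim\epsilon R^2$ differs from the ``fat'' scaling $|A_R|\sim R^3$. Since the lemma's constant $c(\epsilon,\kappa_*,c_\Gamma)$ is allowed to depend on $\epsilon$, this should be manageable with careful bookkeeping on nested annuli, but it is where the real technical work lies.
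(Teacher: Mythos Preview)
Your proposal is correct and follows essentially the same route as the paper: a Poincar\'e-type step giving $\rho^2\|\nabla\p_s\bm{h}\|_{L^\infty}^2$, then an $L^\infty$ gradient bound obtained by combining Sobolev embedding, Proposition~\ref{prop:h_highreg} applied to $(\p_s\bm{h},\p_sq-(\p_sq)_{s_0,3R/4})$, the pressure estimate \eqref{eq:tang_q_s}, and Corollary~\ref{cor:caccio}. Two minor differences are worth noting. First, your explicit telescoping argument, which directly exploits the $\theta$-independence of $\p_s\bm{h}|_{\mc{C}_\epsilon}$, is a cleaner substitute for the generic Poincar\'e inequality the paper invokes on the annulus $A_\rho$; both give the factor $\rho^2$. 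Second, and more to the point of the obstacle you flag: the paper handles the $R$-scaling in the Sobolev step not by a 3D embedding on the thin annulus $A_{R/2}$ but by applying a 2D Sobolev inequality on each fixed-$\theta$ slice $\{|s-s_0|\le R/2,\ \epsilon\le r\le \epsilon+R/2\}$ (a rectangle scaling cleanly with $R$) and then integrating in $\theta$. This slice-by-slice device sidesteps precisely the thin-annulus scaling issue you anticipated, and would close your argument without further bookkeeping.
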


\begin{proof}
Using a Poincar\'e inequality over $A_\rho(s_0)$, Sobolev embedding, Proposition \ref{prop:h_highreg}, estimate \eqref{eq:tang_q_s}, and Corollary \ref{cor:caccio}, we may estimate 
\begin{align*}
\fint_{A_\rho(s_0)} &\abs{\p_s {\bm h} - (\p_s \bm{h})_{s_0,\rho}}^2 \,d\by \le c(\epsilon)\,\rho^2\fint_{A_\rho(s_0)} \abs{\nabla\p_s \bm{h}}^2 \,d\by \\
&\le c(\epsilon,\kappa_*)\,\rho^2 \int_0^{2\pi}\bigg(\sup_{\abs{s-s_0}\le \rho,\, 0\le r\le \rho}\abs{\nabla\p_s {\bm h}}^2\bigg)\,d\theta \\ 
&\le c(\epsilon,\kappa_*)\,\rho^2 \int_0^{2\pi}\bigg(\sup_{\abs{s-s_0}\le R/2,\, 0\le r\le R/2}\abs{\nabla\p_s {\bm h}}^2\bigg)\,d\theta \\
&\le c(\epsilon,\kappa_*)\,\rho^2\big(R^{-2}\norm{\nabla\p_s \bm{h}}_{L^2(A_{R/2}(s_0))}^2 + R^2\norm{\nabla\p_s \bm{h}}_{\dot H^2(A_{R/2}(s_0))}^2\big)\\
&
\le c(\epsilon,\kappa_*,c_\Gamma)\,\rho^2\fint_{A_{3R/4}(s_0)}\bigg( \abs{\nabla\p_s \bm{h}}^2 + \abs{\p_s q-(\p_s q)_{s_0,3R/4}}^2 \bigg)\,d\by \\
&\le c(\epsilon,\kappa_*,c_\Gamma)\,\rho^2\fint_{A_{3R/4}(s_0)}\abs{\nabla\p_s \bm{h}}^2\,d\by  
\le c(\epsilon,\kappa_*,c_\Gamma)\bigg(\frac{\rho}{R}\bigg)^2\fint_{A_R(s_0)}\abs{\p_s \bm{h} - (\p_s \bm{h})_{s_0,R}}^2\,d\by \,.
\end{align*}
In the fourth inequality we have used scaling along each $(s,r)$ slice in the annulus $A_{R/2}(s_0)$ to obtain the $R$-dependence of the Sobolev inequality. In the fifth inequality we have used that $(\p_s\bm{h},\p_s q-(\p_sq)_{s_0,3R/4})$ satisfies \eqref{eq:AR_weak} with zero boundary data, and we may thus apply the bound from Proposition \ref{prop:h_highreg} at the level of $(\p_s\bm{h},\p_s q-(\p_sq)_{s_0,3R/4})$.
\end{proof}

Equipped with Lemmas \ref{lem:camp} and \ref{lem:osc_h}, we may proceed to the proof of Lemma \ref{lem:holder_NtD}.
\begin{proof}[Proof of Lemma \ref{lem:holder_NtD}]
Consider the weak solution $(\overline\bu,\overline p)$ to the slender body BVP satisfying \eqref{eq:O_weakform} within the straightened region $\Psi(\mc{O}_{r_*})$. Let $0<R\le\frac{r_*}{2}$ and $s_0\in\T$, and consider the annular region $A_R(s_0)\subset \Psi(\mc{O}_{r_*})$. 

Within $A_R(s_0)$, we decompose $(\overline\bu,\overline p)$ as $\overline\bu=\bm{h}+\bm{b}$ and $\overline p=q+\zeta$, where $(\bm{h},q)$ satisfies the frozen-coefficient slender body PDE \eqref{eq:AR_weak}. 
Rewriting \eqref{eq:O_weakform} as 
\begin{align*}
&\int_{A_R}\big(\nabla\overline\bu({\bm A}-{\bm A}_0) + ({\bm B}-{\bm B}_0)^{\rm T}\nabla\overline\bu^{\rm T}\nabla\Psi^{\rm T} + \bm{B}_0^{\rm T}\nabla\overline\bu^{\rm T}(\nabla\Psi-\nabla\Psi_0)^{\rm T}\big):\nabla\bm{\psi}\,d\by \\
&+\int_{A_R}\big((\nabla\bm{h}+\nabla\bm{b}){\bm A}_0 + \bm{B}_0^{\rm T}(\nabla\bm{h}+\nabla\bm{b})^{\rm T}\nabla\Psi_0^{\rm T}\big):\nabla\bm{\psi}\,d\by \\
&- \int_{A_R}(\overline p-\overline p_{s_0,R})\,\div((\bm{B}-\bm{B}_0)\bm{\psi})\,d\by
- \int_{A_R}(q+\zeta-\overline p_{s_0,R})\,\div(\bm{B}_0\bm{\psi})\,d\by
= \int_{\T}\bm{f}(s)\bm{\psi}(s)\,ds
\end{align*}
for any $\bm{\psi}$ with $\bm{\psi}\big|_{\mc{C}_\epsilon}=\bm{\psi}(s)$ and $\bm{\psi}\big|_{\p A_R(s_0)\backslash\mc{C}_\epsilon}=0$, we may use that $(\bm{h},q)$ satisfies \eqref{eq:AR_weak} to obtain an equation for ${\bm b}$:
\begin{equation}\label{eq:OG_beqn}
\begin{aligned}
&\int_{A_R}\big(\nabla\bm{b}{\bm A}_0+ \bm{B}_0^{\rm T}\nabla\bm{b}^{\rm T}\nabla\Psi_0^{\rm T}\big):\nabla\bm{\psi}\,d\by
= \int_{\T}\big(\bm{f}(s)-\bm{f}(s_0)\big)\bm{\psi}(s)\,ds\\
&\qquad -\int_{A_R}\big(\nabla\overline\bu({\bm A}-{\bm A}_0)+(\bm{B}-\bm{B}_0)^{\rm T}\nabla\overline\bu^{\rm T}\nabla\Psi^{\rm T}
+\bm{B}_0^{\rm T}\nabla\overline\bu^{\rm T}(\nabla\Psi-\nabla\Psi_0)^{\rm T}\big):\nabla\bm{\psi}\,d\by \\
&\qquad + \int_{A_R}(\overline p-\overline p_{s_0,R})\,\div((\bm{B}-\bm{B}_0)\bm{\psi})\,d\by + \int_{A_R}(\zeta-\overline p_{s_0,R})\,\div(\bm{B}_0\bm{\psi})\,d\by
\,.
\end{aligned}
\end{equation}
If, in addition, we require that the test function $\bm{\psi}$ satisfies $\div(\bm{B}_0\bm{\psi})=0$, we obtain the equation
\begin{equation}\label{eq:divBfree_b}
\begin{aligned}
&\int_{A_R}\big(\nabla\bm{b}{\bm A}_0+ \bm{B}_0^{\rm T}\nabla\bm{b}^{\rm T}\nabla\Psi_0^{\rm T}\big):\nabla\bm{\psi}\,d\by
=  \int_{\T}\big(\bm{f}(s)-\bm{f}(s_0)\big)\bm{\psi}(s)\,ds\\
&\qquad -\int_{A_R}\big(\nabla\overline\bu({\bm A}-{\bm A}_0)+(\bm{B}-\bm{B}_0)^{\rm T}\nabla\overline\bu^{\rm T}\nabla\Psi^{\rm T}
+\bm{B}_0^{\rm T}\nabla\overline\bu^{\rm T}(\nabla\Psi-\nabla\Psi_0)^{\rm T}\big):\nabla\bm{\psi}\,d\by \\
&\qquad + \int_{A_R}(\overline p-\overline p_{s_0,R})\,\div((\bm{B}-\bm{B}_0)\bm{\psi})\,d\by
\,.
\end{aligned}
\end{equation}

Noting that $\bm{b}=\overline\bu-\bm{h}$ already satisfies the boundary conditions $\bm{b}\big|_{\mc{C}_\epsilon}=\bm{b}(s)$ and $\bm{b}\big|_{\p A_R(s_0)\backslash\mc{C}_\epsilon}=0$, we would like to use $\bm{b}$ as a test function above. It will be convenient to use \eqref{eq:divBfree_b}, so we must first correct for the divergence. We consider $\bm{g}\in H^1_0(A_R(s_0))$ satisfying
\begin{equation}\label{eq:g_correction}
\begin{aligned}
\div(\bm{B}_0 \,\bm{g}) &= \div(\bm{B}_0\,\bm{b}) \qquad \text{in }A_R(s_0)\\
\norm{\nabla \bm{g}}_{L^2(A_R(s_0))} &\le c(\kappa_*,c_\Gamma)\norm{\div(\bm{B}_0\,\bm{b})}_{L^2(A_R(s_0))}\,.
\end{aligned}
\end{equation}
As before, such a $\bm{g}$ exists due to \cite[Section III.3]{galdi2011introduction} and \cite[Appendix A.2.5]{closed_loop}. Using that $\div(\bm{B}\,\overline\bu)=0$ and $\div(\bm{B}_0\,\bm{h})=0$ in $A_R(s_0)$, we may write  
\begin{align*}
\div(\bm{B}_0\,\bm{b})=-\div((\bm{B}-\bm{B}_0)\,\overline\bu)\,,
\end{align*}
so, making use of the extra regularity assumption \eqref{eq:Breg} on the coefficient $\bm{B}$, we have  
\begin{equation}\label{eq:gcorrection_est}
\norm{\nabla \bm{g}}_{L^2(A_R(s_0))} \le c(\kappa_*,c_\Gamma)\norm{\div(\bm{B}_0\,\bm{b})}_{L^2(A_R(s_0))} \le c(\epsilon,\norm{\X_{ss}}_{C^{1,\alpha}},c_\Gamma)\,R^\alpha\norm{\overline\bu}_{H^1(A_R(s_0))} \,.
\end{equation}

We may now use $\bm{b}-\bm{g}$ as a test function in \eqref{eq:divBfree_b}. We first note that
\begin{align*}
   &\abs{\int_{A_R}\big(\nabla\bm{b}{\bm A}_0+ \bm{B}_0^{\rm T}\nabla\bm{b}^{\rm T}\nabla\Psi_0^{\rm T}\big):\nabla\bm{b}\,d\by} \le
   \int_{A_R}(\abs{{\bm A}_0}+\abs{\bm{B}_0}\abs{\nabla\Psi_0})\abs{\nabla\bm{b}}\abs{\nabla\bm{g}}\,d\by \\
  &\qquad +\int_{A_R}(\abs{{\bm A}-{\bm A}_0}+\abs{\bm{B}-\bm{B}_0}\abs{\nabla\Psi}+\abs{\bm{B}_0}\abs{\nabla\Psi-\nabla\Psi_0})\abs{\nabla\overline\bu}(\abs{\nabla\bm{b}}+\abs{\nabla\bm{g}})\,d\by\\
  &\qquad + \int_{A_R}\abs{\overline p-\overline p_{s_0,R}}\big(\abs{\bm{B}-\bm{B}_0}+\abs{\nabla(\bm{B}-\bm{B}_0)} \big)\big(\abs{\nabla\bm{b}}+\abs{\nabla\bm{g}}+\abs{\bm{b}}+\abs{\bm{g}}\big)\,d\by\\
  &\qquad+ \int_{\T}\abs{\bm{f}(s)-\bm{f}(s_0)}\abs{\bm{b}\big|_{\mc{C}_\epsilon}}\,ds \,.
 \end{align*} 
 Using the decomposition \eqref{eq:E0_def} and the homogeneous Korn inequality \eqref{eq:KornyR}, we may bound 
\begin{align*}
  \int_{A_R}\abs{\nabla\bm{b}}^2\,d\by \le c(c_\Gamma,\kappa_*)\int_{A_R}\big(\nabla\bm{b}{\bm A}_0+ \bm{B}_0^{\rm T}\nabla\bm{b}^{\rm T}\nabla\Psi_0^{\rm T}\big):\nabla\bm{b}\,d\by\,.
\end{align*}
Then, using the regularity of the coefficients $\bm{A}$, $\bm{B}$, and $\nabla\Psi$ as well as the bound \eqref{eq:gcorrection_est} for $\bm{g}$, we obtain 
\begin{align*}
\int_{A_R}\abs{\nabla\bm{b}}^2\,d\by
&\le c(\epsilon,\norm{\X_{ss}}_{C^{1,\alpha}},c_\Gamma)\bigg(R^\alpha\norm{\nabla\bm{b}}_{L^2(A_R)}\norm{\overline\bu}_{H^1(A_R)}+ R^{2\alpha}\norm{\overline\bu}_{H^1(A_R)}^2
\\
&\hspace{-1.6cm}\quad + R^\alpha\norm{\overline p-\overline p_{s_0,R}}_{L^2(A_R)}(\norm{\nabla\bm{b}}_{L^2(A_R)}+R^\alpha\norm{\overline\bu}_{H^1(A_R)}) + R^{1/2+\alpha}\norm{\bm{f}}_{C^{0,\alpha}}\norm{\bm{b}\big|_{\mc{C}_\epsilon}}_{L^2(|s-s_0|<R)}\bigg)\,.
\end{align*}
By a scaling argument (see \cite[proof of Lemma 1.8]{laplace}), the following trace inequality holds in $A_R(s_0)$:
\begin{align*}
\norm{\bm{b}\big|_{\mc{C}_\epsilon}}_{L^2(|s-s_0|<R)}^2 \le c(\epsilon)\,R\norm{\nabla{\bm b}}_{L^2(A_R(s_0))}^2\,.
\end{align*}
Using Young's inequality, we thus obtain the bound
\begin{equation}\label{eq:bestimate}
\int_{A_R}\abs{\nabla\bm{b}}^2\,d\by
\le c(\epsilon,\norm{\X_{ss}}_{C^{1,\alpha}},c_\Gamma)\,R^{2\alpha}
\bigg(\norm{\overline\bu}_{H^1(A_R)} + \norm{\overline p-\overline p_{s_0,R}}_{L^2(A_R)}^2 + R^2\norm{\bm{f}}_{C^{0,\alpha}(\T)}^2\bigg)\,.
\end{equation}

Returning to the equation \eqref{eq:OG_beqn} for $\bm{b}$, we may estimate the pressure term $\zeta-\overline p_{s_0,R}$ by choosing a test function $\bm{\psi}\in H^1_0(A_R(s_0))$ satisfying 
\begin{align*}
\div(\bm{B}_0\bm{\psi})&=\zeta-\overline p_{s_0,R} \qquad \text{in }A_R(s_0)\\
\norm{\nabla\bm{\psi}}_{L^2(A_R(s_0))}&\le c(\kappa_*,c_\Gamma)\norm{\zeta-\overline p_{s_0,R}}_{L^2(A_R(s_0))}\,.
\end{align*}
Using this $\bm{\psi}$ in \eqref{eq:OG_beqn}, we obtain 
\begin{align*}
\int_{A_R(s_0)}\abs{\zeta-\overline p_{s_0,R}}^2\,d\by 
&\le \int_{A_R(s_0)}\bigg((\abs{{\bm A}_0}+\abs{\bm{B}_0}\abs{\nabla\Psi_0})\abs{\nabla\bm{b}}\abs{\nabla\bm{\psi}} + \abs{\overline p-\overline p_{s_0,R}}\abs{\div((\bm{B}-\bm{B}_0)\bm{\psi})}\\
&\qquad +(\abs{{\bm A}-{\bm A}_0}+\abs{{\bm B}-{\bm B}_0}\abs{\nabla\Psi}+\abs{\bm{B}_0}\abs{\nabla\Psi-\nabla\Psi_0})\abs{\nabla\overline\bu}\abs{\nabla\bm{\psi}}\bigg)\,d\by \\
&\hspace{-1cm}\le c(\epsilon,\norm{\X_{ss}}_{C^{1,\alpha}},c_\Gamma)\bigg(\norm{\nabla\bm{b}}_{L^2(A_R)}^2+R^{2\alpha}\norm{\nabla\overline\bu}_{L^2(A_R)}^2+R^{2\alpha}\norm{\overline p-\overline p_{s_0,R}}_{L^2(A_R)}^2 \bigg)\,.
\end{align*}
Combining with \eqref{eq:bestimate}, we have 
\begin{equation}\label{eq:bzeta_estimate}
\begin{aligned}
&\int_{A_R(s_0)}\abs{\nabla\bm{b}}^2\,d\by + \int_{A_R(s_0)}\abs{\zeta-\overline p_{s_0,R}}^2\,d\by\\
&\qquad\le c(\epsilon,\norm{\X_{ss}}_{C^{1,\alpha}},c_\Gamma)
\bigg(R^{2\alpha}\norm{\overline \bu}_{H^1(A_R)} + R^{2\alpha}\norm{\overline p-\overline p_{s_0,R}}_{L^2(A_R)}^2 + R^{2+2\alpha}\norm{\bm{f}}_{C^{0,\alpha}(\T)}^2\bigg)\,.
\end{aligned}
\end{equation}

Next, using Proposition \ref{prop:h_highreg} and recalling that $A_\rho$ scales like $\rho^2$ for small $\rho$, for $\rho<R/2$, we have that $\bm{h}=\overline\bu-\bm{b}$ and $q=\overline p-\zeta$ satisfy 
\begin{align*}
&\int_{A_\rho(s_0)}\bigg(\abs{\nabla \bm{h}}^2+\abs{\bm{h}}^2+\abs{q}^2\bigg)\,d\by \le c\,\rho^2\int_0^{2\pi}\sup_{\abs{s-s_0}\le \rho,\,0\le r\le \rho}\bigg(\abs{\nabla \bm{h}}^2+\abs{\bm{h}}^2+\abs{q}^2\bigg)\,d\theta \\
&\quad \le c\,\rho^2\int_0^{2\pi}\sup_{\abs{s-s_0}\le R/2,\,0\le r\le R/2}\bigg(\abs{\nabla \bm{h}}^2+\abs{\bm{h}}^2+\abs{q}^2\bigg)\,d\theta \\
&\quad
\le c(\epsilon,\kappa_*,R)\,\rho^2\big(\norm{\bm{h}}_{H^3(A_{R/2}(s_0))}^2+\norm{q}_{H^2(A_{R/2}(s_0))}^2\big)\\
&\quad\le c(\epsilon,\kappa_*)\bigg(\frac{\rho}{R}\bigg)^2\bigg(\norm{\bm{h}}_{H^1(A_{R}(s_0))}^2 + \norm{q}_{L^2(A_{R}(s_0))}^2\bigg)\,,
\end{align*}
by a similar scaling argument to the proof of Lemma \ref{lem:osc_h}. We thus have 
\begin{equation}\label{eq:up_est}
\begin{aligned}
&\norm{\overline\bu}_{H^1(A_\rho)}^2 + \norm{\overline p-\overline p_{s_0,\rho}}_{L^2(A_\rho)}^2
\le 2\big(\norm{\bm{h}}_{H^1(A_\rho)}^2 + \norm{\bm{b}}_{H^1(A_\rho)}^2 + \norm{q}_{L^2(A_\rho)}^2+ \norm{\zeta-\overline p_{s_0,\rho}}_{L^2(A_\rho)}^2\big) \\
&\quad\le c(\epsilon,\kappa_*)\bigg(\frac{\rho}{R}\bigg)^2\bigg(\norm{\bm{h}}_{H^1(A_R)}^2+ \norm{q}_{L^2(A_R)}^2 \bigg) + 2\norm{\bm{b}}_{H^1(A_\rho)}^2+ 2\norm{\zeta-\overline p_{s_0,\rho}}_{L^2(A_\rho)}^2\\
%
%
&\quad\le c(\epsilon,\kappa_*)\bigg(\frac{\rho}{R}\bigg)^2\bigg(\norm{\overline \bu}_{H^1(A_R)}^2+ \norm{\overline p-\overline p_{s_0,R}}_{L^2(A_R)}^2 \bigg) \\
&\qquad + c(\epsilon,\norm{\X_{ss}}_{C^{1,\alpha}},c_\Gamma)
\bigg(R^{2\alpha}\big(\norm{\overline \bu}_{H^1(A_R)} +\norm{\overline p-\overline p_{s_0,R}}_{L^2(A_R)}^2\big) + R^{2+2\alpha}\norm{\bm{f}}_{C^{0,\alpha}(\T)}^2\bigg)\,,
\end{aligned}
\end{equation}
where we have used the estimate \eqref{eq:bzeta_estimate}.

We may now apply a useful proposition from \cite{giaquinta2013introduction}: 
\begin{proposition}[Lemma 5.13 from \cite{giaquinta2013introduction}]\label{prop:giaq}
Given $m>\nu>0$ and a nondecreasing function $\Xi:\R^+\to\R^+$ satisfying
\begin{align*}
\Xi(\rho) \le c_1\bigg(\bigg(\frac{\rho}{R}\bigg)^m+\omega\bigg)\Xi(R)+c_2R^\nu
\end{align*}
for all $0<\rho\le R\le \frac{r_*}{2}$, for $\omega$ sufficiently small (depending on $c_1,m,\nu$), we in fact have
\begin{equation}\label{eq:giaquinta_est}
\Xi(\rho) \le c_3\bigg(\frac{\Xi(R)}{R^\nu}+c_2\bigg)\rho^\nu 
\end{equation}
for all $0\le \rho\le R\le \frac{r_*}{2}$.
\end{proposition}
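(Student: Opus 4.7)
The plan is to iterate the hypothesized inequality at a single well-chosen geometric scale ratio $\tau\in(0,1)$, which converts the hypothesis into an affine contraction whose discrete decay can then be interpolated to every $\rho\in(0,R]$ via monotonicity of $\Xi$.

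First I would select $\tau\in(0,1)$ small enough that $c_1\tau^{m-\nu}\le\frac{1}{4}$ -- possible precisely because $m>\nu>0$ -- and then fix the smallness threshold on $\omega$ as $c_1\omega\le\frac{1}{4}\tau^\nu$. With these two conditions, applying the hypothesis with the pair $(\tau R,R)$ gives the clean one-step bound
\begin{equation*}
\Xi(\tau R) \le \tfrac{1}{2}\tau^\nu\,\Xi(R) + c_2R^\nu,
\end{equation*}
valid for every $R\in(0,r_*/2]$, since the smallness of $\omega$ does not deteriorate as $R$ shrinks.

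Next I would iterate this bound. Writing $a_k:=\Xi(\tau^k R)$ and passing to the rescaled quantity $b_k:=\tau^{-k\nu}a_k$, the recurrence becomes the affine contraction $b_{k+1}\le\frac{1}{2}b_k + c_2\tau^{-\nu}R^\nu$, whose iterates stay uniformly bounded by $b_0+2c_2\tau^{-\nu}R^\nu$. Unwinding gives
\begin{equation*}
\Xi(\tau^k R) \le \tau^{k\nu}\bigl(\Xi(R) + 2c_2\tau^{-\nu}R^\nu\bigr), \qquad k\ge 0.
\end{equation*}
For arbitrary $\rho\in(0,R]$ I would then pick the unique integer $k\ge 0$ with $\tau^{k+1}R<\rho\le\tau^k R$, apply monotonicity of $\Xi$ to get $\Xi(\rho)\le\Xi(\tau^k R)$, and use $\tau^{k\nu}\le\tau^{-\nu}(\rho/R)^\nu$ (from $\tau^k R<\rho/\tau$) to extract the power $\rho^\nu$; setting $c_3:=2\tau^{-2\nu}$ then yields the stated estimate, with $\rho=0$ handled trivially.

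The only real subtlety -- and the point where the strict inequality $m>\nu$ enters essentially -- is the two-step matching of smallness conditions. The coefficient of $\Xi(R)$ in the raw hypothesis is $c_1(\tau^m+\omega)$, and to obtain a genuinely geometric iteration with rate at most $\tau^\nu/2$ one must dominate $\tau^\nu$ by a constant strictly less than $1$. Since $\omega$ is given and $c_1$ is fixed by the problem, the only free room comes from shrinking the $\tau^{m-\nu}$ factor, which is available precisely because $m>\nu$. If one tried to run the argument with $m=\nu$ the contraction disappears and one obtains at best logarithmic rather than power decay, so the gap is sharp; everything beyond this point is routine bookkeeping and I do not anticipate a further obstacle.
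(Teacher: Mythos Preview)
Your proof is correct and is precisely the standard iteration argument for this lemma. The paper does not give its own proof of this proposition; it is simply quoted as Lemma~5.13 from Giaquinta's book \cite{giaquinta2013introduction}, and the argument you have written is essentially the one found there.
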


Applying Proposition \ref{prop:giaq} to \eqref{eq:up_est} with $\Xi(\rho)=\norm{\overline\bu}_{H^1(A_\rho)}^2 + \norm{\overline p-\overline p_{s_0,\rho}}_{L^2(A_\rho)}^2$, $m=2$, and $\nu=2-\delta$ for $\delta>0$, for $R$ sufficiently small we obtain
\begin{equation}\label{eq:up_linfty}
\begin{aligned}
&\norm{\overline\bu}_{H^1(A_\rho(s_0))}^2 + \norm{\overline p}_{L^2(A_\rho(s_0))}^2\\
&\le c(\epsilon,\norm{\X_{ss}}_{C^{1,\alpha}},c_\Gamma) 
\bigg(R^{-(2-\delta)}\big(\norm{\overline\bu}_{H^1(A_R)} +\norm{\overline p-\overline p_{s_0,R}}_{L^2(A_R)}^2\big) + R^{2\alpha+\delta}\norm{\bm{f}}_{C^{0,\alpha}(\T)}^2\bigg) \rho^{2-\delta}
\end{aligned}
\end{equation}
for $0\le\rho\le R\le\frac{r_*}{2}$.

We may now estimate the oscillation of $\p_s\overline\bu$ over $A_\rho(s_0)$. For $0<\rho\le R' \le R\le \frac{r_*}{2}$, we have 
\begin{equation}\label{eq:main_comp}
\begin{aligned}
&\int_{A_\rho(s_0)} \abs{\p_s \overline\bu - (\p_s \overline\bu)_{s_0,\rho}}^2\,d\by \le 
\int_{A_\rho(s_0)} 2\bigg(\abs{\p_s \bm{h} - (\p_s \bm{h})_{s_0,\rho}}^2 +\abs{\p_s \bm{b} - (\p_s \bm{b})_{s_0,\rho}}^2\bigg)\,d\by \\
& \le c(\epsilon,\kappa_*,c_\Gamma)\bigg(\frac{\rho}{R'} \bigg)^4 \int_{A_{R'}(s_0)}\abs{\p_s \bm{h} - (\p_s \bm{h})_{s_0,R'}}^2\,d\by + 4\int_{A_\rho(s_0)}\abs{\nabla \bm{b}}^2\,d\by \\
& \le c(\epsilon,\kappa_*,c_\Gamma) \bigg(\frac{\rho}{R'} \bigg)^4 \int_{A_{R'}(s_0)}\abs{\p_s \overline\bu - (\p_s \overline\bu)_{s_0,R'}}^2\,d\by + c(\epsilon,\kappa_*)\int_{A_{R'}(s_0)}\abs{\nabla {\bm b}}^2\,d\by \\
& \le c(\epsilon,\kappa_*,c_\Gamma) \bigg(\frac{\rho}{R'} \bigg)^4 \int_{A_{R'}(s_0)}\abs{\p_s \overline\bu - (\p_s \overline\bu)_{s_0,R'}}^2\,d\by \\
&\quad + c(\epsilon,\norm{\X_{ss}}_{C^{1,\alpha}},c_\Gamma)
\bigg((R')^{2\alpha}\big(\norm{\overline\bu}_{H^1(A_{R'})} + \norm{\overline p-\overline p_{s_0,R'}}_{L^2(A_{R'})}^2\big) + (R')^{2+2\alpha}\norm{\bm{f}}_{C^{0,\alpha}(\T)}^2\bigg) \\
& \le  c(\epsilon,\kappa_*,c_\Gamma) \bigg(\frac{\rho}{R'} \bigg)^4 \int_{A_{R'}(s_0)}\abs{\p_s \overline\bu - (\p_s \overline\bu)_{s_0,R'}}^2\,d\by 
 + c(\epsilon,\norm{\X_{ss}}_{C^{1,\alpha}},c_\Gamma)\,(R')^{2+2\alpha-\delta}\,M  \,,
\end{aligned}
\end{equation}
where $M = R^{-(2-\delta)}\big(\norm{\overline\bu}_{H^1(A_{R})} + \norm{\overline p-\overline p_{s_0,R}}_{L^2(A_{R})}^2\big)+ R^\delta\norm{\bm{f}}_{C^{0,\alpha}(\T)}^2$.

Using Proposition \ref{prop:giaq} with $\Xi(\rho)= \int_{A_\rho(s_0)} \abs{\p_s \overline\bu - (\p_s \overline\bu)_{s_0,\rho}}^2\,d\by$, $m=4$, and $\nu=2+2\alpha-\delta$, we obtain
\begin{align*}
&\int_{A_\rho(s_0)} \abs{\p_s \overline\bu - (\p_s \overline\bu)_{s_0,\rho}}^2\,d\by\\
&\qquad\le \rho^{2+2\alpha-\delta}c(\epsilon,\norm{\X_{ss}}_{C^{1,\alpha}} ,c_\Gamma)\bigg(R^{-(2+2\alpha-\delta)} \int_{A_{R}(s_0)}\abs{\p_s \overline\bu - (\p_s \overline\bu)_{s_0,R}}^2\,d\by + M\bigg)
\end{align*}
for any $0\le \rho\le R\le\frac{r_*}{2}$. Using Lemma \ref{lem:camp}, we have $\p_s\overline\bu\in C^{0,\alpha-\frac{\delta}{2}}$ for $\delta>0$ small. Furthermore, using that $\overline\bu\big|_{\Psi(\Gamma_\epsilon)}$ is independent of $\theta$, we also have $\p_\theta\overline\bu\big|_{\Psi(\Gamma_\epsilon)\cap\p A_R(s_0)}\in C^{0,\alpha-\frac{\delta}{2}}$. Using classical elliptic regularity theory, we then have $(\overline\bu,\overline p-\overline p_{s_0,R})\in C^{1,\alpha-\frac{\delta}{2}}(A_R(s_0))\times C^{0,\alpha-\frac{\delta}{2}}(A_R(s_0))$ with
\begin{equation}\label{eq:bu_AR_est}
\begin{aligned}
&\norm{\overline\bu}_{C^{1,\alpha-\frac{\delta}{2}}(A_R(s_0))} + \norm{\overline p-\overline p_{s_0,R}}_{C^{0,\alpha-\frac{\delta}{2}}(A_R(s_0))} \\
&\qquad \le 
c(\epsilon,\norm{\X_{ss}}_{C^{1,\alpha}} ,c_\Gamma)\big(\norm{\overline\bu}_{H^1(\Psi(\mc{O}_{r_*}))} + \norm{\overline p}_{L^2(\Psi(\mc{O}_{r_*}))} +\norm{\bm{f}}_{C^{0,\alpha}(\T)} \big)\,.
\end{aligned}
\end{equation}
In particular, we thus have $(\nabla\overline\bu,\overline p-\overline p_{s_0,R})\in L^\infty(A_R(s_0))\times L^\infty(A_R(s_0))$ and we may estimate
\begin{align*}
\int_{A_R(s_0)}\bigg( \abs{\nabla \overline\bu}^2+\abs{\overline \bu}^2+\abs{\overline p-\overline p_{s_0,R}}^2\bigg)\,d\by &\le c\,R^2\sup_{\by\in A_R(s_0)}\bigg( \abs{\nabla \overline\bu}^2+\abs{\overline\bu}^2+\abs{\overline p-\overline p_{s_0,R}}^2\bigg)\,.
\end{align*}
The estimate \eqref{eq:main_comp} may thus be improved to 
\begin{equation}\label{eq:main_comp2}
\begin{aligned}
&\int_{A_\rho(s_0)} \abs{\p_s \overline\bu - (\p_s \overline\bu)_{s_0,\rho}}^2\,d\by \\
&\qquad \le  c(\epsilon,\kappa_*,c_\Gamma) \bigg(\frac{\rho}{R'} \bigg)^4 \int_{A_{R'}(s_0)}\abs{\p_s \overline\bu - (\p_s \overline\bu)_{s_0,R'}}^2\,d\by 
 + c(\epsilon,\norm{\X_{ss}}_{C^{1,\alpha}} ,c_\Gamma)\,(R')^{2+2\alpha}\,M_2  \,,
\end{aligned}
\end{equation}
where $M_2=\sup_{\by\in A_R(s_0)}\big( \abs{\nabla \overline\bu}^2+\abs{\overline\bu}^2+\abs{\overline p-\overline p_{s_0,R}}^2\big)+ \norm{\bm{f}}_{C^{0,\alpha}(\T)}^2$. By Proposition \ref{prop:giaq}, we then have
\begin{equation}
\begin{aligned}
&\fint_{A_\rho(s_0)}\abs{\p_s\overline\bu-(\p_s\overline\bu)_{s_0,\rho}}^2\,d\by\\
&\qquad\le \rho^{2\alpha}\,c(\epsilon,\norm{\X_{ss}}_{C^{1,\alpha}} ,c_\Gamma)\bigg(R^{-2\alpha}\fint_{A_R(s_0)}\abs{\p_s\overline\bu-(\p_s\overline\bu)_{s_0,\rho}}^2\,d\by+M_2 \bigg)\,.
\end{aligned}
\end{equation}
By covering the region $\Psi(\mc{O}_{r_*})$ with annuli $A_R(s_0)$, $s_0\in \T$, we thus have that $\p_s\overline\bu$ belongs to the Campanato-type space $\mc{A}^{2,\alpha}$ \eqref{eq:campanato}. Using Lemma \ref{lem:camp} along with $\bu=\overline\bu\circ\Psi$, we obtain that $\bu\big|_{\Gamma_\epsilon} = \bu(s)\in C^{1,\alpha}(\T)$. The estimate \eqref{eq:holderNTD1} follows from the bounds \eqref{eq:bu_AR_est} and \eqref{eq:uH1_est}. \\

Finally, we show the Lipschitz estimate \eqref{eq:holderNTD2}. Given two nearby filaments with centerlines $\X^{(a)}$, $\X^{(b)}$ satisfying Lemma \ref{lem:XaXb_C2beta}, we aim to bound the difference $\bv^{(a)}(s)-\bv^{(b)}(s)$ for $\bv^{(a)}(s),\bv^{(b)}(s)$ satisfying
\begin{align*}
(\mc{L}_\epsilon^{(a)})^{-1}[\bv^{(a)}](s) = \bm{f}(s) = (\mc{L}_\epsilon^{(b)})^{-1}[\bv^{(b)}](s)\,.
\end{align*}
Rearranging, we may write 
\begin{align*}
\bv^{(a)}-\bv^{(b)} = \mc{L}_\epsilon^{(a)}\big[(\mc{L}_\epsilon^{(b)})^{-1}[\bv^{(b)}]-(\mc{L}_\epsilon^{(a)})^{-1}[\bv^{(b)}] \big] \,.
\end{align*}
We may then apply the estimate \eqref{eq:holderNTD1} for $\mc{L}_\epsilon^{(a)}$ and $\mc{L}_\epsilon^{(b)}$, and the Lipschitz estimate \eqref{eq:thm_DtN_ests_lip} for $\mc{L}_\epsilon^{-1}$ to obtain 
\begin{align*}
\norm{\bv^{(a)}-\bv^{(b)}}_{C^{1,\alpha}(\T)} &= \norm{\mc{L}_\epsilon^{(a)}\big[(\mc{L}_\epsilon^{(b)})^{-1}[\bv^{(b)}]-(\mc{L}_\epsilon^{(a)})^{-1}[\bv^{(b)}] \big] }_{C^{1,\alpha}(\T)} \\
&\le c(\epsilon,\|\X_{ss}^{(a)}\|_{C^{1,\alpha}},c_\Gamma)\,\norm{(\mc{L}_\epsilon^{(b)})^{-1}[\bv^{(b)}]-(\mc{L}_\epsilon^{(a)})^{-1}[\bv^{(b)}] }_{C^{0,\alpha}(\T)} \\
&\le c(\epsilon,\|\X_{ss}^{(a)}\|_{C^{1,\alpha}},\kappa_{*,\alpha^+}^{(b)},c_\Gamma)\norm{\X^{(a)}-\X^{(b)}}_{C^{2,\alpha^+}}\norm{\bv^{(b)}}_{C^{1,\alpha}(\T)} \\
&\le c(\epsilon,\|\X_{ss}^{(a)}\|_{C^{1,\alpha}},\|\X_{ss}^{(b)}\|_{C^{1,\alpha}},c_\Gamma)\norm{\X^{(a)}-\X^{(b)}}_{C^{2,\alpha^+}}\norm{\bm{f}}_{C^{0,\alpha}(\T)}\,.
\end{align*}
\end{proof}


\appendix
\section{Proof of Lemma \ref{lem:XaXb_C2beta}: nearby curves}\label{app:XaXb} 
Given the two curves $\X^{(a)}(s)$, $\X^{(b)}(s)$ satisfying \eqref{eq:XaXb_close}, we begin by framing both curves with a corresponding Bishop frame $(\be_{\rm t},\wt{\be}_1,\wt{\be}_2)$ \cite{bishop1975there}, which, given an initial choice of normal vector $\wt\be_1(0)$, satisfies the ODE 
\begin{equation}\label{eq:Bishop}
\frac{d}{ds} \begin{pmatrix}
\be_{\rm t}\\
\wt{\be}_1\\
\wt{\be}_2
\end{pmatrix} = 
\begin{pmatrix}
0 & \wt\kappa_1 & \wt\kappa_2\\
-\wt\kappa_1 & 0 & 0\\
-\wt\kappa_2 & 0 & 0
\end{pmatrix}
\begin{pmatrix}
\be_{\rm t}\\
\wt{\be}_1\\
\wt{\be}_2
\end{pmatrix}\,.
\end{equation}
Here for curve (a) we choose the initial normal vector $\wt\be_1^{(a)}(0)$ arbitrarily, while $\wt{\be}^{(b)}_1(0)$ is chosen to satisfy
\begin{align*}
\wt{\be}^{(b)}_1(0) = \inf_{\bv\perp\be_{\rm t}^{(b)}(0),\,\abs{\bv}=1}\abs{\bv-\wt{\be}^{(a)}_1(0)}\,;
\end{align*}
i.e. at the closest pair of points $\X^{(a)}(0)$ and $\X^{(b)}(0)$, we choose the normal vectors to be as close as possible. Given that $\X^{(a)}$ and $\X^{(b)}$ satisfy \eqref{eq:XaXb_close}, the difference in initial normal vectors must then satisfy 
\begin{equation}\label{eq:init_cond_close}
\abs{\wt{\be}^{(a)}_1(0)-\wt{\be}^{(b)}_1(0)}\le\delta\,.
\end{equation} 

Although the pure Bishop frame is not necessarily periodic in $s$, its coefficients $\wt\kappa_1$ and $\wt\kappa_2$ control the coefficients from the periodicized frame \eqref{eq:frame}. 
In particular, using the Bishop frame \eqref{eq:Bishop}, we define the angle $-\pi\le\phi< \pi$ to be
\begin{align*}
\phi = \arccos(\wt{\be}_1(0)\cdot\wt{\be}_1(1))\,,
\end{align*}
i.e. the mismatch from periodicity. 
Take $\kappa_3=-\phi$ and note that for the curves (a) and (b) we have 
\begin{equation}\label{eq:kappa3ab}
\abs{\kappa_3^{(a)}-\kappa_3^{(b)}}\le c\,\norm{\wt{\be}_1^{(a)}-\wt{\be}_1^{(b)}}_{L^\infty}\,.
\end{equation}
Furthermore, given $\kappa_3$ as above, we may define
\begin{align*}
\begin{pmatrix}
\be_{\rm n_1}(s) \\
\be_{\rm n_2}(s)
\end{pmatrix}
&= \begin{pmatrix}
\cos(s\kappa_3) & -\sin(s\kappa_3)\\
\sin(s\kappa_3) & \cos(s\kappa_3)
\end{pmatrix}
\begin{pmatrix}
\wt{\be}_1(s) \\
\wt{\be}_2(s)
\end{pmatrix}\,.
\end{align*}
It may be seen that $(\be_{\rm t}(s),\be_{\rm n_1}(s),\be_{\rm n_2}(s))$ satisfy the ODE \eqref{eq:frame} with
\begin{align*}
\begin{pmatrix}
\kappa_1(s) \\
\kappa_2(s)
\end{pmatrix}
&= \begin{pmatrix}
\cos(s\kappa_3) & -\sin(s\kappa_3)\\
\sin(s\kappa_3) & \cos(s\kappa_3)
\end{pmatrix}
\begin{pmatrix}
\wt\kappa_1(s) \\
\wt\kappa_2(s)
\end{pmatrix}\,.
\end{align*}
For both $\ell=1,2$, we then have 
\begin{equation}\label{eq:kappa12ab}
\begin{aligned}
\norm{\kappa_\ell^{(a)}-\kappa_\ell^{(b)}}_{C^{0,\beta}} &\le c(\kappa_{*,\beta}^{(a)},\kappa_{*,\beta}^{(b)})\abs{\kappa_3^{(a)}-\kappa_3^{(b)}}+c\norm{\wt\kappa_1^{(a)}-\wt\kappa_1^{(b)}}_{C^{0,\beta}}+c\norm{\wt\kappa_2^{(a)}-\wt\kappa_2^{(b)}}_{C^{0,\beta}}\\
&\le c(\kappa_{*,\beta}^{(a)},\kappa_{*,\beta}^{(b)})\left(\norm{\wt\kappa_1^{(a)}-\wt\kappa_1^{(b)}}_{C^{0,\beta}}+\norm{\wt\kappa_2^{(a)}-\wt\kappa_2^{(b)}}_{C^{0,\beta}}\right)\,.
\end{aligned}
\end{equation}
It thus suffices to show the desired bound \eqref{eq:XaXb_C2beta} for the coefficients $\wt\kappa_1$, $\wt\kappa_2$ of the pure Bishop frame. For both curves (a) and (b), we may rewrite the ODEs \eqref{eq:Bishop} for $\wt{\be}_1$ and $\wt{\be}_2$ as 
\begin{equation}\label{eq:Bishop2}
\begin{aligned}
\dot{\wt{\be}}_1 &= -(\wt{\be}_1\cdot\dot{\be}_{\rm t})\,\be_{\rm t}\\
\dot{\wt{\be}}_2 &= -(\wt{\be}_2\cdot\dot{\be}_{\rm t})\,\be_{\rm t}\,,
\end{aligned}
\end{equation}
where we use $\dot{\be}$ to denote $\frac{d}{ds}\be$.
Letting $\bw=\wt{\be}_1^{(a)}-\wt{\be}_1^{(b)}$, we then have
\begin{equation}\label{eq:dot_bw}
\begin{aligned}
\dot\bw &= -(\bw\cdot\dot{\be}_{\rm t}^{(a)})\,\be_{\rm t}^{(a)}
-(\wt{\be}_1^{(b)}\cdot(\dot{\be}_{\rm t}^{(a)}-\dot{\be}_{\rm t}^{(b)}))\,\be_{\rm t}^{(a)}
-(\wt{\be}_1^{(b)}\cdot\dot{\be}_{\rm t}^{(b)})\,(\be_{\rm t}^{(a)}-\be_{\rm t}^{(b)})\,.
\end{aligned}
\end{equation}
In particular, using that $\norm{\be_{\rm t}^{(a)}-\be_{\rm t}^{(b)}}_{C^{1,\beta}}\le \norm{\X^{(a)}-\X^{(b)}}_{C^{2,\beta}}=\delta$, we have 
\begin{align*}
\frac{1}{2}\frac{d}{ds}\abs{\bw}^2 &= c(\kappa_*^{(a)})\abs{\bw}^2+ \delta\abs{\bw}+c(\kappa_*^{(b)})\delta\abs{\bw}\\
&\le  c(\kappa_*^{(a)},\kappa_*^{(b)})\big(\abs{\bw}^2+ \delta^2\big)\,.
\end{align*}
Using Gr\"onwall's inequality and \eqref{eq:init_cond_close}, we then have
\begin{align*}
\abs{\bw}^2 &\le  c(\kappa_*^{(a)},\kappa_*^{(b)})\big(\abs{\bw(0)}^2+ \delta^2\big)
\le c(\kappa_*^{(a)},\kappa_*^{(b)})\,\delta^2\,.
\end{align*}
Returning to the ODE \eqref{eq:dot_bw} for $\bw$, we also obtain
\begin{align*}
\abs{\dot\bw} \le c(\kappa_*^{(a)})\abs{\bw}+ \delta +c(\kappa_*^{(b)})\delta
\le c(\kappa_*^{(a)},\kappa_*^{(b)})\,\delta\,.
\end{align*}
In total, recalling that $\delta=\norm{\X^{(a)}-\X^{(b)}}_{C^{2,\beta}}$ and $\bw=\wt{\be}_1^{(a)}-\wt{\be}_1^{(b)}$, we have
\begin{align*}
\norm{\wt{\be}_1^{(a)}-\wt{\be}_1^{(b)}}_{C^1} \le c(\kappa_*^{(a)},\kappa_*^{(b)})\,\norm{\X^{(a)}-\X^{(b)}}_{C^{2,\beta}}\,.
\end{align*}
We may then use the ODEs \eqref{eq:Bishop}, \eqref{eq:Bishop2} to obtain bounds on the coefficients $\wt\kappa_1$. We have
\begin{align*}
\wt\kappa_1^{(a)}-\wt\kappa_1^{(b)} = (\wt{\be}_1^{(a)}-\wt{\be}_1^{(b)})\cdot\dot\be_{\rm t}^{(a)} + \wt{\be}_1^{(b)}\cdot(\dot\be_{\rm t}^{(a)}-\dot\be_{\rm t}^{(b)})\,,
\end{align*}
and thus
\begin{align*}
\norm{\wt\kappa_1^{(a)}-\wt\kappa_1^{(b)}}_{C^{0,\beta}} \le c(\kappa_{*,\beta}^{(a)},\kappa_{*,\beta}^{(b)})\, \norm{\X^{(a)}-\X^{(b)}}_{C^{2,\beta}}\,.
\end{align*}
Using the ODE \eqref{eq:Bishop2} for $\wt\be_2$, the same bound may be shown for $\wt\kappa_2^{(a)}-\wt\kappa_2^{(b)}$. Then by \eqref{eq:kappa3ab} and \eqref{eq:kappa12ab}, we obtain Lemma \ref{lem:XaXb_C2beta}.
\hfill \qedsymbol


\section{Calculation of straight single and double layer symbols}\label{app:symbol_calc}
Here we derive the expressions stated in Lemma \ref{lem:straight_SD} for the single and double layer operators $\overline{\mc{S}}$ and $\overline{\mc{D}}$ about the straight filament $\mc{C}_\epsilon$.

\begin{proof}[Proof of Lemma \ref{lem:straight_SD}]
For the straight filament, periodicity of the operators $\overline{\mc{S}}$ and $\overline{\mc{D}}$ is enforced by considering only densities $\bm{\varphi}(s,\theta)$ which are 1-periodic in $s$. For such a density $\bm{\varphi}$, we write $\overline{\mc{S}}$ and $\overline{\mc{D}}$ as
\begin{align*}
\overline{\mc{S}}[\bm{\varphi}](s,\theta) &= \int_{-\infty}^{\infty}\int_0^{2\pi}\overline{\mc{G}}(s,\theta,s',\theta')\,\bm{\varphi}(s',\theta')\,\epsilon\, d\theta'ds' \\
\overline{\mc{D}}[\bm{\varphi}](s,\theta) &= \int_{-\infty}^{\infty}\int_0^{2\pi}\overline{\bm{K}}_{\mc D}(s,\theta,s',\theta')\,\bm{\varphi}(s',\theta')\,\epsilon \,d\theta'ds'\,,
\end{align*}
where the kernels $\overline{\mc{G}}$ and $\overline{\bm{K}}_{\mc D}$ will be written in more detail below. Recalling the general form of the Stokeslet $\mc{G}$ \eqref{eq:stokeslet} and double layer kernel $\bm{K}_{\mc D}$ \eqref{eq:stresslet}, we note that about $\mc{C}_\epsilon$ we may write 
\begin{equation}\label{eq:straightx_diff}
\begin{aligned}
\overline{\bx}-\overline{\bx}' &= \textstyle(s-s')\be_z+2\epsilon\sin(\frac{\theta-\theta'}{2})\be_\theta(\frac{\theta+\theta'}{2})  \\
(\overline{\bx}-\overline{\bx}')\cdot\be_r(\theta') &= \textstyle -2\epsilon\sin^2(\frac{\theta-\theta'}{2}) \\
\textstyle \be_\theta(\frac{\theta+\theta'}{2})&=\textstyle -\sin(\frac{\theta+\theta'}{2})\be_x+\cos(\frac{\theta+\theta'}{2})\be_y\,.
\end{aligned}
\end{equation}
Here again we use overline notation to emphasize that $\overline{\bx}$ lies along the straight filament $\mc{C}_\epsilon$.
Given a continuous density $\bm{\varphi}$ along $\mc{C}_\epsilon$, we may use \eqref{eq:straightx_diff} to write out the integrand \eqref{eq:stokeslet} of the single layer operator about $\mc{C}_\epsilon$ as 
\begin{equation}\label{eq:straight_SL_kernel}
\begin{aligned}
8\pi(\overline{\mc{G}}\bm{\varphi})\cdot\be_z &= \textstyle \frac{(\bm{\varphi}\cdot\be_z)}{\sqrt{(s-s')^2+4\epsilon^2\sin^2(\frac{\theta-\theta'}{2})}}
+ \frac{(s-s')^2(\bm{\varphi}\cdot\be_z)}{\sqrt{(s-s')^2+4\epsilon^2\sin^2(\frac{\theta-\theta'}{2})}^{\,3}}\\
&\qquad \textstyle +\frac{2\epsilon(s-s')\sin(\frac{\theta-\theta'}{2})\big(-\sin(\frac{\theta+\theta'}{2})(\bm{\varphi}\cdot\be_x)+\cos(\frac{\theta+\theta'}{2})(\bm{\varphi}\cdot\be_y)\big)}{\sqrt{(s-s')^2+4\epsilon^2\sin^2(\frac{\theta-\theta'}{2})}^{\,3}} \\
8\pi(\overline{\mc{G}}\bm{\varphi})\cdot\be_x &= \textstyle \frac{(\bm{\varphi}\cdot\be_x)}{\sqrt{(s-s')^2+4\epsilon^2\sin^2(\frac{\theta-\theta'}{2})}}
 -\frac{2\epsilon(s-s')\sin(\frac{\theta-\theta'}{2})\sin(\frac{\theta+\theta'}{2})(\bm{\varphi}\cdot\be_z) }{\sqrt{(s-s')^2+4\epsilon^2\sin^2(\frac{\theta-\theta'}{2})}^{\,3}} \\
 &\qquad \textstyle +\frac{4\epsilon^2\sin^2(\frac{\theta-\theta'}{2})\big(\sin^2(\frac{\theta+\theta'}{2})(\bm{\varphi}\cdot\be_x)-\sin(\frac{\theta+\theta'}{2})\cos(\frac{\theta+\theta'}{2})(\bm{\varphi}\cdot\be_y)\big)}{\sqrt{(s-s')^2+4\epsilon^2\sin^2(\frac{\theta-\theta'}{2})}^{\,3}}
 \\
8\pi(\overline{\mc{G}}\bm{\varphi})\cdot\be_y &= \textstyle \frac{(\bm{\varphi}\cdot\be_y)}{\sqrt{(s-s')^2+4\epsilon^2\sin^2(\frac{\theta-\theta'}{2})}}
+\frac{2\epsilon(s-s')\sin(\frac{\theta-\theta'}{2})\cos(\frac{\theta+\theta'}{2})(\bm{\varphi}\cdot\be_z) }{\sqrt{(s-s')^2+4\epsilon^2\sin^2(\frac{\theta-\theta'}{2})}^{\,3}}  \\
&\qquad \textstyle +\frac{4\epsilon^2\sin^2(\frac{\theta-\theta'}{2})\big(\cos^2(\frac{\theta+\theta'}{2})(\bm{\varphi}\cdot\be_y)-\sin(\frac{\theta+\theta'}{2})\cos(\frac{\theta+\theta'}{2})(\bm{\varphi}\cdot\be_x)\big)}{\sqrt{(s-s')^2+4\epsilon^2\sin^2(\frac{\theta-\theta'}{2})}^{\,3}}\,.
\end{aligned}
\end{equation}
Similarly, using \eqref{eq:straightx_diff} in \eqref{eq:stresslet} the integrand of the straight double layer operator may be written 
\begin{equation}\label{eq:straight_DL_kernel}
\begin{aligned}
(\overline{\bm K}_{\mc{D}}\bm{\varphi})\cdot\be_z &=  -\textstyle \frac{3}{\pi}
\frac{\epsilon^2(s-s')\sin^3(\frac{\theta-\theta'}{2})\big(-\sin(\frac{\theta+\theta'}{2})(\bm{\varphi}\cdot\be_x)+\cos(\frac{\theta+\theta'}{2})(\bm{\varphi}\cdot\be_y)\big)}{\sqrt{(s-s')^2+4\epsilon^2\sin^2(\frac{\theta-\theta'}{2})}^{\,5}} \\
&\qquad -\textstyle \frac{3}{4\pi}
\frac{2\epsilon(s-s')^2\sin^2(\frac{\theta-\theta'}{2})(\bm{\varphi}\cdot\be_z)}{\sqrt{(s-s')^2+4\epsilon^2\sin^2(\frac{\theta-\theta'}{2})}^{\,5}} \\
(\overline{\bm K}_{\mc{D}}\bm{\varphi})\cdot\be_x &= -\textstyle \frac{6}{\pi}
 \frac{\epsilon^3\sin^4(\frac{\theta-\theta'}{2})\big(\sin^2(\frac{\theta+\theta'}{2})(\bm{\varphi}\cdot\be_x)-\sin(\frac{\theta+\theta'}{2})\cos(\frac{\theta+\theta'}{2})(\bm{\varphi}\cdot\be_y)\big) }{\sqrt{(s-s')^2+4\epsilon^2\sin^2(\frac{\theta-\theta'}{2})}^{\,5}}
 \\
 &\qquad +\textstyle \frac{3}{\pi}
 \frac{ \epsilon^2(s-s')\sin^3(\frac{\theta-\theta'}{2})\sin(\frac{\theta+\theta'}{2})(\bm{\varphi}\cdot\be_z) }{\sqrt{(s-s')^2+4\epsilon^2\sin^2(\frac{\theta-\theta'}{2})}^{\,5}}
 \\
(\overline{\bm K}_{\mc{D}}\bm{\varphi})\cdot\be_y &= -\textstyle \frac{6}{\pi}
\frac{\epsilon^3\sin^4(\frac{\theta-\theta'}{2})\big(\cos^2(\frac{\theta+\theta'}{2})(\bm{\varphi}\cdot\be_y)-\sin(\frac{\theta+\theta'}{2})\cos(\frac{\theta+\theta'}{2})(\bm{\varphi}\cdot\be_x)\big) }{\sqrt{(s-s')^2+4\epsilon^2\sin^2(\frac{\theta-\theta'}{2})}^{\,5}}\\
&\qquad -\textstyle \frac{3}{\pi}
\frac{\epsilon^2(s-s')\sin^3(\frac{\theta-\theta'}{2})\cos(\frac{\theta+\theta'}{2})(\bm{\varphi}\cdot\be_z) }{\sqrt{(s-s')^2+4\epsilon^2\sin^2(\frac{\theta-\theta'}{2})}^{\,5}}\,.
\end{aligned}
\end{equation}
Before computing anything involving \eqref{eq:straight_SL_kernel} and \eqref{eq:straight_DL_kernel}, we make note of a series of identities involving integrals of the second-kind modified Bessel functions $K_0$ and $K_1$. In particular, the following identities hold: 
\begin{equation}\label{eq:sinKj_IDs}
\begin{aligned}
\int_0^{2\pi}\textstyle K_0(z\sin(\frac{\theta}{2}))\,d\theta &= \textstyle2\pi I_0(\frac{z}{2})K_0(\frac{z}{2}) \\
\int_0^{2\pi}\textstyle \sin(\frac{\theta}{2})\,K_1(z\sin(\frac{\theta}{2}))\,d\theta &= \textstyle \pi \big( I_0(\frac{z}{2})K_1(\frac{z}{2})-I_1(\frac{z}{2})K_0(\frac{z}{2})\big)\\
\int_0^{2\pi}\textstyle \sin^2(\frac{\theta}{2})\,K_0(z\sin(\frac{\theta}{2}))\,d\theta &= \textstyle \pi\big( I_0(\frac{z}{2})K_0(\frac{z}{2})-I_1(\frac{z}{2})K_1(\frac{z}{2})\big) \\
\int_0^{2\pi}\textstyle \sin^3(\frac{\theta}{2})\,K_1(z\sin(\frac{\theta}{2}))\,d\theta &= \textstyle \pi\big(I_0(\frac{z}{2})K_1(\frac{z}{2}) -I_1(\frac{z}{2})K_0(\frac{z}{2}) \big) - \frac{2\pi}{z}I_1(\frac{z}{2})K_1(\frac{z}{2})\\
\int_0^{2\pi}\textstyle \sin^4(\frac{\theta}{2})\,K_0(z\sin(\frac{\theta}{2}))\,d\theta &= \textstyle\pi(I_0(\frac{z}{2})K_0(\frac{z}{2})-I_1(\frac{z}{2})K_1(\frac{z}{2})) + \frac{\pi}{z}(I_0(\frac{z}{2})K_1(\frac{z}{2})-I_1(\frac{z}{2})K_0(\frac{z}{2}))  \\
&\qquad \textstyle -\frac{4\pi}{z^2}I_1(\frac{z}{2})K_1(\frac{z}{2}) \\
\int_0^{2\pi}\textstyle \sin^5(\frac{\theta}{2})\,K_1(z\sin(\frac{\theta}{2}))\,d\theta &= \textstyle \pi(I_0(\frac{z}{2})K_1(\frac{z}{2})-I_1(\frac{z}{2})K_0(\frac{z}{2})) +\frac{\pi}{z}(I_0(\frac{z}{2})K_0(\frac{z}{2})-3I_1(\frac{z}{2})K_1(\frac{z}{2}))  \\
&\qquad \textstyle  +\frac{4\pi}{z^2}(I_0(\frac{z}{2})K_1(\frac{z}{2})-I_1(\frac{z}{2})K_0(\frac{z}{2})) -\frac{16\pi}{z^3}I_1(\frac{z}{2})K_1(\frac{z}{2})\,.
\end{aligned}
\end{equation}
Here $I_0$ and $I_1$ are first-kind modified Bessel functions. The first integral identity is shown in \cite{laplace}; the rest may be seen by differentiating the first identity with respect to $z$, using that $K_0'(z)=-K_1(z)$ and $K_1'(z)=-K_0(z)-\frac{K_1(z)}{z}$.

Equipped with the expressions \eqref{eq:straight_SL_kernel} and \eqref{eq:straight_DL_kernel} and the identities \eqref{eq:sinKj_IDs}, we may show Lemma \ref{lem:straight_SD}. 
We begin with the expression \eqref{eq:DL_tangential}, i.e. how the double layer acts on $\theta$-independent functions $\bv(s) = e^{2\pi iks}\be_z$ tangent to the filament centerline. Using \eqref{eq:straight_DL_kernel} and \eqref{eq:sinKj_IDs}, we may calculate 
\begin{align*}
\overline{\mc{D}}[e^{2\pi iks}\be_z] \cdot\be_z &= -\frac{3}{2\pi}\int_{-\infty}^\infty\int_0^{2\pi}\textstyle \frac{\epsilon\sin^2(\frac{\theta'}{2})(s')^2}{\sqrt{(s')^2+4\epsilon^2\sin^2(\frac{\theta'}{2})}^{\,5}}\, e^{2\pi ik(s-s')} \,\epsilon\,d\theta'ds'\\
&= -\int_0^{2\pi}\bigg(\textstyle \epsilon\abs{k}\sin(\frac{\theta'}{2})K_1\big(4\pi\epsilon\abs{k}\sin(\frac{\theta'}{2})\big) - 4\pi\epsilon^2\abs{k}^2\sin^2(\frac{\theta'}{2}) K_0\big(4\pi\epsilon\abs{k}\sin(\frac{\theta'}{2})\big)\bigg)\,d\theta'\, e^{2\pi iks} \\
&= -\left(-4\pi^2\epsilon^2\abs{k}^2 \big( I_0K_0-I_1K_1\big) +\pi\epsilon\abs{k}\big(I_0K_1-I_1K_0\big) \right)\, e^{2\pi iks} \,, \\
\overline{\mc{D}}[e^{2\pi iks}\be_z] \cdot\be_r &= -\frac{3}{\pi}\int_{-\infty}^\infty\int_0^{2\pi} \textstyle \frac{\epsilon^2\,s'\sin^4(\frac{\theta'}{2}) }{\sqrt{(s')^2+4\epsilon^2\sin^2(\frac{\theta'}{2})}^{\,5}}\,e^{2\pi ik(s-s')}\epsilon \,d\theta'ds' \\
&=  \int_0^{2\pi} \textstyle4\pi i \epsilon^2k^2\sin^3(\frac{\theta'}{2}) K_1(4\pi\epsilon\abs{k}\sin(\frac{\theta'}{2}))\,d\theta'\,  e^{2\pi iks}  \\
&= -i\left(4\pi^2\epsilon^2k^2(I_1K_0-I_0K_1)+2\pi\epsilon\abs{k}I_1K_1\right)\,e^{2\pi iks} \,.
\end{align*}
Here each $I_j$, $K_j$ in the final expressions is evaluated at $2\pi\epsilon\abs{k}$. We also note that $\overline{\mc{D}}[e^{2\pi iks}\be_z] \cdot\be_\theta=0$.

To calculate the expression \eqref{eq:Sinv_tangential1} for how $\overline{\mc{S}}^{-1}$ acts on zero modes in $\theta$, we first determine how $\overline{\mc{S}}$ acts on functions of the form $e^{2\pi iks}\be_z$ and $e^{2\pi iks}\be_r$. Using \eqref{eq:straight_SL_kernel} and \eqref{eq:sinKj_IDs}, for the $\be_z$ direction we may calculate 
\begin{align*}
\overline{\mc{S}}[e^{2\pi iks}\be_z] \cdot\be_r &= \frac{1}{8\pi}\int_{-\infty}^\infty\int_0^{2\pi} \textstyle\frac{2\epsilon\,s'\sin^2(\frac{\theta'}{2})}{\sqrt{(s')^2+4\epsilon^2\sin^2(\frac{\theta'}{2})}^{\,3}}\,e^{2\pi i k(s-s')}\,\epsilon\,d\theta'ds' \\
&=-\int_0^{2\pi}\textstyle i\epsilon^2\abs{k}\sin^2(\frac{\theta'}{2})K_0(4\pi\epsilon\abs{k}\sin(\frac{\theta'}{2}))\,d\theta' \,e^{2\pi i ks}\\
&= -i\pi\epsilon^2\abs{k}(I_0K_0-I_1K_1)\,e^{2\pi i ks} \,,\\
\overline{\mc{S}}[e^{2\pi iks}\be_z] \cdot\be_z 
&=\frac{1}{8\pi}\int_{-\infty}^\infty\int_0^{2\pi}\textstyle\bigg(\frac{2}{\sqrt{(s')^2+4\epsilon^2\sin^2(\frac{\theta'}{2})}}-\frac{4\epsilon^2\sin^2(\frac{\theta'}{2})}{\sqrt{(s')^2+4\epsilon^2\sin^2(\frac{\theta'}{2})}^{\,3}}\bigg)\,e^{2\pi i k(s-s')}\,\epsilon\,d\theta'ds' \\
&= \bigg(\frac{1}{2\pi}\int_0^{2\pi}\textstyle K_0(4\pi\epsilon\abs{k}\sin(\frac{\theta'}{2}))\,\epsilon\,d\theta'
\displaystyle -\int_0^{2\pi} \textstyle\epsilon^2\abs{k}\sin(\frac{\theta'}{2})\,K_1(4\pi\epsilon\abs{k}\sin(\frac{\theta'}{2}))\,d\theta'\bigg)\,e^{2\pi i ks} \\
&= \epsilon \left(I_0K_0 + \pi\epsilon\abs{k}(I_1K_0- I_0K_1)\right)\, e^{2\pi iks}\,.
\end{align*}
For the $\be_r$ direction, we may calculate
\begin{align*}
\overline{\mc{S}}[e^{2\pi iks}\be_r] \cdot\be_r &= \frac{1}{8\pi}\int_{-\infty}^\infty\int_0^{2\pi} \textstyle\bigg( \frac{1-2\sin^2(\frac{\theta'}{2})  }{\sqrt{(s')^2+4\epsilon^2\sin^2(\frac{\theta'}{2})}} - \frac{4\epsilon^2\sin^4(\frac{\theta'}{2})}{\sqrt{(s')^2+4\epsilon^2\sin^2(\frac{\theta'}{2})}^{\,3}}\bigg)\,e^{2\pi i k(s-s')}\,\epsilon\,d\theta'ds'\\
&= \frac{1}{4\pi}\int_0^{2\pi} \textstyle K_0(4\pi\epsilon\abs{k}\sin(\frac{\theta'}{2}))\left(1-2\sin^2(\frac{\theta'}{2})\right)\,\epsilon\,d\theta' \,e^{2\pi iks} \\
& \qquad - \int_0^{2\pi} \textstyle \epsilon^2\abs{k}\sin^3(\frac{\theta'}{2})K_1(4\pi\epsilon\abs{k}\sin(\frac{\theta'}{2})) \,d\theta' \,e^{2\pi iks}\\
&=  \epsilon\left( I_1K_1 + \pi\epsilon\abs{k}(I_1K_0-I_0K_1)\right) e^{2\pi iks}\,, \\
\overline{\mc{S}}[e^{2\pi iks}\be_r] \cdot\be_z &= -\frac{1}{8\pi}\int_{-\infty}^\infty\int_0^{2\pi} \textstyle\frac{2\epsilon\,s'\sin^2(\frac{\theta'}{2})}{\sqrt{(s')^2+4\epsilon^2\sin^2(\frac{\theta'}{2})}^{\,3}} \,e^{2\pi i k(s-s')}\,\epsilon\,d\theta'ds'\\
&= \int_0^{2\pi}\textstyle i\epsilon^2\abs{k}\sin^2(\frac{\theta'}{2})K_0(4\pi\epsilon\abs{k}\sin(\frac{\theta'}{2}))\,d\theta' \,e^{2\pi iks}\\
&= i\pi\epsilon^2\abs{k}(I_0K_0-I_1K_1)\,e^{2\pi iks}\,.
\end{align*}
Each $I_j$ and $K_j$ in the final expressions is again evaluated at $2\pi\epsilon\abs{k}$. We further note that $\overline{\mc{S}}[e^{2\pi iks}\be_z] \cdot\be_\theta=\overline{\mc{S}}[e^{2\pi iks}\be_r] \cdot\be_\theta=0$.

In total, we obtain the following expression for the action of $\overline{\mc{S}}$ on the relevant zero modes in $\theta$:
\begin{equation}\label{eq:SL_tangential}
\overline{\mc{S}} \begin{pmatrix}
e^{2\pi iks} \be_z \\
e^{2\pi iks} \be_r
\end{pmatrix}= \epsilon\begin{pmatrix}
I_0K_0+\pi\epsilon\abs{k}(I_1K_0-I_0K_1)  & i\pi\epsilon\abs{k}(I_0K_0-I_1K_1)\\
-i\pi\epsilon\abs{k}(I_0K_0-I_1K_1) &  I_1K_1+ \pi\epsilon\abs{k}(I_1K_0-I_0K_1)
\end{pmatrix}\begin{pmatrix}
e^{2\pi iks} \be_z \\
e^{2\pi iks} \be_r
\end{pmatrix}\,.
\end{equation}
Here $I_j=I_j(2\pi\epsilon\abs{k})$ and $K_j=K_j(2\pi\epsilon\abs{k})$. Inverting the matrix \eqref{eq:SL_tangential}, we obtain the expression \eqref{eq:Sinv_tangential1} and \eqref{eq:Sinv_tangential2} for the multiplier $\bm{M}_{\rm S,t}^{-1}(k)$ in the tangential direction along the filament.

We next consider $\theta$-independent data in directions normal to the filament centerline. We consider data of the form $e^{2\pi iks}\be_x$ and note that, by symmetry, data of the form $e^{2\pi iks}\be_y$ behaves analogously. Note that, using the identities $\be_x=\cos\theta\be_r-\sin\theta\be_\theta$ and $\be_y=\sin\theta\be_r+\cos\theta\be_\theta$, functions of this form actually correspond to one-modes in $\theta$. 

We begin by calculating the expressions \eqref{eq:DL_normal} for the double layer multipliers.
Using the form \eqref{eq:straight_DL_kernel} of the integrand of $\overline{\mc{D}}$ as well as the identities \eqref{eq:sinKj_IDs}, we may calculate
\begin{align*}
\overline{\mc{D}}[e^{2\pi iks}\be_x]\cdot\be_z &= -\frac{3}{4\pi}\int_{-\infty}^\infty\int_0^{2\pi}\textstyle\frac{4\epsilon^2\,s'\sin^4(\frac{\theta'}{2})}{\sqrt{(s')^2+4\epsilon^2\sin^2(\frac{\theta'}{2})}^{\,5}}\, e^{2\pi i k(s-s')}\,\epsilon\,d\theta'ds'\, \cos\theta \\
&= \int_0^{2\pi} \textstyle i 4\pi \epsilon^2k^2\sin^3(\frac{\theta'}{2})K_1(4\pi\epsilon\abs{k}\sin(\frac{\theta'}{2}))\,d\theta'\, e^{2\pi iks}\cos\theta \\
&= i\left(4\pi^2\epsilon^2k^2(I_0K_1-I_1K_0) - 2\pi\epsilon\abs{k}I_1K_1\right) \, e^{2\pi iks}\cos\theta\,, \\
\overline{\mc{D}}[e^{2\pi iks}\be_x]\cdot\be_x &= -\frac{3}{4\pi}\int_{-\infty}^\infty\int_0^{2\pi}\textstyle\frac{8\epsilon^3\sin^4(\frac{\theta'}{2})\big(\sin^2\theta-\sin^2(\frac{\theta'}{2}) (\sin^2\theta-\cos^2\theta)\big)}{\sqrt{(s')^2+4\epsilon^2\sin^2(\frac{\theta'}{2})}^{\,5}}\, e^{2\pi i k(s-s')}\,\epsilon\,d\theta'ds'\\
&= -\int_0^{2\pi}\textstyle 4\pi \epsilon^2k^2\sin^2(\frac{\theta'}{2}) K_2(4\pi\epsilon\abs{k}\sin(\frac{\theta'}{2}))\,d\theta'\, e^{2\pi iks}\sin^2\theta \\
&\qquad + \int_0^{2\pi}\textstyle 4\pi \epsilon^2k^2\sin^4(\frac{\theta'}{2}) K_2(4\pi\epsilon\abs{k}\sin(\frac{\theta'}{2}))\,d\theta'\,e^{2\pi iks}(\sin^2\theta-\cos^2\theta)\\
&= -\left(\pi\epsilon\abs{k}(I_1K_0-I_0K_1)+2I_1K_1\right) e^{2\pi iks}\sin^2\theta \\
&\qquad  + \left(4\pi^2\epsilon^2\abs{k}^2(I_1K_1-I_0K_0)+\textstyle 3\pi\epsilon\abs{k}(I_1K_0-I_0K_1) + 2I_1K_1 \right)e^{2\pi iks}\cos^2\theta \,.
\end{align*}
Here we have also used the Bessel function identity $K_2(z)=K_0(z)+2\frac{K_1(z)}{z}$. As before, in the final expressions, each $I_j$ and $K_j$ is evaluated at $2\pi\epsilon\abs{k}$. Altogether, the double layer operator $\overline{\mc{D}}$ can be seen to act on $\theta$-independent functions in directions normal to the filament centerline via 
\begin{equation}\label{eq:Dform}
\begin{aligned}
\overline{\mc{D}}[e^{2\pi iks}\be_x] &= \overline{\mc{D}}[e^{2\pi iks}\big(\cos\theta\be_r - \sin\theta\be_\theta\big)] 
= \left( Q_N\cos\theta\be_r - Q_O\sin\theta\be_\theta + iQ_P\cos\theta\be_z\right)e^{2\pi iks}\,, \\
Q_N &= 4\pi^2\epsilon^2\abs{k}^2(I_1K_1-I_0K_0)+\textstyle 3\pi\epsilon\abs{k}(I_1K_0-I_0K_1) + 2I_1K_1  \\
Q_O &= -\pi\epsilon\abs{k}(I_1K_0-I_0K_1)-2I_1K_1\\
Q_P &=4\pi^2\epsilon^2k^2(I_0K_1-I_1K_0) - 2\pi\epsilon\abs{k}I_1K_1 \,.
\end{aligned}
\end{equation}

We next turn to the single layer operator. Given the form of the expressions arising in the double layer operator, we will need to calculate a full matrix-valued multiplier for $\overline{\mc{S}}$ in order to determine how $\overline{\mc{S}}^{-1}$ acts on functions of the form \eqref{eq:Dform}. Recalling the series of identities \eqref{eq:sinKj_IDs} as well as the form \eqref{eq:straight_SL_kernel} of the integrand of $\overline{\mc{S}}$, we begin by calculating  
\begin{align*}
\overline{\mc{S}}[ &e^{2\pi i k s}\be_x]\cdot\be_x =\overline{\mc{S}}[ e^{2\pi i k s}\cos\theta\be_r]\cdot\be_x - \overline{\mc{S}}[ e^{2\pi i k s}\sin\theta\be_\theta]\cdot\be_x\\
&= \frac{1}{8\pi}\int_{-\infty}^{\infty}\int_0^{2\pi}\bigg(\textstyle\frac{1}{\sqrt{(s')^2+4\epsilon^2\sin^2(\frac{\theta'}{2})}} + \frac{4\epsilon^2\sin^2(\frac{\theta'}{2}) \big(\sin^2\theta - \sin^2(\frac{\theta'}{2})(\sin^2\theta-\cos^2\theta)\big)}{\sqrt{(s')^2+4\epsilon^2\sin^2(\frac{\theta'}{2})}^{\,3}}\bigg)\,e^{2\pi i k (s-s')} \,\epsilon\,d\theta'ds' \\
&= \frac{1}{4\pi}\int_0^{2\pi} \textstyle K_0(4\pi\epsilon\abs{k}\sin(\frac{\theta'}{2})) \,\epsilon\,d\theta'ds'\, e^{2\pi iks}
\displaystyle
+ \int_0^{2\pi} \textstyle \epsilon^2\abs{k}\sin(\frac{\theta'}{2})K_1(4\pi\epsilon\abs{k}\sin(\frac{\theta'}{2}))\,d\theta'\, e^{2\pi iks}\sin^2\theta \\
&\qquad \displaystyle
+ \int_0^{2\pi} \textstyle \epsilon^2\abs{k}\sin^3(\frac{\theta'}{2})K_1(4\pi\epsilon\abs{k}\sin(\frac{\theta'}{2}))\,d\theta'\, e^{2\pi iks}(\cos^2\theta -\sin^2\theta) \\
&= \frac{\epsilon}{2}(I_0K_0+I_1K_1)\, e^{2\pi iks}\sin^2\theta + \epsilon\left(\pi\epsilon\abs{k}(I_0K_1-I_1K_0) +\frac{1}{2}(I_0K_0-I_1K_1) \right)\, e^{2\pi iks}\cos^2\theta \,.
\end{align*}
As usual, $I_j=I_j(2\pi\epsilon\abs{k})$ and $K_j=K_j(2\pi\epsilon\abs{k})$ in the final expressions. We may also calculate
\begin{align*}
\overline{\mc{S}}[&e^{2\pi iks}\cos\theta\be_r]\cdot\be_x\\
&= \frac{1}{8\pi}\int_{-\infty}^\infty\int_0^{2\pi} \textstyle \bigg( \frac{8\epsilon^2(\sin^4(\frac{\theta'}{2})-\sin^6(\frac{\theta'}{2}))}{\sqrt{(s')^2+4\epsilon^2\sin^2(\frac{\theta'}{2})}^{\,3}} + \frac{4\sin^2(\frac{\theta'}{2})-4\sin^4(\frac{\theta'}{2})}{\sqrt{(s')^2+4\epsilon^2\sin^2(\frac{\theta'}{2})}}\bigg)\,e^{2\pi i k(s-s')}\,\epsilon\,d\theta'ds'\, \sin^2\theta \\
&\quad - \frac{1}{8\pi}\int_{-\infty}^\infty\int_0^{2\pi}\textstyle \bigg( \frac{4\epsilon^2\sin^4(\frac{\theta'}{2}) - 8\epsilon^2\sin^6(\frac{\theta'}{2})}{\sqrt{(s')^2+4\epsilon^2\sin^2(\frac{\theta'}{2})}^{\,3}}- \frac{1-4\sin^2(\frac{\theta'}{2})+4\sin^4(\frac{\theta'}{2}) )}{\sqrt{(s')^2+4\epsilon^2\sin^2(\frac{\theta'}{2})}}\bigg)\,e^{2\pi i k(s-s')}\,\epsilon\,d\theta'ds'\, \cos^2\theta\\
&= \int_0^{2\pi}\bigg( \textstyle
\frac{\epsilon}{\pi} (\sin^2(\frac{\theta'}{2})-\sin^4(\frac{\theta'}{2}))K_0(4\pi\epsilon\abs{k}\sin(\frac{\theta'}{2})) \\
&\qquad  + \textstyle
 2\epsilon^2\abs{k}(\sin^3(\frac{\theta'}{2})-\sin^5(\frac{\theta'}{2}))K_1(4\pi\epsilon\abs{k}\sin(\frac{\theta'}{2})) \bigg)\,d\theta'\, e^{2\pi iks}\sin^2\theta \\
 &\quad + \int_0^{2\pi}\bigg( \textstyle
\frac{\epsilon}{4\pi} (1-4\sin^2(\frac{\theta'}{2})+4\sin^4(\frac{\theta'}{2}))K_0(4\pi\epsilon\abs{k}\sin(\frac{\theta'}{2})) \\
&\qquad - \textstyle
 \epsilon^2\abs{k}(\sin^3(\frac{\theta'}{2})-2\sin^5(\frac{\theta'}{2}))K_1(4\pi\epsilon\abs{k}\sin(\frac{\theta'}{2})) \bigg)\,d\theta' \,e^{2\pi iks}\cos^2\theta \\
 &= \textstyle \epsilon\left(\frac{3}{4\pi\epsilon\abs{k}}(I_1K_0-I_0K_1)+\frac{3}{4\pi^2\epsilon^2k^2}I_1K_1 - \frac{1}{2}(I_0K_0-I_1K_1) \right) \,e^{2\pi iks}\sin^2\theta \\
&\quad - \epsilon\left( \textstyle \frac{3}{4\pi\epsilon\abs{k}}(I_1K_0-I_0K_1)+\frac{3}{4\pi^2\epsilon^2k^2}I_1K_1  -\pi\epsilon\abs{k}(I_0K_1-I_1K_0) - (I_0K_0-I_1K_1)  \right) \, e^{2\pi iks}\cos^2\theta\,,
\end{align*}
from which we may then obtain 
\begin{align*}
\overline{\mc{S}}[ e^{2\pi i k s}\sin\theta\be_\theta]\cdot\be_x
&=  \epsilon\left( \textstyle \frac{3}{4\pi\epsilon\abs{k}}(I_1K_0-I_0K_1)+\frac{3}{4\pi^2\epsilon^2k^2}I_1K_1 - I_0K_0 \right) e^{2\pi iks}\,\sin^2\theta  \\
&\quad + \epsilon\left( \textstyle -\frac{3}{4\pi\epsilon\abs{k}}(I_1K_0-I_0K_1)-\frac{3}{4\pi^2\epsilon^2k^2}I_1K_1   + \frac{1}{2}(I_0K_0-I_1K_1)  \right) \, e^{2\pi iks}\cos^2\theta \,.
\end{align*}

In the $\be_z$ direction, we additionally have
\begin{align*}
\overline{\mc{S}}[e^{2\pi iks}\cos\theta\be_r]\cdot\be_z
&= -\frac{1}{8\pi}\int_{-\infty}^\infty\int_0^{2\pi} \textstyle \frac{2\epsilon\,s'(\sin^2(\frac{\theta'}{2})-2\sin^4(\frac{\theta'}{2}))}{\sqrt{(s')^2+4\epsilon^2\sin^2(\frac{\theta'}{2})}^{\,3}} \,e^{2\pi i k(s-s')}\,\epsilon\,d\theta'ds'\, \cos\theta \\
&= -\int_0^{2\pi}\textstyle i\epsilon^2k\big(-\sin^2(\frac{\theta'}{2})+2\sin^4(\frac{\theta'}{2}) \big) K_0(4\pi\epsilon\abs{k}\sin(\frac{\theta'}{2}))\, d\theta'\, e^{2\pi i ks}\cos\theta\\
&=-\epsilon i\left(\textstyle\pi\epsilon\abs{k}(I_1K_1-I_0K_0) + \frac{1}{2}(I_1K_0-I_0K_1) + \frac{1}{2\pi\epsilon\abs{k}}I_1K_1 \right)e^{2\pi i ks}\cos\theta\,, \\
\overline{\mc{S}}[ e^{2\pi i k s}\sin\theta\be_\theta]\cdot\be_z
&= -\frac{1}{8\pi}\int_{-\infty}^\infty\int_0^{2\pi} \textstyle \frac{4\epsilon\,s'(\sin^2(\frac{\theta'}{2})-\sin^4(\frac{\theta'}{2}))}{\sqrt{(s')^2+4\epsilon^2\sin^2(\frac{\theta'}{2})}^{\,3}} \,e^{2\pi i k(s-s')}\,\epsilon\,d\theta'ds'\, \cos\theta \\
&= -\int_0^{2\pi}\textstyle i2\epsilon^2k\big(-\sin^2(\frac{\theta'}{2})+\sin^4(\frac{\theta'}{2}) \big) K_0(4\pi\epsilon\abs{k}\sin(\frac{\theta'}{2}))\, d\theta'\, e^{2\pi i ks}\cos\theta\\
&=-\epsilon i\left(\textstyle \frac{1}{2}(I_1K_0-I_0K_1) + \frac{1}{2\pi\epsilon\abs{k}}I_1K_1 \right)\, e^{2\pi i ks}\cos\theta\,, \\
\overline{S}[ e^{2\pi i k s}\cos\theta\be_z]\cdot\be_z
&=  \frac{1}{8\pi} \int_{-\infty}^\infty\int_0^{2\pi}\textstyle\bigg(\frac{2(1-2\sin^2(\frac{\theta'}{2}))}{\sqrt{(s')^2+4\epsilon^2\sin^2(\frac{\theta'}{2})}}
- \frac{4\epsilon^2(\sin^2(\frac{\theta'}{2})-2\sin^4(\frac{\theta'}{2}))}{\sqrt{(s')^2+4\epsilon^2\sin^2(\frac{\theta'}{2})}^{\,3}} \bigg)\, \,e^{2\pi i k(s-s')}\,\epsilon\,d\theta'ds'\, \cos\theta  \\
&= \frac{1}{2\pi}\int_0^{2\pi} \textstyle (1-2\sin^2(\frac{\theta'}{2}))K_0(4\pi\epsilon\abs{k}\sin(\frac{\theta'}{2}))\, \epsilon\,d\theta' \,e^{2\pi i ks}\cos\theta \\
&\qquad - \int_0^{2\pi} \textstyle \epsilon^2\abs{k}(\sin(\frac{\theta'}{2})-2\sin^3(\frac{\theta'}{2}))K_1(4\pi\epsilon\abs{k}\sin(\frac{\theta'}{2}))\, d\theta'\, e^{2\pi i ks}\cos\theta\\
&= \epsilon\left( \pi\epsilon\abs{k}(I_0K_1-I_1K_0) \right) \,e^{2\pi i ks}\cos\theta\,.
\end{align*}

To summarize, we may write the action of $\overline{\mc{S}}$ on the vectors $(\cos\theta \be_r,\sin\theta \be_\theta,\cos\theta \be_z)$ associated with the $\be_x$ direction as
\begin{equation}\label{eq:straight_SL_n}
\begin{aligned}
\overline{\mc{S}} &\begin{pmatrix}
e^{2\pi iks}\cos\theta \be_r \\
e^{2\pi iks}\sin\theta \be_\theta \\
e^{2\pi iks}\cos\theta\be_z
\end{pmatrix}
= \epsilon\begin{pmatrix}
Q_H & Q_I & -iQ_J \\
Q_I & Q_K & -iQ_L \\
iQ_J & iQ_L & Q_M
\end{pmatrix} \begin{pmatrix}
e^{2\pi iks}\cos\theta \be_r \\
e^{2\pi iks}\sin\theta \be_\theta \\
e^{2\pi iks}\cos\theta\be_z
\end{pmatrix}\,,\\
Q_H &=  \textstyle -\frac{3}{4\pi\epsilon\abs{k}}(I_1K_0-I_0K_1)-\frac{3}{4\pi^2\epsilon^2k^2}I_1K_1  +\pi\epsilon\abs{k}(I_0K_1-I_1K_0) + (I_0K_0-I_1K_1) \\
Q_I &= \textstyle -\frac{3}{4\pi\epsilon\abs{k}}(I_1K_0-I_0K_1)-\frac{3}{4\pi^2\epsilon^2k^2}I_1K_1 + \frac{1}{2}(I_0K_0-I_1K_1)  \\
Q_J &= \textstyle \pi\epsilon\abs{k}(I_1K_1-I_0K_0) + \frac{1}{2}(I_1K_0-I_0K_1) + \frac{1}{2\pi\epsilon\abs{k}}I_1K_1  \\
Q_K &= \textstyle -\bigg(\frac{3}{4\pi\epsilon\abs{k}}(I_1K_0-I_0K_1)+\frac{3}{4\pi^2\epsilon^2k^2}I_1K_1 - I_0K_0 \bigg)\\
Q_L &= \textstyle \frac{1}{2}(I_1K_0-I_0K_1) + \frac{1}{2\pi\epsilon\abs{k}}I_1K_1 \\
Q_M &= \pi\epsilon\abs{k}(I_0K_1-I_1K_0)\,,
\end{aligned}
\end{equation}
with each $I_j$, $K_j$ evaluated at $2\pi\epsilon\abs{k}$. Inverting the matrix \eqref{eq:straight_SL_n}, we arrive at the expression \eqref{eq:Sinv_normal2}, \eqref{eq:Sinv_components} for $\bm{M}_{\rm S,n}^{-1}(k)$ stated in Lemma \ref{lem:straight_SD}. The same expression for the $y$-direction follows by symmetry. 
\end{proof}

\section{Proof of Proposition \ref{prop:Phi_comm}: commutator estimate}\label{app:Phi_comm}
Here we show the commutator estimate \eqref{eq:Phi_comm_est} for the multiplier $\overline{\mc{L}}_\epsilon\p_s^2$. We will rely on the tools of section \ref{subsec:besov}. Before proving Proposition \ref{prop:Phi_comm}, we show the following auxiliary lemma. 

\begin{lemma}[Order one commutator estimate]\label{lem:symbol_comm}
Let $\wh{\bm{M}}(\xi)$, $\xi\in \R$, be a matrix-valued $W^{2,\infty}_{\rm loc}$ function satisfying 
\begin{equation}\label{eq:M_conditions}
\abs{\phi_j(\xi)\wh{\bm{M}}(\xi)} \le b_0 \,2^{j}\,, \qquad 
\abs{\p_\xi^2(\phi_j(\xi)\wh{\bm{M}}(\xi))} \le b_1 \,2^{-j}
\end{equation}
for $\phi_j(\xi)$ as in \eqref{eq:phij_def}.
Given matrix-valued $\bm{A}\in C^{n+1,\alpha}(\R)$ and vector-valued $\bm{g}\in C^{n,\alpha}(\R)$, $0\le n\in \Z$, $0<\alpha<1$, the commutator $T_{\wh{\bm{M}}} (\bm{A}\bm{g}) - \bm{A}(T_{\wh{\bm{M}}}\bm{g})$ satisfies 
\begin{equation}\label{eq:symbol_comm}
\norm{T_{\wh{\bm{M}}} (\bm{A}\bm{g}) - \bm{A}(T_{\wh{\bm{M}}}\bm{g})}_{C^{n,\alpha}} 
\le c\,(\sqrt{b_0b_1}+b_1)\norm{\bm{A}}_{C^{n+1,\alpha}}\norm{\bm{g}}_{C^{n,\alpha}} \,.
\end{equation}
\end{lemma}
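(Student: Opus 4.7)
The plan is to use a Bony-style paraproduct decomposition at the level of Littlewood--Paley projections of $\wh{\bm{M}}$, exploiting the commutator structure to obtain the gain of one derivative beyond the first-order mapping properties of $T_{\wh{\bm{M}}}$ itself. First I would reduce to the base case $n=0$: since $T_{\wh{\bm{M}}}$ commutes with $\p_s$, Leibniz' rule applied to $\p_s^k\bigl([T_{\wh{\bm{M}}},\bm{A}]\bm{g}\bigr)$ yields a finite sum of commutators $[T_{\wh{\bm{M}}},\p_s^j\bm{A}]\p_s^{k-j}\bm{g}$ with $j\le k\le n$; each has $\p_s^j\bm{A}\in C^{n+1-j,\alpha}\subset C^{1,\alpha}$ and $\p_s^{k-j}\bm{g}\in C^{n-k+j,\alpha}\subset C^{0,\alpha}$, so a $C^{0,\alpha}$ bound on each reduces matters to the $n=0$ statement.

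For the base case, decompose $T_{\wh{\bm{M}}}=\sum_j T_j$ with $T_j = T_{\phi_j\wh{\bm{M}}}$ and $\bm{M}_j = \mc{F}^{-1}[\phi_j\wh{\bm{M}}]$, so that
\[
 [T_j,\bm{A}]\bm{g}(s) = \int \bm{M}_j(s-s')\bigl[\bm{A}(s')-\bm{A}(s)\bigr]\bm{g}(s')\,ds',
\]
and apply Bony's paraproduct to $\bm{A}\bm{g}$. The essential low-high piece is handled by a first-order Taylor expansion $P_{\le k-2}\bm{A}(s')-P_{\le k-2}\bm{A}(s) = (s'-s)(P_{\le k-2}\bm{A})'(s) + R_1(s,s')$ with $|R_1|\le \abs{\bm{A}'}_{\dot C^{0,\alpha}}|s-s'|^{1+\alpha}$; the principal term collapses to the operator with symbol $\p_\xi(\phi_j\wh{\bm{M}})$. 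Landau--Kolmogorov interpolation on the annulus $\mathrm{supp}(\phi_j)$ (of length $\sim 2^j$) between the hypotheses gives $|\p_\xi(\phi_j\wh{\bm{M}})|\lesssim\sqrt{b_0 b_1}$ and, iterating, $|\p_\xi^3(\phi_j\wh{\bm{M}})|\lesssim\sqrt{b_0 b_1}\cdot 2^{-2j}$, whence Lemma \ref{lem:besov} yields $\norm{t\bm{M}_j}_{L^1}\le c\sqrt{b_0 b_1}$ uniformly in $j$. The Taylor remainder, together with the high-low and resonant paraproduct contributions, are controlled using $\norm{\bm{M}_j}_{L^1}\lesssim \sqrt{b_0 b_1}\cdot 2^j$ and the crude decay estimate $|\bm{M}_j(t)|\lesssim b_1/t^2$ (obtained by two integrations by parts in $\xi$ and the $L^1$-on-annulus bound on $\p_\xi^2(\phi_j\wh{\bm{M}})$), producing the $b_1$ contribution to \eqref{eq:symbol_comm}. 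Summation in $j$ is convergent by the H\"older regularity of $\bm{A}'$ and $\bm{g}$ combined with the Besov characterization \eqref{eq:besov} of $C^{k,\alpha}$, and the $C^{0,\alpha}$ seminorm is extracted by running the argument on finite differences.

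The main technical obstacle is that $\wh{\bm{M}}$ is of order one at Littlewood--Paley scale $j$, so $T_{\wh{\bm{M}}}$ alone only maps $C^{1,\alpha}$ into $C^{0,\alpha}$: the estimate \eqref{eq:symbol_comm} therefore demands the full gain of one derivative from the commutator cancellation. The delicate point is combining the Landau--Kolmogorov interpolation correctly with the Bony decomposition so that the dyadic sums converge with constants depending on $\sqrt{b_0 b_1}$ and $b_1$ alone; one must verify in particular that the auxiliary $L^1$ bounds on $t^\ell\bm{M}_j$ for $\ell=0,1,2$ are genuinely geometric in $j$, with no logarithmic loss at the transition scale where $b_0 2^j$ and $b_1 2^{-j}$ become comparable.
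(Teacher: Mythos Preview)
Your overall architecture is essentially the paper's: Bony paraproduct decomposition into low--high, high--low, and diagonal pieces, with the commutator gain in the low--high piece coming from the weighted moment $\int|\bm{M}_k(s)||s|\,ds$. The reduction to $n=0$ via Leibniz is fine (the paper instead works directly with the Besov weight $2^{k(n+\alpha)}$, but the two are equivalent). The high--low and diagonal pieces are correctly identified as not needing the commutator cancellation, since the extra decay comes from high-frequency projections of $\bm{A}$.

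There are two genuine gaps in your treatment of the low--high piece. First, the claim $|\p_\xi^3(\phi_j\wh{\bm{M}})|\lesssim\sqrt{b_0b_1}\,2^{-2j}$ cannot be obtained by ``iterating'' Landau--Kolmogorov: interpolating between $\p_\xi^2$ and a fourth derivative would require a bound on $\p_\xi^4(\phi_j\wh{\bm{M}})$, which is nowhere assumed. Without this, you cannot apply Lemma~\ref{lem:besov} to the symbol $\p_\xi(\phi_j\wh{\bm{M}})$ to conclude $\|t\bm{M}_j\|_{L^1}\lesssim\sqrt{b_0b_1}$. Second, the H\"older Taylor remainder $|R_1|\le|\bm{A}'|_{\dot C^{0,\alpha}}|s-s'|^{1+\alpha}$ forces you to control $\int|\bm{M}_j(t)||t|^{1+\alpha}\,dt$, but the only available pointwise decay $|\bm{M}_j(t)|\lesssim b_1 t^{-2}$ (from two integrations by parts) leaves $\int_{|t|\gtrsim 2^{-j}}|t|^{\alpha-1}\,dt$, which diverges for every $\alpha>0$.

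The paper's fix avoids both problems simultaneously: do not separate the linear and H\"older parts of the Taylor expansion. Use only the mean value bound $|P_{\le j-2}\bm{A}(s')-P_{\le j-2}\bm{A}(s)|\le|s-s'|\,\|\bm{A}'\|_{L^\infty}$, so that the only moment needed is $\int|\bm{M}_k(s)||s|\,ds$. This is then estimated directly by splitting at $R_k\sim 2^{-k}$: on $|s|\le R_k$ use $R_k\|\bm{M}_k\|_{L^1}\lesssim R_k\,2^k\sqrt{b_0b_1}\sim\sqrt{b_0b_1}$ from Lemma~\ref{lem:besov}; on $|s|>R_k$ use Cauchy--Schwarz and Plancherel to get $\|s^2\bm{M}_k\|_{L^2}\,\|s^{-1}\|_{L^2(|s|>R_k)}\lesssim\|\p_\xi^2(\phi_k\wh{\bm{M}})\|_{L^2}\,R_k^{-1/2}\lesssim b_1\,2^{-k/2}\cdot 2^{k/2}=b_1$. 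This uses only the second-derivative hypothesis and produces exactly the $\sqrt{b_0b_1}+b_1$ constant in \eqref{eq:symbol_comm}.
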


\begin{proof}
Following \cite[Chapter 2.8]{bahouri2011fourier}, we begin by defining a decomposition of the product $\bm{A}\bm{g}$ into three pieces capturing interactions among different modes in the Littlewood-Paley decomposition \eqref{eq:littlewoodp}. We write 
\begin{equation}\label{eq:mode_decomp}
\bm{A}\bm{g} = \mc{T}_{LH}(\bm{A},\bm{g}) + \mc{T}_{HL}(\bm{A},\bm{g}) + \mc{T}_{HH}(\bm{A},\bm{g})\,,
\end{equation} 
where the low-high, high-low, and high-high interactions, respectively, are given by 
\begin{equation}\label{eq:T_op_defs}
\begin{aligned}
\mc{T}_{LH}(\bm{A},\bm{g}) &= \sum_{j\ge 1} P_j\bm{A} \,P_{\le j-2}\bm{g} \\
\mc{T}_{HL}(\bm{A},\bm{g}) &= \sum_{j\ge 1} P_{\le j-2}\bm{A} \,P_j\bm{g} \\
\mc{T}_{HH}(\bm{A},\bm{g}) &= P_{\le 0}\bm{A}\,P_{\le 0}\bm{g}+ \sum_{\abs{i-j}\le 1,\, i+j\ge 1} P_j\bm{A} \,P_i\bm{g} \,.
\end{aligned}
\end{equation}
Using the definitions \eqref{eq:T_op_defs}, we decompose the commutator expression \eqref{eq:symbol_comm} into three parts: 
\begin{equation}\label{eq:comm_decomp}
\begin{aligned}
T_{\wh{\bm{M}}} (\bm{A}\bm{g}) - \bm{A}(T_{\wh{\bm{M}}}\bm{g}) &= J_{LH}+J_{HL}+J_{HH}\,,\\
J_{LH} &= T_{\wh{\bm{M}}} \mc{T}_{LH}(\bm{A},\bm{g}) - \mc{T}_{LH}(\bm{A},T_{\wh{\bm{M}}}\bm{g})  \\
J_{HL}&= T_{\wh{\bm{M}}} \mc{T}_{HL}(\bm{A},\bm{g}) - \mc{T}_{HL}(\bm{A},T_{\wh{\bm{M}}}\bm{g})  \\
J_{HH} &= T_{\wh{\bm{M}}} \mc{T}_{HH}(\bm{A},\bm{g}) - \mc{T}_{HH}(\bm{A},T_{\wh{\bm{M}}}\bm{g}) \,.
\end{aligned}
\end{equation}

We first compute the low-high interactions $J_{LH}$. For $k\ge 1$, we have 
\begin{align*}
P_k J_{LH} &= P_k\bigg(T_{\wh{\bm{M}}}\sum_{j=k-2}^{k+2}P_{\le j-2}\bm{A}\,P_j\bm{g}
-\sum_{j=k-2}^{k+2}P_{\le j-2}\bm{A}\,P_j(T_{\wh{\bm{M}}}\bm{g})\bigg) \\
&= P_k\bigg(P_{k-5\le \ell\le k+5}T_{\wh{\bm{M}}}\sum_{j=k-2}^{k+2}P_{\le j-2}\bm{A}\,P_j\bm{g}
-\sum_{j=k-2}^{k+2}P_{\le j-2}\bm{A}\, P_{k-5\le \ell\le k+5}T_{\wh{\bm{M}}} P_j\bm{g}\bigg)\,.
\end{align*}
Let $\bm{M}_k$ denote the physical space kernel corresponding to the operator $P_{k-5\le j\le k+5}T_{\wh{\bm{M}}}$. For each $j$, we may write
\begin{align*}
&P_{k-5\le \ell\le k+5}T_{\wh{\bm{M}}}\,P_{\le j-2}\bm{A}\,P_j\bm{g}
-P_{\le j-2}\bm{A}\,P_{k-5\le \ell\le k+5}T_{\wh{\bm{M}}} \,P_j\bm{g}\\
&\qquad = \int_\R\bigg(\bm{M}_k(s-s')\,P_{\le j-2}\bm{A}(s')\,P_j\bm{g}(s') 
- P_{\le j-2}\bm{A}(s)\,\bm{M}_k(s-s')\,P_j\bm{g}(s') \bigg)\,ds'\,.
\end{align*}
We may then estimate 
\begin{equation}\label{eq:JLH_integral_est}
\begin{aligned}
&\abs{\int_\R \bm{M}_k(s-s')\,\big(P_{\le j-2}\bm{A}(s')- P_{\le j-2}\bm{A}(s)\big) \,P_j\bm{g}(s') \,ds'} \\
&\qquad \le c\int_\R\abs{\bm{M}_k(s-s')}\abs{s-s'}\norm{\p_s(P_{\le j-2}\bm{A})}_{L^\infty}\norm{P_j\bm{g}}_{L^\infty} \,ds' \\
&\qquad \le c\,\norm{\bm{M}_k\,\abs{s}\,}_{L^1}\, 2^j\norm{P_{\le j-2}\bm{A}}_{L^\infty}\norm{P_j\bm{g}}_{L^\infty}\,.
\end{aligned}
\end{equation}
We note that \eqref{eq:M_conditions} implies that
\begin{align*}
\int_\R \abs{\bm{M}_k(s)}\abs{s}\,ds &\le R_k\int_{\abs{s}\le R_k}\abs{\bm{M}_k}\,ds + \int_{\abs{s}>R_k}\abs{\bm{M}_k}\abs{s}\,ds \\
&\le R_k\int_{\abs{s}\le R_k}\abs{\bm{M}_k}\,ds + \bigg(\int_{\abs{s}>R_k}\abs{\bm{M}_k}^2\abs{s}^4\,ds\bigg)^{1/2}\bigg(\int_{\abs{s}>R_k}\abs{s}^{-2}\,ds\bigg)^{1/2}\\
&\le R_k\norm{\bm{M}_k}_{L^1(\R)} + c\,R_k^{-1/2}\norm{\p_\xi^2\wh{\bm{M}}_k}_{L^2(\R)}
\le c\, R_k \,2^k\sqrt{b_0b_1} + c\, R_k^{-1/2}\,2^{-k/2}b_1\,,
\end{align*}
where we have used Lemma \ref{lem:besov}. Choosing $R_k\sim 2^{-k}$, we have
\begin{align*}
\int_\R \abs{\bm{M}_k(s)}\abs{s}\,ds \le c\,(\sqrt{b_0b_1}+b_1)\,,
\end{align*}
so, in particular,
\begin{align*}
\abs{\int_\R \bm{M}_k(s-s')\,\big(P_{\le j-2}\bm{A}(s')- P_{\le j-2}\bm{A}(s)\big) \,P_j\bm{g}(s') \,ds'} 
\le c\,(\sqrt{b_0b_1}+b_1)\,2^j\norm{P_{\le j-2}\bm{A}}_{L^\infty}\norm{P_j\bm{g}}_{L^\infty}\,.
\end{align*}
Summing in $j$ and applying $P_k$, $k\ge1$, we thus obtain  
\begin{align*}
2^{k(n+\alpha)}\norm{P_k J_{LH}}_{L^\infty} &\le c\,(\sqrt{b_0b_1}+b_1)\,2^{k(n+\alpha)}\sum_{j=k-2}^{k+2} 2^j\norm{P_{\le j-2}\bm{A}}_{L^\infty}\norm{P_j\bm{g}}_{L^\infty} \\
&\le c\,(\sqrt{b_0b_1}+b_1)\,\norm{\bm{A}}_{C^1}\norm{\bm{g}}_{C^{n,\alpha}}\,.
\end{align*}
For $P_{\le 0}J_{LH}$, we have $\norm{P_{\le 0} J_{LH}}_{L^\infty}\le c\,(\sqrt{b_0b_1}+b_1)\,\norm{\bm{A}}_{L^\infty}\norm{\bm{g}}_{L^\infty}$. 

We next consider the high-low interactions $J_{HL}$ in \eqref{eq:comm_decomp}. Similar to before, for $k\ge 1$, we have
\begin{align*}
P_k J_{HL} &= P_k\bigg(P_{k-5\le \ell\le k+5}T_{\wh{\bm{M}}}\sum_{j=k-2}^{k+2}P_j\bm{A}P_{\le j-2}\bm{g}
-\sum_{j=k-2}^{k+2}P_j\bm{A}P_{k-5\le \ell\le k+5}T_m P_{\le j-2}\bm{g}\bigg)\,.
\end{align*}
For each $j$, the analogous integral to \eqref{eq:JLH_integral_est} is then
\begin{align*}
&\abs{\int_\R\bigg(\bm{M}_k(s-s')\,P_j\bm{A}(s')\,P_{\le j-2}\bm{g}(s') 
- P_j\bm{A}(s)\,\bm{M}_k(s-s')\,P_{\le j-2}\bm{g}(s') \bigg)\,ds'} \\
&\qquad \le 2\norm{\bm{M}_k}_{L^1}\norm{P_j\bm{A}}_{L^\infty}\norm{P_{\le j-2}\bm{g}}_{L^\infty} 
\le c\,\sqrt{b_0b_1}\, 2^k\norm{P_j\bm{A}}_{L^\infty}\norm{P_{\le j-2}\bm{g}}_{L^\infty}\,,
\end{align*}
where we have used Lemma \ref{lem:besov} to bound $\norm{\bm{M}_k}_{L^1}$.
Summing in $j$ and applying $P_k$, we have 
\begin{align*}
2^{k(n+\alpha)}\norm{P_k J_{HL}}_{L^\infty} &\le c\, 2^{k(n+\alpha)}\sum_{j=k-2}^{k+2}c\,\sqrt{b_0b_1}\, 2^k\norm{P_j\bm{A}}_{L^\infty}\norm{P_{\le j-2}\bm{g}}_{L^\infty}\\
&\le c\,\sqrt{b_0b_1}\,\norm{\bm{A}}_{C^{n+1,\alpha}}\norm{\bm{g}}_{L^\infty}\,.
\end{align*}
We additionally have $\norm{P_{\le 0} J_{HL}}_{L^\infty}\le c\,\sqrt{b_0b_1}\,\norm{\bm{A}}_{L^\infty}\norm{\bm{g}}_{L^\infty}$.

Similarly, for $J_{HH}$, for $k\ge 3$ we have
\begin{align*}
P_k J_{HH} &= P_k\bigg(P_{k-5\le \ell\le k+5}T_{\wh{\bm{M}}}\sum_{j\ge k-2,\,\abs{i-j}\le 1,\,i+j\ge1}P_j\bm{A}\,P_i\bm{g}
-\sum_{j\ge k-2,\,\abs{i-j}\le 1,\,i+j\ge1}P_j\bm{A}\,P_{k-5\le \ell\le k+5}T_{\wh{\bm{M}}} P_i\bm{g}\bigg)\,.
\end{align*}
For each $j,i$ satisfying $j\ge k-2$, $\abs{i-j}\le 1$, $i+j\ge1$, as in \eqref{eq:JLH_integral_est}, we must estimate the integral 
\begin{align*}
&\abs{\int_\R\bigg(\bm{M}_k(s-s')\,P_j\bm{A}(s')\,P_i\bm{g}(s') 
- P_j\bm{A}(s)\,\bm{M}_k(s-s')\,P_i\bm{g}(s') \bigg)\,ds'} \\
&\qquad \le 2\norm{\bm{M}_k}_{L^1}\norm{P_j\bm{A}}_{L^\infty}\norm{P_i\bm{g}}_{L^\infty} 
\le c \,\sqrt{b_0b_1}\, 2^k\norm{P_j\bm{A}}_{L^\infty}\norm{P_i\bm{g}}_{L^\infty}\,.
\end{align*}
Again summing in $j$ and $i$ and applying $P_k$, we have
\begin{align*}
2^{k(n+\alpha)}\norm{P_k J_{HH}}_{L^\infty} &\le c \,\sqrt{b_0b_1}\, 2^{k(n+\alpha)}\sum_{j\ge k-2,\,\abs{i-j}\le 1,\,i+j\ge1} 2^k\norm{P_j\bm{A}}_{L^\infty}\norm{P_i\bm{g}}_{L^\infty}\\
&\le c\,\sqrt{b_0b_1} \sum_{j\ge k-2} 2^{k(n+1+\alpha)}2^{-j(n+1+\alpha)}\norm{\bm{A}}_{C^{n+1,\alpha}} 2^{-j\alpha}\norm{\bm{g}}_{C^{0,\alpha}}\\
&\le c\,\sqrt{b_0b_1} \,\norm{\bm{A}}_{C^{n+1,\alpha}}\norm{\bm{g}}_{C^{0,\alpha}}\,.
\end{align*}
Furthermore, for $P_{\le 2}J_{HH}$ we have $\norm{P_{\le 2} J_{HH}}_{L^\infty}\le c\,\sqrt{b_0b_1}\,\norm{\bm{A}}_{L^\infty}\norm{\bm{g}}_{L^\infty}$.

In total, we obtain Lemma \ref{lem:symbol_comm}.
\end{proof}

We may now prove Proposition \ref{prop:Phi_comm} by showing that the operator $\overline{\mc{L}}_\epsilon\p_s^2$ satisfies the assumptions \eqref{eq:M_conditions} of Lemma \ref{lem:symbol_comm}.

\begin{proof}[Proof of Proposition \ref{prop:Phi_comm}]
Recalling the definition \eqref{eq:Meps12_defs} of the multiplier $\wh{\bm{M}}_\epsilon^{(1)}(\xi)$, we may write $\overline{\mc{L}}_\epsilon\p_s^2$ as $T_{\wh{\bm{M}}}$ where 
\begin{equation}\label{eq:our_whM}
\wh{\bm{M}}(\xi)=\wh{\bm{M}}_\epsilon^{(1)}(\xi)\xi^2\,.
\end{equation}
Using Lemmas \ref{lem:mt_bounds} and \ref{lem:mn_bounds}, we immediately have that 
\begin{equation}\label{eq:mult_assumptions}
\abs{\phi_j(\xi)\wh{\bm{M}}(\xi)} \le c\,\epsilon^{-1}\abs{\log\epsilon}\,2^{j}\,, \qquad
\abs{\p_\xi^2(\phi_j(\xi)\wh{\bm{M}}(\xi))} \le c\,\epsilon^{-1}\abs{\log\epsilon}\,2^{-j} \,.
\end{equation}
We may thus apply Lemma \ref{lem:symbol_comm} with $b_0=b_1=c\,\epsilon^{-1}\abs{\log\epsilon}$ to obtain Proposition \ref{prop:Phi_comm}.
\end{proof}


\section{Proof of Lemma \ref{lem:new_alpha_est}: hypersingular bounds}\label{app:new_alphalem}

Using that $\varphi\in C^{1,\alpha}(\Gamma_\epsilon)$, we may expand $\varphi$ about $(s,\theta)$ as
\begin{equation}\label{eq:varphi_expand}
\begin{aligned}
\varphi(s-\bars,\theta-\bartheta)- \varphi(s,\theta) &= \textstyle -\bars \,\p_s\varphi(s,\theta) - \epsilon\bartheta\,\frac{1}{\epsilon}\p_\theta\varphi(s,\theta) \\
&\qquad+ \abs{\bars}^{1+\alpha} Q_{\varphi,s}(s,\theta,\bars,\bartheta) + |\epsilon\bartheta|^{1+\alpha} Q_{\varphi,\theta}(s,\theta,\bars,\bartheta) \,,
\end{aligned}
\end{equation} 
where 
\begin{equation}\label{eq:Qvarphi}
\norm{Q_{\varphi,s}}_{L^\infty}\le c\norm{\p_s\varphi}_{C^{0,\alpha}} \,, 
\qquad
\norm{Q_{\varphi,\theta}}_{L^\infty}\le \textstyle c\norm{\frac{1}{\epsilon}\p_\theta\varphi}_{C^{0,\alpha}} \,.
\end{equation}
Similarly, we may expand $\varphi$ about $(s_0+s,\theta_0+\theta)$ as
\begin{equation}\label{eq:varphi_expand2}
\varphi(s-\bars,\theta-\bartheta)- \varphi(s_0+s,\theta_0+\theta) 
=(s_0+\bars) H_{\varphi,s} + \epsilon(\theta_0+\bartheta)H_{\varphi,\theta}\,,
\end{equation}
but make note of two different expressions for the remainder terms $H_\varphi$. The first is given by
\begin{equation}\label{eq:rem1}
\begin{aligned}
(s_0+\bars)H_{\varphi,s} &= -(s_0+\bars)\p_s\varphi(s_0+s,\theta_0+\theta)\\
&\qquad +\abs{s_0+\bars}^{1+\alpha}Q_{\varphi,s}(s_0+s,\theta_0+\theta,s_0+\bars,\theta_0+\bartheta)\\ 
\epsilon(\theta_0+\bartheta)H_{\varphi,\theta}&= \textstyle -\epsilon(\theta_0+\bartheta)\,\frac{1}{\epsilon}\p_\theta\varphi(s_0+s,\theta_0+\theta)\\
&\qquad +\abs{\epsilon(\theta_0+\bartheta)}^{1+\alpha}Q_{\varphi,\theta}(s_0+s,\theta_0+\theta,s_0+\bars,\theta_0+\bartheta)\,,
\end{aligned}
\end{equation}
i.e. the expansion \eqref{eq:varphi_expand} evaluated at $(s_0+s,\theta_0+\theta,s_0+\bars,\theta_0+\bartheta)$. The second is given by 
\begin{equation}\label{eq:rem2}
\begin{aligned}
(s_0+\bars)H_{\varphi,s} &= -(s_0+\bars)\p_s\varphi(s,\theta)+\abs{s_0}^{1+\alpha}Q_{\varphi,s}(s,\theta,s_0+s,\theta_0+\theta)\\
&\qquad +\abs{\bars}^{1+\alpha}Q_{\varphi,s}(s,\theta,\bars,\bartheta)\\ 
\epsilon(\theta_0+\bartheta)H_{\varphi,\theta}&=\textstyle -\epsilon(\theta_0+\bartheta)\,\frac{1}{\epsilon}\p_\theta\varphi(s,\theta)+\abs{\epsilon\theta_0}^{1+\alpha}Q_{\varphi,\theta}(s,\theta,s_0+s,\theta_0+\theta) \\
&\qquad+ |\epsilon\bartheta|^{1+\alpha}Q_{\varphi,\theta}(s,\theta,\bars,\bartheta)\,.
\end{aligned}
\end{equation}
In both \eqref{eq:rem1} and \eqref{eq:rem2}, the terms $Q_{\varphi,\mu}$ are as in \eqref{eq:Qvarphi}.

Let $\wh I_\varphi$ denote the quantity of interest, i.e.
\begin{align*}
\wh I_\varphi&= \bigg|{\rm p.v.}\int_{-1/2-s_0}^{1/2-s_0}\int_{-\pi-\theta_0}^{\pi-\theta_0}I_{\varphi,\ell kn}(s_0+s,\theta_0+\theta,s_0+\bars,\theta_0+\bartheta)\,\epsilon\, d\bartheta d\bars \\
&\qquad\qquad\qquad\qquad - {\rm p.v.}\int_{-1/2}^{1/2}\int_{-\pi}^{\pi}I_{\varphi,\ell kn}(s,\theta,\bars,\bartheta)\,\epsilon\, d\bartheta d\bars\,\bigg|\,.
\end{align*} 
We begin by considering the case $\sqrt{s_0^2+\epsilon^2\theta_0^2}\ge \epsilon$. Using the first expansion \eqref{eq:rem1} for $\varphi$, we have 
\begin{align*}
\abs{\wh I_\varphi}
&\le 2\norm{{\rm p.v.}\int_{-1/2}^{1/2}\int_{-\pi}^{\pi}\frac{\bars^\ell(\epsilon\sin(\frac{\bartheta}{2}))^k\,g}{\abs{\bR}^n}\bigg(\bars\p_s\varphi(s,\theta)+\epsilon\bartheta\, \textstyle \frac{1}{\epsilon}\p_\theta\varphi(s,\theta) \bigg)\,\epsilon\, d\bartheta d\bars\,}_{L^\infty}\\
&\qquad +  2\norm{\int_{-1/2}^{1/2}\int_{-\pi}^\pi\frac{\bars^\ell(\epsilon\sin(\frac{\bartheta}{2}))^k\,g}{\abs{\bR}^n}\big( \abs{\bars}^{1+\alpha} Q_{\varphi,s} + |\epsilon\bartheta|^{1+\alpha} Q_{\varphi,\theta}\big)\,\epsilon\, d\bartheta d\bars }_{L^\infty} \\
&
\le c(\kappa_*,c_\Gamma)\,\epsilon^\alpha\bigg(\norm{g}_{C^{0,\alpha}_2}\big(\norm{\p_s\varphi}_{L^\infty}+\textstyle \norm{\frac{1}{\epsilon}\p_\theta\varphi}_{L^\infty}\big)+\norm{g}_{L^\infty}\big(\norm{\p_s\varphi}_{C^{0,\alpha}}+\textstyle \norm{\frac{1}{\epsilon}\p_\theta\varphi}_{C^{0,\alpha}}\big)\bigg) \\
&
\le c(\kappa_*,c_\Gamma)\,\sqrt{s_0^2+\epsilon^2\theta_0^2}^{\,\alpha}\bigg(
\norm{g}_{C^{0,\alpha}_2}\big(\norm{\p_s\varphi}_{L^\infty}+\textstyle \norm{\frac{1}{\epsilon}\p_\theta\varphi}_{L^\infty}\big)\\
&\qquad\qquad  + \norm{g}_{L^\infty}\big(\norm{\p_s\varphi}_{C^{0,\alpha}}+\textstyle \norm{\frac{1}{\epsilon}\p_\theta\varphi}_{C^{0,\alpha}}\big)\bigg)\,.
\end{align*}
Here we have used Lemmas \ref{lem:odd_nm} and \ref{lem:basic_est} and the fact that $\epsilon\le \sqrt{s_0^2+\epsilon^2\theta_0^2}$.

We next consider the case $\sqrt{s_0^2+\epsilon^2\theta_0^2}< \epsilon$. Throughout, we will use the notation
\begin{align*}
 \bR &= \bR(s,\theta,\bars,\bartheta)\,, \qquad \bR_0 = \bR(s_0+s,\theta_0+\theta,s_0+\bars,\theta_0+\bartheta)\,,\\
 g_0 &= g(s_0+s,\theta_0+\theta,s-\bars,\theta-\bartheta)\,, \qquad 
 g_0^0=g_0\big|_{(s_0+\bars,\theta_0+\bartheta)=(0,0)}\,, \qquad g^0=g\big|_{(\bars,\bartheta)=(0,0)}
 \end{align*} 
 to condense the exposition. 
Using the above notation as well as the expansions \eqref{eq:varphi_expand} and \eqref{eq:varphi_expand2}, we may write 
\begin{align*}
\wh I_\varphi &= J_{\varphi,s}+J_{\varphi,\theta} \,,\\
J_{\varphi,s}&= {\rm p.v.}\int_{-1/2-s_0}^{1/2-s_0}\int_{-\pi-\theta_0}^{\pi-\theta_0}\frac{(s_0+\bars)^\ell(\epsilon\sin(\frac{\theta_0+\bartheta}{2}))^k\,g_0}{\abs{\bR_0}^n}(s_0+\bars)H_{\varphi,s}\,\epsilon\,d\bartheta d\bars \\
&\qquad\qquad - {\rm p.v.}\int_{-1/2}^{1/2}\int_{-\pi}^{\pi}\frac{\bars^\ell(\epsilon\sin(\frac{\bartheta}{2}))^k\,g}{\abs{\bR}^n}\big(-\bars\p_s\varphi(s,\theta)+\abs{\bars}^{1+\alpha}Q_{\varphi,s}(s,\theta,\bars,\bartheta) \big) \,\epsilon\,d\bartheta d\bars\\
J_{\varphi,\theta}&= {\rm p.v.}\int_{-1/2-s_0}^{1/2-s_0}\int_{-\pi-\theta_0}^{\pi-\theta_0}\frac{(s_0+\bars)^\ell(\epsilon\sin(\frac{\theta_0+\bartheta}{2}))^k\,g_0}{\abs{\bR_0}^n}\epsilon(\theta_0+\bartheta)H_{\varphi,\theta}\,\epsilon\,d\bartheta d\bars \\
&\qquad  - {\rm p.v.}\int_{-1/2}^{1/2}\int_{-\pi}^{\pi}\frac{\bars^\ell(\epsilon\sin(\frac{\bartheta}{2}))^k\,g}{\abs{\bR}^n}\bigg(\textstyle -\epsilon\bartheta \,\frac{1}{\epsilon}\p_\theta\varphi(s,\theta)+|\epsilon\bartheta|^{1+\alpha}Q_{\varphi,\theta}(s,\theta,\bars,\bartheta) \bigg)\,\epsilon\, d\bartheta d\bars\,.
\end{align*}
In order to bound these terms, we will need to introduce a function $\bR_{\rm even}$ satisfying 
\begin{equation}\label{eq:Reven}
\abs{\bR_{\rm even}}^2 := \abs{\overline{\bm{R}}}^2 +\epsilon\bars^2Q_{R,0}(s,\theta) +\kappa_3\epsilon^2\bars\sin(\bartheta)\,,
\end{equation}
where $Q_{R,0}$ is as in \eqref{eq:Q_Rj}, and crucially does not depend on $\bars$ or $\bartheta$. In particular, since $|\barR|$ is even about both $\bars=0$ and $\bartheta=0$, this means that $\abs{\bR_{\rm even}}$ is also even about $\bars=0$ and $\bartheta=0$. For nonnegative integers $n$ and $m$, we thus have that the expression
\begin{equation}\label{eq:odd_kernel}
\frac{\bars^n(\epsilon\sin(\frac{\bartheta}{2}))^m}{\abs{\bR_{\rm even}}^{n+m+2}}\,, \quad n+m \text{ odd},
\end{equation}
is globally odd about $\bars=\bartheta=0$. Therefore we have that
\begin{equation}\label{eq:oddness}
{\rm p.v.}\int_{-1/2}^{1/2}\int_{-\pi}^{\pi}\frac{\bars^n(\epsilon\sin(\frac{\bartheta}{2}))^m }{\abs{\bR_{\rm even}}^{n+m+2}}\, \epsilon d\bartheta d\bars = 0\,,
\end{equation}
and thus kernels of the form \eqref{eq:odd_kernel} with constant-in-$\bars$-and-$\bartheta$ coefficients may be inserted into the integral expressions for $J_{\varphi,s}$ and $J_{\varphi,\theta}$.
We additionally note that 
\begin{equation}\label{eq:Rsq2}
\begin{aligned}
\abs{\bm{R}}^2 &=\textstyle \abs{\bR_{\rm even}}^2 + \bars^4Q_{R,1} +\epsilon\bars^3 Q_{R,2}+ \epsilon^2 \bars^2\sin(\frac{\bartheta}{2})Q_{R,3}\,,\\
\frac{1}{\abs{\bm{R}}}-\frac{1}{\abs{\bR_{\rm even}}} &= \frac{\bars^4Q_{R,1} +\epsilon\bars^3 Q_{R,2}+ \epsilon^2 \bars^2\sin(\frac{\bartheta}{2})Q_{R,3}}{\abs{\bR_{\rm even}}\abs{\bm{R}}(\abs{\bm{R}}+\abs{\bR_{\rm even}})}\,,
\end{aligned}
\end{equation}
where each $Q_{R,j}$ is as in \eqref{eq:Q_Rj} and, as in Lemma \ref{lem:Rests}, 
\begin{equation}\label{eq:Reven_low}
\abs{\bR_{\rm even}} \ge c(\kappa_*,c_\Gamma)|\barR|\,.
\end{equation}
We will use the notation $\bR_{\rm even,0}$ to denote $\bR_{\rm even}(s_0+s,\theta_0+\theta,s_0+\bars,\theta_0+\bartheta)$.

Now, in order to bound $J_{\varphi,s}$ and $J_{\varphi,\theta}$, we split the integrals into two regions:
\begin{align*}
R_1 &= \big\{ (\bars,\bartheta)\; : \; \sqrt{(s_0+\bars)^2+\epsilon^2(\theta_0+\bartheta)^2}\le 4\sqrt{s_0^2+\epsilon^2\theta_0^2} \big\}\,,\\
R_2 &= \big\{ (\bars,\bartheta)\; : \; \sqrt{(s_0+\bars)^2+\epsilon^2(\theta_0+\bartheta)^2}> 4\sqrt{s_0^2+\epsilon^2\theta_0^2} \big\}\,.
\end{align*}
For $j=s,\theta$, let $J_{\varphi,j,1}$ denote the integral of the integrand of $J_{\varphi,j}$ over the region $R_1$. Note that $\sqrt{\bars^2+\epsilon^2\bartheta^2}$ then belongs to the region $R_1'$ where $\sqrt{\bars^2+\epsilon^2\bartheta^2}\le 5\sqrt{s_0^2+\epsilon^2\theta_0^2}$.

After inserting $|\bR_{\rm even}|$ and $|\bR_{\rm even,0}|$ and using the first expansion \eqref{eq:rem1} for $\varphi$ about $(s_0+s,\theta_0+\theta)$, we may bound
\begin{align*}
\abs{J_{\varphi,s,1}} 
&\le \abs{\p_s\varphi(s_0+s,\theta_0+\theta)\,{\rm p.v.}\iint_{R_1} \textstyle (s_0+\bars)^{\ell+1}(\epsilon\sin(\frac{\theta_0+\bartheta}{2}))^k \displaystyle \bigg(\frac{g_0}{\abs{\bR_0}^n}-\frac{g_0^0}{\abs{\bR_{\rm even,0}}^n}\bigg)\,\epsilon\, d\bartheta d\bars} \\
&\qquad + \abs{\p_s\varphi(s,\theta)\,{\rm p.v.}\iint_{R_1'} \textstyle \bars^{\ell+1}(\epsilon\sin(\frac{\bartheta}{2}))^k \displaystyle \bigg(\frac{g}{\abs{\bR}^n}-\frac{g^0}{\abs{\bR_{\rm even}}^n}\bigg)\,\epsilon\, d\bartheta d\bars} \\
&\quad + \iint_{R_1}\frac{|s_0+\bars|^{\ell+1+\alpha}|\epsilon\sin(\frac{\theta_0+\bartheta}{2})|^k\,|g_0|}{\abs{\bR_0}^n}\abs{Q_{\varphi,s}} \,\epsilon\, d\bartheta d\bars
 + \iint_{R_1'} \frac{|\bars|^{\ell+1+\alpha}|\epsilon\sin(\frac{\bartheta}{2})|^k\,|g|}{\abs{\bR}^n}\abs{Q_{\varphi,s}}\,\epsilon\, d\bartheta d\bars \\
&\le \norm{\p_s\varphi}_{L^\infty}\norm{g}_{C^{0,\alpha}_2}\iint_{R_1} \frac{1}{\abs{\bR_0}^{2-\alpha}}\,\epsilon\, d\bartheta d\bars 
+ \norm{\p_s\varphi}_{L^\infty}\norm{g}_{C^{0,\alpha}_2}\iint_{R_1'} \frac{1}{\abs{\bR}^{2-\alpha}}\, \epsilon\, d\bartheta d\bars  \\
&\quad + \norm{\p_s\varphi}_{L^\infty}\abs{\bigg({\rm p.v.}\iint_{R_1} (s_0+\bars)^{\ell+1}\textstyle (\epsilon\sin(\frac{\theta_0+\bartheta}{2}))^k\displaystyle \bigg(\frac{1}{\abs{\bR_0}^n}-\frac{1}{\abs{\bR_{\rm even,0}}^n}\bigg) \,\epsilon\,d\bartheta d\bars\bigg) g_0^0}\\
&\quad + \norm{\p_s\varphi}_{L^\infty}\abs{\bigg({\rm p.v.}\iint_{R_1'} \bars^{\ell+1}\textstyle(\epsilon\sin(\frac{\bartheta}{2}))^k\displaystyle \bigg(\frac{1}{\abs{\bR}^n}-\frac{1}{\abs{\bR_{\rm even}}^n}\bigg)\,\epsilon\, d\bartheta d\bars\bigg)g^0 }  \\
&\qquad + \norm{\p_s\varphi}_{C^{0,\alpha}}\norm{g}_{L^\infty}\bigg(\iint_{R_1}\frac{1}{\abs{\bR_0}^{2-\alpha}} \,\epsilon d\bartheta d\bars + \iint_{R_1'} \frac{1}{\abs{\bR}^{2-\alpha}}\,\epsilon d\bartheta d\bars\bigg) \\
&\le c\big(\norm{\p_s\varphi}_{L^\infty}\norm{g}_{C^{0,\alpha}_2}+\norm{\p_s\varphi}_{C^{0,\alpha}}\norm{g}_{L^\infty}\big)\bigg(\iint_{R_1}\frac{1}{\abs{\bR_0}^{2-\alpha}} \,\epsilon\, d\bartheta d\bars + \iint_{R_1'} \frac{1}{\abs{\bR}^{2-\alpha}}\, \epsilon\, d\bartheta d\bars\bigg) \\
&\le c(\kappa_*,c_\Gamma)\big(\norm{\p_s\varphi}_{L^\infty}\norm{g}_{C^{0,\alpha}_2}+\norm{\p_s\varphi}_{C^{0,\alpha}}\norm{g}_{L^\infty}\big)\bigg(\iint_{\rho\le4\sqrt{s_0^2+\epsilon^2\theta_0^2}}\frac{1}{\rho^{2-\alpha}} \,\rho d\rho d\phi \\
&\hspace{5cm}  + \iint_{\rho\le 5\sqrt{s_0^2+\epsilon^2\theta_0^2}} \frac{1}{\rho^{2-\alpha}}\,\rho d\rho d\phi\bigg) \\
&\le c(\kappa_*,c_\Gamma)\big(\norm{\p_s\varphi}_{L^\infty}\norm{g}_{C^{0,\alpha}_2}+\norm{\p_s\varphi}_{C^{0,\alpha}}\norm{g}_{L^\infty}\big)\,\sqrt{s_0^2+\epsilon^2\theta_0^2}^{\,\alpha} \,.
\end{align*}
Here in the second-to-last line we have used that $\abs{\bR}\ge c(\kappa_*,c_\Gamma)\sqrt{\bars^2+\epsilon^2\bartheta^2}$ (see \cite[Lemma 3.1]{laplace}) to convert to polar coordinates $(\rho,\phi)$ over the patch $\rho\le c\sqrt{s_0^2+\epsilon^2\theta_0^2}<c\epsilon$. 
Similarly, we may show that $J_{\varphi,\theta,1}$ satisfies 
\begin{align*}
\abs{J_{\varphi,\theta,1}} &\le 
c(\kappa_*,c_\Gamma)\big(\textstyle \norm{\frac{1}{\epsilon}\p_\theta\varphi}_{L^\infty}\norm{g}_{C^{0,\alpha}_2}+\norm{\frac{1}{\epsilon}\p_\theta\varphi}_{C^{0,\alpha}}\norm{g}_{L^\infty}\big)\,\sqrt{s_0^2+\epsilon^2\theta_0^2}^{\,\alpha} \,.
\end{align*}

Finally, we bound $J_{\varphi,s,2}$ and $J_{\varphi,\theta,2}$ over the region $R_2$. We now use the second expansion \eqref{eq:rem2} of $\varphi$ about $(s_0+s,\theta_0+\theta)$ to write $J_{\varphi,s,2}$ as
\begin{align*}
J_{\varphi,s,2}&= 
-\p_s\varphi(s,\theta)\iint_{R_2} \bigg(\frac{(s_0+\bars)^{\ell+1}(\epsilon\sin(\frac{\theta_0+\bartheta}{2}))^k\,(g_0-g_0^0)}{\abs{\bR_0}^n}-\frac{\bars^{\ell+1}(\epsilon\sin(\frac{\bartheta}{2}))^k\,(g-g^0)}{\abs{\bR}^n} \bigg)\,\epsilon\, d\bartheta d\bars \\
&\qquad -\p_s\varphi(s,\theta)\iint_{R_2} \frac{(s_0+\bars)^{\ell+1}(\epsilon\sin(\frac{\theta_0+\bartheta}{2}))^k\,(g_0^0-g^0)}{\abs{\bR_0}^n} \,\epsilon\, d\bartheta d\bars \\
&\qquad -\p_s\varphi(s,\theta)\,g^0 \iint_{R_2} \bigg( \textstyle (s_0+\bars)^{\ell+1}(\epsilon\sin(\frac{\theta_0+\bartheta}{2}))^k \displaystyle \bigg(\frac{1}{\abs{\bR_0}^n}-\frac{1}{\abs{\bR_{\rm even,0}}^n}\bigg) \\
&\qquad\qquad + \textstyle \bars^{\ell+1}(\epsilon\sin(\frac{\bartheta}{2}))^k\displaystyle \bigg(\frac{1}{\abs{\bR_{\rm even}}^n}-\frac{1}{\abs{\bR}^n}\bigg) \bigg)\,\epsilon\, d\bartheta d\bars \\
&\qquad + \abs{s_0}^{1+\alpha}Q_{\varphi,s}(s,\theta,s_0+s,\theta_0+\theta)\iint_{R_2} \frac{(s_0+\bars)^{\ell}(\epsilon\sin(\frac{\theta_0+\bartheta}{2}))^k\,g_0}{\abs{\bR_0}^n}\,\epsilon\, d\bartheta d\bars\,.
\end{align*}
Using \eqref{eq:Rsq2} to expand the differences $\frac{1}{\abs{\bR_0}^n}-\frac{1}{\abs{\bR_{\rm even,0}}^n}$ and $\frac{1}{\abs{\bR_{\rm even}}^n}-\frac{1}{\abs{\bR}^n}$, we have
\begin{align*}
\abs{J_{\varphi,s,2}} 
&\le \norm{\p_s\varphi}_{L^\infty}\norm{g}_{C^{0,\alpha}_2}\bigg|\iint_{R_2}\bigg(\frac{(s_0+\bars)^{\ell+1+\alpha}(\epsilon\sin(\frac{\theta_0+\bartheta}{2}))^k}{\abs{\bR_0}^n}-\frac{\bars^{\ell+1+\alpha}(\epsilon\sin(\frac{\bartheta}{2}))^k}{\abs{\bR}^n} \bigg)\,\epsilon\, d\bartheta d\bars\bigg|  \\
&\quad + c\norm{\p_s\varphi}_{L^\infty}\norm{g}_{C^{0,\alpha^+}_1}\iint_{R_2}\frac{\sqrt{s_0^2+\epsilon^2\theta_0^2}^{\,\alpha^+}\,|s_0+\bars|^{\ell+1}|\epsilon\sin(\frac{\theta_0+\bartheta}{2})|^k}{\abs{\bR_0}^n}\,\epsilon\, d\bartheta d\bars  \\
&\quad + \norm{\p_s\varphi}_{L^\infty}\norm{g}_{L^\infty}\bigg|\iint_{R_2} \bigg(\frac{\bars^5Q_{R,1} +\epsilon\bars^4 Q_{R,2}+ \epsilon^2 \bars^3\sin(\frac{\bartheta}{2})Q_{R,3} }{\abs{\bR_{\rm even}}\abs{\bm{R}}(\abs{\bm{R}}+\abs{\bR_{\rm even}})}\sum_{j=0}^{n-1}\frac{\bars^{\ell+1}(\epsilon\sin(\frac{\bartheta}{2}))^k}{\abs{\bR}^j\abs{\bR_{\rm even}}^{n-1-j}} \\
& - \frac{(s_0+\bars)^5Q_{R,1} +\epsilon(s_0+\bars)^4 Q_{R,2}+ \epsilon^2 (s_0+\bars)^3\sin(\frac{\theta_0+\bartheta}{2})Q_{R,3} }{\abs{\bR_{\rm even,0}}\abs{\bm{R}_0}(\abs{\bm{R}_0}+\abs{\bR_{\rm even,0}})}\sum_{j=0}^{n-1}\frac{(s_0+\bars)^{\ell+1}(\epsilon\sin(\frac{\theta_0+\bartheta}{2}))^k}{\abs{\bR_0}^j\abs{\bR_{\rm even,0}}^{n-1-j}}\bigg) \,\epsilon\, d\bartheta d\bars\bigg| \\
&\quad + c\abs{s_0}^{1+\alpha}\norm{\p_s\varphi}_{C^{0,\alpha}}\norm{g}_{L^\infty}\iint_{R_2} \frac{1}{\abs{\bR_0}^3}\,\epsilon d\bartheta d\bars\\
&\le c(\kappa_*)\norm{\p_s\varphi}_{L^\infty}\bigg(\norm{g}_{C^{0,\alpha}_2} \iint_{R_2}\frac{\sqrt{s_0^2+\epsilon^2\theta_0^2}}{\abs{\bR_0}^{3-\alpha}}\,\epsilon\, d\bartheta d\bars  
+ \norm{g}_{C^{0,\alpha^+}_1}\iint_{R_2}\frac{\sqrt{s_0^2+\epsilon^2\theta_0^2}^{\,\alpha^+}}{\abs{\bR_0}^2}\,\epsilon\, d\bartheta d\bars\bigg) \\
&\quad 
+ c(\kappa_*)\norm{\p_s\varphi}_{L^\infty}\norm{g}_{L^\infty}\bigg(\iint_{R_2} \frac{\sqrt{s_0^2+\epsilon^2\theta_0^2}}{\abs{\barR_0}^2}\,\epsilon \,d\bartheta d\bars
+\iint_{R_2} \frac{\sqrt{s_0^2+\epsilon^2\theta_0^2}^{\,\alpha}}{\abs{\barR_0}}\,\epsilon \, d\bartheta d\bars \bigg)\\
&\qquad + c\norm{\p_s\varphi}_{C^{0,\alpha}}\norm{g}_{L^\infty}\iint_{R_2} \frac{\abs{s_0}^{1+\alpha}}{\abs{\bR_0}^3}\,\epsilon \,d\bartheta d\bars\\
&\le c(\kappa_*)\norm{\p_s\varphi}_{L^\infty}\big(\norm{g}_{C^{0,\alpha}_2}+\norm{g}_{C^{0,\alpha^+}_1}\big)\iint_{\rho>4\sqrt{s_0^2+\epsilon^2\theta_0^2}}\bigg(\frac{\sqrt{s_0^2+\epsilon^2\theta_0^2}}{\rho^{3-\alpha}}+ \frac{\sqrt{s_0^2+\epsilon^2\theta_0^2}^{\,\alpha^+}}{\rho^2}\bigg)\,\rho d\rho d\phi \\
&\quad 
+ c(\kappa_*)\norm{\p_s\varphi}_{C^{0,\alpha}}\norm{g}_{L^\infty}\iint_{\rho>4\sqrt{s_0^2+\epsilon^2\theta_0^2}}\bigg( \frac{\sqrt{s_0^2+\epsilon^2\theta_0^2}}{\rho^2}+\frac{\sqrt{s_0^2+\epsilon^2\theta_0^2}^{\,\alpha}}{\rho}+\frac{\abs{s_0}^{1+\alpha}}{\rho^3} \bigg)\,\rho d\rho d\phi \\
&\le c(\kappa_*)\norm{\p_s\varphi}_{L^\infty}\big(\norm{g}_{C^{0,\alpha}_2}+\norm{g}_{C^{0,\alpha^+}_1}\big)\bigg(\sqrt{s_0^2+\epsilon^2\theta_0^2}\big(1+\sqrt{s_0^2+\epsilon^2\theta_0^2}^{\,-1+\alpha}\big) \\
&\qquad\qquad\qquad  +\sqrt{s_0^2+\epsilon^2\theta_0^2}^{\,\alpha^+}\big(1+ \abs{\log(s_0^2+\epsilon^2\theta_0^2)}\big)\bigg) \\
&\quad 
+ c(\kappa_*)\norm{\p_s\varphi}_{C^{0,\alpha}}\norm{g}_{L^\infty}\bigg(\sqrt{s_0^2+\epsilon^2\theta_0^2}\big(1+ \abs{\log(s_0^2+\epsilon^2\theta_0^2)}\big) \\
&\qquad\qquad\qquad  +\sqrt{s_0^2+\epsilon^2\theta_0^2}^{\,\alpha}\big(1+\sqrt{s_0^2+\epsilon^2\theta_0^2}\big)+ \abs{s_0}^{1+\alpha}\big(1+\sqrt{s_0^2+\epsilon^2\theta_0^2}^{\,-1}\big)\bigg) \\
&\le c(\kappa_*)\bigg(\norm{\p_s\varphi}_{L^\infty}\big(\norm{g}_{C^{0,\alpha}_2}+\norm{g}_{C^{0,\alpha^+}_1}\big) + \norm{\p_s\varphi}_{C^{0,\alpha}}\norm{g}_{L^\infty}\bigg)  \sqrt{s_0^2+\epsilon^2\theta_0^2}^{\,\alpha}\,.
\end{align*}
Here in the second inequality we have used the closeness of the kernels with respect to $s_0$ and $\epsilon\theta_0$, and in the third inequality we have again switched to polar coordinates. In the fourth inequality we use that $\alpha^+>\alpha$ to absorb the logarithmic term in $\sqrt{s_0^2+\epsilon^2\theta_0^2}^{\,\alpha^+}\abs{\log(s_0^2+\epsilon^2\theta_0^2)}$. 

By an analogous series of estimates, it may be shown that $J_{\varphi,\theta,2}$ satisfies 
\begin{align*}
c(\kappa_*)\bigg(\textstyle \norm{\frac{1}{\epsilon}\p_\theta\varphi}_{L^\infty}\big(\norm{g}_{C^{0,\alpha}_2}+\norm{g}_{C^{0,\alpha^+}_1}\big) + \norm{\frac{1}{\epsilon}\p_\theta\varphi}_{C^{0,\alpha}}\norm{g}_{L^\infty}\bigg)  \sqrt{s_0^2+\epsilon^2\theta_0^2}^{\,\alpha}\,.
\end{align*}
Combining the estimates from cases 1, 2a, and 2b, we have  
\begin{align*}
\abs{\wh I_\varphi} &\le c(\kappa_*,c_\Gamma)\bigg(\big(\textstyle \norm{\p_s\varphi}_{L^\infty}+ \norm{\frac{1}{\epsilon}\p_\theta\varphi}_{L^\infty}\big)\big(\norm{g}_{C^{0,\alpha}_2}+\norm{g}_{C^{0,\alpha^+}_1}\big) \\
&\qquad + \textstyle \big(\norm{\p_s\varphi}_{C^{0,\alpha}}+ \norm{\frac{1}{\epsilon}\p_\theta\varphi}_{C^{0,\alpha}}\big)\norm{g}_{L^\infty}\bigg)  \sqrt{s_0^2+\epsilon^2\theta_0^2}^{\,\alpha}\,.
\end{align*}
\hfill \qedsymbol


\subsubsection*{Acknowledgments} LO acknowledges support from NSF grant DMS-2406003 and from the Wisconsin Alumni Research Foundation.


\bibliographystyle{abbrv} 
\bibliography{StokesBib}

\begin{thebibliography}{10}

\bibitem{alazard2014cauchy}
T.~Alazard, N.~Burq, and C.~Zuily.
\newblock On the cauchy problem for gravity water waves.
\newblock {\em Inventiones mathematicae}, 198(1):71--163, 2014.

\bibitem{alazard2020paralinearization}
T.~Alazard and O.~Lazar.
\newblock Paralinearization of the muskat equation and application to the
  cauchy problem.
\newblock {\em Archive for Rational Mechanics and Analysis}, 237(2):545--583,
  2020.

\bibitem{alazard2009paralinearization}
T.~Alazard and G.~M{\'e}tivier.
\newblock Paralinearization of the dirichlet to neumann operator, and
  regularity of three-dimensional water waves.
\newblock {\em Communications in Partial Differential Equations},
  34(12):1632--1704, 2009.

\bibitem{NorwayPoF}
H.~I. Andersson, E.~Celledoni, L.~Ohm, B.~Owren, and B.~K. Tapley.
\newblock An integral model based on slender body theory, with applications to
  curved rigid fibers.
\newblock {\em Physics of Fluids}, 33(4):041904, 2021.

\bibitem{bahouri2011fourier}
H.~Bahouri, J.-Y. Chemin, and R.~Danchin.
\newblock {\em Fourier Analysis and Nonlinear Partial Differential Equations}.
\newblock Springer, 2011.

\bibitem{batchelor1970slender}
G.~Batchelor.
\newblock Slender-body theory for particles of arbitrary cross-section in
  {S}tokes flow.
\newblock {\em J. Fluid Mech.}, 44(3):419--440, 1970.

\bibitem{beale1993growth}
J.~T. Beale, T.~Y. Hou, and J.~S. Lowengrub.
\newblock Growth rates for the linearized motion of fluid interfaces away from
  equilibrium.
\newblock {\em Communications on Pure and Applied Mathematics},
  46(9):1269--1301, 1993.

\bibitem{bedrossian2018vortex}
J.~Bedrossian, P.~Germain, and B.~Harrop-Griffiths.
\newblock Vortex filament solutions of the navier-stokes equations.
\newblock {\em arXiv e-prints}, pages arXiv--1809, 2018.

\bibitem{bishop1975there}
R.~L. Bishop.
\newblock There is more than one way to frame a curve.
\newblock {\em The American Mathematical Monthly}, 82(3):246--251, 1975.

\bibitem{camalet2000generic}
S.~Camalet and F.~J{\"u}licher.
\newblock Generic aspects of axonemal beating.
\newblock {\em New Journal of Physics}, 2(1):24, 2000.

\bibitem{camalet1999self}
S.~Camalet, F.~J{\"u}licher, and J.~Prost.
\newblock Self-organized beating and swimming of internally driven filaments.
\newblock {\em Physical review letters}, 82(7):1590, 1999.

\bibitem{cameron2021critical}
S.~Cameron and R.~M. Strain.
\newblock Critical local well-posedness for the fully nonlinear peskin problem.
\newblock {\em arXiv preprint arXiv:2112.00692}, 2021.

\bibitem{chakrabarti2022multiscale}
B.~Chakrabarti, S.~F{\"u}rthauer, and M.~J. Shelley.
\newblock A multiscale biophysical model gives quantized metachronal waves in a
  lattice of beating cilia.
\newblock {\em Proceedings of the National Academy of Sciences},
  119(4):e2113539119, 2022.

\bibitem{chen2021peskin}
K.~Chen and Q.-H. Nguyen.
\newblock The peskin problem with $\dot b^{1}_{\infty,\infty}$ initial data.
\newblock {\em arXiv preprint arXiv:2107.13854}, 2021.

\bibitem{chwang1975hydromechanics}
A.~T. Chwang and T.~Y.-T. Wu.
\newblock Hydromechanics of low-reynolds-number flow. part 2. singularity
  method for stokes flows.
\newblock {\em Journal of Fluid mechanics}, 67(4):787--815, 1975.

\bibitem{corona2017integral}
E.~Corona, L.~Greengard, M.~Rachh, and S.~Veerapaneni.
\newblock An integral equation formulation for rigid bodies in stokes flow in
  three dimensions.
\newblock {\em Journal of Computational Physics}, 332:504--519, 2017.

\bibitem{cortez2012slender}
R.~Cortez and M.~Nicholas.
\newblock Slender body theory for {S}tokes flows with regularized forces.
\newblock {\em Commun. Appl. Math. Comput. Sci.}, 7(1):33--62, 2012.

\bibitem{cox1970motion}
R.~Cox.
\newblock The motion of long slender bodies in a viscous fluid part 1. general
  theory.
\newblock {\em J. Fluid Mech.}, 44(4):791--810, 1970.

\bibitem{davila2022interacting}
J.~D{\'a}vila, M.~Del~Pino, M.~Medina, and R.~Rodiac.
\newblock Interacting helical vortex filaments in the three-dimensional
  ginzburg-landau equation.
\newblock {\em Journal of the European Mathematical Society (EMS Publishing)},
  24(12), 2022.

\bibitem{davila2022travelling}
J.~D{\'a}vila, M.~d. Pino, M.~Musso, and J.~Wei.
\newblock Travelling helices and the vortex filament conjecture in the
  incompressible euler equations.
\newblock {\em Calculus of Variations and Partial Differential Equations},
  61(4):119, 2022.

\bibitem{NIST:DLMF}
{\it NIST Digital Library of Mathematical Functions}.
\newblock \url{https://dlmf.nist.gov/}, Release 1.1.11 of 2023-09-15.
\newblock F.~W.~J. Olver, A.~B. {Olde Daalhuis}, D.~W. Lozier, B.~I. Schneider,
  R.~F. Boisvert, C.~W. Clark, B.~R. Miller, B.~V. Saunders, H.~S. Cohl, and
  M.~A. McClain, eds.

\bibitem{flynn2021vanishing}
P.~T. Flynn and H.~Q. Nguyen.
\newblock The vanishing surface tension limit of the muskat problem.
\newblock {\em Communications in Mathematical Physics}, 382:1205--1241, 2021.

\bibitem{folland1995introduction}
G.~B. Folland.
\newblock {\em Introduction to partial differential equations}, volume 102.
\newblock Princeton university press, 1995.

\bibitem{galdi2011introduction}
G.~P. Galdi.
\newblock {\em An introduction to the mathematical theory of the
  {N}avier-{S}tokes equations: Steady-state problems}.
\newblock Springer Science \& Business Media, 2011.

\bibitem{gancedo2020global}
F.~Gancedo, R.~Granero-Belinch{\'o}n, and S.~Scrobogna.
\newblock Global existence in the lipschitz class for the n-peskin problem.
\newblock {\em arXiv preprint arXiv:2011.02294}, 2020.

\bibitem{garcia2023critical}
E.~Garc{\'\i}a-Ju{\'a}rez and S.~V. Haziot.
\newblock Critical well-posedness for the 2d peskin problem with general
  tension.
\newblock {\em arXiv preprint arXiv:2311.10157}, 2023.

\bibitem{garcia2020peskin}
E.~Garcia-Juarez, Y.~Mori, and R.~M. Strain.
\newblock The peskin problem with viscosity contrast.
\newblock {\em arXiv preprint arXiv:2009.03360}, 2020.

\bibitem{giaquinta2013introduction}
M.~Giaquinta and L.~Martinazzi.
\newblock {\em An introduction to the regularity theory for elliptic systems,
  harmonic maps and minimal graphs}.
\newblock Springer Science \& Business Media, 2013.

\bibitem{giusti2003direct}
E.~Giusti.
\newblock {\em Direct methods in the calculus of variations}.
\newblock World Scientific, 2003.

\bibitem{gotz2000interactions}
T.~G{\"o}tz.
\newblock {\em Interactions of fibers and flow: asymptotics, theory and
  numerics}.
\newblock Doctoral dissertation, University of Kaiserslautern, 2000.

\bibitem{gray1955propulsion}
J.~Gray and G.~Hancock.
\newblock The propulsion of sea-urchin spermatozoa.
\newblock {\em Journal of Experimental Biology}, 32(4):802--814, 1955.

\bibitem{hines1978bend}
M.~Hines and J.~Blum.
\newblock Bend propagation in flagella. i. derivation of equations of motion
  and their simulation.
\newblock {\em Biophysical Journal}, 23(1):41--57, 1978.

\bibitem{hou1994removing}
T.~Y. Hou, J.~S. Lowengrub, and M.~J. Shelley.
\newblock Removing the stiffness from interfacial flows with surface tension.
\newblock {\em Journal of Computational Physics}, 114(2):312--338, 1994.

\bibitem{johnson1980improved}
R.~E. Johnson.
\newblock An improved slender-body theory for {S}tokes flow.
\newblock {\em Journal of Fluid Mechanics}, 99(02):411--431, 1980.

\bibitem{keaveny2011applying}
E.~E. Keaveny and M.~J. Shelley.
\newblock Applying a second-kind boundary integral equation for surface
  tractions in stokes flow.
\newblock {\em Journal of Computational Physics}, 230(5):2141--2159, 2011.

\bibitem{keller1976slender}
J.~B. Keller and S.~I. Rubinow.
\newblock Slender-body theory for slow viscous flow.
\newblock {\em Journal of Fluid Mechanics}, 75(4):705--714, 1976.

\bibitem{kress1989linear}
R.~Kress, V.~Maz'ya, and V.~Kozlov.
\newblock {\em Linear integral equations}, volume~82.
\newblock Springer, 1989.

\bibitem{kuo2023tension}
P.-C. Kuo, M.-C. Lai, Y.~Mori, and A.~Rodenberg.
\newblock The tension determination problem for an inextensible interface in 2d
  stokes flow.
\newblock {\em Research in the Mathematical Sciences}, 10(4):46, 2023.

\bibitem{lannes2005well}
D.~Lannes.
\newblock Well-posedness of the water-waves equations.
\newblock {\em Journal of the American Mathematical Society}, 18(3):605--654,
  2005.

\bibitem{lauga2009hydrodynamics}
E.~Lauga and T.~R. Powers.
\newblock The hydrodynamics of swimming microorganisms.
\newblock {\em Reports on Progress in Physics}, 72(9):096601, 2009.

\bibitem{li2013sedimentation}
L.~Li, H.~Manikantan, D.~Saintillan, and S.~E. Spagnolie.
\newblock The sedimentation of flexible filaments.
\newblock {\em J. Fluid Mech.}, 735:705--736, 2013.

\bibitem{lin2019solvability}
F.-H. Lin and J.~Tong.
\newblock Solvability of the stokes immersed boundary problem in two
  dimensions.
\newblock {\em Communications on Pure and Applied Mathematics}, 72(1):159--226,
  2019.

\bibitem{maxian2021integral}
O.~Maxian, A.~Mogilner, and A.~Donev.
\newblock Integral-based spectral method for inextensible slender fibers in
  stokes flow.
\newblock {\em Physical Review Fluids}, 6(1):014102, 2021.

\bibitem{maxian2021simulations}
O.~Maxian, R.~P. Pel{\'a}ez, A.~Mogilner, and A.~Donev.
\newblock Simulations of dynamically cross-linked actin networks: morphology,
  rheology, and hydrodynamic interactions.
\newblock {\em PLoS computational biology}, 17(12):e1009240, 2021.

\bibitem{maxian2022hydrodynamics}
O.~Maxian, B.~Sprinkle, C.~S. Peskin, and A.~Donev.
\newblock Hydrodynamics of a twisting, bending, inextensible fiber in stokes
  flow.
\newblock {\em Physical Review Fluids}, 7(7):074101, 2022.

\bibitem{mitchell2022single}
W.~H. Mitchell, H.~G. Bell, Y.~Mori, L.~Ohm, and D.~Spirn.
\newblock A single-layer based numerical method for the slender body boundary
  value problem.
\newblock {\em Journal of Computational Physics}, 450:110865, 2022.

\bibitem{mitchell2017generalized}
W.~H. Mitchell and S.~E. Spagnolie.
\newblock A generalized traction integral equation for stokes flow, with
  applications to near-wall particle mobility and viscous erosion.
\newblock {\em Journal of Computational Physics}, 333:462--482, 2017.

\bibitem{modica1982non}
G.~Modica and M.~Giaquinta.
\newblock Non linear systems of the type of the stationary navier-stokes
  system.
\newblock 1982.

\bibitem{rigid}
Y.~Mori and L.~Ohm.
\newblock An error bound for the slender body approximation of a thin, rigid
  fiber sedimenting in stokes flow.
\newblock {\em Research in the Mathematical Sciences}, 7(2):1--27, 2020.

\bibitem{inverse}
Y.~Mori and L.~Ohm.
\newblock Accuracy of slender body theory in approximating force exerted by
  thin fiber on viscous fluid.
\newblock {\em Studies in Applied Mathematics}, 2021.

\bibitem{mori2023well}
Y.~Mori and L.~Ohm.
\newblock Well-posedness and applications of classical elastohydrodynamics for
  a swimming filament.
\newblock {\em Nonlinearity}, 36(3):1799, 2023.

\bibitem{closed_loop}
Y.~Mori, L.~Ohm, and D.~Spirn.
\newblock Theoretical justification and error analysis for slender body theory.
\newblock {\em Communications on Pure and Applied Mathematics},
  73(6):1245--1314, 2020.

\bibitem{free_ends}
Y.~Mori, L.~Ohm, and D.~Spirn.
\newblock Theoretical justification and error analysis for slender body theory
  with free ends.
\newblock {\em Archive for Rational Mechanics and Analysis}, 235(3):1905--1978,
  2020.

\bibitem{mori2019well}
Y.~Mori, A.~Rodenberg, and D.~Spirn.
\newblock Well-posedness and global behavior of the peskin problem of an
  immersed elastic filament in stokes flow.
\newblock {\em Communications on Pure and Applied Mathematics}, 72(5):887--980,
  2019.

\bibitem{nazockdast2017cytoplasmic}
E.~Nazockdast, A.~Rahimian, D.~Needleman, and M.~Shelley.
\newblock Cytoplasmic flows as signatures for the mechanics of mitotic
  positioning.
\newblock {\em Molecular biology of the cell}, 28(23):3261--3270, 2017.

\bibitem{nguyen2020paradifferential}
H.~Q. Nguyen and B.~Pausader.
\newblock A paradifferential approach for well-posedness of the muskat problem.
\newblock {\em Archive for Rational Mechanics and Analysis}, 237(1):35--100,
  2020.

\bibitem{regularized}
L.~Ohm.
\newblock Remarks on regularized stokeslets in slender body theory.
\newblock {\em Fluids}, 6(8):283, 2021.

\bibitem{ohm2024well}
L.~Ohm.
\newblock Well-posedness of a viscoelastic resistive force theory and
  applications to swimming.
\newblock {\em Journal of Nonlinear Science}, 34(5):82, 2024.

\bibitem{laplace}
L.~Ohm.
\newblock On an angle-averaged {N}eumann-to-{D}irichlet map for thin filaments.
\newblock {\em Archive for Rational Mechanics and Analysis}, 249(1):8, 2025.

\bibitem{norway_SBT}
L.~Ohm, B.~K. Tapley, H.~I. Andersson, E.~Celledoni, and B.~Owren.
\newblock A slender body model for thin rigid fibers: validation and
  comparisons.
\newblock {\em Proceedings of MekIT'19}, 2019.

\bibitem{power1987second}
H.~Power and G.~Miranda.
\newblock Second kind integral equation formulation of stokes’ flows past a
  particle of arbitrary shape.
\newblock {\em SIAM Journal on Applied Mathematics}, 47(4):689--698, 1987.

\bibitem{pozrikidis1992boundary}
C.~Pozrikidis.
\newblock {\em Boundary integral and singularity methods for linearized viscous
  flow}.
\newblock Cambridge University Press, 1992.

\bibitem{shelley2000stokesian}
M.~J. Shelley and T.~Ueda.
\newblock The stokesian hydrodynamics of flexing, stretching filaments.
\newblock {\em Physica D: Nonlinear Phenomena}, 146(1-4):221--245, 2000.

\bibitem{spagnolie2011comparative}
S.~E. Spagnolie and E.~Lauga.
\newblock Comparative hydrodynamics of bacterial polymorphism.
\newblock {\em Phys. Rev. Lett.}, 106(5):058103, 2011.

\bibitem{steinbach2007numerical}
O.~Steinbach.
\newblock {\em Numerical approximation methods for elliptic boundary value
  problems: finite and boundary elements}.
\newblock Springer Science \& Business Media, 2007.

\bibitem{tong2021regularized}
J.~Tong.
\newblock Regularized stokes immersed boundary problems in two dimensions:
  Well-posedness, singular limit, and error estimates.
\newblock {\em Communications on Pure and Applied Mathematics}, 74(2):366--449,
  2021.

\bibitem{tong2023geometric}
J.~Tong and D.~Wei.
\newblock Geometric properties of the 2-d peskin problem.
\newblock {\em arXiv preprint arXiv:2304.09556}, 2023.

\bibitem{tornberg2006numerical}
A.-K. Tornberg and K.~Gustavsson.
\newblock A numerical method for simulations of rigid fiber suspensions.
\newblock {\em J. Comput. Phys.}, 215(1):172--196, 2006.

\bibitem{tornberg2004simulating}
A.-K. Tornberg and M.~J. Shelley.
\newblock Simulating the dynamics and interactions of flexible fibers in stokes
  flows.
\newblock {\em Journal of Computational Physics}, 196(1):8--40, 2004.

\bibitem{wiggins1998flexive}
C.~H. Wiggins and R.~E. Goldstein.
\newblock Flexive and propulsive dynamics of elastica at low reynolds number.
\newblock {\em Physical Review Letters}, 80(17):3879, 1998.

\end{thebibliography}


\end{document}